\newtheorem{theorem}{Theorem}[section]
\newtheorem{lemma}[theorem]{Lemma}
\newtheorem{corollary}[theorem]{Corollary}
\newtheorem{remark}[theorem]{Remark}
\newtheorem{definition}[theorem]{Definition}
\renewcommand \theequation {%
%\ifnum \c@section>\z@ \@arabic\c@section.%
\ifnum \c@chapter>\z@ \@arabic\c@chapter.%
%\fi \ifnum\c@subsection>\z@\@arabic\c@subsection.%
\fi\@arabic\c@equation} \@addtoreset{equation}{chapter}
\DeclareMathOperator*{\esssup}{ess\,sup}
\providecommand{\abs}[1]{\left\vert#1\right\vert}
\providecommand{\nm}[1]{\left\Vert#1\right\Vert}
\providecommand{\nnm}[1]{\left\Vert\left\vert#1\right\vert\right\Vert}
\providecommand{\br}[1]{\left\langle #1 \right\rangle}
\providecommand{\tnm}[1]{\left\vert#1\right\vert_{2}}
\providecommand{\tnnm}[1]{{\left\Vert#1\right\Vert}_{2}}
\providecommand{\lnm}[2]{\left\vert#1\right\vert_{\infty,{#2}}}
\providecommand{\lnnm}[2]{{\left\Vert#1\right\Vert}_{\infty,{#2}}}
\providecommand{\ltnm}[2]{{\left\Vert#1\right\Vert}_{m,{#2}}}
\providecommand{\snnm}[1]{{\left\Vert#1\right\Vert}_{\infty}}
\providecommand{\tnmh}[1]{\left\vert#1\right\vert_{2}}
\providecommand{\lnmh}[1]{\left\vert#1\right\vert_{\infty,{\vth,\varrho}}}
\providecommand{\lnmv}[1]{\left\vert#1\right\vert_{\infty,\vth,\varrho}}
\providecommand{\lnnmv}[1]{{\left\Vert#1\right\Vert}_{\infty,\vth,\varrho}}
\providecommand{\bw}[2]{\br{\vvv}^{#2}\ue^{{#1}\abs{\vvv}^2}}
\providecommand{\unm}[1]{\left\vert#1\right\vert_{\nu}}
\providecommand{\unnm}[1]{\left\Vert#1\right\Vert_{\nu}}
\providecommand{\tsm}[2]{\left\Vert#1\right\Vert_{{#2},2}}
\providecommand{\lsm}[2]{\left\Vert#1\right\Vert_{{#2},\infty,\vrh,\vth}}
\providecommand{\ssm}[2]{\left\Vert#1\right\Vert_{{#2},\infty}}
\providecommand{\pnm}[3]{\left\Vert#1\right\Vert_{{#2},{#3}}}
\providecommand{\pnnm}[2]{{\left\Vert#1\right\Vert}_{#2}}
\providecommand{\onm}[2]{{\left\vert#1\right\vert}_{{#2}(\p\Omega)}}
\providecommand{\onnm}[2]{{\left\Vert#1\right\Vert}_{{#2}(\Omega)}}
\providecommand{\tnnmt}[1]{{\left\Vert\left\vert#1\right\vert\right\Vert}_{2}}
\providecommand{\snnmt}[1]{{\left\Vert\left\vert#1\right\vert\right\Vert}_{\infty}}
\providecommand{\lnnmvt}[1]{{\left\Vert\left\vert#1\right\vert\right\Vert}_{\infty,\vth,\varrho}}
\providecommand{\tsmt}[2]{\left\Vert\left\vert#1\right\vert\right\Vert_{{#2},2}}
\providecommand{\lsmt}[2]{\left\Vert\left\vert#1\right\vert\right\Vert_{{#2},\infty,\vrh,\vth}}
\providecommand{\ssmt}[2]{\left\Vert\left\vert#1\right\vert\right\Vert_{{#2},\infty}}
\providecommand{\pnmt}[3]{\left\Vert\left\vert#1\right\vert\right\Vert_{{#2},{#3}}}
\providecommand{\pnnmt}[2]{{\left\Vert\left\vert#1\right\vert\right\Vert}_{#2}}
\providecommand{\unnmt}[1]{\left\Vert\left\vert#1\right\vert\right\Vert_{\nu}}
\providecommand{\onnmt}[2]{{\left\Vert#1\right\Vert}_{{#2}([0,t]\times\Omega)}}
\def\ud{\mathrm{d}}
\def\dt{\partial_t}
\def\p{\partial}
\def\ls{\lesssim}
\def\gs{\gtrsim}
\def\half{\frac{1}{2}}
\def\rt{\rightarrow}
\def\r{\mathbb{R}}
\def\no{\nonumber}
\def\ue{\mathrm{e}}
\def\ds{\displaystyle}
\def\e{\epsilon}
\def\d{\delta}
\def\kk{\kappa}
\def\s{\mathbb{S}}
\def\ee{\mathbf{e}}
\def\ne{\varphi}
\def\id{{\bf{1}}}
\def\vx{x}
\def\vv{v}
\def\va{v_{\eta}}
\def\vb{v_{\phi}}
\def\vc{v_{\psi}}
\def\vn{n}
\def\fs{\mathfrak{F}}
\def\f{F}
\def\fb{\mathscr{F}}
\def\fl{\mathcal{F}}
\def\m{\mu}
\def\mb{\m_{\bb}^{\e}}
\def\bb{b}
\def\mh{\m^{\frac{1}{2}}}
\def\mhh{\m^{-\frac{1}{2}}}
\def\th{\theta}
\def\rh{\rho}
\def\vu{u}
\def\vo{\omega}
\def\vuu{{\mathfrak{u}}}
\def\vvv{{\mathfrak{v}}}
\def\vsi{\sigma}
\def\ua{\mathfrak{u}_{\eta}}
\def\ub{\mathfrak{u}_{\phi}}
\def\uc{\mathfrak{u}_{\psi}}
\def\tu{\tilde u}
\def\tv{\tilde v}
\def\dx{\Delta_x}
\def\nx{\nabla_{x}}
\def\rr{\mathscr{R}}
\def\ll{\mathcal{L}}
\def\lll{\mathscr{L}}
\def\nk{\mathcal{N}}
\def\sn{\nu^{\frac{1}{2}}}
\def\snn{\nu^{-\frac{1}{2}}}
\def\vth{\vartheta}
\def\vrh{\varrho}
\def\ze{\zeta}
\def\bv{\br{\vv}^{\vth}\ue^{\varrho\abs{\vv}^2}}
\def\bvv{\br{\vvv}^{\vth}\ue^{\varrho\abs{\vvv}^2}}
\def\bvvp{\br{\vvv'}^{\vth}\ue^{\varrho\abs{\vvv'}^2}}
\def\g{g}
\def\gg{\mathcal{G}}
\def\ss{S}
\def\pk{{\mathbb{P}}}
\def\ik{{\mathbb{I}}}
\def\pp{\mathcal{P}}
\def\pe{\mathcal{P}^{\e}}
\def\q{Q}
\def\qb{\mathscr{Q}}
\def\ql{\mathcal{Q}}
\def\t{\mathcal{T}}
\def\k{\mathcal{K}}
\def\a{\mathscr{A}}
\def\b{\mathscr{B}}
\def\c{\mathscr{C}}
\def\dd{\mathscr{D}}
\def\w{\mathscr{W}}
\def\vr{r}
\def\vt{\varsigma}
\def\vbb{b}
\def\nn{\mathcal{V}}
\def\vh{w}
\def\tvh{\tilde{w}}
\def\rp{\r^+}
\def\v{\mathcal{V}}
\begin{document}

\title{Asymptotic Analysis of Boltzmann Equation in Bounded Domains}

% \author[1]{Lei Wu\thanks{lew218@lehigh.edu}}
% \affil[1]{Department of Mathematics, Lehigh University}
% \author[2]{Zhimeng Ouyang\thanks{zhimeng\_ouyang@brown.edu}}
% \affil[2]{Department of Mathematics, Brown University}

\author{Lei Wu\footnote{
%L. Wu is supported by NSF grant DMS-1853002.
Email: lew218@lehigh.edu}\\
Department of Mathematics, Lehigh University
\and\\
Zhimeng Ouyang\footnote{Email: Zhimeng\_Ouyang@brown.edu}\\
Department of Mathematics, Brown University}
\date{}

%\subjclass[2010]{82D75, 82B40, 34E05}

\maketitle

\begin{abstract}
Consider 3D Boltzmann equation in convex domains with diffusive-reflection boundary condition. We study the hydrodynamic limits as the Knudsen number and Strouhal number $\e\rt 0^+$. Using the Hilbert expansion, we rigorously justify that the solution of stationary/evolutionary problem converges to that of the steady/unsteady Navier-Stokes-Fourier system.

This is the first paper to justify the hydrodynamic limits of nonlinear Boltzmann equations with hard-sphere collision kernel in bounded domain in the $L^{\infty}$ sense. The proof relies on a novel analysis on the boundary layer effect with geometric correction.

The difficulty mainly comes from three sources: 3D domain, boundary layer regularity, and time dependence. To fully solve this problem, we introduce several techniques: (1) boundary layer with geometric correction; (2) remainder estimates with $L^2-L^{6}-L^{\infty}$ framework.\\
\ \\
\textbf{Keywords:} boundary layer; Milne problem; geometric correction; remainder estimates.
\end{abstract}

\tableofcontents

\newpage

%%%%%%%%%%%%%%%%%%%%%%%%%%%%%%%%%%%%%%%%%%%%%%%%%%%%%%%%%%%%%%%%%%%%%%%%%%%%%%%%%%%%%
%%%%%%%%%%%%%%%%%%%%%%%%%%%%%%%%%%%%%%%%%%%%%%%%%%%%%%%%%%%%%%%%%%%%%%%%%%%%%%%%%%%%%

\pagestyle{myheadings} \thispagestyle{plain} \markboth{LEI WU AND ZHIMENG OUYANG}{ASYMPTOTIC ANALYSIS OF BOLTZMANN EQUATION}

\newpage

%%%%%%%%%%%%%%%%%%%%%%%%%%%%%%%%%%%%%%%%%%%%%%%%%%%%%%%%%%%%%%%%%%%%%%%%
\chapter{Introduction}%%%%%%%%%%%%%%%%%%%%%%%%%%%%%%%%%%%%%%%%%%%%%%%%%%
%%%%%%%%%%%%%%%%%%%%%%%%%%%%%%%%%%%%%%%%%%%%%%%%%%%%%%%%%%%%%%%%%%%%%%%%

%%%%%%%%%%%%%%%%%%%%%%%%%%%%%%%%%%%%%%%%%%%%%%%%%%%%%%%%%%%%%%%%%%%%%%%%
\section{Stationary Boltzmann Equation}
%%%%%%%%%%%%%%%%%%%%%%%%%%%%%%%%%%%%%%%%%%%%%%%%%%%%%%%%%%%%%%%%%%%%%%%%

%%%%%%%%%%%%%%%%%%%%%%%%%%%%%%%%%%%%%%%%%%%%%%%%%%%%%%%%%%%%%%%%%%%%%%%%
\subsection{Problem Presentation}
%%%%%%%%%%%%%%%%%%%%%%%%%%%%%%%%%%%%%%%%%%%%%%%%%%%%%%%%%%%%%%%%%%%%%%%%

We consider the stationary Boltzmann equation in a three-dimensional smooth convex domain $\Omega\ni\vx=(x_1,x_2,x_3)$
with velocity $\vv=(v_1,v_2,v_3)\in\r^3$. The density function $\fs^{\e}(\vx,\vv)$ satisfies
\begin{align}\label{large system}
\left\{
\begin{array}{l}
\e\vv\cdot\nx \fs^{\e}=Q[\fs^{\e},\fs^{\e}]\ \ \text{in}\ \
\Omega\times\r^3,\\\rule{0ex}{1.5em} \fs^{\e}(\vx_0,\vv)=P^{\e}
[\fs^{\e}](\vx_0,\vv) \ \ \text{for}\ \ \vx_0\in\p\Omega\ \ \text{and}\ \ \vv\cdot\vn(\vx_0)<0,
\end{array}
\right.
\end{align}
where $\vn(\vx_0)$ is the unit outward normal vector at $\vx_0$. 

The Knudsen number $\e$ characterizes the average distance a particle might travel between two collisions, and we assume $0<\e<<1$. Intuitively, as $\e\rt0$, the collisions occur more and more frequently and the overall behaviors of this particle system get closer and closer to that of the fluids. 

In this paper, we assume that $Q$ is the hard-sphere collision operator (see \cite[Chapter 1]{Glassey1996} and the following subsections), and in the diffusive-reflection boundary condition
\begin{align}
P^{\e}[\fs^{\e}](\vx_0,\vv):=\mb(\vx_0,\vv)\displaystyle\int_{\vuu\cdot\vn(\vx_0)>0}\fs^{\e}(\vx_0,\vuu)\abs{\vuu\cdot\vn(\vx_0)}\ud{\vuu}.
\end{align} 
It describes that the particles are absorbed by the boundary and then re-emitted based on a boundary Maxwellian
\begin{align}
\mb(\vx_0,\vv):=\frac{\rh^{\e}_{\bb}(\vx_0)}{2\pi\Big(\th^{\e}_{\bb}(\vx_0)\Big)^2}\exp\left(-\frac{\abs{\vv-\vu^{\e}_{\bb}(\vx_0)}^2}{2\th^{\e}_{\bb}(\vx_0)}\right),
\end{align}
where density, velocity and temperature $(\rh^{\e}_{\bb},\vu^{\e}_{\bb},\th^{\e}_{\bb})$ is an $\e$-perturbation of the standard Maxwellian
\begin{align}
\m(\vv):=\frac{1}{2\pi}\exp\left(-\frac{\abs{\vv}^2}{2}\right).
\end{align}
We assume that both $\mb$ and $\m$ satisfies the normalization condition
\begin{align}\label{att 04}
\int_{\vv\cdot\vn(\vx_0)>0}\mb(\vx_0,\vv)\abs{\vv\cdot\vn(\vx_0)}\ud{\vv}=\int_{\vv\cdot\vn(\vx_0)>0}\m(\vv)\abs{\vv\cdot\vn(\vx_0)}\ud{\vv}=1.
\end{align}
In addition, we require that the particles are only reflected on $\p\Omega$ without in-flow or out-flow, i.e.
\begin{align}\label{att 04'}
\int_{\r^3}\mb(\vx_0,\vv)\big(\vv\cdot\vn(\vx_0)\big)\ud{\vv}=\int_{\r^3}\m(\vv)\big(\vv\cdot\vn(\vx_0)\big)\ud{\vv}=0.
\end{align}
We also assume that $\rh^{\e}_{\bb}$, $\vu^{\e}_{\bb}$ and $\th^{\e}_{\bb}$ can be expanded into a power series with respect to $\e$,
\begin{align}
\rh^{\e}_{\bb}(\vx_0):=1+\sum_{k=1}^{\infty}\e^k\rh_{\bb,k}(\vx_0),\quad
\vu^{\e}_{\bb}(\vx_0):=0+\sum_{k=1}^{\infty}\e^k\vu_{\bb,k}(\vx_0),\quad
\th^{\e}_{\bb}(\vx_0):=1+\sum_{k=1}^{\infty}\e^k\th_{\bb,k}(\vx_0),
\end{align}
i.e. $\Big(\rh^{\e}_{\bb},\vu^{\e}_{\bb},\th^{\e}_{\bb}\Big)$ is an $\e$-perturbation of $(1,0,1)$. Hence, we may also expand the boundary Maxwellian $\mb$ into power series with respect to $\e$,
\begin{align}\label{expansion assumption}
\mb(\vx_0,\vv):=\m(\vv)+\m^{\frac{1}{2}}(\vv)\left(\sum_{k=1}^{\infty}\e^k\m_{k}(\vx_0,\vv)\right).
\end{align}
In particular, the first-order perturbation has the form
\begin{align}\label{att 06}
\m_1(\vx_0,\vv):=&\m^{\frac{1}{2}}(\vv)\bigg(\rh_{\bb,1}(\vx_0)+\vu_{\bb,1}(\vx_0)\cdot\vv+\th_{\bb,1}(\vx_0)\frac{\abs{\vv}^2-3}{2}\bigg).
\end{align}
We further assume that the perturbation is small, i.e.
\begin{align}\label{smallness assumption}
\abs{\bv\frac{\mb-\m}{\m^{\frac{1}{2}}}}\leq C_0\e,
\end{align}
for any $0\leq\varrho<\dfrac{1}{4}$ and $3<\vth\leq\vth_0$ with some given large $\vth_0$, and constant $C_0>0$ is sufficiently small. Based on \eqref{att 04}, \eqref{att 04'} and \eqref{expansion assumption}, comparing the order of $\e$, we know
\begin{align}\label{boundary compatibility}
\int_{\r^3}\m_k(\vx_0,\vv)\m^{\frac{1}{2}}(\vv)\abs{\vv\cdot\vn(\vx_0)}\ud{\vv}=&0\ \ \text{for}\ \ k\geq1,\\
\int_{\vv\cdot\vn(\vx_0)\lessgtr0}\m_k(\vx_0,\vv)\m^{\frac{1}{2}}(\vv)\abs{\vv\cdot\vn(\vx_0)}\ud{\vv}=&0\ \ \text{for}\ \ k\geq1.\label{boundary compatibility'}
\end{align}
\begin{remark}
%Inserting \eqref{expansion assumption} into \eqref{att 04'} and comparing the order of $\e$, 
In particular for $k=1$, we know $u_{b,1}\cdot\vn=0$. In fluid mechanics, this corresponds to non-penetration boundary condition.
\end{remark}
Note that if $\fs^{\e}$ is a solution to \eqref{large system}, then for any constant $M\in\r$, $\fs^{\e}+M\mb$ is also a solution. To guarantee uniqueness, we require the normalization condition
\begin{align}\label{large normalization}
\iint_{\Omega\times\r^3}\fs^{\e}(\vx,\vv)\ud\vv\ud\vx=\iint_{\Omega\times\r^3}\m(\vv)\ud\vv\ud\vx=\sqrt{2\pi}\abs{\Omega}.
\end{align}
We intend to study the behavior of $\fs^{\e}$ as $\e\rt0$.

%%%%%%%%%%%%%%%%%%%%%%%%%%%%%%%%%%%%%%%%%%%%%%%%%%%%%%%%%%%%%%%%%%%%%%%%
\subsection{Perturbation Equation}
%%%%%%%%%%%%%%%%%%%%%%%%%%%%%%%%%%%%%%%%%%%%%%%%%%%%%%%%%%%%%%%%%%%%%%%%

Considering \eqref{large normalization}, the solution $\fs^{\e}$ to \eqref{large system} can be expressed as a perturbation of the standard Maxwellian
\begin{align}
\fs^{\e}(\vx,\vv)=&\m(\vv)+\m^{\frac{1}{2}}(\vv)f^{\e}(\vx,\vv),
\end{align}
with the normalization condition
\begin{align}\label{small normalization}
\iint_{\Omega\times\r^3}f^{\e}(\vx,\vv)\m^{\frac{1}{2}}(\vv)\ud{\vv}\ud{\vx}=0.
\end{align}
Here $f^{\e}(\vx,\vv)$ satisfies the perturbation equation
\begin{align}\label{small system}
\left\{
\begin{array}{l}
\e\vv\cdot\nx
f^{\e}+\ll[f^{\e}]=\Gamma[f^{\e},f^{\e}]\ \ \text{in}\ \
\Omega\times\r^3,\\\rule{0ex}{1.5em}
f^{\e}(\vx_0,\vv)=\pe[f^{\e}](\vx_0,\vv) \ \ \text{for}\ \ \vx_0\in\p\Omega\ \ \text{and}\ \ \vv\cdot\vn(\vx_0)<0,
\end{array}
\right.
\end{align}
where
\begin{align}\label{att 31}
\ll[f^{\e}]:=-2\m^{-\frac{1}{2}}Q\Big[\m,\m^{\frac{1}{2}}f^{\e}\Big],\quad
\Gamma[f^{\e},f^{\e}]:=&\m^{-\frac{1}{2}}Q\Big[\m^{\frac{1}{2}}f^{\e},\m^{\frac{1}{2}}f^{\e}\Big],
\end{align}
and
\begin{align}\label{att 05}
\\
\pe[f^{\e}](\vx_0,\vv):=\mb(\vx_0,\vv)\m^{-\frac{1}{2}}(\vv)
\displaystyle\int_{\vuu\cdot\vn(\vx_0)>0}\m^{\frac{1}{2}}(\vuu)
f^{\e}(\vx_0,\vuu)\abs{\vuu\cdot\vn(\vx_0)}\ud{\vuu}+\m^{-\frac{1}{2}}(\vv)\bigg(\mb(\vx_0,\vv)-\m(\vv)\bigg).\no
\end{align}
Hence, in order to study $\fs^{\e}$, it suffices to consider $f^{\e}$.

%%%%%%%%%%%%%%%%%%%%%%%%%%%%%%%%%%%%%%%%%%%%%%%%%%%%%%%%%%%%%%%%%%%%%%%%
\subsection{Linearized Boltzmann Operator}
%%%%%%%%%%%%%%%%%%%%%%%%%%%%%%%%%%%%%%%%%%%%%%%%%%%%%%%%%%%%%%%%%%%%%%%%

To clarify, we specify the hard-sphere collision operator $Q$ in \eqref{large system} and \eqref{att 31}
\begin{align}
Q[F,G]:=&\int_{\r^3}\int_{\s^2}q(\vo,\abs{\vuu-\vv})\bigg(F(\vuu_{\ast})G(\vv_{\ast})-F(\vuu)G(\vv)\bigg)\ud{\vo}\ud{\vuu},
\end{align}
with
\begin{align}
\vuu_{\ast}:=\vuu+\vo\bigg((\vv-\vuu)\cdot\vo\bigg),\qquad \vv_{\ast}:=\vv-\vo\bigg((\vv-\vuu)\cdot\vo\bigg),
\end{align}
and the hard-sphere collision kernel
\begin{align}
q(\vo,\abs{\vuu-\vv}):=q_0\vo\cdot(\vv-\vuu),
\end{align}
for a positive constant $q_0$. Based on \cite[Chapter 3]{Glassey1996}, the linearized Boltzmann operator $\ll$ is
\begin{align}\label{att 11}
\ll[f]=&-2\m^{-\frac{1}{2}}Q\big[\m,\m^{\frac{1}{2}}f\big]:=\nu(\vv)f-K[f],
\end{align}
where
\begin{align}
\nu(\vv)=&\int_{\r^3}\int_{\s^2}q(\vo,\abs{\vuu-\vv})\m(\vuu)\ud{\vo}\ud{\vuu}
=\pi^2q_0\Bigg(\bigg(2\abs{\vv}+\frac{1}{\abs{\vv}}\bigg)\int_0^{\abs{\vv}}\ue^{-z^2}\ud{z}+\ue^{-\abs{\vv}^2}\Bigg),\\
K[f](\vv)=&K_2[f](\vv)-K_1[f](\vv)=\int_{\r^3}k(\vuu,\vv)f(\vuu)\ud{\vuu},\\
K_1[f](\vv)=&\m^{\frac{1}{2}}(\vv)\int_{\r^3}\int_{\s^1}q(\vo,\abs{\vuu-\vv})\m^{\frac{1}{2}}(\vuu)f(\vuu)\ud{\vo}\ud{\vuu}=\int_{\r^3}k_1(\vuu,\vv)f(\vuu)\ud{\vuu},\\
K_2[f](\vv)=&\int_{\r^3}\int_{\s^2}q(\vo,\abs{\vuu-\vv})\m^{\frac{1}{2}}(\vuu)\bigg(\m^{\frac{1}{2}}(\vv_{\ast})f(\vuu_{\ast})
+\m^{\frac{1}{2}}(\vuu_{\ast})f(\vv_{\ast})\bigg)\ud{\vo}\ud{\vuu}=\int_{\r^3}k_2(\vuu,\vv)f(\vuu)\ud{\vuu},
\end{align}
for some kernels
\begin{align}
k(\vuu,\vv)&=k_2(\vuu,\vv)-k_1(\vuu,\vv),\\
k_1(\vuu,\vv)&=\pi q_0\abs{\vuu-\vv}\exp\bigg(-\half\abs{\vuu}^2-\half\abs{\vv}^2\bigg),\\
k_2(\vuu,\vv)&=\frac{2\pi q_0}{\abs{\vuu-\vv}}\exp\bigg(-\frac{1}{4}\abs{\vuu-\vv}^2-\frac{1}{4}\frac{(\abs{\vuu}^2-\abs{\vv}^2)^2}{\abs{\vuu-\vv}^2}\bigg).
\end{align}
In particular, $\ll$ is self-adjoint in $L^2(\r^3)$ and the null space $\nk$ is a five-dimensional space spanned by the orthonormal basis
\begin{align}\label{att 32}
\mh\bigg\{1,\vv,\frac{\abs{\vv}^2-3}{2}\bigg\}.
\end{align}
We denote $\nk^{\perp}$ the orthogonal complement of $\nk$ in $L^2(\r^3)$.

%%%%%%%%%%%%%%%%%%%%%%%%%%%%%%%%%%%%%%%%%%%%%%%%%%%%%%%%%%%%%%%%%%%%%%%%
\subsection{Main Theorem}
%%%%%%%%%%%%%%%%%%%%%%%%%%%%%%%%%%%%%%%%%%%%%%%%%%%%%%%%%%%%%%%%%%%%%%%%

Let $\br{\cdot,\cdot}$ be the standard $L^2$ inner product for $\vv\in\r^3$. Define the $L^p$ and $L^{\infty}$ norms in $\r^3$:
\begin{align}
\abs{f(\vx)}_{p}:=\bigg(\int_{\r^3}\abs{f(\vx,\vv)}^p\ud{\vv}\bigg)^{\frac{1}{p}},\quad
\abs{f(\vx)}_{\infty}:=\esssup_{\vv\in\r^3}\abs{f(\vx,\vv)}.
\end{align}
Furthermore, we define the $L^p$ and $L^{\infty}$ norms in $\Omega\times\r^3$:
\begin{align}
\nm{f}_{p}:=\bigg(\iint_{\Omega\times\r^3}\abs{f(\vx,\vv)}^p\ud{\vv}\ud{\vx}\bigg)^{\frac{1}{p}},\quad
\nm{f}_{\infty}:=\esssup_{(\vx,\vv)\in\Omega\times\r^3}\abs{f(\vx,\vv)}.
\end{align}
Define the weighted $L^{2}$ norms:
\begin{align}
\unm{f(\vx)}:=\tnm{\nu^{\frac{1}{2}}f(\vx)},\quad\unnm{f}:=\tnnm{\nu^{\frac{1}{2}}f}.
\end{align}
Denote the Japanese bracket:
\begin{align}
\br{\vv}=\left(1+\abs{\vv}^2\right)^{\frac{1}{2}}
\end{align}
Define the weighted $L^{\infty}$ norm for $\vrh,\vth\geq0$:
\begin{align}
\lnmv{f(\vx)}=&\esssup_{\vv\in\r^3}\bigg(\bv\abs{f(\vx,\vv)}\bigg),\quad\lnnmv{f}=\esssup_{(\vx,\vv)\in\Omega\times\r^3}\bigg(\bv\abs{f(\vx,\vv)}\bigg).
\end{align}
In \eqref{large system} and \eqref{small system}, based on the flow direction, we can divide the boundary $\gamma:=\{(\vx_0,\vv):\ \vx_0\in\p\Omega,\vv\in\r^3\}$ into the in-flow boundary $\gamma_-$, the out-flow boundary $\gamma_+$, and the grazing set $\gamma_0$:
\begin{align}
\gamma_{-}:=&\{(\vx_0,\vv):\ \vx_0\in\p\Omega,\ \vv\cdot\vn(\vx_0)<0\},\\
\gamma_{+}:=&\{(\vx_0,\vv):\ \vx_0\in\p\Omega,\ \vv\cdot\vn(\vx_0)>0\},\\
\gamma_{0}:=&\{(\vx_0,\vv):\ \vx_0\in\p\Omega,\ \vv\cdot\vn(\vx_0)=0\}.
\end{align}
It is easy to see $\gamma=\gamma_+\cup\gamma_-\cup\gamma_0$. In particular, the boundary condition is only given on $\gamma_{-}$.

Define $\ud{\gamma}=\abs{\vv\cdot\vn}\ud{\varpi}\ud{\vv}$ on $\gamma$ for the surface measure $\varpi$. Define the $L^p$ and
$L^{\infty}$ norms on the boundary:
\begin{align}
&\nm{f}_{\gamma,p}=\bigg(\iint_{\gamma}\abs{f(\vx,\vv)}^p\ud{\gamma}\bigg)^{\frac{1}{p}},\quad\nm{f}_{\gamma,\infty}=\esssup_{(\vx,\vv)\in\gamma}\abs{f(\vx,\vv)}.
\end{align}
Also, define the weighted $L^{\infty}$ norm for $\vrh,\vth\geq0$:
\begin{align}
\nm{f}_{\gamma,\infty,\vrh,\vth}=&\esssup_{(\vx,\vv)\in\gamma}\bigg(\bv\abs{f(\vx,\vv)}\bigg).
\end{align}
The similar notation also applies to $\gamma_{\pm}$.

\begin{theorem}\label{main}
For given $\mb$ satisfying \eqref{att 04}, \eqref{att 04'}, \eqref{expansion assumption} and \eqref{smallness assumption}, there exists a unique
solution $\fs^{\e}=\mb+\m^{\frac{1}{2}}f^{\e}$ to the stationary Boltzmann equation \eqref{large system} with \eqref{large normalization}. In particular, $f^{\e}$ satisfies the equation \eqref{small system} with \eqref{small normalization}, and fulfils that for $0\leq\varrho<\dfrac{1}{4}$ and $3<\vth\leq\vth_0$
\begin{align}
\lnnmv{f^{\e}-\e\f}\ls_{\d} \e^{\frac{4}{3}-\d},
\end{align}
for any $0<\d<<1$. Here
\begin{align}
\f=&\m^{\frac{1}{2}}\left(\rh+\vu\cdot\vv+\th\frac{\abs{\vv}^2-3}{2}\right),
\end{align}
in which $(\rh,\vu,\th)$ satisfies the steady Navier-Stokes-Fourier system
\begin{align}\label{interior 0}
\left\{
\begin{array}{l}
%\nx(\rh +\th )=0,\\\rule{0ex}{1.5em}
\vu\cdot\nx\vu -\gamma_1\dx\vu +\nx p =0,\\\rule{0ex}{1.5em}
\nx\cdot\vu =0,\\\rule{0ex}{1.5em}
\vu \cdot\nx\th -\gamma_2\dx\th =0,
\end{array}
\right.
\end{align}
with boundary data
\begin{align}
\rh (\vx_0)=\rh_{\bb,1}(\vx_0)+M(\vx_0),\quad
\vu (\vx_0)=\vu_{\bb,1}(\vx_0),\quad
\th (\vx_0)=\th_{\bb,1}(\vx_0).
\end{align}
Here $\gamma_1>0$ and $\gamma_2>0$ are constants. $M(\vx_0)$ is a function chosen such that the Boussinesq relation
\begin{align}
\nx(\rh +\th )=0,
\end{align}
and the normalization condition \eqref{small normalization} hold.
\end{theorem}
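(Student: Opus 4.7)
The plan is to use a Hilbert-type asymptotic expansion: write
\[
f^\e = \sum_{k=1}^{N}\e^k\bigl(f_k + f_k^{bl}\bigr) + \e^{a} R^\e,
\]
where $f_k$ is the interior term at order $k$, $f_k^{bl}$ is a boundary-layer correction reconciling the interior ansatz with the diffusive-reflection operator $\pe$, and $R^\e$ is the remainder. The leading interior term is $f_1=F$, and imposing the Fredholm solvability condition for $f_2$ (by projecting the $\e^2$-equation onto $\nk$) forces the fluid moments $(\rh,\vu,\th)$ to satisfy the steady Navier-Stokes-Fourier system \eqref{interior 0}. Higher-order $f_k$ are determined by linearized macroscopic systems, with microscopic parts recovered from $\ll^{-1}$ acting on source terms built from lower orders and from the Hilbert expansion of $\Gamma$. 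The constant $M(\vx_0)$ and the Boussinesq relation appear at the step of fixing the zero-mode ambiguity of $\rh$ jointly with the normalization \eqref{small normalization}.

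First I would build the interior expansion order by order, splitting each $f_k$ into $\pk f_k\in\nk$ and $(I-\pk)f_k\in\nk^\perp$. The boundary data for the macroscopic systems are then pinned down by matching the trace of $f_k+f_k^{bl}$ against the expansion of $\mb$ in \eqref{expansion assumption} and against $\pe$. The boundary layer $f_k^{bl}$ is constructed via a Milne problem with geometric correction: in local coordinates near $\p\Omega$, rescaling the normal variable as $\eta=\e^{-1}\mathrm{dist}(\vx,\p\Omega)$ yields an ODE in $\eta$ whose drift contains an additional curvature-dependent term. This correction is crucial in 3D because without it the Milne profile decays too slowly in $\eta$ to support an $L^\infty$ remainder estimate; with it, the layer decays exponentially and its trace at $\eta=0$ is regular enough to close the macroscopic boundary conditions of Theorem \ref{main}.

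Next I would estimate $R^\e$, which satisfies an equation of the form
\[
\e\vv\cdot\nx R^\e + \ll[R^\e] = \e^{b}\mathscr{S}^\e + \e^{c}\Gamma[R^\e,R^\e] + \e^{d}\Gamma[\text{lower orders},R^\e], \qquad R^\e\big|_{\gamma_-} = \pe[R^\e] + \e^{e}\mathscr{B}^\e,
\]
with small source and boundary data produced by the truncation of the Hilbert expansion. To close this I use the $L^2$--$L^6$--$L^\infty$ framework. The $L^2$ bound follows from coercivity of $\ll$ on $\nk^\perp$, an energy identity that absorbs the boundary contribution via contractivity of $\pe$ in $L^2_\gamma$, and control of $\pk R^\e$ through the macroscopic equations it satisfies. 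The $L^6$ bound is then obtained by applying Sobolev embedding in $\Omega\subset\r^3$ to these macroscopic equations, upgrading integrability before the $L^\infty$ step. Finally the weighted $L^\infty$ bound is obtained by iterating the mild formulation along stochastic characteristics of the diffusive reflection, splitting the kernel $k(\vuu,\vv)$ and estimating its contributions through the $L^6$ and $L^2$ norms via H\"older's inequality. Balancing powers of $\e$ across these three steps yields the rate $\e^{4/3-\d}$, and the same a priori bounds give existence and uniqueness of $f^\e$ by a contraction argument on $R^\e$.

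The main obstacle is the 3D boundary-layer analysis with geometric correction. One must show that the corrected Milne problem is uniquely solvable with exponential decay in the rescaled normal variable while controlling tangential regularity in the presence of the curvature term, and that the cross-terms arising under the weight $\bv$ neither destroy the normal decay nor the gain-of-power in $\e$. The quality of these boundary-layer estimates is what ultimately dictates both the maximal usable expansion order $N$ and the exponent $\tfrac{4}{3}-\d$ in the final convergence rate.
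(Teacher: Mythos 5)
Your proposal follows essentially the same route as the paper: a truncated Hilbert expansion with interior terms solving the steady Navier--Stokes--Fourier system, boundary layers built from the $\e$-Milne problem with geometric correction in the rescaled normal variable (with exponential decay and weighted regularity as the key technical input), and a remainder closed by an $L^2$--$L^{2m}$($\approx L^6$)--$L^\infty$ scheme using energy/test-function estimates for the macroscopic part and the mild formulation along stochastic reflection cycles for the $L^\infty$ bound, with the power counting giving $\e^{4/3-\d}$. This matches the paper's proof structure, so no substantive gap to report.
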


\begin{remark}
The Boussinesq relation implies that $\rh(\vx) +\th(\vx)=C$ for some constant $C$ in the whole domain $\Omega$. However, the boundary data $\rh_{\bb,1}(\vx_0)$ and $\th_{\bb,1}(\vx_0)$ do not necessarily have the same sum at different $\vx_0$. Hence, $M(\vx_0)$ is designed to fill this gap. Note that we are still free to choose the constant $C$ (i.e. $M$ still has one dimension of freedom) and it is eventually determined by the normalization condition \eqref{small normalization}.
\end{remark}

\begin{remark}
From the above theorem, we know $f^{\e}\sim \e\f$ is of order $O(\e)$. The difference $f^{\e}-\e\f=o(\e)$ as $\e\rt0$.
\end{remark}

\begin{remark}
The case $\rh_{\bb,1}(\vx_0)=0$, $\vu_{\bb,1}(\vx_0)=0$ and $\th_{\bb,1}(\vx_0)\neq0$ is called the non-isothermal model, which represents a system that only has heat transfer through the boundary but has no particle exchange and no work done between the environment and the system. Based on the above theorem, the hydrodynamic limit is a steady Navier-Stokes-Fourier system with non-slip boundary condition. This provides a rigorous derivation of this important fluid model.
\end{remark}

\newpage

%%%%%%%%%%%%%%%%%%%%%%%%%%%%%%%%%%%%%%%%%%%%%%%%%%%%%%%%%%%%%%%%%%%%%%%%
\section{Evolutionary Boltzmann Equation}
%%%%%%%%%%%%%%%%%%%%%%%%%%%%%%%%%%%%%%%%%%%%%%%%%%%%%%%%%%%%%%%%%%%%%%%%

%%%%%%%%%%%%%%%%%%%%%%%%%%%%%%%%%%%%%%%%%%%%%%%%%%%%%%%%%%%%%%%%%%%%%%%%
\subsection{Problem Presentation}\label{ott 02.}
%%%%%%%%%%%%%%%%%%%%%%%%%%%%%%%%%%%%%%%%%%%%%%%%%%%%%%%%%%%%%%%%%%%%%%%%

We consider the evolutionary Boltzmann equation in a three-dimensional smooth convex domain $\Omega\ni\vx=(x_1,x_2,x_3)$
with velocity $\vv=(v_1,v_2,v_3)\in\r^3$. The density function $\fs^{\e}(t,\vx,\vv)$ satisfies
\begin{align}\label{large system.}
\left\{
\begin{array}{l}
\e^2\dt\fs^{\e}+\e\vv\cdot\nx \fs^{\e}=Q[\fs^{\e},\fs^{\e}]\ \ \text{in}\ \
\rp\times\Omega\times\r^3,\\\rule{0ex}{1.5em}
\fs^{\e}(0,\vx,\vv)=\fs^{\e}_0(\vx,\vv)\ \ \text{in}\ \
\Omega\times\r^3,\\\rule{0ex}{1.5em}
\fs^{\e}(t,\vx_0,\vv)=P^{\e}[\fs^{\e}](t,\vx_0,\vv) \ \ \text{for}\ \ \vx_0\in\p\Omega\ \ \text{and}\ \ \vv\cdot\vn(\vx_0)<0,
\end{array}
\right.
\end{align}
where $\vn(\vx_0)$ is the unit outward normal vector at $\vx_0$, the Knudsen number $\e$ satisfies $0<\e<<1$, the diffusive-reflection boundary
\begin{align}
P^{\e}[\fs^{\e}](t,\vx_0,\vv):=\mb(t,\vx_0,\vv)\displaystyle\int_{\vuu\cdot\vn(\vx_0)>0}
\fs^{\e}(t,\vx_0,\vuu)\abs{\vuu\cdot\vn(\vx_0)}\ud{\vuu}.
\end{align}
\ \\
\textbf{Boundary Assumption:}\\
The boundary Maxwellian
\begin{align}\label{ott 01.}
\mb(t,\vx_0,\vv):=\frac{\rh^{\e}_{\bb}(t,\vx_0)}{2\pi\Big(\th^{\e}_{\bb}(t,\vx_0)\Big)^2}
\exp\left(-\frac{\abs{\vv-\vu^{\e}_{\bb}(t,\vx_0)}^2}{2\th^{\e}_{\bb}(t,\vx_0)}\right),
\end{align}
is an $\e$-perturbation of the standard Maxwellian
\begin{align}
\m(\vv)=\frac{1}{2\pi}\exp\left(-\frac{\abs{\vv}^2}{2}\right).
\end{align}
We assume that both $\mb$ and $\m$ satisfies the normalization condition
\begin{align}\label{itt 01}
\int_{\vv\cdot\vn(\vx_0)>0}\mb(t,\vx_0,\vv)\abs{\vv\cdot\vn(\vx_0)}\ud{\vv}=\int_{\vv\cdot\vn(\vx_0)>0}\m(\vv)\abs{\vv\cdot\vn(\vx_0)}\ud{\vv}=1.
\end{align}
In addition, we require that the particles are only reflected on $\p\Omega$ without in-flow or out-flow, i.e.
\begin{align}\label{itt 01'}
\int_{\r^3}\mb(t,\vx_0,\vv)\big(\vv\cdot\vn(\vx_0)\big)\ud{\vv}=\int_{\r^3}\m(\vv)\big(\vv\cdot\vn(\vx_0)\big)\ud{\vv}=0.
\end{align}
We also assume that $\rh^{\e}_{\bb}$, $\vu^{\e}_{\bb}$ and $\th^{\e}_{\bb}$ can be expanded into a power series with respect to $\e$,
\begin{align}
\\
\rh^{\e}_{\bb}(t,\vx_0):=1+\sum_{k=1}^{\infty}\e^k\rh_{\bb,k}(t,\vx_0),\quad
\vu^{\e}_{\bb}(t,\vx_0):=0+\sum_{k=1}^{\infty}\e^k\vu_{\bb,k}(t,\vx_0),\quad
\th^{\e}_{\bb}(t,\vx_0):=1+\sum_{k=1}^{\infty}\e^k\th_{\bb,k}(t,\vx_0),\no
\end{align}
i.e. $\Big(\rh^{\e}_{\bb},\vu^{\e}_{\bb},\th^{\e}_{\bb}\Big)$ is an $\e$-perturbation of $(1,0,1)$. Hence, we may also expand the boundary Maxwellian $\mb$ into power series with respect to $\e$,
\begin{align}\label{expansion assumption.}
\mb(t,\vx_0,\vv)=\m(\vv)+\m^{\frac{1}{2}}(\vv)\left(\sum_{k=1}^{\infty}\e^k\m_{k}(t,\vx_0,\vv)\right).
\end{align}
In particular, we have
\begin{align}\label{att 06}
\m_1(t,\vx_0,\vv):=&\m^{\frac{1}{2}}(\vv)\bigg(\rh_{\bb,1}(t,\vx_0)+\vu_{\bb,1}(t,\vx_0)\cdot\vv+\th_{\bb,1}(t,\vx_0)\frac{\abs{\vv}^2-3}{2}\bigg).
\end{align}
We further assume that
\begin{align}\label{smallness assumption.}
\abs{\ue^{K_0t}\bv\frac{\mb-\m}{\m^{\frac{1}{2}}}}+\abs{\ue^{K_0t}\bv\frac{\dt(\mb-\m)}{\mh}}\leq C_0\e,
\end{align}
for any $0\leq\varrho<\dfrac{1}{4}$ and $3<\vth\leq\vth_0$ with some given large $\vth_0$. Here $C_0,K_0>0$ are constants and $C_0>0$ is sufficiently small. This indicates that the boundary Maxwellian $\mb$ is very close to the global Maxwellian $\m$ and its time derivative is also very small.

Based on \eqref{itt 01}, \eqref{itt 01'} and \eqref{expansion assumption.}, we know
\begin{align}\label{boundary compatibility.}
\int_{\r^3}\m_k(t,\vx_0,\vv)\m^{\frac{1}{2}}(\vv)\abs{\vv\cdot\vn(\vx_0)}\ud{\vv}=&0\ \ \text{for}\ \ k\geq1,\\
\int_{\vv\cdot\vn(\vx_0)\lessgtr0}\m_k(t,\vx_0,\vv)\m^{\frac{1}{2}}(\vv)\abs{\vv\cdot\vn(\vx_0)}\ud{\vv}=&0\ \ \text{for}\ \ k\geq1.\no
\end{align}
\begin{remark}
In particular for $k=1$, we know $u_{b,1}\cdot\vn=0$. In fluid mechanics, this corresponds to non-penetration boundary condition.
\end{remark}
\ \\
\textbf{Initial Assumption:}\\
We assume that the initial data $\fs_0$ is a perturbation of the standard Maxwellian
\begin{align}
\fs^{\e}_0(\vx,\vv):=\m(\vv)+\mh(\vv)f_0(\vx,\vv):=\m(\vv)+\mh(\vv)\sum_{k=1}^{\infty}\e^kf_{0,k}(\vx,\vv),
\end{align}
satisfying
\begin{align}\label{itt 02}
\iint_{\Omega\times\r^3}\mh(\vv)f_0(\vx,\vv)\ud\vv\ud\vx=0,
\end{align}
which means that
\begin{align}\label{initial compatibility.}
\iint_{\Omega\times\r^3}\mh(\vv)f_{0,k}(\vx,\vv)\ud\vv\ud\vx=0\ \ \text{for}\ \ k\geq1.
\end{align}
In particular, we assume that the initial data $f_{0,1}\in\nk$, i.e.
\begin{align}
f_{0,1}(\vx,\vv):=\mh(\vv)\bigg(\rh_{0,1}(\vx)+\vu_{0,1}(\vx)\cdot\vv+\th_{0,1}(\vx)\frac{\abs{\vv}^2-3}{2}\bigg).
\end{align}
for some smooth function $(\rh_{0,1},\vu_{0,1},\th_{0,1})$ satisfying the Boussinesq relation $\rh_{0,1}+\th_{0,1}=\text{constant}$.
\begin{remark}
The assumption on $f_{0,1}$ is designed to simplify the discussion of initial layer and highlight the boundary effects. For example, if $\fs_0^{\e}$ is a local Maxwellian like $\mb$ in \eqref{ott 01.}, then this requirement is naturally verified.
\end{remark}
Also, we assume the smallness of initial perturbation
\begin{align}\label{smallness assumption..}
\abs{\bv f_{0}}\leq C_0\e,
\end{align}
for any $0\leq\varrho<\dfrac{1}{4}$ and $3<\vth\leq\vth_0$. Here the constant $C_0>0$ is sufficiently small.\\
\ \\
\textbf{Compatibility Assumption:}\\
Also, the initial and boundary data satisfy the compatibility conditions at $t=0$ and $\vx_0\in\p\Omega$
\begin{align}\label{compatibility condition.}
&\m_k(0,\vx_0,\vv)=0,\quad\dt\m_k(0,\vx_0,\vv)=0\ \ \text{for}\ \ k\geq1,\\
&f_{0,k}(\vx_0,\vv)=\rh_{0,k}(\vx_0)\mh,\quad\nx f_{0,k}(\vx_0,\vv)=0,\quad\nx^2 f_{0,k}(\vx_0,\vv)=0\ \ \text{for}\ \ k\geq1.\no
\end{align}
\begin{remark}
Roughly speaking, the compatibility conditions requires that $\mb\sim\m$ and $\fs^{\e}_0\sim C\mu$ at $(0,\vx_0,\vv)$, i.e. the initial data and boundary data do not have severe variations at the intersection point. They are designed to simplify the interaction of initial layer and boundary layer. %This can help simplify the later analysis without trivializing the problem. Usually, it is not difficult to satisfy them.
\end{remark}
%Here $\nx^sf_{0,k}=0$ actually requires for the kernel part $(\rh_{0,k},\vu_{0,k},\th_{0,k})$ the gradient is zero.

We may directly check that the solution $\fs^{\e}$ satisfies the mass conservation
\begin{align}\label{large normalization.}
\iint_{\Omega\times\r^3}\fs^{\e}(t,\vx,\vv)\ud\vv\ud\vx=\iint_{\Omega\times\r^3}\fs_0(\vx,\vv)\ud\vv\ud\vx=\iint_{\Omega\times\r^3}\m(\vv)\ud\vv\ud\vx=\sqrt{2\pi}\abs{\Omega}.
\end{align}
We intend to study the behavior of $\fs^{\e}$ as $\e\rt0$.

%%%%%%%%%%%%%%%%%%%%%%%%%%%%%%%%%%%%%%%%%%%%%%%%%%%%%%%%%%%%%%%%%%%%%%%%
\subsection{Linearization}
%%%%%%%%%%%%%%%%%%%%%%%%%%%%%%%%%%%%%%%%%%%%%%%%%%%%%%%%%%%%%%%%%%%%%%%%

We rewrite the solution $\fs^{\e}$ as a perturbation of the standard Maxwellian
\begin{align}
\fs^{\e}(t,\vx,\vv)=&\m(\vv)+\m^{\frac{1}{2}}(\vv)f^{\e}(t,\vx,\vv).
\end{align}
\eqref{large normalization.} implies that $f^{\e}$ satisfies the conservation law
\begin{align}\label{small normalization.}
\iint_{\Omega\times\r^3}f^{\e}(t,\vx,\vv)\m^{\frac{1}{2}}(\vv)\ud{\vv}\ud{\vx}=0,
\end{align}
and the equation
\begin{align}\label{small system.}
\left\{
\begin{array}{l}
\e^2\dt f^{\e}+\e\vv\cdot\nx f^{\e}+\ll[f^{\e}]=\Gamma[f^{\e},f^{\e}]\ \ \text{in}\ \ \rp\times\Omega\times\r^3,\\\rule{0ex}{1.5em}
f^{\e}(0,\vx,\vv)=f_0(\vx,\vv)\ \ \text{in}\ \ \Omega\times\r^3,\\\rule{0ex}{1.5em}
f^{\e}(t,\vx_0,\vv)=\pe[f^{\e}](t,\vx_0,\vv) \ \ \text{for}\ \ t\in\rp,\ \ \vx_0\in\p\Omega\ \ \text{and}\ \ \vv\cdot\vn(\vx_0)<0,
\end{array}
\right.
\end{align}
where
\begin{align}\label{att 31.}
\ll[f^{\e}]:=-2\m^{-\frac{1}{2}}Q\Big[\m,\m^{\frac{1}{2}}f^{\e}\Big],\quad
\Gamma[f^{\e},f^{\e}]:=&\m^{-\frac{1}{2}}Q\Big[\m^{\frac{1}{2}}f^{\e},\m^{\frac{1}{2}}f^{\e}\Big],
\end{align}
and
\begin{align}\label{att 05.}
\\
\pe[f^{\e}](t,\vx_0,\vv):=\mb(t,\vx_0,\vv)\m^{-\frac{1}{2}}(\vv)
\displaystyle\int_{\vuu\cdot\vn(\vx_0)>0}\m^{\frac{1}{2}}(\vuu)
f^{\e}(t,\vx_0,\vuu)\abs{\vuu\cdot\vn(\vx_0)}\ud{\vuu}+\m^{-\frac{1}{2}}(\vv)\bigg(\mb(t,\vx_0,\vv)-\m(\vv)\bigg).\no
\end{align}
%In particular, by multiplying $\mh$ in \eqref{small system.} and integrate over $[0,t]\times\Omega\times\r^3$, we obtain
%\begin{align}\label{small normalization.}
%\iint_{\Omega\times\r^3}f(t,\vx,\vv)\ud\vx\ud\vv=\iint_{\Omega\times\r^3}f_0(\vx,\vv)\ud\vx\ud\vv=0.
%\end{align}
Hence, in order to study $\fs^{\e}$, it suffices to consider $f^{\e}$.

%%%%%%%%%%%%%%%%%%%%%%%%%%%%%%%%%%%%%%%%%%%%%%%%%%%%%%%%%%%%%%%%%%%%%%%%
\subsection{Linearized Boltzmann Operator}
%%%%%%%%%%%%%%%%%%%%%%%%%%%%%%%%%%%%%%%%%%%%%%%%%%%%%%%%%%%%%%%%%%%%%%%%

To clarify, we specify the hard-sphere collision operator $Q$ in \eqref{large system.} and \eqref{small system.}
\begin{align}
Q[F,G]:=&\int_{\r^3}\int_{\s^2}q(\vo,\abs{\vuu-\vv})\bigg(F(\vuu_{\ast})G(\vv_{\ast})-F(\vuu)G(\vv)\bigg)\ud{\vo}\ud{\vuu},
\end{align}
with
\begin{align}
\vuu_{\ast}:=\vuu+\vo\bigg((\vv-\vuu)\cdot\vo\bigg),\qquad \vv_{\ast}:=\vv-\vo\bigg((\vv-\vuu)\cdot\vo\bigg),
\end{align}
and the hard-sphere collision kernel
\begin{align}
q(\vo,\abs{\vuu-\vv}):=q_0\vo\cdot(\vv-\vuu),
\end{align}
for a positive constant $q_0$.

Based on \cite[Chapter 3]{Glassey1996}, the linearized Boltzmann operator $\ll$ is
\begin{align}\label{att 11}
\ll[f]=&-2\m^{-\frac{1}{2}}Q\big[\m,\m^{\frac{1}{2}}f\big]:=\nu(\vv)f-K[f],
\end{align}
where
\begin{align}
\nu(\vv)=&\int_{\r^3}\int_{\s^2}q(\vo,\abs{\vuu-\vv})\m(\vuu)\ud{\vo}\ud{\vuu}
=\pi^2q_0\Bigg(\bigg(2\abs{\vv}+\frac{1}{\abs{\vv}}\bigg)\int_0^{\abs{\vv}}\ue^{-z^2}\ud{z}+\ue^{-\abs{\vv}^2}\Bigg),\\
K[f](\vv)=&K_2[f](\vv)-K_1[f](\vv)=\int_{\r^3}k(\vuu,\vv)f(\vuu)\ud{\vuu},\\
K_1[f](\vv)=&\m^{\frac{1}{2}}(\vv)\int_{\r^3}\int_{\s^1}q(\vo,\abs{\vuu-\vv})\m^{\frac{1}{2}}(\vuu)f(\vuu)\ud{\vo}\ud{\vuu}=\int_{\r^3}k_1(\vuu,\vv)f(\vuu)\ud{\vuu},\\
K_2[f](\vv)=&\int_{\r^3}\int_{\s^2}q(\vo,\abs{\vuu-\vv})\m^{\frac{1}{2}}(\vuu)\bigg(\m^{\frac{1}{2}}(\vv_{\ast})f(\vuu_{\ast})
+\m^{\frac{1}{2}}(\vuu_{\ast})f(\vv_{\ast})\bigg)\ud{\vo}\ud{\vuu}=\int_{\r^3}k_2(\vuu,\vv)f(\vuu)\ud{\vuu},
\end{align}
for some kernels
\begin{align}
k(\vuu,\vv)&=k_2(\vuu,\vv)-k_1(\vuu,\vv),\\
k_1(\vuu,\vv)&=\pi q_0\abs{\vuu-\vv}\exp\bigg(-\half\abs{\vuu}^2-\half\abs{\vv}^2\bigg),\\
k_2(\vuu,\vv)&=\frac{2\pi q_0}{\abs{\vuu-\vv}}\exp\bigg(-\frac{1}{4}\abs{\vuu-\vv}^2-\frac{1}{4}\frac{(\abs{\vuu}^2-\abs{\vv}^2)^2}{\abs{\vuu-\vv}^2}\bigg).
\end{align}
In particular, $\ll$ is self-adjoint in $L^2(\r^3)$ and the null space $\nk$ is a five-dimensional space spanned by the orthonormal basis
\begin{align}\label{att 32}
\mh\bigg\{1,\vv,\frac{\abs{\vv}^2-3}{2}\bigg\}.
\end{align}
We denote $\nk^{\perp}$ the orthogonal complement of $\nk$ in $L^2(\r^3)$.

%%%%%%%%%%%%%%%%%%%%%%%%%%%%%%%%%%%%%%%%%%%%%%%%%%%%%%%%%%%%%%%%%%%%%%%%
\subsection{Main Theorem}
%%%%%%%%%%%%%%%%%%%%%%%%%%%%%%%%%%%%%%%%%%%%%%%%%%%%%%%%%%%%%%%%%%%%%%%%

Let $\br{\cdot,\cdot}$ be the standard $L^2$ inner product for $\vv\in\r^3$. Define the $L^p$ and $L^{\infty}$ norms in $\r^3$:
\begin{align}
\abs{f(t,\vx)}_{p}:=\bigg(\int_{\r^3}\abs{f(t,\vx,\vv)}^p\ud{\vv}\bigg)^{\frac{1}{p}},\quad
\abs{f(t,\vx)}_{\infty}:=\esssup_{(\vv)\in\r^3}\abs{f(t,\vx,\vv)}.
\end{align}
Furthermore, we define the $L^p$ and $L^{\infty}$ norms in $\Omega\times\r^3$:
\begin{align}
\nm{f(t)}_{p}:=\bigg(\iint_{\Omega\times\r^3}\abs{f(t,\vx,\vv)}^p\ud{\vv}\ud{\vx}\bigg)^{\frac{1}{p}},\quad
\nm{f(t)}_{\infty}:=\esssup_{(\vx,\vv)\in\Omega\times\r^3}\abs{f(t,\vx,\vv)}.
\end{align}
Moreover, we define the $L^p$ and $L^{\infty}$ norms in $\rp\times\Omega\times\r^3$:
\begin{align}
\nnm{f}_{p}:=\bigg(\int_{\rp}\iint_{\Omega\times\r^3}\abs{f(\vx,\vv)}^p\ud{\vv}\ud{\vx}\bigg)^{\frac{1}{p}},\quad
\nnm{f}_{\infty}:=\esssup_{(t,\vx,\vv)\in\rp\times\Omega\times\r^3}\abs{f(t,\vx,\vv)}.
\end{align}
Define the weighted $L^{2}$ norms:
\begin{align}
\unm{f(t,\vx)}:=\tnm{\nu^{\frac{1}{2}}f(t,\vx)},\quad\unnm{f(t)}:=\tnnm{\nu^{\frac{1}{2}}f(t)},\quad\unnmt{f}:=\tnnmt{\nu^{\frac{1}{2}}f}.
\end{align}
Denote the Japanese bracket:
\begin{align}
\br{\vv}=\left(1+\abs{\vv}^2\right)^{\frac{1}{2}}
\end{align}
Define the weighted $L^{\infty}$ norm for $\vrh,\vth\geq0$:
\begin{align}
&\lnmv{f(t,\vx)}=\esssup_{\vv\in\r^3}\bigg(\bv\abs{f(t,\vx,\vv)}\bigg),\quad\\
&\lnnmv{f(t)}=\esssup_{(\vx,\vv)\in\Omega\times\r^3}\bigg(\bv\abs{f(t,\vx,\vv)}\bigg),\no\\
&\lnnmvt{f}=\esssup_{(t,\vx,\vv)\in\rp\times\Omega\times\r^3}\bigg(\bv\abs{f(t,\vx,\vv)}\bigg).\no
\end{align}
In \eqref{large system.} and \eqref{small system.}, based on the flow direction, we can divide the boundary $\gamma:=\{(\vx_0,\vv):\ \vx_0\in\p\Omega,\vv\in\r^3\}$ into the in-flow boundary $\gamma_-$, the out-flow boundary $\gamma_+$, and the grazing set $\gamma_0$:
\begin{align}
\gamma_{-}:=&\{(\vx_0,\vv):\ \vx_0\in\p\Omega,\ \vv\cdot\vn(\vx_0)<0\},\\
\gamma_{+}:=&\{(\vx_0,\vv):\ \vx_0\in\p\Omega,\ \vv\cdot\vn(\vx_0)>0\},\\
\gamma_{0}:=&\{(\vx_0,\vv):\ \vx_0\in\p\Omega,\ \vv\cdot\vn(\vx_0)=0\}.
\end{align}
It is easy to see $\gamma=\gamma_+\cup\gamma_-\cup\gamma_0$. In particular, the boundary condition is only given on $\rp\times\gamma_{-}$.

Define $\ud{\gamma}=\abs{\vv\cdot\vn}\ud{\varpi}\ud{\vv}$ on $\gamma$ for the surface measure $\varpi$ the surface measure. Define the $L^p$ and
$L^{\infty}$ norms on the boundary:
\begin{align}
&\nm{f(t)}_{\gamma,p}=\bigg(\iint_{\gamma}\abs{f(t,\vx,\vv)}^p\ud{\gamma}\bigg)^{\frac{1}{p}},\quad\nm{f(t)}_{\gamma,\infty}=\esssup_{(\vx,\vv)\in\gamma}\abs{f(t,\vx,\vv)}.
\end{align}
Also, Define the $L^p$ and $L^{\infty}$ norms on the boundary with time:
\begin{align}
&\nnm{f}_{\gamma,p}=\bigg(\int_{\rp}\iint_{\gamma}\abs{f(t,\vx,\vv)}^p\ud{\gamma}\bigg)^{\frac{1}{p}},
\quad\nnm{f}_{\gamma,\infty}=\esssup_{(t,\vx,\vv)\in\rp\times\gamma}\abs{f(\vx,\vv)}.
\end{align}
Also, define the weighted $L^{\infty}$ norm for $\vrh,\vth\geq0$:
\begin{align}
&\nm{f(t)}_{\gamma,\infty,\vrh,\vth}=\esssup_{(\vx,\vv)\in\gamma}\bigg(\bv\abs{f(t,\vx,\vv)}\bigg),\\\quad &\nnm{f}_{\gamma,\infty,\vrh,\vth}=\esssup_{(t,\vx,\vv)\in\rp\times\gamma}\bigg(\bv\abs{f(t,\vx,\vv)}\bigg).\no
\end{align}
The similar notation also applies to $\gamma_{\pm}$. In all above notation, we can replace $\rp$ by $[0,t]$ or even $[s,t]$, and it can be understood from the context without confusion.

\begin{theorem}\label{main.}
% For given $\mb$ satisfying \eqref{expansion assumption.} and \eqref{smallness assumption.} and $f_0$ satisfying \eqref{smallness assumption..} and \eqref{compatibility condition.}, 
For given $\mb$ and $\fs_0^{\e}$ satisfying the assumptions in Section \ref{ott 02.},
there exists a unique
solution $\fs^{\e}=\m+\m^{\frac{1}{2}}f^{\e}$ to the evolutionary Boltzmann equation \eqref{large system.}. In particular, $f^{\e}$ satisfies the equation \eqref{small system.} with \eqref{small normalization.}, and fulfils that for $0\leq\varrho<\dfrac{1}{4}$ and $3<\vth\leq\vth_0$, there exists $K>0$, such that
\begin{align}
\lnnmvt{\ue^{Kt}\Big(f^{\e}-\e\f\Big)}\ls_{\d} \e^{\frac{4}{3}-\d},
\end{align}
for any $0<\d<<1$, where
\begin{align}
\f=&\m^{\frac{1}{2}}\left(\rh+\vu\cdot\vv+\th\frac{\abs{\vv}^2-3}{2}\right),
\end{align}
in which $(\rh,\vu,\th)$ satisfies the unsteady Navier-Stokes-Fourier system
\begin{align}\label{interior 0}
\left\{
\begin{array}{l}
%\nx(\rh +\th )=0,\\\rule{0ex}{1.5em}
\dt\vu+\vu\cdot\nx\vu -\gamma_1\dx\vu +\nx p =0,\\\rule{0ex}{1.5em}
\nx\cdot\vu =0,\\\rule{0ex}{1.5em}
\dt\th+\vu \cdot\nx\th -\gamma_2\dx\th =0,
\end{array}
\right.
\end{align}
with initial and boundary data
\begin{align}
&\rh(0,\vx)=\rh_{0,1},\quad \vu(0,\vx)=\vu_{0,1},\quad \th(0,\vx)=\th_{0,1},\\
&\rh(t,\vx_0)=\rh_{\bb,1}(t,\vx_0)+M(t,\vx_0),\quad \vu(t,\vx_0)=\vu_{\bb,1}(t,\vx_0),\quad \th(t,\vx_0)=\th_{\bb,1}(t,\vx_0).
\end{align}
Here $\gamma_1>0$ and $\gamma_2>0$ are some constants, $M(t,\vx_0)$ is a function chosen such that the Boussinesq relation
\begin{align}
\nx(\rh +\th )=0,
\end{align}
and the conservation law \eqref{small normalization.} hold for all time $t$.
\end{theorem}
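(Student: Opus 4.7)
The proof splits into construction of a multi-scale Hilbert expansion and a quantitative remainder estimate. I would write
\begin{align*}
f^\e = \e\f + \sum_{k=2}^{N}\e^k f_k + \sum_{k=2}^{N}\e^k f_k^B + \sum_{k=1}^{N}\e^k f_k^I + \e^\alpha R^\e,
\end{align*}
with $N$ large and $\alpha$ slightly below $4/3$. Here $f_k(t,\vx,\vv)$ are interior Chapman-Enskog terms, $f_k^B$ are boundary layer correctors in stretched normal coordinates with geometric (curvature) correction, and $f_k^I$ are initial layer correctors in stretched time $t/\e^2$. Substituting into \eqref{small system.} and matching orders of $\e$, the leading term $\e f_1=\e\f\in\nk$ has the Maxwellian form in the statement, and its coefficients $(\rh,\vu,\th)$ satisfy the unsteady Navier-Stokes-Fourier system \eqref{interior 0}: incompressibility and the Boussinesq constraint come from the order-$\e$ and order-$\e^2$ projections onto $\nk$, while the momentum and temperature equations arise from the order-$\e^3$ solvability condition. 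The Dirichlet data for $(\rh,\vu,\th)$ are forced by the compatibility of the Milne problem for $f_1^B$; they are chosen so that $f_1^B\equiv 0$, leaving only $f_k^B$ of order $k\geq 2$. The auxiliary function $M(t,\vx_0)$ is the scalar degree of freedom in the hydrodynamic boundary conditions needed to reconcile $\rh_{\bb,1}+\th_{\bb,1}$ with the interior constraint $\nx(\rh+\th)=0$.

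The correctors $f_k^B$ for $k\geq 2$ are constructed via an $\e$-Milne half-line problem in the stretched normal variable $\eta=d(\vx,\p\Omega)/\e$, with the tangential transport modified by the principal curvatures of $\p\Omega$ and tangential positions treated as parameters. The geometric correction removes the ill-posedness that otherwise plagues the Milne problem in 3D convex domains and produces exponentially decaying profiles in $\eta$ whose traces absorb the interior-reflection mismatch. Under the assumptions $f_{0,1}\in\nk$ and \eqref{compatibility condition.}, the initial layers $f_k^I$ decay like $\ue^{-ct/\e^2}$ and do not interact with the boundary layers at leading order, which is precisely why the compatibility conditions were imposed.

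The main analytic work is the remainder estimate. The equation for $R^\e$ reads
\begin{align*}
\e^2\dt R^\e + \e\vv\cdot\nx R^\e + \ll[R^\e] = 2\e\Gamma[\e\f+\cdots,R^\e] + \e^\alpha\Gamma[R^\e,R^\e] + \ss^\e,
\end{align*}
with perturbed diffusive reflection on $\rp\times\gamma_-$ and initial data controlled by the expansion residual. I would close $\lnnmvt{\ue^{Kt}R^\e}$ through an $L^2$-$L^6$-$L^\infty$ bootstrap: (i) an $L^2$ energy estimate using coercivity of $\ll$ on $\nk^\perp$, a macroscopic test-function argument controlling $\pk R^\e$ through the Navier-Stokes-Fourier structure, and the trace dissipation from the diffusive reflection; (ii) an intermediate $L^6$ bound through Sobolev embedding on the macroscopic part and smoothing of the microscopic part; (iii) an $L^\infty$ bound via Vidav-type double Duhamel iteration along characteristics, with $K[\cdot]$ absorbed by the $L^2$ bound off a small-velocity set. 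The hardest step is that the geometric Milne corrector carries only H\"older-type tangential regularity, so $\ss^\e$ loses a factor $\e^{-\d}$ in $L^\infty$; balancing this against the stochastic-cycle constants in the $L^\infty$ iteration forces $\alpha = 4/3 - \d$. The exponential weight $\ue^{Kt}$ survives because \eqref{smallness assumption.} controls both $\mb-\m$ and $\dt(\mb-\m)$, which is exactly what is needed to commute $\ue^{Kt}$ through the source in the remainder equation.
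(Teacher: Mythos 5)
Your skeleton (interior Hilbert terms, boundary layers with geometric correction in $\eta=\mu/\e$, initial layers in $\tau=t/\e^2$, and an $L^2$--$L^6$--$L^\infty$ remainder framework with stochastic cycles) matches the paper's architecture, but the way you propose to close the remainder estimate has a genuine gap. First, ``take $N$ large'' is not available here: the sources for a Milne-type layer $\fb_k$ with $k\geq 3$ involve tangential and velocity derivatives of $\fb_{k-1}$, and the regularity theory for the $\e$-Milne problem with geometric correction (Chapter 4) only yields weighted bounds of size $\abs{\ln\e}^{8}$ on the \emph{first} derivatives of $\fb_2$; higher-order layers with usable regularity are not constructed, and the initial--boundary corner interaction worsens at each order (the compatibility conditions \eqref{compatibility condition.} are tuned precisely to kill the low-order corner terms). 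The paper therefore truncates at $\f_1,\f_2,\f_3$, $\fb_1,\fb_2,\fb_3$, $\fl_1,\dots,\fl_4$ with remainder $\e^3R$, and gains the missing powers of $\e$ by other means. Second, the actual obstruction in the evolutionary case is that the instantaneous $L^{2m}$ estimate (Lemma \ref{htt lemma 07}) produces the extra term $\e^2\tnnm{\dt f(t)}$, so closing the $L^\infty$ bound (Theorems \ref{LN estimate'.}, \ref{LI estimate'.}) requires an $L^2$ estimate for the \emph{time-differentiated} remainder equation, hence bounds on $\dt S$, $\dt h$, and the fourth-order initial layer $\fl_4$ (whose $t$-derivative costs $\e^{-2}$, which is exactly why the initial layer is expanded one order further than everything else). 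None of this appears in your outline, and without it the evolutionary estimate stalls at $\nm{R}_{L^\infty}\ls \e^{-7/2}\cdot\e^{3+1/2}\sim 1$, as the paper points out.

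Third, your mechanism for the rate is not the paper's: the loss $\d$ in $\e^{4/3-\d}$ does not come from H\"older-type tangential regularity of the Milne corrector. The paper's central trick is to define the top-order boundary layer $\fb_3$ \emph{not} as a Milne solution but as a solution of the stationary-type equation $\e\vv\cdot\nx\fb_3+\ll[\fb_3]=Z$ (see \eqref{extra boundary layer}), so that the stationary remainder machinery (Remark \ref{LN remark}, i.e. the $L^{\frac{2m}{2m-1}}$/$L^{\frac65}$-type data estimate \eqref{eqn 8}) applies to the boundary-layer contribution; the rescaling $\eta=\mu/\e$ then yields the extra factor $\e^{1-\frac{1}{2m}}$ instead of the mere $\e^{\frac12}$ one gets from $L^2$ in time-space. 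The final rate is $\e^{2-\frac{2}{m}}\abs{\ln\e}^{8}$ with the 3D embedding restriction $\frac32<m<3$, which is where $\e^{\frac43-\d}$ comes from. Your proposal, which keeps all boundary layers as Milne solutions and absorbs the deficit into ``$N$ large'' plus an $\e^{-\d}$ regularity loss, would not close for the reasons above; incorporating (i) the $\dt R$ energy estimate feeding the instantaneous $L^{2m}$ bound and (ii) the stationary reformulation of the highest-order boundary layer are the two missing ideas.
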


\begin{remark}
The Boussinesq relation implies that $\rh(t,\vx) +\th(t,\vx)=C(t)$ for some time-dependent function $C(t)$ in the whole domain $\Omega$. However, at each $t$, the boundary data $\rh_{\bb,1}(t,\vx_0)$ and $\th_{\bb,1}(t,\vx_0)$ do not necessarily have the same sum at different $\vx_0$. Hence, $M(t,\vx_0)$ is designed to fill this gap. Note that we are still free to choose the $C(t)$ (i.e. $M$ still has one dimension of freedom at each $t$) and it is eventually determined by the conservation law \eqref{small normalization.}.
\end{remark}

\begin{remark}
From the above theorem, we know $f^{\e}\sim \e\f$ is of order $O(\e)$. The difference $f^{\e}-\e\f=o(\e)$ as $\e\rt0$.
\end{remark}

% \begin{remark}
% The case $\rh_{\bb,1}(t,\vx_0)=0$, $\vu_{\bb,1}(t,\vx_0)=0$ and $\th_{\bb,1}(t,\vx_0)\neq0$ is called the non-isothermal model, which represents a system that only has heat transfer through the boundary but has no particle exchange and no work done between the environment and the system. Based on the above theorem, the hydrodynamic limit is an unsteady Navier-Stokes-Fourier system with non-slip boundary condition. This provides a rigorous derivation of this important fluid model.
% \end{remark}

\begin{remark}
In the smallness assumption \eqref{smallness assumption.}, if $K_0=0$, then the main theorem still holds with $K=0$. Exponential decay in time is not necessary.
\end{remark}

\begin{remark}
Our proof of the main theorem relies on the assumptions in Section \ref{ott 02.}. To remove these technical requirements will be the main topics of our future research.
\end{remark}

\newpage

%%%%%%%%%%%%%%%%%%%%%%%%%%%%%%%%%%%%%%%%%%%%%%%%%%%%%%%%%%%%%%%%%%%%%%%%
\section{History and Motivation}
%%%%%%%%%%%%%%%%%%%%%%%%%%%%%%%%%%%%%%%%%%%%%%%%%%%%%%%%%%%%%%%%%%%%%%%%

%%%%%%%%%%%%%%%%%%%%%%%%%%%%%%%%%%%%%%%%%%%%%%%%%%%%%%%%%%%%%%%%%%%%%%%%
\subsection{Previous Results}
%%%%%%%%%%%%%%%%%%%%%%%%%%%%%%%%%%%%%%%%%%%%%%%%%%%%%%%%%%%%%%%%%%%%%%%%

Hydrodynamic limits are central to connecting the kinetic theory and fluid mechanics. It provides rigorous derivation of fluid equations (like Euler equations or Navier-Stokes equations, etc.) from the kinetic equations (like Boltzmann equations, Landau equations, etc.). As an integrated step to tackle the well-known Hilbert's Sixth Problem, since early 20th century, this type of problems have been extensively studied in many different settings: stationary or evolutionary, linear or nonlinear, strong solution or weak solution, etc.

The early result \cite{Hilbert1916} dates back to 1916 by Hilbert himself, using the so-called Hilbert's expansion, i.e. an expansion of the density function $\fs^{\e}$ as a power series of the Knudsen number $\e$.

On a large time scale $\sim\e^{-1}$, the diffusion effects dominate and the formal derivation reveals that the Boltzmann solution is close to that of the incompressible Navier-Stokes-Fourier system.
A lot of works for $\r^n$ or $\mathbb{T}^n$ domains have been presented, e.g. \cite{Golse.Saint-Raymond2004}, \cite{Masi.Esposito.Lebowitz1989}, \cite{Bardos.Golse.Levermore1991}, \cite{Bardos.Golse.Levermore1993}, \cite{Bardos.Golse.Levermore1998}, \cite{Bardos.Golse.Levermore2000}, \cite{Guo2006}, for either smooth solutions or renormalized solutions.

The general theory of initial-boundary-value problems was first developed in \cite{Grad1963}, and then extended by \cite{Sone1969}, \cite{Sone1971}, \cite{Sone1991}, \cite{Sone.Aoki1987}, for both the evolutionary and stationary equations. The classical books \cite{Sone2002} and \cite{Sone2007} provide a comprehensive summary of previous results and give a complete analysis of such approaches. However, the results in \cite{Sone2002} and \cite{Sone2007} are only formal and lack rigorous justifications.

It is worth noting that the approach of renormalized solution does not work for stationary hydrodynamic limit problems due to the lack of $L^1$ and entropy estimates. Hence, it is necessary to develop a different theory based on strong solutions and energy estimates.

%%%%%%%%%%%%%%%%%%%%%%%%%%%%%%%%%%%%%%%%%%%%%%%%%%%%%%%%%%%%%%%%%%%%%%%%
\subsection{Asymptotic Analysis}
%%%%%%%%%%%%%%%%%%%%%%%%%%%%%%%%%%%%%%%%%%%%%%%%%%%%%%%%%%%%%%%%%%%%%%%%

For stationary Boltzmann equation where the state of gas is close to a uniform state at rest, the expansion of the perturbation $f^{\e}=O(\e)$ consists of two parts: the interior solution $f^{\e}_{\text{in}}$, which is based on a hierarchy of linearized Boltzmann equations and satisfies a steady Navier-Stokes-Fourier system, and the boundary layer $f^{\e}_{\text{bl}}$, which is based on a half-space kinetic equation and decays rapidly when it is away from the boundary.

The justification of hydrodynamic limits for stationary problems usually involves two steps: well-posedness of expansion and remainder estimates:
\begin{itemize}
\item
Step 1: Expanding $f^{\e}_{\text{in}}=\ds\sum_{k=1}^{\infty}\e^k\f_k$ and $f^{\e}_{\text{bl}}=\ds\sum_{k=1}^{\infty}\e^k\fb_k$ as power series of $\e$ and proving the coefficients $\f_k$ and $\fb_k$ are well-defined. This is doable by inserting above expansion ansatz into the Boltzmann equation to compare the order of $\e$ and get a hierarchy of equations for $\f_k$ and $\fb_k$. Traditionally, the estimates of interior solutions $\f_k$ are relatively straightforward. On the other hand, boundary layers $\fb_k$ satisfy one-dimensional half-space problems which lose some key structures of the original equations. The well-posedness of boundary layer equations are sometimes extremely difficult and it is possible that they are actually ill-posed (e.g. certain type of Prandtl layers).
\item
Step 2: Proving that $R=f^{\e}-\e\f_1-\e\fb_1=o(\e)$ as $\e\rt0$. Ideally, this should be done just by expanding to the leading-order level $\f_1$ and $\fb_1$. However, in singular perturbation problems, the estimates of the remainder $R$ usually involve negative powers of $\e$, which require expansion to higher order terms $\f_N$ and $\fb_N$ for $N\geq2$ such that we have sufficient power of $\e$. In other words, we define $R=f^{\e}-\ds\sum_{k=1}^{N}\e^k\f_k-\ds\sum_{k=1}^{N}\e^k\fb_k$ for $N\geq2$ instead of $R=f^{\e}-\e\f_1-\e\fb_1$ to get better estimate of $R$.
\end{itemize}

For evolutionary Boltzmann equation, besides the interior solution and boundary layer, there are also rapid variation of solutions near $t=0$, which is called the initial layer $f^{\e}_{\text{il}}$. Similar to the above analysis, we may expand $f^{\e}_{\text{bl}}= \ds\sum_{k=1}^{\infty}\e^k\fl_k$ and try to justify that $R=f^{\e}-\e\f_1-\e\fb_1-\e\fl_1=o(\e)$ as $\e\rt0$. In particular, the boundary layer and initial layer have complicated interaction at $t=0$ and $\vx\in\p\Omega$, which possibly generates the so-called initial-boundary layer. We know very little about this mixed effect and its well-posedness is unknown even in 1D.

Note that boundary layer plays a significant role in proving the asymptotic convergence in the $L^{\infty}$ sense. If instead we consider $L^p$ convergence for $1\leq p<\infty$, then the boundary layer $\fb_1$ is of order $\e^{\frac{1}{p}}$ due to rescaling, which is negligible compared with $\f_1$ as $\e\rt0$. \cite{Esposito.Guo.Kim.Marra2015} justifies the $L^p$ convergence for 3D stationary and evolutionary Boltzmann without boundary layer analysis. In this monograph, we will focus on the $L^{\infty}$ convergence.

We start this whole project from a simple kinetic model -- neutron transport equations and investigate the effects of boundary layers in details. We developed a whole new theory about the construction of boundary layers with geometric corrections and justified the diffusive limit with either in-flow or diffusive-reflection boundary. We refer to our papers \cite{AA003}, \cite{AA006}, \cite{AA007}, \cite{AA009}, \cite{AA014} for steady problems, and \cite{AA005}, \cite{AA012} for unsteady problems. In particular, our recent \cite{AA016} and monograph \cite{BB001} provide a comprehensive description of the motivation of this theory and its application in 3D steady/unsteady neutron transport equations.

As for the Boltzmann equation, we refer to \cite{AA004} and \cite{AA013} for the asymptotic convergence in 2D stationary problems. In particular, our recent work \cite{AA013} provides a careful discussion about the implementation and difficulties of the boundary layer analysis in 2D.

\newpage

%%%%%%%%%%%%%%%%%%%%%%%%%%%%%%%%%%%%%%%%%%%%%%%%%%%%%%%%%%%%%%%%%%%%%%%%
\section{Difficulties and Methodology}
%%%%%%%%%%%%%%%%%%%%%%%%%%%%%%%%%%%%%%%%%%%%%%%%%%%%%%%%%%%%%%%%%%%%%%%%

%%%%%%%%%%%%%%%%%%%%%%%%%%%%%%%%%%%%%%%%%%%%%%%%%%%%%%%%%%%%%%%%%%%%%%%%
\subsection{Formulation and Difficulties in Stationary Problems}
%%%%%%%%%%%%%%%%%%%%%%%%%%%%%%%%%%%%%%%%%%%%%%%%%%%%%%%%%%%%%%%%%%%%%%%%

In this monograph, our formulation is similar to that of \cite{Esposito.Guo.Kim.Marra2015} and \cite{AA013}.
As \cite[Section 1]{Esposito.Guo.Kim.Marra2015} and \cite[Section 2.2]{AA013} reveal, compared with 2D model and $L^p$ convergence, 3D problems and $L^{\infty}$ convergence have several key difficulties:
\begin{itemize}

\item
Remainder estimates:\\
\cite{Esposito.Guo.Kim.Marra2015} pointed out that $L^{\infty}$ convergence requires improved remainder estimates. However, as one of the key steps, the embedding theorem is much worse in 3D than in 2D. For example, the result as \cite[(4.14)]{AA013} is only true when $1\leq m< 3$. This restricts our choice of $m$ and makes it hard to further improve the remainder estimates. \cite{AA009}, \cite{BB001} and \cite{AA016} provide the estimates in 3D neutron transport equation, and we can clearly see the lose of powers in $\e$.

\item
Boundary layer formulation:\\
If $\Omega\subset\r^3$ is a smooth convex domain, then $\p\Omega$ is a 2D manifold. For each point on $\p\Omega$, there are two orthogonal principal directions corresponding to two principal curvatures. The boundary layer equation actually depends on the detailed form of curvature. As \cite{AA009} reveals, even the well-posedness is highly non-trivial, let alone the regularity estimates.

\item
Singular kernel:\\
A more serious issue is that 3D collision kernel $k(v,v')$ contains the singularity $\dfrac{1}{\abs{v-v'}}$ which is absent in 2D. Then the preliminary lemmas in \cite[Section 2.1]{AA013} may not hold any more. Hence, the key argument \cite[(5.81),(5.82),(5.95)]{AA013} cannot be adapted to the singular kernel $k$.

\end{itemize}
In summary, though the structure of our proof is similar to that of \cite{Esposito.Guo.Kim.Marra2015} and \cite{AA013}, we need fresh new techniques to handle the above difficulties.

For the stationary problem, our major upshots focus on the boundary layer analysis:
\begin{itemize}
    \item
    We use a non-standard energy method to justify the well-posedness and decay of the boundary layer. The main idea is to first utilize the well-known macro-micro decomposition to separate the kernel and non-kernel contribution; then by delicately choosing test functions and using $L^2-L^{\infty}$ framework, we arrive at the desired bounds. Different from \cite{AA004} and \cite{AA013}, we need to justify the $L^{\infty}$ estimate and exponential decay simultaneously to close the proof.
    \item
    We use an intricate characteristic analysis to analyze the weighted non-local operator and justify the regularity of the boundary layer. This is the most important contribution of this monograph. We first proved the important preliminary results: Lemma \ref{Regularity lemma 0} -- Lemma \ref{Regularity lemma 3}. Then we implemented them in the different regions of the characteristics, which is a major improvement from the argument as in \cite[Section 5]{AA013}.
\end{itemize}
As a minor upshot, we also improve and simplify the remainder estimates in \cite[Section 4]{AA013}. Similar to \cite{Esposito.Guo.Kim.Marra2015}, here we use a modified $L^2-L^6-L^{\infty}$ framework. We list some recent development using this path \cite{Esposito.Guo.Kim.Marra2013}, \cite{Esposito.Guo.Marra2018}, \cite{Guo2010}, \cite{Guo.Kim.Tonon.Trescases2013}, \cite{Guo.Kim.Tonon.Trescases2016}, \cite{Kim2011}.

%%%%%%%%%%%%%%%%%%%%%%%%%%%%%%%%%%%%%%%%%%%%%%%%%%%%%%%%%%%%%%%%%%%%%%%%
\subsection{Formulation and Difficulties in Evolutionary Problems}
%%%%%%%%%%%%%%%%%%%%%%%%%%%%%%%%%%%%%%%%%%%%%%%%%%%%%%%%%%%%%%%%%%%%%%%%

For evolutionary problems, the major difficulty is that the remainder estimates are much worse than that of the stationary problem. The main theorem requires at least $\e^{-1}$ convergence for the remainder $R$. Stationary remainder estimate reads
\begin{align}\label{eqn 8}
\nm{R}_{L^{\infty}(\Omega\times\r^3)}\ls \e^{-\frac{5}{2}}\nm{S}_{L^{\frac{6}{5}}(\Omega\times\r^3)}+\text{good terms},
\end{align}
where $S=(v\cdot\nx+\e^{-1}\ll)[R]\sim \e^3$ and the boundary layer rescaling $\eta\sim \e^{-1}\mu$ provides additional $\e^{\frac{5}{6}}$ under $L^{\frac{6}{5}}$ norm. Eventually, we get $\nm{R}_{L^{\infty}}\ls\e^{-\frac{5}{2}}\times\e^{3+\frac{5}{6}}=\e^{\frac{4}{3}}$ convergence. However, for evolutionary problem, so far the best estimate is
\begin{align}
\nm{R}_{L^{\infty}(\r^+\times\Omega\times\r^3)}\ls \e^{-\frac{7}{2}}\nm{S(t)}_{L^{2}(\Omega\times\r^3)}+\text{good terms},
\end{align}
where $S(t)=(\e\dt+v\cdot\nx+\e^{-1}\ll)[R(t)]\sim \e^3$ and we only obtain extra $\e^{\frac{1}{2}}$ from the boundary layer rescaling. Hence, we have $\nm{R}_{L^{\infty}}\ls\e^{-\frac{7}{2}}\times\e^{3+\frac{1}{2}}=\e^{0}\sim 1$ which is far from closing the proof.

Our strategy includes two steps:

First, we introduce a novel $L^2-L^6-L^{\infty}$ boostrapping framework. This is rooted in the nonlinear energy method and we use an intricate energy-dissipation structure to bound both the instantaneous and accumulative $R$ with mutual dependence. In detail, we justify $L^2$ bounds of $R$ and $\dt R$ with non-standard energy method and prove $L^6$ bound of $R(t)$ with a fresh kernel estimates with interpolation argument. Ideally, we should be able to show that
\begin{align*}
\nm{R}_{L^{\infty}(\r^+\times\Omega\times\r^3)}\ls \e^{-\frac{5}{2}}\Big(\nm{S(t)}_{L^{\frac{6}{5}}(\Omega\times\r^3)}+\nm{\dt S}_{L^{2}(\r^+\times\Omega\times\r^3)}\Big)+\text{good terms}.
\end{align*}

Next, our central idea is to smartly utilize the ``good'' stationary remainder estimates. We design the highest-order boundary layer in a rather unusual way. Specifically, we reformulate the $\e$-Milne problem with geometric correction, such that it recovers the stationary equation. This allows us to use stationary remainder estimates \eqref{eqn 8} to control boundary layers, leaving several non-trivial remainder terms which can be delicately handled. In this fashion, we get
\begin{align}
\nm{S(t)}_{L^{\frac{6}{5}}(\Omega\times\r^3)}+\nm{\dt S}_{L^{2}(\r^+\times\Omega\times\r^3)}\ls \e^{3+\frac{5}{6}}.
\end{align}
Hence, we get the same $\e^{\frac{4}{3}}$ convergence as in the stationary case.

\subsection{Notation and Convention}
%%%%%%%%%%%%%%%%%%%%%%%%%%%%%%%%%%%%%%%%%%%%%%%%%%%%%%%%%%%%%%%%%%%%%%%%

Throughout this paper, $C>0$ denotes a constant that only depends on
the domain $\Omega$, but does not depend on the data or $\e$. It is
referred as universal and can change from one inequality to another.
When we write $C(z)$, it means a certain positive constant depending
on the quantity $z$. We write $a\ls b$ to denote $a\leq Cb$.

This paper is organized as follows: in Chapter 2, we study the stationary problem, and in Chapter 3, we study the evolutionary problem. Chapter 4 focuses on the analysis of boundary layer equation, i.e. the $\e$-Milne problem with geometric correction.

\newpage

%%%%%%%%%%%%%%%%%%%%%%%%%%%%%%%%%%%%%%%%%%%%%%%%%%%%%%%%%%%%%%%%%%%%%%%%
\chapter{Stationary Boltzmann Equation}%%%%%%%%%%%%%%%%%%%%%%%%%%%%%%%%%
%%%%%%%%%%%%%%%%%%%%%%%%%%%%%%%%%%%%%%%%%%%%%%%%%%%%%%%%%%%%%%%%%%%%%%%%

%%%%%%%%%%%%%%%%%%%%%%%%%%%%%%%%%%%%%%%%%%%%%%%%%%%%%%%%%%%%%%%%%%%%%%%%
\section{Asymptotic Expansion}%%%%%%%%%%%%%%%%%%%%%%%%%%%%%%%%%%%%%%%%%%
%%%%%%%%%%%%%%%%%%%%%%%%%%%%%%%%%%%%%%%%%%%%%%%%%%%%%%%%%%%%%%%%%%%%%%%%

%%%%%%%%%%%%%%%%%%%%%%%%%%%%%%%%%%%%%%%%%%%%%%%%%%%%%%%%%%%%%%%%%%%%%%%%
\subsection{Interior Expansion}\label{att section 1}
%%%%%%%%%%%%%%%%%%%%%%%%%%%%%%%%%%%%%%%%%%%%%%%%%%%%%%%%%%%%%%%%%%%%%%%%

We define the interior expansion
\begin{align}\label{interior expansion}
f^{\e}_{\text{in}}(\vx,\vv)=\sum_{k=1}^{3}\e^k\f_k(\vx,\vv).
\end{align}
Plugging it into the equation \eqref{small system} and comparing the order of $\e$, we obtain
\begin{align}
\ll[\f_1]=&0,\label{ott 01}\\
\ll[\f_2]=&-\vv\cdot\nx\f_1+\Gamma[\f_1,\f_1],\label{ott 02}\\
\ll[\f_3]=&-\vv\cdot\nx\f_2+2\Gamma[\f_1,\f_2].\label{ott 03}
\end{align}
The analysis of $\f_k$ solvability is standard and well-known. Note that the null space $\nk$ of the operator $\ll$ is spanned by
\begin{align}
\m^{\frac{1}{2}}\bigg\{1,v_1,v_2,v_3,\dfrac{\abs{\vv}^2-3}{2}\bigg\}=\{\ne_0,\ne_1,\ne_2,\ne_3,\ne_4\}.
\end{align}
Then $\ll[f]=S$ is solvable if and only if $S\in\nk^{\perp}$ the orthogonal complement of $\nk$ in $L^2(\r^3)$. As \cite[Chapter 4]{Sone2002} and \cite[Chapter 3]{Sone2007} reveal, each $\f_k$ consists of three parts:
\begin{align}
\f_k(\vx,\vv):=A_k (\vx,\vv)+B_k (\vx,\vv)+C_k (\vx,\vv).
\end{align}
\begin{itemize}
\item
Principal contribution $\ds A_k:=\sum_{i=0}^4A_{k,i}\ne_i\in\nk$, where the coefficients $A_{k,i}$ must be determined at each order $k$ independently.
\item
Connecting contribution $\ds B_k:=\sum_{i=0}^4B_{k,i}\ne_i\in\nk$, where the coefficients $B_{k,i}$ depends on $A_s$ for $1\leq s\leq k-1$. In other words, $B_k$ is accumulative information from previous orders and thus is not independent. This term is present due to the nonlinearity in $\Gamma$. In detail,
\begin{align}\label{at 12}
B_{k,0}=&0,\quad
B_{k,1}=\sum_{i=1}^{k-1}A_{i,0} A_{k-i,1} ,\quad
B_{k,2}=\sum_{i=1}^{k-1}A_{i,0} A_{k-i,2} ,\quad
B_{k,3}=\sum_{i=1}^{k-1}A_{i,0} A_{k-i,3} ,\\
B_{k,4}=&\sum_{i=1}^{k-1}\bigg(A_{i,0} A_{k-i,4} +A_{i,1} A_{k-i,1} +A_{i,2} A_{k-i,2}+A_{i,3} A_{k-i,3}\no\\
&+\sum_{j=1}^{k-1-i}A_{i,0} (A_{j,1} A_{k-i-j,1} +A_{j,2} A_{k-i-j,2}+A_{j,3} A_{k-i-j,3} )\bigg).\no
\end{align}
\item
Orthogonal contribution $C_k\in\nk^{\perp}$ satisfying
\begin{align}
\ll[C_k]=&-\vv\cdot\nx\f_{k-1}+\sum_{i=1}^{k-1}\Gamma[\f_i,\f_{k-i}],
\end{align}
which can be uniquely determined. Similar to $B_k$, here $C_k$ is also accumulative information from previous orders and thus is not independent.
\end{itemize}
All in all, we will focus on how to determine $A_k$. Traditionally, we write
\begin{align}
A_k=\m^{\frac{1}{2}}\left(\rh_k +\vu_k\cdot\vv+\th_k \left(\frac{\abs{\vv}^2-3}{2}\right)\right),
\end{align}
where the coefficients $\rh_k$, $\vu_k$ and $\th_k$ represent density, velocity and temperature in the macroscopic scale. \cite[Chapter 4]{Sone2002} and \cite[Chapter 3]{Sone2007} states that $(\rh_k,\vu_k,\th_k)$ satisfies the equations as follows:\\
\ \\
\eqref{ott 02} implies:
\begin{align}
p_1 -(\rh_1 +\th_1 )=&0,\label{att 07}\\
\nx p_1 =&0,\\
\nx\cdot\vu_1 =&0,
\end{align}
\eqref{ott 03} implies:
\begin{align}
p_2 -(\rh_2 +\th_2 +\rh_1 \th_1 )=&0,\\
\vu_1 \cdot\nx\vu_1 -\gamma_1\dx\vu_1 +\nx p_2 =&0,\\
\vu_1 \cdot\nx\th_1 -\gamma_2\dx\th_1 =&0,\\
\nx\cdot\vu_2 +\vu_1\cdot\nx\rh_1=&0.\label{att 08}
\end{align}
Here $p_1$ and $p_2$ represent the pressure, $\gamma_1>0$ and $\gamma_2>0$ are constants. (In particular, for different collision kernel, these constants may be different.) The higher-order expansion produces more complicated fluid equations, which can be found in \cite[Chapter 4]{Sone2002}. If the interior solution $\f_k$ cannot satisfy the boundary condition in \eqref{small system}, then we have to introduce boundary layer $\fb_k$ to handle the gap.

%%%%%%%%%%%%%%%%%%%%%%%%%%%%%%%%%%%%%%%%%%%%%%%%%%%%%%%%%%%%%%%%%%%%%%%%
\subsection{Quasi-Spherical Coordinate System}\label{att section 2}
%%%%%%%%%%%%%%%%%%%%%%%%%%%%%%%%%%%%%%%%%%%%%%%%%%%%%%%%%%%%%%%%%%%%%%%%

In order to define boundary layer, we need to design a coordinate system based on the normal and tangential directions on the boundary surface. This is physically more reasonable and mathematically more convenient.

Our main goal is to rewrite the three-dimensional transport operator $\vv\cdot\nx$ with this new coordinate system (we call it quasi-spherical coordinate system). This is basically textbook-level differential geometry, so we omit the details.\\
\ \\
Substitution 1: Spacial Substitution:\\
We choose the simplest coordinate system to parameterize the surface $\p\Omega$. For smooth manifold $\p\Omega$, there exists an orthogonal curvilinear coordinates system $(\iota_1,\iota_2)$ such that the coordinate lines coincide with the principal directions at $\vx_0$ (at least locally).

Assume $\p\Omega$ is parameterized by $\vr=\vr(\iota_1,\iota_2)$. Let $\abs{\cdot}$ denote the length and $\p_i$ denote the derivative with respect to $\iota_i$ for $i=1,2$. Hence, $\p_1\vr$ and $\p_2\vr$ represent two orthogonal tangential vectors. Denote $P_i=\abs{\p_i\vr}$ for $i=1,2$. Then define the two orthogonal unit tangential vectors
\begin{align}\label{coordinate 14}
\vt_1:=\frac{\p_1\vr}{P_1},\ \ \vt_2:=\frac{\p_2\vr}{P_2}.
\end{align}
Also, the outward unit normal vector is
\begin{align}\label{coordinate 1}
\vn:=\frac{\p_1\vr\times\p_2\vr}{\abs{\p_1\vr\times\p_2\vr}}=\vt_1\times\vt_2.
\end{align}
Obviously, $(\vt_1,\vt_2,\vn)$ forms a new orthogonal frame. Hence, consider the corresponding new coordinate system $(\iota_1,\iota_2,\mu)$, where $\mu$ denotes the normal distance to boundary surface $\p\Omega$, i.e.
\begin{align}
\vx=\vr-\mu\vn.
\end{align}
Note that $\mu=0$ means $\vx\in\p\Omega$ and $\mu>0$ means $\vx\in\Omega$ (before reaching the other side of $\p\Omega$). Using this new coordinate system, the transport operator becomes
\begin{align}\label{coordinate 8}
\vv\cdot\nx=&-\frac{\Big(\left(\p_1\vr-\mu\p_1\vn\right)\times\left(\p_2\vr-\mu\p_2\vn\right)\Big)\cdot\vv}
{\Big(\left(\p_1\vr-\mu\p_1\vn\right)\times\left(\p_2\vr-\mu\p_2\vn\right)\Big)\cdot\vn}\frac{\p f}{\p\mu}\\
&+\frac{\Big(\left(\p_2\vr-\mu\p_2\vn\right)\times\vn\Big)\cdot\vv}{\Big(\left(\p_1\vr-\mu\p_1\vn\right)\times\left(\p_2\vr-\mu\p_2\vn\right)\Big)\cdot\vn}\frac{\p f}{\p\iota_1}-\frac{\Big(\left(\p_1\vr-\mu\p_1\vn\right)\times\vn\Big)\cdot\vv}{\Big(\left(\p_1\vr-\mu\p_1\vn\right)\times\left(\p_2\vr-\mu\p_2\vn\right)\Big)\cdot\vn}\frac{\p f}{\p\iota_2}.\no
\end{align}
We may further simplify this expression utilizing the orthogonality. Denote the first fundamental form
\begin{align}
(E,F,G):=\Big(\p_1\vr\cdot\p_1\vr,\ \p_1\vr\cdot\p_2\vr,\ \p_2\vr\cdot\p_2\vr\Big),
\end{align}
and second fundamental form
\begin{align}
(L,M,N):=\Big(\p_{11}\vr\cdot\vn,\ \p_{12}\vr\cdot\vn,\ \p_{22}\vr\cdot\vn\Big).
\end{align}
Then we have $F=M=0$ due to the orthogonality. Two principal curvatures are given by
\begin{align}
\kk_1:=\frac{L}{E},\ \ \kk_2:=\frac{N}{G}.
\end{align}
Also, we know the relation
\begin{align}\label{coordinate 3}
\p_1\vn=\kk_1\p_1\vr,\ \ \p_2\vn=\kk_2\p_2\vr.
\end{align}
Hence, direct computation using \eqref{coordinate 1} and \eqref{coordinate 3} reveals that
\begin{align}
\Big(\left(\p_1\vr-\mu\p_1\vn\right)\times\left(\p_2\vr-\mu\p_2\vn\right)\Big)\cdot\vn=&(1-\kk_1\mu)(1-\kk_2\mu)(\p_1\vr\times\p_2\vr)\cdot\vn\label{coordinate 4}\\
=&(1-\kk_1\mu)(1-\kk_2\mu)P_1P_2,\no\\
\Big(\left(\p_1\vr-\mu\p_1\vn\right)\times\left(\p_2\vr-\mu\p_2\vn\right)\Big)\cdot\vv=&(1-\kk_1\mu)(1-\kk_2\mu)(\p_1\vr\times\p_2\vr)\cdot\vv\label{coordinate 5}\\
=&(1-\kk_1\mu)(1-\kk_2\mu)P_1P_2(\vv\cdot\vn),\no
\end{align}
and
\begin{align}
\Big(\left(\p_2\vr-\mu\p_2\vn\right)\times\vn\Big)\cdot\vv=&(1-\kk_2\mu)(\p_2\vr\times\vn)\cdot\vv=(1-\kk_2\mu)P_2(\vv\cdot\vt_1),\label{coordinate 6}\\
\Big(\left(\p_1\vr-\mu\p_1\vn\right)\times\vn\Big)\cdot\vv=&(1-\kk_1\mu)(\p_1\vr\times\vn)\cdot\vv=-(1-\kk_1\mu)P_1(\vv\cdot\vt_2).\label{coordinate 7}
\end{align}
Hence, plugging \eqref{coordinate 4}, \eqref{coordinate 5}, \eqref{coordinate 6} and \eqref{coordinate 7} into \eqref{coordinate 8}, we have the transport operator
\begin{align}\label{coordinate 11}
\vv\cdot\nx=-(\vv\cdot\vn)\frac{\p}{\p\mu}-\frac{\vv\cdot\vt_1}{P_1(\kk_1\mu-1)}\frac{\p}{\p\iota_1}-\frac{\vv\cdot\vt_2}{P_2(\kk_2\mu-1)}\frac{\p}{\p\iota_2}.
\end{align}
Therefore, under substitution $(x_1,x_2,x_3)\rt(\mu,\iota_1,\iota_2)$, the equation \eqref{small system} is transformed into
\begin{align}
\left\{
\begin{array}{l}\displaystyle
\e\bigg(-(\vv\cdot\vn)\frac{\p f^{\e}}{\p\mu}-\frac{\vv\cdot\vt_1}{P_1(\kk_1\mu-1)}\frac{\p f^{\e}}{\p\iota_1}-\frac{\vv\cdot\vt_2}{P_2(\kk_2\mu-1)}\frac{\p f^{\e}}{\p\iota_2}\bigg)+f^{\e}+\ll[f^{\e}]=\Gamma[f^{\e},f^{\e}]\ \ \text{in}\ \ \Omega\times\r^3,\\\rule{0ex}{2.0em}
f^{\e}(0,\iota_1,\iota_2,\vv)=\pe[f^{\e}](0,\iota_1,\iota_2,\vv)\ \ \text{for}\ \ \vv\cdot\vn<0.
\end{array}
\right.
\end{align}
\ \\
Substitution 2: Velocity Substitution.\\
Define the orthogonal velocity substitution for $\vvv:=(\va,\vb,\vc)$ as
\begin{align}
\left\{
\begin{array}{l}
-\vv\cdot\vn:=\va,\\
-\vv\cdot\vt_1:=\vb,\\
-\vv\cdot\vt_2:=\vc.
\end{array}
\right.
\end{align}
Then using chain rule, fundamental forms and \eqref{coordinate 3}, we have
\begin{align}
\frac{\p}{\p\iota_1}\rt&\frac{\p}{\p\iota_1}+\frac{\p}{\p\va}\frac{\p\va}{\p\iota_1}+\frac{\p}{\p\vb}\frac{\p\vb}{\p\iota_1}+\frac{\p}{\p\vc}\frac{\p\vc}{\p\iota_1}\\
=&\frac{\p}{\p\iota_1}-\kk_1P_1\vb\frac{\p}{\p\va}
+\bigg((\p_{11}\vr\cdot\vn)\frac{1}{P_1}\va+(\p_{11}\vr\cdot\p_2\vr)\frac{1}{P_1P_2}\vc\bigg)\frac{\p}{\p\vb}\no\\
&+\bigg((\p_{12}\vr\cdot\vn)\frac{1}{P_2}\va+(\p_{12}\vr\cdot\p_1\vr)\frac{1}{P_1P_2}\vb\bigg)\frac{\p}{\p\vc},\no\\
=&\frac{\p}{\p\iota_1}-\kk_1P_1\bigg(\vb\frac{\p}{\p\va}-\va\frac{\p}{\p\vb}\bigg)\no\\
&+(\p_{11}\vr\cdot\p_2\vr)\frac{1}{P_1P_2}\vc\frac{\p}{\p\vb}+(\p_{12}\vr\cdot\p_1\vr)\frac{1}{P_1P_2}\vb\frac{\p}{\p\vc},\no
\end{align}
and
\begin{align}
\frac{\p}{\p\iota_2}\rt&\frac{\p}{\p\iota_2}+\frac{\p}{\p\va}\frac{\p\va}{\p\iota_2}+\frac{\p}{\p\vb}\frac{\p\vb}{\p\iota_2}+\frac{\p}{\p\vc}\frac{\p\vc}{\p\iota_2}\\
=&\frac{\p}{\p\iota_2}-\kk_2P_2\vc\frac{\p}{\p\va}
+\bigg((\p_{12}\vr\cdot\vn)\frac{1}{P_1}\va+(\p_{12}\vr\cdot\p_2\vr)\frac{1}{P_1P_2}\vc\bigg)\frac{\p}{\p\vb}\no\\
&+\bigg((\p_{22}\vr\cdot\vn)\frac{1}{P_2}\va+(\p_{22}\vr\cdot\p_1\vr)\frac{1}{P_1P_2}\vb\bigg)\frac{\p}{\p\vc}\no\\
=&\frac{\p}{\p\iota_2}-\kk_2P_2\bigg(\vc\frac{\p}{\p\va}-\va\frac{\p}{\p\vc}\bigg)\no\\
&+(\p_{12}\vr\cdot\p_2\vr)\frac{1}{P_1P_2}\vc\frac{\p}{\p\vb}+(\p_{22}\vr\cdot\p_1\vr)\frac{1}{P_1P_2}\vb\frac{\p}{\p\vc}.\no
\end{align}
Here, we utilize $\p_{12}\vr\cdot\vn=M=0$ in the second fundamental form and $\p_{ii}\vr\cdot\vn=-\p_i\vr\cdot\p_i\vn=-\kk_i\abs{\p_i\vr}^2$ for $i=1,2$. Then the transport operator in \eqref{coordinate 11} becomes
\begin{align}
\vv\cdot\nx=&\va\frac{\p}{\p\mu}-\frac{1}{R_1-\mu}\bigg(\vb^2\frac{\p}{\p\va}-\va\vb\frac{\p}{\p\vb}\bigg)
-\frac{1}{R_2-\mu}\bigg(\vc^2\frac{\p}{\p\va}-\va\vc\frac{\p}{\p\vc}\bigg)\\
&-\frac{1}{P_1P_2}\Bigg(\frac{\p_{11}\vr\cdot\p_2\vr}{P_1(\kk_1\mu-1)}\vb\vc
+\frac{\p_{12}\vr\cdot\p_2\vr}{P_2(\kk_2\mu-1)}\vc^2\Bigg)\frac{\p}{\p\vb}\no\\
&-\frac{1}{P_1P_2}\Bigg(\frac{\p_{22}\vr\cdot\p_1\vr}{P_2(\kk_2\mu-1)}\vb\vc
+\frac{\p_{12}\vr\cdot\p_1\vr}{P_1(\kk_1\mu-1)}\vb^2\Bigg)\frac{\p}{\p\vc}\no\\
&-\bigg(\frac{\vb}{P_1(\kk_1-1\mu)}\frac{\p}{\p\iota_1}+\frac{\vc}{P_2(\kk_2\mu-1)}\frac{\p}{\p\iota_2}\bigg)\no,
\end{align}
where $R_1=\dfrac{1}{\kk_1}$ and $R_2=\dfrac{1}{\kk_2}$ represent the radius of principal curvature. Hence, under substitution $\vv\rt\vvv$,
the equation \eqref{small system} is transformed into
\begin{align}
\left\{
\begin{array}{l}\displaystyle
\e\va\dfrac{\p f^{\e}}{\p\mu}-\dfrac{\e}{R_1-\mu}\bigg(\vb^2\dfrac{\p f^{\e}}{\p\va}-\va\vb\dfrac{\p f^{\e}}{\p\vb}\bigg)
-\dfrac{\e}{R_2-\mu}\bigg(\vc^2\dfrac{\p f^{\e}}{\p\va}-\va\vc\dfrac{\p f^{\e}}{\p\vc}\bigg)\\\rule{0ex}{2.0em}
-\dfrac{\e}{P_1P_2}\Bigg(\dfrac{\p_{11}\vr\cdot\p_2\vr}{P_1(\kk_1\mu-1)}\vb\vc
+\dfrac{\p_{12}\vr\cdot\p_2\vr}{P_2(\kk_2\mu-1)}\vc^2\Bigg)\dfrac{\p f^{\e}}{\p\vb}\\\rule{0ex}{2.0em}
-\dfrac{\e}{P_1P_2}\Bigg(\dfrac{\p_{22}\vr\cdot\p_1\vr}{P_2(\kk_2\mu-1)}\vb\vc
+\dfrac{\p_{12}\vr\cdot\p_1\vr}{P_1(\kk_1\mu-1)}\vb^2\Bigg)\dfrac{\p f^{\e}}{\p\vc}\\\rule{0ex}{2.0em}
-\e\bigg(\dfrac{\vb}{P_1(\kk_1\mu-1)}\dfrac{\p f^{\e}}{\p\iota_1}+\dfrac{\vc}{P_2(\kk_2\mu-1)}\dfrac{\p f^{\e}}{\p\iota_2}\bigg)
+\ll[f^{\e}]=\Gamma[f^{\e},f^{\e}]\ \ \text{in}\ \ \Omega\times\r^3,\\\rule{0ex}{2.0em}
f^{\e}(0,\iota_1,\iota_2,\vvv)=\pe[f^{\e}](0,\iota_1,\iota_2,\vvv)\ \ \text{for}\
\ \va>0.
\end{array}
\right.
\end{align}
\ \\
Substitution 3: Scaling Substitution.\\
Finally, we define the scaled variable $\eta=\dfrac{\mu}{\e}$, which implies $\dfrac{\p}{\p\mu}=\dfrac{1}{\e}\dfrac{\p}{\p\eta}$. Then, under the substitution $\mu\rt\eta$, the equation \eqref{small system} is transformed into
\begin{align}\label{small system'}
\left\{
\begin{array}{l}\displaystyle
\va\dfrac{\p f^{\e}}{\p\eta}-\dfrac{\e}{R_1-\e\eta}\bigg(\vb^2\dfrac{\p f^{\e}}{\p\va}-\va\vb\dfrac{\p f^{\e}}{\p\vb}\bigg)
-\dfrac{\e}{R_2-\e\eta}\bigg(\vc^2\dfrac{\p f^{\e}}{\p\va}-\va\vc\dfrac{\p f^{\e}}{\p\vc}\bigg)\\\rule{0ex}{2.0em}
-\dfrac{\e}{P_1P_2}\Bigg(\dfrac{\p_{11}\vr\cdot\p_2\vr}{P_1(\e\kk_1\eta-1)}\vb\vc
+\dfrac{\p_{12}\vr\cdot\p_2\vr}{P_2(\e\kk_2\eta-1)}\vc^2\Bigg)\dfrac{\p f^{\e}}{\p\vb}\\\rule{0ex}{2.0em}
-\dfrac{\e}{P_1P_2}\Bigg(\dfrac{\p_{22}\vr\cdot\p_1\vr}{P_2(\e\kk_2\eta-1)}\vb\vc
+\dfrac{\p_{12}\vr\cdot\p_1\vr}{P_1(\e\kk_1\eta-1)}\vb^2\Bigg)\dfrac{\p f^{\e}}{\p\vc}\\\rule{0ex}{2.0em}
-\e\bigg(\dfrac{\vb}{P_1(\e\kk_1\eta-1)}\dfrac{\p f^{\e}}{\p\iota_1}+\dfrac{\vc}{P_2(\e\kk_2\eta-1)}\dfrac{\p f^{\e}}{\p\iota_2}\bigg)
+\ll[f^{\e}]=\Gamma[f^{\e},f^{\e}]\ \ \text{in}\ \ \Omega\times\r^3,\\\rule{0ex}{2.0em}
f^{\e}(0,\iota_1,\iota_2,\vvv)=\pe[f^{\e}](0,\iota_1,\iota_2,\vvv)\ \ \text{for}\
\ \va>0.
\end{array}
\right.
\end{align}

%%%%%%%%%%%%%%%%%%%%%%%%%%%%%%%%%%%%%%%%%%%%%%%%%%%%%%%%%%%%%%%%%%%%%%%%
\subsection{Boundary Layer Expansion}
%%%%%%%%%%%%%%%%%%%%%%%%%%%%%%%%%%%%%%%%%%%%%%%%%%%%%%%%%%%%%%%%%%%%%%%%

We define the boundary layer expansion:
\begin{align}\label{boundary layer expansion}
f^{\e}_{\text{bl}}(\eta,\iota_1,\iota_2,\vvv)=\sum_{k=1}^{2}\e^k\fb_k(\eta,\iota_1,\iota_2,\vvv),
\end{align}
where $\fb_k$ can be defined by comparing the order of $\e$ via
plugging \eqref{boundary layer expansion} into the equation
\eqref{small system'}. Thus, in a neighborhood of the boundary, we have
\begin{align}
\va\dfrac{\p\fb_1}{\p\eta}-\dfrac{\e}{R_1-\e\eta}\bigg(\vb^2\dfrac{\p\fb_1}{\p\va}-\va\vb\dfrac{\p\fb_1}{\p\vb}\bigg)
-\dfrac{\e}{R_2-\e\eta}\bigg(\vc^2\dfrac{\p\fb_1}{\p\va}-\va\vc\dfrac{\p\fb_1}{\p\vc}\bigg)+\ll[\fb_1]=&0,\label{expansion temp 6}\\
\va\dfrac{\p\fb_2}{\p\eta}-\dfrac{\e}{R_1-\e\eta}\bigg(\vb^2\dfrac{\p\fb_2}{\p\va}-\va\vb\dfrac{\p\fb_2}{\p\vb}\bigg)
-\dfrac{\e}{R_2-\e\eta}\bigg(\vc^2\dfrac{\p\fb_2}{\p\va}-\va\vc\dfrac{\p\fb_2}{\p\vc}\bigg)+\ll[\fb_2]=&Z,
\end{align}
where $Z=Z\left[\f_1,\fb_1,\dfrac{\p\fb_1}{\p\vb},\dfrac{\p\fb_1}{\p\vc},\dfrac{\p\fb_1}{\p\iota_1},\dfrac{\p\fb_1}{\p\iota_2}\right]$ as
\begin{align}
Z:=&2\Gamma[\f_1,\fb_1]+\Gamma[\fb_1,\fb_1]
+\dfrac{1}{P_1P_2}\Bigg(\dfrac{\p_{11}\vr\cdot\p_2\vr}{P_1(\e\kk_1\eta-1)}\vb\vc
+\dfrac{\p_{12}\vr\cdot\p_2\vr}{P_2(\e\kk_2\eta-1)}\vc^2\Bigg)\dfrac{\p\fb_1}{\p\vb}\\\rule{0ex}{2.0em}
&+\dfrac{1}{P_1P_2}\Bigg(\dfrac{\p_{22}\vr\cdot\p_1\vr}{P_2(\e\kk_2\eta-1)}\vb\vc
+\dfrac{\p_{12}\vr\cdot\p_1\vr}{P_1(\e\kk_1\eta-1)}\vb^2\Bigg)\dfrac{\p\fb_1}{\p\vc}
+\dfrac{\vb}{P_1(\e\kk_1\eta-1)}\dfrac{\p\fb_1}{\p\iota_1}+\dfrac{\vc}{P_2(\e\kk_2\eta-1)}\dfrac{\p\fb_1}{\p\iota_2}.\no
\end{align}

%%%%%%%%%%%%%%%%%%%%%%%%%%%%%%%%%%%%%%%%%%%%%%%%%%%%%%%%%%%%%%%%%%%%%%%%
\subsection{Boundary Condition Expansion}\label{att section 04}
%%%%%%%%%%%%%%%%%%%%%%%%%%%%%%%%%%%%%%%%%%%%%%%%%%%%%%%%%%%%%%%%%%%%%%%%

The bridge between the interior solution and boundary layer is the boundary condition. Define
\begin{align}
\pp[f](\vx_0,\vv):=\m^{\frac{1}{2}}(\vv)
\int_{\vuu\cdot\vn(\vx_0)>0}\m^{\frac{1}{2}}(\vuu)f(\vx_0,\vuu)\abs{\vuu\cdot\vn(\vx_0)}\ud{\vuu}.
\end{align}
Plugging the combined expansion from \eqref{interior expansion} and \eqref{boundary layer expansion}
\begin{align}
f^{\e}\sim\sum_{k=1}^{3}\e^k\f_k+\sum_{k=1}^{2}\e^k\fb_k
\end{align}
into the boundary condition \eqref{small system} and \eqref{att 05}, and comparing the order of $\e$, we obtain
\begin{align}
\f_1+\fb_1=&\pp[\f_1+\fb_1]+\m_1(\vx_0,\vv),\label{btt 1}\\
\f_2+\fb_2=&\pp[\f_2+\fb_2]+\m_1(\vx_0,\vv)
\int_{\vuu\cdot\vn(\vx_0)>0}\m^{\frac{1}{2}}(\vuu)(\f_1+\fb_1)\abs{\vuu\cdot\vn(\vx_0)}
\ud{\vuu}+\m_2(\vx_0,\vv).\label{btt 2}
\end{align}
In particular, we do not further expand the boundary layer, so we directly require
\begin{align}
\f_3=&\pp[\f_3]+\m_2(\vx_0,\vv)
\int_{\vuu\cdot\vn(\vx_0)>0}\m^{\frac{1}{2}}(\vuu)(\f_1+\fb_1)\abs{\vuu\cdot\vn(\vx_0)}\ud{\vuu}\\
&+\m_1(\vx_0,\vv)
\int_{\vuu\cdot\vn(\vx_0)>0}\m^{\frac{1}{2}}(\vuu)(\f_2+\fb_2)\abs{\vuu\cdot\vn(\vx_0)}\ud{\vuu}
+\m_3(\vx_0,\vv).\no
\end{align}
These are the boundary conditions $\f_k$ and $\fb_k$ need to satisfy.

%%%%%%%%%%%%%%%%%%%%%%%%%%%%%%%%%%%%%%%%%%%%%%%%%%%%%%%%%%%%%%%%%%%%%%%%
\subsection{Matching Procedure}\label{att section 03}
%%%%%%%%%%%%%%%%%%%%%%%%%%%%%%%%%%%%%%%%%%%%%%%%%%%%%%%%%%%%%%%%%%%%%%%%

Define the length of boundary layer in rescaled variable $L:=\e^{-\frac{1}{2}}$.
Also, denote $\rr[\va,\vb,\vc]=(-\va,\vb,\vc)$. \\
\ \\
Step 1: Construction of $\f_1$ and $\fb_1$.\\
Based on Section \ref{att section 1}, we know $F_1=A_1$ since there is no contribution of $B_1$ and $C_1$. Considering the first-order boundary Maxwellian expansion \eqref{att 06} and reorganizing \eqref{att 07}--\eqref{att 08}, we define
\begin{align}
\f_1=&\m^{\frac{1}{2}}\left(\rh_{1}+\vu_{1}\cdot\vv+\th_{1}\frac{\abs{\vv}^2-3}{2}\right),
\end{align}
where $(\rh_1,\vu_1,\th_1)$ satisfies the Navier-Stokes-Fourier system
\begin{align}\label{interior 1}
\left\{
\begin{array}{l}
%\nx(\rh_1 +\th_1 )=0,\\\rule{0ex}{1.0em}
\vu_1\cdot\nx\vu_1 -\gamma_1\dx\vu_1 +\nx p_2 =0,\\\rule{0ex}{1.0em}
\nx\cdot\vu_1 =0,\\\rule{0ex}{1.0em}
\vu_1 \cdot\nx\th_1 -\gamma_2\dx\th_1 =0,
\end{array}
\right.
\end{align}
with the boundary condition
\begin{align}
\rh_1(\vx_0)=\rh_{\bb,1}(\vx_0)+M_1(\vx_0),\quad \vu_1(\vx_0)=\vu_{\bb,1}(\vx_0),\quad\th_1(\vx_0)=\th_{\bb,1}(\vx_0).
\end{align}
Here $M_1(\vx_0)$ is chosen such that the Boussinesq relation
\begin{align}
\nx(\rh_1 +\th_1 )=0
\end{align}
is satisfied (which is part of \eqref{att 07}--\eqref{att 08}). Note that now $M_1$ still has one dimension of freedom and is finally fully determined by the normalization condition
\begin{align}
\iint_{\Omega\times\r^3}\f_1(\vx,\vv)\mh(\vv)\ud{\vv}\ud{\vx}=0.
\end{align}
which is a requirement from \eqref{small normalization}.

On the other hand, based on \eqref{boundary compatibility'}, we naturally obtain
\begin{align}\label{att 09}
\pp[\f_1]=M_1\mu^{\frac{1}{2}},
\end{align}
which means
\begin{align}
\f_1=\pp[\f_1]+\m_1\ \ \text{on}\ \ \p\Omega.
\end{align}
Therefore, compared with \eqref{btt 1}, since $\f_1$ already satisfies the boundary condition, it is not necessary to introduce the boundary layer at this order and we simply take $\fb_1=0$.\\
\ \\
Step 2: Construction of $\f_2$ and $\fb_2$.\\
Define $\f_2=A_2+B_2+C_2$, where $B_2$ and $C_2$ can be uniquely determined following Section \ref{att section 1}, and
\begin{align}
A_2=&\m^{\frac{1}{2}}\left(\rh_{2}+\vu_{2}\cdot\vv+\th_{2}\frac{\abs{\vv}^2-3}{2}\right),
\end{align}
satisfying a fluid-type equation (see \cite[Page 92]{Sone2002})
\begin{align}\label{interior 1'}
\\
\left\{
\begin{array}{l}
\nx\Big(p_2-(\rh_2 +\th_2 +\rh_1 \th_1 )\Big)=0,\\\rule{0ex}{1.0em}
\vu_1\cdot\nx\vu_2+(\rh_1\vu_1+\vu_2)\cdot\nx\vu_1-\gamma_1\dx\vu_2 +\nx p_3 =-\gamma_2\nx\cdot\dx\th_1-\gamma_4\nx\cdot\bigg(\th_1\Big(\nx\vu_1+(\nx\vu)^T\Big)\bigg),\\\rule{0ex}{1.0em}
\nx\cdot\vu =-\vu_1\cdot\nx\rh_1,\\\rule{0ex}{1.0em}
\vu_1 \cdot\nx\th_2+(\rh_1\vu_1+\vu_2)\cdot\nx\th_1-\vu_1\cdot\nx p_2 =\gamma_1\Big(\nx\vu_1+(\nx\vu)^T\Big)^2+\dx\Big(\gamma_2\th_2+\gamma_5\th_1^2\Big),\no
\end{array}
\right.
\end{align}
where $\gamma_3,\gamma_4,\gamma_5$ are constants. Now $\f_2$ does not satisfy \eqref{btt 2} alone, so we have to introduce boundary layer. %\eqref{interior 1'} looks rather complicated, but it is essentially a linear equation once we know $(\rh_1,\vu_1,\th_1)$.
Let $\fb_2$ satisfy the $\e$-Milne problem with geometric correction
\begin{align}\label{att 3}
\left\{
\begin{array}{l}\displaystyle
\va\dfrac{\p\fb_2}{\p\eta}-\dfrac{\e}{R_1-\e\eta}\bigg(\vb^2\dfrac{\p\fb_2}{\p\va}-\va\vb\dfrac{\p\fb_2}{\p\vb}\bigg)
-\dfrac{\e}{R_2-\e\eta}\bigg(\vc^2\dfrac{\p\fb_2}{\p\va}-\va\vc\dfrac{\p\fb_2}{\p\vc}\bigg)+\ll[\fb_2]
=0,\\\rule{0ex}{2.0em}
\fb_2(0,\iota_1,\iota_2,\vvv)=h(\iota_1,\iota_2,,\vvv)-\tilde h(\iota_1,\iota_2,\vvv)\ \
\text{for}\ \ \va>0,\\\rule{0ex}{2.0em}
\displaystyle\fb_2(L,\iota_1,\iota_2,,\vvv)
=\fb_2(L,\iota_1,\iota_2,,\rr[\vvv]),
\end{array}
\right.
\end{align}
with the in-flow boundary data
\begin{align}\label{btt 3}
h(\iota_1,\iota_2,\vvv)=&M_1\m_1(\vx_0,\vv)+
\m_2(\vx_0,\vv)-\bigg((B_2+C_2)-\pp[B_2+C_2]\bigg).
\end{align}
Using \eqref{boundary compatibility}, considering $B_2$ and $C_2$ given in Section \ref{att section 1} and using symmetry, we may directly check that
\begin{align}\label{att 10}
&\int_{\va>0}\mh(\vvv)h(\iota_1,\iota_2,\vvv)\abs{\va}\ud\vvv\\
=&-\int_{\vv\cdot\vn(\vx_0)<0}(B_2+C_2)(\vx_0)\Big(\vv\cdot\vn(\vx_0)\Big)\ud\vv+\int_{\vv\cdot\vn(\vx_0)<0}\pp[B_2+C_2](\vx_0)\Big(\vv\cdot\vn(\vx_0)\Big)\ud\vv\no\\
=&-\int_{\r^3}(B_2+C_2)(\vx_0)\Big(\vv\cdot\vn(\vx_0)\Big)\ud\vv=0.\no
\end{align}
Here the last equality holds based on the construction in Section \ref{att section 1}. The $C_2$ integral vanishes due to orthogonality. The $B_2$ integral vanishes due to symmetry and $\vu_{\bb,1}\cdot\vn=0$. Based on Theorem \ref{Milne theorem 3} and Theorem \ref{Milne theorem 4}, there exists a unique
\begin{align}
\tilde h(\iota_1,\iota_2,\vvv)=\m^{\frac{1}{2}}\sum_{k=0}^4\tilde D_k(\iota_1,\iota_2)\ee_k,
\end{align}
such that \eqref{att 3} is well-posed and the solution decays exponentially fast (here $\ee_k$ with $k=0,1,2,3,4$ form a basis of null space $\nk$ of $\ll$). In particular, $\tilde D_1=0$.
Then we further require that $A_2$ satisfies the boundary condition
\begin{align}\label{att 1}
A_2(\vx_0,\vv)=\tilde h(\iota_1,\iota_2,\vvv)+M_2(\vx_0)\m^{\frac{1}{2}}(\vv).
\end{align}
Here $\vx_0$ corresponds to $(\iota_1,\iota_2)$ and $\vv$ corresponds to $\vvv$, based on substitution in Section \ref{att section 2}. Here, the constant $M_2(\vx_0)$ is chosen to enforce the Boussinesq relation
\begin{align}
p_2 -(\rh_2 +\th_2 +\rh_1 \th_1 )=&\text{constant},
\end{align}
where $p_2$ is the pressure solved from \eqref{interior 1}. Similar to the construction of $\f_1$, due to \eqref{small normalization}, we can choose this constant to satisfy the normalization condition
\begin{align}
\iint_{\Omega\times\r^3}(\f_2+\fb_2)(\vx,\vv)\m^{\frac{1}{2}}(\vv)\ud{\vv}\ud{\vx}=0.
\end{align}
Also, based on \eqref{att 09}, $\fb_1=0$, \eqref{att 1} and \eqref{att 3}, we have
\begin{align}
&A_2+\fb_2=M_2\m^{\frac{1}{2}}+h\\
=&M_2\m^{\frac{1}{2}}+\m_1
\int_{\vuu\cdot\vn(\vx_0)>0}\m^{\frac{1}{2}}(\vuu)(\f_1+\fb_1)\abs{\vuu\cdot\vn(\vx_0)}
\ud{\vuu}+\m_2-\bigg((B_2+C_2)-\pp[B_2+C_2]\bigg).\no
\end{align}
Comparing this with the desired boundary expansion \eqref{btt 2}, i.e.
\begin{align}
A_2+B_2+C_2+\fb_2=\pp[A_2+B_2+C_2+\fb_2]+\m_1
\int_{\vuu\cdot\vn(\vx_0)>0}\m^{\frac{1}{2}}(\vuu)(\f_1+\fb_1)\abs{\vuu\cdot\vn(\vx_0)}
\ud{\vuu}+\m_2,
\end{align}
we only need to verify that
\begin{align}
\pp[A_2+\fb_2]=M_2\m^{\frac{1}{2}}.
\end{align}
Based on Theorem \ref{Milne theorem 3}, the equation \eqref{att 3} implies the zero mass-flux condition of $\fb_2$ as
\begin{align}\label{att 2}
\int_{\r^3}\m^{\frac{1}{2}}(\vuu)\fb_2(\vx,\vuu)(\vuu\cdot\vn)\ud{\vuu}=0.
\end{align}
Since $\m_1$ and $\m_2$ satisfy \eqref{boundary compatibility}, based on \eqref{att 1}, we have
\begin{align}
\pp[A_2+\fb_2]
=&
\m^{\frac{1}{2}}\int_{\vuu\cdot\vn>0}\m^{\frac{1}{2}}(\vuu)A_2(\vx,\vuu)(\vuu\cdot\vn)\ud{\vuu}
+\m^{\frac{1}{2}}\int_{\vuu\cdot\vn>0}\m^{\frac{1}{2}}(\vuu)\fb_2(\vx,\vuu)(\vuu\cdot\vn)\ud{\vuu}\\
=&
M_2\m^{\frac{1}{2}}+\m^{\frac{1}{2}}\int_{\vuu\cdot\vn>0}\m^{\frac{1}{2}}(\vuu)\tilde h(\vx,\vuu)(\vuu\cdot\vn)\ud{\vuu}+
\m^{\frac{1}{2}}\int_{\vuu\cdot\vn>0}\m^{\frac{1}{2}}(\vuu)\fb_2(\vx,\vuu)(\vuu\cdot\vn)\ud{\vuu}.\no
\end{align}
Using \eqref{att 2} and \eqref{att 3}, noting $\va>0$ represents in-flow boundary, we know
\begin{align}
\pp[A_2+\fb_2]
=&M_2\m^{\frac{1}{2}}+\m^{\frac{1}{2}}\int_{\vuu\cdot\vn>0}\m^{\frac{1}{2}}(\vuu)\tilde h(\vx,\vuu)(\vuu\cdot\vn)\ud{\vuu}-
\m^{\frac{1}{2}}\int_{\vuu\cdot\vn<0}\m^{\frac{1}{2}}(\vuu)\fb_2(\vx,\vuu)(\vuu\cdot\vn)\ud{\vuu}\\
=&
M_2\m^{\frac{1}{2}}+\m^{\frac{1}{2}}\int_{\vuu\cdot\vn>0}\m^{\frac{1}{2}}(\vuu)\tilde h(\vx,\vuu)(\vuu\cdot\vn)\ud{\vuu}-
\m^{\frac{1}{2}}\int_{\vuu\cdot\vn<0}\m^{\frac{1}{2}}(\vuu)(h-\tilde h)(\vx,\vuu)(\vuu\cdot\vn)\ud{\vuu}\no.
\end{align}
Then direct computation reveals that
\begin{align}
\pp[A_2+\fb_2]
=&M_2\m^{\frac{1}{2}}+\m^{\frac{1}{2}}\int_{\vuu\cdot\vn>0}\m^{\frac{1}{2}}(\vuu)\tilde h(\vx,\vuu)(\vuu\cdot\vn)\ud{\vuu}-
\m^{\frac{1}{2}}\int_{\vuu\cdot\vn<0}\m^{\frac{1}{2}}(\vuu)h(\vx,\vuu)(\vuu\cdot\vn)\ud{\vuu}\\
&+
\m^{\frac{1}{2}}\int_{\vuu\cdot\vn<0}\m^{\frac{1}{2}}(\vuu)\tilde h(\vx,\vuu)(\vuu\cdot\vn)\ud{\vuu}\no\\
=&
M_2\m^{\frac{1}{2}}+\m^{\frac{1}{2}}\int_{\r^3}\m^{\frac{1}{2}}(\vuu)\tilde h(\vx,\vuu)(\vuu\cdot\vn)\ud{\vuu}-
\m^{\frac{1}{2}}\int_{\vuu\cdot\vn<0}\m^{\frac{1}{2}}(\vuu)h(\vx,\vuu)(\vuu\cdot\vn)\ud{\vuu}\no.
\end{align}
Finally, using \eqref{btt 3}, \eqref{boundary compatibility}, \eqref{att 10} and $\tilde D_1=0$, we obtain
\begin{align}
\pp[A_2+\fb_2]
=&M_2\m^{\frac{1}{2}}+\tilde D_1-0=M_2\m^{\frac{1}{2}}.
\end{align}
$F_3$ can be defined in a similar fashion which satisfies an even more complicated fluid-type system (see \cite[Page 92]{Sone2002}). We skip the details and simply note that the well-posedness is always guaranteed.

% \begin{remark}
% Theoretically speaking, when defining the boundary layer $\fb_1$ and $\fb_2$, we need to introduce a smooth cutoff function in the normal direction to avoid the strong discontinuity across $\eta=L$. However, based on the analysis in \cite{AA003}, \cite{AA007} and \cite{AA016}, this cutoff function will not affect the estimates. For clarify and simplicity, we ignore it throughout this monograph.
% \end{remark}

\newpage

%%%%%%%%%%%%%%%%%%%%%%%%%%%%%%%%%%%%%%%%%%%%%%%%%%%%%%%%%%%%%%%%%%%%%%%%%%%%%%%%%%%%%
\section{Remainder Estimates}%%%%%%%%%%%%%%%%%%%%%%%%%%%%%%%%%%%%%%%%%%%%%%%%%%%%%%%%
%%%%%%%%%%%%%%%%%%%%%%%%%%%%%%%%%%%%%%%%%%%%%%%%%%%%%%%%%%%%%%%%%%%%%%%%%%%%%%%%%%%%%

We consider the linearized stationary Boltzmann equation
\begin{align}\label{linear steady}
\left\{
\begin{array}{l}
\e\vv\cdot\nx f+\ll[f]=S(\vx,\vv)\ \ \text{in}\ \ \Omega\times\r^3,\\\rule{0ex}{1.5em}
f(\vx_0,\vv)=\pp[f](\vx_0,\vv)+h(\vx_0,\vv)\ \ \text{for}\ \ \vx_0\in\p\Omega\ \
\text{and}\ \ \vv\cdot\vn<0,
\end{array}
\right.
\end{align}
where
\begin{align}
\pp[f](\vx_0,\vv)=\m^{\frac{1}{2}}(\vv)
\int_{\gamma_+}f(\vx_0,\vv)\mh(\vv)\ud{\gamma}.
\end{align}
The data $S$ and $h$ satisfy the compatibility condition
\begin{align}\label{linear steady compatibility}
\iint_{\Omega\times\r^3}S(\vx,\vv)\m^{\frac{1}{2}}(\vv)\ud{\vv}\ud{\vx}+\int_{\gamma_-}h(\vx,\vv)\m^{\frac{1}{2}}(\vv)\ud{\gamma}=0.
\end{align}
It is easy to see if $f$ is a solution to \eqref{linear steady}, then $f+C\m^{\frac{1}{2}}$ is also a solution for arbitrary $C\in\r$. Hence, to guarantee uniqueness, the solution should satisfy the normalization condition
\begin{align}\label{linear steady normalization}
\iint_{\Omega\times\r^3}f(\vx,\vv)\m^{\frac{1}{2}}(\vv)\ud{\vv}\ud{\vx}=&0.
\end{align}
Our analysis is based on the ideas in \cite{Esposito.Guo.Kim.Marra2013}, \cite{Guo2010}, \cite{AA004} and \cite{AA013}. Since proof of the well-posedness of \eqref{linear steady} is standard, we will focus on the a priori estimates here.

%%%%%%%%%%%%%%%%%%%%%%%%%%%%%%%%%%%%%%%%%%%%%%%%%%%%%%%%%%%%%%%%%%%%%%%%
\subsection{Preliminaries}
%%%%%%%%%%%%%%%%%%%%%%%%%%%%%%%%%%%%%%%%%%%%%%%%%%%%%%%%%%%%%%%%%%%%%%%%

We first introduce the well-known micro-macro decomposition. Define $\pk$ as the orthogonal projection onto the null space of $\ll$:
\begin{align}\label{ktt 04}
\pk[f]:=\m^{\frac{1}{2}}(\vv)\bigg(a_f(\vx)+\vv\cdot\vbb_f(\vx)+\frac{\abs{\vv}^2-3}{2}c_f(\vx)\bigg)\in\nk,
\end{align}
where $a_f$, $\vbb_f$ and $c_f$ are coefficients. When there is no confusion, we will simply write $a,\vbb, c$. Definitely, $\ll\Big[\pk[f]\Big]=0$. Then the operator $\ik-\pk$ is naturally
\begin{align}
(\ik-\pk)[f]:=f-\pk[f],
\end{align}
which satisfies $(\ik-\pk)[f]\in\nk^{\perp}$, i.e. $\ll[f]=\ll\Big[(\ik-\pk)[f]\Big]$.

\begin{lemma}\label{ktt 01}
The linearized collision operator $\ll=\nu I-K$ defined in \eqref{att 11} is self-adjoint in $L^2$. It satisfies
\begin{align}
&\br{\vv}\ls\nu(\vv)\ls\br{\vv},\\
&\br{f,\ll[f]}(\vx)=\br{(\ik-\pk)[f],\ll\Big[(\ik-\pk)[f]\Big]}(\vx),\\
&\unm{(\ik-\pk)[f(\vx)]}^2\ls \br{f,\ll[f]}(\vx)\ls \unm{(\ik-\pk)[f(\vx)]}^2.
\end{align}
\end{lemma}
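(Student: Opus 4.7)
The three claims have quite different flavors, so I would handle them separately.

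For the first claim on $\nu(\vv)$, I would argue directly from the explicit formula
\[
\nu(\vv)=\pi^2q_0\Bigg(\Big(2\abs{\vv}+\frac{1}{\abs{\vv}}\Big)\int_0^{\abs{\vv}}\ue^{-z^2}\ud z+\ue^{-\abs{\vv}^2}\Bigg).
\]
I would split into two regimes. For small $\abs{\vv}$, Taylor-expanding $\int_0^{\abs{\vv}}\ue^{-z^2}\ud z=\abs{\vv}-\tfrac{1}{3}\abs{\vv}^3+O(\abs{\vv}^5)$ shows that the singular $1/\abs{\vv}$ is cancelled and $\nu(\vv)$ is continuous and strictly positive at the origin, hence bounded from above and below by positive constants on any compact set. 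For large $\abs{\vv}$, the Gaussian integral converges to $\sqrt{\pi}/2$ while the exponential tail and the $1/\abs{\vv}$ piece are negligible, so $\nu(\vv)\sim \sqrt{\pi}\,\pi^2q_0\abs{\vv}$. Combining the two regimes yields $\nu(\vv)\sim\br{\vv}$.

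For the second claim I would just use that $\ll$ is self-adjoint in $L^2(\r^3)$ and that $\ll[\pk f]=0$. Decomposing $f=\pk f+(\ik-\pk)f$ gives $\ll[f]=\ll[(\ik-\pk)f]$, and then self-adjointness with $\pk f\in\nk$ removes the cross term:
\[
\br{f,\ll[f]}=\br{(\ik-\pk)f,\ll[(\ik-\pk)f]}+\br{\pk f,\ll[(\ik-\pk)f]}=\br{(\ik-\pk)f,\ll[(\ik-\pk)f]},
\]
since $\br{\pk f,\ll[(\ik-\pk)f]}=\br{\ll[\pk f],(\ik-\pk)f}=0$.

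The last claim reduces, by the second claim, to showing that for $g\in\nk^{\perp}$ one has $\unm{g}^2\ls\br{g,\ll g}\ls\unm{g}^2$. The upper bound is immediate: since $\ll=\nu I-K$ and $K$ is bounded on $L^2$ with $\abs{K[g]}_2\ls\abs{g}_2\ls\unm{g}$, Cauchy--Schwarz plus $\nu\ls\br{\vv}$ give $\br{g,\ll g}\le\abs{\nu^{1/2}g}_2^2+\abs{g}_2\abs{K[g]}_2\ls\unm{g}^2$. The lower bound is the classical Hilbert/Grad coercivity estimate for the hard-sphere linearized collision operator, which exploits the compactness of $K$ relative to $\nu I$ on $L^2(\r^3)$ together with the fact that $\nk$ is exactly the kernel of $\ll$: a standard contradiction argument (assume a normalized sequence $g_n\in\nk^{\perp}$ violating the bound, extract a weak limit, use compactness of $K$ to pass to a strong limit in $\nk\cap\nk^{\perp}=\{0\}$) yields a spectral gap $\br{g,\ll g}\ge\delta\unm{g}^2$. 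I would cite \cite[Chapter 3]{Glassey1996} for this step rather than reprove it, since this is the main technical obstacle and it is by now textbook. Combining the two bounds with the identity from the second claim completes the proof.
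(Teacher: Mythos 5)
Your proof is correct and in substance matches the paper, whose own proof simply cites \cite[Chapter 3]{Glassey1996} and \cite[Lemma 3]{Guo2010} for these standard facts: your explicit small/large-$\abs{\vv}$ asymptotics for $\nu$, the orthogonality identity via $\ll[\pk[f]]=0$ and self-adjointness, and the easy upper bound are all fine, and the only genuinely nontrivial ingredient --- the coercivity (spectral gap) of $\ll$ on $\nk^{\perp}$ --- you correctly defer to the same textbook reference. Nothing further is needed.
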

\begin{proof}
These are standard properties of $\ll$. See \cite[Chapter 3]{Glassey1996} and \cite[Lemma 3]{Guo2010}.
\end{proof}

\begin{lemma}\label{ktt 02}
For $0<\d<<1$, define the near-grazing set of $\gamma_{\pm}$:
\begin{align}
\gamma_{\pm}^{\d}:=\left\{(\vx,\vv)\in\gamma_{\pm}:
\abs{\vn(\vx)\cdot\vv}\leq\d\ \text{or}\ \abs{\vv}\geq\frac{1}{\d}\ \text{or}\
\abs{\vv}\leq\d\right\}.
\end{align}
Then
\begin{align}
\pnm{f\id_{\gamma_{\pm}\backslash\gamma_{\pm}^{\d}}}{\gamma}{1}\leq
C(\delta)\bigg(\pnnm{f}{1}+\pnnm{\vv\cdot\nx f}{1}\bigg).
\end{align}
Here $\id$ denotes the indicator function.
\end{lemma}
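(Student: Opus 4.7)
The plan is to establish this trace-type bound by an averaging argument along characteristics. The essential point is that on $\gamma_\pm\setminus\gamma_\pm^\delta$ the velocity satisfies $\delta\leq\abs{\vv}\leq\delta^{-1}$ and $\abs{\vv\cdot\vn(\vx_0)}\geq\delta$, so there is a uniform inward depth $t_0=t_0(\delta)>0$ such that the segment $\{\vx_0\mp t\vv:t\in[0,t_0]\}$ stays inside $\Omega$ for every such $(\vx_0,\vv)$; smoothness of $\p\Omega$ together with the uniform bounds on $\abs{\vv}$ and $\abs{\vv\cdot\vn}$ makes this choice possible.

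Next, by the fundamental theorem of calculus along the straight characteristic $s\mapsto\vx_0\mp s\vv$,
$$f(\vx_0,\vv)=f(\vx_0\mp t\vv,\vv)\pm\int_0^t(\vv\cdot\nx f)(\vx_0\mp s\vv,\vv)\,\ud s,$$
and averaging in $t$ over $[0,t_0]$ gives
$$f(\vx_0,\vv)=\frac{1}{t_0}\int_0^{t_0}f(\vx_0\mp t\vv,\vv)\,\ud t\pm\frac{1}{t_0}\int_0^{t_0}\int_0^t(\vv\cdot\nx f)(\vx_0\mp s\vv,\vv)\,\ud s\,\ud t.$$
I would then take absolute values and integrate against $\ud\gamma=\abs{\vv\cdot\vn}\,\ud\varpi\,\ud\vv$ over $\gamma_\pm\setminus\gamma_\pm^\delta$. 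The decisive step is the change of variables $(\vx_0,t)\mapsto\vx=\vx_0\mp t\vv$ (with $\vv$ frozen), whose Jacobian is exactly $\abs{\vv\cdot\vn(\vx_0)}$, so that $\ud\vx=\abs{\vv\cdot\vn}\,\ud t\,\ud\varpi$. On the non-grazing tube this map is injective into $\Omega\times\r^3$, so for any nonnegative $g$,
$$\int_{\gamma_\pm\setminus\gamma_\pm^\delta}\int_0^{t_0}\abs{g(\vx_0\mp t\vv,\vv)}\,\ud t\,\abs{\vv\cdot\vn}\,\ud\varpi\,\ud\vv\leq\iint_{\Omega\times\r^3}\abs{g(\vx,\vv)}\,\ud\vx\,\ud\vv.$$
Applying this bound with $g=f$ and with $g=\vv\cdot\nx f$ yields the claim with $C(\delta)\sim t_0(\delta)^{-1}$.

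The main obstacle I foresee is purely geometric: verifying that $t_0$ can be chosen uniformly over the whole non-grazing set and that the change of variables is globally injective with the asserted Jacobian. Convexity of $\Omega$ together with the uniform lower bounds $\abs{\vv}\geq\delta$ and $\abs{\vv\cdot\vn}\geq\delta$ make this routine, since convexity rules out reentry of the backward characteristic and hence any covering multiplicity. Once those facts are in place the estimate follows from Fubini and the two bounds above.
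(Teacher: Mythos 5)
Your proposal is correct, and it is essentially the standard proof of this trace lemma: the paper does not prove the statement itself but simply cites \cite[Lemma 2.1]{Esposito.Guo.Kim.Marra2013}, whose argument is exactly your backward-characteristic one — a uniform escape time on the non-grazing set, the fundamental theorem of calculus along the straight characteristic, averaging in $t$, and the change of variables $(\vx_0,t)\mapsto\vx=\vx_0\mp t\vv$ with Jacobian $\abs{\vv\cdot\vn(\vx_0)}$. Two minor points to make explicit if you write it up: since only $f,\ \vv\cdot\nx f\in L^1$ are assumed, the pointwise identity along characteristics should be justified for almost every characteristic (or by a smoothing/approximation argument); and the uniform depth $t_0(\d)$ (of order $\d^3$, from $\abs{\vv\cdot\vn}\geq\d$, $\abs{\vv}\leq\d^{-1}$ and the bounded curvature/convexity of $\p\Omega$, which also gives the injectivity you use) should be quantified, as it is what produces the constant $C(\d)$.
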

\begin{proof}
See \cite[Lemma 2.1]{Esposito.Guo.Kim.Marra2013}.
\end{proof}

\begin{lemma}[Time-Independent Green's Identity]\label{ktt 03}
Assume $f(\vx,\vv),\ g(\vx,\vv)\in L^2(\Omega\times\r^2)$ and
$\vv\cdot\nx f,\ \vv\cdot\nx g\in L^2(\Omega\times\r^2)$ with $f,\
g\in L^2(\gamma)$. Then
\begin{align}
\iint_{\Omega\times\r^2}\bigg((\vv\cdot\nx f)g+(\vv\cdot\nx
g)f\bigg)\ud{\vx}\ud{\vv}=\iint_{\gamma_+}fg\ud{\gamma}-\iint_{\gamma_-}fg\ud{\gamma}.
\end{align}
\end{lemma}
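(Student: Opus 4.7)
The plan is to reduce the identity to the classical divergence theorem in $\vx$ applied with $\vv$ as a parameter, and then integrate in $\vv$. Since $\vv$ is constant under $\nx$, the Leibniz rule gives $(\vv\cdot\nx f)g+f(\vv\cdot\nx g)=\vv\cdot\nx(fg)=\nx\cdot(\vv\,fg)$. For each fixed $\vv\in\r^3$ and smooth $fg(\cdot,\vv)$, the divergence theorem yields
\begin{equation*}
\int_\Omega\nx\cdot(\vv\,fg)\,\ud\vx=\int_{\p\Omega}(fg)(\vv\cdot\vn)\,\ud\varpi.
\end{equation*}
Integrating in $\vv$ and splitting the boundary contribution according to the sign of $\vv\cdot\vn$, with $\ud\gamma=\abs{\vv\cdot\vn}\ud\varpi\ud\vv$ and the grazing set $\gamma_0$ contributing nothing, produces the claimed identity.

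The subtlety is justifying this under the stated low-regularity hypotheses: $f,g$ and the transport derivatives $\vv\cdot\nx f,\vv\cdot\nx g$ lie only in $L^2(\Omega\times\r^3)$, with traces in $L^2(\gamma)$. I would proceed by approximation in two stages. First, introduce a velocity cutoff $\chi_R(\vv)\in C_c^\infty(\r^3)$ supported in $\{\abs{\vv}\leq R\}$; by Cauchy--Schwarz and the $L^2$ bounds, the cutoff error tends to zero as $R\to\infty$ in every integral appearing in the identity. Second, for the $\chi_R$-truncated functions, mollify in $\vx$ by a standard mollifier to obtain smooth approximants $f_n,g_n$. Since convolution in $\vx$ commutes with the constant-coefficient operator $\vv\cdot\nx$, one has $f_n\to f\chi_R$ and $\vv\cdot\nx f_n\to\vv\cdot\nx(f\chi_R)$ in $L^2$, and similarly for $g$. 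For each $n$ the identity holds classically on a slightly shrunken subdomain, and one passes to the limit in the interior terms directly.

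The main obstacle is the passage to the limit on the boundary $\gamma$, since naive mollification does not guarantee convergence of boundary traces. To handle this, I would invoke a trace theorem for the free transport operator (an $L^2$ analogue of Lemma \ref{ktt 02}): away from the grazing set, control of $\pnnm{f}{2}$ and $\pnnm{\vv\cdot\nx f}{2}$ yields control of the boundary trace, so $f_n|_\gamma\to f|_\gamma$ and $g_n|_\gamma\to g|_\gamma$ in $L^2(\gamma\setminus\gamma^\d)$ for each $\d>0$, where $\gamma^\d$ denotes the near-grazing neighborhood of Lemma \ref{ktt 02}. The remaining near-grazing contribution is controlled by Cauchy--Schwarz through $\pnm{f\id_{\gamma^\d}}{\gamma}{2}\pnm{g\id_{\gamma^\d}}{\gamma}{2}$, which vanishes as $\d\to 0$ by absolute continuity of the integral under the $L^2(\gamma)$ hypothesis. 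Combining these limits, sending $n\to\infty$, then $\d\to 0$, then $R\to\infty$, completes the identity in the stated generality.
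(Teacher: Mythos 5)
The paper offers no proof of this lemma, only a citation to \cite[Lemma 2.2]{Esposito.Guo.Kim.Marra2013}, and your argument is precisely the standard one used there: write $(\vv\cdot\nx f)g+f(\vv\cdot\nx g)=\nx\cdot(\vv fg)$, apply the divergence theorem for fixed $\vv$, and justify the trace convergence by cutoff, mollification, and the near-grazing trace estimate. One small simplification: you do not need an $L^2$ analogue of Lemma \ref{ktt 02} — the stated $L^1$ version applied to the product $fg$ already suffices, since $\pnnm{fg}{1}$ and $\pnnm{\vv\cdot\nx(fg)}{1}$ are controlled by Cauchy--Schwarz from the $L^2$ hypotheses on $f$, $g$ and their transport derivatives, which is exactly how the paper itself later uses that lemma with $f^2$ in \eqref{ktt 46}.
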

\begin{proof}
See \cite[Lemma 2.2]{Esposito.Guo.Kim.Marra2013}.
\end{proof}

\begin{lemma}\label{Regularity lemma 1}
For Boltzmann collision operator $k$, we have
\begin{align}
\abs{k(\vuu,\vvv)}\ls \left(\abs{\vuu-\vvv}+\frac{1}{\abs{\vuu-\vvv}}\right)\ue^{-\frac{1}{8}\abs{\vuu-\vvv}^2-\frac{1}{8}\frac{\abs{\abs{\vuu}^2-\abs{\vvv}^2}^2}{\abs{\vuu-\vvv}^2}}.
\end{align}
\end{lemma}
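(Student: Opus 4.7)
The proof should be a direct computation using the explicit formulas for $k_1$ and $k_2$ given above the statement, together with a single elementary algebraic inequality that converts a Gaussian in $|\vuu|^2+|\vvv|^2$ into the desired Gaussian in the variables $|\vuu-\vvv|$ and $(|\vuu|^2-|\vvv|^2)/|\vuu-\vvv|$.

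The plan is to bound $|k|\leq |k_1|+|k_2|$ and estimate each summand separately. For $k_2$, the prefactor is already $\frac{2\pi q_0}{|\vuu-\vvv|}$, and the exponent is $-\tfrac14(|\vuu-\vvv|^2+(|\vuu|^2-|\vvv|^2)^2/|\vuu-\vvv|^2)$; since $\tfrac14\geq\tfrac18$, this factor is trivially dominated by the target exponential. For $k_1$, the prefactor is $\pi q_0|\vuu-\vvv|$, and we need to pass from the Gaussian $\exp\bigl(-\tfrac12|\vuu|^2-\tfrac12|\vvv|^2\bigr)$ to the target form. The key observation is that
\begin{align*}
|\vuu-\vvv|^2+\frac{(|\vuu|^2-|\vvv|^2)^2}{|\vuu-\vvv|^2}
\;\leq\; 2(|\vuu|^2+|\vvv|^2)+(|\vuu|+|\vvv|)^2
\;\leq\; 4(|\vuu|^2+|\vvv|^2),
\end{align*}
where the first step uses $|\vuu-\vvv|^2\leq 2(|\vuu|^2+|\vvv|^2)$ and $|\vuu-\vvv|\geq\bigl||\vuu|-|\vvv|\bigr|$ (so that the singular quotient is $\leq(|\vuu|+|\vvv|)^2$). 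Dividing by $8$ yields $\tfrac18|\vuu-\vvv|^2+\tfrac18(|\vuu|^2-|\vvv|^2)^2/|\vuu-\vvv|^2\leq \tfrac12(|\vuu|^2+|\vvv|^2)$, which gives the required pointwise domination of the $k_1$ Gaussian.

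Assembling the two bounds produces
\begin{align*}
|k(\vuu,\vvv)|\ \ls\ \Bigl(|\vuu-\vvv|+\tfrac{1}{|\vuu-\vvv|}\Bigr)\exp\!\Bigl(-\tfrac18|\vuu-\vvv|^2-\tfrac18\tfrac{(|\vuu|^2-|\vvv|^2)^2}{|\vuu-\vvv|^2}\Bigr),
\end{align*}
which is exactly the claim. There is no serious obstacle here; the only point requiring care is the elementary inequality above, which is the standard mechanism for converting between the two natural Gaussian weights in the hard-sphere kernel. The factor $\tfrac18$ in the final exponent (rather than $\tfrac14$) is precisely the price paid for handling $k_1$ by this crude pointwise comparison, so no improvement is possible without separating the two kernels in subsequent applications.
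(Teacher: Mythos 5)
Your proposal is correct. The paper does not prove this lemma itself — it simply cites \cite[Lemma 3]{Guo2010} — and your argument (bounding $k_1$ and $k_2$ separately, dominating $k_2$ trivially since $\tfrac14\geq\tfrac18$, and for $k_1$ using $\abs{\vuu-\vvv}^2\leq 2(\abs{\vuu}^2+\abs{\vvv}^2)$ together with the reverse triangle inequality to control the quotient $(\abs{\vuu}^2-\abs{\vvv}^2)^2/\abs{\vuu-\vvv}^2$ by $(\abs{\vuu}+\abs{\vvv})^2$) is precisely the standard elementary computation behind that citation, so nothing further is needed.
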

\begin{proof}
See \cite[Lemma 3]{Guo2010}.
\end{proof}

\begin{lemma}\label{Regularity lemma 1'}
Let $0\leq\varrho< \dfrac{1}{4}$ and $\vth\geq0$. Then for $\d>0$ sufficiently small and any $\vvv\in\r^3$,
\begin{align}
\int_{\r^3}\ue^{\d\abs{\vuu-\vvv}^2}\abs{k(\vuu,\vvv)}
\frac{\br{\vvv}^{\vth}\ue^{\vrh\abs{\vvv}^2}}{\br{\vuu}^{\vth}\ue^{\vrh\abs{\vuu}^2}}\ud{\vuu}
\ls \frac{1}{\br{\vvv}}.
\end{align}
\end{lemma}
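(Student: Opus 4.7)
The plan is to reduce the integral to a Gaussian estimate that exploits the anisotropic decay of the hard-sphere kernel $k$ from Lemma~\ref{Regularity lemma 1}, with the weight parameter $\vrh<1/4$ supplying exactly enough room to absorb the Maxwellian weight ratio $e^{\vrh(|\vvv|^2-|\vuu|^2)}$.

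First I would absorb this ratio into the Gaussian structure of $k$. Writing $|\vvv|^2-|\vuu|^2\leq |\vuu-\vvv|\cdot\bigl||\vuu|^2-|\vvv|^2\bigr|/|\vuu-\vvv|$ and applying Young's inequality gives
\begin{align*}
\vrh\bigl(|\vvv|^2-|\vuu|^2\bigr) \leq \frac{\vrh}{2}|\vuu-\vvv|^2 + \frac{\vrh}{2}\frac{(|\vuu|^2-|\vvv|^2)^2}{|\vuu-\vvv|^2}.
\end{align*}
Since $\vrh<1/4$, both coefficients $\vrh/2$ are strictly below the $1/8$ appearing in Lemma~\ref{Regularity lemma 1}, so choosing $\delta$ sufficiently small and combining with the gain $e^{\delta|\vuu-\vvv|^2}$ yields
\begin{align*}
e^{\delta|\vuu-\vvv|^2}|k(\vuu,\vvv)|e^{\vrh(|\vvv|^2-|\vuu|^2)} \ls \Big(|\vuu-\vvv|+\tfrac{1}{|\vuu-\vvv|}\Big)\exp\!\left(-c|\vuu-\vvv|^2-c\tfrac{(|\vuu|^2-|\vvv|^2)^2}{|\vuu-\vvv|^2}\right)
\end{align*}
for some $c>0$. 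The polynomial ratio $\br{\vvv}^\vth/\br{\vuu}^\vth$ I would handle through $\br{\vvv}\ls\br{\vuu}\br{\vuu-\vvv}$, then absorb $\br{\vuu-\vvv}^\vth$ into $e^{-c|\vuu-\vvv|^2}$ by slightly shrinking $c$.

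The real work will then be in estimating
\begin{align*}
I(\vvv):=\int_{\r^3}\Big(|\vuu-\vvv|+\tfrac{1}{|\vuu-\vvv|}\Big)\exp\!\left(-c|\vuu-\vvv|^2-c\tfrac{(|\vuu|^2-|\vvv|^2)^2}{|\vuu-\vvv|^2}\right)\ud\vuu \ls \frac{1}{\br{\vvv}}.
\end{align*}
This will be the main obstacle: the isotropic Gaussian alone yields only $I(\vvv)\ls 1$, so the $\br{\vvv}^{-1}$ decay must be extracted entirely from the anisotropic factor. For $|\vvv|\leq 1$ I will bound $I(\vvv)\ls 1\sim\br{\vvv}^{-1}$ directly using local integrability of $1/|\vuu-\vvv|$ in $\r^3$. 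For $|\vvv|\geq 1$ I would change variables $\vsi=\vuu-\vvv=r\omega$ with $\omega\in\s^2$, observe that $(|\vuu|^2-|\vvv|^2)^2/|\vuu-\vvv|^2=(2\vvv\cdot\omega+r)^2$, parameterize the sphere by the cosine $t=\vvv\cdot\omega/|\vvv|$, and substitute $s=2|\vvv|t+r$ to obtain
\begin{align*}
\int_{\s^2}e^{-c(2\vvv\cdot\omega+r)^2}\ud\omega = \frac{\pi}{|\vvv|}\int_{-2|\vvv|+r}^{2|\vvv|+r}e^{-cs^2}\ud s \ls \frac{1}{|\vvv|},
\end{align*}
which supplies exactly the required decay. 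The remaining radial integral $\int_0^\infty(r^3+r)e^{-cr^2}\ud r$ converges, so the two pieces combine to give $I(\vvv)\ls 1/|\vvv|$ in this regime, closing the bound.
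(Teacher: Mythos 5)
Your proof is correct. Note, though, that the paper never proves this lemma internally: its ``proof'' is a citation to Guo's Lemma 3, so there is no in-paper argument to match against. What you have written is a complete, self-contained version of the standard argument, and it is consistent with the machinery the paper does develop for the neighboring results (Lemmas \ref{Regularity lemma 2} and \ref{Regularity lemma 3}): you absorb the weight ratio $e^{\vrh(|\vvv|^2-|\vuu|^2)}$ into the anisotropic Gaussian of Lemma \ref{Regularity lemma 1}, where your Young-inequality splitting $\vrh(|\vvv|^2-|\vuu|^2)\le \frac{\vrh}{2}|\vuu-\vvv|^2+\frac{\vrh}{2}(|\vuu|^2-|\vvv|^2)^2/|\vuu-\vvv|^2$ is a slightly cruder substitute for the paper's discriminant/quadratic-form computation in the variables $|\vsi|$ and $\vsi\cdot\vvv/|\vsi|$, but it suffices here precisely because $\vrh/2<1/8$ for all $\vrh<1/4$; the Peetre-type absorption of $\br{\vvv}^{\vth}/\br{\vuu}^{\vth}$ is likewise standard. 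The genuinely substantive step — which the paper's internal lemmas never need, since they only claim $O(1)$ or logarithmic bounds — is extracting the $\br{\vvv}^{-1}$ decay from the factor $\exp\bigl(-c(2\vvv\cdot\omega+r)^2\bigr)$, and your angular substitution $s=2|\vvv|t+r$ giving $\int_{\s^2}e^{-c(2\vvv\cdot\omega+r)^2}\ud\omega\ls|\vvv|^{-1}$ uniformly in $r$, combined with the convergent radial integral $\int_0^\infty(r^3+r)e^{-cr^2}\ud r$ and the trivial treatment of $|\vvv|\le1$, is exactly the right mechanism and is executed correctly.
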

\begin{proof}
See \cite[Lemma 3]{Guo2010}.
\end{proof}

%%%%%%%%%%%%%%%%%%%%%%%%%%%%%%%%%%%%%%%%%%%%%%%%%%%%%%%%%%%%%%%%%%%%%%%%
\subsection{$L^{2m}$ Estimates}
%%%%%%%%%%%%%%%%%%%%%%%%%%%%%%%%%%%%%%%%%%%%%%%%%%%%%%%%%%%%%%%%%%%%%%%%

Throughout this section, we consider $\dfrac{3}{2}< m<3$. Let $o(1)$ denote a sufficiently small constant.

\begin{lemma}\label{ktt lemma 1}
The solution $f(\vx,\vv)$ to the equation \eqref{linear steady} satisfies
\begin{align}\label{ktt 39}
\e\pnnm{\pk[f]}{2m}\ls\e\pnm{(1-\pp)[f]}{\gamma_+}{\frac{4m}{3}}+\tnnm{(\ik-\pk)[f]}+\e\pnnm{(\ik-\pk)[f]}{2m}+\tnnm{\snn S}+\e\pnm{h}{\gamma_-}{\frac{4m}{3}}.
\end{align}
\end{lemma}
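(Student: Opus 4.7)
The strategy is a weak-formulation/duality argument of the type used in the Esposito--Guo--Kim--Marra program: we test the equation \eqref{linear steady} against carefully chosen test functions $\psi_a,\psi_b,\psi_c$ designed to extract, respectively, the density, momentum, and temperature components $a_f,\vbb_f,c_f$ of $\pk[f]$ in $L^{2m}$. Since
\[
\pnnm{\pk[f]}{2m}\ls \pnnm{a_f}{2m}+\pnnm{\vbb_f}{2m}+\pnnm{c_f}{2m},
\]
bounding each macroscopic coefficient in $L^{2m}(\Omega)$ suffices. Because $\ll[\pk[f]]=0$, the operator $\ll$ applied to $f$ only sees the microscopic part $(\ik-\pk)[f]$, which is exactly why we expect the right-hand side to contain only $(\ik-\pk)[f]$, $S$, and boundary traces of $f$.

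\textbf{Step 1: Elliptic auxiliary problems.} For each macroscopic component I introduce an elliptic problem whose source is $|\cdot|^{2m-2}\cdot$ times the coefficient. For $c_f$, let $\phi_c$ solve $-\dx\phi_c=|c_f|^{2m-2}c_f-\bar c$ in $\Omega$ with $\phi_c=0$ on $\p\Omega$, where $\bar c$ enforces the solvability constraint; analogous Neumann/Dirichlet problems define $\phi_a$ and $\phi_{\vbb}$ (the latter a system). Classical $L^p$ elliptic regularity gives
\[
\pnnm{\nx^{2}\phi_c}{2m/(2m-1)}\ls \pnnm{c_f}{2m}^{2m-1},
\]
and trace/embedding in 3D (using $\tfrac{3}{2}<m<3$) gives $\pnm{\nx\phi_c}{\p\Omega}{4m/(4m-3)}\ls\pnnm{c_f}{2m}^{2m-1}$, which is exactly the dual exponent of $4m/3$.

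\textbf{Step 2: Test functions.} Take $\psi_c=\mh(\vv)(|\vv|^{2}-5)\,\vv\cdot\nx\phi_c$ and analogous velocity moments for $\psi_a,\psi_{\vbb}$, chosen so that the Gaussian integrals against $\pk[f]$ produce exactly the combination $-\dx\phi_c\cdot c_f$ (plus irrelevant divergence-free pieces). Multiplying \eqref{linear steady} by $\psi_c$, integrating over $\Omega\times\r^{3}$, and invoking the time-independent Green's identity (Lemma~\ref{ktt 03}) yields
\begin{align*}
\e\iint_{\Omega\times\r^{3}} f\,\vv\cdot\nx\psi_c\,\ud\vv\ud\vx
= \e\iint_{\gamma}f\psi_c\,\ud\gamma -\iint\ll[(\ik-\pk)f]\,\psi_c\,\ud\vv\ud\vx+\iint S\psi_c\,\ud\vv\ud\vx.
\end{align*}
The leading order of the left side, computed using the Gaussian moments, is $\e\iint c_f(-\dx\phi_c)\,\ud\vx=\e\pnnm{c_f}{2m}^{2m}$ (modulo the $\bar c$ correction, which vanishes by \eqref{linear steady normalization}).

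\textbf{Step 3: Bounding the remaining terms.} Each of the three kinds of remaining terms is estimated by H\"older's inequality paired with the elliptic bound from Step 1:
\begin{itemize}
\item Microscopic interior terms $\iint(\ik-\pk)[f]\cdot(\vv\cdot\nx\psi_c)\,\ud\vv\ud\vx$ and $\iint\ll[(\ik-\pk)f]\psi_c$ are bounded using Lemma~\ref{ktt 01} and H\"older with exponents $(2m,2m/(2m-1))$, producing $\e\pnnm{(\ik-\pk)[f]}{2m}\pnnm{c_f}{2m}^{2m-1}$ and $\tnnm{(\ik-\pk)[f]}\pnnm{c_f}{2m}^{2m-1}$ respectively (the $L^2$ estimate uses $\nu^{1/2}\psi_c\in L^{2}$).
\item The source term $\iint S\psi_c$ is bounded by $\tnnm{\snn S}\,\tnnm{\sn\psi_c}\ls \tnnm{\snn S}\pnnm{c_f}{2m}^{2m-1}$.
\item The boundary integral is split via $f|_{\gamma_+}=\pp[f]+(1-\pp)[f]$ and $f|_{\gamma_-}=\pp[f]+h$; by conservation of mass through $\pp$ and a direct computation, the $\pp[f]$ parts cancel or vanish, leaving terms controlled by $\pnm{(1-\pp)[f]}{\gamma_+}{4m/3}$ and $\pnm{h}{\gamma_-}{4m/3}$ times $\pnm{\nx\phi_c}{\p\Omega}{4m/(4m-3)}\ls\pnnm{c_f}{2m}^{2m-1}$.
\end{itemize}
Dividing through by $\pnnm{c_f}{2m}^{2m-1}$ yields the desired bound on $\e\pnnm{c_f}{2m}$; repeating verbatim for $\psi_a,\psi_{\vbb}$ gives the estimates for $a_f,\vbb_f$ and hence \eqref{ktt 39}.

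\textbf{Main obstacle.} The delicate point is Step 2: the Gaussian-moment computation must produce only $-\dx\phi_c\cdot c_f$ (and lower-order terms that can be absorbed) after using $\nx\cdot\vbb_f$, which in turn requires a separate auxiliary problem for $\vbb_f$ with a symmetric test function of the form $\mh(v_iv_j-\delta_{ij})\p_j\phi_{\vbb,i}$ to kill cross-terms. Closing all three macroscopic estimates \emph{simultaneously} requires that the cross contributions between $a$, $\vbb$, $c$ match up, which is the standard but delicate macroscopic coercivity mechanism. The second source of difficulty is the boundary term: the Sobolev-trace chain $W^{2,2m/(2m-1)}(\Omega)\hookrightarrow L^{4m/(4m-3)}(\p\Omega)$ for $\nx\phi$ is what forces the restriction $\tfrac{3}{2}<m<3$, and near-grazing contributions have to be handled via Lemma~\ref{ktt 02} to legitimately evaluate traces.
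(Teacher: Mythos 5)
Your plan is essentially the paper's own proof: Green's identity applied to \eqref{linear steady} with test functions of the form $\mh(\vv)\times(\text{velocity moment})\times\nx\phi$, where $\phi$ solves an elliptic problem with source $|\cdot|^{2m-2}(\cdot)$; elliptic $W^{2,\frac{2m}{2m-1}}$ regularity plus the trace embedding of $\nx\phi$ into $L^{\frac{4m}{4m-3}}(\p\Omega)$ (which is indeed where $\tfrac32<m<3$ enters); moment constants chosen so that the $\pp[f]$ boundary contribution and the unwanted macroscopic cross-terms drop out; a second family of test functions for the momentum component so that, after summing over $j$, the full $\|\vbb\|_{L^{2m}}$ is recovered; and finally division by the factor $\|\cdot\|_{L^{2m}}^{2m-1}$. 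Two refinements of your sketch versus the paper: the $\pp[f]$ boundary term in the $a$-estimate cannot be killed by a moment constant or by ``conservation of mass''\,---\,the paper removes it by taking a Neumann condition $\p\phi_a/\p\vn=0$ (normal part) combined with oddness (tangential part); and the $\vbb$-estimate is closed componentwise with the two families $\mh(v_i^2-\beta_{b,i,j})\p_j\phi_{b,j}$ and $\mh|\vv|^2v_iv_j\p_i\phi_{b,j}$, not by a simultaneous coupling of $a,\vbb,c$.

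One concrete point in your Step 1--2 is wrong as written: you subtract a mean $\bar c$ in the Dirichlet problem $-\dx\phi_c=|c|^{2m-2}c-\bar c$, $\phi_c=0$ on $\p\Omega$, ``to enforce the solvability constraint,'' and claim the resulting correction vanishes by \eqref{linear steady normalization}. A Dirichlet problem has no solvability constraint, and \eqref{linear steady normalization} only yields $\int_\Omega a=0$ (pointwise in $\vx$, $(\ik-\pk)[f]$ is orthogonal to $\mh$, so the normalization says nothing about $c$). If you keep the subtraction, the leading term becomes $\|c\|_{L^{2m}(\Omega)}^{2m}-\bar c\int_\Omega c$, and $\bar c\int_\Omega c$ is of the same order as the main term (for constant $c$ it cancels it exactly), so the $c$-estimate would not close. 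The fix is exactly the paper's arrangement: plain Dirichlet problems for $c$ and for each $b_j$, and the mean-subtraction only in the Neumann problem defining $\phi_a$, where compatibility genuinely requires it and where $\int_\Omega a=0$ is precisely what gives $-\int_\Omega a\,\dx\phi_a=\|a\|_{L^{2m}(\Omega)}^{2m}$ with no leftover term.
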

\begin{proof}
Apply Green's identity in Lemma \ref{ktt 03} to the equation \eqref{linear steady}. Then for any $\psi\in L^2(\Omega\times\r^3)$ satisfying $\vv\cdot\nx\psi\in L^2(\Omega\times\r^3)$ and $\psi\in L^2(\gamma)$, we have
\begin{align}\label{ktt 05}
&\e\iint_{\gamma_+}f\psi\ud{\gamma}-\e\iint_{\gamma_-}f\psi\ud{\gamma}
-\e\iint_{\Omega\times\r^3}(\vv\cdot\nx\psi)f=-\iint_{\Omega\times\r^3}\psi\ll\Big[(\ik-\pk)[f]\Big]+\iint_{\Omega\times\r^3}S\psi.
\end{align}
Consider \eqref{ktt 04}, our goal is to choose a particular test function $\psi$ to estimate $a$, $\vbb$ and $c$.\\
\ \\
Step 1: Estimates of $c$.\\
We choose the test function
\begin{align}\label{ktt 06}
\psi=\psi_c=\m^{\frac{1}{2}}(\vv)\left(\abs{\vv}^2-\beta_c\right)\Big(\vv\cdot\nx\phi_c(\vx)\Big),
\end{align}
where
\begin{align}
\left\{
\begin{array}{l}
-\dx\phi_c=c\abs{c}^{2m-2}(\vx)\ \ \text{in}\ \ \Omega,\\\rule{0ex}{1.5em}
\phi_c=0\ \ \text{on}\ \ \p\Omega,
\end{array}
\right.
\end{align}
and $\beta_c\in\r$ will be determined later. Based on the standard elliptic estimates in \cite{Krylov2008}, we have
\begin{align}
\onnm{\phi_c}{W^{2,\frac{2m}{2m-1}}}\ls \onnm{\abs{c}^{2m-1}}{L^{\frac{2m}{2m-1}}}\ls \onnm{c}{L^{2m}}^{2m-1}.
\end{align}
Hence, by Sobolev embedding theorem, we know
\begin{align}
&\tnnm{\psi_c}\ls\onnm{\phi_c}{H^1}\ls\onnm{\phi_c}{W^{2,\frac{2m}{2m-1}}}\ls\onnm{c}{L^{2m}}^{2m-1},\label{ktt 07}\\
&\onnm{\phi_c}{W^{1,\frac{2m}{2m-1}}}\ls\onnm{\phi_c}{W^{2,\frac{2m}{2m-1}}}\ls\onnm{c}{L^{2m}}^{2m-1}.\label{ktt 08}
\end{align}
Also, for $1\leq m\leq 3$, using Sobolev embedding theorem and trace estimates, we have
\begin{align}\label{ktt 13}
\onm{\nx\phi_c}{L^{\frac{4m}{4m-3}}}\ls\onm{\nx\phi_c}{W^{\frac{1}{2m},\frac{2m}{2m-1}}}\ls\onnm{\nx\phi_c}{W^{1,\frac{2m}{2m-1}}}\ls \onnm{\phi_c}{W^{2,\frac{2m}{2m-1}}}\ls\onnm{c}{L^{2m}}^{2m-1}.
\end{align}
We first consider the right-hand side (RHS) of \eqref{ktt 05}. With the choice of \eqref{ktt 06} and H\"older's inequality, using \eqref{ktt 07} and Lemma \ref{ktt 01}, we have
\begin{align}
\abs{\iint_{\Omega\times\r^3}\psi_c\ll\Big[(\ik-\pk)[f]\Big]}&=\abs{\iint_{\Omega\times\r^3}\ll[\psi_c](\ik-\pk)[f]}\ls \tnnm{\ll[\psi_c]}\tnnm{(\ik-\pk)[f]}\\
&\ls \tnnm{\psi_c}\tnnm{(\ik-\pk)[f]}\ls \onnm{c}{L^{2m}}^{2m-1}\tnnm{(\ik-\pk)[f]},\no
\end{align}
and
\begin{align}
\abs{\iint_{\Omega\times\r^3}S\psi_c}\ls \tnnm{\psi_c}\tnnm{\snn S}\ls \onnm{c}{L^{2m}}^{2m-1}\tnnm{\snn S}.
\end{align}
Therefore, we know
\begin{align}\label{ktt 09}
\text{RHS}\ls&\bigg(\tnnm{(\ik-\pk)[f]}+\tnnm{\snn S}\bigg)\onnm{c}{L^{2m}}^{2m-1}.
\end{align}
Then we turn to the left-hand side (LHS) of \eqref{ktt 05}. Based on \eqref{linear steady}, note the decomposition
\begin{align}
f\big|_{\gamma}&=\id_{\gamma_+}f+\id_{\gamma_-}\pp[f]+\id_{\gamma_-}h=\id_{\gamma}\pp[f]+\id_{\gamma_+}(1-\pp)[f]+\id_{\gamma_-}h.
\end{align}
Then, we will choose $\beta_c$ such that
\begin{align}\label{ktt 10}
\int_{\r^3}\m^{\frac{1}{2}}(\vv)\left(\abs{\vv}^2-\beta_c\right)v_i^2\ud{\vv}=0\ \ \text{for}\ \ i=1,2,3,
\end{align}
which, combined with oddness, implies
\begin{align}
\iint_{\gamma_+}\mh\psi_c\ud{\gamma}-\iint_{\gamma_-}\mh\psi_c\ud{\gamma}=0.
\end{align}
Hence, the boundary term on the LHS of \eqref{ktt 05} can be simplified as
\begin{align}\label{ktt 12}
&\e\iint_{\gamma_+}f\psi_c\ud{\gamma}-\e\iint_{\gamma_-}f\psi_c\ud{\gamma}\\
=&\bigg(\iint_{\gamma_+}\pp[f]\psi_c\ud{\gamma}-\iint_{\gamma_-}\pp[f]\psi_c\ud{\gamma}\bigg)
+\e\iint_{\gamma_+}(1-\pp)[f]\psi_c\ud{\gamma}-\e\iint_{\gamma_-}h\psi_c\ud{\gamma}\no\\
=&\e\iint_{\gamma_+}(1-\pp)[f]\psi_c\ud{\gamma}-\e\iint_{\gamma_-}h\psi_c\ud{\gamma}.\no
\end{align}
Applying H\"older's inequality and \eqref{ktt 13} to \eqref{ktt 12}, we have
\begin{align}\label{ktt 14}
\abs{\e\iint_{\gamma_+}f\psi_c\ud{\gamma}-\e\iint_{\gamma_-}f\psi_c\ud{\gamma}}
&\ls\e\bigg(\pnm{(1-\pp)[f]}{\gamma_+}{\frac{4m}{3}}+\pnm{h}{\gamma_-}{\frac{4m}{3}}\bigg)\pnm{\psi_c}{\gamma}{\frac{4m}{4m-3}}\\
&\ls\e\bigg(\pnm{(1-\pp)[f]}{\gamma_+}{\frac{4m}{3}}+\pnm{h}{\gamma_-}{\frac{4m}{3}}\bigg)\onm{\nx\phi_c}{L^{\frac{4m}{4m-3}}}\no\\
&\ls\e\bigg(\pnm{(1-\pp)[f]}{\gamma_+}{\frac{4m}{3}}+\pnm{h}{\gamma_-}{\frac{4m}{3}}\bigg)\onnm{c}{L^{2m}}^{2m-1}.\no
\end{align}
For the bulk term on the LHS of \eqref{ktt 05}, noting the decomposition
\begin{align}
f=\pk[f]+(\ik-\pk)[f]=\mh\bigg(a+\vv\cdot\vbb+\frac{\abs{\vv}^2-3}{2}c\bigg)+(\ik-\pk)[f],
\end{align}
we have
\begin{align}\label{ktt 15}
-\e\iint_{\Omega\times\r^3}(\vv\cdot\nx\psi_c)f=&-\e\iint_{\Omega\times\r^3}(\vv\cdot\nx\psi_c)\mh(\vv)\bigg(a+\vv\cdot\vbb+\frac{\abs{\vv}^2-3}{2}c\bigg)\\
&-\e\iint_{\Omega\times\r^3}(\vv\cdot\nx\psi_c)(\ik-\pk)[f].\no
\end{align}
Considering \eqref{ktt 06}, we directly compute
\begin{align}\label{ktt 16}
\vv\cdot\nx\psi_c=\m^{\frac{1}{2}}(\vv)\left(\abs{\vv}^2-\beta_c\right)\bigg(\sum_{i,j=1}^3v_iv_j\p_i\p_j\phi_c\bigg).
\end{align}
Due to oddness, the $\vbb$ contribution in $\eqref{ktt 15}$ vanishes. \eqref{ktt 10} implies that the $a$ contribution in $\eqref{ktt 15}$ also vanishes. For the $c$ contribution, using \eqref{ktt 16} and oddness, with
\begin{align}
\int_{\r^3}\m(\vv)\abs{v_i}^2\left(\abs{\vv}^2-\beta_c\right)\frac{\abs{\vv}^2-3}{2}\ud{\vv}\neq0\ \ \text{for}\ \ i=1,2,3,
\end{align}
we know
\begin{align}\label{ktt 17}
-\e\iint_{\Omega\times\r^3}(\vv\cdot\nx\psi_c)\mh(\vv)\frac{\abs{\vv}^2-3}{2}c=-\int_{\Omega}c\dx\phi_c=\onnm{c}{L^{2m}}^{2m}.
\end{align}
Also, H\"older's inequality and \eqref{ktt 08} yield
\begin{align}\label{ktt 18}
\abs{\e\iint_{\Omega\times\r^3}(\vv\cdot\nx\psi_c)(\ik-\pk)[f]}&\ls\e\pnnm{\vv\cdot\nx\psi_c}{\frac{2m}{2m-1}}\pnnm{(\ik-\pk)[f]}{2m}\\
&\ls\e\onnm{\phi_c}{W^{2,\frac{2m}{2m-1}}}\pnnm{(\ik-\pk)[f]}{2m}\ls \e\onnm{c}{L^{2m}}^{2m-1}\pnnm{(\ik-\pk)[f]}{2m}.\no
\end{align}
Collecting \eqref{ktt 09}, \eqref{ktt 14}, \eqref{ktt 17}, \eqref{ktt 18}, and cancelling $\onnm{c}{L^{2m}}^{2m-1}$, we have
\begin{align}\label{ktt 19}
&\e\onnm{c}{L^{2m}}\ls
\e\pnm{(1-\pp)[f]}{\gamma_+}{\frac{4m}{3}}+\tnnm{(\ik-\pk)[f]}+\e\pnnm{(\ik-\pk)[f]}{2m}+\tnnm{\snn S}+\e\pnm{h}{\gamma_-}{\frac{4m}{3}}.
\end{align}
\ \\
Step 2: Estimates of $\vbb$.\\
We further divide this step into several sub-steps:\\
\ \\
Sub-Step 2.1: Estimates of $\bigg(\p_{i}\p_j\dx^{-1}\Big(b_j\abs{b_j}^{2m-2}\Big)\bigg)b_i$ for $i,j=1,2,3$.\\
Let $\vbb=(b_1,b_2,b_3)$. We choose the test functions for $i,j=1,2,3$,
\begin{align}\label{ktt 20}
\psi=\psi_{b,i,j}=\m^{\frac{1}{2}}(\vv)\left(v_i^2-\beta_{b,i,j}\right)\p_j\phi_{b,j},
\end{align}
where
\begin{align}
\left\{
\begin{array}{l}
-\dx\phi_{b,j}=b_j\abs{b_j}^{2m-2}(\vx)\ \ \text{in}\ \ \Omega,\\\rule{0ex}{1.5em}
\phi_{b,j}=0\ \ \text{on}\ \ \p\Omega,
\end{array}
\right.
\end{align}
and $\beta_{b,i,j}\in\r$ will be determined later. This is very similar to Step 1. We can recover the elliptic estimates and trace estimates as in \eqref{ktt 07}, \eqref{ktt 08} and \eqref{ktt 13}. With the choice of \eqref{ktt 20}, the right-hand side (RHS) of \eqref{ktt 05} is bounded by
\begin{align}\label{ktt 21}
\text{RHS}\ls \bigg(\tnnm{(\ik-\pk)[f]}+\tnnm{\snn S}\bigg)\onnm{\vbb}{L^{2m}}^{2m-1}.
\end{align}
We will choose $\beta_b$ such that
\begin{align}\label{ktt 24}
\int_{\r^3}\m(\vv)\left(\abs{v_i}^2-\beta_{b,i,j}\right)\ud{\vv}=0,
\end{align}
which means for the boundary term on the left-hand side of \eqref{ktt 05}, there is no $\pp[f]$ contribution. We may recover estimates as \eqref{ktt 14}
\begin{align}\label{ktt 22}
\abs{\e\iint_{\gamma_+}f\psi_{b,i,j}\ud{\gamma}-\e\iint_{\gamma_-}f\psi_{b,i,j}\ud{\gamma}}
&\ls\e\bigg(\pnm{(1-\pp)[f]}{\gamma_+}{\frac{4m}{3}}+\pnm{h}{\gamma_-}{\frac{4m}{3}}\bigg)\onnm{\vbb}{L^{2m}}^{2m-1}.
\end{align}
For the bulk term on the LHS of \eqref{ktt 05}, the $a$ and $c$ contribution vanish due to oddness of \eqref{ktt 20}. Then we focus on the $\vbb$ contribution
\begin{align}\label{ktt 25}
-\e\iint_{\Omega\times\r^3}(\vv\cdot\nx\psi_{b,i,j})\mh(\vv)(\vbb\cdot\vv)
=-\e\iint_{\Omega\times\r^3}\sum_{k,s=1}^3\m(\vv)\left(v_i^2-\beta_{b,i,j}\right)v_kv_sb_s\p_k\p_j\phi_{b,j}.
\end{align}
Due to oddness, for $k\neq s$, the terms in \eqref{ktt 25} vanish. Hence, we have
\begin{align}\label{ktt 26}
-\e\iint_{\Omega\times\r^3}(\vv\cdot\nx\psi_{b,i,j})\mh(\vv)(\vbb\cdot\vv)
=-\e\iint_{\Omega\times\r^3}\sum_{k=1}^3\m(\vv)\left(v_i^2-\beta_{b,i,j}\right)v_k^2b_k\p_k\p_j\phi_{b,j}.
\end{align}
Based on our choice of $\beta_{b,i,j}$ in \eqref{ktt 24}, we directly compute
\begin{align}
\int_{\r^3}\m(\vv)\left(\abs{v_i}^2-\beta_b\right)v_k^2\ud{\vv}=&0\ \ \text{for}\ \ k\neq i,\\
\int_{\r^3}\m(\vv)\left(\abs{v_i}^2-\beta_b\right)v_i^2\ud{\vv}\neq&0.
\end{align}
Thus, for $k\neq i$, the terms in \eqref{ktt 26} vanish. Hence, we have
\begin{align}\label{ktt 27}
-\e\iint_{\Omega\times\r^3}(\vv\cdot\nx\psi_{b,i,j})\mh(\vv)(\vbb\cdot\vv)\ud{\vv}
&=-\e\iint_{\Omega\times\r^3}\m(\vv)\left(v_i^2-\beta_{b,i,j}\right)v_i^2b_i\p_i\p_j\phi_{b,j}\\
&=-\e\int_{\Omega}b_i\p_i\p_j\phi_{b,j}=-\int_{\Omega}\bigg(\p_{i}\p_j\dx^{-1}\Big(b_j\abs{b_j}^{2m-2}\Big)\bigg)b_i.
\end{align}
Finally, the $(\ik-\pk)[f]$ contribution on the LHS of \eqref{ktt 05} can be estimated as
\begin{align}\label{ktt 23}
\abs{\e\iint_{\Omega\times\r^3}(\vv\cdot\nx\psi_{b,i,j})(\ik-\pk)[f]}&\ls\e\onnm{\vbb}{L^{2m}}^{2m-1}\pnnm{(\ik-\pk)[f]}{2m}.
\end{align}
Collecting \eqref{ktt 21}, \eqref{ktt 22}, \eqref{ktt 27} and \eqref{ktt 23}, we obtain
\begin{align}\label{ktt 28}
&\e\abs{\int_{\Omega}\bigg(\p_{i}\p_j\dx^{-1}\Big(b_j\abs{b_j}^{2m-2}\Big)\bigg)b_i}\\
\ls&\onnm{\vbb}{L^{2m}}^{2m-1}\bigg(\e\pnm{(1-\pp)[f]}{\gamma_+}{\frac{4m}{3}}+\tnnm{(\ik-\pk)[f]}+\e\pnnm{(\ik-\pk)[f]}{2m}+\tnnm{\snn S}+\e\pnm{h}{\gamma_-}{\frac{4m}{3}}\bigg).\no
\end{align}
Note that we cannot further simplify the LHS of \eqref{ktt 28} at this stage since we do not include all derivative terms in $\dx b_j$. For example, $\ds\p_{1}\p_1\dx^{-1}\Big(b_2\abs{b_2}^{2m-2}\Big)b_2$ is not controlled here.\\
\ \\
Sub-Step 2.2: Estimates of $\bigg(\p_{i}\p_i\dx^{-1}\Big(b_j\abs{b_j}^{2m-2}\Big)\bigg)b_j$ for $i\neq j$.\\
Notice that the $i=j$ case is included in Sub-Step 2.1. We choose the test function
\begin{align}
\psi=\tilde\psi_{b,i,j}=\m^{\frac{1}{2}}(\vv)\abs{\vv}^2v_iv_j\p_i\phi_{b,j}\ \ \text{for}\ \ i\neq j.
\end{align}
Similar to Sub-Step 2.1, we focus on the $\vbb$ contribution on the LHS of \eqref{ktt 05}
\begin{align}\label{ktt 29}
-\e\iint_{\Omega\times\r^3}\Big(\vv\cdot\nx\tilde\psi_{b,i,j}\Big)\mh(\vv)(\vbb\cdot\vv)
=-\e\iint_{\Omega\times\r^3}\sum_{k,s=1}^3\m(\vv)\abs{\vv}^2v_kv_sv_iv_jb_s\p_k\p_i\phi_{b,j}.
\end{align}
Due to oddness, the terms in \eqref{ktt 29} do not vanish only if $k=i,s=j$ or $k=j,s=i$. Hence, we are left with
\begin{align}\label{ktt 30}
-\e\iint_{\Omega\times\r^3}\Big(\vv\cdot\nx\tilde\psi_{b,i,j}\Big)\mh(\vv)(\vbb\cdot\vv)
&=-\e\iint_{\Omega\times\r^3}\m(\vv)\abs{\vv}^2v_i^2v_j^2\bigg(b_j\p_i\p_i\phi_{b,j}+b_i\p_j\p_i\phi_{b,j}\bigg)\\
&=-\e\iint_{\Omega}\bigg(b_j\p_i\p_i\phi_{b,j}+b_i\p_j\p_i\phi_{b,j}\bigg).\no
\end{align}
Note that $\ds\e\iint_{\Omega}b_i\p_j\p_i\phi_{b,j}$ has been controlled by Sub-Step 2.2. Hence, we obtain
\begin{align}\label{ktt 31}
&\e\abs{\int_{\Omega}\bigg(\p_{i}\p_i\dx^{-1}\Big(b_j\abs{b_j}^{2m-2}\Big)\bigg)b_j}\\
\ls&\onnm{\vbb}{L^{2m}}^{2m-1}\bigg(\e\pnm{(1-\pp)[f]}{\gamma_+}{\frac{4m}{3}}+\tnnm{(\ik-\pk)[f]}+\e\pnnm{(\ik-\pk)[f]}{2m}+\tnnm{\snn S}+\e\pnm{h}{\gamma_-}{\frac{4m}{3}}\bigg).\no
\end{align}
\ \\
Sub-Step 2.3: Synthesis.\\
Summarizing \eqref{ktt 28} and \eqref{ktt 31}, we may sum up over $j=1,2,3$ to obtain, for any $i=1,2,3$,
\begin{align}
\\
\e\onnm{b_i}{L^{2m}}^{2m}
\ls&\onnm{\vbb}{L^{2m}}^{2m-1}\bigg(\e\pnm{(1-\pp)[f]}{\gamma_+}{\frac{4m}{3}}+\tnnm{(\ik-\pk)[f]}+\e\pnnm{(\ik-\pk)[f]}{2m}+\tnnm{\snn S}+\e\pnm{h}{\gamma_-}{\frac{4m}{3}}\bigg).\no
\end{align}
which further implies
\begin{align}\label{ktt 32}
\e\onnm{\vbb}{L^{2m}}
\ls&\e\pnm{(1-\pp)[f]}{\gamma_+}{\frac{4m}{3}}+\tnnm{(\ik-\pk)[f]}+\e\pnnm{(\ik-\pk)[f]}{2m}+\tnnm{\snn S}+\e\pnm{h}{\gamma_-}{\frac{4m}{3}}.
\end{align}
\ \\
Step 3: Estimates of $a$.\\
We choose the test function
\begin{align}
\psi=\psi_a=\m^{\frac{1}{2}}(\vv)\left(\abs{\vv}^2-\beta_a\right)\Big(\vv\cdot\nx\phi_a(\vx)\Big),
\end{align}
where
\begin{align}\label{ktt 36}
\left\{
\begin{array}{l}
-\dx\phi_a=a\abs{a}^{2m-2}(\vx)-\dfrac{1}{\abs{\Omega}}\ds\int_{\Omega}a\abs{a}^{2m-2}(\vx)\ud{\vx}\ \ \text{in}\ \ \Omega,\\\rule{0ex}{1.5em}
\dfrac{\p\phi_a}{\p\vn}=0\ \ \text{on}\ \ \p\Omega,
\end{array}
\right.
\end{align}
and $\beta_a\in\r$ will be determined later. Since
\begin{align}
\int_{\Omega}\bigg(a^{2m-1}(\vx)-\dfrac{1}{\abs{\Omega}}\ds\int_{\Omega}a^{2m-1}(\vx)\ud{\vx}\bigg)\ud{\vx}=0,
\end{align}
based on standard elliptic estimates, we may recover the estimates as \eqref{ktt 07}, \eqref{ktt 08} and \eqref{ktt 13}. Then similar to Step 1, we obtain that the right-hand side (RHS) of \eqref{ktt 05} is bounded as
\begin{align}\label{ktt 33}
\text{RHS}\ls \bigg(\tnnm{(\ik-\pk)[f]}+\tnnm{\snn S}\bigg)\onnm{a}{L^{2m}}^{2m-1}.
\end{align}
For the left-hand side (LHS) of \eqref{ktt 05}, the bulk term can be estimated as Step 1. There is no $\vbb$ contribution due to oddness. We will choose $\beta_a$ such that
\begin{align}
\int_{\r^3}\m^{\frac{1}{2}}(\vv)\left(\abs{\vv}^2-\beta_a\right)\frac{\abs{\vv}^2-3}{2}v_i^2\ud{\vv}=0\
\ \text{for}\ \ i=1,2,3,
\end{align}
which will eliminate $c$ contribution. Hence, the remaining $a$ contribution will be
\begin{align}\label{ktt 34}
-\e\iint_{\Omega\times\r^3}(\vv\cdot\nx\psi_a)\mh(\vv)a=-\int_{\Omega}a\dx\phi_a=\onnm{a}{L^{2m}}^{2m}.
\end{align}
Here, we use the fact that $\ds\int_{\Omega}a=\int_{\Omega\times\r^3}f(\vx,\vv)=0$ due to \eqref{linear steady normalization}. Similarly, $(\ik-\pk)[f]$ contribution is
\begin{align}\label{ktt 35}
\abs{\e\iint_{\Omega\times\r^3}(\vv\cdot\nx\psi_a)(\ik-\pk)[f]}\ls \e\onnm{a}{L^{2m}}^{2m-1}\pnnm{(\ik-\pk)[f]}{2m}.
\end{align}
Now the only difficulty is the boundary term in \eqref{ktt 05}. In particular, as in \eqref{ktt 12}, we are concerned with
\begin{align}
\iint_{\gamma_+}\pp[f]\psi_a\ud{\gamma}-\iint_{\gamma_-}\pp[f]\psi_a\ud{\gamma}
=\int_{\p\Omega\times\r^3}(\vv\cdot\vn)\mh(\vv)\left(\abs{\vv}^2-\beta_a\right)\Big(\vv\cdot\nx\phi_a(\vx_0)\Big)\pp[f](\vx_0,\vv).
\end{align}
This cannot be directly killed with the previous techniques (oddness and choice of $\beta_a$ cannot do it). Notice that $\pp[f](\vx_0,\vv)=z(\vx_0)\mh(\vv)$ for $z(\vx_0)=\ds\int_{\gamma_+}f\ud\gamma$. We decompose the velocity into normal and tangential directions:
\begin{align}
\vv=\vn(\vv\cdot\vn)+\vn^{\perp},
\end{align}
where $\vn^{\perp}$ is the tangential part. Then
\begin{align}
&\int_{\p\Omega\times\r^3}(\vv\cdot\vn)\mh(\vv)\left(\abs{\vv}^2-\beta_a\right)\Big(\vv\cdot\nx\phi_a(\vx_0)\Big)\pp[f](\vx_0,\vv)\\
=&\int_{\p\Omega\times\r^3}(\vv\cdot\vn)^2\mh(\vv)\left(\abs{\vv}^2-\beta_a\right)\frac{\p\phi_a(\vx_0)}{\p\vn}\pp[f](\vx_0,\vv)
+\int_{\p\Omega\times\r^3}(\vv\cdot\vn)\m(\vv)\left(\abs{\vv}^2-\beta_a\right)\Big(\vn^{\perp}\cdot\nx\phi_a(\vx_0)\Big)z(\vx_0).\no
\end{align}
Here, in the RHS, the first term vanishes due to the Neumann boundary condition in \eqref{ktt 36}, and the second term vanishes due to oddness. Then in total, we have
\begin{align}
\iint_{\gamma_+}\pp[f]\psi_a\ud{\gamma}-\iint_{\gamma_-}\pp[f]\psi_a\ud{\gamma}=0.
\end{align}
With this in hand, we can bound as \eqref{ktt 14} to get the boundary contribution
\begin{align}\label{ktt 37}
\abs{\e\iint_{\gamma_+}f\psi_a\ud{\gamma}-\e\iint_{\gamma_-}f\psi_a\ud{\gamma}}
&\ls\e\bigg(\pnm{(1-\pp)[f]}{\gamma_+}{\frac{4m}{3}}+\pnm{h}{\gamma_-}{\frac{4m}{3}}\bigg)\onnm{a}{L^{2m}}^{2m-1}.
\end{align}
Collecting \eqref{ktt 33}, \eqref{ktt 34}, \eqref{ktt 35}, \eqref{ktt 37}, and cancelling $\onnm{a}{L^{2m}}^{2m-1}$, we have
\begin{align}\label{ktt 38}
&\e\onnm{a}{L^{2m}}\ls\e\pnm{(1-\pp)[f]}{\gamma_+}{\frac{4m}{3}}+\tnnm{(\ik-\pk)[f]}+\e\pnnm{(\ik-\pk)[f]}{2m}+\tnnm{\snn S}+\e\pnm{h}{\gamma_-}{\frac{4m}{3}}.
\end{align}
\ \\
Step 4: Synthesis.\\
Collecting \eqref{ktt 19}, \eqref{ktt 32} and \eqref{ktt 38}, we deduce
\begin{align}
\e\pnnm{\pk[f]}{2m}\ls\e\pnm{(1-\pp)[f]}{\gamma_+}{\frac{4m}{3}}+\tnnm{(\ik-\pk)[f]}+\e\pnnm{(\ik-\pk)[f]}{2m}+\tnnm{\snn S}+\e\pnm{h}{\gamma_-}{\frac{4m}{3}}.
\end{align}
This completes our proof.
\end{proof}

\begin{theorem}\label{LN estimate}
The solution $f(\vx,\vv)$ to the equation \eqref{linear steady} satisfies the estimate
\begin{align}
&\frac{1}{\e^{\frac{1}{2}}}\tsm{(1-\pp)[f]}{\gamma_+}+\frac{1}{\e}\unnm{(\ik-\pk)[f]}+\pnnm{\pk[f]}{2m}\\
\ls& o(1)\e^{\frac{3}{2m}}\Big(\ssm{f}{\gamma_+}+\snnm{f}\Big)
+\frac{1}{\e^{2}}\pnnm{\pk[S]}{\frac{2m}{2m-1}}+\frac{1}{\e}\tnnm{\snn(\ik-\pk)[S]}+\pnm{h}{\gamma_-}{\frac{4m}{3}}+\frac{1}{\e}\tsm{h}{\gamma_-}.\no
\end{align}
\end{theorem}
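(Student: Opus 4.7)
The plan is to pair the macroscopic $L^{2m}$ estimate of Lemma \ref{ktt lemma 1} with a standard $L^{2}$ energy identity derived by testing \eqref{linear steady} against $f$ itself, and then close the combined inequality via $L^{2}$--$L^{\infty}$ interpolation of all mixed-norm quantities and Young's inequality.

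First I would apply Green's identity in Lemma \ref{ktt 03} and the coercivity in Lemma \ref{ktt 01} to produce the energy inequality
\begin{align*}
\frac{\e}{2}\Big(\tsm{f}{\gamma_+}^{2}-\tsm{f}{\gamma_-}^{2}\Big)+\delta_{0}\unnm{(\ik-\pk)[f]}^{2}\ls \br{S,f}.
\end{align*}
Inserting $f|_{\gamma_+}=\pp[f]+(1-\pp)[f]$ and $f|_{\gamma_-}=\pp[f]+h$, the normalization of $\mh$ yields $\int_{\gamma_+}|\pp[f]|^{2}\,\ud\gamma=\int_{\gamma_-}|\pp[f]|^{2}\,\ud\gamma$, and $\pp[f]$ is $L^{2}$-orthogonal to $(1-\pp)[f]$ on $\gamma_+$. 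Hence the $\pp[f]^{2}$-contributions cancel and the boundary terms reduce to $\tsm{(1-\pp)[f]}{\gamma_+}^{2}$, $\tsm{h}{\gamma_-}^{2}$ and a cross term $\e\int_{\gamma_-}\pp[f]\,h\,\ud\gamma$. The cross term is handled by Cauchy--Schwarz and Young, using $\pp[f](\vx_0,\vv)=\mh(\vv)\int_{\gamma_+}f\mh\,\ud\gamma$ and the trace bound in Lemma \ref{ktt 02}, so that the $\pp[f]$ factor is absorbed into a small multiple of $\tsm{(1-\pp)[f]}{\gamma_+}^{2}+\pnnm{\pk[f]}{2m}^{2}+\tsm{h}{\gamma_-}^{2}$. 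For the source I split $\br{S,f}=\br{\pk[S],\pk[f]}+\br{(\ik-\pk)[S],(\ik-\pk)[f]}$, bounding the second piece by Young to give $o(1)\unnm{(\ik-\pk)[f]}^{2}+C\tnnm{\snn(\ik-\pk)[S]}^{2}$, and the first by H\"older to produce $\pnnm{\pk[S]}{\frac{2m}{2m-1}}\pnnm{\pk[f]}{2m}$. Taking square roots and dividing by $\e$ yields the control of the first two terms on the left-hand side of the claim by $\pnnm{\pk[f]}{2m}$, the data $\frac{1}{\e}\tsm{h}{\gamma_-}$, $\frac{1}{\e}\tnnm{\snn(\ik-\pk)[S]}$, and $\frac{1}{\e^{2}}\pnnm{\pk[S]}{\frac{2m}{2m-1}}$.

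Next I would invoke Lemma \ref{ktt lemma 1} to control $\pnnm{\pk[f]}{2m}$ and interpolate the mixed-norm terms on its right-hand side between $L^{2}$ and $L^{\infty}$:
\begin{align*}
\pnm{(1-\pp)[f]}{\gamma_+}{\frac{4m}{3}}\ls \tsm{(1-\pp)[f]}{\gamma_+}^{\frac{3}{2m}}\ssm{(1-\pp)[f]}{\gamma_+}^{1-\frac{3}{2m}},\\
\pnnm{(\ik-\pk)[f]}{2m}\ls \unnm{(\ik-\pk)[f]}^{\frac{3}{2m}}\snnm{(\ik-\pk)[f]}^{1-\frac{3}{2m}}.
\end{align*}
Each $L^{\infty}$ factor is absorbed into the $o(1)\e^{\frac{3}{2m}}\big(\ssm{f}{\gamma_+}+\snnm{f}\big)$ term of the claim (the $\e^{\frac{3}{2m}}$ weight arises from combining the $\e^{\frac{3}{2m}}$ power of the $L^{2}$ factor with the $\e^{-1}$ prefactor in Lemma \ref{ktt lemma 1}), while the $L^{2}$ factors are controlled by the energy inequality obtained above.

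Combining the two chains and applying Young once more to absorb any residual $\pnnm{\pk[f]}{2m}$ and $\unnm{(\ik-\pk)[f]}$ on the right into the left produces the stated estimate. The main obstacle is the careful bookkeeping of $\e$-powers: in particular, extracting $\frac{1}{\e}\tsm{h}{\gamma_-}$ rather than $\frac{1}{\e}\tsm{h}{\gamma_-}^{2}$ from the boundary identity, keeping the coefficient of $\pnnm{\pk[S]}{\frac{2m}{2m-1}}$ at $\e^{-2}$ (which requires that the $\br{\pk[S],\pk[f]}$ bound be used before taking square roots), and tuning the interpolation exponents so that all $L^{\infty}$ contributions coalesce into a single $o(1)\e^{\frac{3}{2m}}$ coefficient. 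The range $\frac{3}{2}<m<3$ is precisely what is needed so that $\frac{3}{2m}\in(\tfrac{1}{2},1)$, making the interpolation nontrivial in both endpoints and the Sobolev embeddings \eqref{ktt 13} valid.
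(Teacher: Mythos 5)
Your route is the same as the paper's: Green's identity plus coercivity for the energy estimate, reduction of the boundary terms using that $\pp$ is an $L^2(\ud\gamma)$-projection with $\tsm{\pp[f]}{\gamma_+}=\tsm{\pp[f]}{\gamma_-}$, control of the leftover $\tsm{\pp[f]}{\gamma_+}$ via the Maxwellian structure of $\pp[f]$ and Lemma \ref{ktt 02}, the kernel/non-kernel splitting of the source pairing, and finally Lemma \ref{ktt lemma 1} combined with $L^2$--$L^{\infty}$ interpolation, $\e$-weighted Young, and absorption. However, two steps as written do not hold up.

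First, your bulk interpolation inequality is false. Lebesgue interpolation gives $\pnnm{(\ik-\pk)[f]}{2m}\leq\tnnm{(\ik-\pk)[f]}^{\frac{1}{m}}\snnm{(\ik-\pk)[f]}^{\frac{m-1}{m}}$, i.e.\ exponent $\frac{1}{m}$ on the $L^2$ factor; the exponent $\frac{3}{2m}$ is correct only for the boundary norm $\pnm{(1-\pp)[f]}{\gamma_+}{\frac{4m}{3}}$. Testing your version on $\lambda\,\id_{E}$ with $\abs{E}$ small shows it fails, since $\frac{3}{2m}>\frac{1}{m}$. With the correct exponent the argument still closes: one gets $\e^2\pnnm{(\ik-\pk)[f]}{2m}^2\ls\e^{\frac{3}{m}-1}\tnnm{(\ik-\pk)[f]}^2+o(1)\e^{2+\frac{3}{m}}\snnm{f}^2$, and $\frac{3}{m}-1\geq 0$ precisely because $m<3$ — so the upper bound on $m$ is needed for this $\e$-bookkeeping (and for the embedding \eqref{ktt 13}), not to place $\frac{3}{2m}$ in $(\frac12,1)$ as you state.

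Second, your claim that the cross term allows $\tsm{\pp[f]}{\gamma_+}$ to be absorbed into a small multiple of $\tsm{(1-\pp)[f]}{\gamma_+}^2+\pnnm{\pk[f]}{2m}^2+\tsm{h}{\gamma_-}^2$ alone is not what the grazing-set argument delivers. Applying Lemma \ref{ktt 02} to $f^2$ forces you to bound $\pnnm{\vv\cdot\nx(f^2)}{1}$ through the equation, which produces in addition $\frac{1}{\e}\tnnm{(\ik-\pk)[f]}^2$ and $\frac{1}{\e}\abs{\int_{\Omega\times\r^3}fS}$ on the right (this is \eqref{ktt 47}); you also need the explicit observation that, since $\pp[f]=z(\vx_0)\mh(\vv)$, the non-grazing part of $\gamma_+$ retains at least half of $\tsm{\pp[f]}{\gamma_+}^2$, because $\int_{\vv\cdot\vn\leq\d}\m\abs{\vv\cdot\vn}\ud\vv\ls\d$. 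The extra $\e^{-1}$-weighted terms are harmless only because the cross term carries the small prefactor $\e^2\eta$ from Young, after which the $(\ik-\pk)$ piece is absorbed into the dissipation and the $fS$ piece joins the source estimate. Since this is exactly the $\e$-bookkeeping you identify as the main obstacle, it must be carried out, not elided; once these two points are repaired your proof coincides with the paper's.
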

\begin{proof}
\ \\
Step 1: Energy Estimate.\\
Multiplying $f$ on both sides of \eqref{linear steady} and applying Green's identity in Lemma \ref{ktt 03} imply
\begin{align}\label{ktt 40}
\frac{\e}{2}\tsm{f}{\gamma_+}^2-\frac{\e}{2}\tsm{\pp[f]+h}{\gamma_-}^2+\int_{\Omega\times\r^3}f\ll[f]=&\iint_{\Omega\times\r^3}fS.
\end{align}
A direct computation shows that
\begin{align}
\tsm{(1-\pp)[f]}{\gamma_+}^2&=\int_{\gamma_+}\Big(f-\pp[f]\Big)^2\ud\gamma=\int_{\gamma_+}f^2\ud\gamma+\int_{\gamma_+}\Big(\pp[f]\Big)^2\ud\gamma-2\int_{\gamma_+}f\pp[f]\ud\gamma\\
&=\tsm{f}{\gamma_+}^2+\tsm{\pp[f]}{\gamma_+}^2-2\tsm{\pp[f]}{\gamma_+}^2=\tsm{f}{\gamma_+}^2-\tsm{\pp[f]}{\gamma_+}^2.\no
\end{align}
Obviously, $\tsm{\pp[f]}{\gamma_+}^2=\tsm{\pp[f]}{\gamma_-}^2$. Hence, we have
\begin{align}\label{ktt 41}
\frac{\e}{2}\tsm{f}{\gamma_+}^2-\frac{\e}{2}\tsm{\pp[f]+h}{\gamma_-}^2
&=\frac{\e}{2}\tsm{f}{\gamma_+}^2-\frac{\e}{2}\tsm{\pp[f]}{\gamma_-}^2-\frac{\e}{2}\tsm{h}{\gamma_-}^2+\e\int_{\gamma_-}h\pp[f]\ud\gamma\\
&=\frac{\e}{2}\tsm{(1-\pp)[f]}{\gamma_+}^2-\frac{\e}{2}\tsm{h}{\gamma_-}^2+\e\int_{\gamma_-}h\pp[f]\ud\gamma\no\\
&\gs \e\tsm{(1-\pp)[f]}{\gamma_+}^2-\frac{1}{\eta}\tsm{h}{\gamma_-}^2-\e^2\eta\tsm{\pp[f]}{\gamma_+},\no
\end{align}
where $0<\eta<<1$ will be determined later. On the other hand, based on Lemma \ref{ktt 01}, we know
\begin{align}\label{ktt 42}
\int_{\Omega\times\r^3}f\ll[f]\gs \unnm{(\ik-\pk)[f]}^2.
\end{align}
Inserting \eqref{ktt 41} and \eqref{ktt 42} into \eqref{ktt 40}, we have
\begin{align}\label{ktt 43}
\e\tsm{(1-\pp)[f]}{\gamma_+}^2+\unnm{(\ik-\pk)[f]}^2
\ls \eta\e^2\tsm{\pp[f]}{\gamma_+}+\frac{1}{\eta}\tsm{h}{\gamma_-}^2+\int_{\Omega\times\r^3}fS.
\end{align}
\ \\
Step 2: Estimate of $\tsm{\pp[f]}{\gamma_+}$.\\
Multiplying $f$ on both sides of the equation \eqref{linear steady}, we have
\begin{align}\label{ktt 44}
\vv\cdot\nx(f^2)=\frac{2}{\e}\Big(-f\ll[f]+fS\Big).
\end{align}
Taking absolute value and integrating \eqref{ktt 44} over $\Omega\times\r^3$, using Lemma \ref{ktt 01}, we deduce
\begin{align}
\pnnm{\vv\cdot\nx(f^2)}{1}\ls&\frac{1}{\e}\bigg(\tnnm{(\ik-\pk)[f]}^2+\abs{\int_{\Omega\times\r^3}fS}\bigg).
\end{align}
On the other hand, applying Lemma \ref{ktt 02} to $f^2$, for near grazing set $\gamma^{\d}$, we have
\begin{align}\label{ktt 46}
\pnm{\id_{\gamma\backslash\gamma^{\d}}f}{\gamma}{2}^2&=\pnm{\id_{\gamma\backslash\gamma^{\d}}f^2}{\gamma}{1}
\leq
C(\d)\left(\pnnm{f^2}{1}+\pnnm{\vv\cdot\nx(f^2)}{1}\right)=C(\d)\left(\tnnm{f}^2+\pnnm{\vv\cdot\nx(f^2)}{1}\right)\\
&\ls C(\d)\left(\tnnm{f}^2+\frac{1}{\e}\tnnm{(\ik-\pk)[f]}^2+\frac{1}{\e}\abs{\int_{\Omega\times\r^3}fS}\right).\no
\end{align}
As in Step 3 of proof to Lemma \ref{ktt lemma 1}, we can rewrite $\pp[f](\vx_0,\vv)=z(\vx)\mh(\vv)$. Then for $\d$ small, we deduce
\begin{align}\label{ktt 45}
\tsm{\pp[\id_{\gamma\backslash\gamma^{\d}}f]}{\gamma}^2
=&\int_{\p\Omega}\abs{z(\vx)}^2
\left(\int_{\vv\cdot\vn(\vx)\geq\d,\d\leq\abs{\vv}\leq\d^{-1}}\m(\vv)\abs{\vv\cdot\vn(\vx)}\ud{\vv}\right)\ud{\vx}\\
\geq&\half\left(\int_{\p\Omega}\abs{z(\vx)}^2\ud{\vx}\right)\left(\int_{\gamma_+}\m(\vv)\abs{\vv\cdot\vn(\vx)}\ud{\vv}\right)=\half\tsm{\pp[f]}{\gamma_+}^2,\no
\end{align}
where we utilize the bounds that
\begin{align}
\int_{\vv\cdot\vn(\vx)\leq\d}\m(\vv)\abs{\vv\cdot\vn(\vx)}\ud{\vv}\ls&\d,\\
\int_{\abs{\vv}\leq\d\ \text{or}\ \abs{\vv}\geq\d^{-1}}\m(\vv)\abs{\vv\cdot\vn(\vx)}\ud{\vv}\ls&\d.
\end{align}
Therefore, from \eqref{ktt 45} and the fact
\begin{align}
\tsm{\pp[\id_{\gamma\backslash\gamma^{\d}}f]}{\gamma_+}\ls
\tsm{\id_{\gamma\backslash\gamma^{\d}}f}{\gamma_+}\ls \tsm{\id_{\gamma\backslash\gamma^{\d}}f}{\gamma},
\end{align}
we conclude
\begin{align}
\tsm{\pp[f]}{\gamma_+}^2\ls \tsm{\pp[\id_{\gamma\backslash\gamma^{\d}}f]}{\gamma_+}\ls\tsm{\id_{\gamma\backslash\gamma^{\d}}f}{\gamma}.
\end{align}
Considering \eqref{ktt 46}, we have
\begin{align}
\tsm{\pp[f]}{\gamma_+}^2\ls&C(\d)\left(\tnnm{f}^2+\frac{1}{\e}\tnnm{(\ik-\pk)[f]}^2+\frac{1}{\e}\abs{\int_{\Omega\times\r^3}fS}\right).
\end{align}
For fixed $0<\d<<1$ and using $f=\pk[f]+(\ik-\pk)[f]$, we obtain
\begin{align}\label{ktt 47}
\tsm{\pp[f]}{\gamma_+}^2\ls&\tnnm{\pk[f]}^2+\frac{1}{\e}\tnnm{(\ik-\pk)[f]}^2+\frac{1}{\e}\abs{\int_{\Omega\times\r^3}fS}.
\end{align}
\ \\
Step 3: Interpolation Estimates.\\
Plugging \eqref{ktt 47} into \eqref{ktt 43} with $\e$ sufficiently small to absorb $\unnm{(\ik-\pk)[f]}^2$ into the left-hand side, we obtain
\begin{align}\label{ktt 48}
\e\tsm{(1-\pp)[f]}{\gamma_+}^2+\unnm{(\ik-\pk)[f]}^2
\ls \eta\e^2\tnnm{\pk[f]}^2+\frac{1}{\eta}\tsm{h}{\gamma_-}^2+\abs{\int_{\Omega\times\r^3}fS}.
\end{align}
We square on both sides of \eqref{ktt 39} to obtain
\begin{align}\label{ktt 49}
\e^2\pnnm{\pk[f]}{2m}^2\ls\e^2\pnm{(1-\pp)[f]}{\gamma_+}{\frac{4m}{3}}^2+\tnnm{(\ik-\pk)[f]}^2+\e^2\pnnm{(\ik-\pk)[f]}{2m}^2+\tnnm{\snn S}^2+\e^2\pnm{h}{\gamma_-}{\frac{4m}{3}}^2.
\end{align}
H\"older's inequality implies
\begin{align}
\tnnm{\pk[f]}\ls \pnnm{\pk[f]}{2m}.
\end{align}
Multiplying a small constant on both sides of \eqref{ktt 49} and
adding to \eqref{ktt 48} with $\eta>0$ sufficiently small to absorb $\eta\e^2\tnnm{\pk[f]}^2$ and $\tnnm{(\ik-\pk)[f]}^2$ into the left-hand side, we obtain
\begin{align}\label{ktt 50}
&\e\tsm{(1-\pp)[f]}{\gamma_+}^2+\unnm{(\ik-\pk)[f]}^2+\e^2\pnnm{\pk[f]}{2m}^2\\
\ls& \e^2\pnm{(1-\pp)[f]}{\gamma_+}{\frac{4m}{3}}^2+\e^2\pnnm{(\ik-\pk)[f]}{2m}^2+\tnnm{\snn S}^2+\e^2\pnm{h}{\gamma_-}{\frac{4m}{3}}^2+\tsm{h}{\gamma_-}^2+\abs{\int_{\Omega\times\r^3}fS}.\no
\end{align}
Now we need to handle the extra term $\e^2\pnm{(1-\pp)[f]}{\gamma_+}{\frac{4m}{3}}^2$ and $\e^2\pnnm{(\ik-\pk)[f]}{2m}^2$ on the right-hand side of \eqref{ktt 50}. By interpolation estimate and Young's inequality, we have
\begin{align}
\pnm{(1-\pp)[f]}{\gamma_+}{\frac{4m}{3}}\leq&\tsm{(1-\pp)[f]}{\gamma_+}^{\frac{3}{2m}}\ssm{(1-\pp)[f]}{\gamma_+}^{\frac{2m-3}{2m}}\\
=&\bigg(\frac{1}{\e^{\frac{6m-9}{4m^2}}}\tsm{(1-\pp)[f]}{\gamma_+}^{\frac{3}{2m}}\bigg)
\bigg(\e^{\frac{6m-9}{4m^2}}\ssm{(1-\pp)[f]}{\gamma_+}^{\frac{2m-3}{2m}}\bigg)\no\\
\ls&\bigg(\frac{1}{\e^{\frac{6m-9}{4m^2}}}\tsm{(1-\pp)[f]}{\gamma_+}^{\frac{3}{2m}}\bigg)^{\frac{2m}{3}}+o(1)
\bigg(\e^{\frac{6m-9}{4m^2}}\ssm{(1-\pp)[f]}{\gamma_+}^{\frac{2m-3}{2m}}\bigg)^{\frac{2m}{2m-3}}\no\\
\leq&\frac{1}{\e^{\frac{2m-3}{2m}}}\tsm{(1-\pp)[f]}{\gamma_+}+o(1)\e^{\frac{3}{2m}}\ssm{(1-\pp)[f]}{\gamma_+}\no\\
\leq&\frac{1}{\e^{\frac{2m-3}{2m}}}\tsm{(1-\pp)[f]}{\gamma_+}+o(1)\e^{\frac{3}{2m}}\ssm{(1-\pp)[f]}{\gamma_+}.\no
\end{align}
Similarly, we have
\begin{align}
\pnnm{(\ik-\pk)[f]}{2m}\leq&\tnnm{(\ik-\pk)[f]}^{\frac{1}{m}}\snnm{(\ik-\pk)[f]}^{\frac{m-1}{m}}\\
=&\bigg(\frac{1}{\e^{\frac{3m-3}{2m^2}}}\tnnm{(\ik-\pk)[f]}^{\frac{1}{m}}\bigg)
\bigg(\e^{\frac{3m-3}{2m^2}}\snnm{(\ik-\pk)[f]}^{\frac{m-1}{m}}\bigg)\no\\
\ls&\bigg(\frac{1}{\e^{\frac{3m-3}{2m^2}}}\tnnm{(\ik-\pk)[f]}^{\frac{1}{m}}\bigg)^{m}+o(1)
\bigg(\e^{\frac{3m-3}{2m^2}}\snnm{(\ik-\pk)[f]}^{\frac{m-1}{m}}\bigg)^{\frac{m}{m-1}}\no\\
\leq&\frac{1}{\e^{\frac{3m-3}{2m}}}\tnnm{(\ik-\pk)[f]}+o(1)\e^{\frac{3}{2m}}\snnm{(\ik-\pk)[f]}.\no
\end{align}
We need this extra $\e^{\frac{3}{2m}}$ for the convenience of $L^{\infty}$ estimate. Then we know for sufficiently small $\e$ and $\dfrac{3}{2}< m<3$,
\begin{align}\label{ktt 51}
\e^2\pnm{(1-\pp)[f]}{\gamma_+}{\frac{4m}{3}}^2
\ls&\e^{2-\frac{2m-3}{m}}\tsm{(1-\pp)[f]}{\gamma_+}^2+o(1)\e^{2+\frac{3}{m}}\ssm{(1-\pp)[f]}{\gamma_+}^2\\
\ls&o(1)\e\tsm{(1-\pp)[f]}{\gamma_+}^2+o(1)\e^{2+\frac{3}{m}}\ssm{f}{\gamma_+}^2.\no
\end{align}
Similarly, we have
\begin{align}\label{ktt 52}
\e^2\pnnm{(\ik-\pk)[f]}{2m}^2\ls&\e^{2-\frac{3m-3}{m}}\tnnm{(\ik-\pk)[f]}^2+o(1)\e^{2+\frac{3}{m}}\snnm{(\ik-\pk)[f]}^2\\
\ls& o(1)\tnnm{(\ik-\pk)[f]}^2+o(1)\e^{2+\frac{3}{m}}\snnm{f}^2.\no
\end{align}
Inserting \eqref{ktt 51} and \eqref{ktt 52} into \eqref{ktt 50}, we can absorb $o(1)\e\tsm{(1-\pp)[f]}{\gamma_+}^2$ and $o(1)\tnnm{(\ik-\pk)[f]}^2$ into the left-hand side to obtain
\begin{align}\label{ktt 52}
&\e\tsm{(1-\pp)[f]}{\gamma_+}^2+\unnm{(\ik-\pk)[f]}^2+\e^2\pnnm{\pk[f]}{2m}^2\\
\ls& o(1)\e^{2+\frac{3}{m}}\Big(\ssm{f}{\gamma_+}^2+\snnm{f}^2\Big)+
\tnnm{\snn S}^2+\e^2\pnm{h}{\gamma_-}{\frac{4m}{3}}^2+\tsm{h}{\gamma_-}^2+\abs{\int_{\Omega\times\r^3}fS}.\no
\end{align}
\ \\
Step 4: Synthesis.\\
We can decompose
\begin{align}\label{ktt 55}
\int_{\Omega\times\r^3}fS=\iint_{\Omega\times\r^3}\pk[f]\pk[S]+\iint_{\Omega\times\r^3}(\ik-\pk)[f](\ik-\pk)[S].
\end{align}
H\"older's inequality and Cauchy's inequality imply
\begin{align}\label{ktt 53}
\iint_{\Omega\times\r^3}\pk[f]\pk[S]\leq\pnnm{\pk[f]}{2m}\pnnm{\pk[S]}{\frac{2m}{2m-1}}
\ls o(1)\e^2\pnnm{\pk[f]}{2m}^2+\frac{1}{\e^{2}}\pnnm{\pk[S]}{\frac{2m}{2m-1}}^2,
\end{align}
and
\begin{align}\label{ktt 54}
\iint_{\Omega\times\r^3}(\ik-\pk)[f](\ik-\pk)[S]\ls o(1)\unnm{(\ik-\pk)[f]}^2+\tnnm{\snn(\ik-\pk)[S]}^2.
\end{align}
Inserting \eqref{ktt 53} and \eqref{ktt 54} into \eqref{ktt 55} and further \eqref{ktt 52}, absorbing $o(1)\e^2\pnnm{\pk[f]}{2m}^2$ and $o(1)\unnm{(\ik-\pk)[f]}^2$ into the left-hand side, we get
\begin{align}
&\e\tsm{(1-\pp)[f]}{\gamma_+}^2+\unnm{(\ik-\pk)[f]}^2+\e^2\pnnm{\pk[f]}{2m}^2\\
\ls& o(1)\e^{2+\frac{3}{m}}\Big(\ssm{f}{\gamma_+}^2+\snnm{f}^2\Big)+
+\frac{1}{\e^{2}}\pnnm{\pk[S]}{\frac{2m}{2m-1}}^2+\tnnm{\snn(\ik-\pk)[S]}^2+\e^2\pnm{h}{\gamma_-}{\frac{4m}{3}}^2+\tsm{h}{\gamma_-}^2.\no
\end{align}
Therefore, we have
\begin{align}
&\frac{1}{\e^{\frac{1}{2}}}\tsm{(1-\pp)[f]}{\gamma_+}+\frac{1}{\e}\unnm{(\ik-\pk)[f]}+\pnnm{\pk[f]}{2m}\\
\leq& o(1)\e^{\frac{3}{2m}}\Big(\ssm{f}{\gamma_+}+\snnm{f}\Big)+
+\frac{1}{\e^{2}}\pnnm{\pk[S]}{\frac{2m}{2m-1}}+\frac{1}{\e}\tnnm{\snn(\ik-\pk)[S]}+\pnm{h}{\gamma_-}{\frac{4m}{3}}+\frac{1}{\e}\tsm{h}{\gamma_-}\bigg).\no
\end{align}
\end{proof}

%%%%%%%%%%%%%%%%%%%%%%%%%%%%%%%%%%%%%%%%%%%%%%%%%%%%%%%%%%%%%%%%%%%%%%%%%%%%%%%%%%%%%
\subsection{$L^{\infty}$ Estimates}
%%%%%%%%%%%%%%%%%%%%%%%%%%%%%%%%%%%%%%%%%%%%%%%%%%%%%%%%%%%%%%%%%%%%%%%%%%%%%%%%%%%%%

Now we begin to consider mild formulation. When tracking the solution backward along the characteristics, once it hits the in-flow boundary, due to diffusive reflection boundary, actually the information comes from the integral of characteristics hitting the out-flow boundary. Following this idea, we may define the backward stochastic cycles, with multiple hitting times and out-flow integrals.

\begin{definition}[Hitting Time and Position]
For any $(\vx,\vv)\in\Omega\times\r^3$ with $(\vx,\vv)\notin\gamma_0$, define the backward the hitting time
\begin{align}\label{ktt 58}
t_b(\vx,\vv):=&\inf\{t>0:\vx-\e t\vv\notin\Omega\}.
\end{align}
Also, define the hitting position
\begin{align}
\vx_b:=\vx-\e t_b(\vx,\vv)\vv\notin\Omega.
\end{align}
\end{definition}
%Before the diffusive reflection, $\vv$ does not change along the characteristics. Hence, we know $(\vx_b,\vv)\in\gamma_-$.

\begin{definition}[Stochastic Cycle]
For any $(\vx,\vv)\in\Omega\times\r^3$ with $(\vx,\vv)\notin\gamma_0$, let $(t_0,\vx_0,\vv_0)=(0,\vx,\vv)$. Define the first stochastic triple
\begin{align}
(t_1,\vx_1,\vv_1):=\Big(t_b(\vx_0,\vv_0),\vx_b(\vx_0,\vv_0),\vv_1\Big),
\end{align}
for some $\vv_1$ satisfying $\vv_1\cdot\vn(\vx_1)>0$.

Inductively, assume we know the $k^{th}$ stochastic triple $(t_k,\vx_k,\vv_k)$. Define the $(k+1)^{th}$ stochastic triple
\begin{align}
(t_{k+1},\vx_{k+1},\vv_{k+1}):=\Big(t_k+t_b(\vx_k,\vv_k),\vx_k(\vx_k,\vv_k),\vv_{k+1}\Big),
\end{align}
for some $\vv_{k+1}$ satisfying $\vv_{k+1}\cdot\vn(\vx_{k+1})>0$.
\end{definition}

\begin{remark}
Roughly speaking, this definition describes one characteristic line with reflection (alternatively so-called stochastic cycle), starting from $(\vx_k,\vv_k)\in\gamma_+$, tracking back to $(\vx_{k+1},\vv_k)\in\gamma_-$, diffusively reflected to $(\vx_{k+1},\vv_{k+1})\in\gamma_+$, and beginning a new cycle. $t_k$ the accumulative time the characteristic moves backward. Note that we are free to choose any $\vv_k\cdot\vn(\vx_k)>0$, so different sequence $\ds\{\vv_k\}_{k=1}^{\infty}$ represents different stochastic cycles.
\end{remark}

\begin{definition}[Diffusive Reflection Integral]
Define $\nn_{k}=\{\vv\in \r^3:\vv\cdot\vn(\vx_{k})>0\}$, so the stochastic cycle must satisfy $\vv_k\in\nn_k$. Let the iterated integral for $k\geq2$ be defined as
\begin{align}
\int_{\prod_{j=1}^{k-1}\nn_j}\prod_{j=1}^{k-1}\ud{\sigma_j}:=\int_{\nn_1}\ldots\bigg(\int_{\nn_{k-1}}\ud{\sigma_{k-1}}\bigg)\ldots\ud{\sigma_1}
\end{align}
where $\ud{\sigma_j}:=\m(\vv_j)\abs{\vv_j\cdot\vn(\vx_j)}\ud{\vv_j}$ is a probability measure.
\end{definition}

We define a weight function scaled with parameter $\xi$, for $0\leq\vrh<\dfrac{1}{4}$ and $\vth\geq0$,
\begin{align}\label{ktt 56}
\vh(\vv):=\bv,
\end{align}
and
\begin{align}\label{ktt 57}
\tvh(\vv):=\frac{1}{\m^{\frac{1}{2}}(\vv)\vh(\vv)}=\sqrt{2\pi}\frac{\ue^{\left(\frac{1}{4}-\varrho\right)\abs{\vv}^2}}
{\left(1+\abs{\vv}^2\right)^{\frac{\vth}{2}}}.
\end{align}

%\begin{align}\label{ktt 56}
%\vh(\vv)=w_{\xi,\vth,\varrho}(\vv)=\left(1+\xi^2\abs{\vv}^2\right)^{\frac{\vth}{2}}\ue^{\varrho\abs{\vv}^2},
%\end{align}
%and
%\begin{align}\label{ktt 57}
%\tvh(\vv)=\frac{1}{\m^{\frac{1}{2}}(\vv)\vh(\vv)}=\sqrt{2\pi}\frac{\ue^{\left(\frac{1}{4}-\varrho\right)\abs{\vv}^2}}
%{\left(1+\xi^2\abs{\vv}^2\right)^{\frac{\vth}{2}}}.
%\end{align}
%
%\begin{remark}
%For fixed $\xi,\vth>0,\vrh\geq0$, the weight function defined in \eqref{ktt 56} is equivalent to $\bv$, i.e.
%\begin{align}
%\bv\ls\vh\ls\bv.
%\end{align}
%However, the extra parameter $\xi$ gives us more flexibility. In particular, we have the important Lemma \ref{ktt lemma 2}
%\end{remark}

\begin{lemma}\label{ktt lemma 2}
For $T_0>0$ sufficiently large, there exists constants $C_1,C_2>0$ independent of $T_0$, such that for $k=C_1T_0^{\frac{5}{4}}$, and
$(\vx,\vv)\in\times\bar\Omega\times\r^3$,
\begin{align}\label{ktt 59}
\int_{\Pi_{j=1}^{k-1}\nn_j}\id_{\{t_k(\vx,\vv,\vv_1,\ldots,\vv_{k-1})<\frac{T_0}{\e}\}}\prod_{j=1}^{k-1}\ud{\sigma_j}\leq
\left(\frac{1}{2}\right)^{C_2T_0^{\frac{5}{4}}}.
\end{align}
%We also have, for $\vth>4$,
%\begin{align}
%\int_{\Pi_{j=1}^{k-1}\nn_j}\br{\vv_l}\tvh(\vv_l)\prod_{j=1}^{k-1}\ud{\sigma_j}\leq&
%\frac{C(\beta,\varrho)}{\xi^3},\label{ktt 60}\\
%\int_{\Pi_{j=1}^{k-1}\nn_j}\id_{\{t_k(\vx,\vv,\vv_1,\ldots,\vv_{k-1})<\frac{T_0}{\e}\}}\br{\vv_l}\tvh(\vv_l)\prod_{j=1}^{k-1}\ud{\sigma_j}\leq&
%\frac{C(\beta,\varrho)}{\xi^3}\bigg(\frac{1}{2}\bigg)^{C_2T_0^{\frac{5}{4}}}.
%\label{ktt 61}
%\end{align}
\end{lemma}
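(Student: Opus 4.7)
The plan is a geometric/probabilistic counting argument: at each reflection step, partition the velocity fiber $\nn_j$ into a ``good'' set (bounded velocity, non-grazing) and a ``bad'' set (grazing or large velocity), bound the time contribution of each good bounce from below using convexity, bound the measure of the bad set from above using the Gaussian weight, and then close the proof with a binomial tail bound in which the parameter $\delta$ is optimized against $T_0$ to produce the $T_0^{5/4}$ scaling.

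For $\delta\in(0,1)$ to be chosen, set
$$V_j^{\delta} := \big\{\vv\in\nn_j :\ \vv\cdot\vn(\vx_j)\geq\delta\ \text{and}\ \abs{\vv}\leq 1/\delta\big\}.$$
First, a direct computation with the Gaussian $\m$ (spherical coordinates for the grazing piece $\{\vv\cdot\vn<\delta\}$, and a Gaussian tail estimate for $\{\abs{\vv}\geq 1/\delta\}$) gives $\int_{\nn_j\setminus V_j^{\delta}}\ud\sigma_j \leq C\delta^2$ for small $\delta$. Second, by the strict convexity and $C^2$-regularity of $\p\Omega$, the backward chord through $\vx_j\in\p\Omega$ in the inward direction $-\vv/\abs{\vv}$ has length at least $c_\Omega\,\vv\cdot\vn(\vx_j)/\abs{\vv}$, hence for $\vv\in V_j^{\delta}$ the chord is at least $c_\Omega\delta^2$; dividing by $\e\abs{\vv}\leq \e/\delta$ in the definition \eqref{ktt 58} yields $t_b(\vx_j,\vv)\geq c_\Omega\delta^3/\e$. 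Consequently, since $t_k = \sum_{j=0}^{k-1} t_b(\vx_j,\vv_j)$, the event $\{t_k<T_0/\e\}$ forces at most $m_0:=\lfloor T_0/(c_\Omega\delta^3)\rfloor$ of the indices $j$ to satisfy $\vv_j\in V_j^{\delta}$, equivalently at least $k-1-m_0$ of them to lie in the bad set $\nn_j\setminus V_j^{\delta}$.

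Since each $\ud\sigma_j$ is a probability measure, a standard binomial union bound gives
$$\int_{\Pi_{j=1}^{k-1}\nn_j}\id_{\{t_k<T_0/\e\}}\prod_{j=1}^{k-1}\ud\sigma_j \leq \binom{k-1}{m_0}(C\delta^2)^{k-1-m_0}.$$
To close, choose $\delta:=T_0^{-1/12}$, so that $m_0\sim T_0^{5/4}$ and $C\delta^2 = CT_0^{-1/6}\to 0$, and then set $k=C_1T_0^{5/4}$ with $C_1$ large enough that $k-1\geq 2m_0$. Then $\binom{k-1}{m_0}\leq 4^{m_0}$ and $4C\delta^2\leq 1/2$ once $T_0$ is large, so the bound collapses to $(4C\delta^2)^{m_0}\leq (1/2)^{m_0} = (1/2)^{C_2 T_0^{5/4}}$ with $C_2 = 1/(2c_\Omega)$. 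The main obstacle will be the chord-length lower bound in the second step, which requires the 3D convex geometry of $\Omega$ (specifically, uniformly bounded principal curvatures of $\p\Omega$ so that an inscribed ball of uniform radius is tangent at every boundary point); once this geometric input is in hand, the remaining pieces are a careful Gaussian estimate on the grazing set and the binomial optimization above.
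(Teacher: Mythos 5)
Your proposal is correct in substance, and it is worth noting that the paper itself does not prove this lemma: its ``proof'' is a one-line citation of \cite[Lemma 4.1]{Esposito.Guo.Kim.Marra2013} together with the remark that the hitting time \eqref{ktt 58} carries an extra $\e^{-1}$, which cancels against the threshold $T_0/\e$. What you wrote is essentially a reconstruction of that cited argument: the same splitting of each fiber $\nn_j$ into a non-grazing, bounded-speed set $V_j^{\d}$ and a complement of $\ud\sigma_j$-measure $O(\d^2)$, the same chord lower bound $\gs (\vv\cdot\vn)/\abs{\vv}$ from the uniform interior-sphere property of the smooth bounded boundary (convexity is not actually needed for this step), the same counting that $t_k<T_0/\e$ forces all but $O(T_0\d^{-3})$ bounces to be ``bad'', and the same binomial/union bound with $\d$ a negative power of $T_0$; your choice $\d=T_0^{-1/12}$ does produce the $k\sim T_0^{5/4}$ scaling.

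Two steps need tightening. First, the inequality $\binom{k-1}{m_0}\leq 4^{m_0}$ is not implied by $k-1\geq 2m_0$: the binomial coefficient is increasing in $k$ for fixed $m_0$, so enlarging $C_1$ makes it worse (e.g.\ $\binom{4m}{m}\gg 4^m$). The repair is routine: use $\binom{k-1}{m_0}\leq \big(e(k-1)/m_0\big)^{m_0}\leq B^{m_0}$ with $B=B(C_1,c_\Omega)$, and note that since $C\d^2=CT_0^{-1/6}\rt 0$ while $k-1-m_0\gs T_0^{5/4}$, the factor $(C\d^2)^{k-1-m_0}$ absorbs $B^{m_0}$ once $T_0$ is large, giving $(1/2)^{C_2T_0^{5/4}}$ for some $C_2>0$ (the explicit value $C_2=1/(2c_\Omega)$ should not be claimed). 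Second, because $V_j^{\d}$ and $\ud\sigma_j$ depend on $\vx_j$, hence on $\vv_1,\dots,\vv_{j-1}$, the bounces are not independent; the binomial bound should be phrased as a union bound over subsets $A\subset\{1,\dots,k-1\}$ of cardinality $k-1-m_0$ that are entirely bad, evaluating the iterated integral from the innermost variable outward and using that both the bad-set bound $C\d^2$ and the normalization $\int_{\nn_j}\ud\sigma_j=1$ hold uniformly in $\vx_j\in\p\Omega$. With these adjustments your argument is complete and matches the standard proof behind the citation.
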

\begin{proof}
This is a rescaled version of \cite[Lemma 4.1]{Esposito.Guo.Kim.Marra2013}. Since our hitting time in \eqref{ktt 58} is rescaled with $\e$, we should rescale back in the statement of lemma.
\end{proof}

\begin{remark}
Roughly speaking, Lemma \ref{ktt lemma 2} states that even though we have the freedom to choose $\vv_k$ in each stochastic cycle, in the long run, the accumulative time will not be too small. After enough reflections $\sim k$, most characteristics has the accumulative time that will exceed any set threshold $T_0$.
\end{remark}

\begin{theorem}\label{LI estimate}
Assume \eqref{linear steady compatibility} and \eqref{linear steady normalization} hold. The solution $f(\vx,\vv)$ to the equation \eqref{linear steady} satisfies for $\vth\geq0$ and $0\leq\varrho<\dfrac{1}{4}$,
\begin{align}
&\lnnmv{f}+\lsm{f}{\gamma_+}\\
\ls& \frac{1}{\e^{2+\frac{3}{2m}}}\pnnm{\pk[S]}{\frac{2m}{2m-1}}+\frac{1}{\e^{1+\frac{3}{2m}}}\tnnm{\snn(\ik-\pk)[S]}+\lnnmv{\nu^{-1}S}\no\\
&+\frac{1}{\e^{\frac{3}{2m}}}\pnm{h}{\gamma_-}{\frac{4m}{3}}+\frac{1}{\e^{1+\frac{3}{2m}}}\tsm{h}{\gamma_-}+\lsm{h}{\gamma_-}.\no
\end{align}
\end{theorem}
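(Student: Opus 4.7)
The plan is to carry out the classical $L^2$-$L^{\infty}$ bootstrap on the weighted equation via stochastic cycles, using Theorem \ref{LN estimate} as the $L^{2m}$-input. First I rewrite \eqref{linear steady} for the weighted unknown $g:=\vh f$:
\begin{align*}
\e\vv\cdot\nx g+\nu(\vv)g=K_{\vh}[g]+\vh S,\qquad K_{\vh}[g](\vv):=\int_{\r^3}\frac{\vh(\vv)}{\vh(\vuu)}k(\vuu,\vv)g(\vuu)\,\ud\vuu,
\end{align*}
with Lemma \ref{Regularity lemma 1'} ensuring $\int|k_{\vh}(\vuu,\vv)|\,\ud\vuu\ls\br{\vv}^{-1}$. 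Along the backward characteristic, Duhamel writes $g(\vx,\vv)$ as a damped boundary term at $\vx_b$ plus integrals of $\vh S$ and $K_{\vh}[g]$ over $s\in[0,t_b]$; the boundary value on $\gamma_-$ is replaced by $\vh(\pp[f]+h)$, so that $\pp[f]$ unfolds as an integral against $\ud\sigma_1$ on $\nn_1$, launching the next stochastic cycle.

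Next I iterate this representation $k=C_1 T_0^{5/4}$ times along the stochastic cycles for a large parameter $T_0$. Lemma \ref{ktt lemma 2} guarantees the trajectories whose accumulated time stays below $T_0/\e$ after $k$ bounces contribute at most $(1/2)^{C_2 T_0^{5/4}}\lnnmv{g}$, absorbed by the left-hand side for $T_0$ large. The remaining terms are source integrals of $\vh S$ (bounded by $\lnnmv{\nu^{-1}S}$ using Lemma \ref{Regularity lemma 1'}), boundary integrals of $\vh h$ (bounded by $\lsm{h}{\gamma_-}$), and a doubly iterated collision integral of the form
\begin{align*}
\int_0^{t_b}\!\!\int_{\r^3}\!\!\int_0^{s}\!\!\int_{\r^3}\ue^{-\nu(\vv)(t_b-s)/\e}k_{\vh}(\vuu,\vv)\,\ue^{-\nu(\vuu)(s-s_1)/\e}k_{\vh}(\vuu',\vuu)\,g(y,\vuu')\,\ud\vuu'\,\ud s_1\,\ud\vuu\,\ud s,
\end{align*}
with $y\in\Omega$ determined by the backward trajectory, plus an analogous term along subsequent cycles.

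The critical step is controlling this doubly iterated integral by the $L^{2m}$-norm of $f$. Following \cite{Guo2010} and \cite{Esposito.Guo.Kim.Marra2013}, I split the integration domain: on $\{|\vuu|+|\vuu'|\geq N\}\cup\{|\vuu-\vuu'|\leq\kappa\}\cup\{|t_b-s|+|s-s_1|\leq\kappa\e\}$ the contribution is $\ls(N^{-1}+\kappa+\ue^{-\nu_0\kappa})\lnnmv{g}$ by Lemma \ref{Regularity lemma 1'} combined with the exponential damping, and is absorbed for $N$ large and $\kappa$ small. On the complementary compact non-degenerate set I regularize $k_{\vh}$ away from $\vuu'=\vuu$ and change variable $\vuu'\mapsto y$ for fixed $(s,\vuu,s_1)$, whose Jacobian $(\e(s-s_1))^{-3}$ combines with the $s$-integration of length $O(\e^{-1})$ and the exponential factors; Cauchy-Schwarz in $y$ followed by H\"older in $\vuu$ on the ball of radius $2N$ yields a bound of order $\e^{-3/(2m)}\pnnm{f}{2m}$ plus absorbable error.

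Substituting the $L^{2m}$-estimate of Theorem \ref{LN estimate} then produces exactly the $\e^{-3/(2m)}$-loss on each $\e$-power in the right-hand side of the claim, and the small $L^{\infty}$ feedback term $o(1)\e^{3/(2m)}\lnnmv{f}$ coming from Theorem \ref{LN estimate} is absorbed on the left. The boundary norm $\lsm{f}{\gamma_+}$ is captured simultaneously by evaluating the same Duhamel representation at $\vx\in\p\Omega$, $\vv\cdot\vn>0$. The main obstacle is the 3D singular factor $1/|\vuu-\vv|$ in $k_2$, absent from the 2D treatment of \cite{AA013}: the two-parameter cutoff $(N,\kappa)$ used in conjunction with Lemma \ref{Regularity lemma 1'} is precisely what keeps the change-of-variables step legal in this singular regime and closes the $L^{\infty}$ loop.
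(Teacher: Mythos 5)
Your proposal follows essentially the same route as the paper: the weighted mild formulation for $g=\vh f$, the stochastic-cycle iteration controlled by Lemma \ref{ktt lemma 2}, a second Duhamel expansion of the $K_{\vh}$ term split into large-velocity, near-singular, short-time and main regions, kernel truncation plus the change of variables $\vv'\mapsto y$ with Jacobian $\gtrsim\e^3\d^3$, and finally feeding in Theorem \ref{LN estimate} while absorbing the $o(1)\e^{\frac{3}{2m}}$ feedback. One caution: in the main case you should estimate the macroscopic and microscopic parts separately — H\"older to $L^{2m}$ for $\pk[f]$ (cost $\e^{-\frac{3}{2m}}$) and Cauchy--Schwarz to $L^2$ for $(\ik-\pk)[f]$ (cost $\e^{-\frac{3}{2}}$, compensated by the $\e$-gain $\frac{1}{\e}\unnm{(\ik-\pk)[f]}$ in Theorem \ref{LN estimate}) — since that theorem controls $\pnnm{\pk[f]}{2m}$ rather than $\pnnm{f}{2m}$; as written, your bound $\e^{-3/(2m)}\pnnm{f}{2m}$ needs an extra interpolation for the microscopic part (also fix the bookkeeping so the damping exponent and the Jacobian refer to the same time parametrization).
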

\begin{proof}
\ \\
Step 1: Mild formulation.\\
Denote the weighted solution
\begin{align}
g(\vx,\vv):=&\vh(\vv) f(\vx,\vv),
\end{align}
and the weighted non-local operator
\begin{align}
K_{\vh(\vv)}[g](\vv):=&\vh(\vv)K\left[\frac{g}{\vh}\right](\vv)=\int_{\r^3}k_{\vh(\vv)}(\vv,\vuu)g(\vuu)\ud{\vuu},
\end{align}
where
\begin{align}
k_{\vh(\vv)}(\vv,\vuu):=k(\vv,\vuu)\frac{\vh(\vv)}{\vh(\vuu)}.
\end{align}
Multiplying $\vh$ on both sides of \eqref{linear steady}, we have
\begin{align}\label{ktt 64}
\left\{
\begin{array}{l}
\e\vv\cdot\nx g+\nu g=K_{\vh}(\vx,\vv)+\vh(\vv) S(\vx,\vv)\ \ \text{in}\ \ \Omega\times\r^3,\\\rule{0ex}{2em}
g(\vx_0,\vv)=\ds\vh(\vv)\mh(\vv)\int_{\vuu\cdot\vn>0}\tvh(\vuu)g(\vx_0,\vuu)\ud\vuu+\vh h(\vx_0,\vv)\ \ \text{for}\ \ \vx_0\in\p\Omega\ \
\text{and}\ \ \vv\cdot\vn<0,
\end{array}
\right.
\end{align}
We can rewrite the solution of the equation \eqref{ktt 64} along the characteristics by Duhamel's principle as
\begin{align}
g(\vx,\vv)=& \vh(\vv)h(\vx_1,\vv)\ue^{-\nu(\vv)
t_{1}}+\int_{0}^{t_{1}}\vh(\vv) S\Big(\vx-\e(t_1-s)\vv,\vv\Big)\ue^{-\nu(\vv)
(t_{1}-s)}\ud{s}\\
&+\int_{0}^{t_{1}}K_{\vh(\vv)}[g]\Big(\vx-\e(t_1-s)\vv,\vv\Big)\ue^{-\nu(\vv)
(t_{1}-s)}\ud{s}+\frac{\ue^{-\nu(\vv)
t_{1}}}{\tvh(\vv)}\int_{\nn_1}g(\vx_1,\vv_1)\tvh(\vv_1)\ud{\sigma_1},\no
\end{align}
where the last term refers to $\pp[f]$. We may further rewrite the last term using \eqref{ktt 64} along the stochastic cycle by applying Duhamel's principle $k$ times as
\begin{align}\label{ktt 65}
g(\vx,\vv)=& \vh(\vv)h(\vx_1,\vv)\ue^{-\nu(\vv)t_{1}}+\int_{0}^{t_{1}}\vh(\vv) S\Big(\vx-\e(t_1-s)\vv,\vv\Big)\ue^{-\nu(\vv)(t_{1}-s)}\ud{s}\\
&+\int_{0}^{t_{1}}K_{\vh(\vv)}[g]\Big(\vx-\e(t_1-s)\vv,\vv\Big)\ue^{-\nu(\vv)(t_{1}-s)}\ud{s}\no\\
&+\frac{\ue^{-\nu(\vv)t_{1}}}{\tvh(\vv)}\sum_{\ell=1}^{k-1}\int_{\prod_{j=1}^{\ell}\nn_j}\Big(G_{\ell}[\vx,\vv]+H_{\ell}[\vx,\vv]\Big)\tvh(\vv_\ell)
\bigg(\prod_{j=1}^{\ell}\ue^{-\nu(\vv_j)(t_{j+1}-t_j)}\ud{\sigma_j}\bigg)\no\\
&+\frac{\ue^{-\nu(\vv)t_{1}}}{\tvh(\vv)}\int_{\prod_{j=1}^{k}\nn_j}g(\vx_k,\vv_k)\tvh(\vv_{k})
\bigg(\prod_{j=1}^{k}\ue^{-\nu(\vv_{j})(t_{j+1}-t_{j})}\ud{\sigma_j}\bigg),\no
\end{align}
where
\begin{align}\label{ktt 66}
G_{\ell}[\vx,\vv]:=&\vh(\vv_\ell)h(\vx_{\ell+1},\vv_{\ell})+\int_{t_{\ell}}^{t_{\ell+1}}\bigg(
\vh(\vv_\ell)S\Big(\vx_{\ell}-\e(t_{\ell+1}-s)\vv_{\ell},\vv_{\ell}\Big)\ue^{\nu(\vv_\ell)s}\bigg)\ud{s}\\
H_{\ell}[\vx,\vv]:=&\int_{t_{\ell}}^{t_{\ell+1}}\bigg(K_{\vh(\vv_{\ell})}[g]\Big(\vx_{\ell}-\e(t_{\ell+1}-s)\vv_{\ell},\vv_{\ell}\Big)\ue^{\nu(\vv_{\ell})s}\bigg)\ud{s}.\label{ktt 67}
\end{align}
\ \\
Step 2: Estimates of source terms and boundary terms.\\
We set $k=CT_0^{\frac{5}{4}}$ for $T_0$ defined in Lemma \ref{ktt lemma 2}. Consider all terms in \eqref{ktt 65} related to $h$ and $S$.

Since $t_1\geq0$, we have
\begin{align}\label{ktt 68}
\abs{\vh(\vv)h(\vx_1,\vv)\ue^{-\nu(\vv)t_{1}}}\leq \ssm{\vh h}{\gamma_-}.
\end{align}
Also,
\begin{align}\label{ktt 69}
\abs{\int_{0}^{t_{1}}\vh(\vv) S\Big(\vx-\e(t_1-s)\vv,\vv\Big)\ue^{-\nu(\vv)(t_{1}-s)}\ud{s}}&\leq \snnm{\nu^{-1}\vh S}\abs{\int_{0}^{t_{1}}\nu(\vv)\ue^{-\nu(\vv)(t_{1}-s)}\ud{s}}
\leq \snnm{\nu^{-1}\vh S}.
\end{align}
Then we turn to terms defined in $G_{\ell}$ of \eqref{ktt 66}. Noting that $\dfrac{1}{\tvh}\ls 1$, we know
\begin{align}\label{ktt 70}
&\abs{\frac{\ue^{-\nu(\vv)t_{1}}}{\tvh(\vv)}\sum_{\ell=1}^{k-1}\int_{\prod_{j=1}^{\ell}\nn_j}\Big(\vh(\vv_\ell)h(\vx_{\ell+1},\vv_{\ell})\Big)\tvh(\vv_\ell)
\bigg(\prod_{j=1}^{\ell}\ue^{-\nu(\vv_j)(t_{j+1}-t_j)}\ud{\sigma_j}\bigg)}\\
\ls&\ssm{\vh h}{\gamma_-}\abs{\sum_{\ell=1}^{k-1}\int_{\prod_{j=1}^{\ell}\nn_j}\tvh(\vv_\ell)\prod_{j=1}^{\ell}\ud{\sigma_j}}
\ls\ssm{\vh h}{\gamma_-}\abs{\sum_{\ell=1}^{k-1}\int_{\nn_{\ell}}\tvh(\vv_\ell)\ud{\sigma_\ell}}\ls CT_0^{\frac{5}{4}}\ssm{\vh h}{\gamma_-}.\no
\end{align}
Similarly,
\begin{align}\label{ktt 71}
\\
&\abs{\frac{\ue^{-\nu(\vv)t_{1}}}{\tvh(\vv)}\sum_{\ell=1}^{k-1}\int_{\prod_{j=1}^{\ell}\nn_j}\bigg(\int_{t_{\ell}}^{t_{\ell+1}}\bigg(
\vh(\vv_\ell)S\Big(\vx_{\ell}-\e(t_{\ell+1}-s)\vv_{\ell},\vv_{\ell}\Big)\ue^{\nu(\vv_\ell)s}\bigg)\ud{s}\bigg)\tvh(\vv_\ell)
\bigg(\prod_{j=1}^{\ell}\ue^{-\nu(\vv_j)(t_{j+1}-t_j)}\ud{\sigma_j}\bigg)}\no\\
\ls&\snnm{\nu^{-1}\vh S}\sum_{\ell=1}^{k-1}\int_{\prod_{j=1}^{\ell}\nn_j}\bigg(\int_{t_{\ell}}^{t_{\ell+1}}\abs{
\nu(\vv_\ell)\ue^{\nu(\vv_\ell)(s-(t_{\ell+1}-t_{\ell}))}\ud{s}}\tvh(\vv_\ell)
\prod_{j=1}^{\ell}\ud{\sigma_j}\bigg)\ls CT_0^{\frac{5}{4}}\snnm{\nu^{-1}\vh S}.\no
\end{align}
Collecting all terms in \eqref{ktt 68}, \eqref{ktt 69}, \eqref{ktt 70} and \eqref{ktt 71}, we have
\begin{align}\label{ktt 74}
\text{Boundary Term Contribution}\ls CT_0^{\frac{5}{4}}\ssm{\vh h}{\gamma_-}\ls \ssm{\vh h}{\gamma_-},
\end{align}
and
\begin{align}\label{ktt 75}
\text{Source Term Contribution}\ls CT_0^{\frac{5}{4}}\snnm{\nu^{-1}\vh S}\ls \snnm{\nu^{-1}\vh S}.
\end{align}
\ \\
Step 3: Estimates of Multiple Reflection.\\
We focus on the last term in \eqref{ktt 65}, which can be decomposed based on accumulative time $t_{k+1}$:
\begin{align}
&\abs{\frac{\ue^{-\nu(\vv)t_{1}}}{\tvh(\vv)}\int_{\prod_{j=1}^{k}\nn_j}g(\vx_k,\vv_k)\tvh(\vv_{k})
\bigg(\prod_{j=1}^{k}\ue^{-\nu(\vv_{j})(t_{j+1}-t_{j})}\ud{\sigma_j}\bigg)}\\
\leq&\abs{\frac{\ue^{-\nu(\vv)t_{1}}}{\tvh(\vv)}\int_{\Pi_{j=1}^{k}\nn_j}\id_{\left\{t_{k}\leq
\frac{T_0}{\e}\right\}}g(\vx_k,\vv_k)\tvh(\vv_{k})
\bigg(\prod_{j=1}^{k}\ue^{-\nu(\vv_{j})(t_{j+1}-t_{j})}\ud{\sigma_j}\bigg)}\no\\
&+\abs{\frac{\ue^{-\nu(\vv)t_{1}}}{\tvh(\vv)}\int_{\Pi_{j=1}^{k}\nn_j}\id_{\left\{t_{k}\geq
\frac{T_0}{\e}\right\}}g(\vx_k,\vv_k)\tvh(\vv_{k})
\bigg(\prod_{j=1}^{k}\ue^{-\nu(\vv_{j})(t_{j+1}-t_{j})}\ud{\sigma_j}\bigg)}:=J_1+J_2.\no
\end{align}
Based on Lemma \ref{ktt lemma 2}, we have
\begin{align}\label{ktt 72}
J_1\ls&\snnm{g}\abs{\int_{\Pi_{j=1}^{k-1}\nn_j}\id_{\left\{t_{k+1}\leq
\frac{T_0}{\e}\right\}}\bigg(\int_{\nn_k}\tvh(\vv_{k})\ud\sigma_k\bigg)
\bigg(\prod_{j=1}^{k-1}\ud{\sigma_j}\bigg)}\\
\ls&\snnm{g}\abs{\int_{\Pi_{j=1}^{k-1}\nn_j}\id_{\left\{t_{k+1}\leq
\frac{T_0}{\e}\right\}}
\bigg(\prod_{j=1}^{k-1}\ud{\sigma_j}\bigg)}\ls \bigg(\frac{1}{2}\bigg)^{C_2T_0^{\frac{5}{4}}}\snnm{g}.\no
\end{align}
On the other hand, when $t_k$ is large, the exponential terms become extremely small, so we obtain
\begin{align}\label{ktt 73}
J_2\ls&\snnm{g}\abs{\ue^{-\nu(\vv)t_{1}}\int_{\Pi_{j=1}^{k-1}\nn_j}\id_{\left\{t_{k+1}\geq
\frac{T_0}{\e}\right\}}\bigg(\int_{\nn_k}\tvh(\vv_{k})\ud\sigma_k\bigg)
\bigg(\prod_{j=1}^{k-1}\ue^{-\nu(\vv_{j})(t_{j+1}-t_{j})}\ud{\sigma_j}\bigg)}\\
\ls&\snnm{g}\abs{\ue^{-\nu(\vv)t_{1}}\int_{\Pi_{j=1}^{k-1}\nn_j}\id_{\left\{t_{k+1}\geq
\frac{T_0}{\e}\right\}}
\bigg(\prod_{j=1}^{k-1}\ue^{-\nu(\vv_{j})(t_{j+1}-t_{j})}\ud{\sigma_j}\bigg)}\ls \ue^{-\frac{T_0}{\e}}\snnm{g}.\no
\end{align}
Summarizing \eqref{ktt 72} and \eqref{ktt 73}, we get for $\d$ arbitrarily small
\begin{align}\label{ktt 76}
\text{Multiple Reflection Term Contribution}\ls \d \snnm{g}.
\end{align}
\ \\
Step 4: Estimates of $K_{\vh}$ terms.\\
So far, the only remaining terms in \eqref{ktt 65} are related to $K_{\vh}$. We focus on
\begin{align}
\abs{\int_{0}^{t_{1}}K_{\vh(\vv)}[g]\Big(\vx-\e(t_1-s)\vv,\vv\Big)\ue^{-\nu(\vv)(t_{1}-s)}\ud{s}}&\ls \snnm{K_{\vh(\vv)}[g]\Big(\vx-\e(t_1-s)\vv,\vv\Big)}.
\end{align}
Denote $X(s;\vx,\vv):=\vx-\e(t_1-s)\vv$. Define the back-time stochastic cycle from $(s,X,\vv')$ as $(t_i',\vx_i',\vv_i')$ with $(t_0',\vx_0',\vv_0')=(s,X,\vv')$. Then we can rewrite $K_{\vh}$ along the stochastic cycle as \eqref{ktt 65}
\begin{align}\label{ktt 77}
&\abs{K_{\vh(\vv)}[g]\Big(\vx-\e(t_1-s)\vv,\vv\Big)}=\abs{K_{\vh(\vv)}[g](X,\vv)}=\abs{\int_{\r^3}k_{\vh(\vv)}(\vv,\vv')g(X,\vv')\ud{\vv'}}\\
\leq&\abs{\int_{\r^3}\int_{0}^{t_1'}k_{\vh(\vv)}(\vv,\vv')K_{\vh(\vv')}[g]\Big(X-\e(t_1'-r)\vv',\vv'\Big)\ue^{-\nu(\vv')
(t_1'-r)}\ud{r}\ud{\vv'}}\no\\
&+\abs{\int_{\r^3}\frac{\ue^{-\nu(\vv')t_{1}'}}{\tvh(\vv')}\sum_{\ell=1}^{k-1}\int_{\prod_{j=1}^{\ell}\nn_j'}k_{\vh(\vv)}(\vv,\vv')H_{\ell}[X,\vv']\tvh(\vv_{\ell}')
\bigg(\prod_{j=1}^{\ell}\ue^{-\nu(\vv_j')(t_{j+1}'-t_j')}\ud{\sigma_j'}\bigg)\ud{\vv'}}\no\\
&+\abs{\int_{\r^3}k_{\vh(\vv)}(\vv,\vv')\Big(\text{boundary terms + source terms + multiple reflection terms}\Big)\ud{\vv'}}\no\\
:=&I+II+III.\no
\end{align}
Using estimates \eqref{ktt 74}, \eqref{ktt 75}, \eqref{ktt 76} from Step 2 and Step 3, and Lemma \ref{Regularity lemma 1'}, we can bound $III$ directly
\begin{align}\label{ktt 78}
III\ls \ssm{\vh h}{\gamma_-}+\snnm{\nu^{-1}\vh S}+\d \snnm{g}.
\end{align}
$I$ and $II$ are much more complicated. We may further rewrite $I$ as
\begin{align}
I=&\abs{\int_{\r^3}\int_{\r^3}\int_{0}^{t_1'}k_{\vh(\vv)}(\vv,\vv')k_{\vh(\vv')}(\vv',\vv'')g\Big(X-\e(t_1'-r)\vv',\vv''\Big)\ue^{-\nu(\vv')
(t_1'-r)}\ud{r}\ud{\vv'}\ud{\vv''}},
\end{align}
which will estimated in four cases:
\begin{align}
I:=I_1+I_2+I_3+I_4.
\end{align}
\ \\
Case I: $I_1:$ $\abs{\vv}\geq N$.\\
Based on Lemma \ref{Regularity lemma 1'}, we have
\begin{align}
\abs{\int_{\r^3}\int_{\r^3}k_{\vh(\vv)}(\vv,\vv')k_{\vh(\vv')}(\vv',\vv'')\ud{\vv'}\ud{\vv''}}\ls\frac{1}{1+\abs{\vv}}\ls\frac{1}{N}.
\end{align}
Hence, we get
\begin{align}\label{ktt 87}
I_1\ls\frac{1}{N}\snnm{g}.
\end{align}
\ \\
Case II: $I_2:$ $\abs{\vv}\leq N$, $\abs{\vv'}\geq2N$, or $\abs{\vv'}\leq
2N$, $\abs{\vv''}\geq3N$.\\
Notice this implies either $\abs{\vv'-\vv}\geq N$ or
$\abs{\vv'-\vv''}\geq N$. Hence, either of the following is valid
correspondingly:
\begin{align}
\abs{k_{\vh(\vv)}(\vv,\vv')}\leq& C\ue^{-\d N^2}\abs{k_{\vh(\vv)}(\vv,\vv')}\ue^{\d\abs{\vv-\vv'}^2},\\
\abs{k_{\vh(\vv')}(\vv',\vv'')}\leq& C\ue^{-\d N^2}\abs{k_{\vh(\vv')}(\vv',\vv'')}\ue^{\d\abs{\vv'-\vv''}^2}.
\end{align}
Based on Lemma \ref{Regularity lemma 1'}, we know
\begin{align}
\int_{\r^3}\abs{k_{\vh(\vv)}(\vv,\vv')}\ue^{\d\abs{\vv-\vv'}^2}\ud{\vv'}<&\infty,\\
\int_{\r^3}\abs{k_{\vh(\vv')}(\vv',\vv'')}\ue^{\d\abs{\vv'-\vv''}^2}\ud{\vv''}<&\infty.
\end{align}
Hence, we have
\begin{align}\label{ktt 88}
I_2\ls \ue^{-\d N^2}\snnm{g}.
\end{align}
\ \\
Case III: $I_3:$ $t_1'-r\leq\d$ and $\abs{\vv}\leq N$, $\abs{\vv'}\leq 2N$, $\abs{\vv''}\leq 3N$.\\
In this case, since the integral with respect to $r$ is restricted in a very short interval, there is a small contribution as
\begin{align}\label{ktt 89}
I_3\ls\abs{\int_{t_1'-\d}^{t_1'}\ue^{-(t_1'-r)}\ud{r}}\snnm{g}\ls \d\snnm{g}.
\end{align}
\ \\
Case IV: $I_4:$ $t_1'-r\geq\d$ and $\abs{\vv}\leq N$, $\abs{\vv'}\leq 2N$, $\abs{\vv''}\leq 3N$.\\
This is the most complicated case. Since $k_{\vh(\vv)}(\vv,\vv')$ has
possible integrable singularity of $\dfrac{1}{\abs{\vv-\vv'}}$, we can
introduce the truncated kernel $k_N(\vv,\vv')$ which is smooth and has compactly supported range such that
\begin{align}\label{ktt 79}
\sup_{\abs{\vv}\leq 3N}\int_{\abs{\vv'}\leq
3N}\abs{k_N(\vv,\vv')-k_{\vh(\vv)}(\vv,\vv')}\ud{\vv'}\leq\frac{1}{N}.
\end{align}
Then we can split
\begin{align}\label{ktt 80}
k_{\vh(\vv)}(\vv,\vv')k_{\vh(\vv')}(\vv',\vv'')=&k_N(\vv,\vv')k_N(\vv',\vv'')
+\bigg(k_{\vh(\vv)}(\vv,\vv')-k_N(\vv,\vv')\bigg)k_{\vh(\vv')}(\vv',\vv'')\\
&+\bigg(k_{\vh(\vv')}(\vv',\vv'')-k_N(\vv',\vv'')\bigg)k_N(\vv,\vv').\no
\end{align}
This means that we further split $I_4$ into
\begin{align}
I_4:=I_{4,1}+I_{4,2}+I_{4,3}.
\end{align}
Based on \eqref{ktt 79}, we have
\begin{align}\label{ktt 82}
I_{4,2}\ls&\frac{1}{N}\snnm{g},\quad I_{4,3}\ls\frac{1}{N}\snnm{g}.
\end{align}
Therefore, the only remaining term is $I_{4,1}$. Note that we always have $X-\e(t_1'-r)\vv'\in\Omega$. Hence, we define the change of variable $\vv'\rt y$ as
$y=(y_1,y_2,y_3)=X-\e(t_1'-r)\vv'$. Then the Jacobian
\begin{align}\label{ktt 81}
\abs{\frac{\ud{y}}{\ud{\vv'}}}=\abs{\left\vert\begin{array}{ccc}
\e(t_1'-r)&0&0\\
0&\e(t_1'-r)&0\\
0&0&\e(t_1'-r)
\end{array}\right\vert}=\e^3(t_1'-r)^3\geq \e^3\d^3.
\end{align}
Considering $\abs{\vv},\abs{\vv'},\abs{\vv''}\leq 3N$, we know $\abs{g}\ls\abs{f}$. Also, since $k_N$ is bounded, we estimate
\begin{align}\label{ktt 85}
I_{4,1}\ls&
\int_{\abs{\vv'}\leq2N}\int_{\abs{\vv''}\leq3N}\int_{0}^{t_1'}
\id_{\{X-\e(t_1'-r)\vv'\in\Omega\}}\abs{f(X-\e(t_1'-r)\vv',\vv'')}\ue^{-\nu(\vv')
(t_1'-r)}\ud{r}\ud{\vv'}\ud{\vv''}.
\end{align}
Using the decomposition $f=\pk[f]+(\ik-\pk)[f]$, \eqref{ktt 81} and H\"older's inequality, we estimate them separately,
\begin{align}\label{ktt 83}
&\int_{\abs{\vv'}\leq2N}\int_{\abs{\vv''}\leq3N}\int_{0}^{t_1'}
\id_{\{X-\e(t_1'-r)\vv'\in\Omega\}}\abs{\pk[f](X-\e(t_1'-r)\vv',\vv'')}\ue^{-\nu(\vv')
(t_1'-r)}\ud{r}\ud{\vv'}\ud{\vv''}\\
\leq&\bigg(\int_{\abs{\vv'}\leq2N}\int_{\abs{\vv''}\leq3N}\int_{0}^{t_1'}
\id_{\{X-\e(t_1'-r)\vv'\in\Omega\}}\ue^{-\nu(\vv')
(t_1'-r)}\ud{r}\ud{\vv'}\ud{\vv''}\bigg)^{\frac{2m-1}{2m}}\no\\
&\times\bigg(\int_{\abs{\vv'}\leq2N}\int_{\abs{\vv''}\leq3N}\int_{0}^{t_1'}
\id_{\{X-\e(t_1'-r)\vv'\in\Omega\}}\Big(\pk[f]\Big)^{2m}(X-\e(t_1'-r)\vv',\vv'')\ue^{-\nu(\vv')
(t_1'-r)}\ud{r}\ud{\vv'}\ud{\vv''}\bigg)^{\frac{1}{2m}}\no\\
\ls&\abs{\int_{0}^{t_1'}\frac{1}{\e^3\d^3}\int_{\abs{\vv''}\leq3N}\int_{\Omega}\id_{\{
y\in\Omega\}}\Big(\pk[f]\Big)^{2m}(y,\vv'')\ue^{-(t_1'-r)}\ud{y}\ud{\vv''}\ud{r}}^{\frac{1}{2m}}\ls \frac{1}{\e^{\frac{3}{2m}}\d^{\frac{3}{2m}}}\pnnm{\pk[f]}{2m},\no
\end{align}
and
\begin{align}\label{ktt 84}
&\int_{\abs{\vv'}\leq2N}\int_{\abs{\vv''}\leq3N}\int_{0}^{t_1'}
\id_{\{X-\e(t_1'-r)\vv'\in\Omega\}}\abs{(\ik-\pk)[f](X-\e(t_1'-r)\vv',\vv'')}\ue^{-\nu(\vv')
(t_1'-r)}\ud{r}\ud{\vv'}\ud{\vv''}\\
\leq&\bigg(\int_{\abs{\vv'}\leq2N}\int_{\abs{\vv''}\leq3N}\int_{0}^{t_1'}
\id_{\{X-\e(t_1'-r)\vv'\in\Omega\}}\ue^{-\nu(\vv')
(t_1'-r)}\ud{r}\ud{\vv'}\ud{\vv''}\bigg)^{\frac{1}{2}}\no\\
&\times\bigg(\int_{\abs{\vv'}\leq2N}\int_{\abs{\vv''}\leq3N}\int_{0}^{t_1'}
\id_{\{X-\e(t_1'-r)\vv'\in\Omega\}}\Big((\ik-\pk)[f]\Big)^{2}(X-\e(t_1'-r)\vv',\vv'')\ue^{-\nu(\vv')
(t_1'-r)}\ud{r}\ud{\vv'}\ud{\vv''}\bigg)^{\frac{1}{2}}\no\\
\ls&\abs{\int_{0}^{t_1'}\frac{1}{\e^3\d^3}\int_{\abs{\vv''}\leq3N}
\int_{\Omega}\id_{\{y\in\Omega\}}\Big((\ik-\pk)[f]\Big)^{2}(y,\vv'')\ue^{-(t_1'-r)}\ud{y}\ud{\vv''}\ud{r}}^{\frac{1}{2}}
\ls \frac{1}{\e^{\frac{3}{2}}\d^{\frac{3}{2}}}\tnnm{(\ik-\pk)[f]}.\no
\end{align}
Inserting \eqref{ktt 83} and \eqref{ktt 84} into \eqref{ktt 85}, we obtain
\begin{align}\label{ktt 86}
I_{4,1}\ls \frac{1}{\e^{\frac{3}{2m}}\d^{\frac{3}{2m}}}\pnnm{\pk[f]}{2m}+\frac{1}{\e^{\frac{3}{2}}\d^{\frac{3}{2}}}\tnnm{(\ik-\pk)[f]}.
\end{align}
Combined with \eqref{ktt 82}, we know
\begin{align}\label{ktt 90}
I_4\ls \frac{1}{N}\snnm{g}+\frac{1}{\e^{\frac{3}{2m}}\d^{\frac{3}{2m}}}\pnnm{\pk[f]}{2m}+\frac{1}{\e^{\frac{3}{2}}\d^{\frac{3}{2}}}\tnnm{(\ik-\pk)[f]}.
\end{align}
Summarizing all four cases in \eqref{ktt 87}, \eqref{ktt 88}, \eqref{ktt 89} and \eqref{ktt 90}, we obtain
\begin{align}
I\ls \bigg(\frac{1}{N}+\ue^{-\d N^2}+\d\bigg)\snnm{g}+\frac{1}{\e^{\frac{3}{2m}}\d^{\frac{3}{2m}}}\pnnm{\pk[f]}{2m}+\frac{1}{\e^{\frac{3}{2}}\d^{\frac{3}{2}}}\tnnm{(\ik-\pk)[f]}.
\end{align}
Choosing $\d$ sufficiently small and then taking $N$ sufficiently large, we have
\begin{align}\label{ktt 91}
I\ls \d\snnm{g}+\frac{1}{\e^{\frac{3}{2m}}\d^{\frac{3}{2m}}}\pnnm{\pk[f]}{2m}+\frac{1}{\e^{\frac{3}{2}}\d^{\frac{3}{2}}}\tnnm{(\ik-\pk)[f]}.
\end{align}
By a similar but tedious computation, we arrive at
\begin{align}\label{ktt 92}
II\ls \d\snnm{g}+\frac{1}{\e^{\frac{3}{2m}}\d^{\frac{3}{2m}}}\pnnm{\pk[f]}{2m}+\frac{1}{\e^{\frac{3}{2}}\d^{\frac{3}{2}}}\tnnm{(\ik-\pk)[f]}.
\end{align}
Combined with \eqref{ktt 78}, we have
\begin{align}
\abs{\int_{0}^{t_{1}}K_{\vh(\vv)}[g]\Big(\vx-\e(t_1-s)\vv,\vv\Big)\ue^{-\nu(\vv)(t_{1}-s)}\ud{s}}
&\ls\d\snnm{g}+\frac{1}{\e^{\frac{3}{2m}}\d^{\frac{3}{2m}}}\pnnm{\pk[f]}{2m}+\frac{1}{\e^{\frac{3}{2}}\d^{\frac{3}{2}}}\tnnm{(\ik-\pk)[f]}\\
&+\ssm{\vh h}{\gamma_-}+\snnm{\nu^{-1}\vh S}.\no
\end{align}
All the other terms in \eqref{ktt 65} related to $K_{\vh}$ can be estimated in a similar fashion. At the end of the day, we have
\begin{align}\label{ktt 93}
\\
K_{\vh}\ \text{term contribution}\ls \d\snnm{g}+\frac{1}{\e^{\frac{3}{2m}}\d^{\frac{3}{2m}}}\pnnm{\pk[f]}{2m}+\frac{1}{\e^{\frac{3}{2}}\d^{\frac{3}{2}}}\tnnm{(\ik-\pk)[f]}+\ssm{\vh h}{\gamma_-}+\snnm{\nu^{-1}\vh S}.\no
\end{align}
\ \\
Step 5: Synthesis.\\
Summarizing all above and inserting \eqref{ktt 74}, \eqref{ktt 75}, \eqref{ktt 76} and \eqref{ktt 93} into \eqref{ktt 65}, we obtain for any $(\vx,\vv)\in\bar\Omega\times\r^3$,
\begin{align}\label{ctt 2}
\abs{g(\vx,\vv)}\ls\d\snnm{g}+\frac{1}{\e^{\frac{3}{2m}}\d^{\frac{3}{2m}}}\pnnm{\pk[f]}{2m}+\frac{1}{\e^{\frac{3}{2}}\d^{\frac{3}{2}}}\tnnm{(\ik-\pk)[f]}
+\ssm{\vh h}{\gamma_-}+\snnm{\nu^{-1}\vh S}.
\end{align}
Taking supremum over $(\vx,\vv)\in\gamma_+$ in \eqref{ctt 2}, we have
\begin{align}
\ssm{g}{\gamma_+}\ls \d\snnm{g}+\frac{1}{\e^{\frac{3}{2m}}\d^{\frac{3}{2m}}}\pnnm{\pk[f]}{2m}+\frac{1}{\e^{\frac{3}{2}}\d^{\frac{3}{2}}}\tnnm{(\ik-\pk)[f]}
+\ssm{\vh h}{\gamma_-}+\snnm{\nu^{-1}\vh S}.
\end{align}
Based on Theorem \ref{LN estimate}, for $\dfrac{3}{2}< m<3$, we obtain
\begin{align}
\ssm{g}{\gamma_+}\ls&\d\snnm{g}+o(1)\Big(\ssm{f}{\gamma_+}+\snnm{f}\Big)+E
\ls\d\snnm{g}+o(1)\Big(\ssm{g}{\gamma_+}+\snnm{g}\Big)+E,
\end{align}
where
\begin{align}
E:=&\frac{1}{\e^{2+\frac{3}{2m}}}\pnnm{\pk[S]}{\frac{2m}{2m-1}}+\frac{1}{\e^{1+\frac{3}{2m}}}\tnnm{\snn(\ik-\pk)[S]}+\snnm{\nu^{-1}\vh S}\\
&+\frac{1}{\e^{\frac{3}{2m}}}\pnm{h}{\gamma_-}{\frac{4m}{3}}+\frac{1}{\e^{1+\frac{3}{2m}}}\tsm{h}{\gamma_-}+\ssm{\vh h}{\gamma_-}.\no
\end{align}
Absorbing $o(1)\ssm{g}{\gamma_+}$ into the left-hand side, we have
\begin{align}\label{ctt 1}
\ssm{g}{\gamma_+}\ls \d\snnm{g}+o(1)\snnm{g}+E.
\end{align}
On the other hand, taking supremum over $(\vx,\vv)\in\Omega\times\r^3$ in \eqref{ctt 2}, we have
\begin{align}
\snnm{g}\ls \d\snnm{g}+\frac{1}{\e^{\frac{3}{2m}}\d^{\frac{3}{2m}}}\pnnm{\pk[f]}{2m}+\frac{1}{\e^{\frac{3}{2}}\d^{\frac{3}{2}}}\tnnm{(\ik-\pk)[f]}
+\ssm{\vh h}{\gamma_-}+\snnm{\nu^{-1}\vh S}.
\end{align}
Based on Theorem \ref{LN estimate}, we obtain
\begin{align}
\snnm{g}\ls&\d\snnm{g}+o(1)\Big(\ssm{g}{\gamma_+}+\snnm{g}\Big)+E.
\end{align}
Absorbing $\d\snnm{g}$ and $o(1)\snnm{g}$ into the left-hand side, we have
\begin{align}\label{ctt 3}
\snnm{g}\ls&o(1)\ssm{g}{\gamma_+}+E.
\end{align}
Inserting \eqref{ctt 1} into \eqref{ctt 3}, and absorbing $\d\snnm{g}$ and $o(1)\snnm{g}$ into the left-hand side, we get
\begin{align}
\snnm{g}\ls E.
\end{align}
Then \eqref{ctt 1} implies
\begin{align}
\ssm{g}{\gamma_+}\ls E.
\end{align}
In summary, we have
\begin{align}
\snnm{g}+\ssm{g}{\gamma_+}\ls& \frac{1}{\e^{2+\frac{3}{2m}}}\pnnm{\pk[S]}{\frac{2m}{2m-1}}+\frac{1}{\e^{1+\frac{3}{2m}}}\tnnm{\snn(\ik-\pk)[S]}+\snnm{\nu^{-1}\vh S}\\
&+\frac{1}{\e^{\frac{3}{2m}}}\pnm{h}{\gamma_-}{\frac{4m}{3}}+\frac{1}{\e^{1+\frac{3}{2m}}}\tsm{h}{\gamma_-}+\ssm{\vh h}{\gamma_-}.\no
\end{align}
Then our result naturally follows.
\end{proof}

\begin{remark}\label{LN remark}
Inserting Theorem \ref{LI estimate} into Theorem \ref{LN estimate}, we actually have
\begin{align}
&\frac{1}{\e^{\frac{1}{2}}}\tsm{(1-\pp)[f]}{\gamma_+}+\frac{1}{\e}\unnm{(\ik-\pk)[f]}+\pnnm{\pk[f]}{2m}\\
\ls& \frac{1}{\e^{2}}\pnnm{\pk[S]}{\frac{2m}{2m-1}}+\frac{1}{\e}\tnnm{\snn(\ik-\pk)[S]}+\lnnmv{\nu^{-1}S}\no\\
&+\pnm{h}{\gamma_-}{\frac{4m}{3}}+\frac{1}{\e}\tsm{h}{\gamma_-}+\lsm{h}{\gamma_-}.\no
\end{align}
\end{remark}

\newpage

%%%%%%%%%%%%%%%%%%%%%%%%%%%%%%%%%%%%%%%%%%%%%%%%%%%%%%%%%%%%%%%%%%%%%%%%
\section{Hydrodynamic Limit}%%%%%%%%%%%%%%%%%%%%%%%%%%%%%%%%%%%%%%%%%%%%
%%%%%%%%%%%%%%%%%%%%%%%%%%%%%%%%%%%%%%%%%%%%%%%%%%%%%%%%%%%%%%%%%%%%%%%%

%%%%%%%%%%%%%%%%%%%%%%%%%%%%%%%%%%%%%%%%%%%%%%%%%%%%%%%%%%%%%%%%%%%%%%%%
\subsection{Nonlinear Estimates}
%%%%%%%%%%%%%%%%%%%%%%%%%%%%%%%%%%%%%%%%%%%%%%%%%%%%%%%%%%%%%%%%%%%%%%%%

\begin{lemma}\label{nonlinear lemma}
The nonlinear term $\Gamma$ defined in \eqref{att 31} satisfies $\Gamma[f,g]\in\nk^{\perp}$. Also, for $0\leq\varrho<\dfrac{1}{4}$ and $\vth\geq0$,
\begin{align}
&\tnnm{\Gamma[f,g]}\ls \Big(\sup_{\vx\in\Omega}\tnm{\nu g(x)}\Big)\tnnm{\nu f},\label{ftt 02}\\
&\lnnmv{\nu^{-1}\Gamma[f,g]}\ls\lnnmv{f}\lnnmv{g},\label{ftt 01}.
\end{align}
\end{lemma}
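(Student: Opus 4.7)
The plan divides naturally into three pieces, one per assertion. For the orthogonality $\Gamma[f,g]\in\nk^{\perp}$, I would test against each of the five basis vectors $\ne_i=\mh\phi_i$ of $\nk$ in \eqref{att 32} with $\phi_i\in\{1,v_1,v_2,v_3,(|v|^2-3)/2\}$. This reduces matters to showing $\int_{\r^3}Q[\mh f,\mh g](v)\,\phi_i(v)\,\ud v=0$ for each $i$. Applying the pre/post-collisional change of variables $(\vuu,\vv,\vo)\mapsto(\vuu_*,\vv_*,\vo)$ (Jacobian one) together with the $\vuu\leftrightarrow\vv$ swap, and invoking the collision invariance $\phi_i(\vuu)+\phi_i(\vv)=\phi_i(\vuu_*)+\phi_i(\vv_*)$, the four resulting permutations cancel in exactly the textbook fashion of \cite[Chapter 3]{Glassey1996}.

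For the $L^{\infty}$ bound \eqref{ftt 01}, I would carry out a pointwise computation in $(\vx,\vv)$. From the explicit formula for $\Gamma[f,g](\vv)=\mhh(\vv)\iint q\,\mh(\vuu_*)\mh(\vv_*)f(\vuu_*)g(\vv_*)\,\ud\vo\ud\vuu$ minus the loss term, I bound $q\ls\br{\vuu-\vv}$, redistribute the Gaussians via $\m(\vuu_*)\m(\vv_*)=\m(\vuu)\m(\vv)$, and replace $|f|$, $|g|$ by $\lnmv{f}\bvv^{-1}$ and $\lnmv{g}\bvv^{-1}$ evaluated at the appropriate velocities. After pulling $\bvv(\vv)\nu(\vv)^{-1}$ onto the left-hand side, what remains is a universal integral $\bvv(\vv)\nu(\vv)^{-1}\iint q\,\mh(\vuu_*)\mh(\vv_*)\bvv^{-1}(\vuu_*)\bvv^{-1}(\vv_*)\,\ud\vo\ud\vuu$, which is bounded uniformly in $\vv$ because the hypothesis $\varrho<\tfrac14$ keeps the Gaussians strictly dominant over the $\ue^{\varrho|\cdot|^2}$ factor of the weight, while $\nu(\vv)\gs\br{\vv}$ absorbs the polynomial growth. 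The loss part is easier, reducing to a single Maxwellian integral. Taking $\esssup$ over $(\vx,\vv)$ yields \eqref{ftt 01}.

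The $L^2$ bound \eqref{ftt 02} is where the real work lies: it is a mixed-norm estimate, with $g$ appearing through $\sup_{\vx}\tnm{\nu g(\vx)}$, an $L^{\infty}_{\vx}L^2_{\vv}$-type quantity, and $f$ through $\tnnm{\nu f}$. I work pointwise in $\vx$ and arrange every Cauchy-Schwarz so that the $\vv$-integral against $g$ detaches as a scalar at each fixed $\vx$, controlled by $\tnm{\nu g(\vx)}$. Concretely, the loss part has schematic form $\nu(\vv)f(\vx,\vv)G(\vx,\vv)$ with $G(\vx,\vv)=\int k_{\text{loss}}(\vuu,\vv)g(\vx,\vuu)\,\ud\vuu$; Cauchy-Schwarz against the Gaussian decay of $k_{\text{loss}}$ gives $|G(\vx,\vv)|\ls\tnm{\nu g(\vx)}$ uniformly in $\vv$, and squaring and integrating first in $\vv$ then in $\vx$ produces the claim. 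For the gain part I split the $(\vo,\vuu)$-integral by Cauchy-Schwarz into a convolution against $|f|^2$ times one against $|g|^2$, then apply a further Cauchy-Schwarz in $\vv$ to factor the result, again detaching the $g$-factor as $\tnm{\nu g(\vx)}$. The main obstacle is precisely this bookkeeping of $\nu$-weights through the gain term: a careless split would leave $g$ in the wrong norm or spoil the $\vx$-integrability. By contrast, the orthogonality assertion and the $L^{\infty}$ bound are routine consequences of, respectively, the collision invariants and a pointwise Maxwellian-absorption computation.
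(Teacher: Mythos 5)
Your overall architecture for \eqref{ftt 02} is the same as the paper's (work pointwise in $\vx$, apply Cauchy--Schwarz against the Gaussian kernel, detach $g$ through $\sup_{\vx\in\Omega}\tnm{\nu g(\vx)}$, then integrate in $\vx$), and your treatments of the orthogonality and of \eqref{ftt 01} are the standard arguments that the paper simply cites (\cite[Section 3.8]{Glassey1996} and \cite[Lemma 5]{Guo2010}); those parts are fine. One minor slip: with the paper's definition of $\Gamma$, the loss term is $g(\vx,\vv)\int\int q\,\mh(\vuu)f(\vx,\vuu)\,\ud\vo\,\ud\vuu$, i.e.\ $g$ carries the velocity variable and $f$ is integrated, the opposite of your schematic $\nu(\vv)f(\vx,\vv)G(\vx,\vv)$; this is harmless because the claimed bound follows with either arrangement, but it is not the structure you wrote down.

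The substantive gap is in your gain-term step. Detaching the $g$-factor ``as a scalar at each fixed $\vx$'', uniformly in $\vv$, works for the loss term (the Maxwellian sits on the integrated variable) but fails for the gain term: after your first Cauchy--Schwarz the $g$-piece is $\int\int q\,\mh(\vuu)\,g^2(\vx,\vv_{\ast})\,\ud\vo\,\ud\vuu$, and since $\vv_{\ast}$ depends only on the component of $\vuu-\vv$ along $\vo$, reducing the $(\vo,\vuu)$-integral leaves a kernel with a $\abs{\vuu-\vv}^{-1}$-type singularity acting on $g^2$; that quantity is not bounded pointwise in $\vv$ by $\tnm{\nu g(\vx)}^2$ (it need not even be finite for every $\vv$ when $g$ is merely $L^2_{\vv}$), so a ``further Cauchy--Schwarz in $\vv$'' does not by itself factor the estimate --- you would need an additional $L^1_{\vv}\to L^2_{\vv}$ kernel bound to finish along your route. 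The paper's device avoids this entirely: it groups $f^2g^2$ together in one Cauchy--Schwarz factor, bounds the kernel factor by $\nu^2(\vv)$, and then uses the measure-preserving substitution $(u,\vv)\mapsto(u',v')=(\vv-u_{\perp},\vv-u_{\parallel})$ with $\ud{u}\,\ud{\vv}=\ud{u'}\,\ud{v'}$ and $\abs{\vv}\ls\abs{u'}+\abs{v'}$, so that $\nu^2(\vv)\ls\nu^2(u')+\nu^2(v')$ distributes the weight and yields $\tnm{\nu f(\vx)}^2\tnm{\nu g(\vx)}^2$ at each $\vx$, hence \eqref{ftt 02} after integrating in $\vx$. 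Either import that change of variables or supply the missing kernel estimate; as written, the key factorization step of the gain term does not go through.
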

\begin{proof}
The orthogonality is shown in \cite[Section 3.8]{Glassey1996}. \eqref{ftt 02} can be shown following the idea in \cite[Lemma 2.3]{Guo2002}. From \eqref{att 31},
\begin{align}
\Gamma[f,g]:=&\m^{-\frac{1}{2}}Q\Big[\m^{\frac{1}{2}}f,\m^{\frac{1}{2}}g\Big]=\Gamma_{\text{gain}}[f,g]-\Gamma_{\text{loss}}[f,g],
\end{align}
where using the energy conservation $\abs{\vuu}^2+\abs{\vv}^2=\abs{\vuu_{\ast}}^2+\abs{\vv_{\ast}}^2$,
\begin{align}
\Gamma_{\text{gain}}[f,g]&:=q_0\int_{\r^3}\int_{\s^2}\ue^{-\frac{\abs{\vuu}^2}{2}}\Big(\vo\cdot(\vv-\vuu)\Big)f(\vuu_{\ast})g(\vv_{\ast})\ud{\vo}\ud{\vuu},\\
\Gamma_{\text{loss}}[f,g]&:=q_0\int_{\r^3}\int_{\s^2}\ue^{-\frac{\abs{\vuu}^2}{2}}\Big(\vo\cdot(\vv-\vuu)\Big)f(\vuu)g(\vv)\ud{\vo}\ud{\vuu},
\end{align}
with
\begin{align}
\vuu_{\ast}:=\vuu+\vo\bigg((\vv-\vuu)\cdot\vo\bigg),\qquad \vv_{\ast}:=\vv-\vo\bigg((\vv-\vuu)\cdot\vo\bigg).
\end{align}
For the loss term, we substitute $u=\vv-\vuu$, so we know
\begin{align}
\Gamma_{\text{loss}}[f,g]&:=q_0g(\vv)\int_{\r^3}\int_{\s^2}\ue^{-\frac{\abs{v-u}^2}{2}}(\vo\cdot u)f(v-u)\ud{\vo}\ud{u}.
\end{align}
Hence, using H\"older's inequality, we have
\begin{align}\label{ftt 27}
\int_{\r^3}\bigg(\Gamma_{\text{loss}}[f,g](\vx)\bigg)^2\ud\vv=&q_0^2\int_{\r^3}g^2(\vx,\vv)\bigg(\int_{\r^3}\int_{\s^2}\ue^{-\frac{\abs{v-u}^2}{2}}(\vo\cdot u)f(\vx,v-u)\ud{\vo}\ud{u}\bigg)^2\ud\vv\\
\ls&\int_{\r^3}g^2(\vx,\vv)\bigg(\int_{\r^3}\ue^{-\abs{v-u}^2}\abs{u}^2\ud{u}\bigg)\bigg(\int_{\r^3}f^2(\vx,v-u)\ud{u}\bigg)\ud\vv\no\\
\ls&\tnm{f(x)}^2\tnm{\nu g(x)}^2,\no
\end{align}
where we utilize the fact that
\begin{align}
\int_{\r^3}\ue^{-\abs{v-u}^2}\abs{u}^2\ud{u}\ls \nu^2(\vv).
\end{align}
On the other hand, for the gain term, after substituting $u=\vv-\vuu$, we know
\begin{align}
\Gamma_{\text{gain}}[f,g]&:=q_0\int_{\r^3}\int_{\s^2}\ue^{-\frac{\abs{v-u}^2}{2}}(\vo\cdot u)f(\vv-u_{\perp})g(\vv-u_{\parallel})\ud{\vo}\ud{u},
\end{align}
where
\begin{align}
u_{\perp}=u-\vo(u\cdot\vo),\quad u_{\parallel}=\vo(u\cdot\vo).
\end{align}
Hence, using H\"older's inequality, we have
\begin{align}
\int_{\r^3}\bigg(\Gamma_{\text{gain}}[f,g](\vx)\bigg)^2\ud\vv=&q_0^2\int_{\r^3}\bigg(\int_{\r^3}\int_{\s^2}\ue^{-\frac{\abs{v-u}^2}{2}}(\vo\cdot u)f(\vx,v-u_{\perp})g(\vx,\vv-u_{\parallel})\ud{\vo}\ud{u}\bigg)^2\ud\vv\\
\ls&\int_{\r^3}\bigg(\int_{\r^3}\ue^{-\abs{v-u}^2}\abs{u}^2\ud{u}\bigg)\bigg(\int_{\r^3}f^2(\vx,\vv-u_{\perp})g^2(\vx,\vv-u_{\parallel})\ud{u}\bigg)\ud\vv\no\\
\ls&\int_{\r^3}\int_{\r^3}\nu^2(\vv)f^2(\vx,\vv-u_{\perp})g^2(\vx,\vv-u_{\parallel})\ud{u}\ud\vv\no
\end{align}
Denote $u'=\vv-u_{\perp}$ and $v'=\vv-u_{\parallel}$. Consider substitution $(u,v)\rt(u',v')$. It is well-known (see the proof of \cite[Lemma 2.3]{Guo2002}) that $\ud u\ud v=\ud u'\ud v'$ and $\abs{v}\ls\abs{u'}+\abs{v'}$. Hence, we have
\begin{align}\label{ftt 28}
\int_{\r^3}\bigg(\Gamma_{\text{gain}}[f,g](\vx)\bigg)^2\ud\vv\ls&\int_{\r^3}\int_{\r^3}\Big(\nu^2(u')+\nu^2(v')\Big)f^2(\vx,u')g^2(\vx,v')\ud{u'}\ud v'\\
\ls&\bigg(\int_{\r^3}\nu^2(u')f^2(\vx,u')\ud u'\bigg)\bigg(\int_{\r^3}\nu^2(v')g^2(\vx,v')\ud v'\bigg)\no\\
\ls&\tnm{\nu f(x)}^2\tnm{\nu g(x)}^2.\no
\end{align}
Combining \eqref{ftt 27} and \eqref{ftt 28}, we know
\begin{align}
\int_{\r^3}\bigg(\Gamma[f,g](\vx)\bigg)^2\ud\vv\ls \tnm{\nu f(x)}^2\tnm{\nu g(x)}^2,
\end{align}
which further implies
\begin{align}
\int_{\Omega}\int_{\r^3}\bigg(\Gamma[f,g]\bigg)^2\ud\vv\ud\vx\ls \Big(\sup_{\vx\in\Omega}\tnm{\nu g(x)}^2\Big)\tnnm{\nu f}^2.
\end{align}
Therefore, \eqref{ftt 02} naturally follows. Also, \eqref{ftt 01} is proved in \cite[Lemma 5]{Guo2010}.
\end{proof}

%%%%%%%%%%%%%%%%%%%%%%%%%%%%%%%%%%%%%%%%%%%%%%%%%%%%%%%%%%%%%%%%%%%%%%%%
\subsection{Perturbed Remainder Estimates}
%%%%%%%%%%%%%%%%%%%%%%%%%%%%%%%%%%%%%%%%%%%%%%%%%%%%%%%%%%%%%%%%%%%%%%%%

We consider the perturbed linearized stationary Boltzmann equation
\begin{align}\label{nonlinear steady}
\left\{
\begin{array}{l}
\e\vv\cdot\nx f+\ll[f]=\Gamma[f,g]+S(\vx,\vv)\ \ \text{in}\ \ \Omega\times\r^3,\\\rule{0ex}{1.5em}
f(\vx_0,\vv)=\pp[f](\vx_0,\vv)+(\mb-\m)\m^{-1}\pp[f]+h(\vx_0,\vv)\ \ \text{for}\ \ \vx_0\in\p\Omega\ \
\text{and}\ \ \vv\cdot\vn<0.
\end{array}
\right.
\end{align}
Assume that a priori
\begin{align}\label{perturbed normalization}
\iint_{\Omega\times\r^3}f(\vx,\vv)\m^{\frac{1}{2}}(\vv)\ud{\vv}\ud{\vx}=&0.
\end{align}
and
\begin{align}\label{perturbed smallness}
\lnnmv{g}=o(1)\e.
\end{align}
The data $S$ and $h$ satisfy the compatibility condition
\begin{align}\label{perturbed compatibility}
\iint_{\Omega\times\r^3}S(\vx,\vv)\m^{\frac{1}{2}}(\vv)\ud{\vv}\ud{\vx}+\int_{\gamma_-}h(\vx,\vv)\m^{\frac{1}{2}}(\vv)\ud{\gamma}=0.
\end{align}

\begin{theorem}\label{LN estimate.}
Assume \eqref{linear steady compatibility} and \eqref{linear steady normalization} hold. The solution $f(\vx,\vv)$ to the equation \eqref{nonlinear steady} satisfies
\begin{align}
&\frac{1}{\e^{\frac{1}{2}}}\tsm{(1-\pp)[f]}{\gamma_+}+\frac{1}{\e}\unnm{(\ik-\pk)[f]}+\pnnm{\pk[f]}{2m}\\
\leq& o(1)\e^{\frac{3}{2m}}\Big(\ssm{f}{\gamma_+}+\snnm{f}\Big)
+\frac{1}{\e^{2}}\pnnm{\pk[S]}{\frac{2m}{2m-1}}+\frac{1}{\e}\tnnm{\snn(\ik-\pk)[S]}+\pnm{h}{\gamma_-}{\frac{4m}{3}}+\frac{1}{\e}\tsm{h}{\gamma_-}.\no
\end{align}
\end{theorem}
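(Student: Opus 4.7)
The plan is to treat \eqref{nonlinear steady} as an instance of the linear problem \eqref{linear steady} by absorbing the nonlinear term into the source and the boundary discrepancy into the inflow data, then invoke Theorem \ref{LN estimate} and control the extras via the smallness hypotheses \eqref{perturbed smallness} and \eqref{smallness assumption}. Concretely, I would set
\[
\tilde S := S + \Gamma[f,g], \qquad \tilde h := h + (\mb-\m)\m^{-1}\pp[f].
\]
The compatibility condition \eqref{linear steady compatibility} for $(\tilde S,\tilde h)$ then follows from \eqref{perturbed compatibility}, the boundary identity \eqref{boundary compatibility}, and the orthogonality $\Gamma[f,g]\in\nk^{\perp}$ given by Lemma \ref{nonlinear lemma}; the normalization \eqref{linear steady normalization} is exactly \eqref{perturbed normalization}. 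Hence Theorem \ref{LN estimate} applies with $(\tilde S,\tilde h)$ in place of $(S,h)$.

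Next I would bound the difference between $(\tilde S,\tilde h)$ and $(S,h)$ in the various norms appearing on the right-hand side of Theorem \ref{LN estimate}. Since $\Gamma[f,g]\in\nk^{\perp}$, one has $\pk[\tilde S]=\pk[S]$ and $(\ik-\pk)[\tilde S]=(\ik-\pk)[S]+\Gamma[f,g]$, so the only nontrivial new source contribution is $\Gamma[f,g]$, entering through $\tfrac{1}{\e}\tnnm{\snn\Gamma[f,g]}$. Using \eqref{ftt 01} of Lemma \ref{nonlinear lemma} pointwise in $\vx$, together with the finiteness of $\int_{\r^3}\nu(\vv)\br{\vv}^{-2\vth}\ue^{-2\vrh\abs{\vv}^2}\ud{\vv}$ in the regime $0\le\vrh<\tfrac14$, $\vth>3$, I would obtain
\[
\tfrac{1}{\e}\tnnm{\snn\Gamma[f,g]}\ls \tfrac{1}{\e}\lnnmv{f}\,\lnnmv{g}\ls o(1)\,\lnnmv{f},
\]
where the last inequality uses \eqref{perturbed smallness}. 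For the boundary, write $\pp[f](\vx_0,\vv)=\mh(\vv)z(\vx_0)$ with $\abs{z(\vx_0)}\ls \ssm{f}{\gamma_+}$ and invoke \eqref{smallness assumption} to get the pointwise bound $\abs{(\mb-\m)\m^{-1}\pp[f](\vx_0,\vv)}\ls C_0\,\e\,\bv^{-1}\,\ssm{f}{\gamma_+}$, so that both $\tsm{\tilde h-h}{\gamma_-}$ and $\pnm{\tilde h-h}{\gamma_-}{\tfrac{4m}{3}}$ are controlled by $o(\e)\,\ssm{f}{\gamma_+}$.

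Finally, I would collect these bounds. The boundary correction produces an $o(1)\,\ssm{f}{\gamma_+}$ contribution, which is absorbable into the $o(1)\e^{\frac{3}{2m}}\ssm{f}{\gamma_+}$ term already on the right-hand side of Theorem \ref{LN estimate}. The nonlinear correction produces an $o(1)\,\lnnmv{f}$ contribution, which is strictly stronger than the unweighted $\snnm{f}$ appearing in the target statement. The main obstacle is precisely this discrepancy: the $L^{2m}$ estimate alone is not self-contained in the perturbed setting, and the closure must be performed jointly with Theorem \ref{LI estimate}, in the spirit of Remark \ref{LN remark}, which provides exactly the weighted $L^{\infty}$ control $\lnnmv{f}$ needed to absorb the nonlinear residue. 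Once this coupling is in place, the claimed inequality follows.
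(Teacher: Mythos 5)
Your skeleton (apply the linear Theorem \ref{LN estimate} with modified data $\tilde S=S+\Gamma[f,g]$, $\tilde h=h+(\mb-\m)\m^{-1}\pp[f]$, then use the smallness hypotheses) is the same as the paper's, but neither of your two closure steps gives the stated inequality. For the nonlinear residue you use \eqref{ftt 01} and land on $\frac{1}{\e}\tnnm{\snn\Gamma[f,g]}\ls o(1)\lnnmv{f}$, and then propose to remove it by coupling with the $L^{\infty}$ theory. That coupling is precisely what you cannot afford: the perturbed $L^{\infty}$ estimate is proved \emph{from} the present theorem (and Remark \ref{LN remark} likewise presupposes it), so invoking it is circular; and even applying the linear Theorem \ref{LI estimate} to the data $(\tilde S,\tilde h)$ would replace $\lnnmv{f}$ by quantities of size $\e^{-2-\frac{3}{2m}}\pnnm{\pk[S]}{\frac{2m}{2m-1}}+\cdots$ multiplied only by an $\e$-independent $o(1)$, which is strictly weaker than the claimed $\e^{-2}\pnnm{\pk[S]}{\frac{2m}{2m-1}}$. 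The paper closes this term \emph{inside} the $L^{2m}$ estimate by using the $L^{2}$-based bilinear bound \eqref{ftt 02} rather than \eqref{ftt 01}: with \eqref{perturbed smallness}, $\frac{1}{\e}\tnnm{\snn\Gamma[f,g]}\ls o(1)\big(\unnm{\pk[f]}+\unnm{(\ik-\pk)[f]}\big)$, and since $\pnnm{\pk[f]}{2m}\gs\unnm{\pk[f]}$ while $\frac{1}{\e}\unnm{(\ik-\pk)[f]}$ is already on the left, both pieces are absorbed directly, with no $L^{\infty}$ information on $f$ required.

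The boundary correction has a parallel defect. Your pointwise bound $\abs{(\mb-\m)\m^{-1}\pp[f]}\ls C_0\e\,\bv^{-1}\ssm{f}{\gamma_+}$ is fine, and the $L^{\frac{4m}{3}}$ piece (which carries no $\frac1\e$) is indeed harmless since $\e\leq\e^{\frac{3}{2m}}$. But the term $\frac{1}{\e}\tsm{\tilde h-h}{\gamma_-}$ then leaves $C_0\ssm{f}{\gamma_+}$ with a constant independent of $\e$; this cannot be ``absorbed into'' $o(1)\e^{\frac{3}{2m}}\ssm{f}{\gamma_+}$, which is smaller by the factor $\e^{\frac{3}{2m}}$, nor into the left-hand side, which contains no sup norm of $f$. (That factor is not cosmetic: in the subsequent $L^{\infty}$ bootstrap it is exactly what offsets the $\e^{-\frac{3}{2m}}\d^{-\frac{3}{2m}}$ from the change-of-variables step.) The paper instead keeps this contribution in $L^{2}$ on the boundary, $\frac1\e\tsm{(\mb-\m)\m^{-1}\pp[f]}{\gamma_-}\ls o(1)\tsm{\pp[f]}{\gamma_+}$, then controls $\tsm{\pp[f]}{\gamma_+}$ by $L^{2}$/$L^{2m}$ quantities through the near-grazing estimate \eqref{ktt 47}, and absorbs the resulting $o(1)\tsm{\pp[f]}{\gamma_+}$. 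Without these two modifications your argument proves only a weaker statement than the theorem.
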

\begin{proof}
Since the perturbed term $\Gamma[f,g]\in\nk^{\perp}$, we apply Theorem \ref{LN estimate} to \eqref{nonlinear steady} to obtain
\begin{align}\label{ftt 04}
&\frac{1}{\e^{\frac{1}{2}}}\tsm{(1-\pp)[f]}{\gamma_+}+\frac{1}{\e}\unnm{(\ik-\pk)[f]}+\pnnm{\pk[f]}{2m}\\
\leq& o(1)\e^{\frac{3}{2m}}\Big(\ssm{f}{\gamma_+}+\snnm{f}\Big)
+\frac{1}{\e^{2}}\pnnm{\pk[S]}{\frac{2m}{2m-1}}+\frac{1}{\e}\tnnm{\snn(\ik-\pk)[S]}+\pnm{h}{\gamma_-}{\frac{4m}{3}}+\frac{1}{\e}\tsm{h}{\gamma_-}\no\\
&+\frac{1}{\e}\tnnm{\snn\Gamma[f,g]}+\pnm{(\mb-\m)\m^{-1}\pp[f]}{\gamma_-}{\frac{4m}{3}}+\frac{1}{\e}\tsm{(\mb-\m)\m^{-1}\pp[f]}{\gamma_-}.\no
\end{align}
Using Lemma \ref{nonlinear lemma} and \eqref{perturbed smallness}, we have
\begin{align}\label{ftt 03}
\frac{1}{\e}\tnnm{\snn\Gamma[f,g]}\ls o(1)\tnnm{\sn f}\ls o(1)\unnm{\pk[f]}+o(1)\unnm{(\ik-\pk)[f]}.
\end{align}
Note that direct computation reveals that
\begin{align}
\pnnm{\pk[f]}{2m}\gs\unnm{\pk[f]},
\end{align}
so inserting \eqref{ftt 03} into \eqref{ftt 04}, we can absorb $o(1)\unnm{\pk[f]}$ and $o(1)\unnm{(\ik-\pk)[f]}$ into the left-hand side. On the other hand, due to \eqref{smallness assumption}, we know
\begin{align}\label{ftt 18}
\pnm{(\mb-\m)\m^{-1}\pp[f]}{\gamma_-}{\frac{4m}{3}}+\frac{1}{\e}\tsm{(\mb-\m)\m^{-1}\pp[f]}{\gamma_-}&\ls o(1)\e\pnm{\pp[f]}{\gamma_-}{\frac{4m}{3}}+o(1)\tsm{\pp[f]}{\gamma_-}\\
&\ls o(1)\e\ssm{f}{\gamma_+}+o(1)\tsm{\pp[f]}{\gamma_+}.\no
\end{align}
Here, $o(1)\ssm{f}{\gamma_+}$ can be combined with the corresponding term on the right-hand side of \eqref{ftt 04}. Also, the bound of $\tsm{\pp[f]}{\gamma_+}$ has been achieved in the proof of Theorem \ref{LN estimate}. Inserting \eqref{ktt 47} into \eqref{ftt 04}, using \eqref{ftt 18}, H\"older's inequality and Theorem \ref{LN estimate}, we know
\begin{align}
\tsm{\pp[f]}{\gamma_+}\ls&\tnnm{\pk[f]}+\frac{1}{\e}\tnnm{(\ik-\pk)[f]}+\frac{1}{\e^{\frac{1}{2}}}\bigg(\abs{\int_{\Omega\times\r^3}fS}\bigg)^{\frac{1}{2}}\\
\ls&o(1)\e^{\frac{3}{2m}}\Big(\ssm{f}{\gamma_+}+\snnm{f}\Big)\no\\
&+\frac{1}{\e^{2}}\pnnm{\pk[S]}{\frac{2m}{2m-1}}+\frac{1}{\e}\tnnm{\snn(\ik-\pk)[S]}+\pnm{h}{\gamma_-}{\frac{4m}{3}}+\frac{1}{\e}\tsm{h}{\gamma_-}+o(1)\tsm{\pp[f]}{\gamma_+}.\no
\end{align}
Then absorbing $o(1)\tsm{\pp[f]}{\gamma_+}$ into the left-hand side, we get control of $\tsm{\pp[f]}{\gamma_+}$. Then inserting it into \eqref{ftt 18} and further \eqref{ftt 04}, we get the desired result.
\end{proof}

\begin{theorem}\label{LI estimate.}
Assume \eqref{linear steady compatibility} and \eqref{linear steady normalization} hold. The solution $f(\vx,\vv)$ to the equation \eqref{nonlinear steady} satisfies for $\vth\geq0$ and $0\leq\varrho<\dfrac{1}{4}$,
\begin{align}
\lnnmv{f}+\lsm{f}{\gamma_+}
\ls& \frac{1}{\e^{2+\frac{3}{2m}}}\pnnm{\pk[S]}{\frac{2m}{2m-1}}+\frac{1}{\e^{1+\frac{3}{2m}}}\tnnm{\snn(\ik-\pk)[S]}+\lnnmv{\nu^{-1} S}\\
&+\frac{1}{\e^{\frac{3}{2m}}}\pnm{h}{\gamma_-}{\frac{4m}{3}}+\frac{1}{\e^{1+\frac{3}{2m}}}\tsm{h}{\gamma_-}+\lsm{h}{\gamma_-}.\no
\end{align}
\end{theorem}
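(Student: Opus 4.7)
The plan is to reduce \eqref{nonlinear steady} to the setting of Theorem \ref{LI estimate} by absorbing the nonlinear collision term into the source and the Maxwellian perturbation into the boundary data. Set $\tilde{S}:=S+\Gamma[f,g]$ and $\tilde{h}:=h+(\mb-\m)\m^{-1}\pp[f]$, so that $f$ solves the linear equation $\e\vv\cdot\nx f+\ll[f]=\tilde{S}$ with reflection condition $f|_{\gamma_-}=\pp[f]+\tilde h$. Running the stochastic-cycle mild formulation of Theorem \ref{LI estimate} verbatim on this reformulated system produces a bound for $\lnnmv{f}+\lsm{f}{\gamma_+}$ in terms of weighted norms of $\tilde S$ and $\tilde h$, together with the intermediate quantities $\pnnm{\pk[f]}{2m}$ and $\unnm{(\ik-\pk)[f]}$ that appear in the kernel case $I_{4,1}$ of that argument.

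The nonlinear source is then handled via Lemma \ref{nonlinear lemma}. Because $\Gamma[f,g]\in\nk^{\perp}$, one has $\pk[\tilde S]=\pk[S]$; the $(\ik-\pk)$ piece obeys $\tnnm{\snn\Gamma[f,g]}\ls \lnnmv{g}\tnnm{\sn f}$ by \eqref{ftt 02}, while the pointwise bound \eqref{ftt 01} combined with the smallness \eqref{perturbed smallness} yields $\lnnmv{\nu^{-1}\Gamma[f,g]}\ls o(1)\e\lnnmv{f}$. For the perturbed boundary term, the assumption \eqref{smallness assumption} gives $\bv\abs{(\mb-\m)\m^{-1}\pp[f]}\ls o(1)\e\lsm{f}{\gamma_+}$, together with analogous $L^{4m/3}_{\gamma_-}$ and $L^{2}_{\gamma_-}$ bounds carrying an extra factor of $\e$ on $\pp[f]$. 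Each of these contributions, once inserted into the bound from step one, becomes $o(1)(\lnnmv{f}+\lsm{f}{\gamma_+})$ and can be absorbed onto the left.

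To close the bootstrap, I would invoke Theorem \ref{LN estimate.} to replace the leftover $\pnnm{\pk[f]}{2m}$, $\unnm{(\ik-\pk)[f]}$, and $\tsm{(1-\pp)[f]}{\gamma_+}$ by the right-hand side advertised in the present theorem, plus the harmless factor $o(1)\e^{3/(2m)}(\ssm{f}{\gamma_+}+\snnm{f})$, which is once more absorbed onto the left. Reorganising yields
\begin{align*}
\lnnmv{f}+\lsm{f}{\gamma_+}\ls \frac{1}{\e^{2+\frac{3}{2m}}}\pnnm{\pk[S]}{\frac{2m}{2m-1}}+\cdots+\lsm{h}{\gamma_-},
\end{align*}
which is the claim.

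The main obstacle is bookkeeping the absorption chain: both $\Gamma[f,g]$ and the Maxwellian-perturbation boundary term produce right-hand side factors involving $f$ itself, and these must be shown to be genuinely small (of size $o(1)$, or $o(1)\e^{\alpha}$ for some $\alpha>0$) rather than merely bounded, so that the absorption is legitimate after composition with Theorem \ref{LN estimate.}. The delicate point is that each substitution pairs an $\e$-negative weight against an $\e$-positive smallness factor coming from \eqref{perturbed smallness} or \eqref{smallness assumption}; one must verify these exponents balance favourably so that the total coefficient in front of $\lnnmv{f}+\lsm{f}{\gamma_+}$ on the right stays strictly below unity for $\e$ sufficiently small.
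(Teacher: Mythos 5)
Your proposal is correct and follows essentially the same route as the paper: treat $\Gamma[f,g]$ as an extra source and $(\mb-\m)\m^{-1}\pp[f]$ as extra boundary data, rerun the stochastic-cycle mild formulation of Theorem \ref{LI estimate}, control the leftover $\pnnm{\pk[f]}{2m}$ and $\unnm{(\ik-\pk)[f]}$ via Theorem \ref{LN estimate.}, and absorb the $o(1)$ contributions coming from \eqref{perturbed smallness} and \eqref{smallness assumption}. The paper's proof is exactly this absorption argument, recorded via \eqref{ftt 05} and \eqref{ftt 19}, so no further comparison is needed.
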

\begin{proof}
Since we already have bounds for $f$ in $L^{2m}$ as Theorem \ref{LN estimate.}, following the proof of Theorem \ref{LI estimate}, we obtain
\begin{align}\label{ftt 06}
\lnnmv{f}+\lsm{f}{\gamma_+}
\ls& \frac{1}{\e^{2+\frac{3}{2m}}}\pnnm{\pk[S]}{\frac{2m}{2m-1}}+\frac{1}{\e^{1+\frac{3}{2m}}}\tnnm{\snn(\ik-\pk)[S]}+\lnnmv{\nu^{-1}S}\\
&+\frac{1}{\e^{\frac{3}{2m}}}\pnm{h}{\gamma_-}{\frac{4m}{3}}+\frac{1}{\e^{1+\frac{3}{2m}}}\tsm{h}{\gamma_-}+\lsm{h}{\gamma_-}\no\\
&+\lnnmv{\nu^{-1}\Gamma[f,g]}+\lsm{(\mb-\m)\m^{-1}\pp[f]}{\gamma_-}.\no
\end{align}
Using Lemma \ref{nonlinear lemma} and \eqref{perturbed smallness}, we have
\begin{align}\label{ftt 05}
\lnnmv{\nu^{-1}\Gamma[f,g]}\ls \lnnmv{f}\lnnmv{g}\ls o(1)\lnnmv{f}.
\end{align}
Inserting \eqref{ftt 05} into \eqref{ftt 06}, we can absorb $o(1)\lnnmv{f}$ into the left-hand side. Also, using \eqref{smallness assumption}, we have
\begin{align}\label{ftt 19}
\lsm{(\mb-\m)\m^{-1}\pp[f]}{\gamma_-}\ls o(1)\lsm{f}{\gamma_+}.
\end{align}
Inserting \eqref{ftt 19} into \eqref{ftt 06} and absorbing $o(1)\lsm{f}{\gamma_+}$ into the left-hand side, we obtain the desired result.
\end{proof}

%%%%%%%%%%%%%%%%%%%%%%%%%%%%%%%%%%%%%%%%%%%%%%%%%%%%%%%%%%%%%%%%%%%%%%%%
\subsection{Analysis of Asymptotic Expansion}
%%%%%%%%%%%%%%%%%%%%%%%%%%%%%%%%%%%%%%%%%%%%%%%%%%%%%%%%%%%%%%%%%%%%%%%%

Based on the construction of interior solutions in Section \ref{att section 03}, we know $\f_1$, $\f_2$ and $\f_3$ satisfy certain fluid equations. For small data, the well-posedness and regularity of these equations are well-know, so we omit the proof and only present the main results.
\begin{theorem}\label{limit theorem 2}
For $K_0>0$ sufficiently small, the boundary layer satisfies
\begin{align}
\nm{\bv\f_1}_{H^3_xL^{\infty}_v}\ls 1,\quad\nm{\bv\f_2}_{H^3_xL^{\infty}_v}\ls 1,\quad\nm{\bv\f_3}_{H^3_xL^{\infty}_v}\ls 1.
\end{align}
\end{theorem}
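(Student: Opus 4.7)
The statement lists the \emph{interior} Hilbert profiles $\f_1,\f_2,\f_3$ constructed in Section~\ref{att section 1}; the boundary-layer correctors $\fb_k$ only exist for $k=1,2$ and moreover $\fb_1\equiv 0$ by the matching in Section~\ref{att section 03}. Consequently the proof reduces to classical elliptic/Navier--Stokes regularity for the macroscopic hierarchy, converted to the required $H^3_xL^\infty_v$ norm via the polynomial-in-$\vv$ structure of $\f_k$ and the pointwise weight bound $\mh(\vv)\bv\ls 1$, valid for $0\leq\vrh<\tfrac{1}{4}$.

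For $\f_1$ the profile $(\rh_1,\vu_1,\th_1)$ solves the steady Navier--Stokes--Fourier system~\eqref{interior 1} on the smooth convex $\Omega$ with smooth Dirichlet data of size $\ls C_0\e$. Standard small-data $H^3$ well-posedness (contraction mapping around $0$ in a small $H^3$ ball, using the Banach algebra property $H^3(\Omega)\hookrightarrow W^{1,\infty}(\Omega)$ in three space dimensions) yields $\|(\rh_1,\vu_1,\th_1)\|_{H^3(\Omega)}\ls 1$. Since $\f_1=\mh(\rh_1+\vu_1\cdot\vv+\th_1(|\vv|^2-3)/2)$ is $\mh$ times a quadratic polynomial in $\vv$ with $H^3_x$ coefficients, the weighted $L^\infty_v$ bound is immediate from the weight estimate above.

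For $k=2,3$ the plan is to induct on $k$ using $\f_k=A_k+B_k+C_k$. The macroscopic part $A_k$ satisfies a linearized fluid system of NSF-type whose source consists of polynomial combinations of lower-order macroscopic coefficients and their derivatives; the Banach algebra property places these sources in $H^3$, so elliptic regularity returns $A_k\in H^3_xL^\infty_v$, with Dirichlet data given by $\tilde h+M_k\mh$ as in~\eqref{att 1}. The connecting piece $B_k$ is an explicit polynomial in the $A_j$-coefficients, see~\eqref{at 12}, so its regularity is automatic. The orthogonal piece $C_k=\ll^{-1}|_{\nk^\perp}\!\big(-\vv\cdot\nx\f_{k-1}+\sum_{i=1}^{k-1}\Gamma[\f_i,\f_{k-i}]\big)$ inherits its $H^3_x$ regularity from the induction, and its weighted $L^\infty_v$ control follows from the standard pointwise velocity-weighted bounds for $\ll^{-1}$ on $\nk^\perp$ (Lemma~\ref{Regularity lemma 1'}) together with $\bv\,|\Gamma[f,g]|\ls(\bv f)(\bv g)$ derived from Lemma~\ref{nonlinear lemma}.

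The principal obstacle is not the interior estimates themselves, which are classical, but verifying that the Milne boundary trace $\tilde h(\iota_1,\iota_2,\vvv)$ appearing in the Dirichlet data for $A_k$ admits three tangential derivatives on $\p\Omega$ with the Gaussian weight intact, uniformly in $\e$, despite the $\e$-dependent geometric correction $\e/(R_i-\e\eta)$ in~\eqref{att 3}. This is precisely the tangential regularity input deferred to Chapter~4; granting it, the smallness of $C_0$ in~\eqref{smallness assumption} (which I read as the constant named $K_0$ in the present statement) keeps each contraction mapping strictly below one and closes the induction at $k=3$.
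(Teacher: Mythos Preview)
Your proposal is essentially correct and in fact more detailed than the paper's own treatment: the paper omits the proof entirely, stating only that ``for small data, the well-posedness and regularity of these equations are well-known, so we omit the proof and only present the main results.'' Your sketch via small-data $H^3$ contraction for the steady NSF system, followed by induction on $k$ using the $A_k+B_k+C_k$ decomposition and the pointwise weight bound $\mh\bv\ls 1$, is exactly the standard argument the paper has in mind.

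Your flagging of the tangential regularity of $\tilde h$ as the genuine obstacle is apt and goes beyond what the paper explicitly addresses. The paper's Chapter~4 provides first-order tangential derivative estimates for the Milne solution $\gg$ (Theorem~\ref{Milne tangential}), and $\tilde h$ is determined from $\g_L$ via the near-identity map $\mathcal{M}$ of Theorem~\ref{Milne theorem 1}; smoothness of $\tilde h$ in $(\iota_1,\iota_2)$ then follows from smooth dependence of $\g_L$ on the data and curvatures, but the paper does not spell out the higher-order tangential bounds needed for $H^3$ boundary data. This is a real gap in the exposition that your remark correctly isolates.

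One minor correction: the constant $K_0$ in the theorem statement is not the data-smallness parameter $C_0$ from~\eqref{smallness assumption}. Throughout the paper $K_0$ denotes the exponential decay rate (in $\eta$ for the Milne problem, in $t$ for the evolutionary problem). Its appearance here, together with the phrase ``boundary layer'' for what are manifestly interior profiles, appears to be a copy-paste artifact from the adjacent Theorem~\ref{limit theorem 1}; the hypothesis ``$K_0>0$ sufficiently small'' plays no role in the interior estimates.
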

On the other hand, based on the construction of boundary layers in Section \ref{att section 03}, we know $\fb_1=0$ and $\fb_2$ is well-defined. Using Theorem \ref{Milne theorem 4}, Theorem \ref{Regularity theorem 2}, Theorem \ref{Milne tangential} and Theorem \ref{Milne velocity}, we have for $0\leq\vrh<\dfrac{1}{4}$ and $\vth>3$,
\begin{theorem}\label{limit theorem 1}
For $K_0>0$ sufficiently small, the boundary layer $\fb_2$ satisfies
\begin{align}
\lnnmv{\ue^{K_0\eta}\fb_2}\ls 1,
\end{align}
and
\begin{align}
\begin{array}{ll}
\lnnmv{\ue^{K_0\eta}\va\dfrac{\p\fb_2}{\p\eta}}+\lnnmv{\ue^{K_0\eta}\dfrac{\p\fb_2}{\p\iota_1}}+\lnnmv{\ue^{K_0\eta}\dfrac{\p\fb_2}{\p\iota_2}}\ls \abs{\ln(\e)}^8,\\\rule{0ex}{2.0em}
\lnnmv{\ue^{K_0\eta}\nu\dfrac{\p\fb_2}{\p\va}}+\lnnmv{\ue^{K_0\eta}\nu\dfrac{\p\fb_2}{\p\vb}}+\lnnmv{\ue^{K_0\eta}\nu\dfrac{\p\fb_2}{\p\vc}}\ls\abs{\ln(\e)}^8.
\end{array}
\end{align}
\end{theorem}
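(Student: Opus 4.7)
The plan is to reduce the theorem to direct invocations of the four structural results on the $\e$-Milne problem with geometric correction (Theorem \ref{Milne theorem 3}, Theorem \ref{Milne theorem 4}, Theorem \ref{Regularity theorem 2}, Theorem \ref{Milne tangential}, Theorem \ref{Milne velocity}) applied to the specific Cauchy/boundary data produced by the matching procedure in Section \ref{att section 03}. First I would verify that the in-flow data $h$ defined in \eqref{btt 3} meets the hypotheses of the well-posedness and decay theorems. The contribution $M_1\m_1+\m_2$ is bounded in $\lnmv{\cdot}$ directly by the smallness assumption \eqref{smallness assumption}. The contribution $(B_2+C_2)-\pp[B_2+C_2]$ is traced back, through the explicit formulas \eqref{at 12} and the orthogonal equation $\ll[C_2]=-\vv\cdot\nx\f_1+\Gamma[\f_1,\f_1]$, to weighted bounds on $\f_1$ and $\nx\f_1$; these are supplied by Theorem \ref{limit theorem 2}, while the nonlinear piece is absorbed via Lemma \ref{nonlinear lemma}. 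The zero mass-flux condition required for solvability is already checked in \eqref{att 10}.

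With the bound $\lnmv{h}\ls 1$ in hand, applying Theorem \ref{Milne theorem 4} to \eqref{att 3} yields the weighted exponential decay $\lnnmv{\ue^{K_0\eta}\fb_2}\ls \lnmv{h}\ls 1$ for $K_0>0$ sufficiently small (constrained by the velocity weight parameter $\vrh<\tfrac{1}{4}$). This is the first conclusion. The regularity estimates then follow by applying, in turn, Theorem \ref{Regularity theorem 2} to obtain the weighted normal-derivative bound on $\va\p_\eta\fb_2$, Theorem \ref{Milne tangential} for the tangential derivatives $\p_{\iota_i}\fb_2$, and Theorem \ref{Milne velocity} for the velocity derivatives $\p_{\va}\fb_2$, $\p_{\vb}\fb_2$, $\p_{\vc}\fb_2$. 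In each application the right-hand side of the derivative estimate is driven by derivatives of $h$ in $\iota_1,\iota_2$ or $\vvv$, which are controlled by $H^3_x L^\infty_v$ bounds on $\f_1$ (hence by Theorem \ref{limit theorem 2}) together with the differentiation of the boundary operator $\pp$, which is harmless since the domain is smooth.

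The main obstacle is the loss captured by the $\abs{\ln(\e)}^8$ factor. This factor is not cosmetic: it arises from the iterated characteristic analysis used to control derivatives across the grazing set $\va=0$, where the geometric correction $\tfrac{\e}{R_i-\e\eta}$ interacts with the singular weights needed to extract velocity and tangential derivatives. Each derivative contributes a logarithmic factor through this mechanism, and the coupling among normal, tangential and velocity derivatives in the proofs of Theorem \ref{Milne tangential} and Theorem \ref{Milne velocity} produces the specific power $8$. Once these three single-derivative estimates are established in Chapter 4, no further work is required here: the bounds for $\fb_2$ stated in the theorem follow by simply matching each derivative of $\fb_2$ to the corresponding theorem and observing that the inputs ($\lnmv{h}$, tangential and velocity derivatives of $h$) are all $O(1)$ by the first two steps.
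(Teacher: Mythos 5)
Your proposal matches the paper's own treatment: the paper proves this theorem simply by invoking Theorem \ref{Milne theorem 4}, Theorem \ref{Regularity theorem 2}, Theorem \ref{Milne tangential} and Theorem \ref{Milne velocity} for the $\e$-Milne problem \eqref{att 3} defining $\fb_2$, exactly as you do, and your preliminary check that the in-flow data $h$ of \eqref{btt 3} satisfies \eqref{Milne bound} and \eqref{Regularity bound} only makes explicit what the paper leaves implicit. One small attribution slip: the $\nu\,\p_{\va}\fb_2$ bound is supplied by Theorem \ref{Regularity theorem 2} (through the weight $\zeta\geq\abs{\va}$) and Corollary \ref{Regularity corollary}, not by Theorem \ref{Milne velocity}, which covers only $\p_{\vb}$ and $\p_{\vc}$; this does not affect the argument.
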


\begin{remark}
Note that the norms defined in studying $\e$-Milne problem with geometric correction can naturally be extended to include $(\iota_1,\iota_2)$ dependence and are consistent with the current format.
\end{remark}

%%%%%%%%%%%%%%%%%%%%%%%%%%%%%%%%%%%%%%%%%%%%%%%%%%%%%%%%%%%%%%%%%%%%%%%%
\subsection{Proof of Main Theorem}
%%%%%%%%%%%%%%%%%%%%%%%%%%%%%%%%%%%%%%%%%%%%%%%%%%%%%%%%%%%%%%%%%%%%%%%%

Now we turn to the proof of the main result, Theorem \ref{main}.\\
\ \\
Step 1: Remainder definitions.\\
Define the remainder as
\begin{align}\label{ftt 07}
\e^3 R:=&f^{\e}-\Big(\e\f_1+\e^2\f_2+\e^3\f_3\Big)-\Big(\e\fb_1+\e^2\fb_2\Big)=f^{\e}-\q -\qb,
\end{align}
where
\begin{align}
\q:=&\e\f_1+\e^2\f_2+\e^3\f_3,\quad \qb:=\e\fb_1+\e^2\fb_2.
\end{align}
%In other words, we have
%\begin{align}
%f^{\e}=\q+\qb+\e^3R.
%\end{align}
We write $\lll$ to denote the linearized Boltzmann operator
\begin{align}
\lll[f]=&\e\vv\cdot\nx f+\ll[f].
\end{align}
In studying boundary layers, we use substitutions to rewrite $\lll$ into normal and tangential component as in \eqref{small system'}:
\begin{align}
\lll[f]=&\va\dfrac{\p f}{\p\eta}-\dfrac{\e}{R_1-\e\eta}\bigg(\vb^2\dfrac{\p f}{\p\va}-\va\vb\dfrac{\p f}{\p\vb}\bigg)
-\dfrac{\e}{R_2-\e\eta}\bigg(\vc^2\dfrac{\p f}{\p\va}-\va\vc\dfrac{\p f}{\p\vc}\bigg)\\\rule{0ex}{2.0em}
&-\dfrac{\e}{P_1P_2}\Bigg(\dfrac{\p_{11}\vr\cdot\p_2\vr}{P_1(\e\kk_1\eta-1)}\vb\vc
+\dfrac{\p_{12}\vr\cdot\p_2\vr}{P_2(\e\kk_2\eta-1)}\vc^2\Bigg)\dfrac{\p f}{\p\vb}\no\\\rule{0ex}{2.0em}
&-\dfrac{\e}{P_1P_2}\Bigg(\dfrac{\p_{22}\vr\cdot\p_1\vr}{P_2(\e\kk_2\eta-1)}\vb\vc
+\dfrac{\p_{12}\vr\cdot\p_1\vr}{P_1(\e\kk_1\eta-1)}\vb^2\Bigg)\dfrac{\p f}{\p\vc}\no\\\rule{0ex}{2.0em}
&-\e\bigg(\dfrac{\vb}{P_1(\e\kk_1\eta-1)}\dfrac{\p f}{\p\iota_1}+\dfrac{\vc}{P_2(\e\kk_2\eta-1)}\dfrac{\p f}{\p\iota_2}\bigg)
+\ll[f].\no
\end{align}
\ \\
Step 2: Representation of $\lll[R]$.\\
The equation \eqref{small system} is actually
\begin{align}
\lll[f^{\e}]=\Gamma[f^{\e},f^{\e}],
\end{align}
which means
\begin{align}\label{ftt 08}
\lll[\q+\qb+\e^3R]=\Gamma[\q+\qb+\e^3R,\q+\qb+\e^3R].
\end{align}
In \eqref{ftt 08}, the nonlinear terms on the right-hand side can be decomposed as
\begin{align}\label{ftt 09}
\Gamma[\q+\qb+\e^3R,\q+\qb+\e^3R]=&\e^6\Gamma[R,R]+2\e^3\Gamma[R,\q+\qb]+\Gamma[\q+\qb,\q+\qb].
\end{align}
For the left-hand side of \eqref{ftt 08}, based on the construction of interior solutions in Section \ref{att section 03}, the interior contribution
\begin{align}\label{ftt 10}
\lll[\q]=&\e\vv\cdot\nx\Big(\e\f_1+\e^2\f_2+\e^3\f_3\Big) +\ll[\e\f_1+\e^2\f_2+\e^3\f_3]\\
=&\e^4\vv\cdot\nx\f_3+\e^2\Gamma[\f_1,\f_1]+2\e^3\Gamma[\f_1,\f_2].\no
\end{align}
On the other hand, based on the construction of boundary layers in Section \ref{att section 03}, we know the boundary layer contribution with $\qb=\e^2\fb_2$
\begin{align}\label{ftt 11}
\lll[\qb]=
&-\e^3\dfrac{1}{P_1P_2}\Bigg(\dfrac{\p_{11}\vr\cdot\p_2\vr}{P_1(\e\kk_1\eta-1)}\vb\vc
+\dfrac{\p_{12}\vr\cdot\p_2\vr}{P_2(\e\kk_2\eta-1)}\vc^2\Bigg)\dfrac{\p\fb_2}{\p\vb}\\\rule{0ex}{2.0em}
&-\e^3\dfrac{1}{P_1P_2}\Bigg(\dfrac{\p_{22}\vr\cdot\p_1\vr}{P_2(\e\kk_2\eta-1)}\vb\vc
+\dfrac{\p_{12}\vr\cdot\p_1\vr}{P_1(\e\kk_1\eta-1)}\vb^2\Bigg)\dfrac{\p\fb_2}{\p\vc}\no\\\rule{0ex}{2.0em}
&-\e^3\bigg(\dfrac{\vb}{P_1(\e\kk_1\eta-1)}\dfrac{\p\fb_2}{\p\iota_1}+\e^3\dfrac{\vc}{P_2(\e\kk_2\eta-1)}\dfrac{\p\fb_2}{\p\iota_2}\bigg).\no
\end{align}
Therefore, inserting \eqref{ftt 09}, \eqref{ftt 10} and \eqref{ftt 11} into \eqref{ftt 08}, we have
\begin{align}\label{ftt 12}
\lll[R]=&\e^3\Gamma[R,R]+2\Gamma[R,\q+\qb]+S_1+S_2,
\end{align}
where
\begin{align}\label{ftt 13}
S_1=&-\e\vv\cdot\nx\f_3+\dfrac{1}{P_1P_2}\Bigg(\dfrac{\p_{11}\vr\cdot\p_2\vr}{P_1(\e\kk_1\eta-1)}\vb\vc
+\dfrac{\p_{12}\vr\cdot\p_2\vr}{P_2(\e\kk_2\eta-1)}\vc^2\Bigg)\dfrac{\p\fb_2}{\p\vb}\\\rule{0ex}{2.0em}
&+\dfrac{1}{P_1P_2}\Bigg(\dfrac{\p_{22}\vr\cdot\p_1\vr}{P_2(\e\kk_2\eta-1)}\vb\vc
+\dfrac{\p_{12}\vr\cdot\p_1\vr}{P_1(\e\kk_1\eta-1)}\vb^2\Bigg)\dfrac{\p\fb_2}{\p\vc}\no\\\rule{0ex}{2.0em}
&+\bigg(\dfrac{\vb}{P_1(\e\kk_1\eta-1)}\dfrac{\p\fb_2}{\p\iota_1}+\e^3\dfrac{\vc}{P_2(\e\kk_2\eta-1)}\dfrac{\p\fb_2}{\p\iota_2}\bigg),\no\\
S_2=&2\Gamma[\f_1,\fb_2]+2\e\Gamma[\f_1,\f_3]+\text{higher-order $\Gamma$ terms up to $\e^3$}..\label{ftt 14}
\end{align}
\ \\
Step 3: Representation of $R-\pp[R]$.\\
The boundary condition of \eqref{small system} is essentially
\begin{align}
f^{\e}=\mb\m^{-1}\pp[f^{\e}]+\mhh(\mb-\m).
\end{align}
which means
\begin{align}
\q+\qb+\e^3R=\pp[\q+\qb+\e^3R]+(\mb-\m)\m^{-1}\pp[\q+\qb+\e^3R]+\mhh(\mb-\m).
\end{align}
Based on the boundary condition expansion in Section \ref{att section 04}, we have
\begin{align}\label{ftt 15}
R-\pp[R]=&H[R]+h,
\end{align}
where
\begin{align}\label{ftt 16}
H[R](\vx_0,\vv)=(\mb-\m)\m^{-1}\pp[R],
\end{align}
and
\begin{align}\label{ftt 17}
h&=\e^{-2}\Big(\mb-\m-\e\mh\m_1-\e^2\mh\m_2\Big)\m^{-1}\pp[\f_1+\fb_1]+\e^{-1}\Big(\mb-\m-\e\mh\m_1\Big)\m^{-1}\pp[\f_2+\fb_2]\\
&+(\mb-\m)\m^{-1}\pp[\f_3]+\e^{-3}\mhh\Big(\mb-\m-\e\mh\m_1-\e^2\mh\m_2-\e^3\mh\m_3\Big).\no
\end{align}
\ \\
Step 4: Remainder Estimate.\\
The equation \eqref{ftt 12} and boundary condition \eqref{ftt 15} forms a system that fits into \eqref{nonlinear steady}:
\begin{align}\label{remainder equation}
\left\{
\begin{array}{l}
\e\vv\cdot\nx R+\ll[R]=\Gamma\Big[R,2(\q+\qb)+\e^3R\Big]+S_1(\vx,\vv)+S_2(\vx,\vv)\ \ \text{in}\ \ \Omega\times\r^3,\\\rule{0ex}{1.5em}
R(\vx_0,\vv)=\pp[R](\vx_0,\vv)+H[R](\vx_0,\vv)+h(\vx_0,\vv)\ \ \text{for}\ \ \vx_0\in\p\Omega\ \
\text{and}\ \ \vv\cdot\vn<0.
\end{array}
\right.
\end{align}
We assume (this will be verified later) that
\begin{align}\label{ftt 26}
\lnnmv{\e^3R}\ls o(1)\e.
\end{align}
Then we can verify \eqref{remainder equation} satisfies the assumptions \eqref{perturbed smallness} (since $\q$ and $\qb$ are small), \eqref{perturbed compatibility}. Also, the construction in Section \ref{att section 03} implies that the solution satisfies \eqref{perturbed normalization}. Applying Theorem \ref{LI estimate.} to \eqref{remainder equation}, we obtain
\begin{align}\label{ftt 24}
\lnnmv{R}+\lsm{R}{\gamma_+}
\ls&\frac{1}{\e^{2+\frac{3}{2m}}}\pnnm{\pk[S_1]}{\frac{2m}{2m-1}}+\frac{1}{\e^{1+\frac{3}{2m}}}\tnnm{\snn(\ik-\pk)[S_1]}+\lnnmv{\nu^{-1} S_1}\\
&+\frac{1}{\e^{2+\frac{3}{2m}}}\pnnm{\pk[S_2]}{\frac{2m}{2m-1}}+\frac{1}{\e^{1+\frac{3}{2m}}}\tnnm{\snn(\ik-\pk)[S_2]}+\lnnmv{\nu^{-1} S_2}\no\\
&+\frac{1}{\e^{\frac{3}{2m}}}\pnm{h}{\gamma_-}{\frac{4m}{3}}+\frac{1}{\e^{1+\frac{3}{2m}}}\tsm{h}{\gamma_-}+\lsm{h}{\gamma_-}.\no
\end{align}
\ \\
Step 5: Estimates of $S_1$ Terms.\\
Based on Theorem \ref{limit theorem 2}, we know
\begin{align}
\pnnm{\e\vv\cdot\nx\f_3}{\frac{2m}{2m-1}}+\tnnm{\snn\Big(\e\vv\cdot\nx\f_3\Big)}+\lnnmv{\nu^{-1}\Big(\e\vv\cdot\nx\f_3\Big)}\ls \e.
\end{align}
On the other hand, based on Theorem \ref{limit theorem 1}, using the rescaling $\eta=\dfrac{\mu}{\e}$, we have
\begin{align}
\pnnm{\nu\frac{\p\fb_2}{\p\iota_1}}{\frac{2m}{2m-1}}+\pnnm{\nu\frac{\p\fb_2}{\p\iota_2}}{\frac{2m}{2m-1}}+
\pnnm{\nu^2\frac{\p\fb_2}{\p\vb}}{\frac{2m}{2m-1}}+\pnnm{\nu^2\frac{\p\fb_2}{\p\vc}}{\frac{2m}{2m-1}}&\ls \e^{1-\frac{1}{2m}}\abs{\ln(\e)}^8,\\
\tnnm{\sn\frac{\p\fb_2}{\p\iota_1}}+\tnnm{\sn\frac{\p\fb_2}{\p\iota_2}}
+\tnnm{\nu^{\frac{3}{2}}\frac{\p\fb_2}{\p\vb}}+\tnnm{\nu^{\frac{3}{2}}\frac{\p\fb_2}{\p\vc}}&\ls \e^{\frac{1}{2}}\abs{\ln(\e)}^8,\\
\lnnmv{\frac{\p\fb_2}{\p\iota_1}}+\lnnmv{\frac{\p\fb_2}{\p\iota_2}}+\lnnmv{\nu\frac{\p\fb_2}{\p\vb}}+\lnnmv{\nu\frac{\p\fb_2}{\p\vc}}&\ls \abs{\ln(\e)}^8.
\end{align}
Collecting all terms, we have
\begin{align}\label{ftt 21}
\pnnm{\pk[S_1]}{\frac{2m}{2m-1}}\ls \e^{1-\frac{1}{2m}}\abs{\ln(\e)}^8,\quad \tnnm{\snn(\ik-\pk)[S_1]}\ls \e^{\frac{1}{2}}\abs{\ln(\e)}^8,\quad
\lnnmv{\nu^{-1} S_1}\ls \abs{\ln(\e)}^8.
\end{align}
\ \\
Step 6: Estimates of $S_2$ Terms.\\
Since $S_2$ are all nonlinear terms, Lemma \ref{nonlinear lemma} implies that $\pk[S_2]=0$. Then the leading-order term is $\Gamma[\f_1,\fb_2]$. Hence, using Theorem \ref{limit theorem 2}, Theorem \ref{limit theorem 1} and Lemma \ref{nonlinear lemma}, we have
\begin{align}
\tnnm{\snn\Gamma[\f_1,\fb_2]}&\ls \sup_{\vx\in\Omega}\Big(\tnm{\nu \f_1(x)}\Big)\tnnm{\nu \fb_2}\ls \e^{\frac{1}{2}}\abs{\ln(\e)}^8,\\
\lnnmv{\nu^{-1}\Gamma[\f_1,\fb_2]}&\ls\lnnmv{\f_1}\lnnmv{\fb_2}\ls \abs{\ln(\e)}^8.
\end{align}
Hence, we have
\begin{align}\label{ftt 22}
\pnnm{\pk[S_2]}{\frac{2m}{2m-1}}=0,\quad \tnnm{\snn(\ik-\pk)[S_2]}\ls \e^{\frac{1}{2}}\abs{\ln(\e)}^8,\quad
\lnnmv{\nu^{-1} S_2}\ls \abs{\ln(\e)}^8.
\end{align}
\ \\
Step 7: Estimates of $h$ Terms.\\
Note that all terms in $h$ are at least at order of $\e$. Hence, we directly bound
\begin{align}\label{ftt 23}
\pnm{h}{\gamma_-}{\frac{4m}{3}}\ls \e,\quad \tsm{h}{\gamma_-}\ls\e,\quad\lsm{h}{\gamma_-}\ls\e.
\end{align}
\ \\
Step 8: Synthesis.\\
Inserting \eqref{ftt 21}, \eqref{ftt 22} and \eqref{ftt 23} into \eqref{ftt 24}, we have
\begin{align}
\lnnmv{R}+\lsm{R}{\gamma_+}
\ls& \frac{1}{\e^{2+\frac{3}{2m}}}\bigg(\e^{1-\frac{1}{2m}}\abs{\ln(\e)}^8\bigg)+\frac{1}{\e^{1+\frac{3}{2m}}}\bigg(\e^{\frac{1}{2}}\abs{\ln(\e)}^8\bigg)
+\bigg(\abs{\ln(\e)}^8\bigg)\\
&+\frac{1}{\e^{\frac{3}{2m}}}(\e)+\frac{1}{\e^{1+\frac{3}{2m}}}(\e)+(\e)+\e^{1-\frac{3}{2m}}\lnnmv{R}^2\no\\
\ls& \e^{-1-\frac{2}{m}}\abs{\ln(\e)}^8+\e^{1-\frac{3}{2m}}\lnnmv{R}^2.\no
\end{align}
In particular, we can verify the validity of assumption \eqref{ftt 26}. Considering \eqref{ftt 07}, this means that we have shown
\begin{align}
\frac{1}{\e^3}\lnnmv{f^{\e}-\Big(\e\f_1+\e^2\f_2+\e^3\f_3\Big)-\Big(\e\fb_1+\e^2\fb_2\Big)}\ls \e^{-1-\frac{2}{m}}\abs{\ln(\e)}^8.
\end{align}
Therefore, we know
\begin{align}
\lnnmv{f^{\e}-\e\f_1-\e\fb_1}\ls\e^{2-\frac{2}{m}}\abs{\ln(\e)}^8.
\end{align}
Since $\fb_1=0$, then we naturally have for $\f=\f_1$.
\begin{align}
\lnnmv{f^{\e}-\e\f}\ls\e^{2-\frac{2}{m}}\abs{\ln(\e)}.
\end{align}
Here $\dfrac{3}{2}< m<3$, so we may further bound
\begin{align}
\lnnmv{f^{\e}-\e\f}\ls C(\d)\e^{\frac{4}{3}-\d},
\end{align}
for any $0<\d<<1$.

%%%%%%%%%%%%%%%%%%%%%%%%%%%%%%%%%%%%%%%%%%%%%%%%%%%%%%%%%%%%%%%%%%%%%%%%
\chapter{Evolutionary Boltzmann Equation}%%%%%%%%%%%%%%%%%%%%%%%%%%%%%%%
%%%%%%%%%%%%%%%%%%%%%%%%%%%%%%%%%%%%%%%%%%%%%%%%%%%%%%%%%%%%%%%%%%%%%%%%

%%%%%%%%%%%%%%%%%%%%%%%%%%%%%%%%%%%%%%%%%%%%%%%%%%%%%%%%%%%%%%%%%%%%%%%
\section{Asymptotic Expansion}%%%%%%%%%%%%%%%%%%%%%%%%%%%%%%%%%%%%%%%%%%
%%%%%%%%%%%%%%%%%%%%%%%%%%%%%%%%%%%%%%%%%%%%%%%%%%%%%%%%%%%%%%%%%%%%%%%

%%%%%%%%%%%%%%%%%%%%%%%%%%%%%%%%%%%%%%%%%%%%%%%%%%%%%%%%%%%%%%%%%%%%%%%
\subsection{Interior Expansion}\label{att section 6.}
%%%%%%%%%%%%%%%%%%%%%%%%%%%%%%%%%%%%%%%%%%%%%%%%%%%%%%%%%%%%%%%%%%%%%%%

We define the interior expansion
\begin{align}\label{interior expansion.}
f^{\e}_{\text{in}}(t,\vx,\vv)=\sum_{k=1}^{3}\e^k\f_k(t,\vx,\vv).
\end{align}
Plugging it into the equation \eqref{small system.} and comparing the order of $\e$, we obtain
\begin{align}
\ll[\f_1]=&0,\label{interior expansion 1.}\\
\ll[\f_2]=&-\vv\cdot\nx\f_1+\Gamma[\f_1,\f_1],\label{interior expansion 2.}\\
\ll[\f_3]=&-\dt\f_1-\vv\cdot\nx\f_2+2\Gamma[\f_1,\f_2].\label{interior expansion 3.}
\end{align}
The analysis of $\f_k$ solvability is standard and well-known. Note that the null space $\nk$ of the operator $\ll$ is spanned by
\begin{align}
\m^{\frac{1}{2}}\bigg\{1,v_1,v_2,v_3,\dfrac{\abs{\vv}^2-3}{2}\bigg\}=\{\ne_0,\ne_1,\ne_2,\ne_3,\ne_4\}.
\end{align}
Then $\ll[f]=S$ is solvable if and only if $S\in\nk^{\perp}$ the orthogonal complement of $\nk$ in $L^2(\r^3)$. As \cite[Chapter 4]{Sone2002} and \cite[Chapter 3]{Sone2007} reveal, similar to stationary problems in Section \ref{att section 1}, each $\f_k$ consists of three parts:
\begin{align}
\f_k(t,\vx,\vv):=A_k (t,\vx,\vv)+B_k (t,\vx,\vv)+C_k (t,\vx,\vv).
\end{align}
\begin{itemize}
\item
Principal contribution $\ds A_k:=\sum_{i=0}^4A_{k,i}\ne_i\in\nk$, where the coefficients $A_{k,i}$ must be determined at each order $k$ independently.
\item
Connecting contribution $\ds B_k:=\sum_{i=0}^4B_{k,i}\ne_i\in\nk$, where the coefficients $B_{k,i}$ depends on $A_s$ for $1\leq s\leq k-1$. In other words, $B_k$ is accumulative information from previous orders and thus is not independent. This term is present due to the nonlinearity in $\Gamma$.
\item
Orthogonal contribution $C_k\in\nk^{\perp}$ satisfying
\begin{align}
\ll[C_k]=&\dt\f_{k-2}-\vv\cdot\nx\f_{k-1}+\sum_{i=1}^{k-1}\Gamma[\f_i,\f_{k-i}],
\end{align}
which can be uniquely determined. Similar to $B_k$, here $C_k$ is also accumulative information from previous orders and thus is not independent.
\end{itemize}
All in all, we will focus on how to determine $A_k$. Traditionally, we write
\begin{align}
A_k=\m^{\frac{1}{2}}\left(\rh_k +\vu_k\cdot\vv+\th_k \left(\frac{\abs{\vv}^2-3}{2}\right)\right),
\end{align}
where the coefficients $\rh_k$, $\vu_k$ and $\th_k$ represent density, velocity and temperature in the macroscopic scale. \cite[Chapter 4]{Sone2002} and \cite[Chapter 3]{Sone2007} state that $(\rh_k,\vu_k,\th_k)$ satisfies 
%certain fluid equations. 
as follows:\\
\ \\
\eqref{interior expansion 2.} implies
\begin{align}
p_1 -(\rh_1 +\th_1 )=&0,\\
\nabla_{x} p_1 =&0,\\
\nx\cdot\vu_1 =&0,
\end{align}
\eqref{interior expansion 3.} implies
\begin{align}
p_2 -(\rh_2 +\th_2 +\rh_1 \th_1 )=&0,\\
\dt\vu_1+\vu_1 \cdot\nx\vu_1 -\gamma_1\dx\vu_1 +\nx p_2 =&0,\\
\dt\th_1+\vu_1 \cdot\nx\th_1 -\gamma_2\dx\th_1 =&0,\\
\nx\cdot\vu_2 +\vu_1\cdot\nx\rh_1=&0.
\end{align}
Here $p_1$ and $p_2$ represent the pressure, $\gamma_1$ and $\gamma_2$ are constants. The higher-order expansion produces more complicated fluid equations, which can be found in \cite[Chapter 4]{Sone2002}. 
If the interior solution $\f_k$ cannot satisfy the initial and boundary condition, then we have to introduce initial layer $\fl_k$ and boundary layer $\fb_k$ to handle the gap.

%%%%%%%%%%%%%%%%%%%%%%%%%%%%%%%%%%%%%%%%%%%%%%%%%%%%%%%%%%%%%%%%%%%%%%%
\subsection{Initial-Layer Expansion}\label{att section 3.}
%%%%%%%%%%%%%%%%%%%%%%%%%%%%%%%%%%%%%%%%%%%%%%%%%%%%%%%%%%%%%%%%%%%%%%%

Temporal Substitution: \\
We define the rescaled time variable $\tau$ by making the scaling transform $\tau=\dfrac{t}{\e^2}$, which implies $\dfrac{\p }{\p t}=\dfrac{1}{\e^2}\dfrac{\p }{\p\tau}$.
Then, under the substitution $t\rt\tau$, the equation \eqref{small system.} is transformed into
\begin{align}\label{initial.}
\left\{
\begin{array}{l}
\p_{\tau}f^{\e}+\e\vv\cdot\nx
f^{\e}+\ll[f^{\e}]=\Gamma[f^{\e},f^{\e}],\ \ \text{in}\ \ \rp\times\Omega\times\r^3\\\rule{0ex}{1.5em}
f^{\e}(0,\vx,\vv)=f_0(\vx,\vv),\ \ \text{in}\ \ \Omega\times\r^3\\\rule{0ex}{1.5em}
f^{\e}(\tau,\vx_0,\vv)=\pe[f^{\e}](\tau,\vx_0,\vv) \ \ \text{for}\ \ \tau\in\rp,\ \ \vx_0\in\p\Omega\ \ \text{and}\ \ \vv\cdot\vn(\vx_0)<0,
\end{array}
\right.
\end{align}
We define the initial layer expansion:
\begin{align}\label{initial layer expansion.}
f^{\e}_{\text{il}}(\tau,\vx,\vv)=\sum_{k=1}^{4}\e^k\fl_k(\tau,\vx,\vv),
\end{align}
where $\fl_{k}$ can be determined by comparing the order of $\e$ via plugging \eqref{initial layer expansion.} into the equation
\eqref{initial.}. Thus, we have
\begin{align}
\p_{\tau}\fl_{1}+\ll[\fl_1]=&0,\label{initial expansion 1}\\
\p_{\tau}\fl_{2}+\ll[\fl_2]=&-\vv\cdot\nx\fl_{1}+\Gamma[\fl_1,\fl_1]+2\Gamma[\f_1,\fl_1],\label{initial expansion 2}\\
\p_{\tau}\fl_{3}+\ll[\fl_3]=&-\vv\cdot\nx\fl_{2}+2\Gamma[\fl_1,\fl_2]+2\Gamma[\f_1,\fl_2]+2\Gamma[\f_2,\fl_1],\label{initial expansion 3}\\
\p_{\tau}\fl_{4}+\ll[\fl_4]=&-\vv\cdot\nx\fl_{3}+2\Gamma[\fl_1,\fl_3]+\Gamma[\fl_2,\fl_2]+2\Gamma[\f_1,\fl_3]+2\Gamma[\f_3,\fl_1]+2\Gamma[\f_2,\fl_2].\label{initial expansion 3}
\end{align}
%From \eqref{initial expansion 1}, we know that
%\begin{align}
%\p_{\tau}\pk[\fl_{1}]=0.
%\end{align}
%Hence, we have
%\begin{align}
%\pk[\fl_1(\tau)]=\pk[\fl_1(0)].
%\end{align}
%Then by Theorem \ref{initial theorem}, we can show that for some $\fl_{1,\infty}=\pk[\fl_1](0)$,
%\begin{align}
%\lnnmv{\ue^{K_0\tau}(\fl_1-\fl_{1,\infty})}\ls 1.
%\end{align}
%Then by a similar argument, we have that there exists $\fl_{2,\infty}$ such that
%\begin{align}
%\nm{\bv \ue^{\nu\tau}(\fl_2-\fl_{2,\infty})}_{L^{\infty}}\leq C.
%\end{align}

%%%%%%%%%%%%%%%%%%%%%%%%%%%%%%%%%%%%%%%%%%%%%%%%%%%%%%%%%%%%%%%%%%%%%%%%
\subsection{Boundary Layer Expansion}\label{att section 2.}
%%%%%%%%%%%%%%%%%%%%%%%%%%%%%%%%%%%%%%%%%%%%%%%%%%%%%%%%%%%%%%%%%%%%%%%%

This is very similar to the stationary problem in Section \ref{att section 2}. We need to introduce several geometric substitutions.
\begin{enumerate}
\item
In a neighborhood of $\vx_0\in\p\Omega$ define an orthogonal curvilinear coordinates system $(\iota_1,\iota_2)$ such that at $\vx_0$ the coordinate lines coincide with the principal directions. Let $\mu$ be the normal distance to the boundary. Then $(\mu,\iota_1,\iota_2)$ forms a local orthogonal coordinate system.

Assume $\p\Omega$ is parameterized by $\vr=\vr(\iota_1,\iota_2)$. Denote $P_i=\abs{\p_i\vr}$ for $i=1,2$. Then define the two orthogonal unit tangential vectors
\begin{align}\label{coordinate 14}
\vt_1:=\frac{\p_1\vr}{P_1},\ \ \vt_2:=\frac{\p_2\vr}{P_2}.
\end{align}
Also, the outward unit normal vector is
\begin{align}\label{coordinate 1}
\vn:=\frac{\p_1\vr\times\p_2\vr}{\abs{\p_1\vr\times\p_2\vr}}=\vt_1\times\vt_2.
\end{align}
Let $\kk_1$ and $\kk_2$ denote two principal curvatures and $R_1$ and $R_2$ two radii of principal curvatures.
\item
We also decompose the velocity into normal and tangential directions
\begin{align}
\left\{
\begin{array}{rcl}
-\vv\cdot\vn&=&\va,\\
-\vv\cdot\vt_1&=&\vb,\\
-\vv\cdot\vt_2&=&\vc.
\end{array}
\right.
\end{align}
Denote $\vvv=(\va,\vb,\vc)$.
\item
Define the scaled variable $\eta=\dfrac{\mu}{\e}$, which implies $\dfrac{\p}{\p\mu}=\dfrac{1}{\e}\dfrac{\p}{\p\eta}$.
\end{enumerate}
Under these substitutions, the equation \eqref{small system.} is transformed into
\begin{align}\label{small system'.}
\left\{
\begin{array}{l}\displaystyle
\e^2\dt f^{\e}+\va\dfrac{\p f^{\e}}{\p\eta}-\dfrac{\e}{R_1-\e\eta}\bigg(\vb^2\dfrac{\p f^{\e}}{\p\va}-\va\vb\dfrac{\p f^{\e}}{\p\vb}\bigg)
-\dfrac{\e}{R_2-\e\eta}\bigg(\vc^2\dfrac{\p f^{\e}}{\p\va}-\va\vc\dfrac{\p f^{\e}}{\p\vc}\bigg)\\\rule{0ex}{2.0em}
-\dfrac{\e}{P_1P_2}\Bigg(\dfrac{\p_{11}\vr\cdot\p_2\vr}{P_1(\e\kk_1\eta-1)}\vb\vc
+\dfrac{\p_{12}\vr\cdot\p_2\vr}{P_2(\e\kk_2\eta-1)}\vc^2\Bigg)\dfrac{\p f^{\e}}{\p\vb}\\\rule{0ex}{2.0em}
-\dfrac{\e}{P_1P_2}\Bigg(\dfrac{\p_{22}\vr\cdot\p_1\vr}{P_2(\e\kk_2\eta-1)}\vb\vc
+\dfrac{\p_{12}\vr\cdot\p_1\vr}{P_1(\e\kk_1\eta-1)}\vb^2\Bigg)\dfrac{\p f^{\e}}{\p\vc}\\\rule{0ex}{2.0em}
-\e\bigg(\dfrac{\vb}{P_1(\e\kk_1\eta-1)}\dfrac{\p f^{\e}}{\p\iota_1}+\dfrac{\vc}{P_2(\e\kk_2\eta-1)}\dfrac{\p f^{\e}}{\p\iota_2}\bigg)
+\ll[f^{\e}]=\Gamma[f^{\e},f^{\e}]\ \ \text{in}\ \ \rp\times\Omega\times\r^3,\\\rule{0ex}{2.0em}
f^{\e}(0,\eta,\iota_1,\iota_2,\vvv)=f_0(\eta,\iota_1,\iota_2,\vvv)\ \ \text{in}\ \ \Omega\times\r^3,\\\rule{0ex}{2.0em}
f^{\e}(t,0,\iota_1,\iota_2,\vvv)=\pe[f^{\e}](t,0,\iota_1,\iota_2,\vvv)\ \ \text{for}\
\ \va>0.
\end{array}
\right.
\end{align}
We define the boundary layer expansion as follows:
\begin{align}\label{boundary layer expansion.}
f^{\e}_{\text{bl}}(t,\eta,\iota_1,\iota_2,\vvv)=\sum_{k=1}^{3}\e^k\fb_k(t,\eta,\iota_1,\iota_2,\vvv),
\end{align}
where $\fb_k$ can be defined by comparing the order of $\e$ via plugging \eqref{boundary layer expansion.} into the equation
\eqref{small system'.}. Thus, in a neighborhood of the boundary, we have
\begin{align}
\va\dfrac{\p\fb_1}{\p\eta}-\dfrac{\e}{R_1-\e\eta}\bigg(\vb^2\dfrac{\p\fb_1}{\p\va}-\va\vb\dfrac{\p\fb_1}{\p\vb}\bigg)
-\dfrac{\e}{R_2-\e\eta}\bigg(\vc^2\dfrac{\p\fb_1}{\p\va}-\va\vc\dfrac{\p\fb_1}{\p\vc}\bigg)+\ll[\fb_1]=&0,\\
\va\dfrac{\p\fb_2}{\p\eta}-\dfrac{\e}{R_1-\e\eta}\bigg(\vb^2\dfrac{\p\fb_2}{\p\va}-\va\vb\dfrac{\p\fb_2}{\p\vb}\bigg)
-\dfrac{\e}{R_2-\e\eta}\bigg(\vc^2\dfrac{\p\fb_2}{\p\va}-\va\vc\dfrac{\p\fb_2}{\p\vc}\bigg)+\ll[\fb_2]=&Z_1,
\end{align}
where $Z_1=Z_1\left[\f_1,\fb_1,\dfrac{\p\fb_1}{\p\vb},\dfrac{\p\fb_1}{\p\vc},\dfrac{\p\fb_1}{\p\iota_1},\dfrac{\p\fb_1}{\p\iota_2}\right]$ as
\begin{align}
Z_1:=&2\Gamma[\f_1,\fb_1]+\Gamma[\fb_1,\fb_1]
+\dfrac{1}{P_1P_2}\Bigg(\dfrac{\p_{11}\vr\cdot\p_2\vr}{P_1(\e\kk_1\eta-1)}\vb\vc
+\dfrac{\p_{12}\vr\cdot\p_2\vr}{P_2(\e\kk_2\eta-1)}\vc^2\Bigg)\dfrac{\p\fb_1}{\p\vb}\\\rule{0ex}{2.0em}
&+\dfrac{1}{P_1P_2}\Bigg(\dfrac{\p_{22}\vr\cdot\p_1\vr}{P_2(\e\kk_2\eta-1)}\vb\vc
+\dfrac{\p_{12}\vr\cdot\p_1\vr}{P_1(\e\kk_1\eta-1)}\vb^2\Bigg)\dfrac{\p\fb_1}{\p\vc}
+\dfrac{\vb}{P_1(\e\kk_1\eta-1)}\dfrac{\p\fb_1}{\p\iota_1}+\dfrac{\vc}{P_2(\e\kk_2\eta-1)}\dfrac{\p\fb_1}{\p\iota_2}.\no
\end{align}
However, we define $\fb_3$ in a completely different fashion. Let $\fb_3$ satisfy
\begin{align}\label{extra boundary layer}
&\va\dfrac{\p \fb_3}{\p\eta}-\dfrac{\e}{R_1-\e\eta}\bigg(\vb^2\dfrac{\p \fb_3}{\p\va}-\va\vb\dfrac{\p \fb_3}{\p\vb}\bigg)
-\dfrac{\e}{R_2-\e\eta}\bigg(\vc^2\dfrac{\p \fb_3}{\p\va}-\va\vc\dfrac{\p \fb_3}{\p\vc}\bigg)\\\rule{0ex}{2.0em}
&-\dfrac{\e}{P_1P_2}\Bigg(\dfrac{\p_{11}\vr\cdot\p_2\vr}{P_1(\e\kk_1\eta-1)}\vb\vc
+\dfrac{\p_{12}\vr\cdot\p_2\vr}{P_2(\e\kk_2\eta-1)}\vc^2\Bigg)\dfrac{\p \fb_3}{\p\vb}\no\\\rule{0ex}{2.0em}
&-\dfrac{\e}{P_1P_2}\Bigg(\dfrac{\p_{22}\vr\cdot\p_1\vr}{P_2(\e\kk_2\eta-1)}\vb\vc
+\dfrac{\p_{12}\vr\cdot\p_1\vr}{P_1(\e\kk_1\eta-1)}\vb^2\Bigg)\dfrac{\p \fb_3}{\p\vc}\no\\\rule{0ex}{2.0em}
&-\e\bigg(\dfrac{\vb}{P_1(\e\kk_1\eta-1)}\dfrac{\p \fb_3}{\p\iota_1}+\dfrac{\vc}{P_2(\e\kk_2\eta-1)}\dfrac{\p \fb_3}{\p\iota_2}\bigg)
+\ll[\fb_3]=Z_2,\no
\end{align}
where
\begin{align}
Z_2:=&2\Gamma[\fb_1,\fb_2]+2\Gamma[\f_1,\fb_2]+2\Gamma[\f_2,\fb_1]
+\dfrac{1}{P_1P_2}\Bigg(\dfrac{\p_{11}\vr\cdot\p_2\vr}{P_1(\e\kk_1\eta-1)}\vb\vc
+\dfrac{\p_{12}\vr\cdot\p_2\vr}{P_2(\e\kk_2\eta-1)}\vc^2\Bigg)\dfrac{\p\fb_2}{\p\vb}\\\rule{0ex}{2.0em}
&+\dfrac{1}{P_1P_2}\Bigg(\dfrac{\p_{22}\vr\cdot\p_1\vr}{P_2(\e\kk_2\eta-1)}\vb\vc
+\dfrac{\p_{12}\vr\cdot\p_1\vr}{P_1(\e\kk_1\eta-1)}\vb^2\Bigg)\dfrac{\p\fb_2}{\p\vc}
+\dfrac{\vb}{P_1(\e\kk_1\eta-1)}\dfrac{\p\fb_2}{\p\iota_1}+\dfrac{\vc}{P_2(\e\kk_2\eta-1)}\dfrac{\p\fb_2}{\p\iota_2}.\no
\end{align}
Obviously, \eqref{extra boundary layer} actually contains all terms in \eqref{small system'.} except the time derivative, so it is essentially
\begin{align}
\e\vv\cdot\nx\fb_3+\ll[\fb_3]=Z_2.
\end{align}
Hence, we will resort to the well-posedness and decay theory of linearized stationary problem instead of that of the half-space boundary layer equation (the so-called $\e$-Milne problem with geometric correction).

%%%%%%%%%%%%%%%%%%%%%%%%%%%%%%%%%%%%%%%%%%%%%%%%%%%%%%%%%%%%%%%%%%%%%%%%
\subsection{Initial Condition Expansion}
%%%%%%%%%%%%%%%%%%%%%%%%%%%%%%%%%%%%%%%%%%%%%%%%%%%%%%%%%%%%%%%%%%%%%%%%

The bridge between the interior solution and initial layer is the initial condition. Plugging the combined expansion from \eqref{interior expansion.} and \eqref{initial layer expansion.}
\begin{align}
f^{\e}\sim\sum_{k=1}^{3}\e^k\f_k+\sum_{k=1}^{4}\e^k\fl_k
\end{align}
into the initial condition \eqref{small system.}, and comparing the order of $\e$, we obtain
\begin{align}
\f_1+\fl_1=&f_{0,1},\label{ltt 1.}\\
\f_2+\fl_2=&f_{0,2},\label{ltt 2.}\\
\f_3+\fl_3=&f_{0,3}.\label{ltt 3.}
\end{align}
Since we do not expand the interior solution $f^{\e}_{\text{in}}$ to higher order, we simply require the initial condition such that $\fl_4$ decays to zero as $\tau\rt\infty$.

%%%%%%%%%%%%%%%%%%%%%%%%%%%%%%%%%%%%%%%%%%%%%%%%%%%%%%%%%%%%%%%%%%%%%%%%
\subsection{Boundary Condition Expansion}
%%%%%%%%%%%%%%%%%%%%%%%%%%%%%%%%%%%%%%%%%%%%%%%%%%%%%%%%%%%%%%%%%%%%%%%%

Similar to the stationary problem in Section \ref{att section 04}, the bridge between the interior solution and boundary layer is the boundary condition. Define
\begin{align}
\pp[f](t,\vx_0,\vv):=\m^{\frac{1}{2}}(\vv)
\int_{\vuu\cdot\vn(\vx_0)>0}\m^{\frac{1}{2}}(\vuu)f(t,\vx_0,\vuu)\abs{\vuu\cdot\vn(\vx_0)}\ud{\vuu}.
\end{align}
Plugging the combined expansion from \eqref{interior expansion.} and \eqref{boundary layer expansion.}
\begin{align}
f^{\e}\sim\sum_{k=1}^{3}\e^k\f_k+\sum_{k=1}^{2}\e^k\fb_k
\end{align}
into the boundary condition \eqref{small system.} and \eqref{att 05.}, and comparing the order of $\e$, we obtain
\begin{align}
\f_1+\fb_1=&\pp[\f_1+\fb_1]+\m_1(\vx_0,\vv),\label{btt 1.}\\
\f_2+\fb_2=&\pp[\f_2+\fb_2]+\m_1(\vx_0,\vv)
\int_{\vuu\cdot\vn(\vx_0)>0}\m^{\frac{1}{2}}(\vuu)(\f_1+\fb_1)\abs{\vuu\cdot\vn(\vx_0)}
\ud{\vuu}+\m_2(\vx_0,\vv).\label{btt 2.}
\end{align}
For $\fb_3$ and $\fb_3$, since the boundary layer $\fb_3$ is defined differently, we can assign stronger version
\begin{align}
\f_3+\fb_3=&\pp[\f_3+\fb_3]+\e^{-2}\Big(\mb-\m-\e\mh\m_1\Big)\m^{-1}\pp[\f_1+\fb_1]+\e^{-1}\Big(\mb-\m\Big)\m^{-1}\pp[\f_2+\fb_2]\\
&+\e^{-3}\mhh\Big(\mb-\m-\e\mh\m_1-\e^2\mh\m_2\Big).\no
\end{align}

%%%%%%%%%%%%%%%%%%%%%%%%%%%%%%%%%%%%%%%%%%%%%%%%%%%%%%%%%%%%%%%%%%%%%%%%
\subsection{Matching Procedure}\label{att section 1.}
%%%%%%%%%%%%%%%%%%%%%%%%%%%%%%%%%%%%%%%%%%%%%%%%%%%%%%%%%%%%%%%%%%%%%%%%

Define the length of boundary layer $L=\e^{-\frac{1}{2}}$. Also, denote $\rr[\va,\vb,\vc]=(-\va,\vb,\vc)$. \\
\ \\
Step 1: Construction of $\f_1$, $\fl_1$ and $\fb_1$.\\
A direct computation reveals that $\f_1=A_1+B_1+C_1$, where $B_1=C_1=0$.
Define
\begin{align}
\f_1=&\m^{\frac{1}{2}}\left(\rh_{1}+\vu_{1}\cdot\vv+\th_{1}\frac{\abs{\vv}^2-3}{2}\right),
\end{align}
where $(\rh_1,\vu_1,\th_1)$ satisfies the Navier-Stokes-Fourier system
\begin{align}\label{interior 1.}
\left\{
\begin{array}{l}
%\nx(\rh_1+\th_1)=0,\\\rule{0ex}{1.0em}
\dt\vu_1+\vu_1\cdot\nx\vu_1-\gamma_1\dx\vu_1+\nx p_2 =0,\\\rule{0ex}{1.0em}
\nx\cdot\vu_1=0,\\\rule{0ex}{1.0em}
\dt\th_1+\vu_1\cdot\nx\th_1-\gamma_2\dx\th_1=0,
\end{array}
\right.
\end{align}
with the initial condition
\begin{align}
\rh_1(0,\vx)=\rh_{0,1}(\vx),\quad \vu_1(0,\vx)=\vu_{0,1}(\vx),\quad \th_1(0,\vx)=\th_{0,1}(\vx),
\end{align}
and the boundary condition
\begin{align}
\rh_1(t,\vx_0)=\rh_{\bb,1}(t,\vx_0)+M_1(t,\vx_0),\quad \vu_1(t,\vx_0)=\vu_{\bb,1}(t,\vx_0),\quad\th_1(t,\vx_0)=\th_{\bb,1}(t,\vx_0).
\end{align}
Here $M_1(t,\vx_0)$ is such that the Boussinesq relation
\begin{align}
\nx(\rh_1+\th_1)=0
\end{align}
is satisfied.
Note that the above requirement means that for fixed $t$, $M_1$ still has one dimension of freedom. It is eventually fully determined by enforcing the conservation law
\begin{align}
\iint_{\Omega\times\r^3}\f_1(t,\vx,\vv)\mh(\vv)\ud{\vv}\ud{\vx}=0.
\end{align}
\ \\
Then based on the compatibility condition of $\mu_1$ which is
\begin{align}
\int_{\vuu\cdot\vn(\vx_0)>0}\m^{\frac{1}{2}}(\vuu)\mu_1(t,\vx_0,\vuu)\abs{\vuu\cdot\vn(\vx_0)}\ud{\vuu}=0,
\end{align}
we naturally obtain $\pp[\f_1]=M_1\mu^{\frac{1}{2}}$, which means
\begin{align}
\f_1=\pp[\f_1]+\m_1\ \ \text{on}\ \ \p\Omega.
\end{align}
Therefore, compared with \eqref{btt 1.}, it is not necessary to introduce the boundary layer at this order and we simply take $\fb_1=0$.
Also, the interior solution can already satisfy the initial data, so it is not necessary to introduce the initial layer at this order and we simply take $\fl_1=0$.\\
\ \\
Step 2: Construction of $\f_2$, $\fl_2$ and $\fb_2$.\\
Define $\f_2=A_2+B_2+C_2$, where $B_2$ and $C_2$ can be uniquely determined following previous analysis in Section \ref{att section 6.}, and
\begin{align}
A_2=&\m^{\frac{1}{2}}\left(\rh_{2}+\vu_{2}\cdot\vv+\th_{2}\frac{\abs{\vv}^2-3}{2}\right),
\end{align}
satisfying a linear fluid-type equation provided $\f_1$ is known.
% (we omit the detailed form of this equation here, which is an evolutionary version of \eqref{interior 1'}).
Now $\f_2$ does not satisfy \eqref{btt 2.} alone, so we have to introduce boundary layer. %\eqref{interior 1'} looks rather complicated, but it is essentially a linear equation once we know $(\rh_1,\vu_1,\th_1)$.
Let $\fb_2$ satisfy the $\e$-Milne problem with geometric correction
\begin{align}\label{att 3.}
\left\{
\begin{array}{l}\displaystyle
\va\dfrac{\p\fb_2}{\p\eta}-\dfrac{\e}{R_1-\e\eta}\bigg(\vb^2\dfrac{\p\fb_2}{\p\va}-\va\vb\dfrac{\p\fb_2}{\p\vb}\bigg)
-\dfrac{\e}{R_2-\e\eta}\bigg(\vc^2\dfrac{\p\fb_2}{\p\va}-\va\vc\dfrac{\p\fb_2}{\p\vc}\bigg)+\ll[\fb_2]
=0,\\\rule{0ex}{2.0em}
\fb_2(t,0,\iota_1,\iota_2,\vvv)=h(t,\iota_1,\iota_2,,\vvv)-\tilde h(t,\iota_1,\iota_2,,\vvv)\ \
\text{for}\ \ \va>0,\\\rule{0ex}{2.0em}
\displaystyle\fb_2(t,L,\iota_1,\iota_2,,\vvv)
=\fb_2(t,L,\iota_1,\iota_2,,\rr[\vvv]),
\end{array}
\right.
\end{align}
with the in-flow boundary data
\begin{align}\label{btt 3.}
h(t,\iota_1,\iota_2,\vvv)=&M_1\m_1(t,\vx_0,\vv)+\m_2(t,\vx_0,\vv)-\bigg((B_2+C_2)-\pp[B_2+C_2]\bigg).
\end{align}
Based on Theorem \ref{Milne theorem 3} and \ref{Milne theorem 4}, there exists a unique
\begin{align}
\tilde h(t,\iota_1,\iota_2,\vvv)=\m^{\frac{1}{2}}\sum_{k=0}^4\tilde D_k(t,\iota_1,\iota_2)\ee_k,
\end{align}
such that \eqref{att 3.} is well-posed and the solution decays exponentially fast to zero (here $\ee_k$ with $k=0,1,2,3,4$ form a basis of null space $\nk$ of $\ll$). In particular, $\tilde D_1=0$.
Then we further require that $A_2$ satisfies the boundary condition
\begin{align}\label{att 1}
A_2(t,\vx_0,\vv)=\tilde h(t,\iota_1,\iota_2,\vvv)+M_2(t,\vx_0)\m^{\frac{1}{2}}(\vv).
\end{align}
Here $\vx_0$ corresponds to $(\iota_1,\iota_2)$ and $\vv$ corresponds to $\vvv$, based on substitution in Section \ref{att section 2.}. Here, the constant $M_2(t,\vx_0)$ is chosen to enforce the Boussinesq relation
\begin{align}
p_2 -(\rh_2 +\th_2 +\rh_1 \th_1 )=&0,
\end{align}
where $p_2$ is the pressure solved from \eqref{interior 1.}.
Similar to the construction of $\f_1$, due to \eqref{small normalization.}, we can choose $M_2$ to satisfy the conservation law
\begin{align}
\iint_{\Omega\times\r^3}(\f_2+\fb_2+\fl_2)(t,\vx,\vv)\m^{\frac{1}{2}}(\vv)\ud{\vv}\ud{\vx}=0.
\end{align}
We can verify that such construction satisfies the boundary condition \eqref{btt 2.}.

Also, the initial layer is no longer zero at this order. It satisfies
\begin{align}
\left\{
\begin{array}{l}
\p_{\sigma}\fl_{2}+\ll[\fl_2]=0,\\\rule{0ex}{1.5em}
\fl_2(0,\vx,\vv)=(B_2+C_2)(0,\vx,\vv)-\fl_{2,\infty},
\end{array}
\right.
\end{align}
where $\fl_{2,\infty}(\vx,\vv)\in\nk$ is chosen based on Theorem \ref{initial theorem} such that
\begin{align}
\lim_{\tau\rt\infty}\fl_2(\tau,\vx,\vv)=0.
\end{align}
Then we further require that $A_2$ satisfies the initial condition
\begin{align}\label{att 1}
A_2(0,\vx,\vv)=\fl_{2,\infty}(\vx,\vv).
\end{align}
\ \\
Step 3: Construction of $\f_3$, $\fl_3$ and $\fb_3$.\\
This is almost the same as Step 2. Define $\f_3=A_3+B_3+C_3$, where $B_3$ and $C_3$ can be uniquely determined following previous analysis, and
\begin{align}
A_3=&\m^{\frac{1}{2}}\left(\rh_{3}+\vu_{3}\cdot\vv+\th_{3}\frac{\abs{\vv}^2-3}{2}\right),
\end{align}
satisfying a linear fluid-type equation provided $\f_1$ and $\f_2$ are known. In particular, since the boundary layer at this order is defined in a more tricky way, we simply define the boundary condition
\begin{align}
A_3(t,\vx_0,\vv)=0.
\end{align}
On the other hand, define the boundary layer $\fb_3$
\begin{align}
&\va\dfrac{\p \fb_3}{\p\eta}-\dfrac{\e}{R_1-\e\eta}\bigg(\vb^2\dfrac{\p \fb_3}{\p\va}-\va\vb\dfrac{\p \fb_3}{\p\vb}\bigg)
-\dfrac{\e}{R_2-\e\eta}\bigg(\vc^2\dfrac{\p \fb_3}{\p\va}-\va\vc\dfrac{\p \fb_3}{\p\vc}\bigg)\\\rule{0ex}{2.0em}
&-\dfrac{\e}{P_1P_2}\Bigg(\dfrac{\p_{11}\vr\cdot\p_2\vr}{P_1(\e\kk_1\eta-1)}\vb\vc
+\dfrac{\p_{12}\vr\cdot\p_2\vr}{P_2(\e\kk_2\eta-1)}\vc^2\Bigg)\dfrac{\p \fb_3}{\p\vb}\\\rule{0ex}{2.0em}
&-\dfrac{\e}{P_1P_2}\Bigg(\dfrac{\p_{22}\vr\cdot\p_1\vr}{P_2(\e\kk_2\eta-1)}\vb\vc
+\dfrac{\p_{12}\vr\cdot\p_1\vr}{P_1(\e\kk_1\eta-1)}\vb^2\Bigg)\dfrac{\p \fb_3}{\p\vc}\\\rule{0ex}{2.0em}
&-\e\bigg(\dfrac{\vb}{P_1(\e\kk_1\eta-1)}\dfrac{\p \fb_3}{\p\iota_1}+\dfrac{\vc}{P_2(\e\kk_2\eta-1)}\dfrac{\p \fb_3}{\p\iota_2}\bigg)
+\ll[\fb_3]=Z,
\end{align}
where
\begin{align}
Z:=&2\Gamma[\f_1,\fb_2]
+\dfrac{1}{P_1P_2}\Bigg(\dfrac{\p_{11}\vr\cdot\p_2\vr}{P_1(\e\kk_1\eta-1)}\vb\vc
+\dfrac{\p_{12}\vr\cdot\p_2\vr}{P_2(\e\kk_2\eta-1)}\vc^2\Bigg)\dfrac{\p\fb_2}{\p\vb}\\\rule{0ex}{2.0em}
&+\dfrac{1}{P_1P_2}\Bigg(\dfrac{\p_{22}\vr\cdot\p_1\vr}{P_2(\e\kk_2\eta-1)}\vb\vc
+\dfrac{\p_{12}\vr\cdot\p_1\vr}{P_1(\e\kk_1\eta-1)}\vb^2\Bigg)\dfrac{\p\fb_2}{\p\vc}
+\dfrac{\vb}{P_1(\e\kk_1\eta-1)}\dfrac{\p\fb_2}{\p\iota_1}+\dfrac{\vc}{P_2(\e\kk_2\eta-1)}\dfrac{\p\fb_2}{\p\iota_2}.\no
\end{align}
This is essentially,
\begin{align}
    \e\vv\cdot\nx\fb_3+\ll[\fb_3]=Z.
\end{align}
The boundary condition is taken as
\begin{align}
\fb_3=&\pp[\fb_3]+\e^{-2}\Big(\mb-\m-\e\mh\m_1\Big)\m^{-1}\pp[\f_1+\fb_1]+\e^{-1}\Big(\mb-\m\Big)\m^{-1}\pp[\f_2+\fb_2]\\
&+\e^{-3}\mhh\Big(\mb-\m-\e\mh\m_1-\e^2\mh\m_2\Big)-\bigg((B_3+C_3)-\pp[B_3+C_3]\bigg).\no
\end{align}
Also, the initial layer satisfies
\begin{align}
\left\{
\begin{array}{l}
\p_{\sigma}\fl_{3}+\ll[\fl_3]=-\vv\cdot\nx\fl_{2}+2\Gamma[\f_1,\fl_2],\\\rule{0ex}{1.5em}
\fl_3(0,\vx,\vv)=(B_3+C_3)(0,\vx,\vv)-\fl_{3,\infty},
\end{array}
\right.
\end{align}
where $\fl_{3,\infty}(\vx,\vv)\in\nk$ is chosen based on Theorem \ref{initial theorem} such that
\begin{align}
\lim_{\tau\rt\infty}\fl_3(\tau,\vx,\vv)=0.
\end{align}
Then we further require that $A_3$ satisfies the initial condition
\begin{align}\label{att 1}
A_3(0,\vx,\vv)=\fl_{3,\infty}(\vx,\vv).
\end{align}
In a similar fashion, we can define $\fl_4$
\begin{align}
\left\{
\begin{array}{l}
\p_{\sigma}\fl_{4}+\ll[\fl_4]=-\vv\cdot\nx\fl_{3}+2\Gamma[\f_1,\fl_3]+2\Gamma[\f_3,\fl_1]+2\Gamma[\f_2,\fl_2],\\\rule{0ex}{1.5em}
\fl_4(0,\vx,\vv)=-\fl_{4,\infty},
\end{array}
\right.
\end{align}
where $\fl_{4,\infty}(\vx,\vv)\in\nk$ is chosen based on Theorem \ref{initial theorem} such that
\begin{align}
\lim_{\tau\rt\infty}\fl_4(\tau,\vx,\vv)=0.
\end{align}

\newpage

%%%%%%%%%%%%%%%%%%%%%%%%%%%%%%%%%%%%%%%%%%%%%%%%%%%%%%%%%%%%%%%%%%%%%%%%%%%%%%%%%%%%%
\section{Remainder Estimates}%%%%%%%%%%%%%%%%%%%%%%%%%%%%%%%%%%%%%%%%%%%%%%%%%%%%%%%%
%%%%%%%%%%%%%%%%%%%%%%%%%%%%%%%%%%%%%%%%%%%%%%%%%%%%%%%%%%%%%%%%%%%%%%%%%%%%%%%%%%%%%

We consider the linearized evolutionary Boltzmann equation
\begin{align}\label{linear unsteady}
\left\{
\begin{array}{l}
\e^2\dt f+\e\vv\cdot\nx f+\ll[f]=S(t,\vx,\vv)\ \ \text{in}\ \ \rp\times\Omega\times\r^3,\\\rule{0ex}{1.5em}
f(0,\vx,\vv)=z(\vx,\vv)\ \ \text{in}\ \ \Omega\times\r^3,\\\rule{0ex}{1.5em}
f(t,\vx_0,\vv)=\pp[f](t,\vx_0,\vv)+h(t,\vx_0,\vv)\ \ \text{on}\ \ \rp\times\gamma_-,
\end{array}
\right.
\end{align}
where
\begin{align}
\pp[f](t,\vx_0,\vv)=\mh(\vv)
\int_{\vuu\cdot\vn(\vx_0)>0}\m^{\frac{1}{2}}(\vuu)f(t,\vx_0,\vuu)\abs{\vuu\cdot\vn(\vx_0)}\ud{\vuu}.
\end{align}
The data $z$, $S$ and $h$ satisfy the compatibility condition
\begin{align}\label{linear unsteady compatibility}
\iint_{\Omega\times\r^3}\mh z=0,\quad \iint_{\Omega\times\r^3}S(\vx,\vv)\m^{\frac{1}{2}}(\vv)\ud{\vv}\ud{\vx}+\int_{\gamma_-}h(\vx,\vv)\m^{\frac{1}{2}}(\vv)\ud{\gamma}=0.
\end{align}
Then we can easily derive
\begin{align}\label{linear unsteady normalization}
\iint_{\Omega\times\r^3}\mh f(t)=0.
\end{align}
Our analysis is based on the ideas in \cite{Esposito.Guo.Kim.Marra2013}, \cite{Guo2010}, \cite{AA004} and \cite{AA013}. In particular, we will invoke the results of stationary problem. Since proof of the well-posedness of \eqref{linear unsteady} is standard, we will focus on the a priori estimates here.

\subsection{Preliminaries}
%%%%%%%%%%%%%%%%%%%%%%%%%%%%%%%%%%%%%%%%%%%%%%%%%%%%%%%%%%%%%%%%%%%%%%%%%%%%%%%%%%%%%

We first introduce the well-known micro-macro decomposition. Define $\pk$ as the orthogonal projection onto the null space of $\ll$:
\begin{align}\label{htt lemma 06}
\pk[f]:=\m^{\frac{1}{2}}(\vv)\bigg(a_f(t,\vx)+\vv\cdot\vbb_f(t,\vx)+\frac{\abs{\vv}^2-3}{2}c_f(t,\vx)\bigg)\in\nk,
\end{align}
where $a_f$, $\vbb_f$ and $c_f$ are coefficients. When there is no confusion, we will simply write $a,\vbb, c$. Definitely, $\ll\Big[\pk[f]\Big]=0$. Then the operator $\ik-\pk$ is naturally
\begin{align}
(\ik-\pk)[f]:=f-\pk[f],
\end{align}
which satisfies $(\ik-\pk)[f]\in\nk^{\perp}$, i.e. $\ll[f]=\ll\Big[(\ik-\pk)[f]\Big]$.

\begin{lemma}\label{htt lemma 05}
The linearized collision operator $\ll=\nu I-K$ defined in \eqref{att 11} is self-adjoint in $L^2$. It satisfies
\begin{align}
&\br{\vv}\ls\nu(\vv)\ls\br{\vv},\\
&\br{f,\ll[f]}(t,\vx)=\br{(\ik-\pk)[f],\ll\Big[(\ik-\pk)[f]\Big]}(t,\vx),\\
&\unm{(\ik-\pk)[f(t,\vx)]}^2\ls \br{f,\ll[f]}(t,\vx)\ls \unm{(\ik-\pk)[f(t,\vx)]}^2.
\end{align}
\end{lemma}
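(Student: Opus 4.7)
The plan is to observe that Lemma \ref{htt lemma 05} is the evolutionary analogue of Lemma \ref{ktt 01} from the stationary chapter, and that time $t$ enters only as a parameter: the operator $\ll = \nu I - K$ acts solely in the velocity variable $\vv$, and the projector $\pk$ is defined pointwise in $(t,\vx)$ via \eqref{htt lemma 06}. Consequently, each of the three assertions reduces, at fixed $(t,\vx)$, to the corresponding stationary statement.

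For the bound $\br{\vv}\ls \nu(\vv)\ls \br{\vv}$, I would simply invoke the explicit formula
\[
\nu(\vv)=\pi^2 q_0\Bigg(\bigg(2\abs{\vv}+\frac{1}{\abs{\vv}}\bigg)\int_0^{\abs{\vv}}\ue^{-z^2}\ud{z}+\ue^{-\abs{\vv}^2}\Bigg),
\]
which was recorded in the statement of the linearized operator above, and note that as $\abs{\vv}\to\infty$ the first factor gives linear growth while for $\abs{\vv}\to 0$ the $1/\abs{\vv}$ singularity is cancelled by the integral so that $\nu$ remains bounded below; this is classical (e.g.\ \cite[Chapter 3]{Glassey1996}).

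For the self-adjointness and the identity $\br{f,\ll[f]}=\br{(\ik-\pk)[f],\ll[(\ik-\pk)[f]]}$, I would use that $\ll$ is symmetric on $L^2(\r^3)$ and that $\pk[f]\in\nk=\ker\ll$, so $\ll[\pk[f]]=0$ and $\br{\pk[f],\ll[(\ik-\pk)[f]]}=\br{\ll[\pk[f]],(\ik-\pk)[f]}=0$; expanding $f=\pk[f]+(\ik-\pk)[f]$ and using these two facts yields the identity, freezing $(t,\vx)$ throughout.

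For the coercivity estimate, I would cite the standard hard-sphere spectral gap: there exists $c_0>0$ such that for every $g\in\nk^{\perp}$, $\br{g,\ll[g]}\geq c_0\unm{g}^2$, while the upper bound follows from $\br{g,\ll[g]}\leq \br{g,\nu g}=\unm{g}^2$. Applying this with $g=(\ik-\pk)[f](t,\vx,\cdot)$ and combining with the preceding identity gives both inequalities. The main (negligible) obstacle is bookkeeping: I must verify the references \cite[Chapter 3]{Glassey1996} and \cite[Lemma 3]{Guo2010} apply verbatim in the $t$-dependent setting, but since $t$ does not interact with the operator or the inner product in $\vv$, the extension is immediate, and the proof amounts to the single line: ``These properties hold pointwise in $(t,\vx)$; see Lemma \ref{ktt 01} and the references therein.''
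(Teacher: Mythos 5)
Your proposal is correct and follows essentially the same route as the paper: the paper's proof simply notes these are standard properties of $\ll$ and cites \cite[Chapter 3]{Glassey1996} and \cite[Lemma 3]{Guo2010}, exactly as in the stationary Lemma \ref{ktt 01}, with $t$ entering only as a parameter. Your additional spelling-out of the self-adjointness, the orthogonality argument, and the spectral gap is consistent with those references and adds nothing that changes the method.
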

\begin{proof}
These are standard properties of $\ll$. See \cite[Chapter 3]{Glassey1996} and \cite[Lemma 3]{Guo2010}.
\end{proof}

\begin{lemma}\label{htt lemma 02}
For $0<\d<<1$, define the near-grazing set of $\gamma_{\pm}$
\begin{align}
\gamma_{\pm}^{\delta}:=\left\{(\vx,\vv)\in\gamma_{\pm}: \abs{\vn(\vx)\cdot\vv}\leq\delta\ \ \text{or}\ \ \abs{\vv}\geq\frac{1}{\d}\ \ \text{or}\ \ \abs{\vv}\leq\d\right\}.
\end{align}
Then
\begin{align}
&\int_s^t\pnm{f{\bf{1}}_{\gamma_{\pm}\backslash\gamma_{\pm}^{\delta}}}{\gamma}{1}
\leq C(\delta)\bigg(\e\pnnm{f(s)}{1}+\int_s^t\Big(\pnnm{f}{1}+\pnnm{\e\dt f+\vv\cdot\nx f}{1}\Big)\bigg).
\end{align}
\end{lemma}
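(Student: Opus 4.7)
The plan is to adapt the stationary argument of Lemma \ref{ktt 02} (i.e.\ the proof of Lemma 2.2 in \cite{Esposito.Guo.Kim.Marra2013}) by incorporating a time integration through the evolutionary Green's identity. The key input is a well-chosen time-independent test function $\alpha_\d(\vx,\vv)$ satisfying: (i) $\alpha_\d \gs c_\d > 0$ on $\gamma_+\setminus\gamma_+^\d$ and $\alpha_\d \equiv 0$ (or of controlled sign) on $\gamma_-$; (ii) $\snnm{\alpha_\d}\ls C(\d)$; (iii) $\snnm{\vv\cdot\nx\alpha_\d}\ls C(\d)$. Such a function is built from a smooth spatial cutoff $\chi\big(\mathrm{dist}(\vx,\p\Omega)\big)$ localized in an inward tubular neighborhood of $\p\Omega$, multiplied by a velocity cutoff supported in $\{\d<\abs{\vv}<1/\d,\ \vv\cdot\vn > \d/2\}$, with $\vn$ extended normally off the boundary. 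This is the same class of test functions used in the stationary proof, just no longer reliant on the structure of the stationary operator.

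Next, I would apply the time-dependent Green's identity to the product $\abs{f}\alpha_\d$ over $[s,t]\times\Omega\times\r^3$, using the distributional identity $\e\dt\abs{f}+\vv\cdot\nx\abs{f}=\mathrm{sgn}(f)\big(\e\dt f+\vv\cdot\nx f\big)$:
\begin{align*}
&\e\iint_{\Omega\times\r^3}\!\abs{f(t)}\alpha_\d-\e\iint_{\Omega\times\r^3}\!\abs{f(s)}\alpha_\d
+\int_s^t\!\!\iint_{\gamma_+}\!\abs{f}\alpha_\d\,\ud\gamma-\int_s^t\!\!\iint_{\gamma_-}\!\abs{f}\alpha_\d\,\ud\gamma \\
&\qquad=\int_s^t\!\!\iint_{\Omega\times\r^3}\!\mathrm{sgn}(f)\,\alpha_\d\bigl(\e\dt f+\vv\cdot\nx f\bigr)
+\int_s^t\!\!\iint_{\Omega\times\r^3}\!\abs{f}\bigl(\vv\cdot\nx\alpha_\d\bigr).
\end{align*}
Because $\alpha_\d$ vanishes on $\gamma_-$ and the term $\e\iint\abs{f(t)}\alpha_\d$ is nonnegative, moving the latter to the other side and applying property (i) bounds the $\gamma_+$ near-grazing trace below: $c(\d)\int_s^t\pnm{f\id_{\gamma_+\backslash\gamma_+^\d}}{\gamma}{1}$ dominates the LHS boundary contribution. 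Properties (ii) and (iii) then control the RHS bulk integrals by $C(\d)\int_s^t\big(\pnnm{f}{1}+\pnnm{\e\dt f+\vv\cdot\nx f}{1}\big)$, and the initial term by $C(\d)\,\e\pnnm{f(s)}{1}$. The $\gamma_-$ estimate is obtained by running the identity with $\alpha_\d$ supported instead on $\{-\vv\cdot\vn>\d/2\}$, exchanging the roles of the two boundary pieces.

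The main obstacle will be the careful construction of $\alpha_\d$ near the boundary: isolating one piece of $\gamma\setminus\gamma^\d$ (so that property (i) holds with the correct sign) while simultaneously keeping $\abs{\vv\cdot\nx\alpha_\d}$ bounded is the principal technical point, since the sign factor $\vv\cdot\vn(\vx)$ only lives on $\p\Omega$ and its naive extension into $\Omega$ produces uncontrolled transport derivatives. The remedy, standard in this line of work, is to extend $\vn$ normally in a small tube, multiply by a smooth cutoff of the normal distance, and rely on the smoothness of $\p\Omega$ together with the velocity truncation $\d<\abs{\vv}<1/\d$ to absorb any boundary curvature in $C(\d)$. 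Once $\alpha_\d$ is in hand, the rest of the argument is bookkeeping with the evolutionary Green's identity and the monotonicity of absolute value — no new analytic idea beyond the stationary case is required.
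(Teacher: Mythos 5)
Your handling of the outgoing piece $\gamma_+\setminus\gamma_+^{\d}$ is sound, and it is a genuinely different route from the paper's: the paper simply invokes \cite[Lemma 2.1]{Esposito.Guo.Kim.Marra2013} with a time rescaling, and that lemma is proved by integrating along characteristics and changing variables $(\tau,\vx_0,\sigma)\mapsto(\tau,y)$, which is where the measure $\abs{\vv\cdot\vn}$ and the initial-data term come from; you instead get the $\gamma_+$ bound by duality, testing against a fixed cut-off $\alpha_\d$ in the time-dependent Green identity. For that half the only caveats are routine: the renormalization $(\e\dt+\vv\cdot\nx)\abs{f}=\mathrm{sgn}(f)(\e\dt+\vv\cdot\nx)f$ needs an approximation argument (or note that the paper only applies the lemma to $f^2\geq0$), Green's identity presupposes a priori integrability of the traces (acceptable, since this is used as an a priori estimate), and $\abs{\vv\cdot\nx\alpha_\d}\leq C(\d)$ indeed follows from the normal extension of $\vn$ and the truncation $\abs{\vv}\leq 2/\d$.

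The gap is in the incoming piece. If $\alpha_\d$ is supported on $\{\vv\cdot\vn<-\d/2\}$ so that it vanishes on $\gamma_+$, then the $\gamma_-$ boundary term enters Green's identity with a minus sign, and solving for it yields
\begin{align}
\int_s^t\iint_{\gamma_-}\abs{f}\alpha_\d\,\ud{\gamma}\ \leq\ \e\iint_{\Omega\times\r^3}\abs{f(t)}\alpha_\d
+\int_s^t\iint_{\Omega\times\r^3}\Big(\abs{\e\dt f+\vv\cdot\nx f}\,\alpha_\d+\abs{f}\,\abs{\vv\cdot\nx\alpha_\d}\Big),
\end{align}
so it is now the \emph{final-time} term $\e\pnnm{f(t)}{1}$ that survives, while $\e\iint\abs{f(s)}\alpha_\d$ is the one you may discard -- the opposite of what the statement requires. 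This is not a defect of the test-function method that a cleverer $\alpha_\d$ could repair: take free transport $(\e\dt+\vv\cdot\nx)f=0$ with $f(s)\equiv0$ and data entering through a fixed non-grazing patch of $\gamma_-$ only during the window $(t-\eta,t)$; then the $\gamma_-\setminus\gamma_-^{\d}$ trace integral is of order $\eta$, while $\e\pnnm{f(s)}{1}=0$, the transport term vanishes, and $\int_s^t\pnnm{f}{1}\ls\eta^2/\e$, so the inequality with only $\e\pnnm{f(s)}{1}$ fails for $\eta$ small. What your mirrored construction actually proves is the time-reversed estimate for $\gamma_-$, with $\e\pnnm{f(t)}{1}$ in place of $\e\pnnm{f(s)}{1}$. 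That is all the paper ever needs (only the $\gamma_+$ part is used, through $\pp[f]$ in \eqref{htt 18}--\eqref{htt 20}), but as a proof of the lemma as written for both signs your argument does not close; you should either restrict the claim to $\gamma_+$ or record the $\gamma_-$ case in its time-reversed form.
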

\begin{proof}
See \cite[Lemma 2.1]{Esposito.Guo.Kim.Marra2013} with a standard time rescaling argument.
\end{proof}

%\begin{lemma}[Time-Independent Green's Identity]\label{htt lemma 03'}
%Assume $f(\vx,\vv),\ g(\vx,\vv)\in L^2(\Omega\times\r^2)$ and
%$\vv\cdot\nx f,\ \vv\cdot\nx g\in L^2(\Omega\times\r^2)$ with $f,\
%g\in L^2(\gamma)$. Then
%\begin{align}
%\iint_{\Omega\times\r^2}\bigg((\vv\cdot\nx f)g+(\vv\cdot\nx
%g)f\bigg)\ud{\vx}\ud{\vv}=\iint_{\gamma_+}fg\ud{\gamma}-\iint_{\gamma_-}fg\ud{\gamma}.
%\end{align}
%\end{lemma}
%\begin{proof}
%See \cite[Lemma 2.2]{Esposito.Guo.Kim.Marra2013}.
%\end{proof}

\begin{lemma}[Time-Dependent Green's Identity]\label{htt lemma 03}
Assume $f(t,\vx,\vv),\ g(t,\vx,\vv)\in L^{2}(\rp\times\Omega\times\r^3)$ and $\dt f+\vv\cdot\nx f,\ \dt g+\vv\cdot\nx g\in L^2(\rp\times\Omega\times\r^3)$
with $f,\ g\in L^2(\rp\times\gamma)$. Then for almost all $s,t\in\rp$,
\begin{align}
&\int_s^t\iint_{\Omega\times\r^3}\bigg(\dt f+\vv\cdot\nx f)g+(\dt g+\vv\cdot\nx g)f\bigg)\\
=&\int_s^t\iint_{\gamma_+}fg\ud{\gamma}-\int_s^t\iint_{\gamma_-}fg\ud{\gamma}+\iint_{\Omega\times\r^3}f(t)g(t)-\iint_{\Omega\times\r^3}f(s)g(s).\no
\end{align}
\end{lemma}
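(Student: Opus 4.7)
The identity is essentially an integration by parts for the transport operator $\dt + \vv\cdot\nx$ in the time-space cylinder $[s,t]\times\Omega$. My plan is to first establish the identity for sufficiently smooth $f,g$ via a direct computation, and then pass to the limit via a density argument using the regularity hypotheses $\dt f + \vv\cdot\nx f\in L^2$ and $f\in L^2(\gamma)$.

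For the smooth case, the key observation is the product rule
\begin{align*}
(\dt f + \vv\cdot\nx f)g + (\dt g + \vv\cdot\nx g)f = \dt(fg) + \vv\cdot\nx(fg).
\end{align*}
Integrating over $[s,t]\times\Omega\times\r^3$ and swapping the order of integration, the time-derivative piece yields the two endpoint bulk integrals $\iint_{\Omega\times\r^3} f(t)g(t) - \iint_{\Omega\times\r^3}f(s)g(s)$ directly by the fundamental theorem of calculus in $t$. For the spatial transport piece, since $\vv$ does not depend on $\vx$, we have $\vv\cdot\nx(fg) = \nx\cdot(\vv fg)$, so the divergence theorem in $\vx$ produces
\begin{align*}
\iint_{\Omega\times\r^3}\vv\cdot\nx(fg)\ud\vx\ud\vv = \int_{\p\Omega\times\r^3}(fg)(\vv\cdot\vn)\ud\varpi\ud\vv = \iint_{\gamma_+}fg\ud\gamma - \iint_{\gamma_-}fg\ud\gamma,
\end{align*}
where the decomposition into $\gamma_\pm$ follows from the sign of $\vv\cdot\vn$ and the definition $\ud\gamma = \abs{\vv\cdot\vn}\ud\varpi\ud\vv$. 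Integrating this in $t$ from $s$ to $t$ gives the remaining boundary terms.

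For the density/approximation step, I would mollify $f,g$ jointly in $(t,\vx)$ (after extending trivially in $t$ and reflecting suitably near $\p\Omega$, or localizing first), obtaining smooth approximants $f_n, g_n$ whose bulk $L^2$ norms, transport $L^2$ norms $\nnm{\dt f_n + \vv\cdot\nx f_n}_2$, and trace norms on $\gamma$ all converge to those of $f,g$. The bulk terms and the transport terms then pass to the limit by Cauchy-Schwarz, while the boundary integrals pass by $L^2(\gamma)$ convergence. The endpoint bulk terms are handled by selecting $s,t$ in a full-measure set of Lebesgue points, using that $t\mapsto \iint f(t)g(t)$ is continuous after a modification on a null set (this follows from $f,g, \dt f + \vv\cdot\nx f, \dt g + \vv\cdot\nx g \in L^2$ via the identity itself applied to the smooth approximants).

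The main technical obstacle is the trace control on $\gamma$ in the presence of the grazing set $\gamma_0$, and more seriously, the construction of approximants $f_n$ whose traces on $\rp\times\gamma$ converge in $L^2(\ud\gamma)$ to that of $f$ while simultaneously preserving $L^2$ convergence of $\dt f_n + \vv\cdot\nx f_n$. The standard maneuver is to work with cutoffs in $\vv$ (away from $\vv=0$ and $\abs{\vv}=\infty$) and then in the characteristic variable to transport the mollification along the flow, for which one invokes the trace theorem of Ukai or the Diperna--Lions renormalization framework. Since the statement is cited from \cite[Lemma 2.2]{Esposito.Guo.Kim.Marra2013} (adapted by adding the time variable), I would simply reduce to the stationary identity of Lemma \ref{ktt 03} applied to $f$ and $g$ at almost every fixed time and combine with the $L^2$-in-time integration by parts in $t$, avoiding re-proving the delicate trace machinery.
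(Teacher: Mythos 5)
The paper itself gives no argument here beyond citing \cite[Lemma 2.2]{Esposito.Guo.Kim.Marra2013}, and the main body of your proposal (the product rule $(\dt f+\vv\cdot\nx f)g+(\dt g+\vv\cdot\nx g)f=\dt(fg)+\vv\cdot\nx(fg)$, the divergence theorem in $\vx$ producing the $\gamma_\pm$ terms, the fundamental theorem of calculus in $t$ producing the endpoint bulk terms, followed by mollification along the flow and Ukai-type trace control near the grazing set) is exactly the standard route taken in that reference, so on that level your sketch is consistent with what the paper relies on. One remark: the cleanest way to organize the approximation step is to treat $(t,\vx)$ jointly, i.e.\ view $\dt+\vv\cdot\nx$ as a transport operator in the extended phase space with velocity $(1,\vv)$, so that the time slices $\{s\}\times\Omega$ and $\{t\}\times\Omega$ and the lateral boundary $[s,t]\times\gamma$ are all pieces of one boundary and a single trace theorem covers all terms; this is also how one sees that the identity holds for \emph{almost all} $s,t$ (the endpoint traces are only defined a.e.\ in time).

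Your final shortcut, however, has a genuine gap: you cannot reduce to the stationary Green's identity (Lemma \ref{ktt 03}) applied at almost every fixed time. That lemma requires $\vv\cdot\nx f(t,\cdot,\cdot)\in L^2(\Omega\times\r^3)$ for the time slice, whereas the hypotheses of the present lemma only control the combination $\dt f+\vv\cdot\nx f$ in $L^2(\rp\times\Omega\times\r^3)$; neither $\dt f$ nor $\vv\cdot\nx f$ is separately assumed (or obtainable) in $L^2$, and in the applications (e.g.\ to the remainder $R$) they are genuinely not separately controlled at the sharp $\e$-rates. Moreover, even granting the slicing, the stationary identity contains no $\dt(fg)$ term, so the endpoint contributions $\iint f(t)g(t)-\iint f(s)g(s)$ would have to be generated by an additional integration by parts in $t$ that again needs $\dt f$, $\dt g$ individually. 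So the slice-and-integrate reduction fails as stated; the correct argument is the space-time one you describe first, carried out for the full operator $\dt+\vv\cdot\nx$ at once, which is precisely the content of the cited lemma.
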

\begin{proof}
See \cite[Lemma 2.2]{Esposito.Guo.Kim.Marra2013}.
\end{proof}

%%%%%%%%%%%%%%%%%%%%%%%%%%%%%%%%%%%%%%%%%%%%%%%%%%%%%%%%%%%%%%%%%%%%%%%%%%%%%%%%%%%%%
\subsection{$L^2$ Estimates}
%%%%%%%%%%%%%%%%%%%%%%%%%%%%%%%%%%%%%%%%%%%%%%%%%%%%%%%%%%%%%%%%%%%%%%%%%%%%%%%%%%%%%

\begin{lemma}\label{htt lemma 04}
Assume \eqref{linear unsteady compatibility} and \eqref{linear unsteady normalization} hold. The solution $f(t,\vx,\vv)$ to the equation \eqref{linear unsteady} satisfies
\begin{align}\label{htt 12}
\e\tnnmt{\pk[f]}\ls&\e^{\frac{3}{2}}\tnnm{f(t)}+\e\pnmt{(1-\pp)[f]}{\gamma_+}{2}+\tnnmt{(\ik-\pk)[f]}\\
&+\tnnmt{\snn S}+\e^{\frac{3}{2}}\tnnm{z}+\e\pnmt{h}{\gamma_-}{2}.\no
\end{align}
\end{lemma}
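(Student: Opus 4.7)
I would adapt the stationary argument of Lemma \ref{ktt lemma 1} (at the level $m=1$) to the evolutionary setting by invoking the time-dependent Green's identity from Lemma \ref{htt lemma 03}. As in the stationary proof, the first step is to construct three families of test functions $\psi_a,\psi_{b,i,j},\psi_c$ of the form $\mh(\vv)\bigl(\abs{\vv}^2-\beta\bigr)\bigl(\vv\cdot\nx\phi(t,\vx)\bigr)$, where at each fixed $t$ the potential $\phi(t,\cdot)$ solves the linear elliptic boundary-value problem whose right-hand side is the relevant macroscopic coefficient (e.g.\ $-\dx\phi_c(t,\cdot)=c(t,\cdot)$ with Dirichlet data for $\psi_c$, the Neumann problem \eqref{ktt 36} for $\psi_a$ using the conservation law \eqref{linear unsteady normalization}, etc.). The scalars $\beta$ are picked exactly as in Step~1--Step~3 of the proof of Lemma \ref{ktt lemma 1} so as to annihilate the $\pp[f]$ boundary contribution and to eliminate the unwanted $a,\vbb,c$ cross projections.

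Testing \eqref{linear unsteady} against each $\psi$ and invoking Lemma \ref{htt lemma 03} over $[0,t]\times\Omega\times\r^{3}$, the stationary algebra produces the leading term $\e\tnnmt{c}^{2}$ (and analogously $\e\tnnmt{\vbb}^{2}$, $\e\tnnmt{a}^{2}$) from $-\e\int_0^t\iint(\vv\cdot\nx\psi)f$, together with the expected controlled contributions of size $\e\pnmt{(1-\pp)[f]}{\gamma_+}{2}\tnnmt{\mathrm{macro}}$, $\e\pnmt{h}{\gamma_-}{2}\tnnmt{\mathrm{macro}}$, $\tnnmt{(\ik-\pk)[f]}\tnnmt{\mathrm{macro}}$ and $\tnnmt{\snn S}\tnnmt{\mathrm{macro}}$. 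Compared with the stationary case, two genuinely new families of terms appear, namely the temporal boundary contributions $\e^{2}\iint f(t)\psi(t)-\e^{2}\iint z\psi(0)$ and the interior time-derivative term $\e^{2}\int_0^t\iint\dt\psi\cdot f$. The temporal boundary pieces are routine: elliptic regularity yields $\tnnm{\psi(t)}\ls\onnm{\mathrm{macro}(t)}{L^2}\ls\tnnm{f(t)}$, so $\e^{2}\bigl|\iint f(t)\psi(t)\bigr|\ls\e^{2}\tnnm{f(t)}^{2}$ and $\e^{2}\bigl|\iint z\psi(0)\bigr|\ls\e^{2}\tnnm{z}^{2}$. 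Taking a square root at the very end turns these into the $\e^{3/2}\tnnm{f(t)}$ and $\e^{3/2}\tnnm{z}$ contributions on the right-hand side of \eqref{htt 12}.

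The hard part will be the time-derivative interior term $\e^{2}\int_0^t\iint\dt\psi\cdot f$, since $\phi(t,\cdot)$ depends on $t$ only through the macroscopic coefficient and hence $\dt\psi$ is driven by $\dt c,\dt\vbb,\dt a$, none of which are a~priori in our norms. The strategy I would use is to eliminate these derivatives with the Boltzmann equation itself: projecting \eqref{linear unsteady} onto $\nk$ and using $\pk\ll=0$ gives $\e^{2}\dt\pk[f]=\pk[S]-\e\pk[\vv\cdot\nx\pk[f]]-\e\pk[\vv\cdot\nx(\ik-\pk)[f]]$, so every occurrence of $\e^{2}\dt(\mathrm{macro})$ can be rewritten in terms of $\pk[S]$, spatial derivatives of $\pk[f]$ (producing only lower-order copies of $\tnnmt{\pk[f]}$ after another spatial integration by parts against $\psi$, which are absorbable on the left), and $(\ik-\pk)[f]$ paired with a bounded factor. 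This substitution converts the troublesome term into contributions already present on the right-hand side of \eqref{htt 12}.

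Assembling everything in squared form, I would arrive at
\begin{equation*}
\e\tnnmt{\pk[f]}^{2}\ls \e^{2}\tnnm{f(t)}^{2}+\e^{2}\tnnm{z}^{2}+\e^{2}\pnmt{(1-\pp)[f]}{\gamma_+}{2}^{2}+\e^{2}\pnmt{h}{\gamma_-}{2}^{2}+\tnnmt{(\ik-\pk)[f]}^{2}+\tnnmt{\snn S}^{2},
\end{equation*}
after absorbing the small copies of $\e\tnnmt{\pk[f]}^{2}$ generated by the substitution for $\dt\psi$ into the left-hand side. Taking square roots and splitting the resulting cross terms with Young's inequality yields the linear bound \eqref{htt 12}. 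The main technical subtlety to watch is consistency of the $\e$-powers when using the equation to replace $\dt(\mathrm{macro})$, since each spatial derivative there carries an $\e^{-1}$ that must be compensated by the $\e^{2}$ prefactor on the time-derivative term before square-rooting.
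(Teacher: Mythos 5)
Your proposal is correct and follows essentially the same route as the paper: time-dependent Green's identity with the stationary ($m=1$) test functions, the temporal boundary terms yielding the $\e^{3/2}\tnnm{f(t)}$ and $\e^{3/2}\tnnm{z}$ contributions, and the kinetic equation itself used to trade $\e^{2}\dt(\mathrm{macro})$ for $\pk[S]$, $\e\times$macro and $\e\times$micro terms. The paper implements that last step in dual form—testing \eqref{linear unsteady} at fixed $t$ against $-\mh\hat\ne_j\,\dt\phi(t)$ via the time-independent Green's identity to bound $\e^{2}\tnnmt{\dt\nx\phi}$ (its Steps 5--7)—which is exactly the weak formulation of your projection-plus-$\nx\dx^{-1}$ substitution, and it then performs the same final absorption you describe.
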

\begin{proof}
Apply Green's identity in Lemma \ref{htt lemma 03} to the equation \eqref{linear unsteady}. Then for any $\psi\in L^2(\rp\times\Omega\times\r^3)$
satisfying $\e\dt\psi+\vv\cdot\nx\psi\in L^2(\rp\times\Omega\times\r^3)$ and $\psi\in L^2(\rp\times\gamma)$, we have
\begin{align}\label{htt 01}
&\e\int_0^t\iint_{\gamma_+}f\psi\ud{\gamma}-\e\int_0^t\iint_{\gamma_-}f\psi\ud{\gamma}
-\e\int_0^t\iint_{\Omega\times\r^3}(\vv\cdot\nx\psi)f\\
=&\e^2\int_0^t\iint_{\Omega\times\r^3}f\dt\psi
-\e^2\iint_{\Omega\times\r^3}f(t)\psi(t)+\e^2\iint_{\Omega\times\r^3}f(0)\psi(0)
-\int_0^t\iint_{\Omega\times\r^3}\psi\ll\Big[(\ik-\pk)[f]\Big]+\int_0^t\iint_{\Omega\times\r^3}S\psi.\no
\end{align}
The proof follows the same idea as in stationary version of Lemma \ref{ktt lemma 1} with $m=1$. Actually, we use almost the same test function $\psi\sim \mh\vv\cdot\nx\phi$ to estimate $a$, $\vbb$ and $c$, where $\phi$ satisfies proper elliptic equations. Hence, we will omit the details and only present the main result. Compared with stationary estimate, the new terms only show up on the right-hand side of \eqref{htt 01}. Using H\"older's inequality, we know
\begin{align}
\abs{\e^2\int_0^t\iint_{\Omega\times\r^3}f\dt\psi}\ls \e^2\tnnmt{f}\tnnmt{\dt\psi}\ls\e^2\tnnmt{\dt f}\tnnmt{\dt\nx\phi}.
\end{align}
In a similar fashion, we have
\begin{align}
\abs{\e^2\iint_{\Omega\times\r^3}f(t)\psi(t)}&\ls \e^2\tnnm{f(t)}\tnnm{\psi(t)}\ls \e^2\tnnm{f(t)}\tnnm{\nx\phi(t)},\\
\abs{\e^2\iint_{\Omega\times\r^3}f(0)\psi(0)}&\ls \e^2\tnnm{f(0)}\tnnm{\psi(0)}\ls \e^2\tnnm{z}\tnnm{\nx\phi(0)}.
\end{align}
\ \\
Step 1: Estimates of $c$.\\
We choose the test function
\begin{align}
\psi=\psi_c=\m^{\frac{1}{2}}(\vv)\left(\abs{\vv}^2-\beta_c\right)\Big(\vv\cdot\nx\phi_c(t,\vx)\Big),
\end{align}
where for fixed $t$,
\begin{align}
\left\{
\begin{array}{l}
-\dx\phi_c=c(t,\vx)\ \ \text{in}\ \
\Omega,\\\rule{0ex}{1.5em}
\phi_c=0\ \ \text{on}\ \ \p\Omega,
\end{array}
\right.
\end{align}
and $\beta_c\in\r$ will be determined as in stationary problem. Based on the standard elliptic estimates (see \cite{Krylov2008}), we have
\begin{align}
\onnm{\phi_c(t)}{H^2}\ls \onnm{c(t)}{L^2}.
\end{align}
Eventually, we have
\begin{align}\label{htt 02}
\e\onnmt{c}{L^{2}}^2\ls&\bigg(\tnnmt{(\ik-\pk)[f]}+\tnnmt{\snn S}+\e\pnmt{(1-\pp)[f]}{\gamma_+}{2}+\e\pnmt{h}{\gamma_-}{2}\bigg)\onnmt{c}{L^2}\\
&+\e^2\bigg(\tnnmt{f}\tnnmt{\dt\nx\phi_c}+\tnnm{f(t)}\onnm{c(t)}{L^2}+\tnnm{z}\onnm{c(0)}{L^2}\bigg).\no
\end{align}
\ \\
Step 2: Estimates of $\vbb$.\\
We further divide this step into several sub-steps:\\
\ \\
Sub-Step 2.1: Estimates of $\bigg(\p_{i}\p_j\dx^{-1}b_j\bigg)b_i$ for $i,j=1,2,3$.\\
Let $\vbb=(b_1,b_2,b_3)$. We choose the test functions for $i,j=1,2,3$,
\begin{align}
\psi=\psi_{b,i,j}=\m^{\frac{1}{2}}(\vv)\left(v_i^2-\beta_{b,i,j}\right)\p_j\phi_{b,j},
\end{align}
where
\begin{align}
\left\{
\begin{array}{l}
-\dx\phi_{b,j}=b_j(t,\vx)\ \ \text{in}\ \ \Omega,\\\rule{0ex}{1.5em}
\phi_{b,j}=0\ \ \text{on}\ \ \p\Omega,
\end{array}
\right.
\end{align}
and $\beta_{b,i,j}\in\r$ will be determined as in stationary problem. Eventually, we obtain
\begin{align}\label{htt 03}
\\
\e\abs{\int_0^t\int_{\Omega}\bigg(\p_{i}\p_j\dx^{-1}b_j\bigg)b_i}
\ls&\bigg(\tnnmt{(\ik-\pk)[f]}+\tnnmt{\snn S}+\e\pnmt{(1-\pp)[f]}{\gamma_+}{2}+\e\pnmt{h}{\gamma_-}{2}\bigg)\onnmt{\vbb}{L^2}\no\\
&+\e^2\bigg(\tnnmt{f}\tnnmt{\dt\nx\phi_{b,j}}+\tnnm{f(t)}\onnm{\vbb(t)}{L^2}+\tnnm{z}\onnm{\vbb(0)}{L^2}\bigg).\no
\end{align}
\ \\
Sub-Step 2.2: Estimates of $\bigg(\p_{i}\p_i\dx^{-1}b_j\bigg)b_j$ for $i\neq j$.\\
Notice that the $i=j$ case is included in Sub-Step 2.1. We choose the test function
\begin{align}
\psi=\tilde\psi_{b,i,j}=\m^{\frac{1}{2}}(\vv)\abs{\vv}^2v_iv_j\p_i\phi_{b,j}\ \ \text{for}\ \ i\neq j.
\end{align}
Eventually, we obtain
\begin{align}\label{htt 04}
\\
\e\abs{\int_0^t\int_{\Omega}\bigg(\p_{i}\p_i\dx^{-1}b_j\bigg)b_j}
\ls&\bigg(\tnnmt{(\ik-\pk)[f]}+\tnnmt{\snn S}+\e\pnmt{(1-\pp)[f]}{\gamma_+}{2}+\e\pnmt{h}{\gamma_-}{2}\bigg)\onnmt{\vbb}{L^2}\no\\
&+\e^2\bigg(\tnnmt{f}\tnnmt{\dt\nx\phi_{b,j}}+\tnnm{f(t)}\onnm{\vbb(t)}{L^2}+\tnnm{z}\onnm{\vbb(0)}{L^2}\bigg).\no
\end{align}
\ \\
Sub-Step 2.3: Synthesis.\\
Summarizing \eqref{htt 03} and \eqref{htt 04}, we may sum up over $j=1,2,3$ to obtain, for any $i=1,2,3$,
\begin{align}
\e\onnmt{b_i}{L^{2}}^{2}
\ls&\bigg(\tnnmt{(\ik-\pk)[f]}+\tnnmt{\snn S}+\e\pnmt{(1-\pp)[f]}{\gamma_+}{2}+\e\pnmt{h}{\gamma_-}{2}\bigg)\onnmt{\vbb}{L^2}\\
&+\e^2\bigg(\tnnmt{f}\sum_{j=1}^3\tnnmt{\dt\nx\phi_{b,j}}+\tnnm{f(t)}\onnm{\vbb(t)}{L^2}+\tnnm{z}\onnm{\vbb(0)}{L^2}\bigg),\no
\end{align}
which further implies
\begin{align}\label{htt 05}
\e\onnmt{\vbb}{L^{2}}^2\ls&\bigg(\tnnmt{(\ik-\pk)[f]}+\tnnmt{\snn S}+\e\pnmt{(1-\pp)[f]}{\gamma_+}{2}+\e\pnmt{h}{\gamma_-}{2}\bigg)\onnmt{\vbb}{L^2}\\
&+\e^2\bigg(\tnnmt{f}\sum_{j=1}^3\tnnmt{\dt\nx\phi_{b,j}}+\tnnm{f(t)}\onnm{\vbb(t)}{L^2}+\tnnm{z}\onnm{\vbb(0)}{L^2}\bigg).\no
\end{align}
\ \\
Step 3: Estimates of $a$.\\
We choose the test function
\begin{align}
\psi=\psi_a=\m^{\frac{1}{2}}(\vv)\left(\abs{\vv}^2-\beta_a\right)\Big(\vv\cdot\nx\phi_a(t,\vx)\Big),
\end{align}
where
\begin{align}
\left\{
\begin{array}{l}
-\dx\phi_a=a(t,\vx)\ \ \text{in}\ \ \Omega,\\\rule{0ex}{1.5em}
\dfrac{\p\phi_a}{\p\vn}=0\ \ \text{on}\ \ \p\Omega,
\end{array}
\right.
\end{align}
and $\beta_a$ is a real number to be determined as in stationary problem. Eventually, we get
\begin{align}\label{htt 06}
\e\onnmt{a}{L^{2}}^2\ls&\bigg(\tnnmt{(\ik-\pk)[f]}+\tnnmt{\snn S}+\e\pnmt{(1-\pp)[f]}{\gamma_+}{2}+\e\pnmt{h}{\gamma_-}{2}\bigg)\onnmt{a}{L^2}\\
&+\e^2\bigg(\tnnmt{f}\tnnmt{\dt\nx\phi_a}+\tnnm{f(t)}\onnm{a(t)}{L^2}+\tnnm{z}\onnm{a(0)}{L^2}\bigg).\no
\end{align}
\ \\
Step 4: First Synthesis.\\
Collecting \eqref{htt 02}, \eqref{htt 06} and \eqref{htt 06}, we deduce
\begin{align}\label{htt 07}
\e\tnnmt{\pk[f]}^2\ls&\bigg(\tnnmt{(\ik-\pk)[f]}+\tnnmt{\snn S}+\e\pnmt{(1-\pp)[f]}{\gamma_+}{2}+\e\pnmt{h}{\gamma_-}{2}\bigg)\tnnmt{\pk[f]}\\
&+\e^2\tnnmt{f}\bigg(\tnnmt{\dt\nx\phi_a}+\sum_{j=1}^3\tnnmt{\dt\nx\phi_{b,j}}+\tnnmt{\dt\nx\phi_c}\bigg)+\e^2\tnnm{f(t)}^2+\e^2\tnnm{z}^2.\no
\end{align}
In order to close the proof, we must bound $\tnnmt{\dt\nx\phi_a}$, $\tnnmt{\dt\nx\phi_{b,j}}$ and $\tnnmt{\dt\nx\phi_c}$.

Apply Green's identity in Lemma \ref{ktt 03} to the equation \eqref{linear unsteady}. Then for any $\psi\in L^2(\Omega\times\r^3)$ independent of time $t$
satisfying $\vv\cdot\nx\psi\in L^2(\Omega\times\r^3)$ and $\psi\in L^2(\gamma)$, we have
\begin{align}\label{htt 08}
\e^2\iint_{\Omega\times\r^3}\dt f(t)\psi=&-\e\iint_{\gamma_+}f(t)\psi\ud{\gamma}+\e\iint_{\gamma_-}f(t)\psi\ud{\gamma}
+\e\iint_{\Omega\times\r^3}(\vv\cdot\nx\psi)f(t)\\
&-\iint_{\Omega\times\r^3}\psi\ll\Big[(\ik-\pk)[f](t)\Big]+\iint_{\Omega\times\r^3}S(t)\psi.\no
\end{align}
\ \\
Step 5: Estimate of $\dt\nx\phi_c$.\\
For fixed $t$, taking $\psi=-\m^{\frac{1}{2}}\dfrac{\abs{\vv}^2-3}{2}\dt\phi_c(t)$, using integration by parts, we have
\begin{align}
\e^2\iint_{\Omega\times\r^3}\dt f(t)\psi&=-\e^2\iint_{\Omega\times\r^3}\dt f(t)\m^{\frac{1}{2}}\frac{\abs{\vv}^2-3}{2}\dt\phi_c(t)=-\e^2\int_{\Omega}\dt c(t)\dt\phi_c(t)\\
&=-\e^2\int_{\Omega}\Delta_x\dt\phi_c(t)\dt\phi_c(t)=\e^2\int_{\Omega}\abs{\dt\nx\phi_c(t)}^2=\onnm{\dt\nx\phi_c(t)}{L^2}^2.\no
\end{align}
Following a similar argument as in Step 1 - Step 3, we have
\begin{align}\label{htt 09}
\e^2\tnnmt{\dt\nx\phi_c}\ls\e\onnmt{\vbb}{L^2}+\e\tnnmt{(\ik-\pk)[f]}+\tnnmt{\snn S}.
\end{align}
\ \\
Step 6: Estimate of $\dt\nx\phi_{b,j}$.\\
For fixed $t$, taking $\psi=-\m^{\frac{1}{2}}v_j\dt\phi_{b,j}(t)$, using integration by parts, we have
\begin{align}
\e^2\iint_{\Omega\times\r^3}\dt f(t)\psi&=-\e^2\iint_{\Omega\times\r^3}\dt f(t)\m^{\frac{1}{2}}v_j\dt\phi_{b,j}(t)=-\e^2\int_{\Omega}\dt \phi_{b,j}(t)\dt\phi_{b,j}(t)\\
&=-\e^2\int_{\Omega}\Delta_x\dt\phi_{b,j}(t)\dt\phi_b^i(t)=\e^2\int_{\Omega}\abs{\dt\nx\phi_{b,j}(t)}^2=\onnm{\dt\nx\phi_{b,j}}{L^2}^2.\no
\end{align}
Following a similar argument as in Step 1 - Step 3, we have
\begin{align}\label{htt 10}
\e^2\tnnmt{\dt\nx\phi_{b,j}}\ls \e\onnmt{a}{L^2}+\e\onnmt{c}{L^2}+\e\tnnmt{(\ik-\pk)[f]}+\tnnmt{\snn S}.
\end{align}
\ \\
Step 7: Estimate of $\dt\nx\phi_a$.\\
For fixed $t$, taking $\psi=-\m^{\frac{1}{2}}\dt\phi_a(t)$, using integration by parts, we have
\begin{align}
\e^2\iint_{\Omega\times\r^3}\dt f(t)\psi&=-\e^2\iint_{\Omega\times\r^3}\dt f(t)\m^{\frac{1}{2}}\dt\phi_a(t)=-\e^2\int_{\Omega}\dt a(t)\dt\phi_a(t)\\
&=-\e^2\int_{\Omega}\Delta_x\dt\phi_a(t)\dt\phi_a(t)=\e^2\int_{\Omega}\abs{\dt\nx\phi_a(t)}^2=\e^2\onnm{\dt\nx\phi_a(t)}{L^2}^2.\no
\end{align}
Following a similar argument as in Step 1 - Step 3, we have
\begin{align}\label{htt 11}
\e^2\tnnmt{\dt\nx\phi_a}\ls\e\onnmt{\vbb}{L^2}+\tnnmt{\snn S}.
\end{align}
\ \\
Step 8: Second Synthesis.\\
Inserting \eqref{htt 09}, \eqref{htt 10} and \eqref{htt 11} into \eqref{htt 07}, we have
\begin{align}
\e\tnnmt{\pk[f]}^2\ls&\bigg(\tnnmt{(\ik-\pk)[f]}+\tnnmt{\snn S}+\e\pnmt{(1-\pp)[f]}{\gamma_+}{2}+\e\pnmt{h}{\gamma_-}{2}\bigg)\tnnmt{\pk[f]}\\
&+\tnnmt{f}\bigg(\e\tnnmt{\pk[f]}+\e\tnnmt{(\ik-\pk)[f]}+\tnnmt{\snn S}\bigg)+\e^2\tnnm{f(t)}^2+\e^2\tnnm{z}^2.\no
\end{align}
Applying Cauchy's inequality, we have
\begin{align}
\e\tnnmt{\pk[f]}^2\ls&o(1)\e\tnnmt{\pk[f]}^2+\e^2\tnnm{f(t)}^2+\e\pnmt{(1-\pp)[f]}{\gamma_+}{2}^2+\frac{1}{\e}\tnnmt{(\ik-\pk)[f]}^2\\
&+\frac{1}{\e}\tnnmt{\snn S}^2+\e^2\tnnm{z}^2+\e\pnmt{h}{\gamma_-}{2}^2.\no
\end{align}
Hence, absorbing $o(1)\e\tnnmt{\pk[f]}^2$ into the left-hand side, we have
\begin{align}
\e\tnnmt{\pk[f]}\ls&\e^{\frac{3}{2}}\tnnm{f(t)}+\e\pnmt{(1-\pp)[f]}{\gamma_+}{2}+\tnnmt{(\ik-\pk)[f]}\\
&+\tnnmt{\snn S}+\e^{\frac{3}{2}}\tnnm{z}+\e\pnmt{h}{\gamma_-}{2}.\no
\end{align}
This completes our proof.
\end{proof}

\begin{theorem}\label{LT estimate'}
Assume \eqref{linear unsteady compatibility} and \eqref{linear unsteady normalization} hold. The solution $f(t,\vx,\vv)$ to the equation \eqref{linear unsteady} satisfies
\begin{align}\label{htt 25}
&\tnnm{f(t)}+\frac{1}{\e^{\frac{1}{2}}}\tsmt{(1-\pp)[f]}{\gamma_+}+\frac{1}{\e}\unnmt{(\ik-\pk)[f]}+\tnnmt{\pk[f]}\\
\ls& \frac{1}{\e^2}\tnnmt{\pk[S]}+\frac{1}{\e}\tnnmt{\snn(\ik-\pk)[S]}+\frac{1}{\e}\tsmt{h}{\gamma_-}+\tnnm{z}.\no
\end{align}
\end{theorem}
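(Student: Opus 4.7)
The plan is to mirror the argument of Theorem \ref{LN estimate} from the stationary setting, but with Lemma \ref{htt lemma 03} (time-dependent Green's identity) and Lemma \ref{htt lemma 04} (evolutionary macro estimate) playing the roles of their stationary counterparts. The evolutionary problem introduces two new bookkeeping burdens on top of the stationary proof: the presence of the initial datum $z$, and an instantaneous $\tnnm{f(t)}$ coming from the time boundary of Green's identity. The matching of $\e$-powers must be arranged so that the final inequality is stated in the unweighted $\tnnm{f(t)}$ and $\tnnm{z}$ as displayed.

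First, I multiply the equation by $f$ and apply Lemma \ref{htt lemma 03} on $[0,t]$. Expanding $f|_{\gamma}=\pp[f]\id_\gamma+(1-\pp)[f]\id_{\gamma_+}+h\id_{\gamma_-}$, using $\tsm{\pp[f]}{\gamma_+}=\tsm{\pp[f]}{\gamma_-}$ for the reflection-invariant diffuse kernel, and invoking the coercivity $\int_0^t\br{f,\ll[f]}\gs\unnmt{(\ik-\pk)[f]}^2$ from Lemma \ref{htt lemma 05}, I obtain
\begin{align*}
\e^{2}\tnnm{f(t)}^{2}+\e\tsmt{(1-\pp)[f]}{\gamma_+}^{2}+\unnmt{(\ik-\pk)[f]}^{2}
\ls \e^{2}\tnnm{z}^{2}+\tsmt{h}{\gamma_-}^{2}+\eta\e^{2}\tsmt{\pp[f]}{\gamma_+}^{2}+\Bigl|\!\int_0^t\!\!\iint fS\Bigr|,
\end{align*}
after Cauchy's inequality absorbs the cross term $\e\int h\,\pp[f]$.

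Next, I control $\tsmt{\pp[f]}{\gamma_+}$ by the interior norms. Multiplying the equation by $f$ again yields $\e\dt(f^{2})+\vv\cdot\nx(f^{2})\ls \e^{-1}\bigl(-f\ll[f]+fS\bigr)$, so Lemma \ref{htt lemma 02} applied to $f^{2}$ gives
\begin{align*}
\int_0^t\pnm{\id_{\gamma\backslash\gamma^{\d}}f}{\gamma}{2}^{2}
\ls C(\d)\Bigl(\e\tnnm{f(0)}^{2}+\tnnmt{f}^{2}+\e^{-1}\tnnmt{(\ik-\pk)[f]}^{2}+\e^{-1}|\!\int_0^t\!\!\iint fS|\Bigr).
\end{align*}
The argument exactly as in Step 2 of Theorem \ref{LN estimate} (the near-grazing set $\gamma^{\d}$ has small Maxwellian measure, and $\pp[f]=z(\vx)\mh$) then yields
\begin{align*}
\tsmt{\pp[f]}{\gamma_+}^{2}\ls \tnnmt{\pk[f]}^{2}+\e^{-1}\tnnmt{(\ik-\pk)[f]}^{2}+\e\tnnm{z}^{2}+\e^{-1}\Bigl|\!\int_0^t\!\!\iint fS\Bigr|.
\end{align*}
Substituting this back into the energy inequality with $\eta$ small, the $\eta\e^{2}\tsmt{\pp[f]}{\gamma_+}^{2}$ on the right contributes $\eta\e^{2}\tnnmt{\pk[f]}^{2}$, which will be absorbed once I bring in Lemma \ref{htt lemma 04}.

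Finally, squaring Lemma \ref{htt lemma 04} gives
\begin{align*}
\e^{2}\tnnmt{\pk[f]}^{2}\ls \e^{3}\tnnm{f(t)}^{2}+\e^{2}\tsmt{(1-\pp)[f]}{\gamma_+}^{2}+\tnnmt{(\ik-\pk)[f]}^{2}+\tnnmt{\snn S}^{2}+\e^{3}\tnnm{z}^{2}+\e^{2}\tsmt{h}{\gamma_-}^{2}.
\end{align*}
I add a small multiple of this to the combined energy/trace estimate; the key point is that $\e^{3}\tnnm{f(t)}^{2}$ is absorbed by $\e^{2}\tnnm{f(t)}^{2}$ on the left for $\e$ small, and $\e^{2}\tsmt{(1-\pp)[f]}{\gamma_+}^{2}$ is absorbed by $\e\tsmt{(1-\pp)[f]}{\gamma_+}^{2}$. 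Splitting $\int fS=\iint\pk[f]\pk[S]+\iint(\ik-\pk)[f](\ik-\pk)[S]$ and using Cauchy's inequality to absorb $o(1)\tnnmt{\pk[f]}^{2}$ and $o(1)\unnmt{(\ik-\pk)[f]}^{2}$, I arrive at
\begin{align*}
\e^{2}\tnnm{f(t)}^{2}+\e\tsmt{(1-\pp)[f]}{\gamma_+}^{2}+\unnmt{(\ik-\pk)[f]}^{2}+\e^{2}\tnnmt{\pk[f]}^{2}\ls\tnnmt{\pk[S]}^{2}+\e^{2}\tnnmt{\snn(\ik-\pk)[S]}^{2}+\e^{2}\tsmt{h}{\gamma_-}^{2}+\e^{4}\tnnm{z}^{2},
\end{align*}
from which \eqref{htt 25} follows by taking square roots and dividing by appropriate powers of $\e$. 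The main obstacle is bookkeeping: one must verify that every power of $\e$ lines up so that the instantaneous term $\tnnm{f(t)}^{2}$, which appears with factor $\e^{3}$ when squaring Lemma \ref{htt lemma 04} but only $\e^{2}$ in the energy identity, is indeed absorbable; this is precisely where the choice of time weight $\e^{2}\dt$ (as opposed to $\e\dt$) in \eqref{small system.} is essential.
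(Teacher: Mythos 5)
Your proposal follows the paper's own proof essentially line by line: the energy estimate via the time-dependent Green's identity (Lemma \ref{htt lemma 03}), the near-grazing control of $\tsmt{\pp[f]}{\gamma_+}$ obtained by applying Lemma \ref{htt lemma 02} to $f^2$, the squared macroscopic estimate of Lemma \ref{htt lemma 04} added with a small constant, and the $\pk$/$(\ik-\pk)$ splitting of $\int_0^t\iint fS$ with Cauchy's inequality; the absorption of $\e^3\tnnm{f(t)}^2$ into $\e^2\tnnm{f(t)}^2$ is indeed the point where the $\e^2\dt$ scaling is used, exactly as in the paper.

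The one concrete defect is your final displayed inequality: its right-hand side is too strong by a factor of $\e^2$ and does not follow from the steps you describe (it is in fact false; with $S=0$, $h=0$ it would assert $\tnnm{f(t)}\ls\e\tnnm{z}$, contradicting $\tnnm{f(0)}=\tnnm{z}$ and continuity in time). Concretely, to absorb the macroscopic part of the source one must split $\iint\pk[f]\pk[S]\ls o(1)\e^2\tnnmt{\pk[f]}^2+\e^{-2}\tnnmt{\pk[S]}^2$, which leaves $\e^{-2}\tnnmt{\pk[S]}^2$ (not $\tnnmt{\pk[S]}^2$) on the right; likewise the microscopic part of $S$ enters as $\tnnmt{\snn(\ik-\pk)[S]}^2$, the boundary cross term contributes $\tsmt{h}{\gamma_-}^2$ at order one, and the initial data enters as $\e^2\tnnm{z}^2$ (from the energy identity) plus $\e^3\tnnm{z}^2$ (from squaring Lemma \ref{htt lemma 04}). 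The correct synthesis is therefore
\begin{align}
\e^2\tnnm{f(t)}^2+\e\tsmt{(1-\pp)[f]}{\gamma_+}^2+\unnmt{(\ik-\pk)[f]}^2+\e^2\tnnmt{\pk[f]}^2
\ls \frac{1}{\e^2}\tnnmt{\pk[S]}^2+\tnnmt{\snn(\ik-\pk)[S]}^2+\tsmt{h}{\gamma_-}^2+\e^2\tnnm{z}^2,\no
\end{align}
i.e.\ the paper's \eqref{htt 24}, and \eqref{htt 25} follows by dividing by $\e^2$ (not $\e^4$) and taking square roots. Since all your intermediate inequalities carry the correct powers, this is a repairable bookkeeping slip in the last combination rather than a gap in the method.
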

\begin{proof}
\ \\
Step 1: Energy Estimate.\\
Multiplying $f$ on both sides of \eqref{linear unsteady} and applying Green's identity in Lemma \ref{htt lemma 03} imply
\begin{align}\label{htt 13}
\frac{\e^2}{2}\tnnm{f(t)}^2+\frac{\e}{2}\tsmt{f}{\gamma_+}^2-\frac{\e}{2}\tsmt{\pp[f]+h}{\gamma_-}^2+\int_0^t\int_{\Omega\times\r^3}f\ll[f]
=&\frac{\e^2}{2}\tnnm{z}^2+\int_0^t\int_{\Omega\times\r^3}fS.
\end{align}
Direct computation reveals that
\begin{align}\label{htt 14}
\frac{\e}{2}\tsmt{f}{\gamma_+}^2-\frac{\e}{2}\tsmt{\pp[f]+h}{\gamma_-}^2
&=\frac{\e}{2}\tsmt{f}{\gamma_+}^2-\frac{\e}{2}\tsmt{\pp[f]}{\gamma_-}^2-\frac{\e}{2}\tsmt{h}{\gamma_-}^2+\e\int_0^t\int_{\gamma_-}h\pp[f]\ud\gamma\\
&=\frac{\e}{2}\tsmt{(1-\pp)[f]}{\gamma_+}^2-\frac{\e}{2}\tsmt{h}{\gamma_-}^2+\e\int_0^t\int_{\gamma_-}h\pp[f]\ud\gamma\no\\
&\gs \e\tsmt{(1-\pp)[f]}{\gamma_+}^2-\frac{1}{\eta}\tsmt{h}{\gamma_-}^2-\e^2\eta\tsmt{\pp[f]}{\gamma_+},\no
\end{align}
where $0<\eta<<1$ will be determined later. On the other hand, based on Lemma \ref{htt lemma 05}, we know
\begin{align}\label{htt 15}
\int_0^t\int_{\Omega\times\r^3}f\ll[f]\gs \unnmt{(\ik-\pk)[f]}^2.
\end{align}
Inserting \eqref{htt 14} and \eqref{htt 15} into \eqref{htt 13}, we have
\begin{align}\label{htt 16}
\\
\e^2\tnnm{f(t)}^2+\e\tsmt{(1-\pp)[f]}{\gamma_+}^2+\unnmt{(\ik-\pk)[f]}^2
\ls \eta\e^2\tsmt{\pp[f]}{\gamma_+}+\e^2\tnnm{z}^2+\frac{1}{\eta}\tsmt{h}{\gamma_-}^2+\int_0^t\int_{\Omega\times\r^3}fS.\no
\end{align}
\ \\
Step 2: $\tsmt{\pp[f]}{\gamma_+}$.\\
Multiplying $f$ on both sides of the equation \eqref{linear unsteady}, we have
\begin{align}\label{htt 17}
\e\dt(f^2)+\vv\cdot\nx(f^2)=\frac{2}{\e}\Big(-f\ll[f]+fS\Big).
\end{align}
Taking absolute value and integrating \eqref{htt 17} over $[0,t]\times\Omega\times\r^3$, using Lemma \ref{htt lemma 05}, we deduce
\begin{align}
\pnnmt{\e\dt(f^2)+\vv\cdot\nx(f^2)}{1}\ls&\frac{1}{\e}\bigg(\tnnmt{(\ik-\pk)[f]}^2+\abs{\int_0^t\int_{\Omega\times\r^3}fS}\bigg).
\end{align}
On the other hand, applying Lemma \ref{htt lemma 02} to $f^2$, for near grazing set $\gamma^{\d}$, we have
\begin{align}\label{htt 19}
\pnmt{\id_{\gamma\backslash\gamma^{\d}}f}{\gamma}{2}^2&=\pnmt{\id_{\gamma\backslash\gamma^{\d}}f^2}{\gamma}{1}
\leq
C(\d)\left(\e\pnnm{z^2}{1}+\pnnmt{f^2}{1}+\pnnmt{\e\dt(f^2)+\vv\cdot\nx(f^2)}{1}\right)\\
&=C(\d)\left(\e\tnnm{z}^2+\tnnmt{f}^2+\pnnmt{\e\dt(f^2)+\vv\cdot\nx(f^2)}{1}\right)\no\\
&\ls C(\d)\left(\e\tnnm{z}^2+\tnnmt{f}^2+\frac{1}{\e}\tnnmt{(\ik-\pk)[f]}^2+\frac{1}{\e}\abs{\int_0^t\int_{\Omega\times\r^3}fS}\right).\no
\end{align}
We can rewrite $\pp[f](t,\vx_0,\vv)=y(t,\vx)\mh(\vv)$. Then for $\d$ small, we deduce
\begin{align}\label{htt 18}
\tsmt{\pp[\id_{\gamma\backslash\gamma^{\d}}f]}{\gamma}^2
=&\int_0^t\int_{\p\Omega}\abs{y(t,\vx)}^2
\left(\int_{\vv\cdot\vn(\vx)\geq\d,\d\leq\abs{\vv}\leq\d^{-1}}\m(\vv)\abs{\vv\cdot\vn(\vx)}\ud{\vv}\right)\ud{\vx}\\
\geq&\half\left(\int_0^t\int_{\p\Omega}\abs{y(t,\vx)}^2\right)\left(\int_{\gamma_+}\m(\vv)\abs{\vv\cdot\vn(\vx)}\ud{\vv}\right)=\half\tsmt{\pp[f]}{\gamma_+}^2,\no
\end{align}
where we utilize the bounds that
\begin{align}
\int_{\vv\cdot\vn(\vx)\leq\d}\m(\vv)\abs{\vv\cdot\vn(\vx)}\ud{\vv}\ls&\d,\\
\int_{\abs{\vv}\leq\d\ \text{or}\ \abs{\vv}\geq\d^{-1}}\m(\vv)\abs{\vv\cdot\vn(\vx)}\ud{\vv}\ls&\d.
\end{align}
Therefore, from \eqref{htt 18} and the fact
\begin{align}
\tsmt{\pp[\id_{\gamma\backslash\gamma^{\d}}f]}{\gamma_+}\ls
\tsmt{\id_{\gamma\backslash\gamma^{\d}}f}{\gamma_+}\ls \tsmt{\id_{\gamma\backslash\gamma^{\d}}f}{\gamma},
\end{align}
we conclude
\begin{align}
\tsmt{\pp[f]}{\gamma_+}^2\ls \tsmt{\pp[\id_{\gamma\backslash\gamma^{\d}}f]}{\gamma_+}\ls\tsmt{\id_{\gamma\backslash\gamma^{\d}}f}{\gamma}.
\end{align}
Considering \eqref{htt 19}, we have
\begin{align}
\tsmt{\pp[f]}{\gamma_+}^2\ls&C(\d)\left(\e\tnnm{z}^2+\tnnmt{f}^2+\frac{1}{\e}\tnnmt{(\ik-\pk)[f]}^2+\frac{1}{\e}\abs{\int_0^t\int_{\Omega\times\r^3}fS}\right).
\end{align}
For fixed $0<\d<<1$ and using $f=\pk[f]+(\ik-\pk)[f]$, we obtain
\begin{align}\label{htt 20}
\tsmt{\pp[f]}{\gamma_+}^2\ls&\e\tnnm{z}^2+\tnnmt{\pk[f]}^2+\frac{1}{\e}\tnnmt{(\ik-\pk)[f]}^2+\frac{1}{\e}\abs{\int_0^t\int_{\Omega\times\r^3}fS}.
\end{align}
\ \\
Step 3: Synthesis.\\
Plugging \eqref{htt 20} into \eqref{htt 16} with $\e$ sufficiently small to absorb $\tnnmt{(\ik-\pk)[f]}^2$ into the left-hand side, we obtain
\begin{align}\label{htt 21}
\\
\e^2\tnnm{f(t)}^2+\e\tsmt{(1-\pp)[f]}{\gamma_+}^2+\unnmt{(\ik-\pk)[f]}^2
\ls \eta\e^2\tnnmt{\pk[f]}^2+\e^2\tnnm{z}^2+\frac{1}{\eta}\tsmt{h}{\gamma_-}^2+\abs{\int_0^t\int_{\Omega\times\r^3}fS}.\no
\end{align}
We square on both sides of \eqref{htt 12} to obtain
\begin{align}\label{htt 22}
\\
\e^2\tnnmt{\pk[f]}^2\ls&\e^{3}\tnnm{f(t)}^2+\e^2\pnmt{(1-\pp)[f]}{\gamma_+}{2}^2+\tnnmt{(\ik-\pk)[f]}^2
+\tnnmt{\snn S}^2+\e^{3}\tnnm{z}^2+\e^2\pnmt{h}{\gamma_-}{2}^2.\no
\end{align}
Multiplying a small constant on both sides of \eqref{htt 22} and adding to \eqref{htt 21} with $\eta$ sufficiently small to absorb $\e^2\pnmt{(1-\pp)[f]}{\gamma_+}{2}^2$, $\tnnmt{(\ik-\pk)[f]}^2$, $\e^{3}\tnnm{f(t)}^2$ and $\eta\e^2\tnnmt{\pk[f]}^2$ into the left-hand side, we obtain
\begin{align}\label{htt 24}
&\e^2\tnnm{f(t)}^2+\e\tsmt{(1-\pp)[f]}{\gamma_+}^2+\unnmt{(\ik-\pk)[f]}^2+\e^2\tnnmt{\pk[f]}^2\\
\ls& \tsmt{h}{\gamma_-}^2+\e^2\tnnm{z}^2+\tnnmt{\snn S}^2+\abs{\int_0^t\int_{\Omega\times\r^3}fS}.\no
\end{align}
Applying Cauchy's inequality, we have
\begin{align}\label{htt 23}
\abs{\int_0^t\int_{\Omega\times\r^3}fS}\ls &\abs{\int_0^t\int_{\Omega\times\r^3}(\ik-\pk)[f](\ik-\pk)[S]}+\abs{\int_0^t\int_{\Omega\times\r^3}\pk[f]\pk[S]}\\
\ls&o(1)\unnmt{(\ik-\pk)[f]}^2+\tnnmt{\snn(\ik-\pk)[S]}^2+o(1)\e^2\tnnmt{\pk[f]}^2+\frac{1}{\e^2}\tnnmt{\pk[S]}^2.\no
\end{align}
Inserting \eqref{htt 23} into \eqref{htt 24} to absorb $o(1)\tnnmt{(\ik-\pk)[f]}^2$ and $o(1)\e^2\tnnmt{\pk[f]}^2$ into the left-hand side, we obtain
\begin{align}\label{htt 24}
&\e^2\tnnm{f(t)}^2+\e\tsmt{(1-\pp)[f]}{\gamma_+}^2+\unnmt{(\ik-\pk)[f]}^2+\e^2\tnnmt{\pk[f]}^2\\
\ls& \frac{1}{\e^2}\tnnmt{\pk[S]}^2+\tnnmt{\snn(\ik-\pk)[S]}^2+\tsmt{h}{\gamma_-}^2+\e^2\tnnm{z}^2.\no
\end{align}
Hence, our desired result naturally follows.
\end{proof}

\begin{corollary}
Since \eqref{linear unsteady} is a linear equation, taking time derivative on both sides, we know $\dt f$ satisfies
\begin{align}\label{linear unsteady'}
\left\{
\begin{array}{l}
\e^2\dt(\dt f)+\e\vv\cdot\nx(\dt f)+\ll[\dt f]=\dt S(t,\vx,\vv)\ \ \text{in}\ \ \rp\times\Omega\times\r^3,\\\rule{0ex}{1.5em}
\dt f(0,\vx,\vv)=-\dfrac{1}{\e^2}\ll[z(\vx,\vv)]-\dfrac{1}{\e}\vv\cdot\nx z(\vx,\vv)+\dfrac{1}{\e^2}S(0,\vx,\vv)\ \ \text{in}\ \ \Omega\times\r^3,\\\rule{0ex}{1.5em}
\dt f(t,\vx_0,\vv)=\pp[\dt f](t,\vx_0,\vv)+\dt h(t,\vx_0,\vv)\ \ \text{on}\ \ \rp\times\gamma_-,
\end{array}
\right.
\end{align}
where we solve the initial data $\dt f(0,\vx,\vv)$ from \eqref{linear unsteady}. Then applying Lemma \ref{LT estimate'} to \eqref{linear unsteady'}, we obtain
\begin{align}\label{htt 26}
&\tnnm{\dt f(t)}+\frac{1}{\e^{\frac{1}{2}}}\tsmt{(1-\pp)[\dt f]}{\gamma_+}+\frac{1}{\e}\unnmt{(\ik-\pk)[\dt f]}+\tnnmt{\pk[\dt f]}\\
\ls& \frac{1}{\e^2}\tnnmt{\pk[\dt S]}+\frac{1}{\e}\tnnmt{\snn(\ik-\pk)[\dt S]}+\frac{1}{\e}\tsmt{\dt h}{\gamma_-}
+\frac{1}{\e^2}\tnnm{\nu z}+\frac{1}{\e}\tnnm{\vv\cdot\nx z}+\frac{1}{\e^2}\tnnm{S(0)}.\no
\end{align}
\end{corollary}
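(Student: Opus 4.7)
The plan is to treat this as a direct corollary: we differentiate the linear equation in time, read off a new linear equation of exactly the same type satisfied by $\dt f$, and apply Theorem \ref{LT estimate'} verbatim to $\dt f$. The only real content is reducing the initial-data contribution $\tnnm{\dt f(0)}$ on the right-hand side to the stated expression in $z$ and $S(0)$, and checking that the compatibility/normalization hypotheses transfer.

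First I would differentiate \eqref{linear unsteady} in $t$. Because the operators $\dt$, $\e\vv\cdot\nx$ and $\ll$ all commute and have coefficients independent of $t$, and because $\pp$ does not involve $t$-dependence either, one reads off immediately the interior equation $\e^2\dt(\dt f)+\e\vv\cdot\nx(\dt f)+\ll[\dt f]=\dt S$ and the boundary condition $\dt f=\pp[\dt f]+\dt h$ on $\rp\times\gamma_-$. For the initial datum, evaluate the original interior equation at $t=0$ and solve for $\dt f(0)$:
\begin{align*}
\dt f(0,\vx,\vv)=-\frac{1}{\e^2}\ll[z](\vx,\vv)-\frac{1}{\e}\vv\cdot\nx z(\vx,\vv)+\frac{1}{\e^2}S(0,\vx,\vv).
\end{align*}
This yields exactly the system \eqref{linear unsteady'}.

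Next I would verify that the hypotheses of Theorem \ref{LT estimate'} apply to $\dt f$. Differentiating the compatibility condition \eqref{linear unsteady compatibility} in $t$ gives
$\iint_{\Omega\times\r^3}\dt S\,\mh+\int_{\gamma_-}\dt h\,\mh\,\ud\gamma=0$, which is the corresponding compatibility condition for \eqref{linear unsteady'}. The induced normalization $\iint_{\Omega\times\r^3}\mh\,\dt f(0)=0$ follows by testing the formula for $\dt f(0)$ against $\mh$: the $\ll[z]$ term drops because $\mh\in\nk$ and $\ll$ is self-adjoint, the streaming term $\iint\mh\vv\cdot\nx z$ is handled by Green's identity together with the boundary condition $z=\pp[z]+h(0,\cdot)$ on $\gamma_-$ and the compatibility of $z,S(0),h(0,\cdot)$, and the $S(0)$ term is controlled by the compatibility hypothesis at $t=0$. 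With these verified, Theorem \ref{LT estimate'} applied to $\dt f$ produces
\begin{align*}
&\tnnm{\dt f(t)}+\e^{-1/2}\tsmt{(1-\pp)[\dt f]}{\gamma_+}+\e^{-1}\unnmt{(\ik-\pk)[\dt f]}+\tnnmt{\pk[\dt f]}\\
\ls&\ \e^{-2}\tnnmt{\pk[\dt S]}+\e^{-1}\tnnmt{\snn(\ik-\pk)[\dt S]}+\e^{-1}\tsmt{\dt h}{\gamma_-}+\tnnm{\dt f(0)}.
\end{align*}

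Finally, I would estimate $\tnnm{\dt f(0)}$ from its explicit formula. The two streaming/source pieces give directly $\e^{-1}\tnnm{\vv\cdot\nx z}+\e^{-2}\tnnm{S(0)}$. For the collision piece, since $\ll=\nu I-K$ and since $K$ is bounded on $L^2(\r^3)$ with $\tnm{K[z]}\ls\tnm{\nu z}$ (from Lemma \ref{Regularity lemma 1} or directly from \cite[Chapter 3]{Glassey1996}), one obtains $\tnnm{\ll[z]}\ls\tnnm{\nu z}$, hence $\e^{-2}\tnnm{\ll[z]}\ls\e^{-2}\tnnm{\nu z}$. Combining yields exactly \eqref{htt 26}. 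The only mild obstacle is the derivation of the normalization $\iint\mh\dt f(0)=0$, which is routine but requires a careful use of Green's identity and the boundary compatibility of the data; everything else is bookkeeping. No new analytic ingredient is needed beyond Theorem \ref{LT estimate'} itself.
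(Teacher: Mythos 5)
Your proposal is correct and follows essentially the same route the paper takes: differentiate the linear system in time (reading off the new initial datum from the equation at $t=0$), apply Theorem \ref{LT estimate'} to $\dt f$, and bound $\tnnm{\dt f(0)}$ via $\tnnm{\ll[z]}\ls\tnnm{\nu z}$. Your extra verification of the transferred compatibility/normalization conditions is a reasonable fleshing-out of what the paper leaves implicit.
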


%%%%%%%%%%%%%%%%%%%%%%%%%%%%%%%%%%%%%%%%%%%%%%%%%%%%%%%%%%%%%%%%%%%%%%%%
\subsection{$L^{2m}$ Estimates}
%%%%%%%%%%%%%%%%%%%%%%%%%%%%%%%%%%%%%%%%%%%%%%%%%%%%%%%%%%%%%%%%%%%%%%%%

Throughout this section, we need $\dfrac{3}{2}<m<3$. Let $o(1)$ denote a sufficiently small constant.

\begin{lemma}\label{htt lemma 07}
Assume \eqref{linear unsteady compatibility} and \eqref{linear unsteady normalization} hold. The solution $f(t,\vx,\vv)$ to the equation \eqref{linear unsteady} satisfies
\begin{align}\label{htt 34}
\e\pnnm{\pk[f(t)]}{2m}\ls&\e\pnm{(1-\pp)[f(t)]}{\gamma_+}{\frac{4m}{3}}+\tnnm{(\ik-\pk)[f(t)]}+\e\pnnm{(\ik-\pk)[f(t)]}{2m}\\
&+\tnnm{\snn S(t)}+\e\pnm{h(t)}{\gamma_-}{\frac{4m}{3}}+\e^2\tnnm{\dt f(t)}.\no
\end{align}
\end{lemma}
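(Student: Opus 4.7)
The plan is to reduce this instantaneous (pointwise-in-time) $L^{2m}$ estimate to the stationary estimate already established in Lemma \ref{ktt lemma 1}. At a fixed time $t$, the equation \eqref{linear unsteady} can be rewritten as
\begin{align*}
\e\vv\cdot\nx f(t)+\ll[f(t)]=\widetilde S(t,\vx,\vv),\qquad \widetilde S(t):=S(t)-\e^2\dt f(t),
\end{align*}
which is structurally identical to the stationary equation \eqref{linear steady} at each fixed $t$, with modified source $\widetilde S(t)$. The boundary condition is the diffusive-reflection one with boundary datum $h(t)$, and the normalization $\iint f(t)\mh=0$ follows from \eqref{linear unsteady normalization}, so all hypotheses of Lemma \ref{ktt lemma 1} hold at each fixed $t$.

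First I would apply the stationary $L^{2m}$ estimate Lemma \ref{ktt lemma 1} directly at time $t$, using exactly the same test functions $\psi_c$, $\psi_{b,i,j}$, $\tilde\psi_{b,i,j}$, $\psi_a$ (which are time-independent, so the time-independent Green's identity Lemma \ref{ktt 03} applies cleanly at each fixed $t$). This yields
\begin{align*}
\e\pnnm{\pk[f(t)]}{2m}\ls&\;\e\pnm{(1-\pp)[f(t)]}{\gamma_+}{\frac{4m}{3}}+\tnnm{(\ik-\pk)[f(t)]}\\
&+\e\pnnm{(\ik-\pk)[f(t)]}{2m}+\tnnm{\snn\widetilde S(t)}+\e\pnm{h(t)}{\gamma_-}{\frac{4m}{3}}.
\end{align*}

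Then I would split $\tnnm{\snn\widetilde S(t)}\leq\tnnm{\snn S(t)}+\e^2\tnnm{\snn\dt f(t)}$ and observe that because $\nu(\vv)\gs\br{\vv}\geq 1$ by Lemma \ref{ktt 01}, we have $\snn\leq C$ uniformly, hence
\begin{align*}
\e^2\tnnm{\snn\dt f(t)}\ls\e^2\tnnm{\dt f(t)}.
\end{align*}
Substituting this back yields exactly \eqref{htt 34}.

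The main (and essentially only) subtle point is ensuring that the normalization requirement used in Step~3 of the proof of Lemma~\ref{ktt lemma 1}, namely $\int_\Omega a(t,\vx)\,\ud\vx=0$, indeed holds at the current time $t$; but this follows from \eqref{linear unsteady normalization}, which in turn is a consequence of the compatibility assumption \eqref{linear unsteady compatibility} together with Green's identity applied to \eqref{linear unsteady} tested against $\mh$. Everything else is a verbatim repetition of the stationary argument applied slicewise in $t$; no new calculation is needed, and no extra issue arises from the boundary reflection operator $\pp$ since its structure is unchanged.
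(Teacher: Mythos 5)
Your proposal is correct and is essentially the paper's own argument: the paper likewise applies the time-independent Green's identity of Lemma \ref{ktt 03} slicewise at fixed $t$ with the same test functions $\psi_c,\psi_{b,i,j},\tilde\psi_{b,i,j},\psi_a$, treating the extra term $-\e^2\iint\dt f(t)\psi(t)$ exactly as you treat $-\e^2\dt f$ inside $\widetilde S(t)$, and bounding it by $\e^2\tnnm{\dt f(t)}\tnnm{\psi(t)}$, which matches your use of $\nu\gs1$ to get $\e^2\tnnm{\snn\dt f(t)}\ls\e^2\tnnm{\dt f(t)}$. The only cosmetic difference is that the paper re-runs the stationary computation with the extra term tracked explicitly rather than invoking Lemma \ref{ktt lemma 1} as a black box (and note your test functions are in fact $t$-dependent through $a(t),\vbb(t),c(t)$, which is harmless since no time integration occurs).
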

\begin{proof}
This is very similar to the proof of Lemma \ref{htt lemma 04} and the stationary version in Lemma \ref{ktt lemma 1}. We apply Green's identity to the equation \eqref{linear unsteady} and choose particular test functions to control $a$, $\vbb$ and $c$. However, there is no simple way to get around the $\dt\nx\phi$ terms as in Step 5 - Step 7 of the proof of Lemma \ref{htt lemma 04}. Here, we resort to stationary techniques, i.e. to use time-independent Green's identity instead of time-dependent one.

Apply Green's identity in Lemma \ref{ktt 03} to the equation \eqref{linear unsteady}. Then for any $\psi(t)\in L^2(\Omega\times\r^3)$
satisfying $\vv\cdot\nx\psi(t)\in L^2(\Omega\times\r^3)$ and $\psi(t)\in L^2(\gamma)$, we have
\begin{align}
&\e\iint_{\gamma_+}f(t)\psi(t)\ud{\gamma}-\e\iint_{\gamma_-}f(t)\psi(t)\ud{\gamma}
-\e\iint_{\Omega\times\r^3}\Big(\vv\cdot\nx\psi(t)\Big)f(t)\\
=&-\iint_{\Omega\times\r^3}\psi(t)\ll\Big[(\ik-\pk)[f](t)\Big]+\iint_{\Omega\times\r^3}S(t)\psi(t)-\e^2\iint_{\Omega\times\r^3}\dt f(t)\psi(t).\no
\end{align}
Then except from $\ds-\e^2\iint_{\Omega\times\r^3}\dt f(t)\psi(t)$, this is exactly the same as the stationary estimates in Lemma \ref{ktt lemma 1}, so we just mimick the proof there and that of Lemma \ref{htt lemma 04}, and point out the major differences. In particular, we always use the bound
\begin{align}
\abs{\e^2\iint_{\Omega\times\r^3}\dt f(t)\psi(t)}\ls \e^2\tnnmt{\dt f(t)}\tnnmt{\psi(t)}.
\end{align}
\ \\
Step 1: Estimates of $c$.\\
We choose the test function
\begin{align}
\psi(t)=\psi_c(t)=\m^{\frac{1}{2}}(\vv)\left(\abs{\vv}^2-\beta_c\right)\Big(\vv\cdot\nx\phi_c(t,\vx)\Big),
\end{align}
where
\begin{align}
\left\{
\begin{array}{l}
-\dx\phi_c(t)=c\abs{c}^{2m-2}(t,\vx)\ \ \text{in}\ \ \Omega,\\\rule{0ex}{1.5em}
\phi_c(t)=0\ \ \text{on}\ \ \p\Omega,
\end{array}
\right.
\end{align}
and $\beta_c\in\r$ will be determined as in stationary problem. Based on the standard elliptic estimates in \cite{Krylov2008}, we have
\begin{align}
\onnm{\phi_c(t)}{W^{2,\frac{2m}{2m-1}}}\ls \onnm{\abs{c(t)}^{2m-1}}{L^{\frac{2m}{2m-1}}}\ls \onnm{c(t)}{L^{2m}}^{2m-1}.
\end{align}
Hence, by Sobolev embedding theorem, we know
\begin{align}
&\tnnm{\psi_c(t)}\ls\onnm{\phi_c}{H^1}\ls\onnm{\phi_c(t)}{W^{2,\frac{2m}{2m-1}}}\ls\onnm{c(t)}{L^{2m}}^{2m-1},\\
&\onnm{\phi_c(t)}{W^{1,\frac{2m}{2m-1}}}\ls\onnm{\phi_c(t)}{W^{2,\frac{2m}{2m-1}}}\ls\onnm{c(t)}{L^{2m}}^{2m-1}.
\end{align}
Also, for $1\leq m\leq 3$, using Sobolev embedding theorem and trace estimates, we have
\begin{align}
\onm{\nx\phi_c(t)}{L^{\frac{4m}{4m-3}}}&\ls\onm{\nx\phi_c(t)}{W^{\frac{1}{2m},\frac{2m}{2m-1}}}\ls\onnm{\nx\phi_c(t)}{W^{1,\frac{2m}{2m-1}}}\\
&\ls \onnm{\phi_c(t)}{W^{2,\frac{2m}{2m-1}}}\ls\onnm{c(t)}{L^{2m}}^{2m-1}.\no
\end{align}
Eventually, we have
\begin{align}\label{htt 29}
\e\onnm{c(t)}{L^{2m}}\ls&
\e\pnm{(1-\pp)[f(t)]}{\gamma_+}{\frac{4m}{3}}+\tnnm{(\ik-\pk)[f(t)]}+\e\pnnm{(\ik-\pk)[f(t)]}{2m}\\
&+\tnnm{\snn S(t)}+\e\pnm{h(t)}{\gamma_-}{\frac{4m}{3}}+\e^2\tnnm{\dt f(t)}.\no
\end{align}
\ \\
Step 2: Estimates of $\vbb$.\\
We further divide this step into several sub-steps:\\
\ \\
Sub-Step 2.1: Estimates of $\bigg(\p_{i}\p_j\dx^{-1}\Big(b_j\abs{b_j}^{2m-2}\Big)\bigg)b_i$ for $i,j=1,2,3$.\\
Let $\vbb=(b_1,b_2,b_3)$. We choose the test functions for $i,j=1,2,3$,
\begin{align}
\psi(t)=\psi_{b,i,j}(t)=\m^{\frac{1}{2}}(\vv)\left(v_i^2-\beta_{b,i,j}\right)\p_j\phi_{b,j},
\end{align}
where
\begin{align}
\left\{
\begin{array}{l}
-\dx\phi_{b,j}(t)=b_j\abs{b_j}^{2m-2}(t,\vx)\ \ \text{in}\ \ \Omega,\\\rule{0ex}{1.5em}
\phi_{b,j}(t)=0\ \ \text{on}\ \ \p\Omega,
\end{array}
\right.
\end{align}
and $\beta_{b,i,j}\in\r$ will be determined as in stationary problem. We can recover the elliptic estimates and trace estimates. Eventually, we have
\begin{align}\label{htt 27}
&\e\abs{\int_{\Omega}\bigg(\p_{i}\p_j\dx^{-1}\Big(b_j\abs{b_j}^{2m-2}\Big)\bigg)b_i}\\
\ls&\onnm{\vbb(t)}{L^{2m}}^{2m-1}\bigg(\e\pnm{(1-\pp)[f(t)]}{\gamma_+}{\frac{4m}{3}}+\tnnm{(\ik-\pk)[f(t)]}+\e\pnnm{(\ik-\pk)[f(t)]}{2m}\no\\
&+\tnnm{\snn S(t)}+\e\pnm{h(t)}{\gamma_-}{\frac{4m}{3}}+\e^2\tnnm{\dt f(t)}\bigg).\no
\end{align}
\ \\
Sub-Step 2.2: Estimates of $\bigg(\p_{i}\p_i\dx^{-1}\Big(b_j\abs{b_j}^{2m-2}\Big)\bigg)b_j$ for $i\neq j$.\\
Notice that the $i=j$ case is included in Sub-Step 2.1. We choose the test function
\begin{align}
\psi(t)=\tilde\psi_{b,i,j}(t)=\m^{\frac{1}{2}}(\vv)\abs{\vv}^2v_iv_j\p_i\phi_{b,j}\ \ \text{for}\ \ i\neq j.
\end{align}
Eventually, we have
\begin{align}\label{htt 28}
&\e\abs{\int_{\Omega}\bigg(\p_{i}\p_i\dx^{-1}\Big(b_j\abs{b_j}^{2m-2}\Big)\bigg)b_j}\\
\ls&\onnm{\vbb(t)}{L^{2m}}^{2m-1}\bigg(\e\pnm{(1-\pp)[f(t)]}{\gamma_+}{\frac{4m}{3}}+\tnnm{(\ik-\pk)[f(t)]}+\e\pnnm{(\ik-\pk)[f(t)]}{2m}\no\\
&+\tnnm{\snn S(t)}+\e\pnm{h(t)}{\gamma_-}{\frac{4m}{3}}+\e^2\tnnm{\dt f(t)}\bigg).\no
\end{align}
\ \\
Sub-Step 2.3: Synthesis.\\
Summarizing \eqref{htt 27} and \eqref{htt 28}, we may sum up over $j=1,2,3$ to obtain, for any $i=1,2,3$,
\begin{align}
\e\onnm{b_i(t)}{L^{2m}}^{2m}
\ls&\onnm{\vbb(t)}{L^{2m}}^{2m-1}\bigg(\e\pnm{(1-\pp)[f(t)]}{\gamma_+}{\frac{4m}{3}}+\tnnm{(\ik-\pk)[f(t)]}+\e\pnnm{(\ik-\pk)[f(t)]}{2m}\\
&+\tnnm{\snn S(t)}+\e\pnm{h(t)}{\gamma_-}{\frac{4m}{3}}+\e^2\tnnm{\dt f(t)}\bigg).\no
\end{align}
which further implies
\begin{align}\label{htt 30}
\e\onnm{\vbb(t)}{L^{2m}}
\ls&\e\pnm{(1-\pp)[f(t)]}{\gamma_+}{\frac{4m}{3}}+\tnnm{(\ik-\pk)[f(t)]}+\e\pnnm{(\ik-\pk)[f(t)]}{2m}\\
&+\tnnm{\snn S(t)}+\e\pnm{h(t)}{\gamma_-}{\frac{4m}{3}}+\e^2\tnnm{\dt f(t)}.\no
\end{align}
\ \\
Step 3: Estimates of $a$.\\
We choose the test function
\begin{align}
\psi(t)=\psi_a(t)=\m^{\frac{1}{2}}(\vv)\left(\abs{\vv}^2-\beta_a\right)\Big(\vv\cdot\nx\phi_a(t,\vx)\Big),
\end{align}
where
\begin{align}
\left\{
\begin{array}{l}
-\dx\phi_a(t)=a\abs{a}^{2m-2}(t,\vx)-\dfrac{1}{\abs{\Omega}}\ds\int_{\Omega}a\abs{a}^{2m-2}(t,\vx)\ud{\vx}\ \ \text{in}\ \ \Omega,\\\rule{0ex}{1.5em}
\dfrac{\p\phi_a(t)}{\p\vn}=0\ \ \text{on}\ \ \p\Omega,
\end{array}
\right.
\end{align}
and $\beta_a\in\r$ will be determined as in stationary problem. We can recover the elliptic estimates and trace estimates. Eventually, we have
\begin{align}\label{htt 31}
\e\onnm{a(t)}{L^{2m}}\ls&
\e\pnm{(1-\pp)[f(t)]}{\gamma_+}{\frac{4m}{3}}+\tnnm{(\ik-\pk)[f(t)]}+\e\pnnm{(\ik-\pk)[f(t)]}{2m}\\
&+\tnnm{\snn S(t)}+\e\pnm{h(t)}{\gamma_-}{\frac{4m}{3}}+\e^2\tnnm{\dt f(t)}.\no
\end{align}
\ \\
Step 4: Synthesis.\\
Collecting \eqref{htt 29}, \eqref{htt 30} and \eqref{htt 31}, we deduce
\begin{align}\label{htt 33}
\e\pnnm{\pk[f(t)]}{2m}\ls&\e\pnm{(1-\pp)[f(t)]}{\gamma_+}{\frac{4m}{3}}+\tnnm{(\ik-\pk)[f(t)]}+\e\pnnm{(\ik-\pk)[f(t)]}{2m}\\
&+\tnnm{\snn S(t)}+\e\pnm{h(t)}{\gamma_-}{\frac{4m}{3}}+\e^2\tnnm{\dt f(t)}.\no
\end{align}
\end{proof}

\begin{theorem}\label{LN estimate'.}
Assume \eqref{linear unsteady compatibility} and \eqref{linear unsteady normalization} hold. The solution $f(t,\vx,\vv)$ to the equation \eqref{linear unsteady} satisfies
\begin{align}
&\frac{1}{\e^{\frac{1}{2}}}\tsm{(1-\pp)[f(t)]}{\gamma_+}+\frac{1}{\e}\unnm{(\ik-\pk)[f(t)]}+\pnnm{\pk[f(t)]}{2m}\\
&+\frac{1}{\e^{\frac{1}{2}}}\tsmt{(1-\pp)[\dt f]}{\gamma_+}+\frac{1}{\e}\unnmt{(\ik-\pk)[\dt f]}+\tnnmt{\pk[\dt f]}\no\\
\ls&o(1)\e^{\frac{3}{2m}}\Big(\ssm{f(t)}{\gamma_+}+\snnm{f(t)}\Big)\no\\
&+\frac{1}{\e^{2}}\pnnm{\pk[S(t)]}{\frac{2m}{2m-1}}+\frac{1}{\e}\tnnm{\snn(\ik-\pk)[S(t)]}
+\frac{1}{\e^2}\tnnmt{\pk[\dt S]}+\frac{1}{\e}\tnnmt{\snn(\ik-\pk)[\dt S]}\no\\
&+\pnm{h(t)}{\gamma_-}{\frac{4m}{3}}+\frac{1}{\e}\tsm{h(t)}{\gamma_-}+\frac{1}{\e}\tsmt{\dt h}{\gamma_-}+\frac{1}{\e^2}\tnnm{\nu z}+\frac{1}{\e}\tnnm{\vv\cdot\nx z}+\frac{1}{\e^2}\tnnm{S(0)}.\no
\end{align}
\end{theorem}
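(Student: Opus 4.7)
The plan is to freeze time and reduce the claim to two already-established pillars: the stationary $L^{2m}$ result (Theorem \ref{LN estimate} together with Remark \ref{LN remark}) applied pointwise in $t$, coupled with the $L^2$ bound on $\dt f$ obtained from Theorem \ref{LT estimate'} applied to the linear equation satisfied by $\dt f$, which is precisely the inequality \eqref{htt 26}. At each fixed $t$, the equation \eqref{linear unsteady} may be recast as the stationary-type problem
\begin{align*}
\e\vv\cdot\nx f(t)+\ll[f(t)]=\tilde S(t),\qquad \tilde S(t):=S(t)-\e^{2}\dt f(t),
\end{align*}
subject to the same diffusive-reflection boundary condition and to the normalization $\iint\mh f(t)=0$ inherited from \eqref{linear unsteady normalization}. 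The compatibility condition \eqref{linear steady compatibility} for $\tilde S(t)$ follows from \eqref{linear unsteady compatibility} together with $\iint\mh\,\dt f(t)=\dt\iint\mh f(t)=0$.

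Applying Theorem \ref{LN estimate} to the frozen-time system yields
\begin{align*}
&\frac{1}{\e^{1/2}}\tsm{(1-\pp)[f(t)]}{\gamma_+}+\frac{1}{\e}\unnm{(\ik-\pk)[f(t)]}+\pnnm{\pk[f(t)]}{2m}\\
&\ls o(1)\e^{3/(2m)}\Big(\ssm{f(t)}{\gamma_+}+\snnm{f(t)}\Big)+\frac{1}{\e^{2}}\pnnm{\pk[\tilde S(t)]}{\frac{2m}{2m-1}}+\frac{1}{\e}\tnnm{\snn(\ik-\pk)[\tilde S(t)]}\\
&\quad+\pnm{h(t)}{\gamma_-}{\frac{4m}{3}}+\frac{1}{\e}\tsm{h(t)}{\gamma_-}.
\end{align*}
Splitting $\tilde S(t)=S(t)-\e^{2}\dt f(t)$, the $\dt f(t)$ contribution generates the extra terms $\pnnm{\pk[\dt f(t)]}{\frac{2m}{2m-1}}+\e\tnnm{\snn(\ik-\pk)[\dt f(t)]}$, which by H\"older's inequality on the bounded domain $\Omega$ (using $\frac{2m}{2m-1}\leq 2$) are jointly controlled by $\tnnm{\dt f(t)}$.

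For the time derivative, differentiating \eqref{linear unsteady compatibility} and \eqref{linear unsteady normalization} in $t$ gives the compatibility and normalization required to apply Theorem \ref{LT estimate'} to the linear equation \eqref{linear unsteady'} satisfied by $\dt f$. The resulting estimate \eqref{htt 26} simultaneously bounds the instantaneous quantity $\tnnm{\dt f(t)}$, which absorbs the $\dt f$ pieces inherited from the frozen-time step, and the three time-integrated norms $\tsmt{(1-\pp)[\dt f]}{\gamma_+}$, $\unnmt{(\ik-\pk)[\dt f]}$, and $\tnnmt{\pk[\dt f]}$ appearing on the left-hand side of the theorem, in terms of the $\dt S$, $\dt h$, $\nu z$, $\vv\cdot\nx z$, and $S(0)$ contributions on the right. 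Adding these two chains of inequalities and regrouping by the expected powers of $\e$ gives the claim.

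The main subtlety is to ensure that the $L^{\infty}$ interpolation structure of Theorem \ref{LN estimate}, which converts the gap between $L^{2m}$ and $L^{\infty}$ into the small factor $o(1)\e^{3/(2m)}(\ssm{f(t)}{\gamma_+}+\snnm{f(t)})$, survives the substitution $S\mapsto\tilde S(t)$. Because $\dt f$ enters the frozen-time bound exclusively through $L^{2}$-based norms, no $L^{\infty}$ control of $\dt f$ is ever needed; in particular, absorbing $\tnnm{\dt f(t)}$ via \eqref{htt 26} reopens no $\snnm{f(t)}$ or $\ssm{f(t)}{\gamma_+}$ contribution, so the interpolation closes cleanly without any new smallness condition on $\e$ beyond what Theorem \ref{LN estimate} and Theorem \ref{LT estimate'} already require.
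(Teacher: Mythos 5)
Your proposal is correct and is in substance the argument the paper uses: the paper also treats the $\e^2\dt f$ term as an $L^2$-controlled source at frozen time (it re-derives the stationary $L^{2m}$ and energy/interpolation estimates inline, in Lemma \ref{htt lemma 07} and Steps 1--3, picking up exactly the extra $\e^2\tnnm{\dt f(t)}$ contributions you obtain by citing Theorem \ref{LN estimate} with $\tilde S(t)=S(t)-\e^2\dt f(t)$), and then closes by invoking \eqref{htt 26}, i.e.\ Theorem \ref{LT estimate'} applied to the time-differentiated equation \eqref{linear unsteady'}, to bound $\tnnm{\dt f(t)}$ together with the accumulated $\dt f$ norms. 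Your black-box reuse of the stationary theorem is legitimate since that theorem is a pure a priori estimate and you verify the frozen-time compatibility and normalization, so the two proofs differ only in presentation, not in the key ideas.
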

\begin{proof}
\ \\
Step 1: Energy Estimate.\\
Multiplying $f$ on both sides of \eqref{linear unsteady} and use the similar estimates as in the proof of Lemma \ref{LT estimate'}, the stationary energy structure implies
\begin{align}\label{htt 35}
&\e\tsm{(1-\pp)[f(t)]}{\gamma_+}^2+\unnm{(\ik-\pk)[f(t)]}^2\\
\ls& \eta\e^2\tnnm{\pk[f(t)]}^2+\frac{1}{\eta}\tsm{h(t)}{\gamma_-}^2+\abs{\int_{\Omega\times\r^3}f(t)S(t)}+\e^2\abs{\int_{\Omega\times\r^3}f(t)\dt f(t)}.\no
\end{align}
We square on both sides of \eqref{htt 34} to obtain
\begin{align}\label{htt 36}
\e^2\pnnm{\pk[f(t)]}{2m}^2\ls&\e^2\pnm{(1-\pp)[f(t)]}{\gamma_+}{\frac{4m}{3}}^2+\tnnm{(\ik-\pk)[f(t)]}^2+\e^2\pnnm{(\ik-\pk)[f(t)]}{2m}^2\\
&+\tnnm{\snn S(t)}^2+\e^2\pnm{h(t)}{\gamma_-}{\frac{4m}{3}}^2+\e^4\tnnm{\dt f(t)}^2.\no
\end{align}
H\"older's inequality implies
\begin{align}
\tnnmt{\pk[f(t)]}\ls \pnnmt{\pk[f(t)]}{2m}.
\end{align}
Multiplying a small constant on both sides of \eqref{htt 36} and
adding to \eqref{htt 35} with $\eta$ sufficiently small to absorb $\eta\e^2\tnnmt{\pk[f(t)]}^2$, and $\tnnmt{(\ik-\pk)[f(t)]}^2$ into the left-hand side, we obtain
\begin{align}\label{htt 39}
&\e\tsm{(1-\pp)[f(t)]}{\gamma_+}^2+\unnm{(\ik-\pk)[f(t)]}^2+\e^2\pnnm{\pk[f(t)]}{2m}^2\\
\ls&\e^2\pnm{(1-\pp)[f(t)]}{\gamma_+}{\frac{4m}{3}}^2+\e^2\pnnm{(\ik-\pk)[f(t)]}{2m}^2+\e^4\tnnm{\dt f(t)}^2\no\\
&+\tnnm{\snn S(t)}^2+\e^2\pnm{h(t)}{\gamma_-}{\frac{4m}{3}}^2+\tsm{h(t)}{\gamma_-}^2+\abs{\int_{\Omega\times\r^3}f(t)S(t)}+\e^2\abs{\int_{\Omega\times\r^3}f(t)\dt f(t)}.\no
\end{align}
Now we have to handle $\e^2\pnmt{(1-\pp)[f(t)]}{\gamma_+}{\frac{4m}{3}}^2$ and $\e^2\pnnmt{(\ik-\pk)[f(t)]}{2m}^2$ on the right-hand side.\\
\ \\
Step 2: Interpolation Argument.\\
By interpolation estimate and Young's inequality, we have
\begin{align}
\pnm{(1-\pp)[f(t)]}{\gamma_+}{\frac{4m}{3}}\leq&\tsm{(1-\pp)[f(t)]}{\gamma_+}^{\frac{3}{2m}}\ssm{(1-\pp)[f(t)]}{\gamma_+}^{\frac{2m-3}{2m}}\\
=&\bigg(\frac{1}{\e^{\frac{6m-9}{4m^2}}}\tsm{(1-\pp)[f(t)]}{\gamma_+}^{\frac{3}{2m}}\bigg)
\bigg(\e^{\frac{6m-9}{4m^2}}\ssm{(1-\pp)[f(t)]}{\gamma_+}^{\frac{2m-3}{2m}}\bigg)\no\\
\ls&\bigg(\frac{1}{\e^{\frac{6m-9}{4m^2}}}\tsm{(1-\pp)[f(t)]}{\gamma_+}^{\frac{3}{2m}}\bigg)^{\frac{2m}{3}}+o(1)
\bigg(\e^{\frac{6m-9}{4m^2}}\ssm{(1-\pp)[f(t)]}{\gamma_+}^{\frac{2m-3}{2m}}\bigg)^{\frac{2m}{2m-3}}\no\\
\leq&\frac{1}{\e^{\frac{2m-3}{2m}}}\tsm{(1-\pp)[f(t)]}{\gamma_+}+o(1)\e^{\frac{3}{2m}}\ssm{(1-\pp)[f(t)]}{\gamma_+}\no\\
\leq&\frac{1}{\e^{\frac{2m-3}{2m}}}\tsm{(1-\pp)[f(t)]}{\gamma_+}+o(1)\e^{\frac{3}{2m}}\ssm{(1-\pp)[f(t)]}{\gamma_+}.\no
\end{align}
Similarly, we have
\begin{align}
\pnnm{(\ik-\pk)[f(t)]}{2m}\leq&\tnnm{(\ik-\pk)[f(t)]}^{\frac{1}{m}}\snnm{(\ik-\pk)[f(t)]}^{\frac{m-1}{m}}\\
=&\bigg(\frac{1}{\e^{\frac{3m-3}{2m^2}}}\tnnm{(\ik-\pk)[f(t)]}^{\frac{1}{m}}\bigg)
\bigg(\e^{\frac{3m-3}{2m^2}}\snnm{(\ik-\pk)[f(t)]}^{\frac{m-1}{m}}\bigg)\no\\
\ls&\bigg(\frac{1}{\e^{\frac{3m-3}{2m^2}}}\tnnm{(\ik-\pk)[f(t)]}^{\frac{1}{m}}\bigg)^{m}+o(1)
\bigg(\e^{\frac{3m-3}{2m^2}}\snnm{(\ik-\pk)[f(t)]}^{\frac{m-1}{m}}\bigg)^{\frac{m}{m-1}}\no\\
\leq&\frac{1}{\e^{\frac{3m-3}{2m}}}\tnnm{(\ik-\pk)[f(t)]}+o(1)\e^{\frac{3}{2m}}\snnm{(\ik-\pk)[f(t)]}.\no
\end{align}
We need this extra $\e^{\frac{3}{2m}}$ for the convenience of $L^{\infty}$ estimate. Then we know for sufficiently small $\e$ and $\dfrac{3}{2}< m<3$,
\begin{align}\label{htt 37}
\e^2\pnm{(1-\pp)[f(t)]}{\gamma_+}{\frac{4m}{3}}^2
\ls&\e^{2-\frac{2m-3}{m}}\tsm{(1-\pp)[f(t)]}{\gamma_+}^2+o(1)\e^{2+\frac{3}{m}}\ssm{(1-\pp)[f(t)]}{\gamma_+}^2\\
\ls&o(1)\e\tsm{(1-\pp)[f(t)]}{\gamma_+}^2+o(1)\e^{2+\frac{3}{m}}\ssm{f(t)}{\gamma_+}^2.\no
\end{align}
Similarly, we have
\begin{align}\label{htt 38}
\e^2\pnnm{(\ik-\pk)[f(t)]}{2m}^2\ls&\e^{2-\frac{3m-3}{m}}\tnnm{(\ik-\pk)[f(t)]}^2+o(1)\e^{2+\frac{3}{m}}\snnm{(\ik-\pk)[f(t)]}^2\\
\ls& o(1)\tnnm{(\ik-\pk)[f(t)]}^2+o(1)\e^{2+\frac{3}{m}}\snnm{f(t)}^2.\no
\end{align}
Inserting \eqref{htt 37} and \eqref{htt 38} into \eqref{htt 39}, and absorbing $o(1)\e\tsm{(1-\pp)[f(t)]}{\gamma_+}^2$ and $o(1)\tnnm{(\ik-\pk)[f(t)]}^2$ into the left-hand side, we obtain
\begin{align}\label{htt 39'}
&\e\tsm{(1-\pp)[f(t)]}{\gamma_+}^2+\unnm{(\ik-\pk)[f(t)]}^2+\e^2\pnnm{\pk[f(t)]}{2m}^2\\
\ls&o(1)\e^{2+\frac{3}{m}}\Big(\ssm{f(t)}{\gamma_+}^2+\snnm{f(t)}^2\Big)+\e^4\tnnm{\dt f(t)}^2\no\\
&+\tnnm{\snn S(t)}^2+\e^2\pnm{h(t)}{\gamma_-}{\frac{4m}{3}}^2+\tsm{h(t)}{\gamma_-}^2+\abs{\int_{\Omega\times\r^3}f(t)S(t)}+\e^2\abs{\int_{\Omega\times\r^3}f(t)\dt f(t)}.\no
\end{align}
\ \\
Step 3: Synthesis.\\
We can decompose
\begin{align}\label{htt 40}
\int_{\Omega\times\r^3}f(t)S(t)=\iint_{\Omega\times\r^3}\pk[f(t)]\pk[S(t)]+\iint_{\Omega\times\r^3}(\ik-\pk)[f(t)](\ik-\pk)[S(t)].
\end{align}
H\"older's inequality and Cauchy's inequality imply
\begin{align}\label{htt 41}
\iint_{\Omega\times\r^3}\pk[f(t)]\pk[S(t)]\leq\pnnm{\pk[f(t)]}{2m}\pnnm{\pk[S(t)]}{\frac{2m}{2m-1}}
\ls o(1)\e^2\pnnm{\pk[f(t)]}{2m}^2+\frac{1}{\e^{2}}\pnnm{\pk[S(t)]}{\frac{2m}{2m-1}}^2,
\end{align}
and
\begin{align}\label{htt 42}
\iint_{\Omega\times\r^3}(\ik-\pk)[f(t)](\ik-\pk)[S(t)]\ls o(1)\unnm{(\ik-\pk)[f(t)]}^2+\tnnm{\snn(\ik-\pk)[S(t)]}^2.
\end{align}
Inserting \eqref{htt 41} and \eqref{htt 42} into \eqref{htt 40} and further \eqref{htt 39}, absorbing $o(1)\e^2\pnnm{\pk[f](t)}{2m}^2$ and $o(1)\unnm{(\ik-\pk)[f(t)]}^2$ into the left-hand side, we get
\begin{align}\label{htt 42'}
&\e\tsm{(1-\pp)[f(t)]}{\gamma_+}^2+\unnm{(\ik-\pk)[f(t)]}^2+\e^2\pnnm{\pk[f(t)]}{2m}^2\\
\ls&o(1)\e^{2+\frac{3}{m}}\Big(\ssm{f(t)}{\gamma_+}^2+\snnm{f(t)}^2\Big)+\e^4\tnnm{\dt f(t)}^2\no\\
&+\frac{1}{\e^{2}}\pnnm{\pk[S(t)]}{\frac{2m}{2m-1}}^2+\tnnm{\snn(\ik-\pk)[S(t)]}^2+\e^2\pnm{h(t)}{\gamma_-}{\frac{4m}{3}}^2+\tsm{h(t)}{\gamma_-}^2
+\e^2\abs{\int_{\Omega\times\r^3}f(t)\dt f(t)}.\no
\end{align}
Now we handle the most difficult term:
\begin{align}
\e^2\abs{\int_{\Omega\times\r^3}f(t)\dt f(t)}\ls \e^2\tnnm{f(t)}\tnnm{\dt f(t)}\ls o(1)\e^2\tnnm{f(t)}^2+\e^2\tnnm{\dt f(t)}^2.
\end{align}
Here $o(1)\e^2\tnnm{f(t)}^2$ can be absorbed into the left-hand side of \eqref{htt 42'}. Then we resort to \eqref{htt 26} to tackle $\e^2\tnnm{\dt f(t)}^2$:
\begin{align}\label{htt 42''}
&\e^2\tnnm{\dt f(t)}^2+\e\tsmt{(1-\pp)[\dt f]}{\gamma_+}^2+\unnmt{(\ik-\pk)[\dt f]}^2+\e^2\tnnmt{\pk[\dt f]}^2\\
\ls& \frac{1}{\e^2}\tnnmt{\pk[\dt S]}^2+\tnnmt{\snn(\ik-\pk)[\dt S]}^2+\tsmt{\dt h}{\gamma_-}^2
+\frac{1}{\e^2}\tnnm{\nu z}^2+\tnnm{\vv\cdot\nx z}^2+\frac{1}{\e^2}\tnnm{S(0)}^2.\no
\end{align}
Multiplying a small constant on \eqref{htt 42'} and adding it to \eqref{htt 42''} to absorb $\e^2\tnnm{\dt f(t)}^2$, we have
\begin{align}
&\e\tsm{(1-\pp)[f(t)]}{\gamma_+}^2+\unnm{(\ik-\pk)[f(t)]}^2+\e^2\pnnm{\pk[f(t)]}{2m}^2\\
&+\e\tsmt{(1-\pp)[\dt f]}{\gamma_+}^2+\unnmt{(\ik-\pk)[\dt f]}^2+\e^2\tnnmt{\pk[\dt f]}^2\no\\
\ls&o(1)\e^{2+\frac{3}{m}}\Big(\ssm{f(t)}{\gamma_+}^2+\snnm{f(t)}^2\Big)\no\\
&+\frac{1}{\e^{2}}\pnnm{\pk[S(t)]}{\frac{2m}{2m-1}}^2+\tnnm{\snn(\ik-\pk)[S(t)]}^2
+\frac{1}{\e^2}\tnnmt{\pk[\dt S]}^2+\tnnmt{\snn(\ik-\pk)[\dt S]}^2\no\\
&+\e^2\pnm{h(t)}{\gamma_-}{\frac{4m}{3}}^2+\tsm{h(t)}{\gamma_-}^2+\tsmt{\dt h}{\gamma_-}^2+\frac{1}{\e^2}\tnnm{\nu z}^2+\tnnm{\vv\cdot\nx z}^2+\frac{1}{\e^2}\tnnm{S(0)}^2.\no
\end{align}
Then our desired result follows.
\end{proof}

\begin{remark}
Roughly speaking, Theorem \ref{LN estimate'.} justifies that in order to bound instantaneous $f$ in $L^{2m}$, we need the accumulative bound for $f$ and $\dt f$ in $L^2$.
\end{remark}

%%%%%%%%%%%%%%%%%%%%%%%%%%%%%%%%%%%%%%%%%%%%%%%%%%%%%%%%%%%%%%%%%%%%%%%%%%%%%%%%%%%%%
\subsection{$L^{\infty}$ Estimates}
%%%%%%%%%%%%%%%%%%%%%%%%%%%%%%%%%%%%%%%%%%%%%%%%%%%%%%%%%%%%%%%%%%%%%%%%%%%%%%%%%%%%%

Now we begin to consider the mild formulation. When tracking the solution backward along the characteristics, once it hits the in-flow boundary or initial time, it either terminates (when hitting the initial time) or is diffusively reflected (when hitting the boundary). Following this idea, we may define the backward stochastic cycles, with multiple hitting times and out-flow integrals.

\begin{definition}[Hitting Time and Position]
For any $(t,\vx,\vv)\in\rp\times\Omega\times\r^3$ with $(\vx,\vv)\notin\gamma_0$, define the backward the hitting time
\begin{align}\label{htt 43}
t_b(t,\vx,\vv):=&\inf\{s>0:\vx-\e s\vv\notin\Omega\ \ \text{or}\ \ t=\e^2s\}.
\end{align}
Also, define the hitting position
\begin{align}
\vx_b:=\vx-\e t_b(\vx,\vv)\vv.
\end{align}
\end{definition}
Note that $\vx_b\in\Omega$ means the characteristic already hit the initial time, and $\vx_b\in\p\Omega$ means the characteristic hits the boundary, so it can be reflected and continue moving.
%Before the diffusive reflection, $\vv$ does not change along the characteristics. Hence, we know $(\vx_b,\vv)\in\gamma_-$.

\begin{definition}[Stochastic Cycle]
For any $(t,\vx,\vv)\in\rp\times\Omega\times\r^3$ with $(\vx,\vv)\notin\gamma_0$, let $(t_0,\vx_0,\vv_0)=(t,\vx,\vv)$. Define the first stochastic triple
\begin{align}
(t_1,\vx_1,\vv_1):=\Big(t-\e^2t_b(\vx_0,\vv_0),\vx_b(\vx_0,\vv_0),\vv_1\Big),
\end{align}
for some $\vv_1$ satisfying $\vv_1\cdot\vn(\vx_1)>0$.

Inductively, assume we know the $k^{th}$ stochastic triple $(t_k,\vx_k,\vv_k)$ with $t_k>0$ (i.e. $\vx_k\in\p\Omega$). Define the $(k+1)^{th}$ stochastic triple
\begin{align}
(t_{k+1},\vx_{k+1},\vv_{k+1}):=\Big(t_k-\e^2t_b(\vx_k,\vv_k),\vx_k(\vx_k,\vv_k),\vv_{k+1}\Big),
\end{align}
for some $\vv_{k+1}$ satisfying $\vv_{k+1}\cdot\vn(\vx_{k+1})>0$.
\end{definition}

\begin{remark}
Roughly speaking, this definition describes one characteristic line with reflection (alternatively so-called stochastic cycle), starting from $(t_k,\vx_k,\vv_k)\in\gamma_+$, tracking back to $(t_{k+1},\vx_{k+1},\vv_k)\in\{0\}\times\Omega\times\r^3$ which will terminate, or $(t_{k+1},\vx_{k+1},\vv_k)\in(0,\infty)\times\gamma_-$, diffusively reflected to $(t_{k+1},\vx_{k+1},\vv_{k+1})\in\gamma_+$, and beginning a new cycle. $t_k$ the actual time the characteristic moves backward. Note that we are free to choose any $\vv_k\cdot\vn(\vx_k)>0$, so different sequence $\ds\{\vv_k\}_{k=1}^{\infty}$ represents different stochastic cycles.
\end{remark}

\begin{definition}[Diffusive Reflection Integral]
Define $\nn_{k}=\{\vv\in \r^3:\vv\cdot\vn(\vx_{k})>0\}$, so the stochastic cycle must satisfy $\vv_k\in\nn_k$. Let the iterated integral for $k\geq2$ be defined as
\begin{align}
\int_{\prod_{j=1}^{k-1}\nn_j}\prod_{j=1}^{k-1}\ud{\sigma_j}:=\int_{\nn_1}\ldots\bigg(\int_{\nn_{k-1}}\ud{\sigma_{k-1}}\bigg)\ldots\ud{\sigma_1}
\end{align}
where $\ud{\sigma_j}:=\m(\vv_j)\abs{\vv_j\cdot\vn(\vx_j)}\ud{\vv_j}$ is a probability measure.
\end{definition}

We define a weight function scaled with parameter $\xi$, for $0\leq\vrh<\dfrac{1}{4}$ and $\vth\geq0$,
\begin{align}\label{htt 44}
\vh(\vv):=\bv,
\end{align}
and
\begin{align}\label{htt 45}
\tvh(\vv):=\frac{1}{\m^{\frac{1}{2}}(\vv)\vh(\vv)}=\sqrt{2\pi}\frac{\ue^{\left(\frac{1}{4}-\varrho\right)\abs{\vv}^2}}
{\left(1+\abs{\vv}^2\right)^{\frac{\vth}{2}}}.
\end{align}

\begin{lemma}\label{htt lemma 08}
For $T_0>0$ sufficiently large, there exists constants $C_1,C_2>0$ independent of $T_0$, such that for $k=C_1T_0^{\frac{5}{4}}$, and
$(\vx,\vv)\in\times\bar\Omega\times\r^3$,
\begin{align}
\int_{\Pi_{j=1}^{k-1}\nn_j}\id_{\{\frac{t-t_k(\vx,\vv,\vv_1,\ldots,\vv_{k-1})}{\e^2}<\frac{T_0}{\e}\}}\prod_{j=1}^{k-1}\ud{\sigma_j}\leq
\left(\frac{1}{2}\right)^{C_2T_0^{\frac{5}{4}}}.
\end{align}
\end{lemma}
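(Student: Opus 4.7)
The plan is to reduce the statement to the stationary analog \cite[Lemma 4.1]{Esposito.Guo.Kim.Marra2013} (equivalently Lemma \ref{ktt lemma 2} above) by absorbing every factor of $\e$ that appears in the evolutionary hitting-time definition \eqref{htt 43}. First I would exploit the recurrence $t_{j+1}=t_j-\e^2 t_b(\vx_j,\vv_j)$ with $t_0=t$, which telescopes to
\begin{align*}
\frac{t-t_k(\vx,\vv,\vv_1,\ldots,\vv_{k-1})}{\e^2}=\sum_{j=0}^{k-1}t_b(\vx_j,\vv_j).
\end{align*}
Thus the indicator in the integrand singles out stochastic cycles whose accumulated parameter-time $\sum t_b$ lies below $T_0/\e$, turning the statement into a purely geometric bound on the measure of short cycles.

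Next I would relate the evolutionary hitting time to the $\e$-free chord parameter $s_b^{\mathrm{stat}}(\vx,\vv):=\inf\{s>0:\vx-s\vv\notin\Omega\}$ used in the stationary Lemma \ref{ktt lemma 2}. Since the spatial constraint $\vx-\e s\vv\notin\Omega$ is equivalent to $\e s\geq s_b^{\mathrm{stat}}(\vx,\vv)$, one has $t_b(\vx,\vv)=s_b^{\mathrm{stat}}(\vx,\vv)/\e$ whenever the backward characteristic reaches $\p\Omega$ before crossing the initial slab $\{t=0\}$. Consequently the event $\{\sum t_b(\vx_j,\vv_j)<T_0/\e\}$ coincides with $\{\sum s_b^{\mathrm{stat}}(\vx_j,\vv_j)<T_0\}$, which is exactly the event controlled by the stationary lemma with threshold $T_0$. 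Applying that lemma with $k=C_1T_0^{5/4}$ reflections yields the asserted bound $(1/2)^{C_2T_0^{5/4}}$; crucially, neither $k$ nor the right-hand side acquires any power of $\e$ because the rescaling cancels symmetrically between the threshold and the chord lengths.

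The main technical point I anticipate needing care is the handling of cycles that terminate through the initial-time criterion $t=\e^2 s$ in \eqref{htt 43} rather than by hitting $\p\Omega$: on such cycles the sequence $(t_j,\vx_j,\vv_j)$ stops being defined by the reflection mechanism, and the product measure $\prod\ud{\sigma_j}$ must either be suitably extended or the integration truncated. This is managed by observing that any cycle terminating at $t=0$ has already accumulated parameter-time at least $t/\e^2$; cycles with $t/\e^2\geq T_0/\e$ contribute zero to the integrand, while cycles with $t/\e^2<T_0/\e$ can be trivially extended after termination (for instance by fixing arbitrary $\vv_j$'s for the missing indices) without affecting the inequality, since the extended cycles lie inside a strictly larger set than in the stationary case. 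With this bookkeeping the stationary argument transfers verbatim and delivers the universal constants $C_1,C_2$ stated in the lemma.
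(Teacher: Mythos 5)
Your main reduction is exactly the paper's route: the paper's entire proof is the remark that this is the rescaled version of \cite[Lemma 4.1]{Esposito.Guo.Kim.Marra2013}, and your telescoping identity $\frac{t-t_k}{\e^2}=\sum_{j=0}^{k-1}t_b(\vx_j,\vv_j)$ together with the conversion $t_b=s_b^{\mathrm{stat}}/\e$ (so that the threshold $T_0/\e$ becomes the unrescaled threshold $T_0$, with no stray powers of $\e$ in $k$ or in the right-hand side) is precisely the bookkeeping that remark leaves implicit. That part is correct.

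The step that does not hold up is your treatment of cycles that terminate at the initial time when $t/\e^2<T_0/\e$. The time truncation can only \emph{shorten} the last hitting time, so $\frac{t-t_k}{\e^2}=\min\bigl(\sum_j s_b^{\mathrm{stat}}(\vx_j,\vv_j)/\e,\ t/\e^2\bigr)$, and hence the evolutionary event $\{\frac{t-t_k}{\e^2}<\frac{T_0}{\e}\}$ \emph{contains} the corresponding purely spatial event rather than being contained in it; your claim that the extended cycles ``lie inside a strictly larger set than in the stationary case'' has the inclusion backwards, so extending terminated cycles arbitrarily cannot transfer the bound. Indeed, for $t<\e T_0$ the indicator equals one on every terminated cycle regardless of the velocities chosen, so the inequality as you argue it (and as the lemma reads if one sets $t_k=0$ after termination) would fail outright in that regime. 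The correct resolution is not an extension argument but an exclusion: the lemma is only ever invoked on the final Duhamel term in \eqref{htt 48}, where the cycle is still alive ($t_k>0$) after $k$ reflections, since cycles reaching $t=0$ earlier are peeled off into the $G_{\ell}$ terms carrying $\id_{\{t_{\ell+1}=0\}}$ and are estimated with the initial data. Equivalently, the cited Lemma 4.1 of Esposito--Guo--Kim--Marra is stated with the indicator $\id_{\{t_k>0\}}$, and it is that statement which rescales to the present one; with the aliveness constraint in place all hitting times are genuine boundary chords, and your reduction goes through verbatim.
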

\begin{proof}
This is a rescaled version of \cite[Lemma 4.1]{Esposito.Guo.Kim.Marra2013}. Since our hitting time in \eqref{htt 43} is rescaled with $\e$, we should rescale back in the statement of lemma.
\end{proof}

\begin{remark}
Roughly speaking, Lemma \ref{htt lemma 08} states that even though we have the freedom to choose $\vv_k$ in each stochastic cycle, in the long run, the accumulative time will not be too small. After enough reflections $\sim k$, most characteristics has the accumulative time that will exceed any set threshold $T_0$.
\end{remark}

\begin{theorem}\label{LI estimate'.}
Assume \eqref{linear unsteady compatibility} and \eqref{linear unsteady normalization} hold. The solution $f(t,\vx,\vv)$ to the equation \eqref{linear unsteady} satisfies for $\vth\geq0$ and $0\leq\varrho<\dfrac{1}{4}$,
\begin{align}
&\lnnmvt{f}+\lsmt{f}{\gamma_+}\\
\ls&\frac{1}{\e^{2+\frac{3}{2m}}}\pnnm{\pk[S(t)]}{\frac{2m}{2m-1}}+\frac{1}{\e^{1+\frac{3}{2m}}}\tnnm{\snn(\ik-\pk)[S(t)]}
+\frac{1}{\e^{2+\frac{3}{2m}}}\tnnmt{\pk[\dt S]}+\frac{1}{\e^{1+\frac{3}{2m}}}\tnnmt{\snn(\ik-\pk)[\dt S]}\no\\
&+\lnnmvt{\nu^{-1}S}+\frac{1}{\e^{\frac{3}{2m}}}\pnm{h(t)}{\gamma_-}{\frac{4m}{3}}+\frac{1}{\e^{1+\frac{3}{2m}}}\tsm{h(t)}{\gamma_-}
+\frac{1}{\e^{1+\frac{3}{2m}}}\tsmt{\dt h}{\gamma_-}+\lsmt{h}{\gamma_-}\no\\
&+\frac{1}{\e^{2+\frac{3}{2m}}}\tnnm{\nu z}+\frac{1}{\e^{1+\frac{3}{2m}}}\tnnm{\vv\cdot\nx z}+\lnnmv{z}+\frac{1}{\e^{2+\frac{3}{2m}}}\tnnm{S(0)}.\no
\end{align}
\end{theorem}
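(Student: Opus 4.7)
The plan is to adapt the $L^2$--$L^{2m}$--$L^\infty$ bootstrapping framework used for Theorem \ref{LI estimate} in the stationary setting, now invoking Theorem \ref{LN estimate'.} (which in turn leans on Theorem \ref{LT estimate'}) as the key instantaneous/accumulative $L^{2m}$ input. First I introduce the weighted unknown $g := \vh f$ and the weighted kernel $K_{\vh(\vv)}$, so that $g$ satisfies $\e^2 \dt g + \e \vv\cdot\nx g + \nu g = K_{\vh}[g] + \vh S$ with a diffusive-reflection boundary condition involving the dual weight $\tvh$. Duhamel's formula along the space-time characteristic $s \mapsto (s, \vx - \e(t_1-s)\vv)$ then gives a mild representation whose terms are: (a) the value at the first backwards hit (either the initial datum $\vh(\vv) z(\vx_1,\vv)$ if $t_1 > 0$ was attained by $t - \e^2 t_b = 0$, or the boundary datum $\vh h$ if $\vx_1 \in \p\Omega$), (b) the integrated source $\vh S$, (c) the integrated $K_{\vh}[g]$, and (d) the diffusive-reflection term. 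The presence of an initial-datum branch, absent from the stationary case, is precisely what produces the $z$ and $S(0)$ terms on the right-hand side.

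Next I iterate the representation along the stochastic cycle as in \eqref{ktt 65}, cutting off after $k \sim C_1 T_0^{5/4}$ reflections. Lemma \ref{htt lemma 08} controls the fraction of cycles whose accumulated time is still larger than $T_0/\e$ by $(1/2)^{C_2 T_0^{5/4}}$, so the residual $k$-th iterate contributes at most $\d \snnmt{g}$ for small $\d$ (this is the direct analogue of \eqref{ktt 76}). The $\vh h$, $\vh z$ and $\nu^{-1} \vh S$ contributions accumulate to $C T_0^{5/4} \bigl(\lsmt{\vh h}{\gamma_-} + \lnnmv{\vh z} + \lnnmvt{\nu^{-1} \vh S}\bigr)$ as in \eqref{ktt 74}--\eqref{ktt 75}, by bounding $\tvh$ integrals uniformly.

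The hard part, as in the stationary argument, is the $K_{\vh}$ double-Duhamel expansion. I will expand $K_{\vh}[g]$ once more via Duhamel to obtain a double integral over $(\vv',\vv'')$ and split into four regimes: (I) $|\vv| \geq N$, handled by the $\br{\vv}^{-1}$ gain in Lemma \ref{Regularity lemma 1'}; (II) the large-difference zones $|\vv-\vv'| \geq N$ or $|\vv'-\vv''|\geq N$, handled by the Gaussian factor; (III) the short-time slice $t_1'-r \leq \d$, giving $\d\snnmt{g}$; and (IV) the bulk case where a smooth truncation $k_N(\vv,\vv')$ replaces $k_{\vh(\vv)}$ and the change of variables $\vv' \mapsto y := X - \e(t_1'-r)\vv'$ produces a Jacobian $\e^3(t_1'-r)^3 \geq \e^3 \d^3$. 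Splitting $f = \pk[f] + (\ik-\pk)[f]$ inside the resulting spatial integral and applying H\"older's inequality in $y$ converts the pointwise $K_{\vh}$ bound into an integral of $\pnnm{\pk[f(\sigma)]}{2m}$ (picking up $\e^{-3/(2m)}$) and $\tnnm{(\ik-\pk)[f(\sigma)]}$ (picking up $\e^{-3/2}$) over intermediate times $\sigma$. Since these arise time-integrated, the natural bound is the accumulative $\tnnmt{\,\cdot\,}$-norms in Theorem \ref{LN estimate'.}; this is where the $\tnnmt{\pk[\dt S]}$ and $\tnnmt{\snn(\ik-\pk)[\dt S]}$ on the right emerge, and in turn (through Theorem \ref{LT estimate'} applied to $\dt f$ as in \eqref{linear unsteady'}) the terms $\tnnm{\nu z}$, $\tnnm{\vv\cdot\nx z}$, and $\tnnm{S(0)}$ with their respective $\e$-weights.

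Finally, taking the supremum of the mild representation over $(t,\vx,\vv) \in \rp\times\bar\Omega\times\r^3$ and separately over $\rp\times\gamma_+$, one obtains a coupled system for $\snnmt{g}$ and $\lsmt{g}{\gamma_+}$ of the form $\snnmt{g} \ls \d \snnmt{g} + o(1) \lsmt{g}{\gamma_+} + E$, where $E$ is the right-hand side of the theorem after applying Theorems \ref{LN estimate'.} and \ref{LT estimate'} to the $L^{2m}$ and $L^2$ pieces. Choosing $N$ large and $\d$ small and absorbing into the left-hand side closes the estimate, exactly mirroring Step 5 of the proof of Theorem \ref{LI estimate}. The main obstacle is the careful bookkeeping in case (IV) of the $K_{\vh}$ analysis: one must ensure the $y$-integral stays inside $\Omega$ under the truncated cycles and that the $\e^{-3/(2m)}$ and $\e^{-3/2}$ singular factors are exactly compensated by the $\e$-powers in front of the $L^{2m}$ and $L^2$ norms supplied by Theorem \ref{LN estimate'.}, so that the final $\e$-scaling matches the claim.
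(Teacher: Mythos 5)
Your proposal follows essentially the same route as the paper's proof: the weighted mild formulation for $g=\vh f$ with the initial-datum branch, the stochastic-cycle iteration controlled by Lemma \ref{htt lemma 08}, the four-case double-Duhamel analysis of $K_{\vh}$ with the truncated kernel and the change of variables $y=X-\e r\vv'$ (Jacobian $\e^3 r^3\geq\e^3\d^3$), and the final absorption of $\snnmt{g}$ and $\ssmt{g}{\gamma_+}$ using Theorem \ref{LN estimate'.} together with the $\dt f$ corollary of Theorem \ref{LT estimate'}. One point of wording to correct: in case (IV) the factor $\ue^{-r}$ lets you bound the time integral by the supremum over $[0,t]$ of the \emph{instantaneous} norms $\pnnm{\pk[f(t)]}{2m}$ and $\tnnm{(\ik-\pk)[f(t)]}$, which is precisely what Theorem \ref{LN estimate'.} controls, rather than by the accumulative $\tnnmt{\cdot}$-norms (that substitution would cost an extra $\e^{-1}$ and an integrability mismatch); the accumulative $\dt S$, $\dt h$, $z$, $S(0)$ terms then enter only through Theorem \ref{LN estimate'.} itself, as you correctly indicate.
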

\begin{proof}
\ \\
Step 1: Mild formulation.\\
Denote the weighted solution
\begin{align}
g(t,\vx,\vv):=&\vh(\vv) f(t,\vx,\vv),
\end{align}
and the weighted non-local operator
\begin{align}
K_{\vh(\vv)}[g](\vv):=&\vh(\vv)K\left[\frac{g}{\vh}\right](\vv)=\int_{\r^3}k_{\vh(\vv)}(\vv,\vuu)g(\vuu)\ud{\vuu},
\end{align}
where
\begin{align}
k_{\vh(\vv)}(\vv,\vuu):=k(\vv,\vuu)\frac{\vh(\vv)}{\vh(\vuu)}.
\end{align}
Multiplying $\vh$ on both sides of \eqref{linear unsteady}, we have
\begin{align}\label{htt 46}
\left\{
\begin{array}{l}
\e^2\dt g+\e\vv\cdot\nx g+\nu g=K_{\vh}(t,\vx,\vv)+\vh(\vv) S(t,\vx,\vv)\ \ \text{in}\ \ \rp\times\Omega\times\r^3,\\\rule{0ex}{2em}
g(0,\vx,\vv)=\vh(\vv) z(\vx,\vv)\ \ \text{in}\ \ \Omega\times\r^3,\\\rule{0ex}{2em}
g(t,\vx_0,\vv)=\ds\vh(\vv)\mh(\vv)\int_{\vuu\cdot\vn>0}\tvh(\vuu)g(t,\vx_0,\vuu)\ud\vuu+\vh h(t,\vx_0,\vv)\ \ \text{for}\ \ \vx_0\in\p\Omega\ \
\text{and}\ \ \vv\cdot\vn<0,
\end{array}
\right.
\end{align}
We introduce indicator function $\id_{\{t_k=0\}}$ which implies the characteristic hits the initial time and $\id_{\{t_k>0\}}$ which implies the characteristic hits the boundary. We can rewrite the solution of the equation \eqref{linear unsteady} along the characteristics by Duhamel's principle as
\begin{align}\label{htt 47}
g(t,\vx,\vv)=&\bigg(\id_{\{t_1=0\}}\vh(\vv)z(\vx_1,\vv)\ue^{-\nu(\vv)\frac{t-t_1}{\e^2}}+\id_{\{t_1>0\}}\vh(\vv)h(t_1,\vx_1,\vv)\ue^{-\nu(\vv)\frac{t-t_1}{\e^2}}\bigg)\\
&+\int_{0}^{\frac{t-t_1}{\e^2}}\vh(\vv) S\Big(t-\e^2s,\vx-\e s\vv,\vv\Big)\ue^{-\nu(\vv)s}\ud{s}+\int_{0}^{\frac{t-t_1}{\e^2}}K_{\vh(\vv)}[g]\Big(t-\e^2s,\vx-\e s\vv,\vv\Big)\ue^{-\nu(\vv)
s}\ud{s}\no\\
&+\frac{\ue^{-\nu(\vv)\frac{t-t_1}{\e^2}}}{\tvh(\vv)}\int_{\nn_1}g(t_1,\vx_1,\vv_1)\tvh(\vv_1)\ud{\sigma_1},\no
\end{align}
where the last term refers to $\pp[f]$. We may further rewrite the last term using \eqref{htt 47} along the stochastic cycle by applying Duhamel's principle $k$ times as
\begin{align}\label{htt 48}
g(t,\vx,\vv)=& \bigg(\id_{\{t_k=0\}}\vh(\vv)z(\vx_1,\vv)\ue^{-\nu(\vv)\frac{t-t_1}{\e^2}}+\id_{\{t_k>0\}}\vh(\vv)h(t_1,\vx_1,\vv)\ue^{-\nu(\vv)\frac{t-t_1}{\e^2}}\bigg)\\
&+\int_{0}^{\frac{t-t_1}{\e^2}}\vh(\vv) S\Big(t-\e^2s,\vx-\e s\vv,\vv\Big)\ue^{-\nu(\vv)s}\ud{s}+\int_{0}^{\frac{t-t_1}{\e^2}}K_{\vh(\vv)}[g]\Big(t-\e^2s,\vx-\e s\vv,\vv\Big)\ue^{-\nu(\vv)
s}\ud{s}\no\\
&+\frac{\ue^{-\nu(\vv)\frac{t-t_1}{\e^2}}}{\tvh(\vv)}\sum_{\ell=1}^{k-1}\int_{\prod_{j=1}^{\ell}\nn_j}\Big(G_{\ell}[t,\vx,\vv]+H_{\ell}[t,\vx,\vv]\Big)\tvh(\vv_{\ell})
\bigg(\prod_{j=1}^{\ell}\ue^{-\nu(\vv_j)\frac{t_j-t_{j+1}}{\e^2}}\ud{\sigma_j}\bigg)\no\\
&+\frac{\ue^{-\nu(\vv)\frac{t-t_1}{\e^2}}}{\tvh(\vv)}\int_{\prod_{j=1}^{k}\nn_j}g(t_k,\vx_k,\vv_k)\tvh(\vv_{k})
\bigg(\prod_{j=1}^{k}\ue^{-\nu(\vv_{j})\frac{t_j-t_{j+1}}{\e^2}}\ud{\sigma_j}\bigg),\no
\end{align}
where
\begin{align}\label{htt 49}
G_{\ell}[t,\vx,\vv]:=&\id_{\{t_{\ell+1}=0\}}\vh(\vv_{\ell})z(\vx_{\ell+1},\vv_{\ell})+\id_{\{t_{\ell+1}>0\}}\vh(\vv_{\ell})h(t_{\ell+1},\vx_{\ell+1},\vv_{\ell})\\
&+\int_{0}^{\frac{t_{\ell}-t_{\ell+1}}{\e^2}}\bigg(\vh(\vv_{\ell})S\Big(t_{\ell}-\e^2s,\vx_{\ell}-\e s\vv_{\ell},\vv_{\ell}\Big)\ue^{\nu(\vv_{\ell})s}\bigg)\ud{s}\no\\
H_{\ell}[t,\vx,\vv]:=&\int_{0}^{\frac{t_{\ell}-t_{\ell+1}}{\e^2}}\bigg(K_{\vh(\vv_{\ell})}[g]\Big(t_{\ell}-\e^2s,\vx_{\ell}-\e s\vv_{\ell},\vv_{\ell}\Big)\ue^{\nu(\vv_{\ell})s}\bigg)\ud{s}.
\end{align}
\ \\
Step 2: Estimates of source terms initial terms and boundary terms.\\
We set $k=CT_0^{\frac{5}{4}}$ for $T_0$ defined in Lemma \ref{htt lemma 08}. Consider all terms in \eqref{htt 48} related to $h$ and $S$.

Since $t_1\leq t$, we have
\begin{align}\label{htt 50}
\abs{\id_{\{t_k=0\}}\vh(\vv)z(\vx_1,\vv)\ue^{-\nu(\vv)\frac{t-t_1}{\e^2}}+\id_{\{t_k>0\}}\vh(\vv)h(t_1,\vx_1,\vv)\ue^{-\nu(\vv)\frac{t-t_1}{\e^2}}}\leq \snnm{\vh z}+\ssmt{\vh h}{\gamma_-}.
\end{align}
Also,
\begin{align}\label{htt 51}
\abs{\int_{0}^{\frac{t-t_1}{\e^2}}\vh(\vv) S\Big(t-\e^2s,\vx-\e s\vv,\vv\Big)\ue^{-\nu(\vv)s}\ud{s}}
\leq& \snnmt{\nu^{-1}\vh S}\abs{\int_{0}^{\frac{t-t_1}{\e^2}}\nu(\vv)\ue^{-\nu(\vv)s}\ud{s}}
\leq \snnmt{\nu^{-1}\vh S}.
\end{align}
Then we turn to terms defined in $G_{\ell}$ of \eqref{htt 49}. Noting that $\dfrac{1}{\tvh}\ls 1$, we know
\begin{align}\label{htt 52}
&\abs{\frac{\ue^{-\nu(\vv)\frac{t-t_1}{\e^2}}}{\tvh(\vv)}\sum_{\ell=1}^{k-1}\int_{\prod_{j=1}^{\ell}\nn_j}
\id_{\{t_{\ell+1}=0\}}\vh(\vv_{\ell})z(\vx_{\ell+1},\vv_{\ell})\tvh(\vv_{\ell})
\bigg(\prod_{j=1}^{\ell}\ue^{-\nu(\vv_j)\frac{t_j-t_{j+1}}{\e^2}}\ud{\sigma_j}\bigg)}\\
\ls&\snnm{\vh z}\abs{\sum_{\ell=1}^{k-1}\int_{\prod_{j=1}^{\ell}\nn_j}\tvh(\vv_\ell)\prod_{j=1}^{\ell}\ud{\sigma_j}}
\ls\snnm{\vh z}\abs{\sum_{\ell=1}^{k-1}\int_{\nn_{\ell}}\tvh(\vv_\ell)\ud{\sigma_\ell}}\ls CT_0^{\frac{5}{4}}\snnm{\vh z},\no
\end{align}
and
\begin{align}\label{htt 53}
&\abs{\frac{\ue^{-\nu(\vv)\frac{t-t_1}{\e^2}}}{\tvh(\vv)}\sum_{\ell=1}^{k-1}\int_{\prod_{j=1}^{\ell}\nn_j}
\id_{\{t_{\ell+1}>0\}}\vh(\vv_{\ell})h(t_{\ell+1},\vx_{\ell+1},\vv_{\ell})\tvh(\vv_{\ell})
\bigg(\prod_{j=1}^{\ell}\ue^{-\nu(\vv_j)\frac{t_j-t_{j+1}}{\e^2}}\ud{\sigma_j}\bigg)}\\
\ls&\ssmt{\vh h}{\gamma_-}\abs{\sum_{\ell=1}^{k-1}\int_{\prod_{j=1}^{\ell}\nn_j}\tvh(\vv_\ell)\prod_{j=1}^{\ell}\ud{\sigma_j}}
\ls\ssmt{\vh h}{\gamma_-}\abs{\sum_{\ell=1}^{k-1}\int_{\nn_{\ell}}\tvh(\vv_\ell)\ud{\sigma_\ell}}\ls CT_0^{\frac{5}{4}}\ssmt{\vh h}{\gamma_-}.\no
\end{align}
Similarly,
\begin{align}\label{htt 54}
\\
&\abs{\frac{\ue^{-\nu(\vv)\frac{t-t_1}{\e^2}}}{\tvh(\vv)}\sum_{\ell=1}^{k-1}\int_{\prod_{j=1}^{\ell}\nn_j}
\int_{0}^{\frac{t_{\ell}-t_{\ell+1}}{\e^2}}\bigg(\vh(\vv_{\ell})S\Big(t_{\ell}-\e^2s,\vx_{\ell}-\e s\vv_{\ell},\vv_{\ell}\Big)\ue^{\nu(\vv_{\ell})s}\bigg)\ud{s}\tvh(\vv_{\ell})
\bigg(\prod_{j=1}^{\ell}\ue^{-\nu(\vv_j)\frac{t_j-t_{j+1}}{\e^2}}\ud{\sigma_j}\bigg)}\no\\
\ls&\snnmt{\nu^{-1}\vh S}\sum_{\ell=1}^{k-1}\int_{\prod_{j=1}^{\ell}\nn_j}\abs{\int_{0}^{\frac{t_{\ell}-t_{\ell+1}}{\e^2}}
\nu(\vv_{\ell})\ue^{\nu(\vv_{\ell})\left(s-\frac{t_{\ell}-t_{\ell+1}}{\e^2}\right)}\ud{s}}\tvh(\vv_\ell)
\prod_{j=1}^{\ell}\ud{\sigma_j}\bigg)\ls CT_0^{\frac{5}{4}}\snnmt{\nu^{-1}\vh S}.\no
\end{align}
Collecting all terms in \eqref{htt 50}, \eqref{htt 51}, \eqref{htt 52}, \eqref{htt 53} and \eqref{htt 54}, we have
\begin{align}\label{htt 74}
\text{Initial Term and Boundary Term Contribution}&\ls CT_0^{\frac{5}{4}}\Big(\snnm{\vh z}+\ssmt{\vh h}{\gamma_-}\Big)\\
&\ls \snnm{\vh z}+\ssmt{\vh h}{\gamma_-},\no
\end{align}
and
\begin{align}\label{htt 75}
\text{Source Term Contribution}\ls CT_0^{\frac{5}{4}}\snnmt{\nu^{-1}\vh S}\ls \snnmt{\nu^{-1}\vh S}.
\end{align}
\ \\
Step 3: Estimates of Multiple Reflection.\\
We focus on the last term in \eqref{htt 48}, which can be decomposed based on accumulative time $t_{k+1}$:
\begin{align}
&\abs{\frac{\ue^{-\nu(\vv)\frac{t-t_1}{\e^2}}}{\tvh(\vv)}\int_{\prod_{j=1}^{k}\nn_j}g(t_k,\vx_k,\vv_k)\tvh(\vv_{k})
\bigg(\prod_{j=1}^{k}\ue^{-\nu(\vv_{j})\frac{t_j-t_{j+1}}{\e^2}}\ud{\sigma_j}\bigg)}\\
\leq&\abs{\frac{\ue^{-\nu(\vv)\frac{t-t_1}{\e^2}}}{\tvh(\vv)}\int_{\prod_{j=1}^{k}\nn_j}\id_{\{\frac{t-t_k}{\e^2}\leq\frac{T_0}{\e}\}}g(t_k,\vx_k,\vv_k)\tvh(\vv_{k})
\bigg(\prod_{j=1}^{k}\ue^{-\nu(\vv_{j})\frac{t_j-t_{j+1}}{\e^2}}\ud{\sigma_j}\bigg)}\no\\
&+\abs{\frac{\ue^{-\nu(\vv)\frac{t-t_1}{\e^2}}}{\tvh(\vv)}\int_{\prod_{j=1}^{k}\nn_j}\id_{\{\frac{t-t_k}{\e^2}\geq\frac{T_0}{\e}\}}g(t_k,\vx_k,\vv_k)\tvh(\vv_{k})
\bigg(\prod_{j=1}^{k}\ue^{-\nu(\vv_{j})\frac{t_j-t_{j+1}}{\e^2}}\ud{\sigma_j}\bigg)}:=J_1+J_2.\no
\end{align}
Based on Lemma \ref{htt lemma 08}, we have
\begin{align}\label{htt 72}
J_1\ls&\snnmt{g}\abs{\int_{\Pi_{j=1}^{k-1}\nn_j}\id_{\{\frac{t-t_k}{\e^2}\leq\frac{T_0}{\e}\}}\bigg(\int_{\nn_k}\tvh(\vv_{k})\ud\sigma_k\bigg)
\bigg(\prod_{j=1}^{k-1}\ud{\sigma_j}\bigg)}\\
\ls&\snnmt{g}\abs{\int_{\Pi_{j=1}^{k-1}\nn_j}\id_{\{\frac{t-t_k}{\e^2}\leq\frac{T_0}{\e}\}}
\bigg(\prod_{j=1}^{k-1}\ud{\sigma_j}\bigg)}\ls \bigg(\frac{1}{2}\bigg)^{C_2T_0^{\frac{5}{4}}}\snnmt{g}.\no
\end{align}
On the other hand, when $t_k$ is large, the exponential terms become extremely small, so we obtain
\begin{align}\label{htt 73}
J_2\ls&\snnmt{g}\abs{\ue^{-\nu(\vv)\frac{t-t_1}{\e^2}}\int_{\Pi_{j=1}^{k-1}\nn_j}\id_{\{\frac{t-t_k}{\e^2}\geq\frac{T_0}{\e}\}}\bigg(\int_{\nn_k}\tvh(\vv_{k})\ud\sigma_k\bigg)
\bigg(\prod_{j=1}^{k-1}\ue^{-\nu(\vv_{j})\frac{t_j-t_{j+1}}{\e^2}}\ud{\sigma_j}\bigg)}\\
\ls&\snnmt{g}\abs{\ue^{-\nu(\vv)\frac{t-t_1}{\e^2}}\int_{\Pi_{j=1}^{k-1}\nn_j}\id_{\{\frac{t-t_k}{\e^2}\geq\frac{T_0}{\e}\}}
\bigg(\prod_{j=1}^{k-1}\ue^{-\nu(\vv_{j})\frac{t_j-t_{j+1}}{\e^2}}\ud{\sigma_j}\bigg)}\ls \ue^{-\frac{T_0}{\e}}\snnmt{g}.\no
\end{align}
Summarizing \eqref{htt 72} and \eqref{htt 73}, we get for $\d$ arbitrarily small
\begin{align}\label{htt 76}
\text{Multiple Reflection Term Contribution}\ls \d \snnm{g}.
\end{align}
\ \\
Step 4: Estimates of $K_{\vh}$ terms.\\
So far, the only remaining terms in \eqref{htt 48} are related to $K_{\vh}$. We focus on
\begin{align}
\abs{\int_{0}^{\frac{t-t_1}{\e^2}}K_{\vh(\vv)}[g]\Big(t-\e^2s,\vx-\e s\vv,\vv\Big)\ue^{-\nu(\vv)
s}\ud{s}}&\ls \snnmt{K_{\vh(\vv)}[g]\Big(t-\e^2s,\vx-\e s\vv,\vv\Big)}.
\end{align}
Denote $T(s;t,\vx,\vv):=t-\e^2s$ and $X(s;t,\vx,\vv):=\vx-\e(t_1-s)\vv$. Define the back-time stochastic cycle from $(T,X,\vv')$ as $(t_i',\vx_i',\vv_i')$ with $(t_0',\vx_0',\vv_0')=(T,X,\vv')$. Then we can rewrite $K_{\vh}$ along the stochastic cycle as \eqref{htt 48}
\begin{align}\label{htt 77}
&\abs{K_{\vh(\vv)}[g]\Big(t-\e^2s,\vx-\e(t_1-s)\vv,\vv\Big)}=\abs{K_{\vh(\vv)}[g](T,X,\vv)}=\abs{\int_{\r^3}k_{\vh(\vv)}(\vv,\vv')g(T,X,\vv')\ud{\vv'}}\\
\leq&\abs{\int_{\r^3}\int_{0}^{\frac{T-t_1'}{\e^2}}k_{\vh(\vv)}(\vv,\vv')K_{\vh(\vv')}[g]\Big(T-\e^2r,X-\e r\vv',\vv'\Big)\ue^{-\nu(\vv')r}\ud{r}\ud{\vv'}}\no\\
&+\abs{\int_{\r^3}\frac{\ue^{-\nu(\vv')\frac{T-t_1'}{\e^2}}}{\tvh(\vv')}\sum_{\ell=1}^{k-1}\int_{\prod_{j=1}^{\ell}\nn_j'}k_{\vh(\vv)}(\vv,\vv')H_{\ell}[T,X,\vv']\tvh(\vv_{\ell}')
\bigg(\prod_{j=1}^{\ell}\ue^{-\nu(\vv_j')\frac{t_j'-t_{j+1}'}{\e^2}}\ud{\sigma_j'}\bigg)\ud{\vv'}}\no\\
&+\abs{\int_{\r^3}k_{\vh(\vv)}(\vv,\vv')\Big(\text{initial terms + boundary terms + source terms + multiple reflection terms}\Big)\ud{\vv'}}\no\\
:=&I+II+III.\no
\end{align}
Using estimates \eqref{htt 74}, \eqref{htt 75}, \eqref{htt 76} from Step 2 and Step 3, and Lemma \ref{Regularity lemma 1'}, we can bound $III$ directly
\begin{align}\label{htt 78}
III\ls \snnm{\vh z}+\ssmt{\vh h}{\gamma_-}+\snnmt{\nu^{-1}\vh S}+\d \snnmt{g}.
\end{align}
$I$ and $II$ are much more complicated. We may further rewrite $I$ as
\begin{align}
I=&\abs{\int_{\r^3}\int_{\r^3}\int_{0}^{\frac{T-t_1'}{\e^2}}k_{\vh(\vv)}(\vv,\vv')k_{\vh(\vv')}(\vv',\vv'')g\Big(T-\e^2r,X-\e r\vv',\vv''\Big)\ue^{-\nu(\vv')
r}\ud{r}\ud{\vv'}\ud{\vv''}},
\end{align}
which will estimated in four cases:
\begin{align}
I:=I_1+I_2+I_3+I_4.
\end{align}
\ \\
Case I: $I_1:$ $\abs{\vv}\geq N$.\\
Based on Lemma \ref{Regularity lemma 1'}, we have
\begin{align}
\abs{\int_{\r^3}\int_{\r^3}k_{\vh(\vv)}(\vv,\vv')k_{\vh(\vv')}(\vv',\vv'')\ud{\vv'}\ud{\vv''}}\ls\frac{1}{1+\abs{\vv}}\ls\frac{1}{N}.
\end{align}
Hence, we get
\begin{align}\label{htt 87}
I_1\ls\frac{1}{N}\snnmt{g}.
\end{align}
\ \\
Case II: $I_2:$ $\abs{\vv}\leq N$, $\abs{\vv'}\geq2N$, or $\abs{\vv'}\leq
2N$, $\abs{\vv''}\geq3N$.\\
Notice this implies either $\abs{\vv'-\vv}\geq N$ or
$\abs{\vv'-\vv''}\geq N$. Hence, either of the following is valid
correspondingly:
\begin{align}
\abs{k_{\vh(\vv)}(\vv,\vv')}\leq& C\ue^{-\d N^2}\abs{k_{\vh(\vv)}(\vv,\vv')}\ue^{\d\abs{\vv-\vv'}^2},\\
\abs{k_{\vh(\vv')}(\vv',\vv'')}\leq& C\ue^{-\d N^2}\abs{k_{\vh(\vv')}(\vv',\vv'')}\ue^{\d\abs{\vv'-\vv''}^2}.
\end{align}
Based on Lemma \ref{Regularity lemma 1'}, we know
\begin{align}
\int_{\r^3}\abs{k_{\vh(\vv)}(\vv,\vv')}\ue^{\d\abs{\vv-\vv'}^2}\ud{\vv'}<&\infty,\\
\int_{\r^3}\abs{k_{\vh(\vv')}(\vv',\vv'')}\ue^{\d\abs{\vv'-\vv''}^2}\ud{\vv''}<&\infty.
\end{align}
Hence, we have
\begin{align}\label{htt 88}
I_2\ls \ue^{-\d N^2}\snnmt{g}.
\end{align}
\ \\
Case III: $I_3:$ $0\leq r\leq\d$ and $\abs{\vv}\leq N$, $\abs{\vv'}\leq 2N$, $\abs{\vv''}\leq 3N$.\\
In this case, since the integral with respect to $r$ is restricted in a very short interval, there is a small contribution as
\begin{align}\label{htt 89}
I_3\ls\abs{\int_{0}^{\d}\ue^{-r}\ud{r}}\snnmt{g}\ls \d\snnmt{g}.
\end{align}
\ \\
Case IV: $I_4:$ $r\geq\d$ and $\abs{\vv}\leq N$, $\abs{\vv'}\leq 2N$, $\abs{\vv''}\leq 3N$.\\
This is the most complicated case. Since $k_{\vh(\vv)}(\vv,\vv')$ has
possible integrable singularity of $\dfrac{1}{\abs{\vv-\vv'}}$, we can
introduce the truncated kernel $k_N(\vv,\vv')$ which is smooth and has compactly supported range such that
\begin{align}\label{htt 79}
\sup_{\abs{\vv}\leq 3N}\int_{\abs{\vv'}\leq
3N}\abs{k_N(\vv,\vv')-k_{\vh(\vv)}(\vv,\vv')}\ud{\vv'}\leq\frac{1}{N}.
\end{align}
Then we can split
\begin{align}\label{htt 80}
k_{\vh(\vv)}(\vv,\vv')k_{\vh(\vv')}(\vv',\vv'')=&k_N(\vv,\vv')k_N(\vv',\vv'')
+\bigg(k_{\vh(\vv)}(\vv,\vv')-k_N(\vv,\vv')\bigg)k_{\vh(\vv')}(\vv',\vv'')\\
&+\bigg(k_{\vh(\vv')}(\vv',\vv'')-k_N(\vv',\vv'')\bigg)k_N(\vv,\vv').\no
\end{align}
This means that we further split $I_4$ into
\begin{align}
I_4:=I_{4,1}+I_{4,2}+I_{4,3}.
\end{align}
Based on \eqref{htt 79}, we have
\begin{align}\label{htt 82}
I_{4,2}\ls&\frac{1}{N}\snnm{g},\quad I_{4,3}\ls\frac{1}{N}\snnm{g}.
\end{align}
Therefore, the only remaining term is $I_{4,1}$. Note that we always have $X-\e r\vv'\in\Omega$. Hence, we define the change of variable $\vv'\rt y$ as
$y=(y_1,y_2,y_3)=X-\e r\vv'$. Then the Jacobian
\begin{align}\label{htt 81}
\abs{\frac{\ud{y}}{\ud{\vv'}}}=\abs{\left\vert\begin{array}{ccc}
-\e r&0&0\\
0&-\e r&0\\
0&0&-\e r
\end{array}\right\vert}=\e^3 r^3\geq \e^3\d^3.
\end{align}
Considering $\abs{\vv},\abs{\vv'},\abs{\vv''}\leq 3N$, we know $\abs{g}\ls\abs{f}$. Also, since $k_N$ is bounded, we estimate
\begin{align}\label{htt 85}
I_{4,1}\ls&\int_{0}^{\frac{T-t_1'}{\e^2}}\int_{\abs{\vv'}\leq2N}\int_{\abs{\vv''}\leq3N}
\id_{\{X-\e r\vv'\in\Omega\}}\abs{f(T-\e^2r, X-\e r\vv',\vv'')}\ue^{-\nu(\vv')r}\ud{r}\ud{\vv'}\ud{\vv''}.
\end{align}
Using the decomposition $f=\pk[f]+(\ik-\pk)[f]$, \eqref{htt 81} and H\"older's inequality, we estimate them separately,
\begin{align}\label{htt 83}
&\int_{0}^{\frac{T-t_1'}{\e^2}}\int_{\abs{\vv'}\leq2N}\int_{\abs{\vv''}\leq3N}
\id_{\{X-\e r\vv'\in\Omega\}}\abs{\pk[f](T-\e^2r,X-\e r\vv',\vv'')}\ue^{-\nu(\vv')
r}\ud{r}\ud{\vv'}\ud{\vv''}\\
\leq&\int_{0}^{\frac{T-t_1'}{\e^2}}\bigg(\int_{\abs{\vv'}\leq2N}\int_{\abs{\vv''}\leq3N}
\id_{\{X-\e r\vv'\in\Omega\}}\ud{\vv'}\ud{\vv''}\bigg)^{\frac{2m-1}{2m}}\no\\
&\times\bigg(\int_{\abs{\vv'}\leq2N}\int_{\abs{\vv''}\leq3N}
\id_{\{X-\e r\vv'\in\Omega\}}\Big(\pk[f]\Big)^{2m}(T-\e^2r,X-\e r\vv',\vv'')\ue^{-\nu(\vv')
r}\ud{\vv'}\ud{\vv''}\bigg)^{\frac{1}{2m}}\ue^{-r}\ud r\no\\
\ls&\int_{0}^{\frac{T-t_1'}{\e^2}}\bigg(\frac{1}{\e^3\d^3}\int_{\abs{\vv''}\leq3N}\int_{\Omega}\id_{\{
y\in\Omega\}}\Big(\pk[f]\Big)^{2m}(T-\e^2r,y,\vv'')\ud{y}\ud{\vv''}\bigg)^{\frac{1}{2m}}\ue^{-r}\ud{r}\ls \frac{1}{\e^{\frac{3}{2m}}\d^{\frac{3}{2m}}}\sup_{[0,T]}\pnnm{\pk[f(t)]}{2m},\no
\end{align}
and
\begin{align}\label{htt 84}
&\int_{0}^{\frac{T-t_1'}{\e^2}}\int_{\abs{\vv'}\leq2N}\int_{\abs{\vv''}\leq3N}
\id_{\{X-\e r\vv'\in\Omega\}}\abs{(\ik-\pk)[f](T-\e^2r,X-\e r\vv',\vv'')}\ue^{-\nu(\vv')r}\ud{\vv'}\ud{\vv''}\ud{r}\\
\leq&\int_{0}^{\frac{T-t_1'}{\e^2}}\bigg(\int_{\abs{\vv'}\leq2N}\int_{\abs{\vv''}\leq3N}
\id_{\{X-\e r\vv'\in\Omega\}}\ud{\vv'}\ud{\vv''}\bigg)^{\frac{1}{2}}\no\\
&\times\bigg(\int_{\abs{\vv'}\leq2N}\int_{\abs{\vv''}\leq3N}
\id_{\{X-\e r\vv'\in\Omega\}}\Big((\ik-\pk)[f]\Big)^{2}(T-\e^2r,X-\e r\vv',\vv'')\ud{\vv'}\ud{\vv''}\bigg)^{\frac{1}{2}}\ue^{-r}\ud{r}\no\\
\ls&\int_{0}^{\frac{T-t_1'}{\e^2}}\bigg(\frac{1}{\e^3\d^3}\int_{\abs{\vv''}\leq3N}
\int_{\Omega}\id_{\{y\in\Omega\}}\Big((\ik-\pk)[f]\Big)^{2}(T-\e^2r,y,\vv'')\ud{y}\ud{\vv''}\bigg)^{\frac{1}{2}}\ue^{-r}\ud{r}
\ls \frac{1}{\e^{\frac{3}{2}}\d^{\frac{3}{2}}}\sup_{[0,T]}\tnnm{(\ik-\pk)[f(t)]}.\no
\end{align}
Inserting \eqref{htt 83} and \eqref{htt 84} into \eqref{htt 85}, we obtain
\begin{align}\label{htt 86}
I_{4,1}\ls \frac{1}{\e^{\frac{3}{2m}}\d^{\frac{3}{2m}}}\sup_{[0,T]}\pnnm{\pk[f(t)]}{2m}+\frac{1}{\e^{\frac{3}{2}}\d^{\frac{3}{2}}}\sup_{[0,T]}\tnnm{(\ik-\pk)[f(t)]}.
\end{align}
Combined with \eqref{htt 82}, we know
\begin{align}\label{htt 90}
I_4\ls \frac{1}{N}\snnmt{g}+\frac{1}{\e^{\frac{3}{2m}}\d^{\frac{3}{2m}}}\sup_{[0,T]}\pnnm{\pk[f(t)]}{2m}+\frac{1}{\e^{\frac{3}{2}}\d^{\frac{3}{2}}}\sup_{[0,T]}\tnnm{(\ik-\pk)[f(t)]}.
\end{align}
Summarizing all four cases in \eqref{htt 87}, \eqref{htt 88}, \eqref{htt 89} and \eqref{htt 90}, we obtain
\begin{align}
I\ls \bigg(\frac{1}{N}+\ue^{-\d N^2}+\d\bigg)\snnmt{g}+\frac{1}{\e^{\frac{3}{2m}}\d^{\frac{3}{2m}}}\sup_{[0,t]}\pnnm{\pk[f(t)]}{2m}+\frac{1}{\e^{\frac{3}{2}}\d^{\frac{3}{2}}}\sup_{[0,t]}\tnnm{(\ik-\pk)[f(t)]}.
\end{align}
Choosing $\d$ sufficiently small and then taking $N$ sufficiently large, we have
\begin{align}\label{htt 91}
I\ls \d\snnmt{g}+\frac{1}{\e^{\frac{3}{2m}}\d^{\frac{3}{2m}}}\sup_{[0,t]}\pnnm{\pk[f(t)]}{2m}+\frac{1}{\e^{\frac{3}{2}}\d^{\frac{3}{2}}}\sup_{[0,t]}\tnnm{(\ik-\pk)[f(t)]}.
\end{align}
By a similar but tedious computation, we arrive at
\begin{align}\label{htt 92}
II\ls \d\snnmt{g}+\frac{1}{\e^{\frac{3}{2m}}\d^{\frac{3}{2m}}}\sup_{[0,t]}\pnnm{\pk[f(t)]}{2m}+\frac{1}{\e^{\frac{3}{2}}\d^{\frac{3}{2}}}\sup_{[0,t]}\tnnm{(\ik-\pk)[f(t)]}.
\end{align}
Combined with \eqref{htt 78}, we have
\begin{align}
&\abs{\int_{0}^{\frac{t-t_1}{\e^2}}K_{\vh(\vv)}[g]\Big(t-\e^2s,\vx-\e s\vv,\vv\Big)\ue^{-\nu(\vv)s}\ud{s}}\\
\ls&\d\snnmt{g}+\frac{1}{\e^{\frac{3}{2m}}\d^{\frac{3}{2m}}}\sup_{[0,t]}\pnnm{\pk[f(t)]}{2m}+\frac{1}{\e^{\frac{3}{2}}\d^{\frac{3}{2}}}\sup_{[0,t]}\tnnm{(\ik-\pk)[f(t)]}
+\snnm{\vh z}+\ssmt{\vh h}{\gamma_-}+\snnmt{\nu^{-1}\vh S}.\no
\end{align}
All the other terms in \eqref{htt 48} related to $K_{\vh}$ can be estimated in a similar fashion. At the end of the day, we have
\begin{align}\label{htt 93}
&K_{\vh}\ \text{term contribution}\\
\ls&\d\snnmt{g}+\frac{1}{\e^{\frac{3}{2m}}\d^{\frac{3}{2m}}}\sup_{[0,t]}\pnnm{\pk[f(t)]}{2m}+\frac{1}{\e^{\frac{3}{2}}\d^{\frac{3}{2}}}\sup_{[0,t]}\tnnm{(\ik-\pk)[f(t)]}
+\snnm{\vh z}+\ssmt{\vh h}{\gamma_-}+\snnmt{\nu^{-1}\vh S}.\no
\end{align}
\ \\
Step 5: Synthesis.\\
Summarizing all above and inserting \eqref{htt 74}, \eqref{htt 75}, \eqref{htt 76} and \eqref{htt 93} into \eqref{htt 48}, we obtain for any $(t,\vx,\vv)\in\rp\times\bar\Omega\times\r^3$,
\begin{align}\label{dtt 2}
\abs{g(t,\vx,\vv)}\ls&\d\snnmt{g}+\frac{1}{\e^{\frac{3}{2m}}\d^{\frac{3}{2m}}}\sup_{[0,t]}\pnnm{\pk[f(t)]}{2m}
+\frac{1}{\e^{\frac{3}{2}}\d^{\frac{3}{2}}}\sup_{[0,t]}\tnnm{(\ik-\pk)[f(t)]}\\
&+\snnm{\vh z}+\ssmt{\vh h}{\gamma_-}+\snnmt{\nu^{-1}\vh S}.\no
\end{align}
Taking supremum over $[0,t]\times\gamma_+$ in \eqref{dtt 2}, we have
\begin{align}
\sup_{[0,t]}\ssm{g(t)}{\gamma_+}\ls&\d\snnmt{g}+\frac{1}{\e^{\frac{3}{2m}}\d^{\frac{3}{2m}}}\sup_{[0,t]}\pnnm{\pk[f(t)]}{2m}
+\frac{1}{\e^{\frac{3}{2}}\d^{\frac{3}{2}}}\sup_{[0,t]}\tnnm{(\ik-\pk)[f(t)]}\\
&+\snnm{\vh z}+\ssmt{\vh h}{\gamma_-}+\snnmt{\nu^{-1}\vh S}.\no
\end{align}
Based on Theorem \ref{LN estimate'.}, for $\dfrac{3}{2}< m<3$, we obtain
\begin{align}
\sup_{[0,t]}\ssm{g(t)}{\gamma_+}\ls&\d\snnmt{g}+o(1)\Big(\sup_{[0,t]}\ssm{f(t)}{\gamma_+}+\sup_{[0,t]}\snnm{f(t)}\Big)+E\\
\ls&\d\snnm{g}+o(1)\Big(\sup_{[0,t]}\ssm{g(t)}{\gamma_+}+\sup_{[0,t]}\snnm{g(t)}\Big)+E,\no
\end{align}
where
\begin{align}
\\
E:=&\frac{1}{\e^{2+\frac{3}{2m}}}\pnnm{\pk[S(t)]}{\frac{2m}{2m-1}}+\frac{1}{\e^{1+\frac{3}{2m}}}\tnnm{\snn(\ik-\pk)[S(t)]}
+\frac{1}{\e^{2+\frac{3}{2m}}}\tnnmt{\pk[\dt S]}+\frac{1}{\e^{1+\frac{3}{2m}}}\tnnmt{\snn(\ik-\pk)[\dt S]}\no\\
&+\snnmt{\nu^{-1}\vh S}+\frac{1}{\e^{\frac{3}{2m}}}\pnm{h(t)}{\gamma_-}{\frac{4m}{3}}+\frac{1}{\e^{1+\frac{3}{2m}}}\tsm{h(t)}{\gamma_-}
+\frac{1}{\e^{1+\frac{3}{2m}}}\tsmt{\dt h}{\gamma_-}+\ssmt{\vh h}{\gamma_-}\no\\
&+\frac{1}{\e^{2+\frac{3}{2m}}}\tnnm{\nu z}+\frac{1}{\e^{1+\frac{3}{2m}}}\tnnm{\vv\cdot\nx z}+\snnm{\vh z}+\frac{1}{\e^{2+\frac{3}{2m}}}\tnnm{S(0)}.\no
\end{align}
Absorbing $o(1)\sup_{[0,t]}\ssm{g(t)}{\gamma_+}$ into the left-hand side, we have
\begin{align}\label{dtt 1}
\sup_{[0,t]}\ssm{g(t)}{\gamma_+}\ls \d\snnmt{g}+o(1)\sup_{[0,t]}\snnm{g(t)}+E.
\end{align}
On the other hand, taking supremum over $[0,t]\times\Omega\times\r^3$ in \eqref{dtt 2}, we have
\begin{align}
\sup_{[0,t]}\snnm{g(t)}\ls& \d\snnmt{g}+\frac{1}{\e^{\frac{3}{2m}}\d^{\frac{3}{2m}}}\sup_{[0,t]}\pnnm{\pk[f(t)]}{2m}
+\frac{1}{\e^{\frac{3}{2}}\d^{\frac{3}{2}}}\sup_{[0,t]}\tnnm{(\ik-\pk)[f(t)]}\\
&+\snnm{\vh z}+\ssmt{\vh h}{\gamma_-}+\snnmt{\nu^{-1}\vh S}.\no
\end{align}
Based on Theorem \ref{LN estimate'.}, we obtain
\begin{align}
\sup_{[0,t]}\snnm{g(t)}\ls&\d\snnmt{g}+o(1)\Big(\sup_{[0,t]}\ssm{g(t)}{\gamma_+}+\sup_{[0,t]}\snnm{g(t)}\Big)+E.
\end{align}
Absorbing $\d\snnmt{g}$ and $o(1)\sup_{[0,t]}\snnm{g(t)}$ into the left-hand side, we have
\begin{align}\label{dtt 3}
\sup_{[0,t]}\snnm{g(t)}\ls&o(1)\sup_{[0,t]}\ssm{g(t)}{\gamma_+}+E.
\end{align}
Inserting \eqref{dtt 1} into \eqref{dtt 3}, and absorbing $\d\snnmt{g}$ and $o(1)\snnmt{g}$ into the left-hand side, we get
\begin{align}
\sup_{[0,t]}\snnm{g(t)}\ls E.
\end{align}
Then \eqref{dtt 1} implies
\begin{align}
\sup_{[0,t]}\ssm{g(t)}{\gamma_+}\ls E.
\end{align}
In summary, we have
\begin{align}
&\snnmt{g}+\ssmt{g}{\gamma_+}\\
\ls&\frac{1}{\e^{2+\frac{3}{2m}}}\pnnm{\pk[S(t)]}{\frac{2m}{2m-1}}+\frac{1}{\e^{1+\frac{3}{2m}}}\tnnm{\snn(\ik-\pk)[S(t)]}
+\frac{1}{\e^{2+\frac{3}{2m}}}\tnnmt{\pk[\dt S]}+\frac{1}{\e^{1+\frac{3}{2m}}}\tnnmt{\snn(\ik-\pk)[\dt S]}\no\\
&+\snnmt{\nu^{-1}\vh S}+\frac{1}{\e^{\frac{3}{2m}}}\pnm{h(t)}{\gamma_-}{\frac{4m}{3}}+\frac{1}{\e^{1+\frac{3}{2m}}}\tsm{h(t)}{\gamma_-}
+\frac{1}{\e^{1+\frac{3}{2m}}}\tsmt{\dt h}{\gamma_-}+\ssmt{\vh h}{\gamma_-}\no\\
&+\frac{1}{\e^{2+\frac{3}{2m}}}\tnnm{\nu z}+\frac{1}{\e^{1+\frac{3}{2m}}}\tnnm{\vv\cdot\nx z}+\snnm{\vh z}+\frac{1}{\e^{2+\frac{3}{2m}}}\tnnm{S(0)}.\no
\end{align}
Then our result naturally follows.
\end{proof}

\begin{remark}
In the above proof, we use the trace $\snnm{g(t)}$, $\ssm{g(t)}{\gamma_+}$ and $\ssmt{g}{\gamma_+}$ interchangeably with $\snnmt{g}$ to perform absorbing argument. Roughly speaking, we track the solution using mild formulation, so it is always continuous along the characteristics, which covers the whole domain $\rp\times\Omega\times\r^3$, so $\ssmt{g}{\gamma_+}$ will control all the rest. To be more precise, it actually relies on Ukai's trace theorem in \cite{Ukai1986}, which says that for transport operator $\dt+\vv\cdot\nx$, such traces are always well-defined and controllable.
\end{remark}

\begin{remark}[Exponential Decay]\label{exponential remark}
Define $\tilde f=\ue^{K_0t}f$. Then $\tilde f$ satisfies
\begin{align}
\left\{
\begin{array}{l}
\e^2\dt \tilde f+\e\vv\cdot\nx \tilde f+\ll[\tilde f]=\e^2K_0\tilde f+\ue^{K_0t}S(t,\vx,\vv)\ \ \text{in}\ \ \rp\times\Omega\times\r^3,\\\rule{0ex}{1.5em}
\tilde f(0,\vx,\vv)=z(\vx,\vv)\ \ \text{in}\ \ \Omega\times\r^3,\\\rule{0ex}{1.5em}
\tilde f(t,\vx_0,\vv)=\pp[\tilde f](t,\vx_0,\vv)+\ue^{K_0t}h(t,\vx_0,\vv)\ \ \text{on}\ \ \rp\times\gamma_-,
\end{array}
\right.
\end{align}
where
\begin{align}
\pp[\tilde f](t,\vx_0,\vv)=\mh(\vv)
\int_{\vuu\cdot\vn(\vx_0)>0}\m^{\frac{1}{2}}(\vuu)\tilde f(t,\vx_0,\vuu)\abs{\vuu\cdot\vn(\vx_0)}\ud{\vuu}.
\end{align}
The extra term is $\e^2K_0\tilde f$. Thanks to $\e^2$, based on $L^2$ and $L^{2m}$ energy estimates in Lemma \ref{LT estimate'} and Theorem \ref{LN estimate'.}, for $K_0$ small, we can absorb this term into the left-hand side. Therefore, we can recover all estimates as in Theorem \ref{LI estimate'.}.
\end{remark}

\newpage

%%%%%%%%%%%%%%%%%%%%%%%%%%%%%%%%%%%%%%%%%%%%%%%%%%%%%%%%%%%%%%%%%%%%%%%%
\section{Hydrodynamic Limit}%%%%%%%%%%%%%%%%%%%%%%%%%%%%%%%%%%%%%%%%%%%%
%%%%%%%%%%%%%%%%%%%%%%%%%%%%%%%%%%%%%%%%%%%%%%%%%%%%%%%%%%%%%%%%%%%%%%%%

%%%%%%%%%%%%%%%%%%%%%%%%%%%%%%%%%%%%%%%%%%%%%%%%%%%%%%%%%%%%%%%%%%%%%%%%
\subsection{Perturbed Remainder Estimates}
%%%%%%%%%%%%%%%%%%%%%%%%%%%%%%%%%%%%%%%%%%%%%%%%%%%%%%%%%%%%%%%%%%%%%%%%

We consider the perturbed evolutionary Boltzmann equation
\begin{align}\label{nonlinear unsteady}
\left\{
\begin{array}{l}
\e^2\dt f+\e\vv\cdot\nx f+\ll[f]=\Gamma[f,g]+\e^3\Gamma[f,f]+S(t,\vx,\vv)\ \ \text{in}\ \ \rp\times\Omega\times\r^3,\\\rule{0ex}{1.5em}
f(0,\vx,\vv)=z(\vx,\vv)\ \ \text{in}\ \ \Omega\times\r^3,\\\rule{0ex}{1.5em}
f(t,\vx_0,\vv)=\pp[f](t,\vx_0,\vv)+(\mb-\m)\m^{-1}\pp[f]+h(t,\vx_0,\vv)\ \ \text{for}\ \ \vx_0\in\p\Omega\ \
\text{and}\ \ \vv\cdot\vn<0.
\end{array}
\right.
\end{align}
Assume that a priori
\begin{align}\label{perturbed smallness.}
\lnnmvt{g}+\lnnmvt{\dt g}+\lnnmvt{\e^3f}=o(1)\e.
\end{align}

\begin{theorem}\label{LN estimate.}
Assume \eqref{linear unsteady compatibility} and \eqref{linear unsteady normalization} hold. The solution $f(t,\vx,\vv)$ to the equation \eqref{nonlinear unsteady} satisfies
\begin{align}
&\frac{1}{\e^{\frac{1}{2}}}\tsm{(1-\pp)[f(t)]}{\gamma_+}+\frac{1}{\e}\unnm{(\ik-\pk)[f(t)]}+\pnnm{\pk[f(t)]}{2m}\\
&\tnnm{f(t)}+\frac{1}{\e^{\frac{1}{2}}}\tsmt{(1-\pp)[f]}{\gamma_+}+\frac{1}{\e}\unnmt{(\ik-\pk)[f]}+\tnnmt{\pk[f]}\no\\
&+\frac{1}{\e^{\frac{1}{2}}}\tsmt{(1-\pp)[\dt f]}{\gamma_+}+\frac{1}{\e}\unnmt{(\ik-\pk)[\dt f]}+\tnnmt{\pk[\dt f]}\no\\
\ls&o(1)\e^{\frac{3}{2m}}\Big(\ssm{f(t)}{\gamma_+}+\snnm{f(t)}\Big)+\frac{1}{\e^{2}}\pnnm{\pk[S(t)]}{\frac{2m}{2m-1}}+\frac{1}{\e}\tnnm{\snn(\ik-\pk)[S(t)]}\no\\
&+\frac{1}{\e^2}\tnnmt{\pk[S]}+\frac{1}{\e}\tnnmt{\snn(\ik-\pk)[S]}+\frac{1}{\e^2}\tnnmt{\pk[\dt S]}+\frac{1}{\e}\tnnmt{\snn(\ik-\pk)[\dt S]}\no\\
&+\pnm{h(t)}{\gamma_-}{\frac{4m}{3}}+\frac{1}{\e}\tsm{h(t)}{\gamma_-}+\frac{1}{\e}\tsmt{h}{\gamma_-}+\frac{1}{\e}\tsmt{\dt h}{\gamma_-}+\frac{1}{\e^2}\tnnm{\nu z}+\frac{1}{\e}\tnnm{\vv\cdot\nx z}+\frac{1}{\e^2}\tnnm{S(0)}.\no
\end{align}
\end{theorem}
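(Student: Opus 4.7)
The approach mirrors the stationary counterpart (Theorem \ref{LN estimate.} in Chapter 2). The plan is to apply the linear evolutionary $L^{2m}$ estimate, Theorem \ref{LN estimate'.}, directly to \eqref{nonlinear unsteady}, treating $\Gamma[f,g]+\e^{3}\Gamma[f,f]$ as an extra source and $(\mb-\m)\m^{-1}\pp[f]$ as an extra in-flow boundary datum, and then absorb the additional terms using Lemma \ref{nonlinear lemma} together with the smallness hypothesis \eqref{perturbed smallness.} and the boundary smallness \eqref{smallness assumption.}. The main efficiency comes from the observation that $\Gamma[\cdot,\cdot]\in\nk^{\perp}$, so the nonlinear source contributes only through the $(\ik-\pk)$ channels (with no $\pk[S]$ or $\pk[\dt S]$ penalty); this also persists under $\dt$ after expanding $\dt\Gamma[f,g]=\Gamma[\dt f,g]+\Gamma[f,\dt g]$ and $\dt\Gamma[f,f]=2\Gamma[f,\dt f]$.

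First I would apply Theorem \ref{LN estimate'.} with source $S+\Gamma[f,g]+\e^{3}\Gamma[f,f]$ and boundary datum $h+(\mb-\m)\m^{-1}\pp[f]$. The right-hand side then carries the extra contributions
\begin{align*}
\frac{1}{\e}\tnnmt{\snn\Gamma[f,g]}+\e^{2}\tnnmt{\snn\Gamma[f,f]}+\frac{1}{\e}\tnnmt{\snn\dt\Gamma[f,g]}+\e^{2}\tnnmt{\snn\dt\Gamma[f,f]}
\end{align*}
from the source, and a trio of boundary terms involving $\pnm{\cdot}{\gamma_-}{\frac{4m}{3}}$, $\tsm{\cdot}{\gamma_-}$, $\tsmt{\dt\,\cdot}{\gamma_-}$ from the perturbation $(\mb-\m)\m^{-1}\pp[f]$. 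Second I would bound each nonlinear piece by Lemma \ref{nonlinear lemma}: using $\tnnm{\Gamma[f_{1},f_{2}]}\ls\lnnmv{f_{1}}\tnnm{\nu f_{2}}$ (or the symmetric form) and passing the weighted $L^{\infty}$ norm through $\sup_{s,\vx}\tnm{\nu g(s,\vx)}\ls\lnnmvt{g}$, hypothesis \eqref{perturbed smallness.} converts every such term into $o(1)$ multiplied by one of $\tnnmt{\pk[f]}$, $\unnmt{(\ik-\pk)[f]}$, $\tnnmt{\pk[\dt f]}$, $\unnmt{(\ik-\pk)[\dt f]}$; the factor $\e^{3}$ in front of $\Gamma[f,f]$ matches $\lnnmvt{\e^{3}f}=o(1)\e$ so that even the quadratic piece becomes absorbable.

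Third, for the boundary perturbation, \eqref{smallness assumption.} gives $\abs{\bv(\mb-\m)/\mh}\ls o(1)\e$ and the same for the $\dt$-variant, so that
\begin{align*}
\pnm{(\mb-\m)\m^{-1}\pp[f]}{\gamma_-}{\frac{4m}{3}}+\frac{1}{\e}\tsm{(\mb-\m)\m^{-1}\pp[f]}{\gamma_-}\ls o(1)\e\ssm{f(t)}{\gamma_+}+o(1)\tsm{\pp[f](t)}{\gamma_+},
\end{align*}
which is reduced to the already-controlled quantities on the right-hand side exactly as in the stationary proof (cf.\ \eqref{ftt 18}); the time-accumulated $\tsmt{\dt[(\mb-\m)\m^{-1}\pp[f]]}{\gamma_-}$ is handled identically using $\dt\pp[f]=\pp[\dt f]$ and the hypothesized decay of $\dt(\mb-\m)/\mh$. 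The trace $\tsmt{\pp[f]}{\gamma_+}$ is controlled by invoking the near-grazing argument (Lemma \ref{htt lemma 02}) used inside Theorem \ref{LT estimate'}, which after absorption recovers the desired bound. The main obstacle is the time-derivative bookkeeping: one must verify that $\dt f$ obeys the linearized system \eqref{linear unsteady'} with an initial datum carrying nonlinear pieces $\Gamma[z,g(0)]$ and $\e^{3}\Gamma[z,z]$, and check that those extra initial contributions are again controlled by the $o(1)\e$ smallness so that the coupled absorption between the instantaneous and accumulated norms closes. Once this bookkeeping is verified, collecting all absorbable terms on the left yields the stated inequality.
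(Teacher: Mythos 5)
Your strategy is essentially the paper's: apply the linear evolutionary estimate Theorem \ref{LN estimate'.} to \eqref{nonlinear unsteady}, treating $\Gamma[f,g]+\e^3\Gamma[f,f]\in\nk^{\perp}$ as an extra source and $(\mb-\m)\m^{-1}\pp[f]$ as extra boundary data, then absorb via Lemma \ref{nonlinear lemma} with \eqref{perturbed smallness.} and via \eqref{smallness assumption.} with the $\pp[f]$ control coming from the proof of Theorem \ref{LT estimate'}; your bookkeeping for the $\dt f$ initial data (the $\Gamma[z,g(0)]$ and $\e^3\Gamma[z,z]$ pieces, which the smallness reduces to the already-present $\frac{1}{\e^2}\tnnm{\nu z}$ term) is also consistent with what the paper needs.

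There is, however, one concrete omission. Theorem \ref{LN estimate'.} alone controls only the instantaneous $L^{2m}$-level quantities at time $t$ and the time-integrated norms of $\dt f$; it says nothing about the second line of the claimed left-hand side, namely $\tnnm{f(t)}$, $\frac{1}{\e^{\frac{1}{2}}}\tsmt{(1-\pp)[f]}{\gamma_+}$, $\frac{1}{\e}\unnmt{(\ik-\pk)[f]}$ and $\tnnmt{\pk[f]}$. Moreover, your own estimate of $\dt\Gamma[f,g]=\Gamma[\dt f,g]+\Gamma[f,\dt g]$ produces a term of size $o(1)\tnnmt{\sn f}$, and the $L^2$-level boundary perturbation produces $o(1)\tsmt{\pp[f]}{\gamma_-}$; neither can be absorbed unless the accumulated $L^2$ norms of $f$ already sit on the left-hand side. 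So in addition to Theorem \ref{LN estimate'.} you must apply the accumulated $L^2$ estimate of Lemma \ref{LT estimate'} to the perturbed system (whose own extra nonlinear and boundary terms are handled by the same smallness argument) and add the two inequalities before performing the absorption — this is precisely how the paper closes the argument, by adding \eqref{ftt 24.} to \eqref{ftt 04.}. With that ingredient made explicit, your proof closes and coincides with the paper's.
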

\begin{proof}
Since the perturbed term $\Gamma[f,g],\Gamma[f,f]\in\nk^{\perp}$, we apply Theorem \ref{LN estimate'.} to \eqref{nonlinear unsteady} to obtain
\begin{align}\label{ftt 04.}
&\frac{1}{\e^{\frac{1}{2}}}\tsm{(1-\pp)[f(t)]}{\gamma_+}+\frac{1}{\e}\unnm{(\ik-\pk)[f(t)]}+\pnnm{\pk[f(t)]}{2m}\\
&+\frac{1}{\e^{\frac{1}{2}}}\tsmt{(1-\pp)[\dt f]}{\gamma_+}+\frac{1}{\e}\unnmt{(\ik-\pk)[\dt f]}+\tnnmt{\pk[\dt f]}\no\\
\ls&o(1)\e^{\frac{3}{2m}}\Big(\ssm{f(t)}{\gamma_+}+\snnm{f(t)}\Big)\no\\
&+\frac{1}{\e^{2}}\pnnm{\pk[S(t)]}{\frac{2m}{2m-1}}+\frac{1}{\e}\tnnm{\snn(\ik-\pk)[S(t)]}
+\frac{1}{\e^2}\tnnmt{\pk[\dt S]}+\frac{1}{\e}\tnnmt{\snn(\ik-\pk)[\dt S]}\no\\
&+\pnm{h(t)}{\gamma_-}{\frac{4m}{3}}+\frac{1}{\e}\tsm{h(t)}{\gamma_-}+\frac{1}{\e}\tsmt{\dt h}{\gamma_-}+\frac{1}{\e^2}\tnnm{\nu z}+\frac{1}{\e}\tnnm{\vv\cdot\nx z}+\frac{1}{\e^2}\tnnm{S(0)}\no\\
&+\frac{1}{\e}\tnnm{\snn\Gamma[f,g](t)]}+\frac{1}{\e}\tnnmt{\snn\dt\Gamma[f,g]}+\frac{1}{\e}\tnnm{\e^3\snn\Gamma[f,f](t)]}+\frac{1}{\e}\tnnmt{\e^3\snn\dt\Gamma[f,f]}\no\\
&+\pnm{(\mb-\m)\m^{-1}\pp[f(t)]}{\gamma_-}{\frac{4m}{3}}+\frac{1}{\e}\tsm{(\mb-\m)\m^{-1}\pp[f(t)]}{\gamma_-}+\frac{1}{\e}\tsmt{ (\mb-\m)\m^{-1}\pp[\dt f]}{\gamma_-}\no\\
&+\frac{1}{\e^2}\tnnm{\Gamma[f,g](0)}+\frac{1}{\e^2}\tnnm{\e^3\Gamma[f,f](0)}.\no
\end{align}
Also, based on Lemma \ref{LT estimate'}, we have $L^2$ estimate
\begin{align}\label{ftt 24.}
&\tnnm{f(t)}+\frac{1}{\e^{\frac{1}{2}}}\tsmt{(1-\pp)[f]}{\gamma_+}+\frac{1}{\e}\unnmt{(\ik-\pk)[f]}+\tnnmt{\pk[f]}\\
\ls& \frac{1}{\e^2}\tnnmt{\pk[S]}+\frac{1}{\e}\tnnmt{\snn(\ik-\pk)[S]}+\frac{1}{\e}\tsmt{h}{\gamma_-}+\tnnm{z}\no\\
&+\frac{1}{\e}\tnnmt{\snn\Gamma[f,g]]}+\frac{1}{\e}\tnnmt{\e^3\snn\Gamma[f,f]]}++\frac{1}{\e}\tsmt{(\mb-\m)\m^{-1}\pp[f]}{\gamma_-}.\no
\end{align}
%We need add L^2 estimate to the whole L^6 estimate in order to close here. The key is to avoid using L^\infty of \dt f, but we are allowed to use L^2 of \dt f
\ \\
Step 1: Bulk Perturbation Terms.\\
Using Lemma \ref{nonlinear lemma} and \eqref{perturbed smallness.}, we have
\begin{align}\label{ftt 03.}
\frac{1}{\e}\tnnm{\snn\Gamma[f,g](t)}\ls o(1)\tnnm{\sn f(t)}\ls o(1)\unnm{\pk[f(t)]}+o(1)\unnm{(\ik-\pk)[f(t)]}.
\end{align}
Note that direct computation reveals that
\begin{align}
\pnnm{\pk[f(t)]}{2m}\gs\unnm{\pk[f(t)]},
\end{align}
so inserting \eqref{ftt 03.} into \eqref{ftt 04.}, we can absorb $o(1)\unnm{\pk[f(t)]}$ and $o(1)\unnm{(\ik-\pk)[f(t)]}$ into the left-hand side. On the other hand, Using Lemma \ref{nonlinear lemma} and \eqref{perturbed smallness.}, we have
\begin{align}
\frac{1}{\e}\tnnmt{\snn\dt\Gamma[f,g]}&\ls o(1)\tnnmt{\sn f}+o(1)\tnnmt{\sn\dt f}
\end{align}
Then $o(1)\tnnm{\sn f}$ can be handled by $L^2$ estimates and $o(1)\tnnm{\sn\dt f}$ can be absorbed into LHS. Similarly,
\begin{align}
\frac{1}{\e}\tnnm{\e^3\snn\Gamma[f,f](t)}&\ls \frac{1}{\e}\lnnmv{\e^3f(t)}\tnnm{\sn f(t)}\ls o(1)\tnnm{\sn f(t)},\\
\frac{1}{\e}\tnnmt{\e^3\snn\dt\Gamma[f,f]}&\ls \frac{1}{\e}\lnnmvt{\e^3f}\tnnmt{\sn\dt f}\ls o(1)\tnnmt{\sn\dt f}.
\end{align}
Both of them can be absorbed into LHS of \eqref{ftt 04.}. A similar argument justifies the absorbing in \eqref{ftt 24.}\\
\ \\
Step 2: Boundary Perturbation Terms.\\
On the other hand, due to \eqref{smallness assumption.}, we know
\begin{align}
\pnm{(\mb-\m)\m^{-1}\pp[f(t)]}{\gamma_-}{\frac{4m}{3}}&\ls o(1)\e\ssm{f(t)}{\gamma_+},
\end{align}
which can be combined with the corresponding term on the right-hand side of \eqref{ftt 04.}. Also,
\begin{align}
\frac{1}{\e}\tsm{(\mb-\m)\m^{-1}\pp[f(t)]}{\gamma_-}&\ls o(1)\tsm{\pp[f(t)]}{\gamma_-},\\
\frac{1}{\e}\tsmt{ (\mb-\m)\m^{-1}\pp[\dt f]}{\gamma_-}&\ls o(1)\tsmt{\pp[\dt f]}{\gamma_-}.
\end{align}
Note that both of then involve $\pp[f]$, which has been controlled by the proof of Theorem \ref{LT estimate'} (Step 2). Hence, adding \eqref{ftt 24.} to \eqref{ftt 04.} and absorbing all new terms into the LHS, we can close the proof.
\end{proof}

\begin{theorem}\label{LI estimate.}
Assume \eqref{linear unsteady compatibility} and \eqref{linear unsteady normalization} hold. The solution $f(t,\vx,\vv)$ to the equation \eqref{nonlinear unsteady} satisfies for $\vth\geq0$ and $0\leq\varrho<\dfrac{1}{4}$,
\begin{align}
&\lnnmvt{f}+\lsmt{f}{\gamma_+}\\
\ls&\frac{1}{\e^{2+\frac{3}{2m}}}\pnnm{\pk[S(t)]}{\frac{2m}{2m-1}}+\frac{1}{\e^{1+\frac{3}{2m}}}\tnnm{\snn(\ik-\pk)[S(t)]}
+\frac{1}{\e^{2+\frac{3}{2m}}}\tnnmt{\pk[S]}+\frac{1}{\e^{1+\frac{3}{2m}}}\tnnmt{\snn(\ik-\pk)[S]}\no\\
&+\frac{1}{\e^{2+\frac{3}{2m}}}\tnnmt{\pk[\dt S]}+\frac{1}{\e^{1+\frac{3}{2m}}}\tnnmt{\snn(\ik-\pk)[\dt S]}+\lnnmvt{\nu^{-1}S}\no\\
&+\frac{1}{\e^{\frac{3}{2m}}}\pnm{h(t)}{\gamma_-}{\frac{4m}{3}}+\frac{1}{\e^{1+\frac{3}{2m}}}\tsm{h(t)}{\gamma_-}
+\frac{1}{\e^{1+\frac{3}{2m}}}\tsmt{h}{\gamma_-}+\frac{1}{\e^{1+\frac{3}{2m}}}\tsmt{\dt h}{\gamma_-}+\lsmt{h}{\gamma_-}\no\\
&+\frac{1}{\e^{2+\frac{3}{2m}}}\tnnm{\nu z}+\frac{1}{\e^{1+\frac{3}{2m}}}\tnnm{\vv\cdot\nx z}+\lnnmv{z}+\frac{1}{\e^{2+\frac{3}{2m}}}\tnnm{S(0)}.\no
\end{align}
\end{theorem}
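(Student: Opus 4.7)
The approach will mirror the stationary counterpart (Theorem~\ref{LI estimate.} of Chapter~2), using the linear evolutionary $L^{\infty}$ bound Theorem~\ref{LI estimate'.} as a black box and absorbing the nonlinear bulk perturbation, the boundary perturbation, and the nonlinear initial-value correction with the help of Lemma~\ref{nonlinear lemma}, the a priori smallness \eqref{perturbed smallness.}, and the boundary smallness \eqref{smallness assumption.}. Because $\Gamma[f,g]$ and $\Gamma[f,f]$ both lie in $\nk^{\perp}$, the perturbation never touches the $\pk[S]$ contributions carrying the worst prefactor $\e^{-2-3/(2m)}$, which is what keeps the final $\e$-power consistent with the linear estimate.

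First I will apply Theorem~\ref{LI estimate'.} directly to \eqref{nonlinear unsteady}, treating $\Gamma[f,g]+\e^{3}\Gamma[f,f]$ as an additional right-hand source and $(\mb-\m)\m^{-1}\pp[f]$ as an additional in-flow datum. This produces three extra classes of terms on the right-hand side. For the bulk perturbation, Lemma~\ref{nonlinear lemma} yields
\[
\lnnmvt{\nu^{-1}\Gamma[f,g]} \;\ls\; \lnnmvt{f}\lnnmvt{g} \;=\; o(1)\e\,\lnnmvt{f},
\]
with an analogous bound for $\e^{3}\Gamma[f,f]$ and for the corresponding $L^{2}$ and time-derivative norms via \eqref{ftt 02}. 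Even after the linear prefactor $\e^{-1-3/(2m)}$ in Theorem~\ref{LI estimate'.} is applied, what remains is still $o(1)\lnnmvt{f}$, which absorbs into the left-hand side. For the boundary, the pointwise bound $\abs{\bv(\mb-\m)/\mh}\ls\e$ from \eqref{smallness assumption.} converts the extra in-flow data into $o(1)\e\lsmt{f}{\gamma_+}$ plus a matching $L^{2}$ piece, absorbable after the $\e^{-1-3/(2m)}$ prefactor. The nonlinear initial pieces $\e^{-2}\tnnm{\Gamma[f,g](0)}$ and $\e^{-2}\tnnm{\e^{3}\Gamma[f,f](0)}$ reduce via Lemma~\ref{nonlinear lemma} and \eqref{perturbed smallness.} at $t=0$ to $o(1)\lnnmv{z}$-type contributions, which are already subsumed by the $z$-norms listed in the conclusion.

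The main obstacle is closing the residual $o(1)\e^{3/(2m)}\bigl(\ssm{f(t)}{\gamma_+}+\snnm{f(t)}\bigr)$ left behind on the right-hand side of Theorem~\ref{LI estimate'.}: it cannot be absorbed directly at the $L^{\infty}$ level, because it comes from the linear $L^{2m}$--$L^{2}$ interpolation. The resolution is to substitute the perturbed Theorem~\ref{LN estimate.} (just established) in place of the linear Theorem~\ref{LN estimate'.} inside this chain. This is legitimate because Theorem~\ref{LN estimate.} has already dispatched the same nonlinear bulk and boundary perturbation in the weaker norms, and its right-hand side is exactly the collection of data norms appearing in the present statement, up to an additional $o(1)\e^{3/(2m)}\bigl(\ssm{f(t)}{\gamma_+}+\snnm{f(t)}\bigr)$ that gets absorbed into the $L^{\infty}$ left-hand side in a final step. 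Throughout, the equivalence $g=\vh f$ from \eqref{ktt 56}--\eqref{ktt 57} together with Ukai's trace control (already used inside Theorem~\ref{LI estimate'.}) permits treating the interior and $\gamma_+$ sup-norms on equal footing, so the two absorbing arguments can be performed simultaneously.
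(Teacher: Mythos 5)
Your proposal is correct and follows essentially the same route as the paper: apply the linear evolutionary $L^{\infty}$ estimate (Theorem \ref{LI estimate'.}) with the perturbed $L^{2m}$ bound of Theorem \ref{LN estimate.} substituted for the linear one, then absorb $\lnnmvt{\nu^{-1}\Gamma[f,g]}$, $\lnnmvt{\e^3\nu^{-1}\Gamma[f,f]}$ via Lemma \ref{nonlinear lemma} and \eqref{perturbed smallness.}, and $\lsmt{(\mb-\m)\m^{-1}\pp[f]}{\gamma_-}$ via \eqref{smallness assumption.}, into the left-hand side. The only caveat is that the $L^2$-in-time nonlinear source terms carrying the $\e^{-1-3/(2m)}$ prefactor must be dispatched inside the perturbed $L^{2m}$/$L^2$ stage (as in Theorem \ref{LN estimate.}) rather than absorbed directly at the $L^{\infty}$ level, which is exactly what your substitution step accomplishes.
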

\begin{proof}
Since we already have bounds for $f$ in $L^{2m}$ as in Theorem \ref{LN estimate.}, following the proof of Theorem \ref{LI estimate'.}, we obtain
\begin{align}\label{ftt 06.}
&\lnnmvt{f}+\lsmt{f}{\gamma_+}\\
\ls&\frac{1}{\e^{2+\frac{3}{2m}}}\pnnm{\pk[S(t)]}{\frac{2m}{2m-1}}+\frac{1}{\e^{1+\frac{3}{2m}}}\tnnm{\snn(\ik-\pk)[S(t)]}
+\frac{1}{\e^{2+\frac{3}{2m}}}\tnnmt{\pk[S]}+\frac{1}{\e^{1+\frac{3}{2m}}}\tnnmt{\snn(\ik-\pk)[S]}\no\\
&+\frac{1}{\e^{2+\frac{3}{2m}}}\tnnmt{\pk[\dt S]}+\frac{1}{\e^{1+\frac{3}{2m}}}\tnnmt{\snn(\ik-\pk)[\dt S]}+\lnnmvt{\nu^{-1}S}\no\\
&+\frac{1}{\e^{\frac{3}{2m}}}\pnm{h(t)}{\gamma_-}{\frac{4m}{3}}+\frac{1}{\e^{1+\frac{3}{2m}}}\tsm{h(t)}{\gamma_-}
+\frac{1}{\e^{1+\frac{3}{2m}}}\tsmt{h}{\gamma_-}+\frac{1}{\e^{1+\frac{3}{2m}}}\tsmt{\dt h}{\gamma_-}+\lsmt{h}{\gamma_-}\no\\
&+\frac{1}{\e^{2+\frac{3}{2m}}}\tnnm{\nu z}+\frac{1}{\e^{1+\frac{3}{2m}}}\tnnm{\vv\cdot\nx z}+\lnnmv{z}+\frac{1}{\e^{2+\frac{3}{2m}}}\tnnm{S(0)}\no\\
&+\lnnmvt{\nu^{-1}\Gamma[f,g]}+\lnnmvt{\e^3\nu^{-1}\Gamma[f,f]}+\lsm{(\mb-\m)\m^{-1}\pp[f]}{\gamma_-}.\no
\end{align}
Using Lemma \ref{nonlinear lemma} and \eqref{perturbed smallness.}, we have
\begin{align}\label{ftt 05.}
\lnnmvt{\nu^{-1}\Gamma[f,g]}\ls \lnnmvt{f}\lnnmvt{g}\ls o(1)\lnnmvt{f},\\
\lnnmvt{\e^3\nu^{-1}\Gamma[f,f]}\ls \lnnmvt{f}\lnnmvt{\e^3f}\ls o(1)\lnnmvt{f}.
\end{align}
Inserting \eqref{ftt 05.} into \eqref{ftt 06.}, we can absorb $o(1)\lnnmv{f}$ into the left-hand side. Also, using \eqref{smallness assumption.}, we have
\begin{align}\label{ftt 19.}
\lsmt{(\mb-\m)\m^{-1}\pp[f]}{\gamma_-}\ls o(1)\lsmt{f}{\gamma_+}.
\end{align}
Inserting \eqref{ftt 19.} into \eqref{ftt 06.} and absorbing $o(1)\lsmt{f}{\gamma_+}$ into the left-hand side, we obtain the desired result.
\end{proof}

%%%%%%%%%%%%%%%%%%%%%%%%%%%%%%%%%%%%%%%%%%%%%%%%%%%%%%%%%%%%%%%%%%%%%%%%
\subsection{Analysis of Asymptotic Expansion}
%%%%%%%%%%%%%%%%%%%%%%%%%%%%%%%%%%%%%%%%%%%%%%%%%%%%%%%%%%%%%%%%%%%%%%%%

%%%%%%%%%%%%%%%%%%%%%%%%%%%%%%%%%%%%%%%%%%%%%%%%%%%%%%%%%%%%%%%%%%%%%%%%
\subsubsection{Analysis of Initial Layer}\label{ftt section 1.}
%%%%%%%%%%%%%%%%%%%%%%%%%%%%%%%%%%%%%%%%%%%%%%%%%%%%%%%%%%%%%%%%%%%%%%%%

We first prove a theorem about well-posedness and decay of initial layer equation.

\begin{theorem}\label{initial theorem}
For equation
\begin{align}\label{initial layer..}
\left\{
\begin{array}{l}
\p_{\tau}g+\ll[g]=S(\tau,\vv)\ \ \text{in}\ \ \rp\times\r^3,\\\rule{0ex}{1.5em}
g(0,\vv)=z(\vv),
\end{array}
\right.
\end{align}
satisfying
\begin{align}\label{initial assumption}
\lnmv{z}\ls 1,\quad \lnnmv{\ue^{K_0t}S}\ls 1,
\end{align}
there exists a unique solution $g(\tau,\vvv)$ and a $g_{\infty}\in\nk$ satisfying
\begin{align}
\abs{g_{\infty}}\ls 1,\quad \lnnmv{\ue^{K_0\tau}(g-g_{\infty})}\ls 1.
\end{align}
\end{theorem}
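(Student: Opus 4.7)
The plan is to exploit the fact that \eqref{initial layer..} is spatially homogeneous, so no transport or boundary effects appear and the argument reduces to a macro-micro decomposition combined with the spectral gap of $\ll$ on $\nk^{\perp}$. Applying $\pk$ to \eqref{initial layer..} kills the $\ll$ contribution and yields the simple ODE
\begin{align}
\p_\tau\pk[g](\tau)=\pk[S](\tau),\qquad \pk[g](0)=\pk[z].
\end{align}
Since \eqref{initial assumption} gives $\abs{\bvv\pk[S](\tau)}\ls \ue^{-K_0\tau}$, the integral $\int_0^\infty \pk[S](s)\ud s$ converges absolutely, and I define
\begin{align}
g_\infty:=\pk[z]+\int_0^\infty \pk[S](s)\ud s\in\nk,
\end{align}
with the pointwise identity $\pk[g](\tau)-g_\infty=-\int_\tau^\infty \pk[S](s)\ud s$. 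This immediately yields $\abs{g_\infty}\ls 1$ and $\lnmv{\ue^{K_0\tau}(\pk[g]-g_\infty)}\ls 1$.

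The microscopic part $q:=(\ik-\pk)[g]$ solves $\p_\tau q+\ll[q]=(\ik-\pk)[S]$ with initial data $(\ik-\pk)[z]$. Testing against $q$ and invoking the coercivity bound $\br{q,\ll[q]}\gs \unm{q}^2$ from Lemma \ref{ktt 01}, then multiplying by $\ue^{2K_1\tau}$ for any $0<K_1<K_0$ and integrating, I obtain the weighted $L^2$ decay
\begin{align}
\tnm{\ue^{K_1\tau}q(\tau)}^2+\int_0^\tau \unm{\ue^{K_1 s}q(s)}^2\ud s\ls \tnm{z}^2+\int_0^\infty \tnm{\snn\ue^{K_1 s}(\ik-\pk)[S](s)}^2\ud s\ls 1.
\end{align}
Combined with the exponential decay of $\pk[g]-g_\infty$, this provides $\tnm{\ue^{K_1\tau}(g-g_\infty)(\tau)}\ls 1$.

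To upgrade to the weighted $L^\infty$ estimate, I will use the $L^2$-$L^\infty$ bootstrap of Theorem \ref{LI estimate}. Setting $\tilde g:=\ue^{K_1\tau}(g-g_\infty)$, a direct computation gives
\begin{align}
\p_\tau\tilde g+\ll[\tilde g]=K_1\tilde g+\ue^{K_1\tau}S,
\end{align}
with $\tilde g(0)=z-g_\infty$. Multiplying by the velocity weight $\vh(\vvv)=\bvv$ and writing $\tilde G=\vh\tilde g$, Duhamel along the trivial characteristic $\vvv\equiv\text{const}$ yields
\begin{align}
\tilde G(\tau,\vvv)=\ue^{-\nu(\vvv)\tau}\vh(\vvv)(z-g_\infty)(\vvv)+\int_0^\tau \ue^{-\nu(\vvv)(\tau-s)}\Big(K_{\vh}[\tilde G]+K_1\tilde G+\vh\ue^{K_1 s}S\Big)(s,\vvv)\ud s.
\end{align}
The initial and source contributions are $O(1)$ in $L^\infty$ thanks to \eqref{initial assumption}; the $K_1\tilde G$ term is absorbed into the left for $K_1$ small since $\nu(\vvv)$ is bounded below. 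For the $K_\vh$ term I iterate Duhamel once more and split the integration into the four velocity regimes (large $\vvv$, large $\vvv-\vvv'$, short time, and the regular region) exactly as in Step 4 of the proof of Theorem \ref{LI estimate}; in the regular region, the change of variable $\vvv'\mapsto \vvv-s\vvv'$ is replaced by a direct bound using the already-established weighted $L^2$ control of $\tilde g$ via Lemma \ref{Regularity lemma 1'}. The hardest part is precisely this $L^\infty$ upgrade: without a spatial transport, one cannot use the change-of-variables trick in space to extract a gain from $K_\vh$, so the bootstrap must rely on the velocity integrability of $k_{\vh}$ alone together with the $L^2$ decay of $q$. Choosing $K_1<K_0$ sufficiently small makes every perturbative term absorbable, which yields $\lnmv{\tilde g(\tau)}\ls 1$ and hence the claimed $\lnnmv{\ue^{K_1\tau}(g-g_\infty)}\ls 1$. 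Uniqueness follows by applying the same energy estimate to the difference of two solutions with zero data and zero source.
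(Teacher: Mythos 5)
Your proposal is correct and follows essentially the same route as the paper's (very brief) proof: a macro--micro decomposition $g=\pk[g]+(\ik-\pk)[g]$, an explicit treatment of the macroscopic ODE $\p_\tau\pk[g]=\pk[S]$ defining $g_\infty$, exponential $L^2$ decay of the microscopic part from the coercivity of $\ll$, and an $L^2$--$L^\infty$ Duhamel bootstrap (simplified here since there is no transport, so the bounded-velocity regime is closed by Cauchy--Schwarz in velocity against the truncated kernel rather than the spatial change of variables). The only cosmetic difference is that you obtain the decay rate $K_1<K_0$, while the statement asserts rate $K_0$; since $K_0$ is taken sufficiently small in all applications (in particular smaller than the spectral gap of $\ll$ on $\nk^{\perp}$), the identical argument yields rate $K_0$ itself, so this is not a genuine gap.
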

\begin{proof}
This is very similar to the analysis of $\e$-Milne problem with geometric correction, but much simpler. We decompose $g=r+q$, where $r\in\nk^{\perp}$ and $q=\ds\sum_{k=0}^4q_k(\tau)\ne_k(\vv)\in\nk$. Then using the same $L^2-L^{\infty}$ estimates, we can get the desired result.
\end{proof}

With this theorem in hand, based on the analysis in Section \ref{att section 1.}, we know $\fl_1=0$ and $\fl_2,\fl_3,\fl_4$ are well-defined.
\begin{theorem}\label{limit theorem 2.}
For $K_0>0$ sufficiently small, the initial layer satisfies
\begin{align}
\lnnmv{\ue^{K_0\tau}\fl_2(\vx)}\ls 1,\quad\lnnmv{\ue^{K_0\tau}\fl_3(\vx)}\ls 1,\quad \lnnmv{\ue^{K_0\tau}\fl_4(\vx)}\ls 1.
\end{align}
\end{theorem}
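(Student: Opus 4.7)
The plan is to apply Theorem \ref{initial theorem} successively to $\fl_2$, $\fl_3$, $\fl_4$ at each fixed $\vx\in\Omega$, since the initial layer equations are kinetic equations in $(\tau,\vv)$ in which $\vx$ appears only as a parameter (the spatial derivatives $\vv\cdot\nx$ enter only through the right-hand sides at orders $k\geq 3$). At each step the asymptotic profile $\fl_{k,\infty}(\vx,\vv)\in\nk$ was chosen precisely so that the $g_\infty$ produced by Theorem \ref{initial theorem} is $0$, which converts the convergence to $g_\infty$ into the desired exponential decay to $0$.

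First I would treat $\fl_2$. Here $\p_\tau\fl_2+\ll[\fl_2]=0$ with zero source, so by Theorem \ref{initial theorem} it suffices to verify $\lnmv{(B_2+C_2)(0,\vx,\cdot)-\fl_{2,\infty}(\vx,\cdot)}\ls 1$ uniformly in $\vx$. The explicit formulas for $B_2$ and $C_2$ from Section \ref{att section 6.} express them as polynomial--in--$\vv$ combinations weighted by $\mh$ times components of $\f_1$ (and bilinear $\Gamma$ expressions), all of which are controlled by Theorem \ref{limit theorem 2} (the evolutionary analog of Theorem \ref{limit theorem 2}, which I would state in parallel). Consequently the initial datum satisfies \eqref{initial assumption} uniformly in $\vx$, and Theorem \ref{initial theorem} yields $\lnnmv{\ue^{K_0\tau}\fl_2}\ls 1$.

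Next, for $\fl_3$ the source is $-\vv\cdot\nx\fl_2+2\Gamma[\f_1,\fl_2]$. The nonlinear piece is bounded by Lemma \ref{nonlinear lemma}, giving $\lnmv{\nu^{-1}\Gamma[\f_1,\fl_2]}\ls\lnmv{\f_1}\lnmv{\fl_2}$, and the exponential weight $\ue^{K_0\tau}$ is inherited from $\fl_2$. The transport term requires a spatial-regularity bootstrap: since $\ll$ commutes with $\nx$ and $\vx$ enters $\fl_2$ only through its initial datum, differentiating the $\fl_2$ equation shows that $\nx\fl_2$ solves a parallel problem of type \eqref{initial layer..} with initial datum $\nx\big[(B_2+C_2)(0,\vx,\cdot)-\fl_{2,\infty}\big]$; the compatibility condition \eqref{compatibility condition.} ensures no boundary-generated singularities contaminate this derivative. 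A second application of Theorem \ref{initial theorem} gives $\lnnmv{\ue^{K_0\tau}\nx\fl_2}\ls 1$, after which Theorem \ref{initial theorem} applied to $\fl_3$ itself delivers the claim. The argument for $\fl_4$ is identical in structure, requiring in addition $\lnnmv{\ue^{K_0\tau}\nx\fl_3}\ls 1$ and Lemma \ref{nonlinear lemma} applied to $\Gamma[\fl_1,\fl_3]$, $\Gamma[\fl_2,\fl_2]$, $\Gamma[\f_j,\fl_k]$, each of which yields $\ue^{-2K_0\tau}$ decay in the product, easily absorbed by choosing $K_0$ smaller than the decay rate supplied by the homogeneous semigroup.

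The main obstacle is the spatial-regularity bootstrap: one must ensure that $\nx^j\fl_k$ stays bounded in the weighted $L^\infty_{\vth,\vrh}$ norm with exponential temporal decay, for $j$ up to the order needed by the highest source. This is not automatic from Theorem \ref{initial theorem} as stated, but follows because differentiation in $\vx$ commutes with $\p_\tau+\ll$ and produces source and initial data that inherit the needed bounds from the interior solutions $\f_1,\f_2,\f_3$ (via Theorem \ref{limit theorem 2}) and from the inductively controlled $\fl_2,\fl_3$. Once this bootstrap is in place the three estimates follow in sequence with no further difficulty, and the smallness constraint on $K_0$ is determined at the first step (decay rate of the homogeneous semigroup on $\nk^\perp$) and preserved thereafter.
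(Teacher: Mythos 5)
Your proposal is correct and follows essentially the same route the paper intends: the paper states this result (without a detailed proof) as a direct consequence of successively applying Theorem \ref{initial theorem} to $\fl_2,\fl_3,\fl_4$ at fixed $\vx$, with the choice of $\fl_{k,\infty}$ forcing the limit to vanish, Lemma \ref{nonlinear lemma} controlling the $\Gamma$ sources, and the spatial-derivative bounds (the paper's separate "space derivative version follows the same fashion" theorem) supplying exactly the bootstrap you identify for the $\vv\cdot\nx\fl_k$ sources. Your elaboration of that bootstrap, resting on $\nx$ commuting with $\p_\tau+\ll$ and $\vx$ entering only parametrically, matches the paper's implicit argument.
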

In particular, since $\p_t=\e^{-2}\p_{\tau}$, we have the time derivative estimate
\begin{theorem}\label{limit theorem 2.}
For $K_0>0$ sufficiently small, the initial layer satisfies
\begin{align}
\lnnmv{\ue^{K_0\sigma}\frac{\p\fl_2(\vx)}{\p t}}\ls \e^{-2},\quad\lnnmv{\ue^{K_0\sigma}\frac{\p\fl_3(\vx)}{\p t}}\ls \e^{-2},\quad \lnnmv{\ue^{K_0\sigma}\frac{\p\fl_4(\vx)}{\p t}}\ls \e^{-2}.
\end{align}
\end{theorem}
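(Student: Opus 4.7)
The plan is to reduce the $t$-derivative estimate to a $\tau$-derivative estimate by the chain rule. Since $\tau = t/\e^2$, we have $\p_t \fl_k = \e^{-2}\p_\tau \fl_k$, so it suffices to establish
\begin{align*}
\lnnmv{\ue^{K_0\tau}\p_\tau\fl_2}+\lnnmv{\ue^{K_0\tau}\p_\tau\fl_3}+\lnnmv{\ue^{K_0\tau}\p_\tau\fl_4}\ls 1,
\end{align*}
after which multiplying by $\e^{-2}$ yields the stated bound.

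The key observation is that the initial-layer equations \eqref{initial expansion 1}--\eqref{initial expansion 3} are \emph{linear} in $\tau$ with source terms that depend on $\tau$ only through lower-order $\fl_j$'s (the interior solutions $\f_j$ depend on $t=\e^2\tau$ and therefore contribute only $O(\e^2)$ under $\p_\tau$). Hence I would differentiate each equation in $\tau$ and observe that $\p_\tau\fl_k$ satisfies an initial-layer problem of exactly the same structure as \eqref{initial layer..}:
\begin{align*}
\left\{
\begin{array}{l}
\p_\tau(\p_\tau\fl_k)+\ll[\p_\tau\fl_k]=\tilde S_k(\tau,\vx,\vv),\\
\p_\tau\fl_k(0,\vx,\vv)=-\ll[\fl_k(0,\vx,\vv)]+S_k(0,\vx,\vv),
\end{array}
\right.
\end{align*}
where $\tilde S_k$ is built from $\p_\tau\fl_{k-1}$, $\nx\p_\tau\fl_{k-1}$, $\Gamma[\f_j,\p_\tau\fl_\ell]$, and $O(\e^2)\p_t\f_j$ contributions. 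Then I would apply Theorem \ref{initial theorem} to $\p_\tau\fl_k$ for $k=2,3,4$ in order, inductively promoting the bound from order $k-1$ to order $k$.

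The verification of the hypothesis \eqref{initial assumption} for $\p_\tau\fl_k$ is the main bookkeeping step. For the initial data, $-\ll[\fl_k(0)]+S_k(0)$ is controlled in $\lnmv{\cdot}$ because $\fl_k(0)$ is explicit (combinations of $B_k+C_k$ and $\fl_{k,\infty}$ from Section \ref{att section 1.}) and $\ll=\nu I-K$ maps the weighted $L^\infty_{\vth,\vrh}$ space to itself up to a loss in the $\br{\vv}$-weight that is acceptable since we took $\vth>3$. For the source term $\tilde S_k$, Theorem \ref{limit theorem 2.} (for $\fl_j$ themselves) together with smoothness of the interior solutions $\f_j$ (Theorem \ref{limit theorem 2}) bounds every piece involving $\fl_j$ or $\nx\fl_j$, while the new pieces involving $\p_\tau\fl_\ell$ for $\ell<k$ are handled by the inductive hypothesis at the previous order.

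The main obstacle I expect is tracking the spatial derivative $\vv\cdot\nx\fl_{k-1}$ that appears inside $\tilde S_k$: differentiating in $\tau$ and then in $\vx$ forces control of mixed derivatives of the initial layer, which in turn requires applying Theorem \ref{initial theorem} to $\nx\fl_{k-1}$ with initial data coming from $\nx f_{0,k-1}$. Here the compatibility hypothesis \eqref{compatibility condition.} (which enforces $\nx f_{0,k}=0$ and $\nx^2 f_{0,k}=0$ on $\p\Omega$) is exactly what is needed to make these spatial derivatives admissible initial data and to avoid a boundary layer inside the initial layer. Once these mixed estimates are in place, the exponential-in-$\tau$ decay comes directly from Theorem \ref{initial theorem}, and the $\e^{-2}$ prefactor in the conclusion arises solely from the chain rule $\p_t = \e^{-2}\p_\tau$.
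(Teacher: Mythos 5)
Your proposal is correct in substance, but it is heavier than the argument the paper intends. The paper's (essentially one-line) proof is: by the chain rule $\p_t\fl_k=\e^{-2}\p_\tau\fl_k$, and $\p_\tau\fl_k$ is read off directly from the \emph{undifferentiated} initial-layer equation, $\p_\tau\fl_k=-\ll[\fl_k]+S_k$, whose right-hand side is already controlled in the weighted $L^\infty$ norm with exponential decay by the previously established bounds on $\fl_k$, on $\nx\fl_{k-1}$ (the space-derivative statement), and on the interior solutions $\f_j$; the only losses are one power of $\br{\vv}$ from $\nu$ (harmless since the bounds hold for a range of $\vth$) and the $O(\e^2)$ factor on $\p_t\f_j$, so the $\e^{-2}$ comes solely from the rescaling, exactly as you say. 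Your route---differentiating the equation in $\tau$ and re-applying Theorem \ref{initial theorem} inductively to $\p_\tau\fl_k$---also works, but it buys nothing extra and costs more: (i) Theorem \ref{initial theorem} only gives convergence of $\p_\tau\fl_k$ to some $g_\infty\in\nk$, so to retain the $\ue^{K_0\tau}$ weight you must additionally argue $g_\infty=0$ (easy: a nonzero limit would force $\fl_k$ to grow linearly in $\tau$, contradicting its exponential decay, but you do not address it); (ii) the differentiated sources force you to control mixed derivatives such as $\nx\p_\tau\fl_{k-1}$, hence second spatial derivatives of lower-order layers at the top order, which the direct argument never needs---these do follow from the undifferentiated equations, so this is extra bookkeeping rather than a gap; and (iii) the compatibility conditions \eqref{compatibility condition.} are not what makes these estimates go through (the initial-layer problems are ODEs in $\tau$ with $\vx$ as a parameter and carry no spatial boundary condition); they are used elsewhere to suppress the corner interaction, so your appeal to them here is a red herring.
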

Note that due to rescaling $\tau=\dfrac{t}{\e^2}$, the bound for $\dt\fl_k$ is much worse than $\fl_k$. This is the main reason that we have to expand the initial layer to more orders than interior solution and boundary layer. Also, this is why we have to enforce the compatibility condition \eqref{compatibility condition.} and let $\fl_1$ vanish.

The space derivative version follows the same fashion.
\begin{theorem}\label{limit theorem 2..}
For $K_0>0$ sufficiently small, the initial layer satisfies
\begin{align}
\lnnmv{\ue^{K_0\tau}\nx\fl_2(\vx)}\ls 1,\quad\lnnmv{\ue^{K_0\tau}\nx\fl_3(\vx)}\ls 1,\quad \lnnmv{\ue^{K_0\tau}\nx\fl_4(\vx)}\ls 1.
\end{align}
\end{theorem}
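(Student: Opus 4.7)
The plan is to exploit the fact that, unlike the Boltzmann equation itself, the initial-layer equations of Section \ref{att section 1.} contain \emph{no} $\vv\cdot\nx$ transport term in the leading operator: the variable $\vx$ enters each equation only as a parameter through the source and the initial data. Consequently $\nx$ commutes with $\p_{\tau}+\ll$, and we can apply Theorem \ref{initial theorem} directly to the differentiated equations, bootstrapping through the hierarchy $\fl_2\rt\fl_3\rt\fl_4$.

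First I would treat $\nx\fl_2$. Applying $\nx$ to the equation $\p_{\tau}\fl_2+\ll[\fl_2]=0$ gives
\begin{align*}
\p_{\tau}(\nx\fl_2)+\ll[\nx\fl_2]=0,\qquad
\nx\fl_2(0,\vx,\vv)=\nx\Big((B_2+C_2)(0,\vx,\vv)-\fl_{2,\infty}(\vx,\vv)\Big).
\end{align*}
The right-hand side of the initial data is controlled in $\lnmv{\cdot}$ by the $H^3_x L^\infty_v$ regularity of $\f_1$ (and hence of $B_2, C_2$) supplied by Theorem \ref{limit theorem 2}, together with the construction of $\fl_{2,\infty}$ in Theorem \ref{initial theorem} as an explicit linear functional of these data. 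Hence \eqref{initial assumption} is verified with a slightly larger weight $\vth'\in(\vth,\vth_0]$, and Theorem \ref{initial theorem} yields $\lnnmv{\ue^{K_0\tau}\nx\fl_2}\ls 1$. Iterating the same argument on $\nx^2\fl_2$ (with one more unit of weight in reserve) produces $\lnnmv{\ue^{K_0\tau}\nx^2\fl_2}\ls 1$, where the extra unit of weight accommodates the polynomial factor in $\vv$ that will appear from $\vv\cdot\nx^2\fl_2$ at the next stage.

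Next I would turn to $\nx\fl_3$. Differentiating the $\fl_3$ equation in $\vx$ produces
\begin{align*}
\p_{\tau}(\nx\fl_3)+\ll[\nx\fl_3]=-\vv\cdot\nx^2\fl_2+2\Gamma[\nx\f_1,\fl_2]+2\Gamma[\f_1,\nx\fl_2],
\end{align*}
with initial data $\nx\big((B_3+C_3)(0)-\fl_{3,\infty}\big)$. Source bounds follow from the previous step together with Lemma \ref{nonlinear lemma} (the $L^\infty$ bound on $\Gamma$) and Theorem \ref{limit theorem 2} (for $\f_1, \nx\f_1$); the loss of one power of $\vv$ coming from $\vv\cdot\nx^2\fl_2$ is absorbed by the reserve in $\vth$. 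The exponential decay of $\fl_2$ and $\nx\fl_2$ transfers directly to the source, so \eqref{initial assumption} holds and Theorem \ref{initial theorem} gives $\lnnmv{\ue^{K_0\tau}\nx\fl_3}\ls 1$. An entirely analogous step, now relying on $\nx^2\fl_3$ (obtained by repeating the differentiation on the $\fl_3$ equation) and on the source structure $-\vv\cdot\nx\fl_3+2\Gamma[\f_1,\fl_3]+2\Gamma[\f_2,\fl_2]$ for $\fl_4$, delivers $\lnnmv{\ue^{K_0\tau}\nx\fl_4}\ls 1$.

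The main obstacle, as elsewhere in the paper, is bookkeeping rather than analysis: one must carry enough margin in the weight parameter $\vth$ to absorb each extra polynomial factor of $\vv$ that is generated when $\vv\cdot\nx$ is transferred to the source, and one must verify at each level that the initial data $(B_k+C_k)(0)-\fl_{k,\infty}$ and its spatial derivatives remain controlled in $\lnmv{\cdot}$. This is guaranteed by the compatibility assumption \eqref{compatibility condition.}, by Theorem \ref{limit theorem 2}, and by the fact that $\fl_{k,\infty}\in\nk$ is built as a smooth linear functional of the lower-order data. Once these regularity inputs are in place, each spatial derivative estimate reduces to a single application of Theorem \ref{initial theorem}, giving the claimed bounds for $K_0>0$ small enough to fit under the decay rate of the initial-layer semigroup.
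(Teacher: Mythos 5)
Your proposal matches the paper's intended argument: the paper gives no detailed proof of this theorem (it only remarks that the space-derivative version ``follows the same fashion''), and the natural route is exactly the one you take --- since $\vx$ enters the initial-layer hierarchy only as a parameter, $\nx$ commutes with $\p_{\tau}+\ll$, so one differentiates the $\fl_k$ equations in $\vx$ and applies Theorem \ref{initial theorem} order by order, with source and initial-data bounds supplied by the lower-order estimates, Lemma \ref{nonlinear lemma}, the interior regularity, the compatibility conditions, and the observation that the limiting element in $\nk$ for each differentiated problem is the gradient of the original (zero) limit. The only caveat, which the paper shares because it simply asserts whatever smoothness of the fluid quantities is needed, is that your bootstrap consumes $\nx^2\fl_2$ and $\nx^2\fl_3$ in $L^{\infty}_{\vx}$, hence more pointwise regularity of $\f_1,\f_2$ (and of $B_k+C_k$ at $t=0$) than the quoted $H^3_x L^{\infty}_{\vv}$ bound literally provides.
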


The above estimates do not involve spacial integral. Obviously, the $\vx$ integral estimates also hold.

%%%%%%%%%%%%%%%%%%%%%%%%%%%%%%%%%%%%%%%%%%%%%%%%%%%%%%%%%%%%%%%%%%%%%%%%
\subsubsection{Analysis of Boundary Layer}\label{ftt section 2.}
%%%%%%%%%%%%%%%%%%%%%%%%%%%%%%%%%%%%%%%%%%%%%%%%%%%%%%%%%%%%%%%%%%%%%%%%

Based on the analysis in Section \ref{att section 1.} and Section 3, we know $\fb_1=0$ and $\fb_2$ is well-defined.
\begin{theorem}\label{limit theorem 1.}
For $K_0>0$ sufficiently small, the boundary layer $\fb_2$ satisfies
\begin{align}
\lnnmv{\ue^{K_0\eta}\fb_2(t)}\ls 1,
\end{align}
and
\begin{align}
\begin{array}{ll}
\lnnmv{\ue^{K_0\eta}\va\dfrac{\p\fb_2(t)}{\p\eta}}+\lnnmv{\ue^{K_0\eta}\dfrac{\p\fb_2(t)}{\p\iota_1}}+\lnnmv{\ue^{K_0\eta}\dfrac{\p\fb_2(t)}{\p\iota_2}}\ls \abs{\ln(\e)}^8,\\\rule{0ex}{2.0em}
\lnnmv{\ue^{K_0\eta}\nu\dfrac{\p\fb_2(t)}{\p\va}}+\lnnmv{\ue^{K_0\eta}\nu\dfrac{\p\fb_2(t)}{\p\vb}}+\lnnmv{\ue^{K_0\eta}\nu\dfrac{\p\fb_2(t)}{\p\vc}}\ls\abs{\ln(\e)}^8.
\end{array}
\end{align}
\end{theorem}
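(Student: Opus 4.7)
The plan is to observe that the equation \eqref{att 3.} for $\fb_2$ contains no time derivative: the variable $t$ enters only parametrically through the in-flow boundary datum $h(t,\iota_1,\iota_2,\vvv)$ defined in \eqref{btt 3.}. Consequently, for each fixed $t \in \rp$, $\fb_2(t,\cdot,\cdot,\cdot,\cdot)$ solves precisely the stationary $\e$-Milne problem with geometric correction to which Theorem \ref{Milne theorem 3}, Theorem \ref{Milne theorem 4}, Theorem \ref{Regularity theorem 2}, Theorem \ref{Milne tangential}, and Theorem \ref{Milne velocity} apply. The strategy is therefore to (i) verify that the boundary datum $h(t,\cdot)$ satisfies the hypotheses of these stationary theorems uniformly in $t$ with the correct exponential-in-$t$ decay, (ii) invoke those theorems pointwise in $t$, and (iii) take the supremum over $t\in\rp$.

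First, I would analyze $h(t,\iota_1,\iota_2,\vvv) = M_1(t,\vx_0)\m_1(t,\vx_0,\vv) + \m_2(t,\vx_0,\vv) - \big((B_2+C_2)-\pp[B_2+C_2]\big)$. The smallness/decay assumption \eqref{smallness assumption.} gives $|\bv \m_k(t,\vx_0,\vv)| \ls \ue^{-K_0 t}$ with the requisite velocity weight. The connecting and orthogonal pieces $B_2, C_2$ are built from $\f_1$, which solves the Navier--Stokes--Fourier system \eqref{interior 1.} with exponentially decaying forcing inherited from the boundary data; standard parabolic well-posedness (as in the assumed results analogous to Theorem \ref{limit theorem 2}) then yields $\nm{\ue^{K_0 t}\f_1}_{L^\infty_t H^3_x L^\infty_v}\ls 1$ for $K_0$ sufficiently small, which transfers to $B_2, C_2$ and hence to $h$. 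The zero-mass-flux compatibility \eqref{att 10} continues to hold at each $t$ by the same oddness/orthogonality argument.

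Next, apply Theorem \ref{Milne theorem 3} and Theorem \ref{Milne theorem 4} slice by slice in $t$: these produce, for each $t$, a unique decomposition $\fb_2(t,\eta,\cdot,\vvv)$ with $\lnmv{\ue^{K_0\eta}\fb_2(t)} \ls \lnmv{h(t)-\tilde h(t)}\ls 1$ uniformly in $t$. Taking $\esssup_{t\in\rp}$ gives the first bound of the theorem. For the tangential derivatives $\dfrac{\p\fb_2}{\p\iota_1}, \dfrac{\p\fb_2}{\p\iota_2}$, differentiate \eqref{att 3.} in $\iota_i$: the linearized problem for $\p_{\iota_i}\fb_2$ is a Milne problem of the same type with boundary datum $\p_{\iota_i}(h-\tilde h)$ plus forcing terms involving $\iota_i$-derivatives of the geometric coefficients multiplied by $\fb_2$ itself. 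Theorem \ref{Milne tangential} then yields the $\abs{\ln\e}^8$ bound, with the time parameter passing through harmlessly since $\p_{\iota_i}h(t)$ inherits the same $t$-uniform bounds from Step 1. For the velocity derivatives $\p_{\va}, \p_{\vb}, \p_{\vc}$, apply Theorem \ref{Milne velocity} and Theorem \ref{Regularity theorem 2} directly in the same fashion, noting that the $\va\p_\eta$ structure produces the characteristic-weighted bound on $\va\p_\eta\fb_2$.

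The main technical concern will be ensuring uniformity in $t$ throughout: the constants in Theorem \ref{Regularity theorem 2}, Theorem \ref{Milne tangential}, and Theorem \ref{Milne velocity} depend on sup-norms of the boundary datum and its tangential derivatives, so I must check that $\lnmv{\p_{\iota_i}^\alpha h(t)}$ is bounded uniformly in $t$; this reduces to uniform-in-$t$ bounds on $\nx^\alpha \f_1(t)$ and $\nx^\alpha(\m_k(t))$, which follow from the assumed regularity of the Navier--Stokes--Fourier solution and the assumption \eqref{smallness assumption.} on the boundary Maxwellian. A secondary subtle point is the compatibility condition \eqref{compatibility condition.} at $t=0$, which ensures $h(0)=0$ and thus prevents any initial-layer/boundary-layer interaction from spoiling the $\ue^{K_0 t}$ decay rate of $\fb_2$ itself; this must be tracked carefully when transferring the exponential weight from $h$ to $\fb_2$ via the stationary estimates.
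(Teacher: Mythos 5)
Your proposal matches the paper's treatment: the paper offers no separate proof of this theorem, but simply observes that \eqref{att 3.} contains no time derivative, so $t$ enters only through the boundary data, and then cites Theorem \ref{Milne theorem 3}, Theorem \ref{Milne theorem 4}, Theorem \ref{Regularity theorem 2}, Theorem \ref{Milne tangential} and Theorem \ref{Milne velocity} applied slice by slice in $t$ --- exactly your strategy, including the verification that $h(t)$ and its derivatives satisfy \eqref{Milne bound} and \eqref{Regularity bound} uniformly in $t$ via the regularity of $\f_1$, $B_2+C_2$ and the assumption \eqref{smallness assumption.}. Your argument is correct and essentially identical to the paper's.
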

However, the tricky part is the estimate of $\fb_3$, which essentially satisfies a stationary linearized Boltzmann equation
\begin{align}
\left\{
\begin{array}{ll}
\e\vv\cdot\nx\fb_3(t)+\ll[\fb_3(t)]=Z(t)\ \ \text{in}\ \ \tilde\Omega\times\r^3,\\\rule{0ex}{2.0em}
\fb_3(t)(\vx_0,\vv)=\pp[\fb_3(t)](\vx_0,\vv)+b(t)\ \ \text{for}\ \ \vx_0\in\p\Omega\ \
\text{and}\ \ \vv\cdot\vn<0,
\end{array}
\right.
\end{align}
where
\begin{align}
Z:=&2\Gamma[\fb_1,\fb_2]+2\Gamma[\f_1,\fb_2]+2\Gamma[\f_2,\fb_1]
+\dfrac{1}{P_1P_2}\Bigg(\dfrac{\p_{11}\vr\cdot\p_2\vr}{P_1(\e\kk_1\eta-1)}\vb\vc
+\dfrac{\p_{12}\vr\cdot\p_2\vr}{P_2(\e\kk_2\eta-1)}\vc^2\Bigg)\dfrac{\p\fb_2}{\p\vb}\\\rule{0ex}{2.0em}
&+\dfrac{1}{P_1P_2}\Bigg(\dfrac{\p_{22}\vr\cdot\p_1\vr}{P_2(\e\kk_2\eta-1)}\vb\vc
+\dfrac{\p_{12}\vr\cdot\p_1\vr}{P_1(\e\kk_1\eta-1)}\vb^2\Bigg)\dfrac{\p\fb_2}{\p\vc}
+\dfrac{\vb}{P_1(\e\kk_1\eta-1)}\dfrac{\p\fb_2}{\p\iota_1}+\dfrac{\vc}{P_2(\e\kk_2\eta-1)}\dfrac{\p\fb_2}{\p\iota_2},\no
\end{align}
and
\begin{align}
    b:=&\e^{-2}\Big(\mb-\m-\e\mh\m_1\Big)\m^{-1}\pp[\f_1+\fb_1]+\e^{-1}\Big(\mb-\m\Big)\m^{-1}\pp[\f_2+\fb_2]\\
&+\e^{-3}\mhh\Big(\mb-\m-\e\mh\m_1-\e^2\mh\m_2\Big)-\bigg((B_3+C_3)-\pp[B_3+C_3]\bigg).\no
\end{align}
Based on stationary $L^{2m}$ estimates in Remark \ref{LN remark}, we obtain
\begin{align}
&\frac{1}{\e^{\frac{1}{2}}}\tsm{(1-\pp)[\fb_3(t)]}{\gamma_+}+\frac{1}{\e}\unnm{(\ik-\pk)[\fb_3(t)]}+\pnnm{\pk[\fb_3(t)]}{2m}\\
\ls& \frac{1}{\e^{2}}\pnnm{\pk[Z(t)]}{\frac{2m}{2m-1}}+\frac{1}{\e}\tnnm{\snn(\ik-\pk)[Z(t)]}+\lnnmv{\nu^{-1}Z(t)}+\pnm{b(t)}{\gamma_-}{\frac{4m}{3}}+\frac{1}{\e}\tsm{b(t)}{\gamma_-}+\lsm{b(t)}{\gamma_-}\no\\
\ls& 
% \frac{1}{\e^{2}}\pnnm{\pk[Z(t)]}{\frac{2m}{2m-1}}+\frac{1}{\e}\tnnm{\snn(\ik-\pk)[Z(t)]}+\lnnmv{\nu^{-1}Z(t)}
% \ls\frac{1}{\e^2}\times\e^{\frac{2m-1}{2m}}\abs{\ln(\e)}^8\ls
\frac{1}{\e^{1+\frac{1}{2m}}}\abs{\ln(\e)}^8,\no
\end{align}
where we strongly rely on the rescaling $\eta=\dfrac{\mu}{\e}$ and the exponential decay of $Z$ in $\eta$. Then using the stationary $L^{\infty}$ estimates in Theorem \ref{LI estimate}, we have
\begin{align}
&\lnnmv{\fb_3(t)}+\lsm{\fb_3(t)}{\gamma_+}\\
\ls& \frac{1}{\e^{2+\frac{3}{2m}}}\pnnm{\pk[Z(t)]}{\frac{2m}{2m-1}}+\frac{1}{\e^{1+\frac{3}{2m}}}\tnnm{\snn(\ik-\pk)[Z(t)]}+\lnnmv{\nu^{-1}Z(t)}\no\\
&+\frac{1}{\e^{\frac{3}{2m}}}\pnm{b(t)}{\gamma_-}{\frac{4m}{3}}+\frac{1}{\e^{1+\frac{3}{2m}}}\tsm{b(t)}{\gamma_-}+\lsm{b(t)}{\gamma_-}\no\\
\ls& 
% \frac{1}{\e^{2+\frac{3}{2m}}}\pnnm{\pk[Z(t)]}{\frac{2m}{2m-1}}+\frac{1}{\e^{1+\frac{3}{2m}}}\tnnm{\snn(\ik-\pk)[Z(t)]}+\lnnmv{\nu^{-1}Z(t)}\no\\
% \ls& \frac{1}{\e^{2+\frac{3}{2m}}}\e^{\frac{2m-1}{2m}}\abs{\ln(\e)}^8+\frac{1}{\e^{1+\frac{3}{2m}}}\e^{\frac{1}{2}}\abs{\ln(\e)}^8+\abs{\ln(\e)}^8\ls
\frac{1}{\e^{1+\frac{2}{m}}}\abs{\ln(\e)}^8.\no
\end{align}
Note that we lose the decay of $\fb_3$ in $\eta$.

The above is only instantaneous version. The corresponding accumulative version for both $\fb_k$ and $\dt\fb_k$ also hold when taking time decay into consideration.  %In particular, the accumulative version relies on the decay to avoid introducing extra $t$.

%%%%%%%%%%%%%%%%%%%%%%%%%%%%%%%%%%%%%%%%%%%%%%%%%%%%%%%%%%%%%%%%%%%%%%%%
\subsubsection{Analysis of Interior Solution}\label{ftt section 3.}
%%%%%%%%%%%%%%%%%%%%%%%%%%%%%%%%%%%%%%%%%%%%%%%%%%%%%%%%%%%%%%%%%%%%%%%%

Based on the analysis in matching procedure, we know $\f_k=0$ are well-defined satisfy corresponding fluid equations.
\begin{theorem}\label{limit theorem 3.}
For $K_0>0$ sufficiently small, the boundary layer satisfies
\begin{align}
\nnm{\bv\f_1}_{L^{\infty}_tH^3_xL^{\infty}_v}\ls 1,\quad\nnm{\bv\f_2}_{L^{\infty}_tH^3_xL^{\infty}_v}\ls 1,\quad\nnm{\bv\f_3}_{L^{\infty}_tH^3_xL^{\infty}_v}\ls 1.
\end{align}
\end{theorem}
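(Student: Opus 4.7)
The plan is to reduce the velocity-weighted Sobolev bound to a purely hydrodynamic regularity problem, then invoke small-data well-posedness for the hierarchy of fluid systems constructed in Section 3.1. Recall from the matching procedure that $\f_k = A_k + B_k + C_k$ where $A_k = \mh\bigl(\rh_k + \vu_k\cdot\vv + \th_k(\abs{\vv}^2-3)/2\bigr)\in\nk$, while $B_k\in\nk$ and $C_k\in\nk^{\perp}$ are explicit $\vv$-polynomials multiplied by $\mh$ whose $(t,\vx)$-coefficients are polynomial in $(\rh_j,\vu_j,\th_j)_{j<k}$ and their spatial derivatives. Because the weight $\bv$ satisfies $\bv\,\mh(\vv)P(\vv)\in L^\infty_v$ whenever $0\le\vrh<1/4$ and $P$ is any polynomial, the bound $\nnm{\bv\f_k}_{L^\infty_tH^3_xL^\infty_v}\ls 1$ is equivalent, up to harmless constants, to $L^\infty_tH^3_x$ control on all scalar coefficients appearing in $A_k, B_k, C_k$. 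Since $H^3(\Omega)$ is a Banach algebra in dimension three, once the principal parts $A_j$ are controlled in $H^3_x$ for $j<k$, the coefficients of $B_k$ and $C_k$ (which involve products and up to two $\vx$-derivatives of lower-order terms) inherit the bound automatically.

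The base case $k=1$: the triple $(\rh_1,\vu_1,\th_1)$ solves the incompressible Navier-Stokes-Fourier system \eqref{interior 1.} with initial data $(\rh_{0,1},\vu_{0,1},\th_{0,1})$ and boundary data $(\rh_{b,1}+M_1, \vu_{b,1}, \th_{b,1})$, all of which are smooth and small by the assumptions in Section 1.2. Under the smallness hypothesis built into \eqref{smallness assumption.}--\eqref{smallness assumption..} and the compatibility conditions \eqref{compatibility condition.}, the standard global small-data theory for 3D incompressible NSF in bounded convex domains gives $(\vu_1,\th_1)\in L^\infty_tH^3_x$ together with exponential time decay at rate $K_0$ inherited from the $\ue^{K_0 t}$ decay of the boundary Maxwellian. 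The density $\rh_1$ is recovered from $\rh_1=-\th_1 + C(t)$ via the Boussinesq relation, with $C(t)$ fixed by the normalization, so it inherits the same $L^\infty_tH^3_x$ bound.

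For $k=2,3$ I proceed inductively. The principal part $A_k$ satisfies a linear inhomogeneous Stokes-type/advection-diffusion system of the form considered in \cite[Chapter 4]{Sone2002}, whose coefficients and forcing are polynomial expressions in $(A_j, \nx A_j, \nx^2 A_j)_{j<k}$, and whose boundary data is determined by matching with the Milne-problem tail $\tilde h$ appearing in Section 3.1. By the inductive hypothesis and the algebra property, the forcing lies in $L^\infty_tH^1_x$ and the boundary trace in an $H^{5/2}$-type space on $\p\Omega$; linear parabolic regularity theory together with the exponential time weight then yields $A_k\in L^\infty_tH^3_x$ with matching norm bounds. The auxiliary parts $B_k, C_k$ are handled as described in the first paragraph.

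The main obstacle will be Step 2: ensuring that the boundary data for $A_2, A_3$ furnished by the Milne correction $\tilde h$ is sufficiently regular on the curved manifold $\p\Omega$. Theorem \ref{limit theorem 1.} provides only $L^\infty$ tangential derivatives of $\fb_2$ with logarithmic losses $\abs{\ln\e}^8$, which is too weak to feed directly into an $H^3$-type lift; one must argue that $\tilde h$, as the macroscopic tail of the decaying Milne solution, enjoys strictly better regularity than $\fb_2$ itself (being obtained by integrating against velocity moments that smoothen away the boundary singularity). A parallel issue is compatibility at $t=0,\ \vx\in\p\Omega$, which is exactly the purpose of imposing \eqref{compatibility condition.}; verifying that these conditions persist through the hierarchy at orders $k=2,3$ is a bookkeeping task that must be done carefully but presents no substantial analytic difficulty.
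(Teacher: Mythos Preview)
Your approach is correct and is in fact more detailed than the paper's own treatment: the paper gives no proof for this theorem, merely asserting that ``for small data, the well-posedness and regularity of these equations are well-known'' and stating the result. Your reduction of the weighted $L^\infty_v$ bound to $L^\infty_t H^3_x$ control of the hydrodynamic coefficients, followed by inductive appeal to small-data parabolic regularity, is exactly the content the paper takes for granted.

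Your flagged obstacle regarding the tangential regularity of $\tilde h$ is well-founded but resolvable, and you should not try to extract it from the $\fb_2$ regularity in Theorem~\ref{limit theorem 1.}. Instead, go back to the construction in Theorem~\ref{Milne theorem 1}: the map $\mathcal{M}:\tilde h\mapsto \tilde g_L$ on the four-dimensional space $\tilde\nk$ is shown there to be an $O(\e^{1/2})$ perturbation of the identity matrix, with the perturbation coefficients $\tilde q'_{k,L},\tilde q''_{k,L}$ depending smoothly on the curvature data $R_1,R_2$ (hence on $\iota_1,\iota_2$). Since $\tilde h=\mathcal{M}^{-1}[g_L]$ and $g_L$ is determined by the smooth in-flow data $h$ (built from the boundary Maxwellian expansion and the lower-order interior solutions), the coefficients $\tilde D_k(t,\iota_1,\iota_2)$ inherit full tangential smoothness directly, without passing through the Milne solution itself and hence without the $\abs{\ln\e}^8$ losses.
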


%%%%%%%%%%%%%%%%%%%%%%%%%%%%%%%%%%%%%%%%%%%%%%%%%%%%%%%%%%%%%%%%%%%%%%%%
\subsubsection{Analysis of Initial-Boundary Layer}
%%%%%%%%%%%%%%%%%%%%%%%%%%%%%%%%%%%%%%%%%%%%%%%%%%%%%%%%%%%%%%%%%%%%%%%%

The compatibility condition \eqref{compatibility condition.} implies that at the corner points $(0,\vx_0,\vv)$, the equation \eqref{small system.} is naturally satisfied. Also, we have the simplified expansion at these points:
\begin{itemize}
\item
By our construction in Section \ref{att section 1.}, $\fb_1=0$ and $\fl_1=0$. Also,
\begin{align}
\f_1(0,\vx_0,\vv)=A_1(t,\vx_0,\vv)+B_1(t,\vx_0,\vv)+C_1(t,\vx_0,\vv)=\rh_{0,1}(\vx_0),
\end{align}
with
\begin{align}
A_1(t,\vx_0,\vv)=\rh_{0,1}(\vx_0)\mh(\vv),\quad B_1(t,\vx_0,\vv)=0,\quad C_1(t,\vx_0,\vv)=0.
\end{align}
In the end, we know
\begin{align}
\f_1(0,\vx_0,\vv)=A_1(t,\vx_0,\vv)=\rh_{0,1}(\vx_0)\mh(\vv).
\end{align}
\item
By our construction in Section \ref{att section 1.}, at $(t,\vx_0,\vv)$, $\fb_2$ and $\fl_2$ satisfy trivial equations with zero source term and zero data, so $\fb_2(0,\vx,\vv)=0$ and $\fl_2(t,\vx_0,\vv)=0$. Also,
\begin{align}
\f_2(0,\vx_0,\vv)=A_2(t,\vx_0,\vv)+B_2(t,\vx_0,\vv)+C_2(t,\vx_0,\vv)=\rh_{0,2}(\vx_0),
\end{align}
with
\begin{align}
A_2(t,\vx_0,\vv)=\rh_{0,2}(\vx_0)\mh(\vv),\quad B_2(t,\vx_0,\vv)=0,\quad C_2(t,\vx_0,\vv)=0.
\end{align}
Here the space derivative $\nx f_{0,1}(\vx_0,\vv)=0$ plays a key role. In the end, we know
\begin{align}
\f_2(0,\vx_0,\vv)=A_2(t,\vx_0,\vv)=\rh_{0,2}(\vx_0)\mh(\vv).
\end{align}
\item
Based on our construction in Section \ref{att section 1.}, we know \begin{align}
\f_3(0,\vx_0,\vv)=A_3(t,\vx_0,\vv)+B_3(t,\vx_0,\vv)+C_3(t,\vx_0,\vv).
\end{align}
In particular, have
\begin{align}
B_3(t,\vx_0,\vv)=0,\quad C_3(t,\vx_0,\vv)=0.
\end{align}
Here the space derivative $\nx f_{0,1}(\vx_0,\vv)=\nx f_{0,2}(\vx_0,\vv)=0$ and $\nx^2 f_{0,1}(\vx_0,\vv)=0$ play a key role. Also, these space derivatives accompanied with $\dt\m_1(t,\vx_0,\vv)=0$ yield $\vv\cdot\nx\fl_2=0$. Hence, we know $\fb_3$ and $\fl_3$ satisfy trivial equation with zero source term and zero data, so $\fb_3(0,\vx,\vv)=0$ and $\fl_3(t,\vx_0,\vv)=0$. In the end, we know
\begin{align}
\f_3(0,\vx_0,\vv)=A_3(t,\vx_0,\vv)=\rh_{0,3}(\vx_0)\mh(\vv).
\end{align}
\item
In summary, we have shown that at the corner point $(0,\vx_0,\vv)$, both the initial layer and boundary layer are zero up to third order. %If we want to trivialize higher-order terms, we need more zero conditions on the derivatives.
\end{itemize}

%%%%%%%%%%%%%%%%%%%%%%%%%%%%%%%%%%%%%%%%%%%%%%%%%%%%%%%%%%%%%%%%%%%%%%%%
\subsection{Proof of Main Theorem}
%%%%%%%%%%%%%%%%%%%%%%%%%%%%%%%%%%%%%%%%%%%%%%%%%%%%%%%%%%%%%%%%%%%%%%%%

Now we turn to the proof of the main result, Theorem \ref{main.}.
The asymptotic analysis already reveals that the construction of the interior solution, initial layer and boundary layer is valid. Here, we focus on the remainder estimates. We divide the proof into several steps:\\
\ \\
Step 1: Remainder definitions.\\
Define the remainder as
\begin{align}
\e^3R=&f^{\e}-\q -\qb-\ql,
\end{align}
where
\begin{align}
\q:=\sum_{k=1}^3\e^k\f_k,\quad
\qb:=\sum_{k=1}^3\e^k\fb_k,\quad
\ql:=\sum_{k=1}^4\e^k\fl_k.
\end{align}
In other words, we have
\begin{align}
f^{\e}=\q+\qb+\ql+\e^3R.
\end{align}
We write $\lll$ to denote the linearized Boltzmann operator:
\begin{align}
\lll[f]=&\e^2\dt f+\e\vv\cdot\nx f+\ll[f].
\end{align}
In studying initial layer in Section \ref{att section 3.}, we utilize the equivalent form:
\begin{align}
\lll[f]=\p_{\tau}f+\e\vv\cdot\nx u+\ll[f].
\end{align}
In studying boundary layer in Section \ref{att section 2.}, we use another equivalent form:
\begin{align}
\lll[f]=&\e^2\dt f+\va\dfrac{\p f}{\p\eta}-\dfrac{\e}{R_1-\e\eta}\bigg(\vb^2\dfrac{\p f}{\p\va}-\va\vb\dfrac{\p f}{\p\vb}\bigg)
-\dfrac{\e}{R_2-\e\eta}\bigg(\vc^2\dfrac{\p f}{\p\va}-\va\vc\dfrac{\p f}{\p\vc}\bigg)\no\\\rule{0ex}{2.0em}
&-\dfrac{\e}{P_1P_2}\Bigg(\dfrac{\p_{11}\vr\cdot\p_2\vr}{P_1(\e\kk_1\eta-1)}\vb\vc
+\dfrac{\p_{12}\vr\cdot\p_2\vr}{P_2(\e\kk_2\eta-1)}\vc^2\Bigg)\dfrac{\p f}{\p\vb}\no\\\rule{0ex}{2.0em}
&-\dfrac{\e}{P_1P_2}\Bigg(\dfrac{\p_{22}\vr\cdot\p_1\vr}{P_2(\e\kk_2\eta-1)}\vb\vc
+\dfrac{\p_{12}\vr\cdot\p_1\vr}{P_1(\e\kk_1\eta-1)}\vb^2\Bigg)\dfrac{\p f}{\p\vc}\no\\\rule{0ex}{2.0em}
&-\e\bigg(\dfrac{\vb}{P_1(\e\kk_1\eta-1)}\dfrac{\p f}{\p\iota_1}+\dfrac{\vc}{P_2(\e\kk_2\eta-1)}\dfrac{\p f}{\p\iota_2}\bigg)
+\ll[f].\no
\end{align}
\ \\
Step 2: Representation of $\lll[R]$.\\
The equation \eqref{small system.} is actually
\begin{align}
\lll[f^{\e}]=\Gamma[f^{\e},f^{\e}],
\end{align}
which means
\begin{align}\label{ett 01}
\lll[\q+\qb+\ql+\e^3R]=\Gamma[\q+\qb+\ql+\e^3R,\q+\qb+\ql+\e^3R].
\end{align}
Note that the right-hand side of \eqref{ett 01}, i.e. the nonlinear term can be decomposed as
\begin{align}\label{ett 05}
\\
\Gamma[\q+\qb+\ql+\e^3R,\q+\qb+\ql+\e^3R]=&\e^6\Gamma[R,R]+2\e^3\Gamma[R,\q+\qb+\ql]+\Gamma[\q+\qb+\ql,\q+\qb+\ql].\no
\end{align}
Then we turn to the left-hand side of \eqref{ett 01}. The interior contribution is
\begin{align}\label{ett 03}
\lll[\q]=&\e^2\dt\Big(\e\f_1+\e^2\f_2+\e^3\f_3\Big)+\e\vv\cdot\nx\Big(\e\f_1+\e^2\f_2+\e^3\f_3\Big) +\ll[\e\f_1+\e^2\f_2+\e^3\f_3]\\
=&\e^4\vv\cdot\nx\f_3+\e^4\dt\f_2+\e^5\dt\f_3+\e^2\Gamma[\f_1,\f_1]+2\e^3\Gamma[\f_1,\f_2].\no
\end{align}
On the other hand, we consider the boundary layer contribution. Since $\fb_1=0$, $\fb_2$ and $\fb_3$ terms are all included in boundary layer construction except the time derivatives, we compute
\begin{align}\label{ett 02}
\lll[\qb]=\e^4\dt\fb_2+\e^5\dt\fb_3+2\e^3\Gamma[\f_1,\fb_2].
\end{align}
Also, since $\fl_1=0$, the initial layer contribution
\begin{align}\label{ett 04}
\lll[\ql]=\e^5\vv\cdot\nx\fl_4+2\e^3\Gamma[\f_1,\fl_2]+2\e^4\Gamma[\fl_2,\fl_2]+2\e^4\Gamma[\f_2,\fl_2]+2\e^4\Gamma[\f_1,\fl_3].
\end{align}
Therefore, inserting \eqref{ett 05}, \eqref{ett 03}, \eqref{ett 02} and \eqref{ett 04} into \eqref{ett 01}, we have
\begin{align}\label{ftt 11.}
\lll[R]=&\e^3\Gamma[R,R]+2\Gamma[R,\q+\qb+\ql]+S_1+S_2,
\end{align}
where
\begin{align}
S_1=&-\e\vv\cdot\nx\f_3-\e\dt\f_2-\e^2\dt\f_3-\e\dt\fb_2-\e^2\dt\fb_3-\e^2\vv\cdot\nx\fl_4,\\
S_2=&\e\bigg(2\Gamma[\fb_2,\fl_2]+2\Gamma[\f_1,\f_3]+2\Gamma[\f_2,\f_2]+2\Gamma[\f_1,\fb_3]\bigg)+\text{higher-order $\Gamma$ terms up to $\e^4$}.
\end{align}
\ \\
Step 3: Representation of $R-\pp[R]$ and $R(0)$.\\
The boundary condition of \eqref{small system.} is essentially
\begin{align}
f^{\e}=\mb\m^{-1}\pp[f^{\e}]+\mhh(\mb-\m).
\end{align}
which means
\begin{align}
\q+\qb+\e^3R=\pp[\q+\qb+\e^3R]+(\mb-\m)\m^{-1}\pp[\q+\qb+\e^3R]+\mhh(\mb-\m).
\end{align}
Based on the boundary condition expansion in Section \ref{att section 1.}, we have
\begin{align}\label{ftt 12.}
R-\pp[R]=&H[R]+h,
\end{align}
where
\begin{align}
H[R](t,\vx_0,\vv)=(\mb-\m)\m^{-1}\pp[R],
\end{align}
and
\begin{align}
h&=-\e\fl_4.
\end{align}
In other words, the only contribution is from the initial layer $\fl_4$ at the corner point. On the other hand, for initial data
\begin{align}\label{ftt 13.}
R(0)=z=\e\fl_4(0).
\end{align}
In other words, the only contribution is from the initial data of initial layer $\fl_4$. \\
\ \\
Step 4: Remainder Estimate.\\
The equation \eqref{ftt 11.}, initial condition \eqref{ftt 13.} and boundary condition \eqref{ftt 12.} forms a system that fits into \eqref{nonlinear unsteady}:
\begin{align}
\\
\left\{
\begin{array}{l}
\e^2\dt R+\e\vv\cdot\nx R+\ll[R]=\Gamma[R,2(\q+\qb+\ql)+\e^3R]+S_1(t,\vx,\vv)+S_2(t,\vx,\vv)\ \ \text{in}\ \ \rp\times\Omega\times\r^3,\\\rule{0ex}{1.5em}
R(0,\vx,\vv)=z(\vx,\vv)\ \ \text{in}\ \ \Omega\times\r^3,\\\rule{0ex}{1.5em}
R(t,\vx_0,\vv)=\pp[R](t,\vx_0,\vv)+H[R](t,\vx_0,\vv)+h(t,\vx_0,\vv)\ \ \text{for}\ \ \vx_0\in\p\Omega\ \
\text{and}\ \ \vv\cdot\vn<0.\no
\end{array}
\right.
\end{align}
Hence, by Theorem \ref{LI estimate.}, we have
\begin{align}
&\lnnmvt{R}+\lsmt{R}{\gamma_+}\\
\ls&\frac{1}{\e^{2+\frac{3}{2m}}}\pnnm{\pk[S_1(t)]}{\frac{2m}{2m-1}}+\frac{1}{\e^{1+\frac{3}{2m}}}\tnnm{\snn(\ik-\pk)[S_1(t)]}
+\frac{1}{\e^{2+\frac{3}{2m}}}\tnnmt{\pk[S_1]}+\frac{1}{\e^{1+\frac{3}{2m}}}\tnnmt{\snn(\ik-\pk)[S_1]}\no\\
&+\frac{1}{\e^{2+\frac{3}{2m}}}\tnnmt{\pk[\dt S_1]}+\frac{1}{\e^{1+\frac{3}{2m}}}\tnnmt{\snn(\ik-\pk)[\dt S_1]}+\lnnmvt{\nu^{-1}S}+\frac{1}{\e^{2+\frac{3}{2m}}}\tnnm{S_1(0)}\no\\
&+\frac{1}{\e^{2+\frac{3}{2m}}}\pnnm{\pk[S_2(t)]}{\frac{2m}{2m-1}}+\frac{1}{\e^{1+\frac{3}{2m}}}\tnnm{\snn(\ik-\pk)[S_2(t)]}
+\frac{1}{\e^{2+\frac{3}{2m}}}\tnnmt{\pk[S_2]}+\frac{1}{\e^{1+\frac{3}{2m}}}\tnnmt{\snn(\ik-\pk)[S_2]}\no\\
&+\frac{1}{\e^{2+\frac{3}{2m}}}\tnnmt{\pk[\dt S_2]}+\frac{1}{\e^{1+\frac{3}{2m}}}\tnnmt{\snn(\ik-\pk)[\dt S_2]}+\lnnmvt{\nu^{-1}S}+\frac{1}{\e^{2+\frac{3}{2m}}}\tnnm{S_2(0)}\no\\
&+\frac{1}{\e^{\frac{3}{2m}}}\pnm{h(t)}{\gamma_-}{\frac{4m}{3}}+\frac{1}{\e^{1+\frac{3}{2m}}}\tsm{h(t)}{\gamma_-}
+\frac{1}{\e^{1+\frac{3}{2m}}}\tsmt{h}{\gamma_-}+\frac{1}{\e^{1+\frac{3}{2m}}}\tsmt{\dt h}{\gamma_-}+\lsmt{h}{\gamma_-}\no\\
&+\frac{1}{\e^{2+\frac{3}{2m}}}\tnnm{\nu z}+\frac{1}{\e^{1+\frac{3}{2m}}}\tnnm{\vv\cdot\nx z}+\lnnmv{z}.\no
\end{align}
\ \\
Step 5: Estimate of $S_1$.\\
Using results in Section \ref{ftt section 3.}, for the interior contribution $S_{IS}:=-\e\vv\cdot\nx\f_3-\e\dt\f_2-\e^2\dt\f_3$:
\begin{align}
&\pnnm{S_{IS}(t)}{\frac{2m}{2m-1}}+\tnnm{\snn S_{IS}(t)]}
+\tnnmt{S_{IS}}+\tnnmt{\snn S_{IS}}\no\\
&+\tnnmt{\dt S_{IS}}+\tnnmt{\snn\dt S_{IS}}+\lnnmvt{\nu^{-1}S_{IS}}+\tnnm{S_{IS}(0)}\ls \e.
\end{align}
Using results in Section \ref{ftt section 2.}, for the boundary layer contribution $S_{BL}:=-\e\dt\fb_2-\e^2\dt\fb_3$, note that $\nm{g(t)}_{L^p}\ls\nm{g(t)}_{L^{2m}}$ for $1\leq p\leq 2m$:
\begin{align}
&\pnnm{\pk[S_{BL}](t)}{\frac{2m}{2m-1}}\ls \e^{1-\frac{1}{2m}}\abs{\ln(\e)}^8,\quad \tnnm{\snn (\ik-\pk)[S_{BL}](t)]}\ls \e^{2-\frac{1}{2m}}\abs{\ln(\e)}^8,\\
&\tnnmt{\pk[S_{BL}]}\ls\e^{1-\frac{1}{2m}}\abs{\ln(\e)}^8,\quad \tnnmt{\snn (\ik-\pk)[S_{BL}]}\ls \e^{2-\frac{1}{2m}}\abs{\ln(\e)}^8,\no\\
&\tnnmt{\pk[\dt S_{BL}]}\ls\e^{1-\frac{1}{2m}}\abs{\ln(\e)}^8,\quad \tnnmt{\snn (\ik-\pk)[\dt S_{BL}]}\ls \e^{2-\frac{1}{2m}}\abs{\ln(\e)}^8,\no\\
&\lnnmvt{\nu^{-1}S_{BL}}\ls \e^{1-\frac{1}{m}}\abs{\ln(\e)}^8,\quad \tnnm{S_{BL}(0)}\ls \e^{1-\frac{1}{2m}}\abs{\ln(\e)}^8.
\end{align}
Using results in Section \ref{ftt section 1.}, for the initial layer contribution $S_{IL}:=-\e^2\vv\cdot\nx\fl_4$, note the rescaling $\tau=\dfrac{t}{\e^2}$:
\begin{align}
&\pnnm{\pk[S_{IL}](t)}{\frac{2m}{2m-1}}\ls \e^2,\quad \tnnm{\snn (\ik-\pk)[S_{IL}](t)]}\ls \e^{2},\\
&\tnnmt{\pk[S_{IL}]}\ls\e^{3},\quad \tnnmt{\snn (\ik-\pk)[S_{IL}]}\ls \e^{3},\no\\
&\tnnmt{\pk[\dt S_{IL}]}\ls\e,\quad \tnnmt{\snn (\ik-\pk)[\dt S_{IL}]}\ls \e,\no\\
&\lnnmvt{\nu^{-1}S_{IL}}\ls \e^{2},\quad \tnnm{S_{IL}(0)}\ls \e^{2}.
\end{align}
Hence, we have
\begin{align}
&\pnnm{\pk[S_{1}](t)}{\frac{2m}{2m-1}}\ls \e^{1-\frac{1}{2m}}\abs{\ln(\e)}^8,\quad \tnnm{\snn (\ik-\pk)[S_{1}](t)]}\ls \e^{2-\frac{1}{2m}}\abs{\ln(\e)}^8,\\
&\tnnmt{\pk[S_{1}]}\ls\e^{1-\frac{1}{2m}}\abs{\ln(\e)}^8,\quad \tnnmt{\snn (\ik-\pk)[S_{1}]}\ls \e^{2-\frac{1}{2m}}\abs{\ln(\e)}^8,\no\\
&\tnnmt{\pk[\dt S_{1}]}\ls\e^{1-\frac{1}{2m}}\abs{\ln(\e)}^8,\quad \tnnmt{\snn (\ik-\pk)[\dt S_{1}]}\ls \e^{2-\frac{1}{2m}}\abs{\ln(\e)}^8,\no\\
&\lnnmvt{\nu^{-1}S_{1}}\ls \e^{1-\frac{1}{m}}\abs{\ln(\e)}^8,\quad \tnnm{S_{1}(0)}\ls \e^{1-\frac{1}{2m}}\abs{\ln(\e)}^8.
\end{align}
\ \\
Step 6: Estimate of $S_2$.\\
It suffices to consider the leading-order term $2\e\Gamma[\fb_2,\fl_2]$ which contains the most dangerous initial layer $\fl_2$. Note that the time derivative estimate is the worst one. Using nonlinear estimates in Lemma \ref{nonlinear lemma} and rescaling $\eta=\dfrac{\mu}{\e}$ and $\tau=\dfrac{t}{\e^2}$, we have
\begin{align}
&\pnnm{\pk[S_{2}](t)}{\frac{2m}{2m-1}}=0,\quad \tnnm{\snn (\ik-\pk)[S_{2}](t)]}\ls \e^{\frac{3}{2}},\\
&\tnnmt{\pk[S_{2}]}=0,\quad \tnnmt{\snn (\ik-\pk)[S_{2}]}\ls \e^{\frac{5}{2}},\no\\
&\tnnmt{\pk[\dt S_{2}]}=0,\quad \tnnmt{\snn (\ik-\pk)[\dt S_{2}]}\ls \e^{\frac{1}{2}},\no\\
&\lnnmvt{\nu^{-1}S_{2}}\ls \e,\quad \tnnm{S_{2}(0)}\ls \e^{\frac{3}{2}}.
\end{align}
\ \\
Step 7: Estimate of $h$ and $z$.\\
For boundary data $h=-\e\fl_4$, we have
\begin{align}
&\pnm{h(t)}{\gamma_-}{\frac{4m}{3}}\ls \e,\quad \tsm{h(t)}{\gamma_-}\ls \e,\quad \tsmt{h}{\gamma_-}\ls \e^2,\\
&\tsmt{\dt h}{\gamma_-}\ls 1,\quad \lsmt{h}{\gamma_-}\ls \e.\no
\end{align}
For initial data $z=-\e\fl_4(0)$, we have
\begin{align}
\tnnm{\nu z}\ls \e,\quad\tnnm{\vv\cdot\nx z}\ls\e,\quad \lnnmv{z}\ls\e.
\end{align}
\ \\
Step 8: Synthesis.\\
Summarizing all above, we have
\begin{align}
&\lnnmvt{R}+\lsmt{R}{\gamma_+}\\
\ls&\frac{1}{\e^{2+\frac{3}{2m}}}\bigg(\e^{1-\frac{1}{2m}}\abs{\ln(\e)}^8\bigg)+\frac{1}{\e^{1+\frac{3}{2m}}}\bigg(\e^{2-\frac{1}{2m}}\abs{\ln(\e)}^8\bigg)
+\frac{1}{\e^{2+\frac{3}{2m}}}\bigg(\e^{1-\frac{1}{2m}}\abs{\ln(\e)}^8\bigg)+\frac{1}{\e^{1+\frac{3}{2m}}}\bigg(\e^{2-\frac{1}{2m}}\abs{\ln(\e)}^8\bigg)\no\\
&+\frac{1}{\e^{2+\frac{3}{2m}}}\bigg(\e^{1-\frac{1}{2m}}\abs{\ln(\e)}^8\bigg)+\frac{1}{\e^{1+\frac{3}{2m}}}\bigg(\e^{2-\frac{1}{2m}}\abs{\ln(\e)}^8\bigg)
+\bigg(\e^{1-\frac{1}{m}}\abs{\ln(\e)}^8\bigg)+\frac{1}{\e^{2+\frac{3}{2m}}}\bigg(\e^{1-\frac{1}{2m}}\abs{\ln(\e)}^8\bigg)\no\\
&+\frac{1}{\e^{2+\frac{3}{2m}}}\Big(0\Big)+\frac{1}{\e^{1+\frac{3}{2m}}}\Big(\e^{\frac{3}{2}}\Big)
+\frac{1}{\e^{2+\frac{3}{2m}}}\Big(0\Big)+\frac{1}{\e^{1+\frac{3}{2m}}}\Big(\e^{\frac{5}{2}}\Big)\no\\
&+\frac{1}{\e^{2+\frac{3}{2m}}}\Big(0\Big)+\frac{1}{\e^{1+\frac{3}{2m}}}\Big(\e^{\frac{1}{2}}\Big)+\Big(\e\Big)+\frac{1}{\e^{2+\frac{3}{2m}}}\Big(\e^{\frac{3}{2}}\Big)\no\\
&+\frac{1}{\e^{\frac{3}{2m}}}\Big(\e\Big)+\frac{1}{\e^{1+\frac{3}{2m}}}\Big(\e\Big)
+\frac{1}{\e^{1+\frac{3}{2m}}}\Big(\e^2\Big)+\frac{1}{\e^{1+\frac{3}{2m}}}\Big(1\Big)+\Big(\e\Big)\no\\
&+\frac{1}{\e^{2+\frac{3}{2m}}}\Big(\e\Big)+\frac{1}{\e^{1+\frac{3}{2m}}}\Big(\e\Big)+\Big(\e\Big)\no\\
\ls&\frac{1}{\e^{1+\frac{2}{m}}}\abs{\ln(\e)}^8.\no
\end{align}
We have shown
\begin{align}
\frac{1}{\e^3}\lnnmv{f^{\e}-\sum_{k=1}^3\e^k\f_k-\sum_{k=1}^3\e^k\fb_k-\sum_{k=1}^4\e^k\fl_k}\ls \e^{-1-\frac{2}{m}}\abs{\ln(\e)}^8.
\end{align}
Therefore, we know
\begin{align}
\lnnmv{f^{\e}-\e\f_1-\e\fb_1-\fl_1}\ls\e^{2-\frac{2}{m}}\abs{\ln(\e)}^8.
\end{align}
Since $\fb_1=\fl_1=0$, then we naturally have for $\f=\f_1$.
\begin{align}
\lnnmv{f^{\e}-\e\f}\ls\e^{2-\frac{2}{m}}\abs{\ln(\e)}^8.
\end{align}
Here $\dfrac{3}{2}< m<3$, so we may further bound
\begin{align}
\lnnmv{f^{\e}-\e\f}\ls C(\d)\e^{\frac{4}{3}-\d},
\end{align}
for any $0<\d<<1$. The exponential decay in time can be justified in a similar fashion using Remark \ref{exponential remark}.

%%%%%%%%%%%%%%%%%%%%%%%%%%%%%%%%%%%%%%%%%%%%%%%%%%%%%%%%%%%%%%%%%%%%%%%%
\chapter{$\e$-Milne Problem with Geometric Correction}%%%%%%%%%%%%%%%%%%
%%%%%%%%%%%%%%%%%%%%%%%%%%%%%%%%%%%%%%%%%%%%%%%%%%%%%%%%%%%%%%%%%%%%%%%%

%%%%%%%%%%%%%%%%%%%%%%%%%%%%%%%%%%%%%%%%%%%%%%%%%%%%%%%%%%%%%%%%%%%%%%%%
\section{Well-Posedness and Decay}
%%%%%%%%%%%%%%%%%%%%%%%%%%%%%%%%%%%%%%%%%%%%%%%%%%%%%%%%%%%%%%%%%%%%%%%%

In this section, we will study the well-posedness and decay of the the $\e$-Milne problem with geometric correction. Let the null space $\nk$ of the operator $\ll$ be spanned by
$\m^{\frac{1}{2}}\bigg\{1,\va,\vb,\vc,\dfrac{\abs{\vvv}^2-3}{2}\bigg\}=\{\ee_0,\ee_1,\ee_2,\ee_3,\ee_4\}$. Given data $h(\vvv)$ and $S(\eta,\vvv)$, we intend to find
\begin{align}\label{Milne transform compatibility}
\tilde h(\vvv):=\sum_{i=0}^4\tilde D_i\ee_i\in\nk,
\end{align}
with $\tilde D_1=0$ and the other $\tilde D_i$ are constants such that the $\e$-Milne problem with geometric correction for
$\gg(\eta,\vvv)$ in the domain
$(\eta,\vvv)\in[0,L]\times\r^3$ as
\begin{align}\label{Milne transform}
\left\{
\begin{array}{l}\displaystyle
\va\dfrac{\p\gg }{\p\eta}-\dfrac{\e}{R_1-\e\eta}\bigg(\vb^2\dfrac{\p\gg}{\p\va}-\va\vb\dfrac{\p\gg}{\p\vb}\bigg)
-\dfrac{\e}{R_2-\e\eta}\bigg(\vc^2\dfrac{\p\gg}{\p\va}-\va\vc\dfrac{\p\gg}{\p\vc}\bigg)+\ll[\gg]
=\ss,\\\rule{0ex}{1.5em}
\gg (0,\vvv)=h (\vvv)-\tilde h(\vvv)\ \ \text{for}\ \
\va>0,\\\rule{0ex}{1.5em}
\displaystyle\gg (L,\vvv)=\gg (L,\rr[\vvv]),
\end{array}
\right.
\end{align}
is well-posed, and $\gg$ decays exponentially fast to zero as $\eta$ becomes larger and larger. Here $\rr[\vvv]=(-\va,\vb,\vc)$ and $L=\e^{-\frac{1}{2}}$. For simplicity, we temporarily ignore the dependence of $\iota_1,\iota_2$, but our estimates are uniform in these variables. Also, the estimates and decaying rate should be uniform in $\e$.

To achieve this goal, we will focus on studying a variation of the above equation, i.e. the $\e$-Milne problem with geometric correction for $\g(\eta,\vvv)$ in
the domain
$(\eta,\vvv)\in[0,L]\times\r^3$
\begin{align}\label{Milne}
\left\{
\begin{array}{l}\displaystyle
\va\dfrac{\p\g }{\p\eta}-\dfrac{\e}{R_1-\e\eta}\bigg(\vb^2\dfrac{\p\g}{\p\va}-\va\vb\dfrac{\p\g}{\p\vb}\bigg)
-\dfrac{\e}{R_2-\e\eta}\bigg(\vc^2\dfrac{\p\g}{\p\va}-\va\vc\dfrac{\p\g}{\p\vc}\bigg)+\ll[\g ]
=\ss,\\\rule{0ex}{1.5em}
\g (0,\vvv)=h (\vvv)\ \ \text{for}\ \
\va>0,\\\rule{0ex}{1.5em}
\displaystyle\g (L,\vvv)=\g (L,\rr[\vvv]).
\end{array}
\right.
\end{align}

In the following, we will show that $\g$ converges to $\g_L(\vvv)\in\nk$ as $\eta\rt\infty$, where $\g_L$ is a function independent of $\eta$ and is completely determined by $h$ and $S$. Next we will prove that the desired $\tilde h$ is roughly $\g_L$ with some minor modifications.

We can decompose
\begin{align}
\g:=w_g+q_g,
\end{align}
where
\begin{align}
q_g&=\m^{\frac{1}{2}}\bigg(q_{g,0}+q_{g,1}\va+q_{g,2}\vb+q_{g,3}\vc+q_{g,4}\dfrac{\abs{\vvv}^2-3}{2}\bigg)\\
&=q_{g,0}\ee_0+q_{g,1}\ee_1+q_{g,2}\ee_2+q_{g,3}\ee_3+q_{g,4}\ee_4\in\nk,\no
\end{align}
and
\begin{align}
w_g\in\nk^{\perp},
\end{align}
where $\nk^{\perp}$ is the orthogonal complement of $\nk$ in $L^2_{\vvv}$. When there is no confusion, we will simply write $\g=w+q$.

In this section, we introduce some special notation to describe the
norms for $(\eta,\vvv)\in[0,L]\times\r^3$. Define
the $L^2$ norms as follows:
\begin{align}
\tnm{f(\eta)}:=&\bigg(\int_{\r^3}\abs{f(\eta,\vvv)}^2\ud{\vvv}\bigg)^{\frac{1}{2}},\\
\tnnm{f}:=&\bigg(\int_0^{L}\int_{\r^3}\abs{f(\eta,\vvv)}^2\ud{\vvv}\ud{\eta}\bigg)^{\frac{1}{2}}.
\end{align}
Define the inner product in $\vvv$
\begin{align}
\br{f,g}(\eta):=\int_{\r^3}
f(\eta,\vvv)g(\eta,\vvv)\ud{\vvv}.
\end{align}
Define the weighted $L^{\infty}$ norms as follows:
\begin{align}
\lnm{f(\eta)}{\vth,\varrho}:=&\esssup_{\vvv\in\r^3}\bigg(\bvv\abs{f(\eta,\vvv)}\bigg),\\
\lnnm{f}{\vth,\varrho}:=&\esssup_{(\eta,\vvv)\in[0,L]\times\r^3}\bigg(\bvv\abs{f(\eta,\vvv)}\bigg),
\end{align}
Define the mixed $L^2$ and weighted $L^{\infty}$ norm as follows:
\begin{align}
\ltnm{f}{\varrho}:=&\esssup_{\eta\in[0,L]}\bigg(\int_{\r^3}\abs{\ue^{2\varrho\abs{\vvv}^2}f(\eta,\vvv)}^2\ud{\vvv}\bigg)^{\frac{1}{2}}.
\end{align}
Here, we require $0\leq\varrho<\dfrac{1}{4}$ and $\vth>3$.

Since the boundary data $h(\vvv)$ is only defined on $\va>0$, we
naturally extend above definitions to this half-domain as follows:
\begin{align}
\tnmh{h}:=&\bigg(\int_{\va>0}\abs{h(\vvv)}^2\ud{\vvv}\bigg)^{\frac{1}{2}},\\
\lnmh{h}:=&\sup_{\va>0}\bigg(\bvv\abs{h(\vvv)}\bigg).
\end{align}
Throughout this section, we assume
\begin{align}\label{Milne bound}
\lnmh{h}\ls 1,\quad\lnnm{\ue^{K\eta}\ss}{\vth,\varrho}\ls 1,
\end{align}
for some constant $K>0$ uniform in $\e$.
%
%In this section, we heavily rely on the property of the linearized collision operator $\ll: L^2(\r^3)\rt\nk^{\perp}$ with null space $\nk$ and image $\nk^{\perp}$. It induces a bijective operator $\tilde\ll: L^2/\nk\rt\nk^{\perp}$. Here the quotient space $L^2/\nk$ is isomorphic to $\nk^{\perp}$ since $\nk^{\perp}$ is the orthogonal complement of $\nk$ in $L^2(\r^3)$. Then we can define $\tilde\ll$ inverse, i.e. the pseudo-inverse of $\ll$ as
%$\ll^{-1}: \nk^{\perp}\rt\nk^{\perp}$ satisfying $\ll\ll^{-1}[f]=f$ for any $f\in\nk^{\perp}$.

\begin{lemma}\label{linearized operator lemma}
For $f\in\nk^{\perp}$, we have
\begin{align}
\tnm{\ll[f]}\ls \tnm{\nu f}.%,\quad
%\tnm{\sn\ll^{-1}[f]}\ls \tnm{\snn f}.
\end{align}
\end{lemma}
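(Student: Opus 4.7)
The plan is to treat the bound as a direct consequence of the splitting $\ll[f]=\nu f-K[f]$ from \eqref{att 11} together with a standard kernel estimate; the hypothesis $f\in\nk^{\perp}$ plays no role in the inequality itself (it only ensures $\ll[f]$ is not annihilated) and can be ignored for the estimate. By the triangle inequality,
\begin{align*}
\tnm{\ll[f]}\;\leq\;\tnm{\nu f}+\tnm{K[f]},
\end{align*}
so it suffices to bound $\tnm{K[f]}$ by a multiple of $\tnm{\nu f}$.

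First, I would invoke Lemma \ref{Regularity lemma 1'} in the unweighted case $\vth=\vrh=0$ (the hypotheses are satisfied trivially) to obtain
\begin{align*}
\int_{\r^3}\abs{k(\vuu,\vvv)}\,\ud\vuu\;\ls\;\frac{1}{\br{\vvv}}\;\ls\;\frac{1}{\nu(\vvv)},
\end{align*}
where the last step uses $\nu(\vvv)\sim\br{\vvv}$ from Lemma \ref{linearized operator lemma}'s predecessors (e.g.\ Lemma \ref{ktt 01}). Applying Cauchy--Schwarz in $\vuu$ to $K[f](\vvv)=\int k(\vuu,\vvv)f(\vuu)\,\ud\vuu$ then yields
\begin{align*}
\abs{K[f](\vvv)}^{2}\;\leq\;\left(\int\abs{k(\vuu,\vvv)}\,\ud\vuu\right)\left(\int\abs{k(\vuu,\vvv)}\abs{f(\vuu)}^{2}\,\ud\vuu\right)\;\ls\;\frac{1}{\nu(\vvv)}\int\abs{k(\vuu,\vvv)}\abs{f(\vuu)}^{2}\,\ud\vuu.
\end{align*}

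Next, I would integrate in $\vvv$, swap the order of integration, and apply Lemma \ref{Regularity lemma 1'} once more in the $\vvv$-variable (using the symmetry of the kernel estimate) to get
\begin{align*}
\tnm{K[f]}^{2}\;\ls\;\int_{\r^{3}}\abs{f(\vuu)}^{2}\left(\int_{\r^{3}}\frac{\abs{k(\vuu,\vvv)}}{\nu(\vvv)}\,\ud\vvv\right)\ud\vuu\;\ls\;\int_{\r^{3}}\abs{f(\vuu)}^{2}\,\ud\vuu\;=\;\tnm{f}^{2}.
\end{align*}
Since $\nu(\vvv)\gs 1$ uniformly, $\tnm{f}\ls\tnm{\nu f}$, and combining with the triangle inequality above yields $\tnm{\ll[f]}\ls\tnm{\nu f}$, as required.

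The only potential obstacle is verifying that the ``swap of order'' step produces a uniformly bounded integral in $\vuu$; but this is exactly the content of Lemma \ref{Regularity lemma 1'} (which is symmetric in $\vuu,\vvv$ up to the $1/\br{\cdot}$ weight), so no additional work should be needed.
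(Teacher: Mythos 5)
Your proposal is correct and takes essentially the same route as the paper: split $\ll=\nu I-K$, use the triangle inequality, and reduce everything to the $L^2$-boundedness $\tnm{K[f]}\ls\tnm{f}$ (the hypothesis $f\in\nk^{\perp}$ indeed plays no role in the estimate). The only difference is that the paper simply cites \cite[Section 3]{Glassey1996} for that bound, whereas you reprove it by a Schur-test/Cauchy--Schwarz argument from Lemma \ref{Regularity lemma 1'} and the symmetry of the kernel bound, which is precisely the standard argument behind the citation.
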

\begin{proof}
Based on \cite[Section 3]{Glassey1996}, we know $\ll=\nu I-K$, where
\begin{align}
\tnm{K[f]}\ls \tnm{f},
\end{align}
so $\ll$ estimate naturally follows. %On the other hand, let $f=\ll[g]$, i.e. $g=\ll^{-1}[f]\in\nk^{\perp}$. We have
%\begin{align}
%\tnm{\sn\ll^{-1}[f]}^2=\tnm{\sn g}^2\ls \br{g,\ll[g]}=\br{f,\ll^{-1}[f]}\ls \tnm{\snn f}\tnm{\sn\ll^{-1}[f]}.
%\end{align}
%Hence, we know
%\begin{align}
%\tnm{\sn\ll^{-1}[f]}\ls \tnm{\snn f}.
%\end{align}
\end{proof}

The existence of uniqueness of $\g$ and $\gg$ follow from a standard iteration argument as in \cite{AA004} and \cite{AA003}, so we will omit the proof here and focus on the a priori estimates.

%%%%%%%%%%%%%%%%%%%%%%%%%%%%%%%%%%%%%%%%%%%%%%%%%%%%%%%%%%%%%%%%%%%%%%%%%%%%%%%%%%%%%
\subsection{$L^2$ Estimates}
%%%%%%%%%%%%%%%%%%%%%%%%%%%%%%%%%%%%%%%%%%%%%%%%%%%%%%%%%%%%%%%%%%%%%%%%%%%%%%%%%%%%%

%%%%%%%%%%%%%%%%%%%%%%%%%%%%%%%%%%%%%%%%%%%%%%%%%%%%%%%%%%%%%%%%%%%%%%%%%%%%%%%%%%%%%
\subsubsection{$\ss\in\nk^{\perp}$ Case}
%%%%%%%%%%%%%%%%%%%%%%%%%%%%%%%%%%%%%%%%%%%%%%%%%%%%%%%%%%%%%%%%%%%%%%%%%%%%%%%%%%%%%

Denote
\begin{align}
G_1(\eta):=-\dfrac{\e}{R_1-\e\eta},\quad G_2(\eta):=-\dfrac{\e}{R_2-\e\eta},
\end{align}
and
\begin{align}
G(\eta):=G_1(\eta)+G_2(\eta).
\end{align}
Let $W_i(\eta)$ satisfy
\begin{align}\label{mtt 10}
G_i=-\dfrac{\ud W_i}{\ud\eta},\quad W_i(0)=0\ \ \text{for}\ \ i=1,2.
\end{align}
Hence, it is easy to check that
\begin{align}
W_i(\eta)=\ln\left(\dfrac{R_i}{R_i-\e\eta}\right).
\end{align}
Denote
\begin{align}
W(\eta):=W_1(\eta)+W_2(\eta).
\end{align}

\begin{remark}\label{Milne power}
We know for $\e<<1$, $\e\eta\leq\e L=\e^{\frac{1}{2}}<<1$, which implies $W(\eta)\sim 0$ and further $\ue^{W(\eta)}\sim 1$.
\end{remark}

We will estimate $\g=w+q$ separately and divide it into several steps.

\begin{lemma}[orthogonality estimate]\label{Milne lemma 3}
Assume $\ss\in\nk^{\perp}$. We have
\begin{align}
\br{\va\ee_j,g}(\eta)=&0\ \ \text{for}\ \ j=0,2,3,4\ \ \text{and}\ \ \eta\in[0,L].
\end{align}
\end{lemma}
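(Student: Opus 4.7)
The plan is to derive, for each $j \in \{0,2,3,4\}$, a first-order linear homogeneous ODE in $\eta$ for the moment $y_j(\eta) := \br{\va\ee_j, g}(\eta)$, and then use the specular-type condition at $\eta = L$ to supply zero terminal data, at which point $y_j\equiv 0$ follows immediately.

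To set up the ODE, I would test \eqref{Milne} against $\ee_j$ and integrate over $\vvv \in \r^3$. The $\ll[g]$ term drops out by self-adjointness of $\ll$ together with $\ee_j \in \nk$; the source contribution drops out by the hypothesis $\ss \in \nk^{\perp}$; and the streaming term yields $\frac{d}{d\eta}y_j(\eta)$. The substantive step is handling the two geometric-correction terms by integration by parts in $\vvv$, with the boundary terms at infinity killed by the Gaussian $\mh$ factor in $\ee_j$. A direct check in each of the four cases---using the explicit formulas for $\p_\va\ee_j$, $\p_\vb\ee_j$, $\p_\vc\ee_j$---gives the pointwise identities
\begin{align*}
-\vb^2\p_\va\ee_j + \va\vb\,\p_\vb\ee_j + \va\ee_j = c_j^{(1)}\,\va\ee_j,\qquad
-\vc^2\p_\va\ee_j + \va\vc\,\p_\vc\ee_j + \va\ee_j = c_j^{(2)}\,\va\ee_j,
\end{align*}
with $(c_0^{(1)},c_0^{(2)}) = (c_4^{(1)},c_4^{(2)}) = (1,1)$, $(c_2^{(1)},c_2^{(2)}) = (2,1)$, and $(c_3^{(1)},c_3^{(2)}) = (1,2)$. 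The key feature is not the specific values of $c_j^{(i)}$ but that the right-hand side is proportional to $\va\ee_j$, so the moment equation closes on $y_j$ and produces the homogeneous linear ODE
\begin{align*}
\frac{d}{d\eta}y_j(\eta) + \bigl(c_j^{(1)} G_1(\eta) + c_j^{(2)} G_2(\eta)\bigr)\, y_j(\eta) = 0,\qquad \eta\in[0,L].
\end{align*}

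It then remains to establish $y_j(L)=0$. The specular condition $\g(L,\vvv)=\g(L,\rr[\vvv])$ with $\rr[\vvv]=(-\va,\vb,\vc)$ forces $\g(L,\cdot)$ to be even in $\va$; for $j\in\{0,2,3,4\}$ the test functions $\va\ee_j\in\{\va\mh,\va\vb\mh,\va\vc\mh,\va\tfrac{\abs{\vvv}^2-3}{2}\mh\}$ are each odd in $\va$, so $y_j(L)=0$ by parity. Solving the linear ODE backward from $\eta=L$ then yields $y_j\equiv 0$ on $[0,L]$. I expect the only mildly technical point to be the explicit integration-by-parts identities for the operators $A_i(\ee_j)$; these are elementary but case-by-case. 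The lemma deliberately excludes $j=1$: since $\va\ee_1=\va^2\mh$ is \emph{even} in $\va$, the specular condition does not force $y_1(L)=0$, consistent with $y_1$ being a genuine conserved mass-flux-type quantity.
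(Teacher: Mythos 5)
Your proposal is correct and follows essentially the same route as the paper's proof: test \eqref{Milne} with $\ee_j$, kill the $\ll[\g]$ and $\ss$ contributions by self-adjointness and orthogonality, integrate by parts in $\vvv$ so the geometric-correction terms close on $\br{\va\ee_j,\g}$ itself, and then use the specular condition at $\eta=L$ (oddness of $\va\ee_j$ in $\va$) to get zero terminal data for the resulting homogeneous ODE. Your explicit constants $c_j^{(1)},c_j^{(2)}$ check out, and your parity remark about $j=1$ matches the paper's own remark that $\br{\va\ee_1,\g}$ need not vanish.
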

\begin{proof}
Multiplying $\ee_j$ for $j=0,2,3,4$ on both sides of \eqref{Milne} and
integrating over $\vvv\in\r^3$, we have
\begin{align}
\frac{\ud{}}{\ud{\eta}}\br{\va\ee_j,\g}+
G_1\br{\vb^2\dfrac{\p
\g}{\p\va}-\va\vb\dfrac{\p \g}{\p\vb},\ee_j}+
G_2\br{\vc^2\dfrac{\p
\g}{\p\va}-\va\vc\dfrac{\p \g}{\p\vc},\ee_j}=-\br{\ll[\g],\ee_j}+\br{\ss,\ee_j}.
\end{align}
Since $\ll$ is self-adjoint and $\ee_j\in\nk$ as well as $\ss\in\nk^{\perp}$, we have
\begin{align}
\br{\ll[\g],\ee_j}=\br{\ll[\ee_j],\g}=0,\quad \br{\ss,\ee_j}=0.
\end{align}
An integration by parts implies
\begin{align}
G_1\br{\vb^2\dfrac{\p
\g}{\p\va}-\va\vb\dfrac{\p \g}{\p\vb},\ee_j}&=-G_1\br{\frac{\p}{\p\va}(\ee_j\vb^2)-\frac{\p}{\p\vb}(\ee_j\va\vb),\g}=C_1G_1\br{\ee_j\va,\g},\\
G_2\br{\vc^2\dfrac{\p
\g}{\p\va}-\va\vc\dfrac{\p \g}{\p\vc},\ee_j}&=-G_2\br{\frac{\p}{\p\va}(\ee_j\vc^2)-\frac{\p}{\p\vc}(\ee_j\va\vc),\g}=C_1G_2\br{\ee_j\va,\g}.
\end{align}
where $C_1$ and $C_2$ are constants. Summarizing all above, we know that \eqref{mtt 33} is
\begin{align}
\frac{\ud{}}{\ud{\eta}}\br{\ee_j\va,\g}=(C_1G_1+C_2G_2)\br{\ee_j\va,\g}.
\end{align}
Considering the reflexive boundary which implies $\br{\ee_j\va,\g}(L)=0$, we have for any $\eta\in[0,L]$,
\begin{align}
\br{\ee_j\va,\g}(\eta)=0.
\end{align}
\end{proof}

\begin{remark}
Note that $\br{\va\ee_1,\g}(\eta)$ is not necessarily zero.
\end{remark}

\begin{lemma}[$L^2$ estimates of $L^2$]\label{Milne lemma 1}
Assume \eqref{Milne bound} holds and $\ss\in\nk^{\perp}$. We have
\begin{align}
\tnnm{\sn{w}}\ls 1.
\end{align}
\end{lemma}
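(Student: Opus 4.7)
The plan is a weighted energy estimate, multiplying \eqref{Milne} by $g$, integrating in $\mathfrak{v}$, and then using a carefully chosen integrating factor in $\eta$ to handle the geometric correction. The coercivity of $\mathcal{L}$ on $\mathcal{N}^{\perp}$ will produce $\tnnm{\nu^{1/2}w}$ on the left, and since $S\in\mathcal{N}^{\perp}$, the macroscopic part $q$ will drop out of the source pairing.

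First I would multiply the equation in \eqref{Milne} by $g$ and integrate over $\vvv\in\mathbb{R}^3$. The transport term gives $\tfrac12\partial_\eta\br{\mathfrak{v}_\eta,g^2}$. For the two geometric terms I would integrate by parts in $\vvv$: a direct computation shows
\begin{align*}
\br{\mathfrak{v}_\phi^{2}\partial_{\mathfrak{v}_\eta}g-\mathfrak{v}_\eta\mathfrak{v}_\phi\partial_{\mathfrak{v}_\phi}g,\,g}
=\tfrac12\br{\mathfrak{v}_\eta,g^{2}},
\end{align*}
and similarly for the $G_2$ contribution. Adding everything and using $\br{g,\mathcal{L}[g]}=\br{w,\mathcal{L}[w]}\gtrsim \unm{w(\eta)}^{2}$ by Lemma \ref{ktt 01}, I obtain the pointwise-in-$\eta$ identity
\begin{align*}
\tfrac12\,\partial_\eta\br{\mathfrak{v}_\eta,g^{2}}+\tfrac12\,G(\eta)\br{\mathfrak{v}_\eta,g^{2}}+\br{g,\mathcal{L}[g]}=\br{g,S}.
\end{align*}
Recalling $G=-\partial_\eta W$ from \eqref{mtt 10}, I multiply through by the integrating factor $\ue^{-W}$ so that the first two terms collapse to $\tfrac12\partial_\eta\!\left(\ue^{-W}\br{\mathfrak{v}_\eta,g^{2}}\right)$, and integrate in $\eta\in[0,L]$.

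The boundary terms are the key structural point. At $\eta=L$, the specular condition $g(L,\vvv)=g(L,\mathscr{R}[\vvv])$ makes $g^{2}(L,\cdot)$ even in $\mathfrak{v}_\eta$, so $\br{\mathfrak{v}_\eta,g^{2}}(L)=0$. At $\eta=0$, I split according to the sign of $\mathfrak{v}_\eta$; the incoming piece is $h$, while the outgoing piece ($\mathfrak{v}_\eta<0$) appears on the left with a \emph{favorable} sign $\tfrac12\int_{\mathfrak{v}_\eta<0}|\mathfrak{v}_\eta|g^{2}(0)$ after moving it across. By Remark \ref{Milne power}, $\ue^{-W(\eta)}\sim 1$ uniformly on $[0,L]$, so no geometric weight is lost. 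This yields
\begin{align*}
\tfrac12\!\int_{\mathfrak{v}_\eta<0}\!|\mathfrak{v}_\eta|g^{2}(0)\,\ud\vvv
+\unnm{w}^{2}\ls \Big|\int_0^{L}\br{g,S}\,\ud\eta\Big|+\tnmh{h}^{2}.
\end{align*}

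Finally, since $S\in\mathcal{N}^{\perp}$, the pairing $\br{g,S}=\br{w,S}$, so Cauchy--Schwarz gives
\begin{align*}
\Big|\!\int_0^{L}\!\br{w,S}\Big|\le\tnnm{\nu^{1/2}w}\,\tnnm{\nu^{-1/2}S},
\end{align*}
and the weighted $L^{\infty}$ bound \eqref{Milne bound} with $\vartheta>3$ gives $\tnnm{\nu^{-1/2}S}\ls 1$ after bounding $\int_0^{L}\ue^{-2K\eta}\,\ud\eta\le 1/(2K)$ and integrating the Gaussian weight in $\vvv$. Cauchy's inequality absorbs $\tnnm{\nu^{1/2}w}$, leaving $\tnnm{\nu^{1/2}w}\ls \tnnm{\nu^{-1/2}S}+\tnmh{h}\ls 1$, as desired.

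The only real obstacle I anticipate is confirming the integration-by-parts identity for the geometric correction without boundary contributions in $\vvv$ (handled by the Gaussian decay of $g$, or by a standard approximation argument), and making sure the sign of the $\eta=0$ boundary term is indeed favorable on the left so no a priori control of $g(0,\vvv)$ for $\mathfrak{v}_\eta<0$ is needed. Note that this lemma only controls the microscopic part $w$; estimating the macroscopic coefficients $q_{g,i}$ is deferred to subsequent lemmas and will use the orthogonality identity of Lemma \ref{Milne lemma 3} combined with carefully designed test functions.
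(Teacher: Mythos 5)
Your proposal is correct and follows essentially the same route as the paper: the same energy identity in $\vvv$ (with the geometric terms collapsing to $\tfrac12 G\br{\va \g,\g}$ via integration by parts), the integrating factor built from $G=-\partial_\eta W$ with $\ue^{\pm W}\sim 1$, the vanishing of $\br{\va\g,\g}$ at $\eta=L$ by specular reflection, the favorable sign of the outgoing piece at $\eta=0$ with the incoming piece controlled by \eqref{Milne bound}, and coercivity plus Cauchy--Schwarz on $\br{w,\ss}$ to absorb $\tnnm{\sn w}$. The paper phrases the same computation as solving the ODE for $\alpha(\eta)=\tfrac12\br{\va\g,\g}(\eta)$ on $[\eta,L]$ and evaluating at $\eta=0$ (which additionally records a lower bound on $\alpha(0)$ used in a later remark), but the estimates are identical.
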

\begin{proof}
Multiplying $\g$ on both sides of \eqref{Milne} and
integrating over $\vvv\in\r^3$, we have
\begin{align}\label{mtt 00}
\half\frac{\ud{}}{\ud{\eta}}\br{\va\g,\g}+
G_1\br{\vb^2\dfrac{\p
\g}{\p\va}-\va\vb\dfrac{\p \g}{\p\vb},\g}+
G_2\br{\vc^2\dfrac{\p
\g}{\p\va}-\va\vc\dfrac{\p \g}{\p\vc},\g}=-\br{\g,\ll[\g]}+\br{\g,\ss}.
\end{align}
An integration by parts implies
\begin{align}
\br{\vb^2\dfrac{\p
\g}{\p\va}-\va\vb\dfrac{\p \g}{\p\vb},\g}=\half\br{\vb^2,\dfrac{\p
(\g^2)}{\p\va}}-\half\br{\va\vb,\dfrac{\p (\g^2)}{\p\vb}}=\half\br{\va\g,\g},\\
\br{\vc^2\dfrac{\p
\g}{\p\va}-\va\vc\dfrac{\p \g}{\p\vc},\g}=\half\br{\vc^2,\dfrac{\p
(\g^2)}{\p\va}}-\half\br{\va\vc,\dfrac{\p (\g^2)}{\p\vc}}=\half\br{\va\g,\g}.
\end{align}
Also, since $\ll$ is a self-adjoint operator with null space $\nk$, we get
\begin{align}\label{mtt 49}
\br{\g,\ll[\g]}=\br{q,\ll[q]}+\br{w,\ll[q]}+\br{q,\ll[w]}+\br{w,\ll[w]}=\br{w,\ll[w]}.
\end{align}
Therefore, we simplify \eqref{mtt 00} to obtain
\begin{align}\label{mtt 01}
\half\frac{\ud{}}{\ud{\eta}}\br{\va\g,\g}+\half
G\br{\va\g,\g}=-\br{w,\ll[w]}+\br{w,\ss}.
\end{align}
Define
\begin{align}\label{mtt 38}
\alpha(\eta)=\half\br{\va\g,\g}(\eta).
\end{align}
Then \eqref{mtt 01} may be rewritten as
\begin{align}
\frac{\ud{\alpha}}{\ud{\eta}}+G\alpha=-\br{w,\ll[w]}+\br{w,\ss}.
\end{align}
Then regarding the above as an ODE and solve it in $[\eta,L]$, we have
\begin{align}
\alpha(\eta)=&\alpha(L)\exp\bigg(\int_{\eta}^LG(y)\ud{y}\bigg)+
\int_{\eta}^L\exp\bigg(-\int_{\eta}^yG(z)\ud{z}\bigg)\bigg(\br{w,\ll[w]}(y)+\br{w,\ss}(y)\bigg)\ud{y}.\label{mtt 02}
\end{align}
Note the fact that $\alpha(L)=0$ due to the reflexive boundary condition. Also, $\br{w,\ll[w]}(\eta)\geq\tnm{\sn{w(\eta)}}^2$ due to coercivity. Hence, \eqref{mtt 02} implies that
\begin{align}
\alpha(\eta)\geq\int_{\eta}^L\exp\bigg(-\int_{\eta}^yG(z)\ud{z}\bigg)\bigg(\tnm{\sn{w(y)}}^2+\br{w,\ss}(y)\bigg)\ud{y}.
\end{align}
In particular, taking $\eta=0$, we have
\begin{align}\label{mtt 32}
\alpha(0)&\geq\int_{0}^L\exp\bigg(-\int_{0}^yG(z)\ud{z}\bigg)\bigg(\tnm{\sn{w(y)}}^2+\br{w,\ss}(y)\bigg)\ud{y}\\
&=\int_{0}^L\ue^{W(y)}\bigg(\tnm{\sn{w(y)}}^2+\br{w,\ss}(y)\bigg)\ud{y},\no
\end{align}
which yields
\begin{align}\label{mtt 03}
\int_{0}^{L}\ue^{W(y)}\tnm{\sn{w(y)}}^2\ud{y}&\leq \alpha(0)-\int_{0}^L\ue^{W(y)}\br{w,\ss}(y)\ud{y}.
\end{align}
On the other hand, \eqref{Milne bound} implies
\begin{align}\label{mtt 31}
\alpha(0)=&\half\br{\va\g,\g}(0)=\half\int_{\va>0}\va\g^2(0,\vvv)\ud{\vvv}+\half\int_{\va<0}\va\g^2(0,\vvv)\ud{\vvv}\leq\half\int_{\va>0}\va\g^2(0,\vvv)\ud{\vvv}\\
=&\half\int_{\va>0}\va h^2(\vvv)\ud{\vvv}\ls 1.\no
\end{align}
Combined \eqref{mtt 03} and \eqref{mtt 31}, we obtain
\begin{align}
\int_{0}^{L}\ue^{W(y)}\tnm{\sn{w(y)}}^2\ud{y}\ls 1+\int_{0}^L\ue^{W(y)}\br{w,\ss}(y)\ud{y}.
\end{align}
Using \eqref{mtt 10} and Remark \ref{Milne power}, as well as H\"older's inequality and Cauchy's inequality, we get
\begin{align}
\int_0^L\tnm{\sn{w}(\eta)}^2\ud{\eta}&\ls 1+\int_{0}^L\abs{\br{w,\ss}(\eta)}\ud{\eta}\ls 1+\int_{0}^L\tnm{\sn{w}(\eta)}\tnm{\snn{\ss}(\eta)}\ud\eta\\
&\ls 1+\d\int_0^L\tnm{\sn{w}(\eta)}^2\ud{\eta}+\d^{-1}\int_0^L\tnm{\snn{\ss}(\eta)}^2\ud{\eta}.\no
\end{align}
Therefore, for sufficiently small $\d$, we absorb $\ds\d\int_0^L\tnm{\sn{w}(\eta)}^2\ud{\eta}$ into LHS and use \eqref{Milne bound} to obtain
\begin{align}
\int_0^L\tnm{\sn{w}(\eta)}^2\ud{\eta}\ls 1+\int_0^L\tnm{\snn{\ss}(\eta)}^2\ud{\eta}\ls 1.
\end{align}
\end{proof}

\begin{remark}
Based on the proof of Lemma \ref{Milne lemma 1}, \eqref{mtt 32} actually implies
\begin{align}
\alpha(0)\gs \int_{0}^L\ue^{W(y)}\br{w,\ss}(y)\ud{y}\gs \int_{0}^L\abs{\br{w,\ss}(y)}\ud{y}.
\end{align}
Hence, with \eqref{mtt 31} holds, $\alpha(0)$ actually has both upper and lower bounds, i.e.
\begin{align}\label{mtt 04}
\int_{0}^L\abs{\br{w,\ss}(y)}\ud{y}\ls\alpha(0)\ls 1.
\end{align}
\end{remark}

\begin{lemma}[point-wise estimate of $q$]\label{Milne lemma 2}
Assume \eqref{Milne bound} holds and $\ss\in\nk^{\perp}$. We have $q_1(\eta)=0$ and
\begin{align}
\abs{q_j(\eta)}\ls 1+\eta+\tnm{\sn{w}(\eta)}\ \ \text{for}\ \ j=0,2,3,4.
\end{align}
\end{lemma}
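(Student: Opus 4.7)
The plan is to separate $q_1$ (which vanishes outright) from the remaining $q_j$'s (which grow at most linearly in $\eta$ up to the local $L^2$ norm of $w$). Throughout I will use the identity $q_j(\eta) = \br{\ee_j, \g}(\eta)$, which follows from $\{\ee_i\}$ being orthonormal in $\nk$ and $w \in \nk^\perp$. Applying Lemma~\ref{Milne lemma 3} with $j=0$ gives $0 = \br{\va\ee_0, \g}(\eta) = \br{\ee_1, \g}(\eta) = q_1(\eta) + \br{\ee_1, w}(\eta)$; since $\ee_1 \in \nk$ is orthogonal to $w$, this forces $q_1 \equiv 0$ and handles the first assertion.

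For each $j \in \{0,2,3,4\}$, I would derive a first-order ODE for the auxiliary moment $M_j(\eta) := \br{\va^2 \ee_j, \g}(\eta)$ by multiplying~\eqref{Milne} by the odd-in-$\va$ test function $\phi_j = \va\,\ee_j$ and integrating over $\vvv$. The same integration by parts that produced Lemma~\ref{Milne lemma 3} reorganizes the geometric correction into a sum of weighted moments of $\g$; the collision term collapses to $\br{\ll[\g], \phi_j} = \br{w, \ll[\phi_j]}$ by self-adjointness and $\ll[\phi_j] \in \nk^\perp$. The result takes the form
\[
\p_\eta M_j(\eta) = \br{\phi_j, \ss}(\eta) - \br{\ll[\phi_j], w}(\eta) + G_1(\eta)\, m^{(1)}_j(\eta) + G_2(\eta)\, m^{(2)}_j(\eta),
\]
where $m^{(i)}_j$ are other bounded Gaussian moments of $\g$ and $|G_i| \ls \e$.

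Integrating this ODE from $0$ to $\eta$ and applying Cauchy--Schwarz yields $|M_j(\eta) - M_j(0)| \ls \eta$: the $\ss$-contribution is $O(1)$ uniformly in $\eta$ by the exponential decay in~\eqref{Milne bound}; the $w$-contribution is bounded by $\sqrt\eta\,\tnnm{\sn w} \ls \sqrt\eta$ via Lemma~\ref{Milne lemma 1}; and the geometric terms contribute $O(\e \eta)$. The boundary value $|M_j(0)|$ is $O(1)$, controlled by the in-flow assumption on $h$ in~\eqref{Milne bound} for $\va>0$ and by the $L^2$ energy/flux identity at $\eta=0$ (cf.\ \eqref{mtt 31}--\eqref{mtt 04} in the proof of Lemma~\ref{Milne lemma 1}) for $\va<0$. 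Finally, direct computation of the Gaussian moments gives $M_j(\eta) = \sum_i A_{ji}\, q_i(\eta) + \br{\va^2 \ee_j, w}(\eta)$; the matrix $A_{ji}$ (for $i,j\in\{0,2,3,4\}$) is block-diagonal with two scalar blocks for $j=2,3$ and an invertible $2\times 2$ block coupling $(q_0, q_4)$ to $(M_0, M_4)$. Inverting this system, and bounding the residual by $|\br{\va^2 \ee_j, w}(\eta)| \ls \tnm{\sn w(\eta)}$ via Cauchy--Schwarz, delivers the claimed pointwise estimate. The main difficulty is untangling the $q_0$--$q_4$ coupling through the $2\times 2$ block (which requires verifying non-degeneracy from explicit Gaussian integrals), with a secondary subtlety in controlling $M_j(0)$ on the undetermined outflow part $\va<0$, recovered from Lemma~\ref{Milne lemma 1}.
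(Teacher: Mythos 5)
Your route is essentially the one the paper takes: $q_1\equiv 0$ from the $j=0$ flux identity (your use of Lemma \ref{Milne lemma 3} is a legitimate shortcut, since $\va\ee_0\propto\ee_1$ and $w\perp\nk$), then an ODE in $\eta$ for the second moments $\br{\va^2\ee_j,\g}$ obtained by testing \eqref{Milne} with $\va\ee_j$, with the $\ss$- and $w$-forcing controlled through \eqref{Milne bound} and Lemma \ref{Milne lemma 1}, the trace at $\eta=0$ controlled by the energy/flux identity \eqref{mtt 31}--\eqref{mtt 04}, and the conclusion recovered by inverting the Gaussian moment matrix $A$ and bounding the residual $\abs{\br{\va^2\ee_j,w}(\eta)}\ls\tnm{\sn w(\eta)}$. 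All of these ingredients match the paper's proof.

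The one step that does not hold as written is your treatment of the geometric terms. After the integration by parts, the quantities $m^{(i)}_j$ are moments of $\g=w+q$ against fixed Gaussian-weighted polynomials; their $q$-part equals $\sum_i B^{(i)}_{ji}q_i(\eta)$ (the constant matrices $B^{(i)}$ appearing in \eqref{mtt 35}), so calling these moments ``bounded'' and concluding that the geometric contribution to $\abs{M_j(\eta)-M_j(0)}$ is $O(\e\eta)$ presupposes exactly the pointwise bound on $q$ that the lemma asserts --- Lemma \ref{Milne lemma 1} gives only an integrated bound on the non-kernel part $w$ and nothing pointwise about $q$. The gap is real but small and repairable: the $q$-dependence enters with an $O(\e)$ coefficient and $\e\eta\le\e L=\e^{\frac{1}{2}}$, so either a Gr\"{o}nwall argument applied to your integrated inequality (the factor $\ue^{C\e\eta}$ is $O(1)$), or the paper's device of closing the system as $\ud\beta/\ud\eta=(G_1B^{(1)}+G_2B^{(2)})A^{-1}\beta+\dots$ and solving with the matrix integrating factor $\exp(-\Xi(\eta))$, whose exponent is of size $W_i(\eta)\sim 0$ by Remark \ref{Milne power}, closes the loop and leaves the rest of your estimates intact.
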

\begin{proof}
Multiplying $\ee_0$ on both sides of \eqref{Milne} and
integrating over $\vvv\in\r^3$, we have
\begin{align}\label{mtt 33}
\frac{\ud{}}{\ud{\eta}}\br{\va\ee_0,\g}+
G_1\br{\vb^2\dfrac{\p
\g}{\p\va}-\va\vb\dfrac{\p \g}{\p\vb},\ee_0}+
G_2\br{\vc^2\dfrac{\p
\g}{\p\va}-\va\vc\dfrac{\p \g}{\p\vc},\ee_0}=-\br{\ll[\g],\ee_0}+\br{\ss,\ee_0}.
\end{align}
Since $\ll$ is self-adjoint and $\ee_0\in\nk$ as well as $\ss\in\nk^{\perp}$, we have
\begin{align}
\br{\ll[\g],\ee_0}=\br{\ll[\ee_0],\g}=0,\quad \br{\ss,\ee_0}=0.
\end{align}
An integration by parts implies
\begin{align}
G_1\br{\vb^2\dfrac{\p
\g}{\p\va}-\va\vb\dfrac{\p \g}{\p\vb},\ee_0}&=-G_1\br{\frac{\p}{\p\va}(\ee_0\vb^2)-\frac{\p}{\p\vb}(\ee_0\va\vb),\g}=G_1\br{\ee_0\va,\g},\\
G_2\br{\vc^2\dfrac{\p
\g}{\p\va}-\va\vc\dfrac{\p \g}{\p\vc},\ee_0}&=-G_2\br{\frac{\p}{\p\va}(\ee_0\vc^2)-\frac{\p}{\p\vc}(\ee_0\va\vc),\g}=G_2\br{\ee_0\va,\g}.
\end{align}
Summarizing all above, we know that \eqref{mtt 33} is
\begin{align}\label{mtt 12}
\frac{\ud{}}{\ud{\eta}}\br{\ee_0\va,\g}=-G\br{\ee_0\va,\g}.
\end{align}
Since $\br{\ee_0\va,\g}=\br{\ee_1,\g}=q_1$, \eqref{mtt 12} is actually
\begin{align}
\frac{\ud{q_1}}{\ud{\eta}}=-Gq_1 .
\end{align}
Considering the reflexive boundary which implies $q_1 (L)=0$, we have for any $\eta\in[0,L]$,
\begin{align}\label{mt 11}
q_1(\eta)=0.
\end{align}
Multiplying $\va\ee_j$ with $j=0,2,3,4$ on both sides of \eqref{Milne} and integrating over
$\vvv\in\r^3$, we obtain
\begin{align}\label{mtt 34}
\\
\frac{\ud{}}{\ud{\eta}}\br{\va^2\ee_j,\g}+G_1\br{\va\ee_j,
\vb^2\dfrac{\p \g}{\p\va}-\va\vb\dfrac{\p
\g}{\p\vb}}+G_2\br{\va\ee_j,
\vc^2\dfrac{\p \g}{\p\va}-\va\vc\dfrac{\p
\g}{\p\vc}}=-\br{\va\ee_j,\ll[\g]}+\br{\va\ee_j,\ss}.\no
\end{align}
Define
\begin{align}
\beta_j(\eta)=&\br{\va^2\ee_j,q}(\eta),\\
\beta(\eta)=&\bigg(\beta_0(\eta),\beta_2(\eta),\beta_3(\eta),\beta_4(\eta)\bigg)^T.
\end{align}
For $j=0,2,3,4$,
\begin{align}
\br{\va^2\ee_j,\g}&=\br{\va^2\ee_j,q}+\br{\va^2\ee_j,w}=\beta_j+\br{\va^2\ee_j,w}.
\end{align}
Using integration by parts, we have
\begin{align}
G_1\br{\va\ee_j,\vb^2\dfrac{\p \g}{\p\va}-\va\vb\dfrac{\p\g}{\p\vb}}&=-G_1\br{\frac{\p}{\p\va}(\va\vb^2\ee_j)-\frac{\p}{\p\vb}(\va^2\vb\ee_j),\g},\\
G_2\br{\va\ee_j,\vc^2\dfrac{\p \g}{\p\va}-\va\vc\dfrac{\p\g}{\p\vc}}&=-G_2\br{\frac{\p}{\p\va}(\va\vc^2\ee_j)-\frac{\p}{\p\vc}(\va^2\vc\ee_j),\g}.
\end{align}
Considering $g=w+q$ and summarizing the above, we can simplify \eqref{mtt 34} into
\begin{align}
\frac{\ud{}}{\ud{\eta}}\br{\va^2\ee_j,\g}=&G_1\br{\frac{\p}{\p\va}(\va\vb^2\ee_j)-\frac{\p}{\p\vb}(\va^2\vb\ee_j),
\g}+G_2\br{\frac{\p}{\p\va}(\va\vc^2\ee_j)-\frac{\p}{\p\vc}(\va^2\vc\ee_j),
\g}\\
&-\br{\va\ee_j,\ll[w]}+\br{\va\ee_j,\ss},\no
\end{align}
which further implies
\begin{align}\label{mtt 05}
\frac{\ud{\beta_j}}{\ud{\eta}}=&G_1\br{\frac{\p}{\p\va}(\va\vb^2\ee_j)-\frac{\p}{\p\vb}(\va^2\vb\ee_j),
q +w}+G_2\br{\frac{\p}{\p\va}(\va\vc^2\ee_j)-\frac{\p}{\p\vc}(\va^2\vc\ee_j),
q +w}\\
&-\br{\va\ee_j,\ll[w]}+\br{\va\ee_j,\ss}-\frac{\ud{}}{\ud{\eta}}\br{\va^2\ee_j,w}.\no
\end{align}
Then we can write
\begin{align}
\br{\frac{\p}{\p\va}(\va\vb^2\ee_j)-\frac{\p}{\p\vb}(\va^2\vb\ee_j),
q }(\eta)=\sum_{i}B_{ji}^{(1)} q_i (\eta),\\
\br{\frac{\p}{\p\va}(\va\vc^2\ee_j)-\frac{\p}{\p\vc}(\va^2\vc\ee_j),
q }(\eta)=\sum_{i}B_{ji}^{(2)} q_i (\eta),
\end{align}
for $i,j=0,2,3,4$, where there is no $q_1$ contribution since $q_1=0$. Here $B^{(1)}$ and $B^{(2)}$ are $4\times4$ constant matrices defined by
\begin{align}
B_{ji}^{(1)}=\br{\frac{\p}{\p\va}(\va\vb^2\ee_j)-\frac{\p}{\p\vb}(\va^2\vb\ee_j),
\ee_i},\\
B_{ji}^{(2)}=\br{\frac{\p}{\p\va}(\va\vc^2\ee_j)-\frac{\p}{\p\vc}(\va^2\vc\ee_j),
\ee_i}.
\end{align}
Moreover, we may rewrite
\begin{align}
\beta_j(\eta)=\sum_{k}A_{jk} q_k (\eta),
\end{align}
for $k,j=0,2,3,4$, where $A$ is an invertible $4\times4$ constant matrix defined by
\begin{align}
A_{jk}=\br{\va^2\ee_j,\ee_k}.
\end{align}
Thus, we can express back
\begin{align}\label{mtt 43}
(q_0,q_2,q_3,q_4)^T=A^{-1}(\beta_0,\beta_2,\beta_3,\beta_4)^T.
\end{align}
Hence, \eqref{mtt 05} can be rewritten in vector form as
\begin{align}\label{mtt 35}
\frac{\ud{\beta}}{\ud{\eta}}=\Big((G_1B^{(1)}+G_2B^{(2)})A^{-1}\Big)\beta+D+E-\frac{\ud F}{\ud\eta},
\end{align}
where the four-vector $D$, $E$ and $F$ are defined for $j=0,2,3,4$
\begin{align}
D_j=&G_1\br{\frac{\p}{\p\va}(\va\vb^2\ee_j)-\frac{\p}{\p\vb}(\va^2\vb\ee_j),
w}
+G_2\br{\frac{\p}{\p\va}(\va\vc^2\ee_j)-\frac{\p}{\p\vc}(\va^2\vc\ee_j),
w},\label{mtt 57}\\
E_j=&-\br{\va\ee_j,\ll[w]}+\br{\va\ee_j,\ss},\quad
F_j=\br{\va^2\ee_j,w}.\no
\end{align}
\eqref{mtt 35} is an ODE system. Using \eqref{mtt 10}, we can solve for $\beta$ as
\begin{align}\label{mtt 06}
\beta(\eta)=&\exp\bigg(-\Big(W_1(\eta)B^{(1)}+W_2(\eta)B^{(2)}\Big)A^{-1}\bigg)\beta(0)\\
&+\int_0^{\eta}\exp\bigg(\Big(W_1(y)-W_1(\eta)\Big)B^{(1)}A^{-1}+\Big(W_2(y)-W_2(\eta)\Big)B^{(2)}A^{-1}\bigg)\bigg(D(y)+E(y)-\frac{\ud F}{\ud y}(y)\bigg)\ud{y}.\no
\end{align}
Again using \eqref{mtt 10}, we may directly integrate by parts for the $F$ term to obtain
\begin{align}\label{mtt 36}
&\int_0^{\eta}\exp\bigg(\Big(W_1(y)-W_1(\eta)\Big)B^{(1)}A^{-1}+\Big(W_2(y)-W_2(\eta)\Big)B^{(2)}A^{-1}\bigg)\frac{\ud F}{\ud y}(y)\ud{y}\\
=&F(\eta)-\exp\bigg(-W_1(\eta)B^{(1)}A^{-1}-W_2(\eta)B^{(2)}A^{-1}\bigg)F(0)\no\\
&+\int_0^{\eta}\exp\bigg(\Big(W_1(y)-W_1(\eta)\Big)B^{(1)}A^{-1}+\Big(W_2(y)-W_2(\eta)\Big)B^{(2)}A^{-1}\bigg)\bigg(G_1(y)B^{(1)}A^{-1}+G_2(y)B^{(2)}A^{-1}\bigg)F(y)\ud{y}.\no
\end{align}
Hence, inserting \eqref{mtt 36} into \eqref{mtt 05}, we have
\begin{align}\label{mtt 07}
\beta(\eta)=&\exp\bigg(-\Big(W_1(\eta)B^{(1)}+W_2(\eta)B^{(2)}\Big)A^{-1}\bigg)\theta-F(\eta)\\
&+\int_0^{\eta}\exp\bigg(\Big(W_1(y)-W_1(\eta)\Big)B^{(1)}A^{-1}+\Big(W_2(y)-W_2(\eta)\Big)B^{(2)}A^{-1}\bigg)Z(y)\ud{y},\no
\end{align}
where $\theta$ is a four-vector satisfying
\begin{align}
\theta_j=\beta_j(0)+F_j(0)=\br{\va^2\ee_j, \g}(0),\ \ j=0,2,3,4,
\end{align}
and $Z$ is a four-vector satisfying
\begin{align}\label{mtt 37}
Z=D+E-(G_1B^{(1)}+G_2B^{(2)})A^{-1}F.
\end{align}
Hence, considering $A$, $B^{(1)}$, $B^{(2)}$ are all constant matrices and Remark \ref{Milne power}, we can directly estimate \eqref{mtt 07} to get
\begin{align}\label{mtt 08}
\abs{\beta_j(\eta)}\ls
\abs{\theta_j}+\abs{F_j(\eta)}+\int_0^{\eta}\abs{Z_j(y)}\ud{y}\ \ \text{for}\ \ j=0,2,3,4.
\end{align}
Using H\"older's inequality and Lemma \ref{linearized operator lemma}, we have
\begin{align}
\abs{D_j(\eta)}\ls& \abs{G_1}\abs{\br{\frac{\p}{\p\va}(\va\vb^2\ee_j)-\frac{\p}{\p\vb}(\va^2\vb\ee_j),w}(\eta)}
+\abs{G_2}\abs{\br{\frac{\p}{\p\va}(\va\vc^2\ee_j)-\frac{\p}{\p\vc}(\va^2\vc\ee_j),w}(\eta)}\label{mtt 13}\\
\ls& \e \tnm{\snn\bigg(\frac{\p}{\p\va}(\va\vb^2\ee_j)-\frac{\p}{\p\vb}(\va^2\vb\ee_j)\bigg)}\tnm{\sn w(\eta)}\no\\
&+\e\tnm{\snn\bigg(\frac{\p}{\p\va}(\va\vc^2\ee_j)-\frac{\p}{\p\vc}(\va^2\vc\ee_j)\bigg)}\tnm{\sn w(\eta)}\no\\
\ls& \e\tnm{\sn{w}(\eta)},\no
\end{align}
and
\begin{align}
\abs{E_j(\eta)}\ls&\abs{\br{\va\ee_j,\ll[w]}(\eta)}+\abs{\br{\va\ee_j,\ss}(\eta)}=\abs{\br{\ll[\va\ee_j],w}(\eta)}+\abs{\br{\va\ee_j,\ss}(\eta)}\label{mtt 11}\\
\ls&\tnm{\snn\ll[\va\ee_j]}\tnm{\sn w(\eta)}+\tnm{\sn\va\ee_j}\tnm{\snn\ss(\eta)}\ls \tnm{\sn w(\eta)}+\tnm{\snn\ss(\eta)},\no
\end{align}
as well as
\begin{align}
\abs{F_j(\eta)}\ls&\abs{\br{\va^2\ee_j,w}(\eta)}\ls \tnm{\snn\va^2\ee_j}\tnm{\sn w}\ls \tnm{\sn{w}(\eta)}.\label{mtt 09}
\end{align}
Inserting \eqref{mtt 13}, \eqref{mtt 11} and \eqref{mtt 09} into \eqref{mtt 37}, we obtain
\begin{align}\label{mtt 42}
\abs{Z_j}\ls \tnm{\sn w(\eta)}+\tnm{\snn\ss(\eta)}\ls 1+\tnm{\sn w(\eta)}.
\end{align}
On the other hand, for $\theta_j$, using H\"older's inequality, we have
\begin{align}\label{mtt 39}
\abs{\theta_j}=\abs{\br{\va^2\ee_j, \g}(0)}\ls \tnm{\abs{\va}^{\frac{3}{2}}\ee_j}\tnm{\abs{\va}^{\frac{1}{2}}\g(0)}
\ls \tnm{\abs{\va}^{\frac{1}{2}}\g(0)},
\end{align}
where
\begin{align}\label{mtt 40}
\tnm{\abs{\va}^{\frac{1}{2}}\g(0)}=\int_{\va>0}\va
h^2(\vvv)\ud{\vvv}-\int_{\va<0}\va\g^2(0,\vvv)\ud{\vvv}.
\end{align}
Using \eqref{mtt 38} and \eqref{mtt 04}, we have
\begin{align}
\int_{\va>0}\va h^2(\vvv)\ud{\vvv}+\int_{\va<0}\va\g^2(0,\vvv)\ud{\vvv}=2\alpha(0)\gs \int_{0}^L\abs{\br{w,\ss}(y)}\ud{y},
\end{align}
which implies
\begin{align}\label{mtt 41}
-\int_{\va<0}\va\g^2(0,\vvv)\ud{\vvv}\ls \int_{\va>0}\va h^2(\vvv)\ud{\vvv}-\int_{0}^L\abs{\br{w,\ss}(y)}\ud{y}.
\end{align}
Hence, inserting \eqref{mtt 41} into \eqref{mtt 40} and further \eqref{mtt 39}, applying H\"older's inequality and Cauchy's inequality, we have
\begin{align}\label{mtt 14}
\abs{\theta_j}&\ls \int_{0}^L\abs{\br{w,\ss}(y)}\ud{y}+\int_{\va>0}\va h^2(\vvv)\ud{\vvv}
\ls 1+\tnnm{\sn w}\tnnm{\snn\ss}\ls 1+\tnnm{\sn w}.
\end{align}
In conclusion, inserting \eqref{mtt 42}, \eqref{mtt 13} and \eqref{mtt 14} into \eqref{mtt 08},  we have
\begin{align}
\abs{\beta_j(\eta)}\ls 1+\tnnm{\sn{w}}+\tnm{\sn w(\eta)}+\int_0^{\eta}\Big(1+\tnm{\sn{w}(y)}\Big)\ud{y}\ \ \text{for}\ \ j=0,2,3,4,
\end{align}
which, using \eqref{mtt 43} and Lemma \ref{Milne lemma 1}, further implies
\begin{align}\label{mtt 51}
\abs{{q}_j(\eta)}\ls
1+\eta+\tnm{\sn{w}(\eta)}+\int_0^{\eta}\tnm{\sn{w}(y)}\ud{y}\ \ \text{for}\ \ j=0,2,3,4.
\end{align}
An application of H\"older's inequality, Cauchy's inequality and Lemma \ref{Milne lemma 1} lead to
\begin{align}
\abs{{q}_j(\eta)}\ls
1+\eta+\tnm{\sn{w}(\eta)}+\eta^{\frac{1}{2}}\tnnm{\sn w}\ls 1+\eta+\tnm{\sn{w}(\eta)}\ \ \text{for}\ \ j=0,2,3,4.
\end{align}
\end{proof}

\begin{remark}
Using a standard iteration argument, Lemma \ref{Milne lemma 1} and Lemma \ref{Milne lemma 2} justify the well-posedness of solution $\g=w+q$. However, the estimates in Lemma \ref{Milne lemma 2} are not uniform in $\eta$, so we need a stronger version.
\end{remark}

\begin{lemma}[$L^2$ decay of $w$]\label{Milne lemma 4}
Assume \eqref{Milne bound} holds and $\ss\in\nk^{\perp}$. There exists $0<K_0<K$ such that
\begin{align}
\tnnm{\ue^{K_0\eta}\sn w}\ls 1.
\end{align}
\end{lemma}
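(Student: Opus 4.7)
The plan is to adapt the unweighted energy argument of Lemma \ref{Milne lemma 1} to a weighted one, using the weight $\ue^{K_0\eta}$ with $K_0>0$ chosen sufficiently small (in particular $K_0<K$, so that the exponential decay of the source is compatible). First, I would multiply the equation \eqref{Milne} by $\ue^{2K_0\eta}\g$ and integrate over $\vvv\in\r^3$; the geometric terms still produce $\tfrac12 G\ue^{2K_0\eta}\br{\va\g,\g}$ after integration by parts as in \eqref{mtt 00}, while moving the weight past $\p_\eta$ yields an extra contribution $-2K_0\ue^{2K_0\eta}\alpha(\eta)$, so that the ODE for $A(\eta):=\ue^{2K_0\eta}\alpha(\eta)$ reads
\begin{align*}
\frac{dA}{d\eta}+(G-2K_0)A=-\ue^{2K_0\eta}\br{w,\ll[w]}+\ue^{2K_0\eta}\br{w,\ss}.
\end{align*}
Integrating this against the integrating factor $\exp(-W(\eta)-2K_0\eta)$ from $\eta$ to $L$, together with $A(L)=0$ (from the specular boundary condition) and $\ue^{W}\sim 1$ (Remark \ref{Milne power}), produces an estimate of the weighted dissipation in terms of weighted source; then the coercivity $\br{w,\ll[w]}\gs\tnm{\sn w}^2$ and a Cauchy inequality absorb the source, with the hypothesis $\lnnm{\ue^{K\eta}\ss}{\vth,\vrh}\ls 1$ giving $\int_0^L\ue^{2K_0\eta}\tnm{\snn\ss(\eta)}^2\,d\eta\ls\int_0^L\ue^{2(K_0-K)\eta}\,d\eta\ls 1$ exactly when $K_0<K$.

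The main obstruction is controlling the term $2K_0\int_0^L\ue^{2K_0\eta}\alpha(\eta)\,d\eta$ that arises from differentiating the weight. Writing $\alpha=\tfrac12\br{\va w,w}+\br{\va w,q}+\tfrac12\br{\va q,q}$ and using Lemma \ref{Milne lemma 3} to kill cross terms involving $\ee_j$ with $j=0,2,3,4$, together with Lemma \ref{Milne lemma 2} ($q_1=0$ and pointwise control of $q_j$), reduces this integral to quantities roughly of the form $\ue^{2K_0\eta}\bigl(\tnm{\sn w(\eta)}^2+(1+\eta)\tnm{\sn w(\eta)}+(1+\eta)^2\bigr)$. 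The naive polynomial-in-$\eta$ piece would diverge after weighting, so the crux is to upgrade the bound on $q$ to a weighted bound before closing the estimate.

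To achieve this, I would revisit the ODE system \eqref{mtt 35} satisfied by $\beta=A(q_0,q_2,q_3,q_4)^T$, now with the weight $\ue^{2K_0\eta}$ attached. Lemma \ref{linearized operator lemma} combined with Lemma \ref{Milne lemma 1} already gives weighted integrability of $D$, $E$, $F$ once $\tnnm{\ue^{K_0\eta}\sn w}$ is under control, and the smallness $|G|\ls\e$ makes the homogeneous propagator in \eqref{mtt 07} essentially the identity. This feedback yields $|\ue^{K_0\eta}q_j(\eta)|\ls 1+\tnnm{\ue^{K_0\eta}\sn w}$, which when substituted into the weighted energy identity allows the $2K_0$ term to be absorbed into the left-hand side for $K_0$ small, closing the estimate by a standard bootstrap initiated at the unweighted bound of Lemma \ref{Milne lemma 1}.

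The hard part will be making this bootstrap rigorous: the ODE estimate \eqref{mtt 07} for $q$ is not itself exponentially contractive (the matrix $(G_1B^{(1)}+G_2B^{(2)})A^{-1}$ has size $\e$ rather than producing genuine damping), so all of the decay must be extracted from the source-like terms $D$, $E$, and $-\frac{dF}{d\eta}$, each of which is proportional to $\sn w$ or $\snn\ss$. One must carefully distribute the weight $\ue^{2K_0\eta}$ between the $w$- and $q$-contributions without losing positive powers of $\e^{-1/2}=L$ (since $\ue^{2K_0 L}$ is huge), and it is precisely the need to keep these constants uniform in $\e$ that fixes the range $0<K_0<K$.
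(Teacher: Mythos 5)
Your weighted energy identity is the paper's own starting point, but the decomposition step in your second paragraph concedes too much and then sends you down an unworkable path. The tools you yourself cite already give a \emph{complete} cancellation: by Lemma \ref{Milne lemma 3}, $\br{\va\ee_j,\g}(\eta)=0$ for $j=0,2,3,4$; since $q_1=0$ (Lemma \ref{Milne lemma 2}) and $\br{\va\ee_j,\ee_k}=0$ by oddness in $\va$ for $j,k\neq 1$, we get $\br{\va\ee_j,q}=0$, hence $\br{\va\ee_j,w}=0$ for $j=0,2,3,4$, and therefore $\br{\va w,q}=\br{\va q,q}=0$. Thus $\br{\va\g,\g}(\eta)=\br{\va w,w}(\eta)$ \emph{exactly}, so the term $K_0\ue^{2K_0\eta}\br{\va\g,\g}$ produced by differentiating the weight involves only $w$, and since $\abs{\va}\ls\nu$ it satisfies $K_0\abs{\br{\va w,w}}\ls K_0\tnm{\sn w}^2$, which is absorbed by the coercivity $\br{w,\ll[w]}\gs\tnm{\sn w}^2$ once $K_0$ is small. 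The paper then simply reruns the ODE argument of Lemma \ref{Milne lemma 1} for the weighted quantity $\ue^{2K_0\eta}\br{\va w,w}$, using Remark \ref{Milne power} and $\lnnm{\ue^{K\eta}\ss}{\vth,\varrho}\ls 1$ with $K_0<K$ for the source. No weighted information about $q$ enters at any point: the leftover terms $(1+\eta)\tnm{\sn w(\eta)}+(1+\eta)^2$ you keep are simply not there.

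The bootstrap you propose to handle those phantom terms would in any case fail, and this is a genuine gap rather than a stylistic difference. The kernel part $q(\eta)$ of the solution to \eqref{Milne} does \emph{not} decay: it converges to $q_L$, which is in general nonzero (this is the content of Lemma \ref{Milne lemma 5}, and it is precisely why the paper later constructs $\tilde h$ in Theorem \ref{Milne theorem 1} to force $\gg_L=0$ before any decay of the full solution can be claimed). Consequently a bound of the form $\abs{\ue^{K_0\eta}q_j(\eta)}\ls 1+\tnnm{\ue^{K_0\eta}\sn w}$ is false: already the constant vector $\theta$ in \eqref{mtt 07} contributes $\ue^{K_0\eta}\abs{\theta_j}$, of order $\ue^{K_0L}$ at $\eta=L$, and nothing in $D$, $E$, $F$ cancels it. Likewise, had a term $\int_0^L(1+\eta)^2\ue^{2K_0\eta}\ud{\eta}$ really appeared, it would be of size $L^2\ue^{2K_0L}$ with $L=\e^{-1/2}$, so no distribution of the weight can keep the constants uniform in $\e$. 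The fix is not a sharper weighted estimate on $q$ but the algebraic identity $\br{\va\g,\g}=\br{\va w,w}$ above, after which your first paragraph closes the proof as written.
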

\begin{proof}
Multiplying $\ue^{2K_0\eta}\g$ on both sides of \eqref{Milne} and integrating over $\vvv\in\r^3$, we obtain
\begin{align}\label{mtt 18}
\half\frac{\ud{}}{\ud{\eta}}\br{\va\g,\ue^{2K_0\eta}\g}+
G_1\br{\vb^2\dfrac{\p
\g}{\p\va}-\va\vb\dfrac{\p \g}{\p\vb},\ue^{2K_0\eta}\g}\\
+
G_2\br{\vc^2\dfrac{\p
\g}{\p\va}-\va\vc\dfrac{\p \g}{\p\vc},\ue^{2K_0\eta}\g}&=K_0\ue^{2K_0\eta}\br{\va{\g},{\g}}-\ue^{2K_0\eta}\br{\g,\ll[\g]}+\ue^{2K_0\eta}\br{\ss,\g}.\no
\end{align}
We simplify each term here. The orthogonal properties in Lemma \ref{Milne lemma 3} implies
\begin{align}\label{mtt 46}
\br{\va\ee_j,\g}(\eta)=\br{\va\ee_j,w}(\eta)+\br{\va\ee_j,q}(\eta)=0\ \ \text{for}\ \ j=0,2,3,4.
\end{align}
Based on Lemma \ref{Milne lemma 2}, $q_1=0$. Combined with oddness, we know
\begin{align}\label{mtt 47}
\br{\va\ee_j,q}(\eta)=\sum_{k=0}^4q_k\br{\va\ee_j,\ee_k}(\eta)=0.
\end{align}
Inserting \eqref{mtt 47} into \eqref{mtt 46}, we obtain
\begin{align}
\br{\va\ee_j,w}(\eta)=0.
\end{align}
Still by $q_1=0$, we have
\begin{align}
\br{\va q,w}(\eta)=\sum_{j=0}^4q_j\br{\va\ee_j,w}=0,
\end{align}
and also by oddness
\begin{align}
\br{\va q,q}(\eta)=0.
\end{align}
Therefore, we deduce that
\begin{align}\label{mtt 48}
\br{\va\g,\g}(\eta)=\br{\va w, w}(\eta)+\br{\va q,q}(\eta)+2\br{\va w,q}(\eta)=\br{\va{w},{w}}(\eta).
\end{align}
On the other hand, \eqref{mtt 49} yields
\begin{align}
\br{\va{\g},{\g}}(\eta)=\br{\va w,w}(\eta).
\end{align}
Similar to the proof of Lemma \ref{Milne lemma 1}, an integration by parts and \eqref{mtt 48} imply
\begin{align}
\br{\vb^2\dfrac{\p
\g}{\p\va}-\va\vb\dfrac{\p \g}{\p\vb},\ue^{2K_0\eta}\g}&=\half\br{\vb^2,\dfrac{\p(\ue^{2K_0\eta}\g^2)}{\p\va}}-\half\br{\va\vb,\dfrac{\p (\ue^{2K_0\eta}\g^2)}{\p\vb}}\\
&=\half\br{\va\g,\ue^{2K_0\eta}\g}=\half\br{\va w,\ue^{2K_0\eta}w},\no\\
\br{\vc^2\dfrac{\p\g}{\p\va}-\va\vc\dfrac{\p \g}{\p\vc},\ue^{2K_0\eta}\gg}&=\half\br{\vc^2,\dfrac{\p(\ue^{2K_0\eta}\g^2)}{\p\va}}-\half\br{\va\vc,\dfrac{\p (\ue^{2K_0\eta}\g^2)}{\p\vc}}\\
&=\half\br{\va\g,\ue^{2K_0\eta}\g}=\half\br{\va w,\ue^{2K_0\eta}w}.\no
\end{align}
Also,
\begin{align}
\br{\ss,\g}=\br{\ss,w}.
\end{align}
Summarizing all above, \eqref{mtt 18} is actually
\begin{align}
\half\frac{\ud{}}{\ud{\eta}}\br{\va w,\ue^{2K_0\eta}w}+\half
G(\eta)\br{\va w,\ue^{2K_0\eta}w}=K_0\ue^{2K_0\eta}\br{\va{w},{w}}-\ue^{2K_0\eta}\br{w,\ll[w]}+\ue^{2K_0\eta}\br{\ss,w}.
\end{align}
Since
\begin{align}
\br{\ll[{w}],{w}}\gs \tnm{\sn{w}}^2,
\end{align}
for $K_0$ sufficiently small, we have
\begin{align}
\br{\ll[{w}],{w}}-K_0\br{\va{w},{w}}\gs\tnm{\sn{w}}^2.
\end{align}
Then by a similar argument as in the proof of Lemma \ref{Milne lemma 1}, we can show that
\begin{align}\label{mtt 29}
\int_0^{L}\ue^{2K_0\eta}\tnm{\sn{w}(\eta)}^2\ud{\eta}\ls 1.
\end{align}
\end{proof}

\begin{lemma}[$q-q_L$ estimate]\label{Milne lemma 5}
Assume \eqref{Milne bound} holds and $\ss\in\nk^{\perp}$. There exists
\begin{align}
q_L=\sum_{k=0}^4q_{k,L}\ee_k\in\nk,
\end{align}
satisfying
\begin{align}
\abs{q_{k,L}}\ls 1\ \ \text{for}\ \ k=0,1,2,3,4,
\end{align}
and
\begin{align}
\tnnm{q-q_L}\ls 1.
\end{align}
\end{lemma}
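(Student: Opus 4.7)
The plan is to extract the constant $q_L$ from the explicit formula \eqref{mtt 07} for $\beta(\eta)$ in the proof of Lemma \ref{Milne lemma 2}, and then use the exponential decay of $w$ (Lemma \ref{Milne lemma 4}) together with the assumed exponential decay of $\ss$ to obtain both the pointwise bound on the coefficients and the spacetime $L^2$ bound on the difference.

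First I set $q_{1,L} := 0$, which is consistent with the identity $q_1 \equiv 0$ from Lemma \ref{Milne lemma 2}. For the remaining components I define the constant four-vector
\begin{equation*}
\beta_L := \theta + \int_0^L Z(y)\,dy,
\end{equation*}
and declare $(q_{0,L}, q_{2,L}, q_{3,L}, q_{4,L})^T := A^{-1}\beta_L$, with $q_L := \sum_{k\in\{0,2,3,4\}} q_{k,L}\ee_k$. To verify $|q_{k,L}| \ls 1$, I use $|\theta| \ls 1$ from \eqref{mtt 14} together with the bound $|Z(y)| \ls \tnm{\sn w(y)} + \tnm{\snn \ss(y)}$ derived in \eqref{mtt 42}. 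The key maneuver is a Cauchy--Schwarz combining the $\ue^{-K_0 y}$ envelope with the weighted $L^2_\eta$ bound $\int_0^L \ue^{2K_0 y}\tnm{\sn w(y)}^2\,dy \ls 1$ from Lemma \ref{Milne lemma 4} (and analogously for $\ss$ via \eqref{Milne bound}), giving $\int_0^L |Z(y)|\,dy \ls 1$. Hence $|\beta_L| \ls 1$, and invertibility of $A$ yields $|q_{k,L}| \ls 1$.

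Next I would write
\begin{align*}
\beta(\eta) - \beta_L &= \left[\ue^{-(W_1(\eta)B^{(1)}+W_2(\eta)B^{(2)})A^{-1}} - I\right]\theta - F(\eta) \\
&\quad + \int_0^\eta \left[\ue^{(W_1(y)-W_1(\eta))B^{(1)}A^{-1}+(W_2(y)-W_2(\eta))B^{(2)}A^{-1}} - I\right]Z(y)\,dy - \int_\eta^L Z(y)\,dy,
\end{align*}
and estimate the four pieces in $L^2_\eta([0,L])$. The exponential factors all satisfy $|\ue^{\cdots} - I| \ls \e^{1/2}$ uniformly on $[0,L]$ by Remark \ref{Milne power}, producing contributions of order $\e^{1/2}\sqrt{L} \ls \e^{1/4}$ from the first piece and similarly from the third. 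The $F$ term gives $\tnnm{F}^2 \ls \tnnm{\sn w}^2 \ls 1$ via Lemma \ref{Milne lemma 1} and the pointwise bound \eqref{mtt 09}. The tail integral admits the pointwise estimate $\left|\int_\eta^L Z(y)\,dy\right| \ls \ue^{-K_0\eta}$ by the same Cauchy--Schwarz argument, whose square integrates to a constant on $[0,L]$. Assembling these and using invertibility of $A$ to pass from $\beta - \beta_L$ back to $q - q_L$ finishes the bound $\tnnm{q-q_L}\ls 1$.

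The main obstacle is the tail integral $\int_\eta^L Z(y)\,dy$: using only Lemma \ref{Milne lemma 1} and the plain $L^2$ bound on $\ss$, a naive Cauchy--Schwarz gives $\sqrt{L-\eta}\tnnm{\sn w}$, whose square integrated in $\eta$ grows like $L^2 \sim \e^{-1}$ and blows up as $\e\to 0$. This is precisely why the exponential decay from Lemma \ref{Milne lemma 4} is indispensable, and the Cauchy--Schwarz factorization through $\ue^{\pm K_0 y}$ must be invoked both for the pointwise bound on $q_{k,L}$ and for the tail integral estimate.
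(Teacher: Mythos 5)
Your proposal is correct and follows essentially the same route as the paper: extract $\beta_L$ (hence $q_L$, with $q_{1,L}=0$) from the explicit ODE formula \eqref{mtt 07}, bound its coefficients via $|\theta|\ls 1$ and $\int_0^L|Z|\ls 1$ using the exponential decay of $w$ from Lemma \ref{Milne lemma 4} and of $\ss$ from \eqref{Milne bound}, and control $\tnnm{\beta-\beta_L}$ by the $F$-term, the tail $\int_\eta^L Z$, and the near-identity exponential factors via Remark \ref{Milne power}, then pass back through $A^{-1}$. The only cosmetic difference is that you drop the $O(\e^{1/2})$ matrix-exponential factors in the definition of $\beta_L$ where the paper keeps them and splits $\beta-\beta_L=\Delta_1+\Delta_2$; since the lemma only asserts existence of some admissible $q_L$, this is immaterial.
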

\begin{proof}
Recall \eqref{mtt 07}
\begin{align}\label{mtt 50}
\beta(\eta)=&\exp\bigg(-\Big(W_1(\eta)B^{(1)}+W_2(\eta)B^{(2)}\Big)A^{-1}\bigg)\theta-F(\eta)\\
&+\int_0^{\eta}\exp\bigg(\Big(W_1(y)-W_1(\eta)\Big)B^{(1)}A^{-1}+\Big(W_2(y)-W_2(\eta)\Big)B^{(2)}A^{-1}\bigg)Z(y)\ud{y},\no
\end{align}
Define
\begin{align}\label{mtt 56}
\beta_L:=&\exp\bigg(-\Big(W_1(L)B^{(1)}+W_2(L)B^{(2)}\Big)A^{-1}\bigg)\theta\\
&+\int_0^{L}\exp\bigg(\Big(W_1(y)-W_1(L)\Big)B^{(1)}A^{-1}+\Big(W_2(y)-W_2(L)\Big)B^{(2)}A^{-1}\bigg)Z(y)\ud{y}.\no
\end{align}
Here
\begin{align}
\beta_L=\Big(\beta_{0,L},\ \beta_{2,L},\ \beta_{3,L},\ \beta_{4,L}\Big)^T,
\end{align}
is a four-vector. Based on \eqref{mtt 08},
\begin{align}\label{mtt 52}
\abs{\beta_{j,L}}\ls
\abs{\theta_j}+\int_0^{L}\abs{Z_j(y)}\ud{y}\ \ \text{for}\ \ j=0,2,3,4.
\end{align}
Inserting \eqref{mtt 42}, \eqref{mtt 13} and \eqref{mtt 14} into \eqref{mtt 52},  we have
\begin{align}\label{mtt 53}
\abs{\beta_{j,L}}\ls 1+\tnnm{\sn{w}}+\int_0^{L}\Big(\tnm{\snn\ss(y)}+\tnm{\sn{w}(y)}\Big)\ud{y}\ \ \text{for}\ \ j=0,2,3,4.
\end{align}
Using \eqref{Milne bound}, we know
\begin{align}\label{mtt 54}
\int_0^{L}\tnm{\snn\ss(y)}\ud{y}\ls 1.
\end{align}
Applying H\"older's inequality and Lemma \ref{Milne lemma 4}, we obtain
\begin{align}\label{mtt 55}
\int_0^{L}\tnm{\sn{w}(y)}\ud{y}&\ls \tnnm{\ue^{-K_0\eta}}\tnnm{\ue^{K_0\eta}\sn w}\ls 1.
\end{align}
Inserting \eqref{mtt 54} and \eqref{mtt 55} into \eqref{mtt 53} and using Lemma \ref{Milne lemma 1}, we have
\begin{align}
\abs{\beta_{j,L}}\ls 1\ \ \text{for}\ \ j=0,2,3,4.
\end{align}
Then by \eqref{mtt 43}, define
\begin{align}\label{mtt 66}
\Big(q_{0,L},\ q_{2,L},\ q_{3,L},\ q_{4,L}\Big)^T:=A^{-1}\Big(\beta_{0,L},\ \beta_{2,L},\ \beta_{3,L},\ \beta_{4,L}\Big)^T.
\end{align}
We have
\begin{align}
\abs{q_{j,L}}\ls 1.
\end{align}
Let $q_{1,L}=0$. Then we know $q_L$ is always well-defined and
\begin{align}
\abs{q_{k,L}}\ls 1\ \ \text{for}\ \ k=0,1,2,3,4.
\end{align}
Then we investigate the estimate $q-q_L$. Denote
\begin{align}
\Xi(\eta)=\Big(W_1(\eta)B^{(1)}+W_2(\eta)B^{(2)}\Big)A^{-1}.
\end{align}
Considering \eqref{mtt 50} and \eqref{mtt 56}, we have
\begin{align}\label{mtt 65}
\beta(\eta)-\beta_L&=\ue^{-\Xi(\eta)}\Big(\ue^{\Xi(\eta)}\beta(\eta)\Big)-\ue^{-\Xi(L)}\Big(\ue^{\Xi(L)}\beta_L\Big)\\
&=\ue^{-\Xi(\eta)}\Big(\ue^{\Xi(\eta)}\beta(\eta)-\ue^{\Xi(L)}\beta_L\Big)+(\ue^{-\Xi(\eta)}-\ue^{-\Xi(L)})\Big(\ue^{\Xi(L)}\beta_L\Big):=\Delta_1(\eta)+\Delta_2(\eta).\no
\end{align}
Here, using Remark \ref{Milne power}, we have
\begin{align}
\int_0^L\Delta_1^2(\eta)\ud\eta\ls \int_0^L\bigg(\ue^{\Xi(\eta)}\beta(\eta)-\ue^{\Xi(L)}\beta_L\bigg)^2\ud\eta,
\end{align}
where
\begin{align}
\ue^{\Xi(\eta)}\Omega(\eta)-\ue^{\Xi(L)}\Omega_L
=&-\ue^{\Xi(\eta)}F(\eta)+\int_{\eta}^L\ue^{\Xi(y)}Z(y)\ud y.
\end{align}
Note that
\begin{align}
Z=D+E-(G_1B^{(1)}+G_2B^{(2)})A^{-1}F,
\end{align}
where $D,E,F$ are defined in \eqref{mtt 57}, and due to Remark \ref{Milne power},
\begin{align}\label{mtt 62}
\\
\int_0^L\Delta_1^2(\eta)\ud\eta\ls\int_0^LF^2(\eta)\ud\eta+\int_0^L\bigg(\int_{\eta}^LD(y)\bigg)^2\ud\eta
+\int_0^L\bigg(\int_{\eta}^LE(y)\bigg)^2\ud\eta+\e\int_0^L\bigg(\int_{\eta}^LF(y)\bigg)^2\ud\eta.\no
\end{align}
We need to estimate each term. In the following, let $j=0,2,3,4$, using H\"older's inequality and Lemma \ref{Milne lemma 4}, we have
\begin{align}\label{mtt 58}
\int_0^LF_j^2(\eta)\ud\eta&\ls \int_0^L\tnm{\va^2\ee_j}^2\tnm{\sn w(\eta)}^2\ud\eta\ls \tnnm{\sn w}^2\ls 1.
\end{align}
Similarly, using H\"older's inequality and Lemma \ref{Milne lemma 4}, we have
\begin{align}\label{mtt 59}
\int_0^L\bigg(\int_{\eta}^LD(y)\bigg)^2\ud\eta&\ls \e^2\int_0^L\bigg(\int_{\eta}^L\tnm{\sn w(y)}\bigg)^2\ud\eta\\
&\ls \e^2\int_0^L\bigg(\int_{\eta}^L\ue^{2K_0y}\tnm{\sn w(y)}^2\bigg)\bigg(\int_{\eta}^L\ue^{-2K_0y}\bigg)\ud\eta\no\\
&\ls\e^2\tnnm{\ue^{K_0\eta}\sn w}^2\int_0^L\ue^{-2K_0\eta}\ud\eta\ls \e^2.\no
\end{align}
Similarly, using H\"older's inequality, Lemma \ref{Milne lemma 4} and Lemma \ref{linearized operator lemma}, we have
\begin{align}
\int_0^L\bigg(\int_{\eta}^LE(y)\bigg)^2\ud\eta&\ls\tnnm{\ue^{K_0\eta}\sn w}^2+\tnnm{\ue^{K_0\eta}\snn S}^2\ls 1,\label{mtt 60}\\
\e\int_0^L\bigg(\int_{\eta}^LF(y)\bigg)^2\ud\eta&\ls \tnnm{\ue^{K_0\eta}\sn w}^2.\label{mtt 61}
\end{align}
Inserting \eqref{mtt 58}, \eqref{mtt 59}, \eqref{mtt 60} and \eqref{mtt 61} into \eqref{mtt 62}, we have
\begin{align}\label{mtt 63}
\int_0^L\Delta_1^2(\eta)\ud\eta\ls 1.
\end{align}
On the other hand, using Remark \ref{Milne power}, since $\abs{\ue^{s}-1}\ls \abs{s}$ and $\abs{\ln(1+s)}\ls \abs{s}$ for $\abs{s}<<1$, we have
\begin{align}\label{mtt 64}
\int_0^L\Delta_2^2(\eta)\ud\eta&\ls\int_0^L\bigg(\ue^{-\Xi(\eta)}-\ue^{-\Xi(L)}\bigg)^2\ud\eta\ls\int_0^L\bigg(\ue^{\Xi(L)-\Xi(\eta)}-1\bigg)^2\ud\eta\\
&\ls\int_0^L\bigg(\ue^{W_1(\eta)-W_1(L)+W_2(\eta)-W_2(L)}-1\bigg)^2\ud\eta\no\\
&\ls\int_0^L\bigg(W_1(\eta)-W_1(L)\bigg)^2\ud\eta+\int_0^L\bigg(W_2(\eta)-W_2(L)\bigg)^2\ud\eta\no\\
&\ls\int_0^L\ln^2\left(\frac{R_1-\e L}{R_1-\e\eta}\right)\ud\eta+\int_0^L\ln^2\left(\frac{R_2-\e L}{R_2-\e\eta}\right)\ud\eta\no\\
&\ls\int_0^L\Big(\e(\eta-L)\Big)^2\ud\eta\ls \e^2 L^{3}\ls \e^{\frac{1}{2}}.\no
\end{align}
Inserting \eqref{mtt 63} and \eqref{mtt 64} into \eqref{mtt 65}, we obtain
\begin{align}
\int_0^L\bigg(\beta(\eta)-\beta_L\bigg)^2\ud\eta\ls 1.
\end{align}
Since $A$ is invertible, by \eqref{mtt 66}, we know
\begin{align}
\int_0^L\bigg(q_j-q_{j,L}\bigg)^2\ls 1\ \ \text{for}\ \ j=0,2,3,4.
\end{align}
It is easy to see that $q_1(\eta)=q_{1,L}=0$. Therefore, we prove that
\begin{align}
\tnnm{{q}-{q}_{L}}\leq C.
\end{align}
\end{proof}

\begin{remark}
This proof highly depends on the fact that $G\sim\e$ and $L\sim \e^{-\frac{1}{2}}$. Also, the $L^2$ decay of $w$ in Lemma \ref{Milne lemma 4} is indispensable.
\end{remark}

\begin{lemma}[$L^2$ estimate of $\g-\g_L$]\label{Milne lemma 6}
Assume \eqref{Milne bound} holds and $\ss\in\nk^{\perp}$. There exists a unique solution $\g(\eta,\vvv)$ to the $\e$-Milne problem with geometric correction
\eqref{Milne} satisfying
\begin{align}
\tnnm{\g-\g_{L}}\ls 1,
\end{align}
for some $g_L=\ds\sum_{k=0}^4g_{k,L}\ee_k\in\nk$ satisfying $\abs{g_{k,L}}\ls 1$.
\end{lemma}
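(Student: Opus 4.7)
The plan is to assemble this lemma from the four preceding lemmas, with essentially no new analytic work required; the heavy lifting has already been done.

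First, existence and uniqueness of $\g$ follow from a standard Picard-type iteration (as in \cite{AA004, AA003}): construct a sequence $\{\g^{(n)}\}$ by solving at each step a linear transport equation with $\ll[\g^{(n+1)}]$ replaced by $\nu\g^{(n+1)} - K[\g^{(n)}]$ and the geometric-correction terms evaluated at $\g^{(n)}$. The a priori estimates of Lemmas \ref{Milne lemma 1} and \ref{Milne lemma 2}, applied to differences $\g^{(n+1)} - \g^{(n)}$ (which satisfy a version of \eqref{Milne} with zero boundary and source data of size $o(1)$), produce a contraction in the weighted $L^2$ norm for $\e$ sufficiently small. The fixed point is the unique solution.

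For the quantitative estimate, define $\g_L := q_L = \sum_{k=0}^4 q_{k,L}\ee_k$ with the coefficients $q_{k,L}$ from Lemma \ref{Milne lemma 5}, so that setting $g_{k,L} := q_{k,L}$ gives $\abs{g_{k,L}}\ls 1$ immediately. Using the macro-micro splitting $\g = w + q$ and writing
\begin{align*}
\g - \g_L = w + (q - q_L),
\end{align*}
the triangle inequality yields
\begin{align*}
\tnnm{\g - \g_L} \leq \tnnm{w} + \tnnm{q - q_L}.
\end{align*}
Since $\nu(\vvv)\gs \br{\vvv} \gs 1$ by Lemma \ref{linearized operator lemma}, one has $\tnnm{w} \ls \tnnm{\sn w} \ls 1$ by Lemma \ref{Milne lemma 1}, and $\tnnm{q - q_L}\ls 1$ is exactly Lemma \ref{Milne lemma 5}. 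Combining these gives $\tnnm{\g - \g_L} \ls 1$.

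The genuine difficulties are all upstream: establishing the exponentially weighted $L^2$ decay $\tnnm{\ue^{K_0\eta}\sn w}\ls 1$ in Lemma \ref{Milne lemma 4} (which required careful use of the orthogonality identities of Lemma \ref{Milne lemma 3} to control $\br{\va\g,\g}$ by $\br{\va w,w}$), and then feeding this decay together with the geometric smallness $\e L \sim \e^{1/2} \ll 1$ into Lemma \ref{Milne lemma 5} to split $\beta(\eta) - \beta_L$ into the two pieces $\Delta_1, \Delta_2$ and close both. Once those couplings between the $w$-decay, the boundary data, and the length scale $L$ are disentangled, the present lemma is a purely routine triangle-inequality synthesis, so I do not anticipate any further obstacle here.
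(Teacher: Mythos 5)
Your proposal is correct and follows essentially the same route as the paper: the paper's proof is precisely "take $\g_L=q_L$ from Lemma \ref{Milne lemma 5}, combine with Lemma \ref{Milne lemma 1}" via the splitting $\g-\g_L=w+(q-q_L)$, with well-posedness deferred to the standard iteration argument cited earlier. Only minor note: the lower bound $\nu\gs 1$ comes from the standard properties of $\ll$ (as in Lemma \ref{ktt 01}), not from Lemma \ref{linearized operator lemma}, but this does not affect the argument.
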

\begin{proof}
Taking $\g_{L}={q}_{L}$ in Lemma \ref{Milne lemma 5}, combined with Lemma \ref{Milne lemma 1}, we can naturally obtain the desired result.
\end{proof}

%%%%%%%%%%%%%%%%%%%%%%%%%%%%%%%%%%%%%%%%%%%%%%%%%%%%%%%%%%%%%%%%%%%%%%%%%%%%%%%%%%%%%
\subsubsection{$\ss\notin\nk^{\perp}$ Case}
%%%%%%%%%%%%%%%%%%%%%%%%%%%%%%%%%%%%%%%%%%%%%%%%%%%%%%%%%%%%%%%%%%%%%%%%%%%%%%%%%%%%%

\begin{lemma}[$L^2$ well-posedness of $\g$]\label{Milne lemma 7}
Assume \eqref{Milne bound} holds. There exists a unique solution $\g(\eta,\vvv)$ to the $\e$-Milne problem with geometric correction
\eqref{Milne} satisfying
\begin{align}
\tnnm{\g-\g_{L}}\ls 1,
\end{align}
for some $g_L=\ds\sum_{k=0}^4g_{k,L}\ee_k\in\nk$ satisfying $\abs{g_{k,L}}\ls 1$.
\end{lemma}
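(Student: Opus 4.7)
The plan is to reduce to the orthogonal-source case of Lemma \ref{Milne lemma 6} by constructing a correction $\Psi$ that absorbs the non-orthogonal component $\pk[\ss]$. Decompose $\ss = \pk[\ss] + (\ik-\pk)[\ss]$ and write $\pk[\ss](\eta,\vvv) = \sum_{k=0}^4 s_k(\eta)\ee_k(\vvv)$; by \eqref{Milne bound} each coefficient satisfies $\abs{s_k(\eta)} \ls \ue^{-K\eta}$. I would seek an auxiliary function $\Psi(\eta,\vvv) = \sum_{k=0}^4 \psi_k(\eta)\Phi_k(\vvv)$ such that $(\va\p_\eta + \ll)\Psi$ matches $\pk[\ss]$ to leading order (modulo $O(\e)$ geometric-correction residuals), and use $\tilde\g := \g - \Psi$ as the reduced unknown.

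The choice of profiles $\Phi_k$ is dictated by a structural obstruction: a direct computation shows that the restricted moment matrix $M_{jk} := \br{\va\ee_j, \ee_k}$ on $\nk$ has its $\ee_2$ and $\ee_3$ rows/columns vanishing by $\vb, \vc$-oddness, so a pure ansatz $\Psi \in \nk$ cannot absorb the $\ee_2, \ee_3$ components of $\pk[\ss]$. I would therefore take $\Phi_0, \Phi_1, \Phi_4 \in \nk$ (using the nonzero block $\pk[\va\ee_0] = \ee_1$ and $\pk[\va\ee_1]$ with nontrivial $\ee_0, \ee_4$ components), and $\Phi_2 := \va\vb\mh$, $\Phi_3 := \va\vc\mh \in \nk^\perp$ (for which $\pk[\va\Phi_2] = c_2\,\ee_2$ and $\pk[\va\Phi_3] = c_3\,\ee_3$ with nonzero constants). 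Matching terms yields a triangular system of first-order linear ODEs for the $\psi_k(\eta)$ driven by $s_k(\eta)$; integrating backward from $\eta = L$ with the terminal condition $\psi_k(L) = 0$ gives $\abs{\psi_k(\eta)} \ls \ue^{-K_1\eta}$ for some $0 < K_1 \leq K$. The remaining $\nk^\perp$-contribution from $\ll[\Phi_k]$ and the geometric-correction terms applied to $\Psi$ split into a genuinely orthogonal piece (which merely modifies $(\ik-\pk)[\ss]$) and an $O(\e)$ residual with small $\pk$-part; an iterative Neumann-type correction, gaining a factor of $\e$ per step, produces a total $\Psi^\ast$ with $\lnnmv{\ue^{K_0\eta}\Psi^\ast} \ls 1$ and a reduced source that lies in $\nk^\perp$ exactly.

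With this reduction in hand, $\tilde\g^\ast := \g - \Psi^\ast$ satisfies the $\e$-Milne problem \eqref{Milne} with orthogonal source $\tilde\ss^\ast$, modified in-flow data $h - \Psi^\ast(0,\cdot)$ on $\va > 0$ (still uniformly bounded), and the reflexive condition preserved since $\Psi^\ast(L,\cdot) \equiv 0$ by construction. Applying Lemma \ref{Milne lemma 6} yields a limit $\tilde\g_L \in \nk$ with $\tnnm{\tilde\g^\ast - \tilde\g_L} \ls 1$ and coefficient bounds $\abs{\tilde g_{k,L}} \ls 1$; setting $\g_L := \tilde\g_L$ and using $\tnnm{\Psi^\ast} \ls 1$ together with $\Psi^\ast(L,\cdot) = 0$ gives $\tnnm{\g - \g_L} \ls 1$ with $\abs{g_{k,L}} \ls 1$. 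Uniqueness follows from applying the same $L^2$ estimate to the difference of two solutions, which satisfies the homogeneous problem. The main obstacle is controlling the iterative Neumann correction uniformly in $\e$: the smallness $G_i \sim \e$ combined with the coercivity of $\ll$ on $\nk^\perp$ furnishes geometric convergence with $\e$-independent constants, but bookkeeping the propagation of the residual $\pk$-projections and verifying that no constant blows up through the iteration is the most delicate step.
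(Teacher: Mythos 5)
Your overall route is the same one the paper takes: absorb the kernel part of the source by an explicit corrector built from $\mh$ times low-degree velocity polynomials (including the non-kernel profiles $\mh\va\vb$, $\mh\va\vc$), reduce to an orthogonal source, and invoke Lemma \ref{Milne lemma 6}; in the paper the ansatz is $\g_2=\mh\big(A\va+B_1+B_2\va\vb+B_3\va\vc+C\va\abs{\vvv}^2\big)$ and the geometric-correction terms $G_1,G_2$ are kept inside the matching, so the coefficients solve one explicit linear ODE system and no Neumann iteration in $\e$ is needed.

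However, your profile family has a concrete gap. With $\Phi_0,\Phi_1,\Phi_4\in\nk$ the matching condition $\pk[\va\,\p_\eta\Psi]=\pk[\ss]$ can only reach a four-dimensional subspace of $\nk$ at each $\eta$: by oddness in $\va$ one has $\br{\va\ee_0,\ee_0}=\br{\va\ee_0,\ee_4}=\br{\va\ee_4,\ee_0}=\br{\va\ee_4,\ee_4}=0$, so the profiles $\ee_0$ and $\ee_4$ both feed only the $\ee_1$ row (they are redundant with each other), and the only achievable directions in the $(\ee_0,\ee_4)$-plane are multiples of the single vector $\big(\br{\va\ee_1,\ee_0},\br{\va\ee_1,\ee_4}\big)$ coming from $\psi_1'$. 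The matching system is therefore singular, not triangular, and a generic source under \eqref{Milne bound} -- e.g. one whose $\ee_4$ coefficient $s_4(\eta)$ is not proportional to $s_0(\eta)$ -- cannot be absorbed at any step of your scheme; the $O(\e)$ geometric-correction terms you defer to the Neumann iteration cannot repair an $O(1)$ rank deficiency without corrector coefficients of size $\e^{-1}$, which destroys uniformity in $\e$. The fix is to enlarge the ansatz by one more non-kernel profile whose $\va$-moment supplies a second independent direction in the $(\ee_0,\ee_4)$-plane, exactly the role of the $C(\eta)\mh\va\abs{\vvv}^2$ term in the paper (its presence is why the projected system there involves both $A+5C$ and $A+10C$ and is solvable). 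With that amendment, and keeping $\psi_k(L)=0$ so the reflexive condition at $\eta=L$ is preserved, the rest of your argument -- exponential decay of the $\psi_k$, reduction to Lemma \ref{Milne lemma 6}, and uniqueness via the homogeneous estimate -- goes through; and you may as well include the $G_1,G_2$ terms directly in the ODE system, which removes the need for the iteration altogether.
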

\begin{proof}
We decompose the source term as
\begin{align}
S=S_Q+S_W,
\end{align}
where $S_Q\in\nk$ is the kernel part and $S_W=S-S_Q\in\nk^{\perp}$. In the following, we will construct a few auxiliary functions $\g_i$ to handle $S_Q$ and $S_W$ separately.\\
\ \\
Step 1: Construction of $\g_1$.\\
We first solve the problem for auxiliary function $\g_1$ with source term $S_W$ as
\begin{align}
\left\{
\begin{array}{l}\displaystyle
\va\dfrac{\p\g_1 }{\p\eta}+G_1\bigg(\vb^2\dfrac{\p\g_1}{\p\va}-\va\vb\dfrac{\p\g_1}{\p\vb}\bigg)
+G_2\bigg(\vc^2\dfrac{\p\g_1}{\p\va}-\va\vc\dfrac{\p\g_1}{\p\vc}\bigg)+\ll[\g_1 ]
=\ss_W,\\\rule{0ex}{1.5em}
\g_1 (0,\vvv)=h (\vvv)\ \ \text{for}\ \
\va>0,\\\rule{0ex}{1.5em}
\displaystyle\g_1 (L,\vvv)=\g_1 (L,\rr[\vvv]).
\end{array}
\right.
\end{align}
Applying Lemma \ref{Milne lemma 6}, we know $\g_1$ is well-posed.\\
\ \\
Step 2: Construction of $\g_2$.\\
There is no way to apply Lemma \ref{Milne lemma 6} to $S_Q$ part, so we resort to explicit formula and analyze it in the following two steps. First, we try to find an auxiliary function $\g_2$ such that
\begin{align}
\va\frac{\p \g_2}{\p\eta}+G_1\bigg(\vb^2\dfrac{\p\g_2}{\p\va}-\va\vb\dfrac{\p\g_2}{\p\vb}\bigg)
+G_2\bigg(\vc^2\dfrac{\p\g_2}{\p\va}-\va\vc\dfrac{\p\g_2}{\p\vc}\bigg)+S_Q\in\nk^{\perp},
\end{align}
which further means
\begin{align}\label{mtt 23}
\int_{\r^3}\ee_j\Bigg(\va\frac{\p \g_2}{\p\eta}+G_1\bigg(\vb^2\dfrac{\p\g_2}{\p\va}-\va\vb\dfrac{\p\g_2}{\p\vb}\bigg)
+G_2\bigg(\vc^2\dfrac{\p\g_2}{\p\va}-\va\vc\dfrac{\p\g_2}{\p\vc}\bigg)+S_Q\Bigg)\ud{\vvv}=0.
\end{align}
for $j=0,1,2,3,4$. Denote
\begin{align}
S_Q=\sum_{k=0}^4S_{Q,k}\ee_k.
\end{align}
We make an ansatz that
\begin{align}
\g_2:=\m^{\frac{1}{2}}\bigg(A(\eta)\va+B_1(\eta)+B_2(\eta)\va\vb+B_3(\eta)\va\vc+C(\eta)\va\abs{\vvv}^2\bigg).
\end{align}
Hence, we can directly compute
\begin{align}
\frac{\p\g_2}{\p\va}&=-\va\g_2+\m^{\frac{1}{2}}\bigg(A+B_2\vb+B_3\vc+C\abs{\vvv}^2+2C\va^2\bigg),\\
\frac{\p\g_2}{\p\vb}&=-\vb\g_2+\m^{\frac{1}{2}}\bigg(B_2\va+2C\va\vb\bigg),\\
\frac{\p\g_2}{\p\vc}&=-\vc\g_2+\m^{\frac{1}{2}}\bigg(B_3\va+2C\va\vc\bigg),
\end{align}
and further
\begin{align}
\vb^2\dfrac{\p\g_2}{\p\va}-\va\vb\dfrac{\p\g_2}{\p\vb}&=\m^{\frac{1}{2}}\bigg(A\vb^2+B_2\vb(\vb^2-\va^2)+B_3\vb^2\vc+C\vb^2\abs{\vvv}^2\bigg),\\
\vc^2\dfrac{\p\g_2}{\p\va}-\va\vc\dfrac{\p\g_2}{\p\vc}&=\m^{\frac{1}{2}}\bigg(A\vc^2+B_2\vb\vc^2+B_3\vc(\vc^2-\va^2)+C\vc^2\abs{\vvv}^2\bigg).
\end{align}
Also, note the Gaussian integral
\begin{align}
\int_{\r^3}\m\ud\vvv=1,\quad\int_{\r^3}\abs{\vvv}^2\m\ud\vvv=3,\quad\int_{\r^3}\abs{\vvv}^4\m\ud\vvv=15,\quad\int_{\r^3}\abs{\vvv}^6\m\ud\vvv=105.
\end{align}
Plugging this ansatz into the equation \eqref{mtt 23}, we obtain a system of linear ordinary differential equations
\begin{align}
\\
\frac{\ud}{\ud\eta}\left(\begin{array}{c}A+5C\\\rule{0ex}{1.5em}
B_1\\\rule{0ex}{1.5em}
B_2\\\rule{0ex}{1.5em}
B_3\\\rule{0ex}{1.5em}
A+10C\end{array}\right)
+\left(\begin{array}{ccccc}G_1+G_2&0&0&0&5G_1+5G_2\\\rule{0ex}{1.5em}
0&0&0&0&0\\\rule{0ex}{1.5em}
0&0&2G_1+G_2&0&0\\\rule{0ex}{1.5em}
0&0&0&G_1+2G_2&0\\\rule{0ex}{1.5em}
G_1+G_2&0&0&0&10G_1+10G_2\end{array}\right)
\left(\begin{array}{c}A\\B_1\\B_2\\B_3\\C\end{array}\right)
=-\left(\begin{array}{c}S_{Q,0}\\S_{Q,1}\\S_{Q,2}\\S_{Q,3}\\S_{Q,4}\end{array}\right).\no
\end{align}
It is easy to check that all five variables $A, B_1,B_2,B_3,C$ are well-defined as long as $S_{Q,k}$ decays exponentially (by solving them explicitly). Furthermore, $\g_2$ decays exponentially to $g_2(L)=0$ as long as the the boundary data are taken properly.\\
\ \\
Step 3: Construction of $\g_3$.\\
Let
\begin{align}
\bar\ss:=\va\frac{\p \g_2}{\p\eta}+G_1\bigg(\vb^2\dfrac{\p\g_2}{\p\va}-\va\vb\dfrac{\p\g_2}{\p\vb}\bigg)
+G_2\bigg(\vc^2\dfrac{\p\g_2}{\p\va}-\va\vc\dfrac{\p\g_2}{\p\vc}\bigg)+\ll[\g_2]+S_Q.
\end{align}
We know $\bar\ss\in\nk^{\perp}$ due to analysis in Step 2 and $\ll[\g_2]\in\nk^{\perp}$. Then we may define an auxiliary function $\g_3$ as the solution of the equation
\begin{align}
\left\{
\begin{array}{l}\displaystyle
\va\dfrac{\p\g_3 }{\p\eta}+G_1\bigg(\vb^2\dfrac{\p\g_3}{\p\va}-\va\vb\dfrac{\p\g_3}{\p\vb}\bigg)
+G_2\bigg(\vc^2\dfrac{\p\g_3}{\p\va}-\va\vc\dfrac{\p\g_3}{\p\vc}\bigg)+\ll[\g_3 ]
=\bar\ss,\\\rule{0ex}{1.5em}
\g_3 (0,\vvv)=-\g_2(0,\vvv)\ \ \text{for}\ \
\va>0,\\\rule{0ex}{1.5em}
\displaystyle\g_3 (L,\vvv)=\g_3 (L,\rr[\vvv]).
\end{array}
\right.
\end{align}
Applying Lemma \ref{Milne lemma 6}, we know $\g_3$ is well-posed.\\
\ \\
Step 4: Construction of $\g_4$.\\
We may directly verify that $\g_4=\g_2+\g_3$ satisfies the equation
\begin{align}
\left\{
\begin{array}{l}\displaystyle
\va\dfrac{\p\g_4 }{\p\eta}+G_1\bigg(\vb^2\dfrac{\p\g_4}{\p\va}-\va\vb\dfrac{\p\g_4}{\p\vb}\bigg)
+G_2\bigg(\vc^2\dfrac{\p\g_4}{\p\va}-\va\vc\dfrac{\p\g_4}{\p\vc}\bigg)+\ll[\g_4 ]
=\ss_Q,\\\rule{0ex}{1.5em}
\g_4 (0,\vvv)=0\ \ \text{for}\ \
\va>0,\\\rule{0ex}{1.5em}
\displaystyle\g_4 (L,\vvv)=\g_4 (L,\rr[\vvv]).
\end{array}
\right.
\end{align}
\ \\
In summary, by superposition, we know $\g=\g_1+\g_4$ satisfies the equation \eqref{Milne} and is well-posed.
\end{proof}

%%%%%%%%%%%%%%%%%%%%%%%%%%%%%%%%%%%%%%%%%%%%%%%%%%%%%%%%%%%%%%%%%%%%%%%%%%%%%%%%%%%%%
\subsubsection{$L^2$ Boundedness}
%%%%%%%%%%%%%%%%%%%%%%%%%%%%%%%%%%%%%%%%%%%%%%%%%%%%%%%%%%%%%%%%%%%%%%%%%%%%%%%%%%%%%

Then we turn to the construction of $\tilde h$ and the well-posedness of the equation \eqref{Milne transform}.
\begin{theorem}[$L^2$ well-posedness of $\gg$]\label{Milne theorem 1}
Assume \eqref{Milne bound} holds. Then there exists $\tilde h\in\nk$ such that there exists a unique solution
$\gg(\eta,\vvv)$ to the $\e$-Milne problem with geometric correction \eqref{Milne transform}
satisfying
\begin{align}\label{mtt 67}
\tnnm{\gg}\ls 1.
\end{align}
\end{theorem}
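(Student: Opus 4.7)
The plan is to combine the $L^2$ solvability from Lemma \ref{Milne lemma 7} with a finite-dimensional inversion argument inside $\nk$ in order to select the ``matching constants'' $\tilde h$.

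First I would apply Lemma \ref{Milne lemma 7} directly to \eqref{Milne} with the given $h$ and $\ss$ to obtain a unique $\g^{(0)}$ with asymptotic value $\g^{(0)}_L=\sum_{k=0}^{4}g_{k,L}^{(0)}\ee_k\in\nk$ and $\tnnm{\g^{(0)}-\g^{(0)}_L}\ls 1$; by Lemma \ref{Milne lemma 2} together with reflexivity, $g_{1,L}^{(0)}=0$, so $\g^{(0)}_L$ lives in the four-dimensional subspace $\nk_0:=\mathrm{span}\{\ee_0,\ee_2,\ee_3,\ee_4\}$. Next, for each $\tilde h=\sum_{i\in\{0,2,3,4\}}\tilde D_i\ee_i\in\nk_0$, let $\tilde\g(\tilde h)$ be the Lemma~\ref{Milne lemma 7} solution of the homogeneous equation ($\ss=0$) with boundary data $\tilde h|_{\va>0}$, and $\tilde\g_L(\tilde h)\in\nk_0$ its asymptotic constant. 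This defines a linear map $\mathcal{T}:\nk_0\to\nk_0$, $\tilde h\mapsto\tilde\g_L(\tilde h)$. By linearity, if $\tilde h\in\nk_0$ is used as the matching function in \eqref{Milne transform}, then $\gg=\g^{(0)}-\tilde\g(\tilde h)$ and $\gg_L=\g^{(0)}_L-\mathcal{T}(\tilde h)$, so it suffices to solve $\mathcal{T}(\tilde h)=\g^{(0)}_L$.

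The central step is to show $\mathcal{T}$ is invertible, which I would do by writing $\mathcal{T}=\ik+\mathcal{E}$ with $\mathcal{E}=O(\e)$ in operator norm. Direct computation shows that $\ee_0$ and $\ee_4$ are exact stationary solutions of the homogeneous $\e$-Milne equation (the geometric correction terms $\vb^2\p_{\va}-\va\vb\p_{\vb}$ and $\vc^2\p_{\va}-\va\vc\p_{\vc}$ annihilate them), so $\mathcal{T}\ee_0=\ee_0$ and $\mathcal{T}\ee_4=\ee_4$; while for $\ee_2$ and $\ee_3$ the same computation produces geometric residuals of size $O(\e)$ lying in $\nk^{\perp}$, e.g.\ $G_1(\vb^2\p_{\va}\ee_2-\va\vb\p_{\vb}\ee_2)=-G_1\va\vb\m^{\frac12}$, which is odd in $(\va,\vb)$ and of magnitude $O(\e)$. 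Applying Lemma \ref{Milne lemma 7} to $\tilde\g(\ee_i)-\ee_i$, which solves \eqref{Milne} with zero boundary data and source of size $O(\e)$ in $\nk^{\perp}$, together with the linearity of the bounds for $\theta_j$, $F_j$, $Z_j$ in the proof of Lemma \ref{Milne lemma 5}, yields $|\mathcal{T}\ee_i-\ee_i|\ls\e$ for $i=2,3$. Hence for $\e$ sufficiently small $\mathcal{T}$ is invertible on $\nk_0$ with $\|\mathcal{T}^{-1}\|\ls 1$.

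Setting $\tilde h:=\mathcal{T}^{-1}(\g^{(0)}_L)\in\nk_0$ (and $\tilde D_1=0$ as required by \eqref{Milne transform compatibility}), one has $\gg_L=0$, and Lemma \ref{Milne lemma 7} immediately gives $\tnnm{\gg}=\tnnm{\gg-\gg_L}\ls 1$, which is \eqref{mtt 67}. Uniqueness follows from a standard energy identity applied to the difference of two solutions, which solves \eqref{Milne} with zero boundary data, zero source, and asymptotic value zero. The main obstacle I anticipate is the quantitative verification that $\|\mathcal{T}-\ik\|=O(\e)$: this requires tracking the linear dependence of the asymptotic constants $g_{k,L}$ on the size of the source in the proof of Lemma \ref{Milne lemma 5}, rather than relying only on the qualitative bound $\ls 1$ coming from \eqref{Milne bound}; in particular one must exploit the exponential decay in Lemma \ref{Milne lemma 4} to get the $\e$-smallness of the $F_j$ and $D_j$ contributions when the source is itself $O(\e)$.
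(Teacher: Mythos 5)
Your overall strategy is the same as the paper's: reduce the choice of $\tilde h$ to inverting a linear map on $\mathrm{span}\{\ee_0,\ee_2,\ee_3,\ee_4\}$ that sends boundary data of the homogeneous problem to its asymptotic state, note that $\ee_0,\ee_4$ are exact fixed points, and treat $\ee_2,\ee_3$ as small perturbations of fixed points. The gap is in the one step you yourself flag as the main obstacle, and your proposed fix for it does not work. For $\tilde h=\ee_2$, the function $\tilde\g(\ee_2)-\ee_2$ solves \eqref{Milne} with zero in-flow data and source $G(\eta)\m^{\frac{1}{2}}\va\vb$. This source is $O(\e)$ pointwise but does \emph{not} decay exponentially in $\eta$: $G(\eta)\sim\e$ uniformly on $[0,L]$. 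Hence it violates the hypothesis \eqref{Milne bound}, so you cannot apply Lemma \ref{Milne lemma 7} (nor Lemmas \ref{Milne lemma 1}--\ref{Milne lemma 6} as stated) to this auxiliary problem, and the exponential decay of Lemma \ref{Milne lemma 4}, which your last paragraph proposes to ``exploit'' to get $\e$-smallness of the $D_j,F_j$ contributions, is precisely the estimate that is unavailable in this regime.

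The correct repair (the paper's route) is to rerun the $L^2$ machinery using only the smallness of the source over the layer of length $L=\e^{-\frac{1}{2}}$: the energy estimate of Lemma \ref{Milne lemma 1} gives $\tnnm{\sn w}\ls \e^{\frac{3}{4}}$ (since $\tnnm{\snn S}\ls \e L^{\frac{1}{2}}$), Lemma \ref{Milne lemma 3} survives, Lemma \ref{Milne lemma 4} is abandoned, and in the Lemma \ref{Milne lemma 5} computation one bounds $\abs{q_{j,L}}\ls\tnnm{\sn w}+\int_0^L\tnm{\sn w(\eta)}\ud\eta+\int_0^L\tnm{S(\eta)}\ud\eta\ls\e^{\frac{3}{4}}L^{\frac{1}{2}}\ls\e^{\frac{1}{2}}$. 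Thus the off-diagonal perturbation of the identity is only $O(\e^{\frac{1}{2}})$, not the $O(\e)$ you assert; this is still ample for invertibility of the $4\times4$ matrix when $\e$ is small, but your claimed rate and the mechanism behind it are unjustified as written. (A minor additional point: your appeal to Lemma \ref{Milne lemma 2} for $g^{(0)}_{1,L}=0$ formally requires $\ss\in\nk^{\perp}$; for general $\ss$ this follows from the decomposition used in the proof of Lemma \ref{Milne lemma 7}, since the kernel part is handled by an auxiliary solution decaying to zero.)
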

\begin{proof}
Given $h$ and $\ss$, Lemma \ref{Milne lemma 6} tells us that the equation \eqref{Milne} for $\g$ is well-posed and $\g_L$ is well-defined. By a similar argument, we know for any $\tilde h$, $\gg$ must also be well-posed. Hence, our main concern here is to delicately choose $\tilde h$ such that $\gg_L=0$, and then Lemma \ref{Milne lemma 6} implies that \eqref{mtt 67} holds.\\
\ \\
Step 1:\\
Let $\tilde\g=\g-\gg$, which satisfies the equation
\begin{align}\label{mtt 70}
\left\{
\begin{array}{l}\displaystyle
\va\frac{\p \tilde\g}{\p\eta}+G_1\bigg(\vb^2\dfrac{\p\tilde\g}{\p\va}-\va\vb\dfrac{\p\tilde\g}{\p\vb}\bigg)
+G_2\bigg(\vc^2\dfrac{\p\tilde\g}{\p\va}-\va\vc\dfrac{\p\tilde\g}{\p\vc}\bigg)+\ll[\tilde\g]
=0,\\\rule{0ex}{1.5em}
\tilde\g(0,\vvv)=\tilde h(\vvv)\ \ \text{for}\ \ \va>0,\\\rule{0ex}{1.5em}
\tilde\g(L,\vvv)=\tilde\g(L,\rr[\vvv]).
\end{array}
\right.
\end{align}
In order for $\gg_L=0$, we must choose proper $\tilde h$ such that
\begin{align}
\tilde\g_{L}(\vvv)=\g_{L}(\vvv)={q}_{0,L}\ee_0+{q}_{1,L}\ee_1+{q}_{2,L}\ee_2+{q}_{3,L}\ee_3+{q}_{4,L}\ee_4,
\end{align}
where $\tilde\g_L$ is defined as in Lemma \ref{Milne lemma 6}.

In other words, $\g$ and $\tilde\g$ may have different in-flow boundary ($h$ or $\tilde h$) and source terms ($S$ or $0$), but they share the same $\tilde\g_{L}(\vvv)=\g_{L}(\vvv)\in\nk$.\\
\ \\
Step 2:\\
Note that
\begin{align}
\tilde h(\vvv):=\tilde D_0\ee_0+\tilde D_1\ee_1+\tilde
D_2\ee_2+\tilde D_3\ee_3+\tilde D_4\ee_4.
\end{align}
Hence, in \eqref{mtt 70}, we actually need to build a mapping between $\tilde h\in\nk$ and $\tilde\g_L\in\nk$. We can take $\tilde D_1=q_{1,L}=0$. Then we consider the endomorphism $\mathcal{M}$ in a four-dimensional space $\tilde\nk=\text{span}\{\ee_0,\ee_2,\ee_3,\ee_4\}$ defined as $\mathcal{M}:\tilde h\rt \mathcal{M}[\tilde h]=\tilde g_{L}$. Therefore, we only need to study the matrix of $\mathcal{M}$ at the basis $\{\ee_0,\ee_2,\ee_3,\ee_4\}$. It suffices to show that $\mathcal{M}$ is invertible.\\
\ \\
Step 3:\\
It is easy to check when $\tilde h=\ee_0$ and $\tilde h=\ee_4$, $\mathcal{M}$ is an identity mapping, i.e.
\begin{align}
\mathcal{M}[\ee_0]=\ee_0,\quad
\mathcal{M}[\ee_4]=\ee_4.
\end{align}
The main obstacle is when $\tilde h=\ee_2,\ee_3$. Actually, $\mathcal{M}[\ee_2]$ is almost $\ee_2$, so we only need to estimate the difference. For $\tilde h=\ee_2$ in \eqref{mtt 70}, define $\tilde\g'=\tilde\g-\ee_2$. Then $\tilde\g'$ satisfies the equation
\begin{align}\label{mtt 69}
\left\{
\begin{array}{l}\displaystyle
\va\frac{\p \tilde\g'}{\p\eta}+G_1(\eta)\bigg(\vb^2\dfrac{\p \tilde\g'}{\p\va}-\va\vb\dfrac{\p \tilde\g'}{\p\vb}\bigg)+G_2(\eta)\bigg(\vc^2\dfrac{\p \tilde\g'}{\p\va}-\va\vc\dfrac{\p \tilde\g'}{\p\vc}\bigg)+\ll[\tilde\g']
=G(\eta)\m^{\frac{1}{2}}\va\vb,\\\rule{0ex}{1.5em}
\tilde\g'(0,\vvv)=0\ \ \text{for}\ \ \va>0,\\\rule{0ex}{1.5em}
\tilde\g'(L,\vvv)=\tilde\g'(L,\rr[\vvv]).
\end{array}
\right.
\end{align}
Here we cannot directly apply Lemma \ref{Milne lemma 1} to Lemma \ref{Milne lemma 6} with $S=G(\eta)\m^{\frac{1}{2}}\va\vb\in\nk^{\perp}$ and $h=0$, since $G(\eta)\m^{\frac{1}{2}}\va\vb$ does not decay exponentially. At best, we only have $\lnnm{S}{\vrh,\vth}\ls\e$ and have to modify the proof accordingly. In Lemma \ref{Milne lemma 1}, we can show that
\begin{align}\label{mtt 68}
\tnnm{\sn w}\ls \e^{\frac{3}{4}}.
\end{align}
Lemma \ref{Milne lemma 3} remains the same. Lemma \ref{Milne lemma 4} does not hold any more, so we need to use the smallness of $S$ and \eqref{mtt 68} in proving Lemma \ref{Milne lemma 5} instead of exponential decay. We focus on the derivation of $q_L$. Here the estimates of $D,E,F,\theta$ remains the same. Then we have
\begin{align}
\abs{q_{j,L}}\ls \tnnm{\sn w}+\int_0^L\tnm{\sn w(\eta)}\ud\eta+\int_0^L\tnm{S(\eta)}\ud\eta\ls \e^{\frac{3}{4}}L^{\frac{1}{2}}\ls\e^{\frac{1}{2}}.
\end{align}
In other other, the limit $\tilde q'_L$ to \eqref{mtt 69} is at the order $\e^{\frac{1}{2}}$ and is very small, i.e.
\begin{align}
\mathcal{M}[\ee_2]=\ee_2+\tilde q'_L\sim \ee_2+\e^{\frac{1}{2}}\ee_j.
\end{align}
A similar argument can justify $\tilde\g''=\tilde\g-\ee_3$ case, i.e.
\begin{align}
\mathcal{M}[\ee_3]=\ee_3+\tilde q''_L\sim \ee_3+\e^{\frac{1}{2}}\ee_j.
\end{align}
\ \\
Step 4: \\
In summary, we know the matrix of $\mathcal{M}$ is just a small perturbation of identity matrix
\begin{align}
\mathcal{M}\left[\begin{array}{c}\ee_0\\\ee_2\\\ee_3\\\ee_4\end{array}\right]=\left(
\begin{array}{cccc}
1&0&0&0\\
0&1+\tilde{q}'_{2,L}&\tilde{q}''_{2,L}&0\\
0&\tilde{q}'_{3,L}&1+\tilde{q}''_{3,L}&0\\
0&0&0&1
\end{array}
\right)\left[\begin{array}{c}\ee_0\\\ee_2\\\ee_3\\\ee_4\end{array}\right].
\end{align}
Here $\tilde q'_{k,L}$ and $\tilde q''_{k,L}$ are defined as in Step 3 and are of order $\e^{\frac{1}{2}}$. For $\e$ sufficiently small, this matrix is invertible, which means $\mathcal{M}$ is bijective. Therefore, we can always find $\tilde h$ such that
$\tilde\g_{L}=\g_{L}$, which is desired.
\end{proof}

%%%%%%%%%%%%%%%%%%%%%%%%%%%%%%%%%%%%%%%%%%%%%%%%%%%%%%%%%%%%%%%%%%%%%%%%%%%%%%%%%%%%%
\subsubsection{$L^2$ Decay}
%%%%%%%%%%%%%%%%%%%%%%%%%%%%%%%%%%%%%%%%%%%%%%%%%%%%%%%%%%%%%%%%%%%%%%%%%%%%%%%%%%%%%

\begin{theorem}[$L^2$ decay]\label{Milne theorem 3}
Assume \eqref{Milne bound} holds. Then there exists $0<K_0<K$ such that the solution $\g(\eta,\vvv)$ to \eqref{Milne transform} satisfying
\begin{align}
\tnnm{\ue^{K_0\eta}\gg}\ls1.
\end{align}
\end{theorem}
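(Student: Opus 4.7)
The plan is to decompose $\gg = w_{\gg} + q_{\gg}$ as in the earlier subsections and establish exponential decay for each part separately, exploiting the crucial fact that the modifier $\tilde h$ was chosen in Theorem \ref{Milne theorem 1} precisely so that $\gg_L = 0$, i.e.\ both $q_{j,L} = 0$ for $j=0,2,3,4$ and $w_L = 0$. Note also that $q_{1,L} = 0$ automatically by Lemma \ref{Milne lemma 2}.

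First I would upgrade Lemma \ref{Milne lemma 4} to the $\gg$ equation. The derivation there (multiplying by $\ue^{2K_0\eta}\g$, exploiting orthogonality, absorbing $K_0\langle \va w,w\rangle$ into the coercivity of $\ll$ for $K_0$ sufficiently small) depends only on $\gg\in\nk^\perp$ up to kernel terms with $q_1=0$, and on the exponentially decaying data $\lnmh{h-\tilde h}\ls 1$ and $\lnnm{\ue^{K\eta}S}{\vth,\varrho}\ls 1$. The same argument applied to \eqref{Milne transform} therefore yields
\begin{align*}
\tnnm{\ue^{K_0\eta}\sn w_{\gg}}\ls 1.
\end{align*}

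Next I would handle the kernel part $q_{\gg}$ via the ODE representation \eqref{mtt 07}. Because $\gg_L=0$, the corresponding $\beta_L$ defined in \eqref{mtt 56} vanishes, so \eqref{mtt 65} collapses to
\begin{align*}
\beta_{\gg}(\eta) = -F(\eta) + \ue^{-\Xi(\eta)}\int_{\eta}^{L}\ue^{\Xi(y)}Z(y)\ud y,
\end{align*}
with $Z=D+E-(G_1B^{(1)}+G_2B^{(2)})A^{-1}F$ as in \eqref{mtt 37}. Since $|\ue^{\pm\Xi(\eta)}|\ls 1$ by Remark \ref{Milne power}, and since \eqref{mtt 13}, \eqref{mtt 11}, \eqref{mtt 09} bound $|Z(y)|$ pointwise by $\tnm{\sn w_{\gg}(y)}+\tnm{\snn S(y)}$, I get
\begin{align*}
|\beta_{\gg}(\eta)|\ls \tnm{\sn w_{\gg}(\eta)} + \int_{\eta}^{L}\Big(\tnm{\sn w_{\gg}(y)}+\tnm{\snn S(y)}\Big)\ud y.
\end{align*}
Multiplying by $\ue^{K_0\eta}$, squaring, and integrating in $\eta$, the first term on the right is already controlled by the upgraded Lemma \ref{Milne lemma 4}. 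For the tail integral I apply Cauchy--Schwarz against a weight $\ue^{-2K_0'y}$ with $K_0<K_0'<K$, which produces a factor $\ue^{-2K_0'\eta}$; swapping the order of integration via Fubini absorbs the $\ue^{2(K_0-K_0')\eta}$ factor uniformly and leaves an integral of $\ue^{2K_0'y}|Z(y)|^2$ over $[0,L]$, which is bounded by $1$ thanks again to the $w_{\gg}$ decay and the assumption on $S$. Since $A$ is invertible and $q_{1,\gg}=0$, this yields $\tnnm{\ue^{K_0\eta}q_{\gg}}\ls 1$.

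Combining both bounds gives $\tnnm{\ue^{K_0\eta}\gg}\ls 1$ for any $K_0$ strictly smaller than both $K$ and the threshold from Lemma \ref{Milne lemma 4}. The main technical obstacle is the Fubini/weighted estimate of the tail integral: the integrand $Z$ itself contains a factor $F(y)=\langle \va^2\ee_j,w_{\gg}\rangle$ which is only $L^2$ in $\eta$, not pointwise small, so naive bounds lose powers of $L=\e^{-1/2}$. Choosing $K_0<K_0'$ strictly is precisely what prevents this loss, since it makes $\int_0^y\ue^{2(K_0-K_0')\eta}\ud\eta$ uniformly bounded independently of $\e$ and $L$. The secondary subtlety is that one must verify the orthogonality relations of Lemma \ref{Milne lemma 3} and the identity $q_{1,\gg}=0$ still hold for $\gg$ (they do, since $\tilde h\in\nk$ does not alter these integral identities).
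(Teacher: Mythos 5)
Your proposal follows essentially the same route as the paper's proof: exponential decay of the microscopic part from Lemma \ref{Milne lemma 4} applied to \eqref{Milne transform}, then the ODE representation for $\beta$ with $\beta_L=0$ (so that $\theta$ becomes a tail integral of $Z$ and $\beta(\eta)=-F(\eta)\mp\int_{\eta}^{L}\exp(\cdots)Z(y)\ud y$), followed by a weighted Cauchy--Schwarz/Fubini estimate of the tail, which is exactly the reduction to the computations of Lemma \ref{Milne lemma 5} that the paper invokes. One bookkeeping slip: your intermediate weight $K_0'$ must not exceed the decay rate furnished by Lemma \ref{Milne lemma 4} (and by \eqref{Milne bound} for $\ss$), not merely satisfy $K_0<K_0'<K$, since the claimed bound $\int_0^L \ue^{2K_0'y}\abs{Z(y)}^2\ud y\ls 1$ rests on $\tnnm{\ue^{K_0\eta}\sn w}\ls 1$ and would fail if $K_0'$ were chosen above that rate; the paper arranges this by keeping the Lemma \ref{Milne lemma 4} rate as the inner weight and taking the final rate $K_0'\leq K_0/2$, and with your constants reordered as (final rate) $<$ (inner weight) $\leq$ (Lemma \ref{Milne lemma 4} rate) your argument coincides with the paper's.
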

\begin{proof}
We decompose $\gg=w+q$ with $\gg_L=q_L=0$. Lemma \ref{Milne lemma 4} already justifies the decay of $w$
\begin{align}
\tnnm{\ue^{K_0\eta}w}\ls 1.
\end{align}
Hence, we focus on $q$ decay. Here, we use the same notation as in the proof of Lemma \ref{Milne lemma 2} and Lemma \ref{Milne lemma 5}.
Recall \eqref{mtt 56}. $\beta_L=q_L=0$ implies
\begin{align}\label{mtt 71}
\theta=-\int_0^{L}\exp\bigg(W_1(y)B^{(1)}A^{-1}+W_2(y)B^{(2)}A^{-1}\bigg)Z(y)\ud{y}
\end{align}
Inserting \eqref{mtt 71} into \eqref{mtt 50}, we obtain
\begin{align}
\beta(\eta)&=-F(\eta)-\int_{\eta}^L\exp\bigg(\Big(W_1(y)-W_1(\eta)\Big)B^{(1)}A^{-1}+\Big(W_2(y)-W_2(\eta)\Big)B^{(2)}A^{-1}\bigg)Z(y)\ud{y}
\end{align}
Note that
\begin{align}
Z=D+E-(G_1B^{(1)}+G_2B^{(2)})A^{-1}F,
\end{align}
where $D,E,F$ are defined in \eqref{mtt 57}, and due to Remark \ref{Milne power},
\begin{align}
\tnnm{\ue^{K_0'\eta}q}^2\ls\tnnm{\ue^{K_0'\eta}\beta}^2\ls \int_{0}^L\ue^{2K_0'\eta}F^2(\eta)\ud\eta+\int_0^L\ue^{2K_0'\eta}\bigg(\int_{\eta}^LZ(y)\ud y\bigg)^2\ud\eta.
\end{align}
Then the proof is similar to that of Lemma \ref{Milne lemma 5}, so we omit it here. Here, we take $K_0'\leq \dfrac{K_0}{2}$.
\end{proof}

\begin{remark}
In \eqref{Milne}, $\g-\g_L$ does not necessarily decay exponentially. This is the main reason we have to introduce Theorem \ref{Milne theorem 1} to design the boundary data such that $\gg_L=0$.
\end{remark}

%%%%%%%%%%%%%%%%%%%%%%%%%%%%%%%%%%%%%%%%%%%%%%%%%%%%%%%%%%%%%%%%%%%%%%%%%%%%%%%%%%%%%
\subsection{$L^{\infty}$ Estimates}\label{mtt section 01}
%%%%%%%%%%%%%%%%%%%%%%%%%%%%%%%%%%%%%%%%%%%%%%%%%%%%%%%%%%%%%%%%%%%%%%%%%%%%%%%%%%%%%

%%%%%%%%%%%%%%%%%%%%%%%%%%%%%%%%%%%%%%%%%%%%%%%%%%%%%%%%%%%%%%%%%%%%%%%%%%%%%%%%%%%%%
\subsubsection{Characteristic Formulation}
%%%%%%%%%%%%%%%%%%%%%%%%%%%%%%%%%%%%%%%%%%%%%%%%%%%%%%%%%%%%%%%%%%%%%%%%%%%%%%%%%%%%%

We rewrite \eqref{Milne} as the following $\e$-transport problem for $\g(\eta,\vvv)$
\begin{align}\label{transport}
\left\{
\begin{array}{l}\displaystyle
\va\frac{\p \g}{\p\eta}+G_1(\eta)\bigg(\vb^2\dfrac{\p
\g}{\p\va}-\va\vb\dfrac{\p \g}{\p\vb}\bigg)+G_2(\eta)\bigg(\vc^2\dfrac{\p
\g}{\p\va}-\va\vc\dfrac{\p \g}{\p\vc}\bigg)+\nu\g
=Q(\eta,\vvv),\\\rule{0ex}{1.5em} \g(0,\vvv)=h(\vvv)\ \
\text{for}\ \ \va>0,\\\rule{0ex}{1.5em}
\g(L,\vvv)=\g(L,R[\vvv]),
\end{array}
\right.
\end{align}
Here $Q:=K[\g]+\ss$.

Define the characteristics starting from
$\Big(\eta(0),\va(0),\vb(0),\vc(0)\Big)$ as $\Big(\eta(s),\va(s),\vb(s),\vc(s)\Big)$ for some $s\in\r$
satisfying
\begin{align}
\frac{\ud{\eta}}{\ud{s}}=\va,\ \
\frac{\ud{\va}}{\ud{s}}=G_1(\eta)\vb^2+G_2(\eta)\vc^2,\ \
\frac{\ud{\vb}}{\ud{s}}=-G_1(\eta)\va\vb,\ \
\frac{\ud{\vc}}{\ud{s}}=-G_2(\eta)\va\vc,
\end{align}
which leads to
\begin{align}\label{mtt 81}
\va^2(s)+\vb^2(s)+\vc^2(s):=E_1,\ \
\vb(s)\ue^{-W_1(\eta(s))}:=E_2,\ \
\vc(s)\ue^{-W_2(\eta(s))}:=E_3,
\end{align}
where the energy $E_i$ are constants depending on the starting
point.

Therefore, along the characteristics, $\va^2+\vb^2+\vc^2$, $\vb\ue^{-W_1(\eta)}$ and $\vc\ue^{-W_2(\eta)}$ are conserved quantities and the equation \eqref{transport} can be
rewritten as
\begin{align}
\frac{\ud\g}{\ud s}+\nu\g=Q,
\end{align}
or equivalently,
\begin{align}\label{mtt 72}
\va\frac{\ud \g}{\ud\eta}+\nu\g=&Q.
\end{align}

Let
\begin{align}
\vb'(\eta,\vvv;\eta'):=\vb\ue^{W_1(\eta')-W_1(\eta)},\ \ \vc'(\eta,\vvv;\eta'):=\vc\ue^{W_2(\eta')-W_2(\eta)}.
\end{align}
On the characteristics, we should always have $E_1\geq\vb'^2+\vc'^2$. Define
\begin{align}\label{mtt 83}
\va'(\eta,\vvv;\eta'):=&\sqrt{E_1-\vb'^2(\eta,\vvv;\eta')-\vc'^2(\eta,\vvv;\eta')},\\
\vvv'(\eta,\vvv;\eta'):=&\Big(\va'(\eta,\vvv;\eta'),\vb'(\eta,\vvv;\eta'),\vc'(\eta,\vvv;\eta')\Big),\\
\rr[\vvv'(\eta,\vvv;\eta')]:=&\Big(-\va'(\eta,\vvv;\eta'),\vb'(\eta,\vvv;\eta'),\vc'(\eta,\vvv;\eta')\Big).
\end{align}
Basically, this means $(\eta,\va,\vb,\vc)$ and $(\eta',\va',\vb',\vc')$, $(\eta',-\va',\vb',\vc')$ are on the same characteristics. In particular, this implies $\va'\geq0$.

We can rewrite the solution to the equation \eqref{transport} along the characteristics using \eqref{mtt 72} as
\begin{align}
\g(\eta,\vvv)=\k[h](\eta,\vvv)+\t[Q](\eta,\vvv),
\end{align}
where the operators $\k$ and $\t$ are defined as follows:\\
\ \\
Case I: $\va>0$:\\
The characteristics directly tracks back to the in-flow boundary $\eta=0$ and $\va>0$, i.e.
\begin{align}\label{mtt 26}
\k[h](\eta,\vvv):=&h\Big(\vvv'(\eta,\vvv; 0)\Big)\exp(-H_{\eta,0}),\\
\t[Q](\eta,\vvv):=&\int_0^{\eta}\frac{Q\Big(\eta',\vvv'(\eta,\vvv;\eta')\Big)}{\va'(\eta,\vvv;\eta')}\exp(-H_{\eta,\eta'})\ud{\eta'}.
\end{align}
Here
\begin{align}\label{mtt 73}
H_{\eta,\eta'}:=&\int_{\eta'}^{\eta}\frac{\nu\Big(\vvv'(\eta,\vvv;y)\Big)}{\va'(\eta,\vvv;y)}\ud{y}.
\end{align}
\ \\
Case II: $\va<0$ and $\va^2+\vb^2+\vc^2\geq \vb'^2(\eta,\vvv;L)+\vc'^2(\eta,\vvv;L)$:\\
The characteristics first goes a bit farther to the boundary $\eta=L$, then gets reflected and tracks back to the in-flow boundary, i.e.
\begin{align}\label{mtt 27}
\k[h](\eta,\vvv):=&h\Big(\vvv'(\eta,\vvv; 0)\Big)\exp(-H_{L,0}-\rr[H_{L,\eta}]),\\
\t[Q](\eta,\vvv):=&\bigg(\int_0^{L}\frac{Q\Big(\eta',\vvv'(\eta,\vvv;\eta')\Big)}{\va'(\eta,\vvv;\eta')}
\exp(-H_{L,\eta'}-\rr[H_{L,\eta}])\ud{\eta'}\\
&+\int_{\eta}^{L}\frac{Q\Big(\eta',\rr[\vvv'(\eta,\vvv;\eta')]\Big)}{\va'(\eta,\vvv;\eta')}\exp(\rr[H_{\eta,\eta'}])\ud{\eta'}\bigg).\no
\end{align}
Here
\begin{align}
\rr[H_{\eta,\eta'}]:=&\int_{\eta'}^{\eta}\frac{\nu\Big(\rr[\vvv'(\eta,\vvv;y)]\Big)}{\va'(\eta,\vvv;y)}\ud{y}.
\end{align}
Actually, since $\nu$ only depends on $\abs{\vvv}$, we must have $H_{\eta,\eta'}=\rr[H_{\eta,\eta'}]$. This distinction is purely for clarification and does not play a role in the estimates.\\
\ \\
Case III: $\va<0$ and $\va^2+\vb^2+\vc^2\leq \vb'^2(\eta,\vvv;L)+\vc'^2(\eta,\vvv;L)$\\
The characteristics reaches the line $\va=0$ before reaching the boundary $\eta=L$, and then directly tracks back to the in-flow boundary, i.e.
\begin{align}\label{mtt 28}
\k[h](\eta,\vvv):=&h\Big(\vvv'(\eta,\vvv; 0)\Big)\exp(-H_{\eta^+,0}-\rr[H_{\eta^+,\eta}]),\\
\t[Q](\eta,\vvv):=&\bigg(\int_0^{\eta^+}\frac{Q\Big(\eta',\vvv'(\eta,\vvv;\eta')\Big)}{\va'(\eta,\vvv;\eta')}
\exp(-H_{\eta^+,\eta'}-\rr[H_{\eta^+,\eta}])\ud{\eta'}\\
&+\int_{\eta}^{\eta^+}\frac{Q\Big(\eta',\rr[\vvv'(\eta,\vvv;\eta')]\Big)}{\va'(\eta,\vvv;\eta')}\exp(\rr[H_{\eta,\eta'}])\ud{\eta'}\bigg).\no
\end{align}
Here $\eta^{+}(\eta,\vvv)$ is defined by
\begin{align}\label{mtt 84}
E_1(\eta,\vvv)=\vb'^2(\eta,\vvv;\eta^+)+\vc'^2(\eta,\vvv;\eta^+).
\end{align}
locates the position that the characteristics touch $\va=0$ line, i.e. $(\eta^+,0,\vb',\vc')$ is on the same characteristics as $(\eta,\va,\vb,\vc)$.

In order to achieve the estimate of $\g$, we need to control $\k[h]$ and $\t[Q]$. Since we always assume that $(\eta,\vvv)$ and $(\eta',\vvv')$ are on the same characteristics, in the following, we will simply write $\vvv'(\eta')$ or even $\vvv'$ instead of $\vvv'(\eta,\vvv;\eta')$ when there is no confusion.

%%%%%%%%%%%%%%%%%%%%%%%%%%%%%%%%%%%%%%%%%%%%%%%%%%%%%%%%%%%%%%%%%%%%%%%%%%%%%%%%%%%%%
\subsubsection{$L^{\infty}$ Boundedness}
%%%%%%%%%%%%%%%%%%%%%%%%%%%%%%%%%%%%%%%%%%%%%%%%%%%%%%%%%%%%%%%%%%%%%%%%%%%%%%%%%%%%%

We first prove some important lemmas characterizing the operators $\k$ and $\t$.

\begin{lemma}[estimate of boundary term]\label{Milne lemma 8}
There is a positive $0<\beta<\nu_0$ such that for any $\vth\geq0$ and
$\varrho\geq0$,
\begin{align}
\lnnm{\ue^{\beta\eta}\k[h]}{\vth,\varrho}\ls\lnmh{h}.
\end{align}
\end{lemma}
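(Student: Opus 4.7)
The plan is to exploit two crucial structural features of the characteristic formulation: (i) energy conservation $\abs{\vvv}^2 = \va^2+\vb^2+\vc^2 \equiv E_1$ along every characteristic, which propagates the weight $\bvv$ unchanged from $\eta$ back to $\eta'=0$; and (ii) a linear-in-$\eta$ lower bound on the collision damping exponent $H$, which provides an exponential factor $\ue^{-c_0\eta}$ that can absorb $\ue^{\beta\eta}$ for sufficiently small $\beta>0$. Combining these reduces the weighted $L^\infty$ estimate to the trivial inequality $\bvv \abs{h(\vvv'(0))}=\br{\vvv'(0)}^{\vth}\ue^{\vrh\abs{\vvv'(0)}^2}\abs{h(\vvv'(0))}\leq\lnmh{h}$.

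I would handle Case I first. When $\va>0$, formula \eqref{mtt 26} gives $\k[h](\eta,\vvv)=h(\vvv'(0))\exp(-H_{\eta,0})$. From \eqref{mtt 81}, $\abs{\vvv'(y)}^2=E_1=\abs{\vvv}^2$ for every $y\in[0,\eta]$, so $\bvv$ equals its value at $y=0$ and consequently $\bvv\abs{h(\vvv'(0))}\leq\lnmh{h}$. It remains to show $H_{\eta,0}\geq c_0\eta$ for some universal $c_0>0$. Since $\nu$ depends only on $\abs{\vvv}$, it is constant along the characteristic, so by \eqref{mtt 73} one has $H_{\eta,0}=\nu(\vvv)\int_0^\eta\ud y/\va'(y)$. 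Using $\va'(y)\leq\abs{\vvv'(y)}=\abs{\vvv}$ together with $\nu(\vvv)\gs\br{\vvv}\geq\abs{\vvv}$ from Lemma \ref{ktt 01}, I obtain $H_{\eta,0}\geq c_0\eta$ independent of $\vvv$. Choosing any $\beta\in(0,\min(c_0,\nu_0))$ yields $\bvv\ue^{\beta\eta}\abs{\k[h](\eta,\vvv)}\leq\lnmh{h}\ue^{-(c_0-\beta)\eta}\leq\lnmh{h}$.

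Cases II and III are essentially the same calculation with the total path length in $\eta$ replacing $\eta$ itself. In Case II, the characteristic ascends from $\eta$ to $L$, reflects, then descends to $\eta'=0$, so the path length is $2L-\eta\geq\eta$ and hence $H_{L,0}+\rr[H_{L,\eta}]\geq c_0\eta$. In Case III, the characteristic grazes at some $\eta^{+}\in[\eta,L]$ (defined in \eqref{mtt 84}) and then descends to $\eta'=0$; the path length is $2\eta^{+}-\eta\geq\eta$, producing the same lower bound. In both cases $\abs{\vvv}$ is conserved across the reflection or grazing, so the weight identity propagates as before.

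The only subtle point I expect is the uniform estimate $\nu(\vvv)/\abs{\vvv}\gs 1$, which underlies the lower bound on $H$. This follows directly from the explicit formula for $\nu$: as $\abs{\vvv}\to 0$ the ratio diverges, and as $\abs{\vvv}\to\infty$ the ratio tends to a positive constant, so it has a uniform positive lower bound on $\r^3$. Note that the integrand $1/\va'(y)$ becomes singular near turnaround points (where $\va'\to 0$ in Case III), but this only makes $H$ larger and hence the exponential decay stronger; the grazing geometry is therefore favorable rather than dangerous. This completes the plan.
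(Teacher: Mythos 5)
Your proof is correct and follows essentially the same route as the paper: both arguments rest on the conservation of $\abs{\vvv}$ along characteristics (so the weight $\bvv$ passes unchanged to the boundary value of $h$) and on the pointwise bound $\nu(\vvv')/\va'\gs 1$ (equivalently your $\nu(\vvv)\gs\abs{\vvv}\geq\va'$), which gives $H\geq c_0\times(\text{path length})\geq c_0\eta$ in all three cases, including across the reflection and the grazing point. Your added remarks on the singularity of $1/\va'$ at turnaround points and on the uniform bound $\nu/\abs{\vvv}\gs 1$ only make explicit what the paper leaves implicit.
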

\begin{proof}
Consider \eqref{mtt 73}, we know
\begin{align}
\frac{\nu(\vvv')}{\va'}\geq\nu_0,\ \
\frac{\nu(\rr[\vvv'])}{\va'}\geq\nu_0.
\end{align}
It follows that
\begin{align}
\exp(-H_{\eta,0})\leq&\ue^{-\beta\eta},\\
\exp(-H_{L,0}-\rr[H_{L,\eta}])\leq&\ue^{-\beta\eta},\\
\exp(-H_{\eta^+,0}-\rr[H_{\eta^+,\eta}])\leq&\ue^{-\beta\eta}.
\end{align}
Then our results are obvious.
\end{proof}

\begin{lemma}[estimate of bulk term]\label{Milne lemma 9}
For any $\vth\geq0$, $\varrho\geq0$ and
$0\leq\beta\leq\dfrac{\nu_0}{2}$, there is a constant $C$ such that
\begin{align}
\lnnm{\ue^{\beta\eta}\t[Q]}{\vth,\varrho}\ls\lnnm{\nu^{-1}\ue^{\beta\eta}Q}{\vth,\varrho}.
\end{align}
\end{lemma}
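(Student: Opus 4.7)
The plan is to bound $\t[Q]$ case by case according to the three characteristic geometries \eqref{mtt 26}, \eqref{mtt 27}, \eqref{mtt 28}, reducing every integral to the canonical form $\int \frac{\nu(\vvv')}{\va'} \exp(-H_{\eta,\eta'})\,\ue^{-\beta\eta'}\,\ud{\eta'}$ and then showing this quantity is $\ls \ue^{-\beta\eta}$ uniformly.

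The first step uses the conservation laws \eqref{mtt 81}: along any characteristic one has $\abs{\vvv'(\eta')}=\abs{\vvv}$, hence the weight $\br{\vvv'}^{\vth}\ue^{\varrho\abs{\vvv'}^2}$ is constant along the ray. Writing
\[
\abs{Q\big(\eta',\vvv'\big)}\;\leq\;\lnnm{\nu^{-1}\ue^{\beta\eta}Q}{\vth,\varrho}\cdot\frac{\nu(\vvv')\,\ue^{-\beta\eta'}}{\br{\vvv}^{\vth}\ue^{\varrho\abs{\vvv}^2}},
\]
we can factor the spatial/velocity weight outside the $\eta'$-integral; thus everything reduces to estimating integrals of $\nu(\vvv')/\va'$ multiplied by exponentials of $H$.

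Next I would exploit the identity $\partial_{\eta'}H_{\eta,\eta'}=-\nu(\vvv'(\eta'))/\va'(\eta')$. Substituting $u=H_{\eta,\eta'}$, the definition \eqref{mtt 73} together with $\nu(\vvv')\geq\nu_0\,\va'$ gives $\eta-\eta'\leq u/\nu_0$, i.e.\ $\ue^{-\beta\eta'}\leq\ue^{-\beta\eta+\beta u/\nu_0}$. Hence
\[
\int_0^{\eta}\frac{\nu(\vvv')}{\va'}\ue^{-H_{\eta,\eta'}}\ue^{-\beta\eta'}\,\ud{\eta'}\;=\;\int_0^{H_{\eta,0}}\ue^{-u}\,\ue^{-\beta\eta'(u)}\,\ud{u}\;\leq\;\ue^{-\beta\eta}\int_0^{\infty}\ue^{-(1-\beta/\nu_0)u}\,\ud{u}\;\leq\;\frac{\ue^{-\beta\eta}}{1-\beta/\nu_0},
\]
and the restriction $\beta\leq\nu_0/2$ yields a universal constant. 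This handles Case I \eqref{mtt 26} immediately.

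For Cases II and III \eqref{mtt 27}, \eqref{mtt 28} the characteristic is split into a $\va'>0$ leg and a reflected leg through $\vvv\mapsto\rr[\vvv']$, with the turning point $\eta=L$ or $\eta=\eta^{+}$ defined by \eqref{mtt 84}. The same substitution applies on each leg, using that $\nu$ depends only on $\abs{\vvv}$ so $H_{\eta,\eta'}$ and $\rr[H_{\eta,\eta'}]$ obey identical bounds, and the composite exponentials (e.g.\ $\exp(-H_{L,\eta'}-\rr[H_{L,\eta}])$) still dominate $\ue^{-\beta(\eta-\eta')}$ because the total path length from $\eta'$ back through the reflection to $\eta$ is at least $\eta-\eta'$. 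The only real bookkeeping is checking that the reflected-leg integral $\int_{\eta}^{L}\frac{Q(\eta',\rr[\vvv'])}{\va'}\exp(\rr[H_{\eta,\eta'}])\,\ud{\eta'}$ satisfies the same bound; here $\rr[H_{\eta,\eta'}]$ already appears with a minus sign once rewritten as $-\rr[H_{\eta',\eta}]$, so the same change of variables closes the estimate. The only mild obstacle I anticipate is keeping the $\eta$-dependence clean in Case III, where $\eta^{+}$ depends on $(\eta,\vvv)$ through \eqref{mtt 84}; but since the bound only uses $\nu\geq\nu_0\va'$ pointwise along the characteristic, this dependence drops out and the universal constant $(1-\beta/\nu_0)^{-1}$ controls everything.
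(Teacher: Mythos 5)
Your proposal is correct and is essentially the paper's own argument: the weight is constant along characteristics since $\abs{\vvv'}=\abs{\vvv}$, the bound $\nu(\vvv')/\va'\geq\nu_0$ together with $\beta\leq\nu_0/2$ absorbs the factor $\ue^{\beta(\eta-\eta')}$ into half of $H_{\eta,\eta'}$, and the substitution $z=H_{\eta,\eta'}$ gives a universal constant; the paper writes this out only for $\va>0$ and declares the reflected cases similar, which your sketch of Cases II and III (using $H_{L,\eta'}+H_{L,\eta}\geq H_{\eta,\eta'}$, resp. the analogous bound at $\eta^{+}$) correctly fills in.
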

\begin{proof}
For $\va>0$ case, we have
\begin{align}
\beta(\eta-\eta')-H_{\eta,\eta'}\leq\beta(\eta-\eta')-\frac{\nu_0(\eta-\eta')}{2}-\frac{H_{\eta,\eta'}}{2}\leq-\frac{H_{\eta,\eta'}}{2}.
\end{align}
It is natural that
\begin{align}
\int_0^{\eta}\frac{\nu\Big(\vvv'(\eta')\Big)}{\va'(\eta')}
\exp\Big(\beta(\eta-\eta')-H_{\eta,\eta'}\Big)\ud{\eta'}\leq\int_0^{\infty}\exp\bigg(-\frac{z}{2}\bigg)\ud{z}=2,
\end{align}
for $z=H_{\eta,\eta'}$. Notice that $\abs{\vvv}=\abs{\vvv'}$. Then we estimate
\begin{align}
\abs{\bvv\ue^{\beta\eta}\t[Q]}\leq& \ue^{\beta\eta}\int_0^{\eta}\bvv\frac{\abs{Q\Big(\eta',\vvv'(\eta')\Big)}}{\va'(\eta')}\exp(-H_{\eta,\eta'})\ud{\eta'}\\
\leq&\lnnm{\nu^{-1}\ue^{\beta\eta}Q}{\vth,\varrho}\int_0^{\eta}\frac{\nu\Big(\vvv'(\eta')\Big)}{\va'(\eta')}
\exp\Big(\beta(\eta-\eta')-H_{\eta,\eta'}\Big)\ud{\eta'}\no\\
\ls&\lnnm{\nu^{-1}\ue^{\beta\eta}Q}{\vth,\varrho}.\no
\end{align}
The $\va<0$ case can be proved in a similar fashion, so we omit it here.
\end{proof}

\begin{lemma}[further estimate of bulk term]\label{Milne lemma 10}
For any $\d>0$, an integer $\vth>3$ and $\varrho\geq0$, there is a
constant $C(\d)$ such that
\begin{align}
\ltnm{\t[Q]}{\varrho}\leq C(\d)\tnnm{\nu^{-\frac{1}{2}}Q}+\d\lnnm{Q}{\vth,\varrho}.
\end{align}
\end{lemma}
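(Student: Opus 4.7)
The plan is to bound $\ltnm{\t[Q]}{\varrho}$ by decomposing the characteristic integral according to whether the outer velocity $\vvv$ lies in a near-grazing set or not. I will work out Case~I ($\va>0$) in detail; Cases~II and III are handled by splitting the $\eta'$-integral at the reflection point $L$ or the turnaround point $\eta^+$ and applying the same argument on each sub-interval.

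First I would apply Cauchy--Schwarz to the integrand defining $\t[Q]$, writing
\[
\abs{\t[Q](\eta,\vvv)}^2 \le \left(\int_0^{\eta}\frac{\nu(\vvv')}{\va'}\ue^{-H_{\eta,\eta'}}\ud\eta'\right)
\left(\int_0^{\eta}\frac{\abs{Q(\eta',\vvv')}^2}{\nu(\vvv')\va'}\ue^{-H_{\eta,\eta'}}\ud\eta'\right).
\]
The first factor equals $1-\ue^{-H_{\eta,0}}\le 1$ by recognizing $(\nu/\va')\ue^{-H}$ as $\partial_{\eta'}\ue^{-H_{\eta,\eta'}}$, so only the second factor needs to be integrated in $\vvv$ against the weight $\ue^{4\varrho\abs{\vvv}^2}$. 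Using that $\abs{\vvv'}=\abs{\vvv}$ along characteristics, this weight is transported along the flow without growth.

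Next I would split the $\vvv$-integration into the near-grazing set $\mathcal{G}_{\delta'}:=\{\abs{\va}<\delta'\}$ and its complement, with $\delta'$ to be chosen at the end. On $\mathcal{G}_{\delta'}$, use the pointwise bound $\abs{Q(\eta',\vvv')}\le\lnnm{Q}{\vth,\varrho}\br{\vvv'}^{-\vth}\ue^{-\varrho\abs{\vvv'}^2}$ together with $\int_0^{\eta}\ue^{-H}/(\nu(\vvv')\va')\ud\eta'\lesssim \nu(\vvv)^{-2}$, which yields a contribution of order $\lnnm{Q}{\vth,\varrho}^2\int_{\{\abs{\va}<\delta'\}}\br{\vvv}^{-2\vth}\ud\vvv\lesssim \delta'\lnnm{Q}{\vth,\varrho}^2$; here integrability in $(\vb,\vc)$ uses $\vth>3$. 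On the complement, I would perform the change of variables $\vvv\mapsto\vvv'$ for fixed $\eta,\eta'$ based on $\vb'=\vb\ue^{W_1(\eta')-W_1(\eta)}$, $\vc'=\vc\ue^{W_2(\eta')-W_2(\eta)}$, and $\va'^2=\va^2+\vb^2+\vc^2-\vb'^2-\vc'^2$. The Jacobian computes to $\ud\vvv=(\va'/\va)\ue^{W(\eta')-W(\eta)}\ud\vvv'$; by Remark~\ref{Milne power} the exponential factor is comparable to $1$. The factor $\va'/\va$ cancels the $1/\va'$ in the integrand, leaving $1/\va$, which is bounded by $1/\delta'$ on the non-grazing set. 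This yields a contribution of order $(\delta')^{-1}\tnnm{\snn Q}^2$. Choosing $\delta'=\delta^2$ and taking square roots produces the claimed estimate with $C(\delta)\sim\delta^{-1}$.

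The main obstacle is the change of variables on the non-grazing set: I need to verify injectivity of $\vvv\mapsto\vvv'$ uniformly in $\eta,\eta'\in[0,L]$ and $\e$, and that the Jacobian really does cancel $1/\va'$ into an $\e$-independent $1/\va$. Remark~\ref{Milne power} is crucial because it ensures $\ue^{W(\eta')-W(\eta)}$ remains uniformly bounded away from $0$ and $\infty$ on $[0,L]$, so that the curvature-induced drift does not introduce hidden powers of $\e^{-1}$. In Cases~II and III, the same change of variables must be applied separately on the forward segment and on the reflected (or turnaround) segment; the reflection maps $\va'\mapsto-\va'$ leaves $\abs{\vvv}$, $\nu$, and the Jacobian unchanged, so the estimates patch together with the same constants.
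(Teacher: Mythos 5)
Your skeleton (split $\vvv$ into a near-grazing slab $\{\abs{\va}<\d'\}$ versus its complement, use the weighted $L^{\infty}$ norm on the slab, and use a change of variables $\vvv\mapsto\vvv'$ with Jacobian $\sim\va'/\va$ plus Cauchy--Schwarz on the complement) is essentially the paper's own strategy (its Types II--IV), and your Cauchy--Schwarz split $\frac{Q}{\va'}\ue^{-H}=\big(\frac{\nu}{\va'}\ue^{-H}\big)^{1/2}\cdot\frac{Q}{(\nu\va')^{1/2}}\ue^{-H/2}$ even lets the Jacobian cancel $1/\va'$ outright, so you avoid the paper's extra parameters $m,\sigma$ and its special treatment of the small-$\va'$ region in the reflected case. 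But there is a genuine gap on the non-grazing piece when $\varrho>0$: the norm $\ltnm{\cdot}{\varrho}$ carries the exponential weight $\ue^{2\varrho\abs{\vvv}^2}$, and since $\abs{\vvv}$ is unbounded on $\{\abs{\va}\geq\d'\}$, transporting the weight along the characteristic ($\abs{\vvv'}=\abs{\vvv}$) leaves you with $\frac{1}{\d'}\int\int \ue^{2\varrho\abs{\vvv'}^2}\frac{Q^2}{\nu}$, i.e.\ a \emph{weighted} $L^2$ norm of $Q$, not the unweighted $\tnnm{\snn Q}^2$ claimed in the lemma. Your statement that the contribution is ``of order $(\d')^{-1}\tnnm{\snn Q}^2$'' silently drops this weight and is only valid for $\varrho=0$.

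The repair is exactly the step your decomposition is missing and which the paper performs first: cut off the large-velocity region $\{\abs{\vvv'}\geq M\}$ (the paper's Type I), where the weighted $L^{\infty}$ norm gives a contribution $\ls M^{-\vth}\lnnm{Q}{\vth,\varrho}^2$ that is absorbed into the $\d$-term by taking $M$ large; on the remaining region all velocities are $O(M)$, so $\ue^{2\varrho\abs{\vvv}^2}\leq\ue^{CM^2}$ is a constant that can be folded into $C(\d)$ (this is the source of the factors $\ue^{6\varrho M^2}$ in the paper), and only then does your change-of-variables bound produce the unweighted $\tnnm{\snn Q}$. With that truncation added, your near-grazing estimate is fine (it is the paper's Type IV, and $\vth>3$ indeed gives integrability of $\br{\vvv}^{-2\vth}$ in the tangential variables), and for the reflected Cases II--III you still need to treat the pre-reflection segment $\int_{\eta}^{L}$ (respectively $\int_{\eta}^{\eta^+}$) and the post-reflection segment $\int_0^{\eta}$ separately and check injectivity of $\vvv\mapsto\vvv'$ on each monotone piece of the characteristic, since across the turning point the map is two-to-one.
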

\begin{proof}
In the following, we will repeatedly use the fact that $\abs{\vvv}=\abs{\vvv'}$.\\
\ \\
Case I: For $\va>0$, $\t[Q]$ is defined in \eqref{mtt 26}.
%\begin{align}
%\t\Big[Q(\eta,\vvv)\Big]=&\int_0^{\eta}\frac{Q\Big(\eta',\vvv(\eta,\vvv;\eta')\Big)}{\va'(\eta,\vvv;\eta')}\exp(-H_{\eta,\eta'})\ud{\eta'}.
%\end{align}
We need to estimate
\begin{align}
\int_{\r^3}\ue^{2\varrho\abs{\vvv}^2}\bigg(\int_0^{\eta}\frac{Q\Big(\eta',\vvv(\eta')\Big)}{\va'(\eta')}\exp(-H_{\eta,\eta'})\ud{\eta'}
\bigg)^2\ud{\vvv}.
\end{align}
Assume $m>0$ is sufficiently small, $M>0$ is sufficiently large and $\sigma>0$ is sufficiently small, which will be determined in the following. We can split the above integral into four parts
\begin{align}
I:=I_1+I_2+I_3+I_4.
\end{align}
In the following, we use $\chi_i$ for $i=1,2,3,4$ to represent the indicator function of each type.\\
\ \\
Case I - Type I: $\chi_1$: $M\leq\va'(\eta')$ or $M\leq\vb'(\eta')$ or $M\leq\vc'(\eta')$.\\
We have
\begin{align}
\abs{\vvv(\eta')}+1\ls\nu\Big(\vvv(\eta')\Big).
\end{align}
Then for $\vth>3$, since $\abs{\vvv}$ is conserved along the characteristics,  we have
\begin{align}
I_1\ls&\lnnm{Q}{\vth,\varrho}^2\int_{\r^3}\chi_1\bigg(\int_0^{\eta}\frac{1}{\br{\vvv'}^{\vth}}\frac{\exp(-H_{\eta,\eta'})}{\va'(\eta')}\ud{\eta'}
\bigg)^2\ud{\vvv}\\
\ls&\frac{1}{M^{\vth}}\lnnm{Q}{\vth,\varrho}^2\int_{\r^3}\frac{1}{\br{\vvv}^{\vth}}\bigg(\int_0^{\eta}\frac{\exp(-H_{\eta,\eta'})}{\va'(\eta')}\ud{\eta'}
\bigg)^2\ud{\vvv}\no\\
\ls&\frac{1}{M^{\vth}}\lnnm{Q}{\vth,\varrho}^2\int_{\r^3}\frac{1}{\br{\vvv}^{\vth}}\ud{\vvv}\no\\
\ls&\frac{1}{M^{\vth}}\lnnm{Q}{\vth,\varrho}^2,\no
\end{align}
since for $y=H_{\eta,\eta'}$,
\begin{align}
\abs{\int_0^{\eta}\frac{\exp(-H_{\eta,\eta'})}{\va'(\eta')}\ud{\eta'}}\ls& \abs{\int_0^{\eta}\frac{\nu\Big(\vvv'(\eta')\Big)\exp(-H_{\eta,\eta'})}{\va'(\eta')}\ud{\eta'}}
\ls\int_0^{\infty}\ue^{-y}\ud{y}=1.
\end{align}
\ \\
Case I - Type II: $\chi_2$: $\va\geq\sigma$, $m\leq\va'(\eta')\leq M$ and $\vb'(\eta')\leq M$ and $\vc'(\eta')\leq M$.\\
Since along the characteristics, $\abs{\vvv}^2$ can be bounded by
$3M^2$ and the integral domain for $\vvv$ is finite, by Cauchy's inequality, we have
\begin{align}
I_2\ls&\ue^{6\varrho M^2}\int_{\r^3}\bigg(\int_0^{\eta}\frac{Q^2}{\nu}\Big(\eta',\vvv'(\eta')\Big)\ud{\eta'}\bigg)
\bigg(\int_0^{\eta}\frac{\nu\Big(\vvv'(\eta')\Big)\exp(-2H_{\eta,\eta'})}{\va'^2(\eta')}\ud{\eta'}\bigg)\ud{\vvv}\\
\ls&\frac{\ue^{6\varrho M^2}}{m}\int_{\r^3}\bigg(\int_0^{\eta}\frac{Q^2}{\nu}\Big(\eta',\vvv'(\eta')\Big)\ud{\eta'}\bigg)
\bigg(\int_0^{\eta}\frac{\nu\Big(\vvv'(\eta')\Big)\exp(-2H_{\eta,\eta'})}{\va'(\eta')}\ud{\eta'}\bigg)\ud{\vvv}\no\\
\ls&\frac{\ue^{6\varrho M^2}}{m}\bigg(\int_{\r^3}\int_0^{\eta}\frac{Q^2}{\nu}\Big(\eta',\vvv'(\eta')\Big)\ud{\eta'}\ud{\vvv}\bigg)\no\\
\ls&\frac{M\ue^{6\varrho M^2}}{m\sigma}\bigg(\int_{\r^3}\int_0^{\eta}\frac{Q}{\nu}\Big(\eta',\vvv'\Big)\ud{\eta'}\ud{\vvv'}\bigg)\no\\
\ls&\frac{M\ue^{6\varrho M^2}}{m\sigma}\tnnm{\nu^{-\frac{1}{2}}Q}^2,\no
\end{align}
where for $y=H_{\eta,\eta'}$,
\begin{align}
\int_0^{\eta}\frac{\nu\Big(\vvv'(\eta')\Big)}{\va'(\eta')}\exp(-2H_{\eta,\eta'})\ud{\eta'}
\ud{\vvv}\ls&\int_0^{\infty}\ue^{-2y}\ud{y}=\half,
\end{align}
and the Jacobian
\begin{align}
\abs{\dfrac{\ud{\vvv}}{\ud{\vvv'}}}=\abs{\dfrac{R_1-\e\eta}{R_1-\e\eta'}\dfrac{R_2-\e\eta}{R_2-\e\eta'}\dfrac{\va'}{\va}}\ls\dfrac{\va'}{\va}\ls \dfrac{M}{\sigma}.
\end{align}
\ \\
Case I - Type III: $\chi_3$: $\va\geq\sigma$, $0\leq\va'(\eta')\leq m$ and $\vb'(\eta')\leq M$ and $\vc'(\eta')\leq M$.\\
We can directly verify the fact that
\begin{align}
0\leq\va\leq\va'(\eta'),
\end{align}
for $\eta'\leq\eta$. Then we know the integral of $\va$ is always in a small domain.
We have for $y=H_{\eta,\eta'}$,
\begin{align}
I_3\ls&\ue^{6\varrho M^2}\lnnm{Q}{\vth,\varrho}^2\int_{\r^3}\frac{\chi_3}{\br{\vvv}^{\vth}}\bigg(\int_0^{\eta}\frac{\exp(-H_{\eta,\eta'})}{\va'(\eta')}\ud{\eta'}\bigg)^2\ud{\vvv}\\
\ls&\ue^{6\varrho M^2}\lnnm{Q}{\vth,\varrho}^2\int_{\r^3}\frac{\chi_3}{\br{\vvv}^{\vth}}\bigg(\int_0^{\infty}\ue^{-y}\ud{y}
\bigg)^2\ud{\vvv}\no\\
\ls&\ue^{6\varrho M^2}\lnnm{Q}{\vth,\varrho}^2\int_{\r^3}\frac{\chi_3}{\br{\vvv}^{\vth}}\ud{\vvv}\no\\
\ls&\ue^{6\varrho M^2}m\lnnm{Q}{\vth,\varrho}^2.\no
\end{align}
\ \\
Case I - Type IV: $\chi_4$: $\va\leq\sigma$, $\va'(\eta')\leq M$ and $\vb'(\eta')\leq M$ and $\vc'(\eta')\leq M$.\\
Similar to Case I - Type III , we know the integral of $\va$ is always in a small domain.
We have for $y=H_{\eta,\eta'}$,
\begin{align}
I_4\ls&\ue^{6\varrho M^2}\lnnm{Q}{\vth,\varrho}^2\int_{\r^3}\frac{\chi_4}{\br{\vvv}^{\vth}}\bigg(\int_0^{\eta}\frac{\exp(-H_{\eta,\eta'})}{\va'(\eta')}\ud{\eta'}
\bigg)^2\ud{\vvv}\\
\ls&\ue^{6\varrho M^2}\lnnm{Q}{\vth,\varrho}^2\int_{\r^3}\frac{\chi_4}{\br{\vvv}^{\vth}}\bigg(\int_0^{\infty}\ue^{-y}\ud{y}
\bigg)^2\ud{\vvv}\no\\
\ls&\ue^{6\varrho M^2}\lnnm{Q}{\vth,\varrho}^2\int_{\r^3}\frac{\chi_4}{\br{\vvv}^{\vth}}\ud{\vvv}\no\\
\ls&\ue^{6\varrho M^2}\sigma\lnnm{Q}{\vth,\varrho}^2.\no
\end{align}
\ \\
Collecting all four types, we have
\begin{align}
I\ls\frac{M\ue^{6\varrho
M^2}}{m\sigma}\tnnm{\nu^{-\frac{1}{2}}Q}^2+\bigg(\frac{1}{M^{\vth}}+\ue^{6\varrho M^2}(m+\sigma)\bigg)\lnnm{Q}{\vth,\varrho}^2.
\end{align}
Taking $M$ sufficiently large, $m<<\ue^{-6\varrho M^2}$ and
$\sigma<<\ue^{-6\varrho M^2}$ sufficiently small, we obtain
the desired result.\\
\ \\
Case II: \\
For $\va<0$ and $\va^2+\vb^2+\vc^2\geq \vb'^2(L)+\vc'^2(L)$, $\t[Q]$ is defined in \eqref{mtt 27}.
%\begin{align}
%\t\Big[Q(\eta,\vvv)\Big]=&\bigg(\int_0^{L}\frac{Q\Big(\eta',\vvv(\eta,\vvv;\eta')\Big)}{\va'(\eta,\vvv;\eta')}
%\exp(-H_{L,\eta'}-\rr[H_{L,\eta}])\ud{\eta'}\\
%&+\int_{\eta}^{L}\frac{Q\Big(\eta',\rr[\vvv(\eta,\vvv;\eta')]\Big)}{\va'(\eta,\vvv;\eta')}\exp(\rr[H_{\eta,\eta'}])\ud{\eta'}\bigg).\no
%\end{align}
We first estimate
\begin{align}
\int_{\r^3}\ue^{2\varrho\abs{\vvv}^2}\bigg(\int_{\eta}^{L}\frac{Q\Big(\eta',\rr[\vvv(\eta')]\Big)}{\va'(\eta')}
\exp(\rr[H_{\eta,\eta'}])\ud{\eta'}\bigg)^2\ud{\vvv}.
\end{align}
We can split the above integral into four parts:
\begin{align}
II:=II_1+II_2+II_3+II_4.
\end{align}
\ \\
Case II - Type I: $\chi_1$: $M\leq\va'(\eta')$ or $M\leq\vb'(\eta')$ or $M\leq\vc'(\eta')$.\\
Similar to Case I - Type I, we have
\begin{align}
II_1\ls&\lnnm{Q}{\vth,\varrho}^2\int_{\r^3}\chi_1\bigg(\int_{\eta}^{L}\frac{1}{\br{\vvv'}^{\vth}}\frac{\exp(\rr[H_{\eta,\eta'}])}{\va'(\eta')}\ud{\eta'}
\bigg)^2\ud{\vvv}\\
\ls&\frac{1}{M^{\vth}}\lnnm{Q}{\vth,\varrho}^2\int_{\r^3}\frac{1}{\br{\vvv}^{\vth}}\bigg(\int_{\eta}^{L}\frac{\exp(\rr[H_{\eta,\eta'}])}{\va'(\eta')}\ud{\eta'}
\bigg)^2\ud{\vvv}\no\\
\ls&\frac{1}{M^{\vth}}\lnnm{Q}{\vth,\varrho}^2\int_{\r^3}\frac{1}{\br{\vvv}^{\vth}}\ud{\vvv}\no\\
\ls&\frac{1}{M^{\vth}}\lnnm{Q}{\vth,\varrho}^2,\no
\end{align}
since for $y=H_{\eta,\eta'}$,
\begin{align}
\abs{\int_{\eta}^{L}\frac{\exp(\rr[H_{\eta,\eta'}])}{\va'(\eta')}\ud{\eta'}}\ls& \abs{\int_{\eta}^{L}\frac{\nu\Big(\vvv'(\eta')\Big)\exp(\rr[H_{\eta,\eta'}])}{\va'(\eta')}\ud{\eta'}}
\ls\int^0_{-\infty}\ue^{y}\ud{y}=1.
\end{align}
\ \\
Case II - Type II: $\chi_2$: $m\leq\va'(\eta')\leq M$ and $\vb'(\eta')\leq M$ and $\vc'(\eta')\leq M$.\\
We can directly verify the fact that
\begin{align}
0\leq\va'(\eta')\leq\abs{\va},
\end{align}
for $\eta'\geq\eta$. Similar to Case I - Type II, by Cauchy's inequality, we have
\begin{align}
II_2\ls&\ue^{6\varrho M^2}\int_{\r^3}\bigg(\int_0^{\eta}\frac{Q^2}{\nu}\Big(\eta',\vvv(\eta')\Big)\ud{\eta'}\bigg)
\bigg(\int_{\eta}^{L}\frac{\nu\Big(\vvv'(\eta')\Big)\exp(2\rr[H_{\eta,\eta'}])}{\va'^2(\eta')}\ud{\eta'}\bigg)\ud{\vvv}\\
\ls&\frac{\ue^{6\varrho M^2}}{m}\int_{\r^3}\bigg(\int_{\eta}^{L}\frac{Q^2}{\nu}\Big(\eta',\vvv(\eta')\Big)\ud{\eta'}\bigg)
\bigg(\int_{\eta}^{L}\frac{\nu\Big(\vvv'(\eta')\Big)\exp(2\rr[H_{\eta,\eta'}])}{\va'(\eta')}\ud{\eta'}\bigg)\ud{\vvv}\no\\
\ls&\frac{\ue^{6\varrho M^2}}{m}\bigg(\int_{\r^3}\int_{\eta}^{L}\frac{Q^2}{\nu}\Big(\eta',\vvv(\eta')\Big)\ud{\eta'}\ud{\vvv}\bigg)\no\\
\ls&\frac{\ue^{6\varrho M^2}}{m}\bigg(\int_{\r^3}\int_{\eta}^{L}\frac{Q^2}{\nu}\Big(\eta',\vvv(\eta')\Big)\ud{\eta'}\ud{\vvv'}\bigg)\no\\
\ls&\frac{\ue^{6\varrho M^2}}{m}\tnnm{\nu^{-\frac{1}{2}}Q}^2,\no
\end{align}
where for $y=H_{\eta,\eta'}$,
\begin{align}
\int_0^{\eta}\frac{\nu\Big(\vvv'(\eta')\Big)}{\va'(\eta')}\exp(-2H_{\eta,\eta'})\ud{\eta'}
\ud{\vvv}\ls&\int_0^{\infty}\ue^{-2y}\ud{y}=\half,
\end{align}
and the Jacobian
\begin{align}
\abs{\dfrac{\ud{\vvv}}{\ud{\vvv'}}}=\abs{\dfrac{R_1-\e\eta}{R_1-\e\eta'}\dfrac{R_2-\e\eta}{R_2-\e\eta'}\dfrac{\va'}{\va}}\ls\abs{\dfrac{\va'}{\va}}\ls 1.
\end{align}
\ \\
Case II - Type III: $\chi_3$: $0\leq\va'(\eta')\leq m$, $\vb'(\eta')\leq M$, $\vc'(\eta')\leq M$ and $\eta'-\eta\geq\sigma$.\\
We know
\begin{align}
H_{\eta,\eta'}\leq-\frac{\sigma}{m}.
\end{align}
Then after substitution $y=H_{\eta,\eta'}$, the integral is not from zero, but from
$-\dfrac{\sigma}{m}$. In detail, we have
\begin{align}
II_3\ls&\ue^{6\varrho M^2}\lnnm{Q}{\vth,\varrho}^2\int_{\r^3}\frac{\chi_3}{\br{\vvv}^{\vth}}\bigg(\int_{\eta}^{L}\frac{\exp(\rr[H_{\eta,\eta'}])}{\va'(\eta')}\ud{\eta'}
\bigg)^2\ud{\vvv}\\
\ls&\ue^{6\varrho M^2}\lnnm{Q}{\vth,\varrho}^2\int_{\r^3}\frac{\chi_3}{\br{\vvv}^{\vth}}\bigg(\int_{\eta}^{L}\frac{\nu\Big(\vvv'(\eta')\Big)
\exp(\rr[H_{\eta,\eta'}])}{\va'(\eta')}\ud{\eta'}
\bigg)^2\ud{\vvv}\no\\
\ls&\ue^{6\varrho M^2}\lnnm{Q}{\vth,\varrho}^2\int_{\r^3}\frac{\chi_3}{\br{\vvv}^{\vth}}\bigg(\int^{-\frac{\sigma}{m}}_{-\infty}\ue^{y}\ud{y}\bigg)^2\ud{\vvv}\no\\
\ls&\ue^{6\varrho M^2}\ue^{-\frac{2\sigma}{m}}\lnnm{Q}{\vth,\varrho}^2\int_{\r^3}\frac{\chi_3}{\br{\vvv}^{\vth}}\ud{\vvv}\no\\
\ls&\ue^{6\varrho M^2}\ue^{-\frac{2\sigma}{m}}\lnnm{Q}{\vth,\varrho}^2.\no
\end{align}
\ \\
Case II - Type IV: $\chi_4$: $0\leq\va'(\eta')\leq m$, $\vb'(\eta')\leq M$, $\vc'(\eta')\leq M$ and $\eta'-\eta\leq\sigma$.\\
For $\eta'\leq\eta$ and $\eta'-\eta\leq\sigma$, we have
\begin{align}
\va=&\sqrt{\va'^2(\eta')+\vb'^2(\eta')+\vc'^2(\eta')-\vb^2-\vc^2}\\
=&\sqrt{\va'^2(\eta')+\vb'^2(\eta')+\vc'^2(\eta')-\vb'^2(\eta')\ue^{2W_1(\eta)-2W_1(\eta')}-\vc'^2(\eta')\ue^{2W_2(\eta)-2W_2(\eta')}}\no\\
=&\sqrt{\va'^2(\eta')+\vb'^2(\eta')+\vc'^2(\eta')
-\vb'^2(\eta')\left(\frac{R_1-\e\eta'}{R_1-\e\eta}\right)^2-\vc'^2(\eta')\left(\frac{R_2-\e\eta'}{R_2-\e\eta}\right)^2}\no\\
\ls&\sqrt{\va'^2(\eta')+2(R_1+R_2) M^2\e(\eta'-\eta)}\no\\
\ls&
\sqrt{m^2+\e M^2\sigma}\leq C(m+M\sqrt{\e\sigma}).\no
\end{align}
Therefore, the integral domain for $\va$ is very small. We have the estimate for $y=H_{\eta,\eta'}$
\begin{align}
II_4\ls&\ue^{6\varrho M^2}\lnnm{Q}{\vth,\varrho}^2\int_{\r^3}\frac{\chi_4}{\br{\vvv}^{\vth}}\bigg(\int_{\eta}^{L}\frac{\exp(\rr[H_{\eta,\eta'}])}{\va'(\eta')}\ud{\eta'}
\bigg)^2\ud{\vvv}\\
\ls&\ue^{6\varrho M^2}\lnnm{Q}{\vth,\varrho}^2\int_{\r^3}\frac{\chi_4}{\br{\vvv}^{\vth}}\bigg(\int^0_{-\infty}\ue^{y}\ud{y}
\bigg)^2\ud{\vvv}\no\\
\ls&\ue^{6\varrho M^2}\lnnm{Q}{\vth,\varrho}^2\int_{\r^3}\frac{\chi_4}{\br{\vvv}^{\vth}}\ud{\vvv}\no\\
\ls&\ue^{6\varrho M^2}(m+\sqrt{\e\sigma})\lnnm{Q}{\vth,\varrho}^2.\no
\end{align}
\ \\
Collecting all four types, we have
\begin{align}
II\leq C\frac{\ue^{6\varrho
M^2}}{m}\tnnm{\nu^{-\frac{1}{2}}Q}^2+C\bigg(\frac{1}{M^{\vth}}+\ue^{6\varrho M^2}\Big(\ue^{-\frac{2\sigma}{m}}+m+\sqrt{\e\sigma}\Big)\bigg)\lnnm{Q}{\vth,\varrho}^2.
\end{align}
Taking $M$ sufficiently large, $\sigma<<\ue^{-6\varrho M^2}$ sufficiently small and
$m<<\min\{\sigma,\ue^{-4\varrho M^2}\}$ sufficiently small, we obtain
the desired result.

Note that we have the decomposition
\begin{align}
&\int_0^{L}\frac{Q\Big(\eta',\vvv(\eta')\Big)}{\va'(\eta')}
\exp(-H_{L,\eta'}-\rr[H_{L,\eta}])\ud{\eta'}\\
=&\int_0^{\eta}\frac{Q\Big(\eta',\vvv(\eta')\Big)}{\va'(\eta')}
\exp(-H_{L,\eta'}-\rr[H_{L,\eta}])\ud{\eta'}+\int_{\eta}^{L}\frac{Q\Big(\eta',\vvv(\eta')\Big)}{\va'(\eta')}
\exp(-H_{L,\eta'}-\rr[H_{L,\eta}])\ud{\eta'}.\no
\end{align}
Then this term can actually be bounded using the techniques in Case I and Case II.\\
\ \\
Case III: \\
For $\va<0$ and $\va^2+\vb^2+\vc^2\leq \vb'^2(L)+\vc'^2(L)$, $\t[Q]$ is defined in \eqref{mtt 28}.
%\begin{align}
%\t\Big[Q(\eta,\vvv)\Big]=&\bigg(\int_0^{\eta^+}\frac{Q\Big(\eta',\vvv(\eta,\vvv;\eta')\Big)}{\va'(\eta,\vvv;\eta')}
%\exp(-H_{\eta^+,\eta'}-\rr[H_{\eta^+,\eta}])\ud{\eta'}\\
%&+\int_{\eta}^{\eta^+}\frac{Q\Big(\eta',\rr[\vvv(\eta,\vvv;\eta')]\Big)}{\va'(\eta,\vvv;\eta')}\exp(\rr[H_{\eta,\eta'}])\ud{\eta'}\bigg).\no
%\end{align}
This is a combination of Case I and Case II, so we omit the proof here.
\end{proof}

\begin{lemma}[$L^{\infty}$ estimate of $\g-\g_L$]\label{Milne lemma 11}
Assume \eqref{Milne bound} holds. Then the solution $\g(\eta,\vvv)$ to the $\e$-Milne problem with geometric correction \eqref{Milne}
satisfies for $\varrho\geq0$ and $\vth>3$,
\begin{align}
\lnnm{\g-\g_{L}}{\vth,\varrho}\ls 1+\tnnm{\g-\g_{L}}.
\end{align}
\end{lemma}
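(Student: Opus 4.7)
The plan is to upgrade the $L^2$ bound on $\tilde\g := \g - \g_L$ from Lemma \ref{Milne lemma 6} to an $L^\infty_{\vth,\vrh}$ bound via the standard $L^2$-$L^\infty$ bootstrap along the characteristics introduced in Section \ref{mtt section 01}. First I would reformulate the problem: since $\g_L \in \nk$ forces $\ll[\g_L] = 0$, and since $q_{1,L} = 0$ makes $\g_L$ even in $\va$ (so that specular reflection at $\eta = L$ is preserved by $\tilde\g$), the function $\tilde\g$ satisfies the same transport-Boltzmann equation as $\g$ but with source
\begin{equation*}
\tilde\ss := \ss - G_1\bigl(\vb^2\p_\va\g_L - \va\vb\p_\vb\g_L\bigr) - G_2\bigl(\vc^2\p_\va\g_L - \va\vc\p_\vc\g_L\bigr),
\end{equation*}
and in-flow data $\tilde h := (h - \g_L)|_{\va>0}$. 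A direct computation on each basis vector $\ee_k$ shows the geometric-correction pieces are pointwise dominated by $C\e\,p(\vvv)\m^{\frac12}(\vvv)$ for a polynomial $p$, so $\lnnm{\nu^{-1}\tilde\ss}{\vth,\vrh}\ls 1$ and $\lnmh{\tilde h}\ls 1$ uniformly in $\e$.

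Next I would write the characteristic representation $\tilde\g = \k[\tilde h] + \t[K\tilde\g + \tilde\ss]$ and iterate once to obtain
\begin{equation*}
\tilde\g = \k[\tilde h] + \t[\tilde\ss] + \t[K\k[\tilde h]] + \t[K\t[\tilde\ss]] + \t[K\t[K\tilde\g]].
\end{equation*}
Lemma \ref{Milne lemma 8} gives $\lnnm{\k[\tilde h]}{\vth,\vrh} \ls 1$. Combining Lemma \ref{Milne lemma 9} with the mapping property $\lnnm{\nu^{-1}K f}{\vth,\vrh} \ls \lnnm{f}{\vth,\vrh}$ (which follows immediately from Lemma \ref{Regularity lemma 1'}) controls the three middle terms by $O(1)$. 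The entire proof then reduces to estimating $\t[K\t[K\tilde\g]]$.

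The hard part — the double-$K$ term — is handled by Lemma \ref{Milne lemma 10} applied with $Q = K\tilde\g$. That yields
\begin{equation*}
\ltnm{\t[K\tilde\g]}{\vrh} \le C(\d)\tnnm{\snn K\tilde\g} + \d\,\lnnm{K\tilde\g}{\vth,\vrh} \ls C(\d)\tnnm{\tilde\g} + \d\,\lnnm{\tilde\g}{\vth,\vrh},
\end{equation*}
using the $L^2$-boundedness of $\snn K$ and the $L^\infty_{\vth,\vrh}$-boundedness of $K$. Then, for each fixed $(\eta,\vvv')$, Cauchy-Schwarz in $\vvv''$ combined with the Gaussian decay of $k$ from Lemma \ref{Regularity lemma 1} (valid because $\vrh < \tfrac14$) converts the $\ltnm{\cdot}{\vrh}$ bound into a pointwise bound on $K\t[K\tilde\g]$ integrable against the $\br\vvv^\vth\ue^{\vrh|\vvv|^2}$ weight. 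Feeding this back through Lemma \ref{Milne lemma 9} gives
\begin{equation*}
\lnnm{\t[K\t[K\tilde\g]]}{\vth,\vrh} \ls C(\d)\tnnm{\tilde\g} + \d\,\lnnm{\tilde\g}{\vth,\vrh},
\end{equation*}
after which choosing $\d$ small and absorbing the last term into the left-hand side yields the claim. The main obstacle is the last step: extracting a genuine $\tnnm{\tilde\g}$ from the triple-nested integral while keeping the residual fraction of $\lnnm{\tilde\g}{\vth,\vrh}$ arbitrarily small, which is precisely what the delicate region-splitting and Jacobian argument of Lemma \ref{Milne lemma 10} is designed to provide; the polynomial-in-$\vvv$ geometric correction terms must be absorbed by the Gaussian factor $\m^{\frac12}$ built into the weight throughout.
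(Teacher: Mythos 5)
Your reduction of the problem is sound and coincides with the paper's: $u:=\g-\g_{L}$ solves \eqref{Milne} with boundary data $p=h-\g_{L}$ and source $\tilde S=S+\g_{2,L}G_1\m^{\frac{1}{2}}\va\vb+\g_{3,L}G_2\m^{\frac{1}{2}}\va\vc$ (your formula for $\tilde\ss$ reduces to exactly this because $\g_{L}\in\nk$ with $\g_{1,L}=0$), with $\lnmh{p}\ls1$, $\lnnm{\nu^{-1}\tilde S}{\vth,\vrh}\ls1$, and the specular condition preserved since $\g_L$ is even in $\va$; the $O(1)$ terms $\k[p]$, $\t[\tilde S]$, $\t[K\k[p]]$, $\t[K\t[\tilde S]]$ are also fine. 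The gap is in the single sentence that carries the whole proof: the claim that, from $\ltnm{\t[K[u]]}{\vrh}\ls C(\d)\tnnm{u}+\d\lnnm{u}{\vth,\vrh}$, Cauchy--Schwarz in the inner velocity plus the Gaussian decay of $k$ gives a pointwise bound on $K\t[K[u]]$ against the weight $\br{\vvv}^{\vth}\ue^{\vrh\abs{\vvv}^2}$. That step requires $\br{\vvv}^{2\vth}\ue^{2\vrh\abs{\vvv}^2}\int_{\r^3}\abs{k(\vuu,\vvv)}^2\ue^{-4\vrh\abs{\vuu}^2}\ud\vuu\ls1$ uniformly in $\vvv$. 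But the exponentials in $k$ decay only in $\vuu-\vvv$ and in $(\abs{\vuu}^2-\abs{\vvv}^2)/\abs{\vuu-\vvv}$, not in $\abs{\vvv}$ itself: substituting $\vuu=\vvv+\vsi$ one gets at best a factor $\ue^{-2\vrh\abs{\vvv}^2}$ from the weights. So the bound holds for $\vrh>0$ with a constant of size $\sup_{\vvv}\br{\vvv}^{2\vth}\ue^{-2\vrh\abs{\vvv}^2}$, which blows up as $\vrh\to0^{+}$, and it fails outright at $\vrh=0$, where $\int\abs{k(\vuu,\vvv)}^2\ud\vuu$ decays only like a negative power of $\br{\vvv}$ and cannot absorb $\br{\vvv}^{2\vth}$ with $\vth>3$. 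Since the lemma is stated for all $\vrh\geq0$ and \eqref{Milne bound} at $\vrh=0$ does not imply it at any $\vrh>0$, this case cannot be bypassed.

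This is precisely where the paper inserts the device your outline omits: a descent in the polynomial weight rather than a second Duhamel iteration. Working with the single mild formulation $u=\k[p]+\t[K[u]]+\t[\tilde S]$, the paper uses \eqref{mtt 74}, $\lnnm{K[u]}{\vth,\vrh}\ls\lnnm{u}{\vth-1,\vrh}$, to lower $\vth$ one unit at a time until $\vth=0$, where \eqref{mtt 75}, $\lnnm{K[u]}{0,\vrh}\leq\ltnm{u}{\vrh}$, is legitimate because there is no polynomial weight left to absorb; then Lemma \ref{Milne lemma 10} supplies $\ltnm{u}{\vrh}\ls\lnmh{p}+C(\d)\tnnm{u}+\d\lnnm{K[u]}{\vth,\vrh}+\lnnm{\nu^{-1}\tilde S}{\vth,\vrh}$, and absorbing $\d\lnnm{K[u]}{\vth,\vrh}$ closes the argument. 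Your double-iteration skeleton can be repaired, but only by treating the double-$K$ term either through this $\vth$-descent or through a Guo-type velocity-region splitting for the weighted kernel $k(\vuu,\vvv)\br{\vvv}^{\vth}\ue^{\vrh\abs{\vvv}^2}/\big(\br{\vuu}^{\vth}\ue^{\vrh\abs{\vuu}^2}\big)$ (large $\abs{\vvv}$, large relative velocity, short time, compact region with the Jacobian change of variables), as in the paper's remainder estimates; as written, the crucial estimate is asserted rather than proved and is false at $\vrh=0$.
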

\begin{proof}
Define $u=\g-\g_{L}$. Then $u$ satisfies the equation
\begin{align}
\left\{
\begin{array}{l}\displaystyle
\va\frac{\p u}{\p\eta}+G_1\bigg(\vb^2\dfrac{\p
u}{\p\va}-\va\vb\dfrac{\p u}{\p\vb}\bigg)+G_2\bigg(\vc^2\dfrac{\p
u}{\p\va}-\va\vc\dfrac{\p u}{\p\vc}\bigg)+\nu u-K[u]
=\tilde S,\\\rule{0ex}{1.5em} u(0,\vvv)=p(\vvv)\ \
\text{for}\ \ \va>0,\\\rule{0ex}{1.5em}
u(L,\vvv)=u(L,\rr[\vvv]),
\end{array}
\right.
\end{align}
where
\begin{align}
\tilde S&=S+\g_{2,L}G_1\m^{\frac{1}{2}}\va\vb+\g_{3,L}G_2\m^{\frac{1}{2}}\va\vc,\\
p&=h(\vvv)-\g_{L}(\vvv).
\end{align}
The using the operators $\k$ and $\t$ defined in \eqref{mtt 26}, \eqref{mtt 27} and \eqref{mtt 28}, we can write $u=\k[p]+\t\Big[K[u]\Big]+\t[\tilde S]$.
Based on Lemma \ref{Milne lemma 8}, Lemma \ref{Milne lemma 9} and Lemma \ref{Milne lemma 10}, we have
\begin{align}\label{mtt 77}
\ltnm{u}{\varrho}\ls&\ltnm{\k[p]}{\varrho}+\ltnm{\t\Big[K[u]\Big]}{\varrho}+\ltnm{\t[\tilde S]}{\varrho}\\
\ls&\lnnmv{\k[p]}+\ltnm{\t\Big[K[u]\Big]}{\varrho}+\lnnmv{\t[\tilde S]}\no\\
\ls&\lnmh{p}+
C(\d)\tnnm{\nu^{-\frac{1}{2}}K[u]}+\d\lnnm{K[u]}{\vth,\varrho}+\lnnm{\nu^{-1}\tilde S}{\vth,\varrho}\no\\
\ls&
\lnmh{p}+
C(\d)\tnnm{u}+\d\lnnm{K[u]}{\vth,\varrho}+\lnnm{\nu^{-1}\tilde S}{\vth,\varrho},\no
\end{align}
where \cite[Section 3.5]{Glassey1996} verifies
\begin{align}
\tnnm{\nu^{-\frac{1}{2}}K[u]}\ls \tnnm{K[u]}\ls&\tnnm{u}.
\end{align}
In \cite[Lemma 3.3.1]{Glassey1996}, it is shown that
\begin{align}
\lnnm{K[u]}{\vth,\varrho}\leq&\lnnm{u}{\vth-1,\varrho},\label{mtt 74}\\
\lnnm{K[u]}{0,\varrho}\leq&\ltnm{u}{\varrho}.\label{mtt 75}
\end{align}
Since $u=\k[p]+\t\Big[K[u]\Big]+\t[\tilde S]$, by Lemma \ref{Milne lemma 8} and Lemma \ref{Milne lemma 9}, using \eqref{mtt 74}, we can estimate
\begin{align}\label{mtt 76}
\lnnm{u}{\vth,\varrho}\ls&\lnnm{\k[p]}{\vth,\varrho}+\lnnm{\t\Big[K[u]\Big]}{\vth,\varrho}+\lnnm{\t[\tilde S]}{\vth,\varrho}\\
\ls&\lnmh{p}+\lnnm{K[u]}{\vth,\varrho}+\lnnm{\nu^{-1}\tilde S}{\vth,\varrho}\no\\
\ls&\lnmh{p}+\lnnm{u}{\vth-1,\varrho}+\lnnm{\nu^{-1}\tilde S}{\vth,\varrho}.\no
\end{align}
Note that now we have $\lnnm{u}{\vth-1,\varrho}$. Hence, it is available to redo the above estimate \eqref{mtt 76} for $\vth-1$. This procedure can keep going until the zeroth order. Then using \eqref{mtt 75} and \eqref{mtt 77}, we obtain
\begin{align}\label{mtt 78}
\lnnm{u}{\vth,\varrho}\ls& \lnmh{p}+\lnnm{K[u]}{\vth,\varrho}+\lnnm{\nu^{-1}\tilde S}{\vth,\varrho}\\
\ls&\lnmh{p}+\lnnm{K[u]}{0,\varrho}+\lnnm{\nu^{-1}\tilde S}{\vth,\varrho}\no\\
\ls&\lnmh{p}+\ltnm{u}{\varrho}+\lnnm{\nu^{-1}\tilde S}{\vth,\varrho}\no\\
\ls&\lnmh{p}+C(\d)\tnnm{u}+\d\lnnm{K[u]}{\vth,\varrho}+\lnnm{\nu^{-1}\tilde S}{\vth,\varrho}.\no
\end{align}
Therefore, for $\d$ sufficiently small, absorbing $\d\lnnm{K[u]}{\vth,\varrho}$ into the right-hand side of the first inequality of \eqref{mtt 78}, we get
\begin{align}\label{mtt 79}
\lnnm{K[u]}{\vth,\varrho}\ls\lnmh{p}+\tnnm{u}+\lnnm{\nu^{-1}\tilde S}{\vth,\varrho}.
\end{align}
Therefore, inserting \eqref{mtt 79} into the first inequality of \eqref{mtt 78}, we have
\begin{align}
\lnnm{u}{\vth,\varrho}\ls\lnmh{p}+\tnnm{u}+\lnnm{\nu^{-1}\tilde S}{\vth,\varrho}.
\end{align}
In particular, using Lemma \ref{Milne lemma 6} and \eqref{Milne bound}, we know
\begin{align}
\lnmh{p}&\ls \lnmh{h}+\lnm{\g_L}{\vth,\varrho}\ls1,\\
\lnnm{\nu^{-1}\tilde S}{\vth,\varrho}&\ls  \lnnm{\nu^{-1}S}{\vth,\varrho}+\e\Big(\abs{\g_{2,L}}+\abs{\g_{3,L}}\Big)\ls 1.
\end{align}
Then our result naturally follows.
\end{proof}

\begin{lemma}[$L^{\infty}$ well-posedness of $\g$]\label{Milne lemma 12}
Assume \eqref{Milne bound} holds. Then there exists a unique solution $\g(\eta,\vvv)$ to the $\e$-Milne problem with geometric correction
\eqref{Milne} satisfying for $\varrho\geq0$ and $\vth>3$,
\begin{align}
\lnnm{\g-\g_{L}}{\vth,\varrho}\ls 1.
\end{align}
\end{lemma}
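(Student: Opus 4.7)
The plan is to combine the two ingredients already assembled: the $L^2$ a priori estimate for $\g - \g_L$ from Lemma \ref{Milne lemma 6} (extended to general $\ss$ by Lemma \ref{Milne lemma 7}), and the $L^\infty$-from-$L^2$ bootstrap in Lemma \ref{Milne lemma 11}. Existence and uniqueness in the $L^2$ sense are already supplied by the standard iteration scheme quoted after Lemma \ref{linearized operator lemma} and confirmed in Lemma \ref{Milne lemma 7}; the weighted $L^\infty$ membership of that solution then follows from the bootstrap, so the only nontrivial content here is to chain the two bounds.

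\textbf{Main steps.} First I would invoke Lemma \ref{Milne lemma 7} (equivalently Lemma \ref{Milne lemma 6} when $\ss\in\nk^\perp$) to produce the unique solution $\g$ together with $\g_L = \sum_{k=0}^{4} g_{k,L}\ee_k \in \nk$ satisfying $\abs{g_{k,L}}\ls 1$ and $\tnnm{\g-\g_L}\ls 1$. Second, I would feed this $L^2$ control directly into Lemma \ref{Milne lemma 11}, which asserts
\[
\lnnm{\g - \g_L}{\vth,\varrho} \ls 1 + \tnnm{\g - \g_L}
\]
for any $\vth > 3$ and $\varrho \geq 0$. Combining the two estimates immediately yields $\lnnm{\g-\g_L}{\vth,\varrho}\ls 1$, which is exactly the claim.

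\textbf{Where the real work sits, and the minor point to check.} The substantive analysis has already been carried out in Lemmas \ref{Milne lemma 6} and \ref{Milne lemma 11}: the former uses the macro--micro decomposition $\g = w + q$, the coercivity of $\ll$, and the ODE system \eqref{mtt 35} for the macroscopic components (culminating in Lemma \ref{Milne lemma 5} through the $L^2$ decay of $w$ in Lemma \ref{Milne lemma 4}) to produce $\g_L$ and control $\g-\g_L$ in $L^2$; the latter uses the mild formulation along the three characteristic regimes of Section \ref{mtt section 01} together with the operator estimates in Lemmas \ref{Milne lemma 8}--\ref{Milne lemma 10} and the iteration in $\vth$ that produces \eqref{mtt 79} to close the singular-kernel $L^\infty$ bootstrap. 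No new obstacle arises in the synthesis. The only point that deserves a quick check is that the perturbed source $\tilde S = S + g_{2,L}G_1\m^{\frac{1}{2}}\va\vb + g_{3,L}G_2\m^{\frac{1}{2}}\va\vc$ and the modified in-flow data $p = h - \g_L$ appearing in the proof of Lemma \ref{Milne lemma 11} remain $O(1)$ in the relevant weighted norms; this is immediate from $\abs{g_{k,L}}\ls 1$, $\abs{G_i}\ls\e$, the Gaussian weight of $\m^{\frac{1}{2}}$, and the hypothesis \eqref{Milne bound}. Hence the chaining is clean and no additional smallness needs to be extracted.
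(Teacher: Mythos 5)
Your proposal is correct and follows essentially the same route as the paper, whose proof of this lemma is simply to combine the $L^2$ bound on $\g-\g_L$ (Lemma \ref{Milne lemma 6}, extended via Lemma \ref{Milne lemma 7}) with the bootstrap $\lnnm{\g-\g_L}{\vth,\varrho}\ls 1+\tnnm{\g-\g_L}$ of Lemma \ref{Milne lemma 11}. Your additional check that $\tilde S$ and $p=h-\g_L$ stay $O(1)$ is a sound observation, but it is already handled inside the proof of Lemma \ref{Milne lemma 11}, so nothing further is needed.
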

\begin{proof}
Based on Lemma \ref{Milne lemma 6} and Lemma \ref{Milne lemma 11}, this is obvious.
\end{proof}

\begin{theorem}[$L^{\infty}$ well-posedness of $\gg$]\label{Milne theorem 2}
Assume \eqref{Milne bound} holds. Then there exists a unique solution $\gg(\eta,\vvv)$ to the $\e$-Milne problem with geometric correction \eqref{Milne transform} satisfying for
$\varrho\geq0$ and integer $\vth\geq3$,
\begin{align}
\lnnm{\gg}{\vth,\varrho}\ls 1.
\end{align}
\end{theorem}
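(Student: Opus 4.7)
The plan is to directly transplant the $L^{\infty}$ machinery developed for $\g$ (Lemma \ref{Milne lemma 11} and Lemma \ref{Milne lemma 12}) onto the modified solution $\gg$. The key point is that $\gg$ satisfies an equation of exactly the same form as \eqref{Milne}, but with boundary data $h-\tilde h$ in place of $h$, and with the crucial structural property $\gg_{L}=0$ built in by construction in Theorem \ref{Milne theorem 1}. Existence and uniqueness of $\gg$ (as an $L^2$ solution) together with the bound $\tnnm{\gg}\ls 1$ is already provided by Theorem \ref{Milne theorem 1}, so only the $L^{\infty}$ bound remains.

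First I would check that the boundary data for the $\gg$ equation still fits the hypothesis $\lnmh{h-\tilde h}\ls 1$ needed to invoke Lemma \ref{Milne lemma 8}. Since $\tilde h\in\nk$ and, by the proof of Theorem \ref{Milne theorem 1}, $\tilde h$ is obtained by applying $\mathcal{M}^{-1}$ to $\g_L$, whose coefficients were bounded in Lemma \ref{Milne lemma 5}, the coefficients $\tilde D_j$ are $O(1)$. Hence $\tilde h$ has Gaussian decay in $\vvv$ with a harmless polynomial prefactor, so $\lnmh{h-\tilde h}\ls \lnmh{h}+\lnm{\tilde h}{\vth,\varrho}\ls 1$ for all $\vth\geq 0$ and $0\leq\varrho<1/4$.

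Next I would rerun the argument of Lemma \ref{Milne lemma 11} verbatim on $u:=\gg-\gg_L=\gg$. The integral representation $u=\k[h-\tilde h]+\t[K[u]]+\t[S]$ combined with the boundary-term estimate (Lemma \ref{Milne lemma 8}), the bulk-term estimate (Lemma \ref{Milne lemma 9}), and the interpolated bulk-term estimate (Lemma \ref{Milne lemma 10}), together with the iteration in $\vth$ that uses $\lnnm{K[u]}{\vth,\vrh}\ls \lnnm{u}{\vth-1,\vrh}$ and $\lnnm{K[u]}{0,\vrh}\ls\ltnm{u}{\vrh}$, yields
\begin{align*}
\lnnm{\gg}{\vth,\vrh}\ls \lnmh{h-\tilde h}+\lnnm{\nu^{-1}S}{\vth,\vrh}+\tnnm{\gg}\ls 1+\tnnm{\gg}.
\end{align*}
The advantage over Lemma \ref{Milne lemma 12} is that the extra source terms $\g_{2,L}G_1\m^{\frac{1}{2}}\va\vb$ and $\g_{3,L}G_2\m^{\frac{1}{2}}\va\vc$ appearing in the proof of Lemma \ref{Milne lemma 11} are absent here, since $\gg_L=0$; no new work is needed to handle them.

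Finally I would close the argument by applying the $L^2$ bound $\tnnm{\gg}\ls 1$ from Theorem \ref{Milne theorem 1} to the right-hand side, which delivers $\lnnm{\gg}{\vth,\vrh}\ls 1$. Uniqueness of $\gg$ in this weighted $L^{\infty}$ class reduces to uniqueness in $L^2$: any two $L^{\infty}$ solutions sharing the same boundary data differ by a solution of the homogeneous problem, which, being in $L^2$, must vanish by the uniqueness part of Theorem \ref{Milne theorem 1}. I do not anticipate a serious obstacle; the only place that needs care is verifying that $\tilde h$ has enough velocity decay to control $\lnmh{h-\tilde h}$, and this follows from the explicit Gaussian-times-polynomial form of elements of $\nk$.
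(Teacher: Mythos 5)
Your proposal is correct and follows essentially the same route as the paper, whose proof of Theorem \ref{Milne theorem 2} simply combines Theorem \ref{Milne theorem 1} (which gives $\gg_L=0$ and $\tnnm{\gg}\ls 1$) with the $L^{\infty}$ machinery of Lemmas \ref{Milne lemma 8}--\ref{Milne lemma 12} applied to the equation for $\gg$ with boundary data $h-\tilde h$. Your added checks — the bound $\lnmh{h-\tilde h}\ls 1$ via the $O(1)$ coefficients of $\tilde h\in\nk$, and the observation that the extra source terms $\g_{2,L}G_1\m^{\frac{1}{2}}\va\vb$, $\g_{3,L}G_2\m^{\frac{1}{2}}\va\vc$ drop out since $\gg_L=0$ — are exactly the details the paper leaves implicit.
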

\begin{proof}
Based on Theorem \ref{Milne theorem 1} and Lemma \ref{Milne lemma 12}, this is obvious.
\end{proof}

%%%%%%%%%%%%%%%%%%%%%%%%%%%%%%%%%%%%%%%%%%%%%%%%%%%%%%%%%%%%%%%%%%%%%%%%%%%%%%%%%%%%%
\subsubsection{$L^{\infty}$ Decay}
%%%%%%%%%%%%%%%%%%%%%%%%%%%%%%%%%%%%%%%%%%%%%%%%%%%%%%%%%%%%%%%%%%%%%%%%%%%%%%%%%%%%%

Now we intend to show the $L^{\infty}$ decay of solution to the equation \eqref{Milne transform}. Define $U=\ue^{K_0\eta}\gg$. Then $U$ satisfies the equation
\begin{align}\label{exponential}
\left\{
\begin{array}{l}\displaystyle
\va\frac{\p U}{\p\eta}+G_1\bigg(\vb^2\dfrac{\p
U}{\p\va}-\va\vb\dfrac{\p U}{\p\vb}\bigg)+G_2\bigg(\vc^2\dfrac{\p
U}{\p\va}-\va\vc\dfrac{\p U}{\p\vc}\bigg)+\ll[U]
=K_0\va U+\ue^{K_0\eta}\ss,\\\rule{0ex}{1.5em}
U(0,\vvv)=h(\vvv)-\tilde h(\vvv)\ \ \text{for}\ \
\va>0,\\\rule{0ex}{1.5em}
U(L,\vvv)=U(L,\rr[\vvv])
\end{array}
\right.
\end{align}

\begin{lemma}[decay estimate]\label{Milne lemma 14}
Assume \eqref{Milne bound} holds. Then there exists $0<K_0<K$ such that for
$\varrho\geq0$ and $\vth>3$
\begin{align}
\lnnm{U}{\vth,\varrho}\ls 1+\tnnm{U}.
\end{align}
\end{lemma}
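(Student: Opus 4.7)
The plan is to adapt the proof of Lemma \ref{Milne lemma 11} to the exponentially-weighted equation \eqref{exponential}. Writing $\ll[U]=\nu U-K[U]$, the equation becomes
\begin{align*}
\va\p_\eta U+G_1\bigg(\vb^2\p_\va U-\va\vb\p_\vb U\bigg)+G_2\bigg(\vc^2\p_\va U-\va\vc\p_\vc U\bigg)+\nu U=\tilde Q,
\end{align*}
where $\tilde Q:=K[U]+K_0\va U+\ue^{K_0\eta}\ss$ and the boundary data is $\tilde p:=h-\tilde h$. By the characteristic formulation from Section \ref{mtt section 01}, one has the mild representation $U=\k[\tilde p]+\t[\tilde Q]$.

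Next I would invoke Lemma \ref{Milne lemma 8}, Lemma \ref{Milne lemma 9}, and Lemma \ref{Milne lemma 10} exactly as in the proof of Lemma \ref{Milne lemma 11}. The boundary contribution $\lnnm{\k[\tilde p]}{\vth,\varrho}\ls\lnmh{\tilde p}\ls 1$ is controlled using $\lnmh{h}\ls 1$ together with the boundedness of $\tilde h\in\nk$ furnished by Theorem \ref{Milne theorem 1}. The bulk term splits into three pieces. The forcing $\ue^{K_0\eta}\ss$ is handled via $\lnnm{\nu^{-1}\ue^{K_0\eta}\ss}{\vth,\varrho}\ls\lnnm{\ue^{K_0\eta}\ss}{\vth,\varrho}\ls 1$ by \eqref{Milne bound}. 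The new drift-type term $K_0\va U$ satisfies
\begin{align*}
\lnnm{\nu^{-1}K_0\va U}{\vth,\varrho}\ls K_0\lnnm{U}{\vth,\varrho},
\end{align*}
since $\abs{\va}\ls\br{\vvv}\ls\nu$; this piece will be absorbed into the left-hand side once $K_0$ is chosen sufficiently small. The operator $K[U]$ is then handled by the same bootstrap argument used in Lemma \ref{Milne lemma 11}: iteratively decreasing the velocity-weight index $\vth$ using $\lnnm{K[U]}{\vth,\varrho}\ls\lnnm{U}{\vth-1,\varrho}$ down to $\vth=0$, then invoking $\lnnm{K[U]}{0,\varrho}\ls\ltnm{U}{\varrho}$ and applying Lemma \ref{Milne lemma 10} to control $\ltnm{\t[K[U]]}{\varrho}$ by $C(\d)\tnnm{U}+\d\lnnm{K[U]}{\vth,\varrho}$.

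Putting these together in analogy with \eqref{mtt 78}, we arrive at
\begin{align*}
\lnnm{U}{\vth,\varrho}\ls 1+\tnnm{U}+K_0\lnnm{U}{\vth,\varrho}+\d\lnnm{K[U]}{\vth,\varrho},
\end{align*}
and absorbing the last two terms into the left-hand side for $K_0$ and $\d$ small yields the desired estimate. The main obstacle is to ensure that the new contribution $K_0\va U$, which is present here but absent from the proof of Lemma \ref{Milne lemma 11}, can be absorbed; this hinges on the elementary pointwise bound $\abs{\va}\ls\nu$ for the hard-sphere collision frequency and the freedom to shrink $K_0$ — consistent with the constraint $0<K_0<K$ already imposed. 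The exponential weight $\ue^{K_0\eta}$ does not interfere with the integrability arguments in Lemmas \ref{Milne lemma 9} and \ref{Milne lemma 10} because those lemmas permit an arbitrary exponential pre-factor up to rate $\nu_0/2$, and we simply need $K_0$ to satisfy this constraint as well.
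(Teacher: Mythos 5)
Your proposal is correct and follows essentially the same route as the paper: write $U=\k[p]+\t\big[K[U]\big]+\t[K_0\va U]+\t[\ue^{K_0\eta}\ss]$, run the Lemma \ref{Milne lemma 11} bootstrap (Lemmas \ref{Milne lemma 8}--\ref{Milne lemma 10}), bound the new term by $K_0\lnnm{U}{\vth,\varrho}$ via $\abs{\va}\ls\nu$, and absorb it for $K_0$ small. The only inessential difference is your extra remark about exponential prefactors in Lemmas \ref{Milne lemma 9}--\ref{Milne lemma 10}, which is not needed since $U$ itself is the weighted unknown and its equation is already in the standard form.
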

\begin{proof}
Since $U=\k[p]+\t\Big[K[U]\Big]+\t[K_0\va U]+\t[\ue^{K_0\eta}\ss]$, similar to the proof of Lemma \ref{Milne lemma 11}, we have
\begin{align}
\lnnm{U}{\vth,\varrho}\ls& \lnmh{p}+\tnnm{U}+\lnnm{\nu^{-1}K_0\va U}{\vth,\varrho}+\lnnm{\nu^{-1}\ue^{K_0\eta}\ss}{\vth,\varrho}\\
\ls&\lnmh{p}+\tnnm{U}+K_0\lnnm{U}{\vth,\varrho}+\lnnm{\nu^{-1}\ue^{K_0\eta}\ss}{\vth,\varrho}.\no
\end{align}
When $K_0>0$ is sufficiently small, we may absorb $K_0\lnnm{U}{\vth,\varrho}$ into the left-hand side to obtain
\begin{align}
\lnnm{U}{\vth,\varrho}\ls\lnmh{p}+\tnnm{U}+\lnnm{\nu^{-1}\ue^{K_0\eta}\ss}{\vth,\varrho}.
\end{align}
Then \eqref{Milne bound} leads to the desired result.
\end{proof}

\begin{theorem}[$L^{\infty}$ decay]\label{Milne theorem 4}
Assume \eqref{Milne bound} holds. Then there exists $0<K_0<K$ such that the solution $\g(\eta,\vvv)$ to \eqref{Milne transform} satisfying for
$\varrho\geq0$ and $\vth>3$,
\begin{align}
\lnnm{\ue^{K_0\eta}\gg}{\vth,\varrho}\ls 1.
\end{align}
\end{theorem}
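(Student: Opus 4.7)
The plan is to prove Theorem \ref{Milne theorem 4} by directly combining the $L^2$ decay estimate already established in Theorem \ref{Milne theorem 3} with the $L^\infty$–$L^2$ reduction provided by Lemma \ref{Milne lemma 14}. Introduce the weighted solution $U(\eta,\vvv) := \ue^{K_0\eta}\gg(\eta,\vvv)$, which, as shown in \eqref{exponential}, satisfies a Milne-type equation whose only difference from the original is the benign extra term $K_0\va U$ on the right-hand side and the source $\ue^{K_0\eta}S$ (which is controlled by \eqref{Milne bound} for $K_0<K$). The task is then to bound $\lnnm{U}{\vth,\varrho}$.

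First, I would invoke Lemma \ref{Milne lemma 14}, which supplies the pointwise bound
\[
\lnnm{U}{\vth,\varrho} \ls 1+\tnnm{U}.
\]
This reduces the $L^\infty$ decay of $\gg$ to obtaining an $L^2$ bound on $U=\ue^{K_0\eta}\gg$. Next, I would apply Theorem \ref{Milne theorem 3}, which provides exactly this estimate:
\[
\tnnm{U} = \tnnm{\ue^{K_0\eta}\gg}\ls 1,
\]
valid for some $0<K_0<K$. Combining these two facts immediately yields
\[
\lnnm{\ue^{K_0\eta}\gg}{\vth,\varrho} = \lnnm{U}{\vth,\varrho} \ls 1.
\]

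The one subtle point is the compatibility of the parameter $K_0$: the constant from Theorem \ref{Milne theorem 3} arises from the $L^2$ coercivity argument (needing $K_0\br{\va w,w}$ to be absorbed by $\br{w,\ll[w]}$), while the one from Lemma \ref{Milne lemma 14} arises from absorbing the $K_0\t[\va U]$ contribution in the mild formulation. I would choose $K_0$ to be the minimum of the two, which is still strictly positive and less than $K$; both lemmas then apply simultaneously. Since the excerpt has already carried out these absorption steps, no new technical obstacle appears here — the proof is essentially a one-line synthesis of the previously established $L^2$ decay and the $L^2$-to-$L^\infty$ bootstrap. The real analytic work has all been done in Theorem \ref{Milne theorem 3} (where the orthogonality relations and the smallness of $G(\eta)$ over the slab $[0,L]$ with $L=\e^{-1/2}$ are used to propagate exponential decay to $q$ after setting $\gg_L=0$) and in the $K$-operator estimates of Lemma \ref{Milne lemma 10} underlying Lemma \ref{Milne lemma 14}.
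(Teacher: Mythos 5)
Your proposal is correct and follows exactly the paper's own argument: the paper proves Theorem \ref{Milne theorem 4} precisely by combining the $L^2$ decay of Theorem \ref{Milne theorem 3} with the $L^\infty$--$L^2$ reduction of Lemma \ref{Milne lemma 14} applied to $U=\ue^{K_0\eta}\gg$. Your additional remark about choosing $K_0$ as the smaller of the two admissible constants is a sensible clarification but introduces nothing beyond what the paper's one-line synthesis already contains.
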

\begin{proof}
Based on Theorem \ref{Milne theorem 3} and Lemma \ref{Milne lemma 14}, this is obvious.
\end{proof}

\newpage

%%%%%%%%%%%%%%%%%%%%%%%%%%%%%%%%%%%%%%%%%%%%%%%%%%%%%%%%%%%%%%%%%%%%%%%%
\section{Regularity}
%%%%%%%%%%%%%%%%%%%%%%%%%%%%%%%%%%%%%%%%%%%%%%%%%%%%%%%%%%%%%%%%%%%%%%%%

Now we begin to study the regularity of the solution $\gg$ to \eqref{Milne transform}.
%\begin{align}\label{Milne transform.}
%\left\{
%\begin{array}{l}\displaystyle
%\va\dfrac{\p\gg }{\p\eta}-\dfrac{\e}{R_1-\e\eta}\bigg(\vb^2\dfrac{\p\gg}{\p\va}-\va\vb\dfrac{\p\gg}{\p\vb}\bigg)
%-\dfrac{\e}{R_2-\e\eta}\bigg(\vc^2\dfrac{\p\gg}{\p\va}-\va\vc\dfrac{\p\gg}{\p\vc}\bigg)+\ll[\gg]
%=\ss,\\\rule{0ex}{1.5em}
%\gg (0,\vvv)=p(\vvv)\ \ \text{for}\ \
%\va>0,\\\rule{0ex}{1.5em}
%\displaystyle\gg (L,\vvv)=\gg (L,\rr[\vvv]),
%\end{array}
%\right.
%\end{align}
In this section, denote the boundary data $p=h-\tilde h$. Besides \eqref{Milne bound}, throughout this section, we further require the regularity bound that for $\varrho\geq0$ and $\vth>3$
\begin{align}\label{Regularity bound}
\lnmh{\nabla_{\vvv}p}\ls 1,\quad\lnnm{\ue^{K\eta}\p_{\eta}\ss}{\vth,\varrho}+\lnnm{\ue^{K\eta}\nabla_{\vvv}\ss}{\vth,\varrho}\ls 1.
\end{align}

%%%%%%%%%%%%%%%%%%%%%%%%%%%%%%%%%%%%%%%%%%%%%%%%%%%%%%%%%%%%%%%%%%%%%%%%
\subsection{Preliminaries}
%%%%%%%%%%%%%%%%%%%%%%%%%%%%%%%%%%%%%%%%%%%%%%%%%%%%%%%%%%%%%%%%%%%%%%%%

%%%%%%%%%%%%%%%%%%%%%%%%%%%%%%%%%%%%%%%%%%%%%%%%%%%%%%%%%%%%%%%%%%%%%%%%
\subsubsection{Weight Function}
%%%%%%%%%%%%%%%%%%%%%%%%%%%%%%%%%%%%%%%%%%%%%%%%%%%%%%%%%%%%%%%%%%%%%%%%

Define a weight function
\begin{align}\label{weight}
\zeta(\eta;\vvv)=\left(\left(\va^2+\vb^2+\vc^2\right)-\left(\frac{R_1-\e\eta}{R_1}\right)^2\vb^2-\left(\frac{R_2-\e\eta}{R_2}\right)^2\vc^2\right)^{\frac{1}{2}}.
\end{align}
It is easy to see that the closer a point $(\eta;\va,\vb,\vc)$ is to the grazing set $(\eta;\va,\vb,\vc)=(0;0,\vb,\vc)$, the smaller $\zeta$ is. In particular, at the grazing set, $\zeta(0;0,\vb,\vc)=0$.
\begin{lemma}[weight function in $\e$ Milne problem]\label{weight lemma}
Let $\zeta$ be defined as in \eqref{weight}. We have
\begin{align}
\va\dfrac{\p\zeta}{\p\eta}-\dfrac{\e}{R_1-\e\eta}\bigg(\vb^2\dfrac{\p\zeta}{\p\va}-\va\vb\dfrac{\p\zeta}{\p\vb}\bigg)
-\dfrac{\e}{R_2-\e\eta}\bigg(\vc^2\dfrac{\p\zeta}{\p\va}-\va\vc\dfrac{\p\zeta}{\p\vc}\bigg)=0.
\end{align}
\end{lemma}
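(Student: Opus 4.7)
The plan is to verify this identity by direct computation, exploiting the fact that the given first-order operator is essentially the transport operator along characteristics and that $\zeta^2$ is built from characteristic invariants. Working with $\zeta^2$ rather than $\zeta$ avoids the square root and handles the grazing set $\zeta = 0$ by continuity: since the operator $\mathcal{T} := \va\p_\eta - \tfrac{\e}{R_1-\e\eta}(\vb^2\p_\va - \va\vb\p_\vb) - \tfrac{\e}{R_2-\e\eta}(\vc^2\p_\va - \va\vc\p_\vc)$ is a first-order derivation, $\mathcal{T}[\zeta^2] = 2\zeta\,\mathcal{T}[\zeta]$, so it suffices to show $\mathcal{T}[\zeta^2] \equiv 0$ away from $\{\zeta = 0\}$ and invoke continuity.

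Conceptually, the reason this should work is that $\zeta^2$ is manufactured exactly from the conserved energies of the characteristic ODEs in \eqref{mtt 81}. Indeed, along characteristics one has $\va^2+\vb^2+\vc^2 = E_1$ and $\vb\,\ue^{-W_1(\eta)} = E_2$, $\vc\,\ue^{-W_2(\eta)} = E_3$; since $\ue^{-W_i(\eta)} = (R_i - \e\eta)/R_i$, this gives $\vb^2\bigl(\tfrac{R_1-\e\eta}{R_1}\bigr)^2 = E_2^2$ and $\vc^2\bigl(\tfrac{R_2-\e\eta}{R_2}\bigr)^2 = E_3^2$, so $\zeta^2 = E_1 - E_2^2 - E_3^2$ is constant along each characteristic. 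Hence the transport operator, which is nothing but the derivative along characteristics, must kill $\zeta^2$.

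To make this concrete and self-contained, I would carry out the verification explicitly. First, compute
\begin{align*}
\p_\eta \zeta^2 &= \tfrac{2\e(R_1-\e\eta)}{R_1^{2}}\vb^2 + \tfrac{2\e(R_2-\e\eta)}{R_2^{2}}\vc^2, \quad \p_\va\zeta^2 = 2\va,\\
\p_\vb\zeta^2 &= 2\vb\Bigl(1 - \bigl(\tfrac{R_1-\e\eta}{R_1}\bigr)^{2}\Bigr), \quad \p_\vc\zeta^2 = 2\vc\Bigl(1 - \bigl(\tfrac{R_2-\e\eta}{R_2}\bigr)^{2}\Bigr).
\end{align*}
Then substitute into $\mathcal{T}[\zeta^2]$: the first-curvature block simplifies, after the internal cancellation $2\va\vb^2 - 2\va\vb^2\bigl(1-(\tfrac{R_1-\e\eta}{R_1})^{2}\bigr) = 2\va\vb^2(\tfrac{R_1-\e\eta}{R_1})^{2}$, to $-2\va\vb^2\tfrac{\e(R_1-\e\eta)}{R_1^{2}}$, and an analogous identity holds for the $R_2$ block. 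These two contributions cancel exactly against $\va\,\p_\eta\zeta^2$, giving $\mathcal{T}[\zeta^2] = 0$.

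There is essentially no obstacle here; the only small care required is the division by $2\zeta$ at the grazing set, which is handled by smoothness of $\zeta^2$ and continuity of $\mathcal{T}[\zeta]$ away from $\zeta = 0$, together with the fact that $\zeta$ is Lipschitz in the variables involved on any compact set avoiding the singularity of $\mathcal{T}$ in $\eta$ (recall $0 \leq \e\eta \leq \e L \ll R_i$). Thus $\mathcal{T}[\zeta] = 0$ identically, which is the statement of the lemma.
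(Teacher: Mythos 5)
Your proof is correct and follows essentially the same route as the paper: a direct computation verifying the cancellation of the transport terms, which the paper performs on $\zeta$ itself (each partial derivative carrying a $1/\zeta$ prefactor) while you perform it on $\zeta^2$ and then divide by $2\zeta$ away from the grazing set. The supplementary observation that $\zeta^2=E_1-E_2^2-E_3^2$ is constant along characteristics is the same structural fact the paper records just before the mild formulation, so your variant adds only a minor repackaging, not a different argument.
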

\begin{proof}
We may directly compute
\begin{align}
&\frac{\p\zeta}{\p\eta}=\frac{1}{\zeta}\left(\frac{R_1-\e\eta}{R_1^2}\e\vb^2+\frac{R_2-\e\eta}{R_2^2}\e\vc^2\right),\\
&\frac{\p\zeta}{\p\va}=\frac{1}{\zeta}\va,\ \
\frac{\p\zeta}{\p\vb}=\frac{1}{\zeta}\left(\vb-\left(\frac{R_1-\e\eta}{R_1}\right)^2\vb\right),\ \
\frac{\p\zeta}{\p\vc}=\frac{1}{\zeta}\left(\vc-\left(\frac{R_2-\e\eta}{R_2}\right)^2\vc\right).
\end{align}
Then we know
\begin{align}
&\va\dfrac{\p\zeta}{\p\eta}-\dfrac{\e}{R_1-\e\eta}\bigg(\vb^2\dfrac{\p\zeta}{\p\va}-\va\vb\dfrac{\p\zeta}{\p\vb}\bigg)
-\dfrac{\e}{R_2-\e\eta}\bigg(\vc^2\dfrac{\p\zeta}{\p\va}-\va\vc\dfrac{\p\zeta}{\p\vc}\bigg)\\
=&\frac{1}{\zeta}\Bigg(\frac{R_1-\e\eta}{R_1^2}\e\va\vb^2+\frac{R_2-\e\eta}{R_2^2}\e\va\vc^2\no\\
&-\frac{\e}{R_1-\e\eta}\left(\va\vb^2-\va\vb^2+\va\vb^2\left(\frac{R_1-\e\eta}{R_1}\right)^2\right)
-\frac{\e}{R_2-\e\eta}\left(\va\vc^2-\va\vc^2+\va\vc^2\left(\frac{R_2-\e\eta}{R_2}\right)^2\right)\Bigg)=0.\no
\end{align}
\end{proof}

\begin{remark}\label{weight remark}
With this lemma in hand, we know for any function $f$, we can put the weight $\zeta$ inside the $\e$-Milne operator, i.e.
\begin{align}
&\va\dfrac{\p(\zeta f)}{\p\eta}-\dfrac{\e}{R_1-\e\eta}\bigg(\vb^2\dfrac{\p(\zeta f)}{\p\va}-\va\vb\dfrac{\p(\zeta f)}{\p\vb}\bigg)
-\dfrac{\e}{R_2-\e\eta}\bigg(\vc^2\dfrac{\p(\zeta f)}{\p\va}-\va\vc\dfrac{\p(\zeta f)}{\p\vc}\bigg)\\
=&\zeta\Bigg(\va\dfrac{\p f}{\p\eta}-\dfrac{\e}{R_1-\e\eta}\bigg(\vb^2\dfrac{\p f}{\p\va}-\va\vb\dfrac{\p f}{\p\vb}\bigg)
-\dfrac{\e}{R_2-\e\eta}\bigg(\vc^2\dfrac{\p f}{\p\va}-\va\vc\dfrac{\p f}{\p\vc}\bigg)\Bigg).\no
\end{align}
\end{remark}

%%%%%%%%%%%%%%%%%%%%%%%%%%%%%%%%%%%%%%%%%%%%%%%%%%%%%%%%%%%%%%%%%%%%%%%%
\subsubsection{Important Lemmas}
%%%%%%%%%%%%%%%%%%%%%%%%%%%%%%%%%%%%%%%%%%%%%%%%%%%%%%%%%%%%%%%%%%%%%%%%

\begin{lemma}\label{Regularity lemma 0}
For Boltzmann collision frequency $\nu=\nu(\abs{\vvv})$, we have
\begin{align}
\abs{\frac{\ud\nu}{\ud\abs{\vvv}}}\ls 1.
\end{align}
\end{lemma}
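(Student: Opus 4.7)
The plan is to simply differentiate the explicit formula for $\nu$ already recorded in the paper and show that the resulting expression is bounded on $(0,\infty)$, with the only subtle point being the apparent singularity at $\vvv=0$.

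Since $\nu$ depends only on $r:=\abs{\vvv}$, I would work with
\begin{align*}
\nu(r)=\pi^2 q_0\left(\Big(2r+\tfrac{1}{r}\Big)\int_0^r \ue^{-z^2}\ud z+\ue^{-r^2}\right)
\end{align*}
and differentiate directly to obtain
\begin{align*}
\frac{\ud\nu}{\ud r}=\pi^2 q_0\left(\Big(2-\tfrac{1}{r^2}\Big)\int_0^r \ue^{-z^2}\ud z+\Big(2r+\tfrac{1}{r}\Big)\ue^{-r^2}-2r\ue^{-r^2}\right)
=\pi^2 q_0\left(\Big(2-\tfrac{1}{r^2}\Big)\int_0^r \ue^{-z^2}\ud z+\tfrac{\ue^{-r^2}}{r}\right).
\end{align*}

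For the regime $r\geq 1$ the bound is immediate: $\int_0^r\ue^{-z^2}\ud z\leq\tfrac{\sqrt{\pi}}{2}$, the coefficient $(2-1/r^2)$ is bounded, and $\ue^{-r^2}/r\rt 0$, so $|\ud\nu/\ud r|\ls 1$ in this region. The only delicate part is the regime $r\leq 1$, where the two terms $-r^{-2}\int_0^r \ue^{-z^2}\ud z$ and $r^{-1}\ue^{-r^2}$ each blow up like $r^{-1}$. I would use the Taylor expansions
\begin{align*}
\int_0^r \ue^{-z^2}\ud z=r-\tfrac{r^3}{3}+O(r^5),\qquad \ue^{-r^2}=1-r^2+O(r^4),
\end{align*}
which give
\begin{align*}
-\tfrac{1}{r^2}\int_0^r \ue^{-z^2}\ud z+\tfrac{\ue^{-r^2}}{r}
=\left(-\tfrac{1}{r}+\tfrac{r}{3}+O(r^3)\right)+\left(\tfrac{1}{r}-r+O(r^3)\right)
=-\tfrac{2r}{3}+O(r^3).
\end{align*}
Thus the $1/r$ singularities cancel exactly, and for $r\leq 1$ one has $\ud\nu/\ud r=\pi^2 q_0\bigl(\tfrac{4r}{3}+O(r^3)\bigr)$, which is manifestly bounded (indeed vanishes) as $r\rt 0$.

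Combining the two regimes yields $|\ud\nu/\ud r|\ls 1$ uniformly in $r\in(0,\infty)$, and since $\nu$ depends on $\vvv$ only through $r$, this gives the stated bound. The main (and only) obstacle here is verifying the cancellation of the $r^{-1}$ singularities at the origin; once this is carried out via Taylor expansion, the lemma is routine.
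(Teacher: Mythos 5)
Your proof is correct and follows essentially the same route as the paper: split at $\abs{\vvv}=1$, bound the large-$\abs{\vvv}$ regime directly, and handle the apparent singularity at the origin via the Taylor expansion of $\int_0^{r}\ue^{-z^2}\ud{z}$. If anything, your version is slightly more explicit, since you differentiate first and exhibit the exact cancellation of the $1/r$ terms, whereas the paper only records that the fractional piece $\frac{1}{r}\int_0^{r}\ue^{-z^2}\ud{z}$ is a bounded power series in $r^2$ and leaves the differentiation of it implicit.
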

\begin{proof}
Based on \cite[Chapter 3]{Glassey1996}, we know
\begin{align}
\nu(\abs{\vvv})\sim \left(2\abs{\vvv}+\frac{1}{\abs{\vvv}}\right)\int_0^{\abs{\vvv}}\ue^{-z^2}\ud z+\ue^{-\abs{\vvv}^2}.
\end{align}
Then for $\abs{\vvv}\geq1$, we have
\begin{align}
\abs{\frac{\ud\nu}{\ud\abs{\vvv}}}&\ls\left(1+\frac{1}{\abs{\vvv}^2}\right)\int_0^{\abs{\vvv}}\ue^{-z^2}\ud z+\left(\abs{\vvv}+\frac{1}{\abs{\vvv}}\right)\ue^{-\abs{\vvv}^2}\ls 1.
\end{align}
For $\abs{\vvv}\leq1$, the key difficulty is the fractional term. Taylor expansion implies
\begin{align}
\frac{1}{\abs{\vvv}}\int_0^{\abs{\vvv}}\ue^{-z^2}\ud z\sim \frac{1}{\abs{\vvv}}\sum_{k=0}^{\infty}\frac{(-1)^{k}}{(2k+1)k!}\abs{\vvv}^{2k+1}=\sum_{k=0}^{\infty}\frac{(-1)^{k}}{(2k+1)k!}\abs{\vvv}^{2k}\ls 1.
\end{align}
Hence, the desired result naturally follows.
\end{proof}

%\begin{lemma}\label{Regularity lemma 1}
%For Boltzmann collision operator $k$, we have
%\begin{align}
%\abs{k(\vuu,\vvv)}\ls \left(\abs{\vuu-\vvv}+\frac{1}{\abs{\vuu-\vvv}}\right)\ue^{-\frac{1}{8}\abs{\vuu-\vvv}^2-\frac{1}{8}\frac{\abs{\abs{\vuu}^2-\abs{\vvv}^2}^2}{\abs{\vuu-\vvv}^2}}.
%\end{align}
%\end{lemma}
%\begin{proof}
%See \cite[Lemma 3]{Guo2010}.
%\end{proof}
%
%\begin{lemma}\label{Regularity lemma 1'}
%Let $0\leq\varrho< \dfrac{1}{4}$ and $\vth\geq0$. Then for $\d>0$ sufficiently small and any $\vvv\in\r^3$,
%\begin{align}
%\int_{\r^3}\ue^{\d\abs{\vuu-\vvv}^2}\abs{k(\vuu,\vvv)}
%\frac{\br{\vvv}^{\vth}\ue^{\vrh\abs{\vvv}^2}}{\br{\vuu}^{\vth}\ue^{\vrh\abs{\vuu}^2}}\ud{\vuu}
%\ls \frac{1}{\br{\vvv}}.
%\end{align}
%\end{lemma}
%\begin{proof}
%See \cite[Lemma 3]{Guo2010}.
%\end{proof}

\begin{lemma}\label{Regularity lemma 2}
Let $0\leq\varrho< \dfrac{1}{4}$ and $\vth\geq0$. Then for $\d>0$ sufficiently small and any $\vvv\in\r^3$,
\begin{align}
\int_{\r^3}\ue^{\d\abs{\vuu-\vvv}^2}\frac{1}{\abs{\vuu}}\abs{k(\vuu,\vvv)}
\frac{\br{\vvv}^{\vth}\ue^{\vrh\abs{\vvv}^2}}{\br{\vuu}^{\vth}\ue^{\vrh\abs{\vuu}^2}}\ud{\vuu}
\ls 1.
\end{align}
\end{lemma}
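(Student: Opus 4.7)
The plan is to leverage Lemma \ref{Regularity lemma 1'} away from the origin and handle the singularity $\frac{1}{\abs{\vuu}}$ directly on a bounded region, exploiting the fact that $\frac{1}{\abs{\vuu}}$ is integrable in $\r^3$ near the origin. First I would split the domain of integration as $\r^3 = \{\abs{\vuu}>1\} \cup \{\abs{\vuu}\leq 1\}$. On the region $\{\abs{\vuu}>1\}$ the factor $\frac{1}{\abs{\vuu}}\leq 1$ is harmless, so Lemma \ref{Regularity lemma 1'} immediately gives a bound of $\frac{1}{\br{\vvv}}\leq 1$.

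The real work is on $\{\abs{\vuu}\leq 1\}$, where $\br{\vuu}\sim 1$ and $\ue^{\vrh\abs{\vuu}^2}\sim 1$, so the $\vuu$-side weight is innocuous. The idea is to use the strong Gaussian-type decay in $k(\vuu,\vvv)$, specifically the $\tfrac{1}{4}$-coefficient decay of $k_2$ as given in the paper's preliminary kernel formulas. Repeating the standard complete-the-square calculation
\[
-\tfrac{1}{4}\abs{\vuu-\vvv}^2-\tfrac{1}{4}\tfrac{(\abs{\vuu}^2-\abs{\vvv}^2)^2}{\abs{\vuu-\vvv}^2}+\vrh(\abs{\vvv}^2-\abs{\vuu}^2)\leq -\Big(\tfrac{1}{4}-\vrh^2\Big)\abs{\vuu-\vvv}^2,
\]
which is strictly negative for $\vrh<\tfrac{1}{4}$, I would bound the full integrand after absorbing the boundary weight into the kernel decay by $C\,\br{\vuu-\vvv}^{\vth}|\vuu|^{-1}|\vuu-\vvv|^{-1}\,\ue^{-c|\vuu-\vvv|^2}$ with some $c=\tfrac{1}{4}-\vrh^2-\d>0$ (taking $\d$ sufficiently small). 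The polynomial factor $\br{\vvv}^{\vth}/\br{\vuu}^{\vth}$ is handled by the triangle-inequality bound $\br{\vvv}\lesssim\br{\vuu}\br{\vuu-\vvv}$ and then absorbed into the exponential at the cost of decreasing the constant $c$ slightly.

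At this point the problem reduces to showing
\[
\int_{\abs{\vuu}\leq 1}\frac{1}{\abs{\vuu}\,\abs{\vuu-\vvv}}\,\ue^{-c'|\vuu-\vvv|^2}\,\ud\vuu \lesssim 1 \quad \text{uniformly in }\vvv\in\r^3,
\]
which I would prove by splitting on $\abs{\vvv}\leq 2$ versus $\abs{\vvv}>2$. In the bounded case the exponential factor is harmless and one only needs that $\frac{1}{|\vuu||\vuu-\vvv|}$ is uniformly locally integrable on a compact region of $\vuu\in\r^3$ (this is a classical convolution-of-Coulomb-potentials bound, uniform on compact sets). In the case $\abs{\vvv}>2$, the condition $\abs{\vuu}\leq 1$ forces $\abs{\vuu-\vvv}\geq\tfrac{\abs{\vvv}}{2}$, so $\frac{1}{\abs{\vuu-\vvv}}\lesssim 1$ and $\ue^{-c'|\vuu-\vvv|^2}\lesssim \ue^{-c'|\vvv|^2/4}$, reducing the integral to $\ue^{-c'|\vvv|^2/4}\int_{|\vuu|\leq 1}\frac{d\vuu}{|\vuu|}\lesssim 1$. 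The contribution from $k_1$ is treated identically (easier, since $k_1$ has no singularity at $\vuu=\vvv$, and its own Gaussian factor $\ue^{-\frac{1}{2}|\vuu|^2-\frac{1}{2}|\vvv|^2}$ beats any polynomial and weight).

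The main obstacle is simply making sure the exponential decay from $k_2$ is strong enough to absorb simultaneously the weight $\ue^{\vrh|\vvv|^2}$ for $\vrh$ approaching $\tfrac14$, the additional $\ue^{\d|\vuu-\vvv|^2}$, and the polynomial $\br{\vvv}^{\vth}$; this forces the completed-square calculation above and the choice $\d<\tfrac14-\vrh^2$. Beyond this, every remaining step is a standard radial estimate on $\{\abs{\vuu}\leq 1\}$.
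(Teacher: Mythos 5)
Your proposal is correct. It shares the paper's central idea — completing the square in $\vuu-\vvv$ to show that the Gaussian decay of the kernel dominates the weight ratio $\ue^{\vrh(|\vvv|^2-|\vuu|^2)}$ and the extra $\ue^{\d|\vuu-\vvv|^2}$ whenever $\vrh<\frac14$ and $\d$ is small, together with the local integrability of $\frac{1}{|\vuu|}$ in $\r^3$ — but the packaging differs. The paper applies the $\tfrac18$-decay bound of Lemma \ref{Regularity lemma 1} globally, performs the same negative-definiteness computation (discriminant $4\vrh^2-\tfrac14<0$), and then separates the two singular factors $\frac{1}{|\vuu|}$ and $\frac{1}{|\vuu-\vvv|}$ by a Cauchy--Schwarz/H\"older step, reducing everything to $\int\frac{1}{|\vuu|^2}\ue^{-|\vuu-\vvv|^2}\ud\vuu$ and $\int\big(|\vsi|^2+\frac{1}{|\vsi|^2}\big)\ue^{-|\vsi|^2}\ud\vsi$, each handled by the $|\vuu|\lessgtr1$ split. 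You instead split the domain first: on $\{|\vuu|>1\}$ you simply quote Lemma \ref{Regularity lemma 1'} with $\frac{1}{|\vuu|}\leq1$, which is a legitimate shortcut that avoids re-deriving the far-field estimate; on $\{|\vuu|\leq1\}$ you use the explicit $\tfrac14$-coefficients of $k_2$, and your claimed bound $-(\tfrac14-\vrh^2)|\vuu-\vvv|^2$ is exactly what optimizing the exponent in $|\vuu|^2-|\vvv|^2$ yields, so it is correct (and in fact would allow any $\vrh<\tfrac12$ on that region). Your remaining reduction, $\int_{|\vuu|\leq1}\frac{\ue^{-c'|\vuu-\vvv|^2}}{|\vuu|\,|\vuu-\vvv|}\ud\vuu\ls1$ via the $|\vvv|\leq2$ versus $|\vvv|>2$ dichotomy, is sound; if you want the ``classical convolution'' bound on the compact region made explicit, one line of Cauchy--Schwarz — which is precisely the paper's $I\times II$ device — does it, since $\frac{1}{|\vuu|^2}$ and $\frac{1}{|\vuu-\vvv|^2}$ are both uniformly locally integrable in $\r^3$. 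The $k_1$ contribution is indeed easier, as you say. Both routes are complete; yours leans on the cited lemma and an explicit kernel formula, while the paper's keeps a single global computation from the weaker $\tfrac18$-bound.
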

\begin{proof}
This proof is mainly motivated by \cite[Lemma 3]{Guo2010}. Notice that
\begin{align}\label{rtt 01}
\abs{\frac{\br{\vvv}^{\vth}\ue^{\vrh\abs{\vvv}^2}}{\br{\vuu}^{\vth}\ue^{\vrh\abs{\vuu}^2}}}\ls \left(1+\abs{\vuu-\vvv}^2\right)^{\frac{\vth}{2}}\ue^{-\varrho\left(\abs{\vuu}^2-\abs{\vvv}^2\right)}.
\end{align}
Combining Lemma \ref{Regularity lemma 1} and \eqref{rtt 01}, we have
\begin{align}\label{rtt 04}
\abs{k(\vuu,\vvv)}
\frac{\br{\vvv}^{\vth}\ue^{\vrh\abs{\vvv}^2}}{\br{\vuu}^{\vth}\ue^{\vrh\abs{\vuu}^2}}
\ls \left(1+\abs{\vuu-\vvv}^2\right)^{\frac{\vth}{2}}
\left(\abs{\vuu-\vvv}+\frac{1}{\abs{\vuu-\vvv}}\right)\ue^{-\frac{1}{8}\abs{\vuu-\vvv}^2-\frac{1}{8}\frac{\abs{\abs{\vuu}^2-\abs{\vvv}^2}^2}{\abs{\vuu-\vvv}^2}
-\varrho\left(\abs{\vuu}^2-\abs{\vvv}^2\right)}.
\end{align}
We first handle the exponential term in \eqref{rtt 04}. Let $\vsi=\vuu-\vvv$, so $\vuu=\vsi+\vvv$. Then we have
\begin{align}\label{rtt 02}
&-\frac{1}{8}\abs{\vuu-\vvv}^2-\frac{1}{8}\frac{\abs{\abs{\vuu}^2-\abs{\vvv}^2}^2}{\abs{\vuu-\vvv}^2}-\varrho\left(\abs{\vuu}^2-\abs{\vvv}^2\right)\\
=&-\frac{1}{8}\abs{\vsi}^2-\frac{1}{8}\frac{\abs{\abs{\vsi+\vvv}^2-\abs{\vvv}^2}^2}{\abs{\vsi}^2}-\varrho\left(\abs{\vsi+\vvv}^2-\abs{\vvv}^2\right)\no\\
=&-\frac{1}{8}\abs{\vsi}^2-\frac{1}{8}\frac{\abs{\abs{\vsi}^2-2\vsi\cdot\vvv}^2}{\abs{\vsi}^2}-\varrho\left(\abs{\vsi}^2-2\vsi\cdot\vvv\right)\no\\
=&-\frac{1}{4}\abs{\vsi}^2+\frac{1}{2}\vsi\cdot\vvv-\frac{1}{2}\frac{\abs{\vsi\cdot\vvv}^2}{\abs{\vsi}^2}-\varrho\left(\abs{\vsi}^2-2\vsi\cdot\vvv\right)\no\\
=&\left(-\frac{1}{4}-\varrho\right)\abs{\vsi}^2+\left(\frac{1}{2}+2\varrho\right)\vsi\cdot\vvv-\frac{1}{2}\frac{\abs{\vsi\cdot\vvv}^2}{\abs{\vsi}^2}.\no
\end{align}
For $0\leq\varrho\leq \dfrac{1}{4}$, the discriminant
\begin{align}
\Delta=\left(\frac{1}{2}+2\varrho\right)^2+2\left(-\frac{1}{4}-\varrho\right)=4\varrho^2-\frac{1}{4}<0,
\end{align}
so the above quadratic form for $\abs{\vsi}$ and $\dfrac{\vsi\cdot\vvv}{\abs{\vsi}}$ is negative definite. This implies
\begin{align}
-\frac{1}{8}\abs{\vuu-\vvv}^2-\frac{1}{8}\frac{\abs{\abs{\vuu}^2-\abs{\vvv}^2}^2}{\abs{\vuu-\vvv}^2}-\varrho\left(\abs{\vuu}^2-\abs{\vvv}^2\right)\ls -\left(\abs{\vsi}^2+\frac{\abs{\vsi\cdot\vvv}^2}{\abs{\vsi}^2}\right)\ls -\abs{\vuu-\vvv}^2.
\end{align}
In particular, for $\d$ small, the perturbed form is still negative definite, i.e.
\begin{align}\label{rtt 05}
&-\left(\frac{1}{8}-\d\right)\abs{\vuu-\vvv}^2-\frac{1}{8}\frac{\abs{\abs{\vuu}^2-\abs{\vvv}^2}^2}{\abs{\vuu-\vvv}^2}-\varrho\left(\abs{\vuu}^2-\abs{\vvv}^2\right)
\ls -\left(\abs{\vsi}^2+\frac{\abs{\vsi\cdot\vvv}^2}{\abs{\vsi}^2}\right)\ls -\abs{\vuu-\vvv}^2.
\end{align}
Hence, using H\"{o}lder's inequality, \eqref{rtt 04} and \eqref{rtt 05}, we may bound
\begin{align}\label{rtt 08}
&\int_{\r^3}\ue^{\d\abs{\vuu-\vvv}^2}\frac{1}{\abs{\vuu}}\abs{k(\vuu,\vvv)}
\frac{\br{\vvv}^{\vth}\ue^{\vrh\abs{\vvv}^2}}{\br{\vuu}^{\vth}\ue^{\vrh\abs{\vuu}^2}}\ud{\vuu}\\
\ls& \int_{\r^3}\left(1+\abs{\vuu-\vvv}^2\right)^{\frac{\vth}{2}}\frac{1}{\abs{\vuu}}\left(\abs{\vuu-\vvv}+\frac{1}{\abs{\vuu-\vvv}}\right)
\ue^{-\abs{\vuu-\vvv}^2}\ud{\vuu}\no\\
\ls&\int_{\r^3}\frac{1}{\abs{\vuu}}\left(\abs{\vuu-\vvv}+\frac{1}{\abs{\vuu-\vvv}}\right)
\ue^{-\abs{\vuu-\vvv}^2}\ud{\vuu}\no\\
\ls&\bigg(\int_{\r^3}\frac{1}{\abs{\vuu}^2}
\ue^{-\abs{\vuu-\vvv}^2}\ud{\vuu}\bigg)^{\frac{1}{2}}
\bigg(\int_{\r^3}\Big(\abs{\vuu-\vvv}^2+\frac{1}{\abs{\vuu-\vvv}^2}\Big)\ue^{-\abs{\vuu-\vvv}^2}\ud\vuu\bigg)^{\frac{1}{2}}:=I\times II.\no
\end{align}
Here, the second inequality is due to the fact that exponential term decays much faster than polynomial term. Then we need to bound $I$ and $II$ separately. Using spherical coordinates and substitution $\vuu\rt\vsi=\vuu-\vvv$, we have
\begin{align}\label{rtt 06}
I\ls&\bigg(\int_{\abs{\vuu}\leq 1}\frac{1}{\abs{\vuu}^2}
\ue^{-\abs{\vuu-\vvv}^2}\ud{\vuu}\bigg)^{\frac{1}{2}}+\bigg(\int_{\abs{\vuu}\geq 1}\frac{1}{\abs{\vuu}^2}
\ue^{-\abs{\vuu-\vvv}^2}\ud{\vuu}\bigg)^{\frac{1}{2}}\\
\ls&\bigg(\int_{\abs{\vuu}\leq 1}\frac{1}{\abs{\vuu}^2}\ud{\vuu}\bigg)^{\frac{1}{2}}+\bigg(\int_{\abs{\vuu}\geq 1}
\ue^{-\abs{\vuu-\vvv}^2}\ud{\vuu}\bigg)^{\frac{1}{2}}
\ls 1+\bigg(\int_{\r^3}\ue^{-\abs{\vsi}^2}\ud{\vsi}\bigg)^{\frac{1}{2}}\ls 1.\no
\end{align}
Similarly, using spherical coordinates and substitution $\vuu\rt\vsi=\vuu-\vvv$, we have
\begin{align}\label{rtt 07}
II\ls&\bigg(\int_{\r^3}\Big(\abs{\vsi}^2+\frac{1}{\abs{\vsi}^2}\Big)\ue^{-\abs{\vsi}^2}\ud{\vsi}\bigg)^{\frac{1}{2}}\ls 1.
\end{align}
In summary, inserting \eqref{rtt 06} and \eqref{rtt 07} into \eqref{rtt 08}, we obtain the desired result.
\end{proof}

\begin{lemma}\label{Regularity lemma 2'}
Let $0\leq\varrho< \dfrac{1}{4}$ and $\vth\geq0$. We have
\begin{align}
\int_{\r^3}\abs{\nabla_{\vvv}k(\vuu,\vvv)}\frac{\br{\vvv}^{\vth}\ue^{\vrh\abs{\vvv}^2}}{\br{\vuu}^{\vth}\ue^{\vrh\abs{\vuu}^2}}\ud\vuu\ls 1.
\end{align}
\end{lemma}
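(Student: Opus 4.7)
The plan is to mimic the structure of the proof of Lemma \ref{Regularity lemma 2}, with the main new task being to identify the worst singularity of $\nabla_{\vvv}k(\vuu,\vvv)$ and show it is still integrable in $3$D. First I will compute the gradient of $k=k_2-k_1$ term by term. For $k_1$, the differentiation is harmless: one gets $\nabla_{\vvv}k_1(\vuu,\vvv)=\pi q_0\bigl(\frac{\vvv-\vuu}{|\vuu-\vvv|}-\vvv|\vuu-\vvv|\bigr)\exp\bigl(-\frac12|\vuu|^2-\frac12|\vvv|^2\bigr)$, which is a bounded multiple of $\bigl(1+|\vvv|\,|\vuu-\vvv|\bigr)$ times the same Gaussian that appears in $k_1$. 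For $k_2$ the worst term comes from differentiating the prefactor $\tfrac{1}{|\vuu-\vvv|}$, producing a $\tfrac{1}{|\vuu-\vvv|^{2}}$ singularity; differentiating the exponent contributes the factors $\tfrac{\vuu-\vvv}{2}$, $\tfrac{\vvv(|\vuu|^2-|\vvv|^2)}{|\vuu-\vvv|^{2}}$, and $\tfrac{(|\vuu|^2-|\vvv|^2)^2(\vuu-\vvv)}{2|\vuu-\vvv|^{4}}$, each multiplying $\tfrac{1}{|\vuu-\vvv|}$ times the Gaussian in $k_2$. Combining, one obtains the pointwise bound
\begin{align*}
\abs{\nabla_{\vvv}k(\vuu,\vvv)}\ls \Big(1+\abs{\vvv}+\abs{\vvv}\abs{\vuu-\vvv}+\frac{1}{\abs{\vuu-\vvv}^{2}}+\frac{(\abs{\vuu}^2-\abs{\vvv}^2)^{2}}{\abs{\vuu-\vvv}^{3}}\Big)\,\mathcal{E}(\vuu,\vvv),
\end{align*}
where $\mathcal{E}(\vuu,\vvv)=\exp\bigl(-\tfrac18|\vuu-\vvv|^{2}-\tfrac18\tfrac{(|\vuu|^2-|\vvv|^2)^{2}}{|\vuu-\vvv|^{2}}\bigr)$ is the same envelope controlling $k$ in Lemma \ref{Regularity lemma 1}.

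Next I multiply by the Gaussian quotient $\br{\vvv}^{\vth}\ue^{\vrh|\vvv|^2}/\br{\vuu}^{\vth}\ue^{\vrh|\vuu|^2}$ and invoke exactly the negative-definite quadratic-form argument from the proof of Lemma \ref{Regularity lemma 2}: for $0\leq\vrh<\tfrac14$, writing $\vsi=\vuu-\vvv$, the exponent
\begin{align*}
-\tfrac{1}{8}|\vsi|^{2}-\tfrac{1}{8}\tfrac{|\vsi|^{2}|\vsi+2\vvv\cdot\vsi/|\vsi||^{2}}{|\vsi|^{2}}-\vrh(|\vsi|^{2}-2\vsi\cdot\vvv)
\end{align*}
is a strictly negative-definite quadratic form in $|\vsi|$ and $\tfrac{\vsi\cdot\vvv}{|\vsi|}$, so the total exponential dominates any polynomial factor involving $|\vvv|$, $|\vuu-\vvv|$, or $\tfrac{(|\vuu|^2-|\vvv|^2)^{2}}{|\vuu-\vvv|^{2}}$. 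Hence
\begin{align*}
\abs{\nabla_{\vvv}k(\vuu,\vvv)}\,\frac{\br{\vvv}^{\vth}\ue^{\vrh\abs{\vvv}^2}}{\br{\vuu}^{\vth}\ue^{\vrh\abs{\vuu}^2}}\ls \Big(1+\tfrac{1}{\abs{\vuu-\vvv}^{2}}\Big)\ue^{-\abs{\vuu-\vvv}^{2}}.
\end{align*}

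Finally I integrate in $\vuu$. Substituting $\vsi=\vuu-\vvv$ and using spherical coordinates centered at $\vvv$, the non-singular piece $\ue^{-|\vsi|^{2}}$ is a Gaussian, and the singular piece $\tfrac{1}{|\vsi|^{2}}\ue^{-|\vsi|^{2}}$ becomes $r^{-2}\cdot r^{2}\,dr\,d\omega=dr\,d\omega$ near the origin, which is integrable on $\{|\vsi|\leq 1\}$; on $\{|\vsi|\geq 1\}$ the singularity is absent and the Gaussian gives decay. Summing both regions yields the uniform-in-$\vvv$ bound $\ls 1$.

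The only genuinely delicate point is that $\nabla_{\vvv}k_{2}$ has a stronger singularity $|\vuu-\vvv|^{-2}$ than the $|\vuu-\vvv|^{-1}$ appearing in $k$, so unlike Lemma \ref{Regularity lemma 2} one cannot borrow any $\tfrac{1}{|\vuu|}$ factor to help; I will need to verify that the bare $3$D singularity $\int_{|\vsi|\leq 1}|\vsi|^{-2}d\vsi$ is integrable, which it is, and that the extra polynomial factors $(|\vuu|^2-|\vvv|^2)^2/|\vuu-\vvv|^{3}$ arising from differentiating the exponent of $k_2$ are truly absorbed by the negative-definite quadratic form rather than appearing as an unbounded $\vvv$-dependent prefactor. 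Once this pointwise bound is secured, the integration is routine.
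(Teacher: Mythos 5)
Your computation of $\nabla_{\vvv}k$ and your observation that the new $\abs{\vuu-\vvv}^{-2}$ singularity is locally integrable in $3$D are fine (the paper sidesteps this point by differentiating after the substitution $\vsi=\vuu-\vvv$, i.e.\ at fixed $\vsi$, so it never meets $\abs{\vsi}^{-2}$; your fixed-$\vuu$ computation is the more literal reading of the statement and causes no harm there). The genuine gap is the claim that the negative-definite quadratic form ``dominates any polynomial factor involving $\abs{\vvv}$.'' The form obtained as in \eqref{rtt 02} is negative definite only in the two variables $\abs{\vsi}$ and $\vsi\cdot\vvv/\abs{\vsi}$; it gives no decay in $\abs{\vvv}$ itself, because $\abs{\vvv}$ can be arbitrarily large while both of these variables stay bounded (take $\vsi$ nearly orthogonal to $\vvv$). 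Concretely, the term produced by differentiating the exponent of $k_2$, of size $\abs{\vvv}\,\abs{\abs{\vuu}^2-\abs{\vvv}^2}\,\abs{\vuu-\vvv}^{-3}\,\mathcal{E}(\vuu,\vvv)$, violates your claimed pointwise bound: with $\varrho=0$ and $\vsi=\vuu-\vvv$ a unit vector orthogonal to $\vvv$ one has $\abs{\abs{\vuu}^2-\abs{\vvv}^2}=1$, the weight ratio is $\sim1$, and your envelope is a fixed constant, so the left-hand side grows like $\abs{\vvv}$ while $\big(1+\abs{\vuu-\vvv}^{-2}\big)\ue^{-\abs{\vuu-\vvv}^2}$ is a constant. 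Hence the final ``routine integration'' of $\big(1+\abs{\vsi}^{-2}\big)\ue^{-\abs{\vsi}^2}$ does not prove the lemma. (By contrast, the factor $(\abs{\vuu}^2-\abs{\vvv}^2)^2/\abs{\vuu-\vvv}^{2}\leq 2\abs{\vsi}^2+8\abs{\vsi\cdot\vvv}^2/\abs{\vsi}^2$ genuinely is controlled by the quadratic form, so that term is fine, as are the $k_1$ terms if you keep their full Gaussian $\ue^{-\frac12\abs{\vuu}^2-\frac12\abs{\vvv}^2}$.)

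The statement is still true, but the bare $\abs{\vvv}$ must be cancelled by a $\br{\vvv}^{-1}$ gain coming from the $\vuu$-integration, not from the pointwise exponential. This is exactly how the paper argues: it splits $\abs{\nabla k_i}$ into a piece with no $\abs{\vvv}$ prefactor, handled as in Lemma \ref{Regularity lemma 2}, and a piece of the form $\abs{\vvv}$ times a kernel of the same type as $k$, to which Lemma \ref{Regularity lemma 1'} applies and yields $\ls\abs{\vvv}\cdot\br{\vvv}^{-1}\ls1$. Equivalently, you can repair your argument by retaining the factor $\ue^{-c\abs{\vsi\cdot\vvv}^2/\abs{\vsi}^2}$ and integrating in spherical coordinates about $\hat\vvv$: the angular integral $\int_{-1}^{1}\ue^{-c\abs{\vvv}^2t^2}\ud{t}\ls\br{\vvv}^{-1}$ supplies the missing decay, which then absorbs the surviving factor $\abs{\vvv}$. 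With that correction the rest of your plan closes the proof.
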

\begin{proof}
Based on \cite[Chapter 3]{Glassey1996}, for hard-sphere gas, $k=k_1+k_2$, where
\begin{align}
k_1(\vuu,\vvv)&\sim\abs{\vuu-\vvv}\ue^{-\frac{1}{2}\abs{\vuu}^2-\frac{1}{2}\abs{\vvv}^2},\\
k_2(\vuu,\vvv)&\sim\frac{1}{\abs{\vuu-\vvv}}\ue^{-\frac{1}{4}\abs{\vuu-\vvv}^2-\frac{1}{4}\frac{\abs{\abs{\vuu}^2-\abs{\vvv}^2}^2}{\abs{\vuu-\vvv}^2}}.
\end{align}
Following the similar argument as in Lemma \ref{Regularity lemma 1'} and \eqref{rtt 04} in Lemma \ref{Regularity lemma 1}, we have
\begin{align}
\abs{\nabla_{\vvv}k(\vuu,\vvv)}\frac{\br{\vvv}^{\vth}\ue^{\vrh\abs{\vvv}^2}}{\br{\vuu}^{\vth}\ue^{\vrh\abs{\vuu}^2}}
\ls \left(1+\abs{\vuu-\vvv}^2\right)^{\frac{\vth}{2}}
\abs{\nabla_{\vvv}k(\vuu,\vvv)}\ue^{-\varrho\left(\abs{\vuu}^2-\abs{\vvv}^2\right)}
\end{align}
Here, the key is to bound $\abs{\nabla_{\vvv}k(\vuu,\vvv)}$. Substituting $\vuu\rt\vsi=\vuu-\vvv$, we get
\begin{align}
k_1(\vsi,\vvv)&=\abs{\vsi}\ue^{-\abs{\vvv}^2-\vsi\cdot\vvv-\frac{1}{2}\abs{\vsi}^2},\label{rtt 31}\\
k_2(\vsi,\vvv)&=\frac{1}{\abs{\vsi}}\ue^{-\frac{1}{4}\abs{\vsi}^2-\frac{1}{4}\frac{\abs{\abs{\vsi}^2-2\vsi\cdot\vvv}^2}{\abs{\vsi}^2}}.\label{rtt 32}
\end{align}
Then we compute
\begin{align}
\nabla_{\vvv}k_1(\vsi,\vvv)&=\abs{\vsi}\Big(-2\vvv-\vsi\Big)\ue^{-\abs{\vvv}^2-\vsi\cdot\vvv-\frac{1}{2}\abs{\vsi}^2},
\end{align}
which implies
\begin{align}
\abs{\nabla_{\vvv}k_1(\vsi,\vvv)}&\ls \abs{\vsi}^2\ue^{-\abs{\vvv}^2-\vsi\cdot\vvv-\frac{1}{2}\abs{\vsi}^2}+\abs{\vsi}\abs{\vvv}\ue^{-\abs{\vvv}^2-\vsi\cdot\vvv-\frac{1}{2}\abs{\vsi}^2}:=I_1+I_2.
\end{align}
Here, $I_1$ is covered by similar techniques as in the proof of Lemma \ref{Regularity lemma 2}, $I_2$ is covered in Lemma \ref{Regularity lemma 1'}. We obtain
\begin{align}
I_1\ls 1,\quad I_2\ls \frac{\abs{\vvv}}{1+\abs{\vvv}}\ls 1,
\end{align}
which implies
\begin{align}\label{rtt 21}
\int_{\r^3}\nabla_{\vvv}k_1(\vuu,\vvv)\frac{\br{\vvv}^{\vth}\ue^{\vrh\abs{\vvv}^2}}{\br{\vuu}^{\vth}\ue^{\vrh\abs{\vuu}^2}}\ud\vuu\ls 1.
\end{align}
On the other hand, we compute
\begin{align}
\abs{\nabla_{\vvv}k_2(\vsi,\vvv)}&= \frac{1}{\abs{\vsi}}\bigg(\vsi-\frac{2\vsi\cdot\vvv}{\abs{\vsi}^2}\vsi\bigg)\ue^{-\frac{1}{4}\abs{\vsi}^2-\frac{1}{4}\frac{\abs{\abs{\vsi}^2-2\vsi\cdot\vvv}^2}{\abs{\vsi}^2}},
\end{align}
which implies
\begin{align}
\abs{\nabla_{\vvv}k_2(\vsi,\vvv)}&\ls \ue^{-\frac{1}{4}\abs{\vsi}^2-\frac{1}{4}\frac{\abs{\abs{\vsi}^2-2\vsi\cdot\vvv}^2}{\abs{\vsi}^2}}+\frac{\abs{\vvv}}{\abs{\vsi}}
\ue^{-\frac{1}{4}\abs{\vsi}^2-\frac{1}{4}\frac{\abs{\abs{\vsi}^2-2\vsi\cdot\vvv}^2}{\abs{\vsi}^2}}:=II_1+II_2.
\end{align}
Still, $II_1$ is covered by similar techniques as in the proof of Lemma \ref{Regularity lemma 2}, $II_2$ is covered in Lemma \ref{Regularity lemma 1'}. We obtain
\begin{align}
II_1\ls 1,\quad II_2\ls \frac{\abs{\vvv}}{1+\abs{\vvv}}\ls 1,
\end{align}
which implies
\begin{align}\label{rtt 22}
\int_{\r^3}\nabla_{\vvv}k_2(\vuu,\vvv)\frac{\br{\vvv}^{\vth}\ue^{\vrh\abs{\vvv}^2}}{\br{\vuu}^{\vth}\ue^{\vrh\abs{\vuu}^2}}\ud\vuu\ls 1.
\end{align}
Then the desired results follow from \eqref{rtt 21} and \eqref{rtt 22}.
\end{proof}

\begin{lemma}\label{Regularity lemma 2''}
Let $0\leq\varrho< \dfrac{1}{4}$ and $\vth\geq0$. We have
\begin{align}
\int_{\r^3}\abs{\nabla_{\vuu}k(\vuu,\vvv)}\frac{\br{\vvv}^{\vth}\ue^{\vrh\abs{\vvv}^2}}{\br{\vuu}^{\vth}\ue^{\vrh\abs{\vuu}^2}}\ud\vuu\ls \br{\vvv}^2.
\end{align}
\end{lemma}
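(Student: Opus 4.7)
The strategy is to mirror the proof of Lemma \ref{Regularity lemma 2'}: decompose $k = k_1 + k_2$ via \eqref{rtt 31}--\eqref{rtt 32} and substitute $\vsi = \vuu - \vvv$, under which $\nabla_{\vuu}$ becomes $\nabla_{\vsi}$ with $\vvv$ held fixed. This preserves the measure and keeps available the negative-definite quadratic form reduction \eqref{rtt 02}--\eqref{rtt 05}, the weight-ratio bound \eqref{rtt 01}, and the splitting into $\{\abs{\vsi}\leq 1\}$ versus $\{\abs{\vsi}\geq 1\}$ used in \eqref{rtt 06}. The only new phenomenon relative to Lemma \ref{Regularity lemma 2'} is that differentiating with respect to $\vuu$ (rather than $\vvv$) produces polynomial prefactors of degree up to two in $\abs{\vvv}$; this accounts exactly for the $\br{\vvv}^2$ on the right-hand side, versus the $O(1)$ bound there.

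For $k_1(\vsi,\vvv)=\abs{\vsi}\ue^{-\abs{\vvv}^2-\vsi\cdot\vvv-\frac{1}{2}\abs{\vsi}^2}$, a direct computation yields
\[
\nabla_{\vsi}k_1 = \left(\tfrac{\vsi}{\abs{\vsi}}-\abs{\vsi}(\vvv+\vsi)\right)\ue^{-\abs{\vvv}^2-\vsi\cdot\vvv-\frac{1}{2}\abs{\vsi}^2},
\]
so $\abs{\nabla_{\vsi}k_1}\ls (1+\abs{\vsi}^2+\abs{\vsi}\abs{\vvv})\ue^{-\abs{\vvv}^2-\vsi\cdot\vvv-\frac{1}{2}\abs{\vsi}^2}$. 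Each such term is handled as $I_1$ and $I_2$ in the proof of Lemma \ref{Regularity lemma 2'}: the stand-alone factor $\ue^{-\abs{\vvv}^2}$ (native to $k_1$) absorbs any power of $\abs{\vvv}$, while the Gaussian in $\vsi$ absorbs powers of $\abs{\vsi}$. Hence the $k_1$ contribution to the integral is uniformly $O(1)$, and in particular dominated by $\br{\vvv}^2$.

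The main obstacle is $k_2$. Writing $f=\abs{\vsi}^2+2\vsi\cdot\vvv$ (so that $\abs{f}\leq \abs{\vsi}^2+2\abs{\vsi}\abs{\vvv}$), a careful differentiation of both the prefactor $1/\abs{\vsi}$ and the exponent $-\tfrac{1}{4}\abs{\vsi}^2-\tfrac{1}{4}f^2/\abs{\vsi}^2$ gives
\[
\abs{\nabla_{\vsi}k_2}\ls \left(\frac{1}{\abs{\vsi}^2}+1+\frac{\abs{\vvv}}{\abs{\vsi}}+\abs{\vvv}+\frac{\abs{\vvv}^2}{\abs{\vsi}^2}\right)\ue^{-\frac{1}{4}\abs{\vsi}^2-\frac{1}{4}f^2/\abs{\vsi}^2},
\]
the worst term $\abs{\vvv}^2/\abs{\vsi}^2$ arising when $\nabla_{\vsi}$ hits simultaneously the cross term $2\vsi\cdot\vvv$ in the numerator and the $\abs{\vsi}^2$ in the denominator of $f^2/\abs{\vsi}^2$. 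After applying \eqref{rtt 01} and \eqref{rtt 05}, each resulting integrand collapses to $(1+\abs{\vsi}^2)^{\vth/2}\cdot(\text{polynomial in }\abs{\vsi}^{-1})\ue^{-c\abs{\vsi}^2}$ multiplied by a polynomial in $\abs{\vvv}$ bounded by $\br{\vvv}^2$. Pulling this $\br{\vvv}^2$ factor outside, the remaining $\vsi$-integral converges: the $1/\abs{\vsi}^2$ singularity is locally integrable in $\r^3$ (by spherical coordinates, as in \eqref{rtt 06}), and the Gaussian controls the polynomial growth at infinity.

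The delicate point is the bookkeeping in the $k_2$ derivative: one must verify that no cross term secretly contributes more than two powers of $\abs{\vvv}$ once $\abs{f}$ is expanded, and that the $\abs{\vvv}^2/\abs{\vsi}^2$ contribution retains integrability against the Gaussian after the quadratic-form reduction has been used up. Once this is carried out, summing the $k_1$ and $k_2$ contributions yields the claimed bound $\br{\vvv}^2$.
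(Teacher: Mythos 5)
Your proposal is correct and follows essentially the same route as the paper: the same decomposition $k=k_1+k_2$ with the substitution $\vsi=\vuu-\vvv$, the same pointwise bounds on $\abs{\nabla_{\vsi}k_1}$ and $\abs{\nabla_{\vsi}k_2}$ (your list of terms $1+\frac{1}{\abs{\vsi}^2}+\frac{\abs{\vvv}}{\abs{\vsi}}+\abs{\vvv}+\frac{\abs{\vvv}^2}{\abs{\vsi}^2}$ matches the paper's $II_1+II_2$ up to harmless additions), and the same use of the weight-ratio and negative-definite quadratic-form reduction before integrating in $\vsi$, with the worst term $\abs{\vvv}^2/\abs{\vsi}^2$ producing the $\br{\vvv}^2$. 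The only cosmetic difference is that you bound the $k_1$ contribution by $O(1)$ while the paper records $\ls\br{\vvv}$, which is immaterial for the stated estimate.
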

\begin{proof}
This is very similar to the proof of Lemma \ref{Regularity lemma 2'}. Following a similar argument, we have
\begin{align}
\abs{\nabla_{\vuu}k(\vuu,\vvv)}\frac{\br{\vvv}^{\vth}\ue^{\vrh\abs{\vvv}^2}}{\br{\vuu}^{\vth}\ue^{\vrh\abs{\vuu}^2}}
\ls \left(1+\abs{\vuu-\vvv}^2\right)^{\frac{\vth}{2}}
\abs{\nabla_{\vuu}k(\vuu,\vvv)}\ue^{-\varrho\left(\abs{\vuu}^2-\abs{\vvv}^2\right)}
\end{align}
Here, the key is to bound $\abs{\nabla_{\vuu}k(\vuu,\vvv)}$. Substituting $\vuu\rt \vsi=\vuu-\vvv=(\sigma_{\eta},\sigma_{\phi},\sigma_{\phi})$, we get \eqref{rtt 31} and \eqref{rtt 32}. Also, note that $\nabla_{\vuu}=\nabla_{\vsi}$.
Then we compute
\begin{align}
\nabla_{\vsi}k_1(\vsi,\vvv)&=\abs{\vsi}\Big(-\vvv-\vsi\Big)\ue^{-\abs{\vvv}^2-\vsi\cdot\vvv-\frac{1}{2}\abs{\vsi}^2}
+\frac{\vsi}{\abs{\vsi}}\ue^{-\abs{\vvv}^2-\vsi\cdot\vvv-\frac{1}{2}\abs{\vsi}^2},
\end{align}
which implies
\begin{align}
\abs{\nabla_{\vsi}k_1(\vsi,\vvv)}&\ls \Big(\abs{\vsi}^2+1\Big)\ue^{-\abs{\vvv}^2-\vsi\cdot\vvv-\frac{1}{2}\abs{\vsi}^2}+\abs{\vsi}\abs{\vvv}\ue^{-\abs{\vvv}^2-\vsi\cdot\vvv-\frac{1}{2}\abs{\vsi}^2}:=I_1+I_2.
\end{align}
Here, using similar techniques as in the proof of Lemma \ref{Regularity lemma 2}, we obtain
\begin{align}
I_1\ls 1,\quad I_2\ls \br{\vvv},
\end{align}
which implies
\begin{align}\label{rtt 21'}
\int_{\r^3}\nabla_{\vuu}k_1(\vuu,\vvv)\frac{\br{\vvv}^{\vth}\ue^{\vrh\abs{\vvv}^2}}{\br{\vuu}^{\vth}\ue^{\vrh\abs{\vuu}^2}}\ud\vuu\ls \abs{\vvv}.
\end{align}
On the other hand, we compute
\begin{align}
\abs{\nabla_{\vsi}k_2(\vsi,\vvv)}&= \frac{1}{\abs{\vsi}}\bigg(-\vsi+\vvv-\frac{2\vsi\cdot\vvv}{\abs{\vsi}^2}(\vvv\cdot \mathscr{T})\bigg)\ue^{-\frac{1}{4}\abs{\vsi}^2-\frac{1}{4}\frac{\abs{\abs{\vsi}^2-2\vsi\cdot\vvv}^2}{\abs{\vsi}^2}}
-\frac{\vsi}{\abs{\vsi}^3}\ue^{-\frac{1}{4}\abs{\vsi}^2-\frac{1}{4}\frac{\abs{\abs{\vsi}^2-2\vsi\cdot\vvv}^2}{\abs{\vsi}^2}},
\end{align}
for tensor
\begin{align}
\mathscr{T}:=\frac{1}{\abs{\vsi}^3}\left(\begin{array}{lll}\sigma_{\phi}^2+\sigma_{\psi}^2&-\sigma_{\eta}\sigma_{\phi}&-\sigma_{\eta}\sigma_{\psi}\\
-\sigma_{\eta}\sigma_{\phi}&\sigma_{\eta}^2+\sigma_{\psi}^2&-\sigma_{\phi}\sigma_{\psi}\\
-\sigma_{\eta}\sigma_{\psi}&-\sigma_{\phi}\sigma_{\psi}&\sigma_{\eta}^2+\sigma_{\phi}^2
\end{array}\right),
\end{align}
which implies
\begin{align}
\abs{\nabla_{\vsi}k_2(\vsi,\vvv)}&\ls \bigg(1+\frac{1}{\abs{\vsi}^2}\bigg)\ue^{-\frac{1}{4}\abs{\vsi}^2-\frac{1}{4}\frac{\abs{\abs{\vsi}^2-2\vsi\cdot\vvv}^2}{\abs{\vsi}^2}}
+\bigg(\frac{\abs{\vvv}}{\abs{\vsi}}+\frac{\abs{\vvv}^2}{\abs{\vsi}^2}\bigg)
\ue^{-\frac{1}{4}\abs{\vsi}^2-\frac{1}{4}\frac{\abs{\abs{\vsi}^2-2\vsi\cdot\vvv}^2}{\abs{\vsi}^2}}:=II_1+II_2.
\end{align}
Still, using similar techniques as in the proof of Lemma \ref{Regularity lemma 2}, we obtain
\begin{align}
II_1\ls 1,\quad II_2\ls \br{\vvv}^2,
\end{align}
which implies
\begin{align}\label{rtt 22'}
\int_{\r^3}\nabla_{\vuu}k_2(\vuu,\vvv)\frac{\br{\vvv}^{\vth}\ue^{\vrh\abs{\vvv}^2}}{\br{\vuu}^{\vth}\ue^{\vrh\abs{\vuu}^2}}\ud\vuu\ls \br{\vvv}^2.
\end{align}
Then the desired results follow from \eqref{rtt 21'} and \eqref{rtt 22'}.
\end{proof}

\begin{lemma}\label{Regularity lemma 3}
For any $\vvv\in\r^3$, we have
\begin{align}
\int_{\r^3}\frac{1}{\ze(\eta;\vuu)}\abs{k(\vuu,\vvv)}\frac{\br{\vvv}^{\vth}\ue^{\vrh\abs{\vvv}^2}}{\br{\vuu}^{\vth}\ue^{\vrh\abs{\vuu}^2}}\ud\vuu\ls 1+\abs{\ln(\e\eta)}.
\end{align}
\end{lemma}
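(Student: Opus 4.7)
The plan is to mirror the strategy of Lemma \ref{Regularity lemma 2}, replacing $\frac{1}{|\vuu|}$ with $\frac{1}{\zeta(\eta;\vuu)}$ and tracking the additional logarithmic factor generated by the degeneracy of $\zeta$ as $\e\eta\to 0$. First, I would invoke Lemma \ref{Regularity lemma 1} together with the weight manipulation \eqref{rtt 04} (absorbing the polynomial correction $(1+|\vuu-\vvv|^2)^{\vth/2}$ into a slightly weaker Gaussian) to obtain the pointwise bound
\begin{align*}
|k(\vuu,\vvv)|\,\frac{\br{\vvv}^{\vth}\ue^{\vrh|\vvv|^2}}{\br{\vuu}^{\vth}\ue^{\vrh|\vuu|^2}}\lesssim \left(|\vuu-\vvv|+\frac{1}{|\vuu-\vvv|}\right)\ue^{-c_0|\vuu-\vvv|^2}
\end{align*}
for some $c_0>0$. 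This reduces the problem to bounding $I_1+I_2$, where $I_1$ involves the factor $|\vuu-\vvv|\,e^{-c_0|\vuu-\vvv|^2}$ (which is uniformly bounded) and $I_2$ carries the singular factor $\frac{1}{|\vuu-\vvv|}e^{-c_0|\vuu-\vvv|^2}$.

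For $I_1$, I would write $\zeta^2=u_1^2+B(u_2,u_3)$ with $B:=A_2u_2^2+A_3u_3^2$ and $A_i=1-(1-\e\eta/R_i)^2\sim 2\e\eta/R_i$, and apply Fubini. The $u_1$-integration yields the key logarithmic estimate
\begin{align*}
\int_{\mathbb{R}}\frac{\ue^{-c_1(u_1-v_1)^2}}{\sqrt{u_1^2+B}}\,du_1\lesssim 1+|\ln B|,
\end{align*}
using the antiderivative $\sinh^{-1}(u_1/\sqrt{B})$ near zero and Gaussian decay at infinity. Splitting $|\ln B|\leq |\ln(\e\eta)|+C+|\ln(u_2^2+u_3^2)|$ and using that $\int_{\mathbb{R}^2}|\ln(u_2^2+u_3^2)|\ue^{-c_1|\cdot|^2}\,du_2du_3=O(1)$ (the logarithmic singularity is integrable in 2D), I obtain $I_1\lesssim 1+|\ln(\e\eta)|$.

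For $I_2$, I would decompose the integration domain into $\{|u_1|\geq\e\eta\}$ and $\{|u_1|<\e\eta\}$. On the first region, use $\zeta\geq|u_1|$ so that $\frac{1}{\zeta|\vuu-\vvv|}\leq\frac{1}{|u_1||\vuu-\vvv|}$; the $u_1$-integration of $\frac{du_1}{|u_1|}$ restricted to $[\e\eta,1]$ contributes exactly $|\ln(\e\eta)|$, while the $|\vuu-\vvv|^{-1}$ singularity is handled by the remaining integrations as in the standard Boltzmann kernel estimates (cf.\ Lemma \ref{Regularity lemma 1'}). On the second region, use $\zeta\geq\sqrt{B}\sim\sqrt{\e\eta}|(u_2,u_3)|$; since the $u_1$-range has length $\sim\e\eta$, the total contribution is $\sqrt{\e\eta}$ times a bounded quantity, which is $O(1)$.

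The main obstacle will be the simultaneous handling of the two singularities in $I_2$ when $\vvv$ lies near the degeneracy set of $\zeta$ (i.e.\ $v_1\approx 0$ with $(v_2,v_3)$ bounded). In that regime a naive splitting $\frac{1}{\zeta|\vuu-\vvv|}\leq\frac{1}{2}(\frac{1}{\zeta^2}+\frac{1}{|\vuu-\vvv|^2})$ is too crude — a direct computation shows it produces a $(\e\eta)^{-1/2}$ blow-up rather than the desired logarithm. To recover the correct bound I would exploit the full refined decay of $k$ from Lemma \ref{Regularity lemma 1}, namely the extra factor $\ue^{-c|\vsi\cdot\vvv|^2/|\vsi|^2}$ appearing in \eqref{rtt 05}, which effectively restricts $\vuu-\vvv$ to a lower-dimensional band and extracts an additional $\br{\vvv}^{-1}$ factor; this is precisely the mechanism that saves the $|\ln(\e\eta)|$ bound against the $(\e\eta)^{-1/2}$ that a pure Cauchy--Schwarz would yield.
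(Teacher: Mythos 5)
Your reduction and your treatment of the nonsingular part track the paper's own proof closely: the pointwise kernel bound is exactly \eqref{rtt 04}--\eqref{rtt 05}, the lower bound $\ze(\eta;\vuu)\gs\big(\ua^2+(\e\eta)\ub^2+(\e\eta)\uc^2\big)^{1/2}$ is \eqref{rtt 10}, and your $\sinh^{-1}$ computation of the $\ua$-integral is \eqref{rtt 12}. Two precision remarks there. First, writing $\tu=(\ub,\uc)$, $\tv=(\vb,\vc)$, the claim $\int_{\r^2}\abs{\ln(\ub^2+\uc^2)}\ue^{-c\abs{\tu-\tv}^2}\ud\ub\ud\uc=O(1)$ is not uniform in $\vvv$ (it grows like $\ln\br{\vvv}$); this is harmless because only the negative part of $\ln B$ matters (when $B\geq1$ the weight is already bounded), or, as the paper does in \eqref{rtt 11}--\eqref{rtt 13}, because the log-producing region can be confined to $\abs{\ub},\abs{\uc}\leq1$. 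Second, on $\{\abs{\ua}\geq\e\eta\}$ you must decouple the two factors, e.g.\ via $\abs{\vuu-\vvv}^{-1}\leq\abs{\tu-\tv}^{-1}$, before integrating $\abs{\ua}^{-1}$; this is implicit in your ``handled by the remaining integrations'' and is fine.

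The genuine gap is your final paragraph. You correctly sense that on $\{\abs{\ua}<\e\eta\}$ the phrase ``$\sqrt{\e\eta}$ times a bounded quantity'' is not literally true: when $\tv\approx0$ and $\ua\approx\va$ the inner integral $\int_{\r^2}\abs{\tu}^{-1}\abs{\vuu-\vvv}^{-1}\ue^{-c\abs{\vuu-\vvv}^2}\ud\tu$ diverges logarithmically, so something must be said. But the repair you propose cannot work: the angular factor $\ue^{-c\abs{\vsi\cdot\vvv}^2/\abs{\vsi}^2}$ (with $\vsi=\vuu-\vvv$) produces the gain $\br{\vvv}^{-1}$ of Lemma \ref{Regularity lemma 1'}, which helps only when $\abs{\vvv}$ is large; in the very regime you single out one has $\abs{\vvv}=O(1)$, so the factor yields nothing, and in any case a gain in $\vvv$ can never convert an $(\e\eta)^{-1/2}$ loss into $\abs{\ln(\e\eta)}$. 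The correct repair is elementary and needs no refinement of the kernel: keep the normal separation inside $\abs{\vuu-\vvv}$, so the inner integral is $\ls 1+\abs{\ln\abs{\ua-\va}}$ uniformly in $\tv$, and then $(\e\eta)^{-1/2}\int_{\abs{\ua}<\e\eta}\big(1+\abs{\ln\abs{\ua-\va}}\big)\ud\ua\ls\sqrt{\e\eta}\,\big(1+\abs{\ln(\e\eta)}\big)\ls1$. Alternatively follow the paper, which never splits in $\ua$ at all: bound $\abs{\vuu-\vvv}^{-1}$ by the tangential distance $\abs{\tu-\tv}^{-1}$, perform the same $\ua$-integration to produce $1+\abs{\ln(\e\eta)}+\abs{\ln\abs{\ub}}+\abs{\ln\abs{\uc}}$, and close with H\"older on the unit square as in \eqref{rtt 15}--\eqref{rtt 17}, pairing the locally integrable singularity $\abs{\tu-\tv}^{-3/2}$ with the cube-integrable logarithms. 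Either route gives the stated bound uniformly in $\vvv$, with no $(\e\eta)^{-1/2}$ and no use of the angular Gaussian.
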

\begin{proof}
Based on \eqref{rtt 04} and \eqref{rtt 05}, we know
\begin{align}\label{rtt 09}
\abs{k(\vuu,\vvv)}\frac{\br{\vvv}^{\vth}\ue^{\vrh\abs{\vvv}^2}}{\br{\vuu}^{\vth}\ue^{\vrh\abs{\vuu}^2}}\ls \left(\abs{\vuu-\vvv}+\frac{1}{\abs{\vuu-\vvv}}\right)\ue^{-\d\abs{\vuu-\vvv}^2}.
\end{align}
Based on \eqref{weight}, letting $\vuu=(\ua,\ub,\uc)$, we directly obtain
\begin{align}\label{rtt 10}
\ze(\eta;\vuu)\gs \sqrt{\ua^2+(\e\eta)\ub^2+(\e\eta)\uc^2}.
\end{align}
Hence, using \eqref{rtt 09} and \eqref{rtt 10}, we bound
\begin{align}\label{rtt 19}
\int_{\r^3}\frac{1}{\ze(\eta;\vuu)}\abs{k(\vuu,\vvv)}\ud\vuu&\ls \int_{\r^3}\frac{1}{\sqrt{\ua^2+(\e\eta)\ub^2+(\e\eta)\uc^2}}\abs{\vuu-\vvv}\ue^{-\d\abs{\vuu-\vvv}^2}\ud\vuu\\
&+\int_{\r^3}\frac{1}{\sqrt{\ua^2+(\e\eta)\ub^2+(\e\eta)\uc^2}}\frac{1}{\abs{\vuu-\vvv}}\ue^{-\d\abs{\vuu-\vvv}^2}\ud\vuu:=I+II.\no
\end{align}
We need to estimate $I$ and $II$ separately. Since exponential term decays much faster than polynomial term, we have
\begin{align}\label{rtt 11}
I&\ls \int_{\r^3}\frac{1}{\sqrt{\ua^2+(\e\eta)\ub^2+(\e\eta)\uc^2}}\ue^{-\abs{\vuu-\vvv}^2}\ud\vuu\\
&\ls \int_{\abs{\vuu}\leq 1}\frac{1}{\sqrt{\ua^2+(\e\eta)\ub^2+(\e\eta)\uc^2}}\ue^{-\abs{\vuu-\vvv}^2}\ud\vuu
+\int_{\abs{\vuu}\geq 1}\frac{1}{\sqrt{\ua^2+(\e\eta)\ub^2+(\e\eta)\uc^2}}\ue^{-\abs{\vuu-\vvv}^2}\ud\vuu\no\\
&\ls \int_{\abs{\ub}\leq 1,\abs{\uc}\leq 1}\Bigg(\int_{\abs{\ua}\leq 1}\frac{1}{\sqrt{\ua^2+(\e\eta)\ub^2+(\e\eta)\uc^2}}\ud\ua\Bigg)\ud\ub\ud\uc
+\int_{\abs{\vuu}\geq 1}\ue^{-\abs{\vuu-\vvv}^2}\ud\vuu\no\\
&\ls 1+\int_{\abs{\ub}\leq 1,\abs{\uc}\leq 1}\Bigg(\int_{\abs{\ua}\leq 1}\frac{1}{\sqrt{\ua^2+(\e\eta)\ub^2+(\e\eta)\uc^2}}\ud\ua\Bigg)\ud\ub\ud\uc.\no
\end{align}
The key is to bound the inner integral for $\abs{\ub}\leq 1,\abs{\uc}\leq 1$, $0<\eta\leq L=\e^{-\frac{1}{2}}$,
\begin{align}\label{rtt 12}
J:&=\int_{\abs{\ua}\leq 1}\frac{1}{\sqrt{\ua^2+(\e\eta)\ub^2+(\e\eta)\uc^2}}\ud\ua\\
&=2\ln\bigg(1+\sqrt{1+(\e\eta)\ub^2+(\e\eta)\uc^2}\bigg)-2\ln\bigg(\sqrt{(\e\eta)\ub^2+(\e\eta)\uc^2}\bigg)\no\\
&\ls \sqrt{1+(\e\eta)\ub^2+(\e\eta)\uc^2}+\abs{\ln\Big((\e\eta)\ub^2+(\e\eta)\uc^2\Big)}\no\\
&\ls 1+\abs{\ln\Big((\e\eta)\ub^2\Big)}+\abs{\ln\Big((\e\eta)\uc^2\Big)}\ls 1+\abs{\ln(\e\eta)}+\abs{\ln\abs{\ub}}+\abs{\ln\abs{\uc}}.\no
\end{align}
Inserting \eqref{rtt 12} into \eqref{rtt 11}, we obtain
\begin{align}\label{rtt 13}
I&\ls  1+\int_{\abs{\ub}\leq 1,\abs{\uc}\leq 1}\Big(1+\abs{\ln(\e\eta)}+\abs{\ln\abs{\ub}}+\abs{\ln\abs{\uc}}\Big)\ud\ub\ud\uc\\
&\ls 1+\abs{\ln(\e\eta)}+\int_{\abs{\ub}\leq 1}\abs{\ln\abs{\ub}}\ud\ub+\int_{\abs{\uc}\leq 1}\abs{\ln\abs{\uc}}\ud\uc\ls 1+\abs{\ln(\e\eta)}.\no
\end{align}
On the other hand, similar to \eqref{rtt 11}, we have
\begin{align}\label{rtt 14}
II\ls&\int_{\abs{\vuu}\leq 1}\frac{1}{\sqrt{\ua^2+(\e\eta)\ub^2+(\e\eta)\uc^2}}\frac{1}{\abs{\vuu-\vvv}^2}\ue^{-\abs{\vuu-\vvv}^2}\ud\vuu\\
&+\int_{\abs{\vuu}\geq 1}\frac{1}{\sqrt{\ua^2+(\e\eta)\ub^2+(\e\eta)\uc^2}}\frac{1}{\abs{\vuu-\vvv}^2}\ue^{-\abs{\vuu-\vvv}^2}\ud\vuu\no\\
\ls &\int_{\abs{\ub}\leq 1,\abs{\uc}\leq 1}\Bigg(\int_{\abs{\ua}\leq 1}\frac{1}{\sqrt{\ua^2+(\e\eta)\ub^2+(\e\eta)\uc^2}}\ud\ua\Bigg)\frac{1}{\sqrt{(\ub-\vb)^2+(\uc-\vc)^2}}\ud\ub\ud\uc\no\\
&+\int_{\abs{\vuu}\geq 1}\frac{1}{\abs{\vuu-\vvv}^2}\ue^{-\abs{\vuu-\vvv}^2}\ud\vuu\no\\
\ls&1+\int_{\abs{\ub}\leq 1,\abs{\uc}\leq 1}\Bigg(\int_{\abs{\ua}\leq 1}\frac{1}{\sqrt{\ua^2+(\e\eta)\ub^2+(\e\eta)\uc^2}}\ud\ua\Bigg)\frac{1}{\sqrt{(\ub-\vb)^2+(\uc-\vc)^2}}\ud\ub\ud\uc.\no
\end{align}
Inserting \eqref{rtt 12} into \eqref{rtt 14}, and applying H\"older's inequality, we obtain
\begin{align}\label{rtt 15}
II\ls&1+\int_{\abs{\ub}\leq 1,\abs{\uc}\leq 1}\frac{1+\abs{\ln(\e\eta)}+\abs{\ln\abs{\ub}}+\abs{\ln\abs{\uc}}}{\sqrt{(\ub-\vb)^2+(\uc-\vc)^2}}\ud\ub\ud\uc\\
\ls&1+\abs{\ln(\e\eta)}+\int_{\abs{\ub}\leq 1,\abs{\uc}\leq 1}\frac{\abs{\ln\abs{\ub}}+\abs{\ln\abs{\uc}}}{\sqrt{(\ub-\vb)^2+(\uc-\vc)^2}}\ud\ub\ud\uc\no\\
\ls&1+\abs{\ln(\e\eta)}+\Bigg(\int_{\abs{\ub}\leq 1,\abs{\uc}\leq 1}\frac{1}{\Big((\ub-\vb)^2+(\uc-\vc)^2\Big)^{\frac{3}{4}}}\ud\ub\ud\uc\Bigg)^{\frac{2}{3}}\no\\
&\times\Bigg(\int_{\abs{\ub}\leq 1,\abs{\uc}\leq 1}\Big(\abs{\ln\abs{\ub}}+\abs{\ln\abs{\uc}}\Big)^3\ud\ub\ud\uc\Bigg)^{\frac{1}{3}}.\no
\end{align}
Note that using polar coordinates, we have
\begin{align}
\int_{\abs{\ub}\leq 1,\abs{\uc}\leq 1}\frac{1}{\Big((\ub-\vb)^2+(\uc-\vc)^2\Big)^{\frac{3}{4}}}\ud\ub\ud\uc&\ls 1,\label{rtt 16}\\
\int_{\abs{\ub}\leq 1,\abs{\uc}\leq 1}\Big(\abs{\ln\abs{\ub}}+\abs{\ln\abs{\uc}}\Big)^3\ud\ub\ud\uc&\ls 1.\label{rtt 17}
\end{align}
Hence, inserting \eqref{rtt 16} and \eqref{rtt 17} into \eqref{rtt 15}, we get
\begin{align}\label{rtt 18}
II\ls 1+\abs{\ln(\e\eta)}.
\end{align}
Inserting \eqref{rtt 13} and \eqref{rtt 18} into \eqref{rtt 19}, we obtain the desired result.
\end{proof}

\begin{remark}
Lemma \ref{Regularity lemma 2} and Lemma \ref{Regularity lemma 3} are valid uniformly in $\vvv\in\r^3$.
\end{remark}

%%%%%%%%%%%%%%%%%%%%%%%%%%%%%%%%%%%%%%%%%%%%%%%%%%%%%%%%%%%%%%%%%%%%%%%%
\subsection{Mild Formulation}
%%%%%%%%%%%%%%%%%%%%%%%%%%%%%%%%%%%%%%%%%%%%%%%%%%%%%%%%%%%%%%%%%%%%%%%%

Taking $\eta$ derivative in \eqref{Milne transform} and multiplying $\zeta$ defined in \eqref{weight} on both sides, we obtain the $\e$-transport problem for $\a:=\zeta\dfrac{\p\gg}{\p\eta}$ as
\begin{align}\label{normal derivative equation}
\left\{
\begin{array}{l}\displaystyle
\va\frac{\p\a}{\p\eta}+G_1(\eta)\bigg(\vb^2\dfrac{\p\a }{\p\va}-\va\vb\dfrac{\p\a }{\p\vb}\bigg)+G_2(\eta)\bigg(\vc^2\dfrac{\p\a }{\p\va}-\va\vc\dfrac{\p\a }{\p\vc}\bigg)+\nu\a=\tilde\a+S_{\a},\\\rule{0ex}{1.5em}
\a(0,\vvv)=p_{\a}(\vvv)\ \ \text{for}\ \ \va>0,\\\rule{0ex}{1.5em}
\a(L,\vvv)=\a(L,\rr[\vvv]),
\end{array}
\right.
\end{align}
where the crucial non-local term
\begin{align}\label{rtt 77}
\tilde\a(\eta,\vvv)=\int_{\r^3}\frac{\zeta(\eta,\vvv)}{\zeta(\eta,\vuu)}k(\vuu,\vvv)\a(\eta,\vuu)\ud{\vuu}.
\end{align}
Here we utilize Lemma \ref{weight lemma} to move $\zeta$ inside the derivative. $p_{\a}$ and $S_{\a}$ will be specified later. We need to derive the a priori estimate of $\a$. Note that $\tilde\a$ is different from $K[\a]$ since the denominator $\zeta(\eta,\vuu)$ is possibly zero. Thus, this creates a strong singularity and becomes the major difficulty in this section.

Here we use the notation as in $L^{\infty}$ estimates of Section \ref{mtt section 01}.
We can easily check that the weight function satisfies $\zeta=\sqrt{E_1-E_2^2-E_3^2}$. Along the characteristics, where $E_1$, $E_2$, $E_3$ and $\zeta$ are constants, the equation \eqref{normal derivative equation} can be rewritten as:
\begin{align}
\va\frac{\ud{\a}}{\ud{\eta}}+\a=\tilde\a+S_{\a}.
\end{align}
We can define the solution to \eqref{normal derivative equation} along the characteristics as follows:
\begin{align}\label{rtt 73}
\a(\eta,\vvv)=\k[p_{\a}]+\t[\tilde\a+S_{\a}],
\end{align}
where the operators $\k$ and $\t$ are defined in \eqref{mtt 26} to \eqref{mtt 28}. Based on Lemma \ref{Milne lemma 8} and Lemma \ref{Milne lemma 9},
we can directly obtain
\begin{align}
\lnnmv{\k[p_{\a}]}\ls&\lnmh{p_{\a}},\label{rtt 71}\\
\lnnmv{\t[S_{\a}]}\ls&\lnnmv{\nu^{-1}S_{\a}}\label{rtt 72}.
\end{align}
The next three sections will be devoted to the estimate of $\t[\tilde\a]$.

Similar to Section \ref{mtt section 01}, since we always assume that $(\eta,\vvv)$ and $(\eta',\vvv')$ are on the same characteristics, in the following, we will simply write $\vvv'(\eta')$ or even $\vvv'$ instead of $\vvv'(\eta,\vvv;\eta')$ when there is no confusion. In addition, we will use $\d$ or $\d_0$ to represent small quantities. They are not necessarily constants, but may depend on $\e$ and need to be chosen later.

In the analysis below, we will repeatedly use the following packages of simple facts (PSF):
\begin{itemize}
\item
Based on Theorem \ref{Milne theorem 2} and Theorem \ref{Milne theorem 4}, we know $\lnnmv{\ue^{K_0\eta}\gg}\ls 1$.
\item
Based on Lemma \ref{Regularity lemma 1'}, we have for $0\leq\vrh<\dfrac{1}{4}$ and $\vth>3$
\begin{align}
\lnnmv{\ue^{K_0\eta}K[\gg]}\ls \lnnmv{\ue^{K_0\eta}\nu^{-1}\gg}\ls 1.
\end{align}
\item
Based on Lemma \ref{Regularity lemma 2'}, we know
\begin{align}
\lnnmv{\ue^{K_0\eta}\nabla_vK[\gg]}\ls\lnnmv{\ue^{K_0\eta}\gg}\ls 1.
\end{align}
\item
Since $E_1$ is conserved along the characteristics, we must have $\abs{\vvv}=\abs{\vvv'}$ and further $\bvv=\bvvp$.
\end{itemize}
%\\
%\ \\
%Region I:\\
%For $\va>0$,
%\begin{align}\label{ptt 04}
%\k[p_{\a}]=&p_{\a}\Big(\vvv'(\eta,\vvv; 0)\Big)\exp(-H_{\eta,0}),\\
%\t[\tilde\a+S_{\a}]=&\int_0^{\eta}\frac{(\tilde\a+S_{\a})\Big(\eta',\vvv'(\eta,\vvv;\eta')\Big)}{\va'(\eta,\vvv;\eta')}\exp(-H_{\eta,\eta'})\ud{\eta'}.
%\end{align}
%\ \\
%Region II:\\
%For $\va<0$ and $\va^2+\vb^2+\vc^2\geq \vb'^2(\eta,\vvv;L)+\vc'^2(\eta,\vvv;L)$,
%\begin{align}\label{ptt 05}
%\k[p_{\a}]=&p_{\a}\Big(\vvv'(\eta,\vvv; 0)\Big)\exp(-H_{L,0}-\rr[H_{L,\eta}]),\\
%\t[\tilde\a+S_{\a}]=&\bigg(\int_0^{L}\frac{(\tilde\a+S_{\a})\Big(\eta',\vvv'(\eta,\vvv;\eta')\Big)}{\va'(\eta,\vvv;\eta')}
%\exp(-H_{L,\eta'}-\rr[H_{L,\eta}])\ud{\eta'}\\
%&+\int_{\eta}^{L}\frac{(\tilde\a+S_{\a})\Big(\eta',\rr[\vvv'(\eta,\vvv;\eta')]\Big)}{\va'(\eta,\vvv;\eta')}\exp(\rr[H_{\eta,\eta'}])\ud{\eta'}\bigg).\no
%\end{align}
%\ \\
%Region III:\\
%For $\va<0$ and $\va^2+\vb^2+\vc^2\leq \vb'^2(\eta,\vvv;L)+\vc'^2(\eta,\vvv;L)$,
%\begin{align}\label{ptt 06}
%\k[p_{\a}]=&p_{\a}\Big(\vvv'(\eta,\vvv; 0)\Big)\exp(-H_{\eta^+,0}-\rr[H_{\eta^+,\eta}]),\\
%\t[\tilde\a+S_{\a}]=&\bigg(\int_0^{\eta^+}\frac{(\tilde\a+S_{\a})\Big(\eta',\vvv'(\eta,\vvv;\eta')\Big)}{\va'(\eta,\vvv;\eta')}
%\exp(-H_{\eta^+,\eta'}-\rr[H_{\eta^+,\eta}])\ud{\eta'}\\
%&+\int_{\eta}^{\eta^+}\frac{(\tilde\a+S_{\a})\Big(\eta',\rr[\vvv'(\eta,\vvv;\eta')]\Big)}{\va'(\eta,\vvv;\eta')}\exp(\rr[H_{\eta,\eta'}])\ud{\eta'}\bigg).\no
%\end{align}

%%%%%%%%%%%%%%%%%%%%%%%%%%%%%%%%%%%%%%%%%%%%%%%%%%%%%%%%%%%%%%%%%%%%%%%%
\subsubsection{Region I: $\va>0$}
%%%%%%%%%%%%%%%%%%%%%%%%%%%%%%%%%%%%%%%%%%%%%%%%%%%%%%%%%%%%%%%%%%%%%%%%

%Based on Lemma \ref{Milne lemma 8} and Lemma \ref{Milne lemma 9},
%we can directly obtain
%\begin{align}
%\lnnmv{\k[p_{\a}]}\leq&\lnmh{p_{\a}}{-},\\
%\lnnmv{\t[S_{\a}]}\leq&\lnnmv{\frac{S_{\a}}{\nu}}.
%\end{align}
%Hence, we only need to estimate
Based on \eqref{mtt 26}, we need to bound
\begin{align}\label{rtt 20}
I=\t[\tilde\a]=\int_0^{\eta}\frac{\tilde\a\Big(\eta',\vvv'(\eta,\vvv;\eta')\Big)}{\va'(\eta,\vvv;\eta')}\exp(-H_{\eta,\eta'})\ud{\eta'}.
\end{align}
\ \\
Step 0: Preliminaries.\\
Based on \eqref{mtt 81} and \eqref{mtt 10}, we have
\begin{align}
E_2(\eta',\vb')=\frac{R_1-\e\eta'}{R_1}\vb',\ \ E_3(\eta',\vc')=\frac{R_2-\e\eta'}{R_2}\vc'.
\end{align}
Then we can directly obtain for $0<\eta'<L=\e^{-\frac{1}{2}}$,
\begin{align}\label{ptt 01}
\zeta(\eta',\vvv')=&\sqrt{(\va'^2+\vb'^2+\vc'^2)-\left(\frac{R_1-\e\eta'}{R_1}\right)^2\vb'^2-\left(\frac{R_2-\e\eta'}{R_2}\right)^2\vc'^2}\\
=&\sqrt{\va'^2+\frac{R_1^2-(R_1-\e\eta')^2}{R_1^2}\vb'^2+\frac{R_2^2-(R_2-\e\eta')^2}{R_2^2}\vc'^2}\no\\
\leq& \sqrt{\va'^2}+\frac{1}{R_1}\sqrt{\Big(R_1^2-(R_1-\e\eta')^2\Big)\vb'^2}+\frac{1}{R_2}\sqrt{\Big(R_2^2-(R_2-\e\eta')^2\Big)\vc'^2}\no\\
\ls& \abs{\va'}+\sqrt{\e\eta'}\abs{\vb'}+\sqrt{\e\eta'}\abs{\vc'}\ls \abs{\vvv'},\no
\end{align}
and
\begin{align}\label{ptt 02}
\zeta(\eta',\vvv')\geq&\frac{1}{2}\left(\sqrt{\va'^2}+\frac{1}{R_1}\sqrt{\Big(R_1^2-(R_1-\e\eta')^2\Big)\vb'^2}+\frac{1}{R_2}\sqrt{\Big(R_2^2-(R_2-\e\eta')^2\Big)\vc'^2}\right)\\
\gs& \abs{\va'}+\sqrt{\e\eta'}\abs{\vb'}+\sqrt{\e\eta'}\abs{\vc'}\gs\sqrt{\e\eta'}\abs{\vvv'}.\no
\end{align}
Also, considering \eqref{mtt 81} and \eqref{mtt 10}, we know for $0\leq\eta'\leq\eta$,
\begin{align}
\va'=&\sqrt{\va^2+\vb^2+\vc^2-\vb'^2-\vc'^2}=\sqrt{\va^2+\vb^2+\vc^2-\vb^2\bigg(\frac{R_1-\e\eta}{R_1-\e\eta'}\bigg)^2-\vc^2\bigg(\frac{R_2-\e\eta}{R_2-\e\eta'}\bigg)^2}\\
=&\sqrt{\va^2+\frac{(2R_1-\e\eta-\e\eta')(\e\eta-\e\eta')}{R_1-\e\eta'}\vb^2+\frac{(2R_2-\e\eta-\e\eta')(\e\eta-\e\eta')}{R_2-\e\eta'}\vc^2}.\no
\end{align}
Since for $i=1,2$
\begin{align}
0&\leq(2R_i-\e\eta-\e\eta')(\e\eta-\e\eta')\ls \e(\eta-\eta'),\\
1&\ls R_i-\e\eta',
\end{align}
we have
\begin{align}
\va\leq\va'
\ls \sqrt{\va^2+\e(\eta-\eta')\vb^2+\e(\eta-\eta')\vc^2},
\end{align}
which means
\begin{align}
\frac{1}{2\sqrt{\va^2+\e(\eta-\eta')\vb^2+\e(\eta-\eta')\vc^2}}\ls \frac{1}{\va'}
\leq\frac{1}{\va}.
\end{align}
Therefore,
\begin{align}\label{ptt 03}
-\int_{\eta'}^{\eta}\frac{1}{\va'(y)}\ud{y}\ls& -\int_{\eta'}^{\eta}\frac{1}{2\sqrt{\va^2+\e(\eta-y)\vb^2+\e(\eta-y)\vc^2}}\ud{y}\\
=&\frac{1}{\e(\vb^2+\vc^2)}\bigg(\va-\sqrt{\va^2+\e(\eta-\eta')\vb^2+\e(\eta-\eta')\vc^2}\bigg)\no\\
=&-\frac{\eta-\eta'}{\va+\sqrt{\va^2+\e(\eta-\eta')\vb^2+\e(\eta-\eta')\vc^2}}\ls-\frac{\eta-\eta'}{\sqrt{\va^2+\e(\eta-\eta')\vb^2+\e(\eta-\eta')\vc^2}}.\no
\end{align}
Define a $C^{\infty}$ cut-off function $\chi\in C^{\infty}[0,\infty)$ satisfying
\begin{align}\label{rtt 28}
\chi(\va)=\left\{
\begin{array}{ll}
1&\text{for}\ \ \abs{\va}\leq\d,\\
0&\text{for}\ \ \abs{\va}\geq2\d.
\end{array}
\right.
\end{align}
We use $\chi$ to avoid discontinuous cut-off for the convenience of integration by parts. In the following, we will divide the estimate of $I$ in \eqref{rtt 20} into several cases based on the value of $\va$, $\va'$, $\e\eta'$ and $\e(\eta-\eta')$. Assume the dummy variable $\vuu=(\ua,\ub,\uc)=(\ua,\tu)$. The similar notation also applies to $\vvv=(\va,\vb,\vc)=(\va,\tv)$. \\
%Let $\id$ denote the indicator function. We write
%\begin{align}
%I=&\int_0^{\eta}\id_{\{\va\geq\d_0\}}+\int_0^{\eta}\id_{\{0\leq\va\leq\d_0\}}\id_{\{\chi(\ua)<1\}}
%+\int_0^{\eta}\id_{\{0\leq\va\leq\d_0\}}\id_{\{\chi(\ua)=1\}}\id_{\{ \sqrt{\e\eta'}\abs{\tv'}\geq\va'\}}\\
%&+\int_0^{\eta}\id_{\{0\leq\va\leq\d_0\}}\id_{\{\chi(\ua)=1\}}\id_{\{\sqrt{\e\eta'}\abs{\tv'}\leq\va'\}}\id_{\{\va^2\leq\e(\eta-\eta')\tv^2\}}\no\\
%&+\int_0^{\eta}\id_{\{0\leq\va\leq\d_0\}}\id_{\{\chi(\ua)=1\}}\id_{\{\sqrt{\e\eta'}\abs{\tv'}\leq\va'\}}\id_{\{\va^2\geq\e(\eta-\eta')\tv^2\}}
%\id_{\{\abs{\tv}\leq\d_0\}}\no\\
%&+\int_0^{\eta}\id_{\{0\leq\va\leq\d_0\}}\id_{\{\chi(\ua)=1\}}\id_{\{\sqrt{\e\eta'}\abs{\tv'}\leq\va'\}}\id_{\{\va^2\geq\e(\eta-\eta')\tv^2\}}
%\id_{\{\abs{\tv}\geq\d_0\}}\no\\
%=&I_1+I_2+I_3+I_4+I_5+I_6.\no
%\end{align}
\ \\
Step 1: Estimate of $I_1: \va\geq\d_0$.\\
In this step, we will not resort to $\a$ equation \eqref{normal derivative equation}, but rather directly bound
\begin{align}
\abs{\bvv I_1} \ls&\abs{\zeta}\abs{\bvv\frac{\p\gg}{\p\eta}}\ls \abs{\bw{\vrh}{\vth+1}\frac{\p\gg}{\p\eta}}.
\end{align}
Hence, the key is to estimate $\dfrac{\p\gg}{\p\eta}$. As in \eqref{mtt 26}, we rewrite the equation \eqref{Milne transform} along the characteristics as
%\begin{align}
%\va\frac{\ud{\gg}}{\ud{\eta}}+\nu\gg=K[\v]+S.
%\end{align}
\begin{align}\label{rtt 23}
\gg(\eta,\vvv)=&\exp\left(-H_{\eta,0}\right)\Bigg(p\Big(\vvv'(0)\Big)
+\int_0^{\eta}\frac{(K[\gg]+S)\Big(\eta',\vvv'(\eta')\Big)}{\va'(\eta')}
\exp\left(H_{\eta',0}\right)\ud{\eta'}\Bigg).
\end{align}
Taking $\eta$ derivative on both sides of \eqref{rtt 23}, we have
\begin{align}
\frac{\p\gg}{\p\eta}:=X_1+X_2+X_3+X_4+X_5+X_6,
\end{align}
where
\begin{align}
X_1=&-\exp\left(-H_{\eta,0}\right)\frac{\p{H_{\eta,0}}}{\p{\eta}}\Bigg(p\Big(\vvv'(0)\Big)
+\int_0^{\eta}\frac{K[\gg]\Big(\eta',\vvv'(\eta')\Big)}{\va'(\eta')}
\exp\left(H_{\eta',0}\right)\ud{\eta'}\Bigg),\\
X_2=&\exp\left(-H_{\eta,0}\right)\frac{\p p\Big(\vvv'(0)\Big)}{\p\eta},\\
X_3=&\frac{(K[\gg]+S)(\eta,\vvv)}{\va},\\
X_4=&-\exp\left(-H_{\eta,0}\right)\int_0^{\eta}\Bigg((K[\gg]+S)\Big(\eta',\vvv'(\eta')\Big)
\exp\left(H_{\eta',0}\right)\frac{1}{\va'^2(\eta')}\frac{\p\va'(\eta')}{\p\eta}\ud{\eta'}\bigg)\\
X_5=&\exp\left(-H_{\eta,0}\right)\int_0^{\eta}\frac{(K[\gg]+S)\Big(\eta',\vvv'(\eta')\Big)}{\va'(\eta')}
\exp\left(H_{\eta',0}\right)\frac{\p{H_{\eta',0}}}{\p{\eta}}\ud{\eta'},\\
X_6=&\exp\left(-H_{\eta,0}\right)
\int_0^{\eta}\frac{1}{\va'(\eta')}\bigg(\nabla_{\vvv'} (K[\gg]+S)\Big(\eta',\vvv'(\eta')\Big)\frac{\p\vvv'(\eta')}{\p\eta}\bigg)
\exp\left(H_{\eta',0}\right)\ud{\eta'}.
\end{align}
We need to estimate each term. Below are some preliminary results:
\begin{itemize}
\item
For $\eta'\leq\eta$, we must have $\va'\geq\va\geq\d_0$, which means $\dfrac{1}{\va'}\leq\dfrac{1}{\va}\leq\dfrac{1}{\d_0}$.
\item
Using substitution $y=H_{\eta,\eta'}$, we know
\begin{align}\label{rtt 24}
\abs{\int_0^{\eta}\frac{\nu\Big(\vvv'(\eta')\Big)}{\va'(\eta')}\exp(-H_{\eta,\eta'})\ud{\eta'}}\leq\abs{\int_0^{\infty}\ue^{-y}\ud{y}}=1.
\end{align}
\item
For $t,s\in[0,\eta]$, based on (PSF), we have
\begin{align}\label{rtt 27}
\abs{{H_{t,s}}}\ls\abs{\int_{s}^{t}\frac{\nu(\vvv'(y))}{\va'(y)}\ud{y}}\ls \frac{\abs{\vvv}}{\d_0}\abs{t-s}.
\end{align}
\item
Considering
\begin{align}
\vb'(\eta')=&\vb\ue^{W_1(\eta')-W_1(\eta)}=\vb\frac{R_1-\e\eta}{R_1-\e\eta'},\\
\vc'(\eta')=&\vc\ue^{W_2(\eta')-W_2(\eta)}=\vc\frac{R_2-\e\eta}{R_2-\e\eta'},\\
\va'(\eta')=&\sqrt{\va^2+\vb^2+\vc^2-\vb'^2-\vc'^2}=\sqrt{\va^2+\vb^2+\vc^2
-\vb^2\bigg(\frac{R_1-\e\eta}{R_1-\e\eta'}\bigg)^2-\vc^2\bigg(\frac{R_2-\e\eta}{R_2-\e\eta'}\bigg)^2},
\end{align}
we know
\begin{align}
&\frac{\p\vb'(\eta')}{\p\eta}=-\frac{\e\vb}{R_1-\e\eta'},\quad\frac{\p\vc'(\eta')}{\p\eta}=-\frac{\e\vc}{R_2-\e\eta'},\\
&\frac{\p\va'(\eta')}{\p\eta}=\frac{2\e}{\va'(\eta)}\left(\vb^2\frac{R_1-\e\eta}{R_1-\e\eta'}+\vc^2\frac{R_2-\e\eta}{R_2-\e\eta'}\right).\no
\end{align}
This implies
\begin{align}\label{rtt 25}
\abs{\frac{\p\vb'(\eta')}{\p\eta}}\ls \e\abs{\vvv},\quad\abs{\frac{\p\vc'(\eta')}{\p\eta}}\ls\e\abs{\vvv},\quad\abs{\frac{\p\va'(\eta')}{\p\eta}}\ls \frac{\e\abs{\vvv}^2}{\va'(\eta')}\ls \frac{\e\abs{\vvv}^2}{\d_0}.
\end{align}
\item
For $t,s\in[0,\eta]$, note that
\begin{align}
&\frac{\p{H_{t,s}}}{\p{\eta}}=\int_{s}^{t}\frac{\p{}}{\p{\eta}}\bigg(\frac{\nu(\vvv'(y))}{\va'(y)}\bigg)\ud{y}\\
=&\int_{s}^{t}\frac{1}{\va'(y)}\frac{\p\nu(\abs{\vvv'})}{\p\abs{\vvv'}}(y)\frac{1}{\abs{\vvv'(y)}}
\bigg(\va'(y)\frac{\p\va'(y)}{\p\eta}+\vb'(y)\frac{\p\vb'(y)}{\p\eta}+\vc'(y)\frac{\p\vc'(y)}{\p\eta}\bigg)\ud{y}
-\int_{s}^{t}\frac{\nu(\abs{\vvv'}(y))}{\va'^2(y)}\frac{\p\va'(y)}{\p\eta}\ud{y}.\no
\end{align}
Based on \eqref{rtt 27}, Lemma \ref{Regularity lemma 0} and (PSF), we obtain
\begin{align}\label{rtt 26}
\abs{\frac{\p{H_{t,s}}}{\p{\eta}}}&\ls\abs{\int_s^t\frac{\nu(\vvv'(y))}{\va'(y)}\bigg(\e+\frac{\e\abs{\vvv}}{\d_0}\bigg)\ud y}+\abs{\int_{s}^{t}\frac{\nu(\abs{\vvv'}(y))}{\va'(y)}\frac{\e\abs{\vvv}^2}{\d_0^2}\ud{y}}\\
&\ls\frac{\e\br{\vvv}^2}{\d_0^2}\abs{H_{t,s}}\ls\frac{\e\br{\vvv}^3}{\d_0^3}\abs{t-s}\ls\frac{\e\eta\br{\vvv}^3}{\d_0^3}\ls\frac{\br{\vvv}^3}{\d_0^3}.\no
\end{align}
\end{itemize}
The estimate of $X_i$ is standard based on (PSF) and the above preliminaries. Using \eqref{rtt 23} and \eqref{rtt 26}, we have
\begin{align}
\abs{\bvv X_1}&\ls\abs{\frac{\p{H_{\eta,0}}}{\p{\eta}}}\abs{\bvv\gg}\ls \Bigg(\abs{\frac{\nu(\vvv)}{\va}}+\abs{\int_{0}^{\eta}\frac{\p{}}{\p{\eta}}\bigg(\frac{\nu(\vvv'(y))}{\va'(y)}\bigg)\ud{y}}\Bigg)\abs{\bvv\gg}\\
&\ls\bigg(\frac{\abs{\vvv}}{\d_0}+\frac{\br{\vvv}^3}{\d_0^3}\bigg)\abs{\bvv\gg}\ls \frac{1}{\d_0^3}\lnnm{\gg}{\vth+3,\vrh}\ls \frac{1}{\d_0^3}.\no
\end{align}
Based on \eqref{rtt 25} and \eqref{Regularity bound}, we know
\begin{align}
\abs{\bvv X_2}&\ls\abs{\exp\left(-H_{\eta,0}\right)}\abs{\bvv\nabla_{\vvv}p}\abs{\frac{\p \vvv'(0)}{\p\eta}}\ls\bigg(\e\abs{\vvv}+\frac{\e\abs{\vvv}^2}{\d_0}\bigg)\abs{\bvv\nabla_{\vvv}p}\\
&\ls \frac{\e}{\d_0^2}\lnm{\nabla_{\vvv}p}{\vth+2,\vrh}\ls \frac{\e}{\d_0^2}.\no
\end{align}
Also, using \eqref{Milne bound} and Lemma \ref{Regularity lemma 1'}, we have
\begin{align}
\abs{\bvv X_3}&\ls\abs{\frac{1}{\va}}\bigg(\abs{\bvv K[\gg]}+\abs{\bvv S}\bigg)\ls \frac{1}{\d_0}\bigg(1+\abs{\bvv\nu^{-1}\gg}\bigg)\ls \frac{1}{\d_0}.
\end{align}
On the other hand, using \eqref{rtt 25}, \eqref{rtt 24} and \eqref{Milne bound}, we obtain
\begin{align}
\abs{\bvv X_4}&\ls\int_0^{\eta}\bigg(\abs{\bvvp K[\gg]}+\abs{\bvvp S}\bigg)\exp\left(-H_{\eta,\eta'}\right)\frac{1}{\d_0^2}\frac{\e\abs{\vvv}^2}{\d_0}\ud\eta'\\
&\ls\frac{\e}{\d_0^3}\bigg(\lnnm{\nu^{-1}\gg}{\vth+2,\vrh}+\lnnm{S}{\vth+2,\vrh}\bigg)\bigg(\int_0^{\eta}\exp\left(-H_{\eta,\eta'}\right)\ud\eta'\bigg)\ls\frac{\e}{\d_0^3}.\no
\end{align}%this can be extended to treat exponential decay case, i.e. multiple $\ue^{K_0\eta}$ on both sides and use $H$ term and decay of S to create decay
Using \eqref{rtt 26}, \eqref{rtt 24} and \eqref{Milne bound}, we know
\begin{align}
\abs{\bvv X_5}&\ls\int_0^{\eta}\frac{1}{\d_0}\bigg(\abs{\bvvp K[\gg]}+\abs{\bvvp S}\bigg)\exp\left(-H_{\eta,\eta'}\right)\frac{\br{\vvv}^3}{\d_0^3}\ud\eta'\\
&\ls\frac{1}{\d_0^4}\bigg(\lnnm{\nu^{-1}\gg}{\vth+3,\vrh}+\lnnm{S}{\vth+3,\vrh}\bigg)\bigg(\int_0^{\eta}\exp\left(-H_{\eta,\eta'}\right)\ud\eta'\bigg)\ls\frac{1}{\d_0^4}.\no
\end{align}
Finally, using \eqref{rtt 25}, \eqref{rtt 24} and \eqref{Regularity bound}, we have
\begin{align}
\\
\abs{\bvv X_6}&\ls\int_0^{\eta}\frac{1}{\d_0}\bigg(\abs{\bvvp \nabla_{\vvv'}K[\gg]}+\abs{\bvvp \nabla_{\vvv'}S}\bigg)\exp\left(-H_{\eta,\eta'}\right)\bigg(\e\abs{\vvv}+\frac{\e\abs{\vvv}^2}{\d_0}\bigg)\ud\eta'\no\\
&\ls\frac{\e}{\d_0^3}\bigg(\lnnm{\gg}{\vth+2,\vrh}+\lnnm{S}{\vth+2,\vrh}\bigg)\bigg(\int_0^{\eta}\exp\left(-H_{\eta,\eta'}\right)\ud\eta'\bigg)\ls\frac{\e}{\d_0^3}.\no
\end{align}
Collecting all $X_i$ estimates, we have
\begin{align}\label{rtt 47}
\abs{\bvv I_1} \ls& \frac{\e}{\d_0^3}+\frac{1}{\d_0^4}.
\end{align}
\ \\
Step 2: Estimate of $I_2$: $0\leq\va\leq\d_0$ with $1-\chi(\ua)$ term.\\
We naturally decompose $1=\Big(1-\chi(\ua)\Big)+\chi(\ua)$. In this step, we focus on $1-\chi(\ua)$ part, while $\chi(\ua)$ part will handled in following steps involving $I_3,I_4,I_5$. Based on \eqref{rtt 28}, the cut-off $1-\chi(\ua)$ is nonzero only when $\abs{\ua}\geq\d$. We have
\begin{align}\label{rtt 29}
I_2:=&\int_0^{\eta}\bigg(\int_{\r^3}\frac{\zeta(\eta',\vvv')}{\zeta(\eta',\vuu)}\Big(1-\chi(\ua)\Big)
k(\vuu,\vvv')\a(\eta',\vuu)\ud{\vuu}\bigg)
\frac{1}{\va'}\exp(-H_{\eta,\eta'})\ud{\eta'}\\
=&\int_0^{\eta}\bigg(\int_{\r^3}\Big(1-\chi(\ua)\Big)
k(\vuu,\vvv')\frac{\gg(\eta',\vuu)}{\p\eta'}\ud{\vuu}\bigg)
\frac{\zeta(\eta',\vvv')}{\va'}\exp(-H_{\eta,\eta'})\ud{\eta'}.\no
\end{align}
We first handle the inner integral. Based on \eqref{Milne transform}, $\gg(\eta',\vuu)$ satisfies
\begin{align}
\ua\frac{\p\gg(\eta',\vuu)}{\p\eta'}
+G_1(\eta')\bigg(\ub^2\frac{\p\gg(\eta',\vuu)}{\p\ua}-\ua\ub\frac{\p\gg(\eta',\vuu)}{\p\ub}\bigg)\\
+G_2(\eta')\bigg(\uc^2\frac{\p\gg(\eta',\vuu)}{\p\ua}-\ua\uc\frac{\p\gg(\eta',\vuu)}{\p\uc}\bigg)&
+\nu\gg(\eta',\vuu)-K[\gg](\eta',\vuu)=S(\eta',\vuu),\no
\end{align}
which implies
\begin{align}\label{rtt 30}
\frac{\p\gg(\eta',\vuu)}{\p\eta'}=-\frac{1}{\ua}
\Bigg(G_1(\eta')\bigg(\ub^2\frac{\p\gg(\eta',\vuu)}{\p\ua}-\ua\ub\frac{\p\gg(\eta',\vuu)}{\p\ub}\bigg)&\\
+G_2(\eta')\bigg(\uc^2\frac{\p\gg(\eta',\vuu)}{\p\ua}-\ua\uc\frac{\p\gg(\eta',\vuu)}{\p\uc}\bigg)&
+\nu\gg(\eta',\vuu)-K[\gg](\eta',\vuu)-S(\eta',\vuu)\Bigg).\no
\end{align}
Hence, inserting \eqref{rtt 30} into the inner integral in \eqref{rtt 29}, we have
\begin{align}
J:=&\int_{\r^3}\Big(1-\chi(\ua)\Big)
k(\vuu,\vvv')\frac{\gg(\eta',\vuu)}{\p\eta'}\ud{\vuu}\\
=&-\int_{\r^3}\Big(1-\chi(\ua)\Big)
k(\vuu,\vvv')\frac{1}{\ua}\bigg(\nu\gg(\eta',\vuu)-K[\gg](\eta',\vuu)-S(\eta',\vuu)\bigg)\ud{\vuu}\no\\
&-\int_{\r^3}\Big(1-\chi(\ua)\Big)
k(\vuu,\vvv')\frac{1}{\ua}
G_1(\eta')\bigg(\ub^2\frac{\p\gg(\eta',\vuu)}{\p\ua}-\ua\ub\frac{\p\gg(\eta',\vuu)}{\p\ub}\bigg)\ud{\vuu}\no\\
&-\int_{\r^3}\Big(1-\chi(\ua)\Big)
k(\vuu,\vvv')\frac{1}{\ua}
G_2(\eta')\bigg(\uc^2\frac{\p\gg(\eta',\vuu)}{\p\ua}-\ua\uc\frac{\p\gg(\eta',\vuu)}{\p\uc}\bigg)\ud{\vuu}\no\\
:=&J_1+J_2+J_3.\no
\end{align}
Since $\abs{\ua}\geq\d$, using Lemma \ref{Regularity lemma 1'}, \eqref{Milne bound} and (PSF), we obtain
\begin{align}
\abs{\bvvp J_1}
&\ls\abs{\bvvp\int_{\r^3}\Big(1-\chi(\ua)\Big)
k(\vuu,\vvv')\frac{1}{\ua}\bigg(\nu\gg(\eta',\vuu)-K[\gg](\eta',\vuu)-S(\eta',\vuu)\bigg)\ud{\vuu}}\\
&\ls\frac{1}{\d}\bigg(\lnnm{\gg}{\vth+1,\vrh}+\lnnm{S}{\vth,\vrh}\bigg)\abs{\int_{\r^3}k(\vuu,\vvv')\frac{\bvvp}{\br{\vuu}^{\vth}\ue^{\vrh\abs{\vuu}^2}}\ud\vuu}\ls \frac{1}{\d}.\no
\end{align}
On the other hand, an integration by parts yields
\begin{align}
J_2=&\int_{\r^3}\frac{\p}{\p\ua}\bigg(\frac{\ub^2}{\ua}
G_1(\eta')\Big(1-\chi(\ua)\Big)
k(\vuu,\vvv')\bigg)\gg(\eta',\vuu)\ud{\vuu}\\
&-\int_{\r^3}\frac{\p}{\p\ub}\bigg(\ub
G_1(\eta')\Big(1-\chi(\ua)\Big)
k(\vuu,\vvv')\bigg)\gg(\eta',\vuu)\ud{\vuu},\no\\
=&\int_{\r^3}\bigg(-\frac{\ub^2}{\ua^2}\Big(1-\chi(\ua)\Big)-\frac{\ub^2}{\ua}\chi'(\ua)-\Big(1-\chi(\ua)\Big)\bigg)G_1(\eta')k(\vuu,\vvv')\gg(\eta',\vuu)\ud{\vuu}\no\\
&+\int_{\r^3}G_1(\eta')\Big(1-\chi(\ua)\Big)\bigg(-\frac{\ub^2}{\ua}\frac{\p k(\vuu,\vvv')}{\p\ua}+\ub\frac{\p k(\vuu,\vvv')}{\p\ub}\bigg)\gg(\eta',\vuu)\ud\vuu:=J_{2,1}+J_{2,2}.\no
\end{align}
Since $\abs{\ua}\geq\d$, using Lemma \ref{Regularity lemma 2} and Lemma \ref{Regularity lemma 1'}, we have
\begin{align}\label{rtt 33}
\abs{\bvvp J_{2,1}}&\ls \frac{\e}{\d^2}\lnnm{\gg}{\vth+2,\vrh}\ls\frac{\e}{\d^2}.
\end{align}
Also, using Lemma \ref{Regularity lemma 2} and Lemma \ref{Regularity lemma 2''}, we have
\begin{align}\label{rtt 34}
\abs{\bvvp J_{2,2}}\ls \frac{\e}{\d}\lnnm{\gg}{\vth+4,\vrh}\ls\frac{\e}{\d}.
\end{align}
\eqref{rtt 33} and \eqref{rtt 34} yield
\begin{align}
\abs{\bvvp J_2}\ls \frac{\e}{\d^2}.
\end{align}
Similarly,
\begin{align}
\abs{\bvvp J_3}\ls \frac{\e}{\d^2}.
\end{align}
In summary, the inner integral in \eqref{rtt 29}
\begin{align}
\abs{\bvvp J}\ls \abs{\bvvp J_1}+\abs{\bvvp J_2}+\abs{\bvvp J_3}\ls \frac{1}{\d}+\frac{\e}{\d^2}.
\end{align}
Then for the outer integral in \eqref{rtt 29}, we can use \eqref{ptt 01} and \eqref{rtt 24} to show that
\begin{align}\label{rtt 38}
\abs{\int_0^{\eta}\frac{\zeta(\eta',\vvv')}{\va'(\eta')}\exp(-H_{\eta,\eta'})\ud{\eta'}}\leq \abs{\int_0^{\eta}\frac{\nu(\vvv')}{\va'(\eta')}\exp(-H_{\eta,\eta'})\ud{\eta'}}\ls 1.
\end{align}
Then we have
\begin{align}\label{rtt 48}
\abs{\bvv I_2}\ls&\abs{\bvvp J}\ls \frac{1}{\d}+\frac{\e}{\d^2}.
\end{align}
\ \\
Step 3: Estimate of $I_3$: $0\leq\va\leq\d_0$, with $\chi(\ua)$ term, and $\sqrt{\e\eta'}\abs{\tv'}\geq\va'$.\\
Based on \eqref{rtt 28} and \eqref{rtt 29}, we are left with $\chi(\ua)$ part, which is nonzero only when $\abs{\ua}\leq2\d$, i.e.
\begin{align}\label{rtt 35}
\int_0^{\eta}\bigg(\int_{\r^3}\frac{\zeta(\eta',\vvv')}{\zeta(\eta',\vuu)}\chi(\ua)
k(\vuu,\vvv')\a(\eta',\vuu)\ud{\vuu}\bigg)
\frac{1}{\va'}\exp(-H_{\eta,\eta'})\ud{\eta'}.
\end{align}
We will further decompose this integral into $I_3,I_4,I_5$. In this step, based on \eqref{ptt 01}, $\sqrt{\e\eta'}\abs{\tv'}\geq\va'$ implies
\begin{align}\label{rtt 36}
\zeta(\eta',\vvv')\ls \abs{\va'}+\sqrt{\e\eta'}\abs{\tv'}\ls \sqrt{\e\eta'}\abs{\tv'}.
\end{align}
On the other hand, \eqref{ptt 02} implies
\begin{align}\label{rtt 37}
\zeta(\eta',\vuu)\gs \sqrt{\e\eta'}\abs{\vuu}.
\end{align}
Then considering \eqref{rtt 36} and \eqref{rtt 37}, the inner integral in \eqref{rtt 35}
\begin{align}\label{rtt 39}
M:=&\abs{\int_{\r^3}\frac{\zeta(\eta',\vvv')}{\zeta(\eta',\vuu)}\chi(\ua)k(\vuu,\vvv')\a(\eta',\vuu)\ud{\vuu}}\ls \abs{\tv'}\abs{\int_{\r^3}\frac{1}{\abs{\vuu}}\chi(\ua)k(\vuu,\vvv')\a(\eta',\vuu)\ud{\vuu}}.
\end{align}
Using Lemma \ref{Regularity lemma 2}, we know
\begin{align}
\abs{\bvvp M}&\ls \abs{\tv'}\lnnm{\a}{\vth,\vrh}.
\end{align}
This bound is too weak since we have not used the smallness $\abs{\ua}\leq2\d$, which means the integral is actually over a very small domain. We naturally modify the proof of Lemma \ref{Regularity lemma 2}. The key step is \eqref{rtt 06}. Here for either $\abs{\vuu}\leq 1$ or $\abs{\vuu}\geq1$, the small domain of $\ua$ produces an extra smallness in integral. In precise,
\begin{align}\label{rtt 40}
\abs{\bvvp M}&\ls \d\abs{\tv'}\lnnm{\a}{\vth,\vrh}.
\end{align}
Here, this $\abs{\tv'}$ will be handled in outer integral of \eqref{rtt 35} as in \eqref{rtt 38},
\begin{align}
\int_0^{\eta}\frac{\abs{\tv'}}{\va'}\exp(-H_{\eta,\eta'})\ud{\eta'}\ls \int_0^{\eta}\frac{\nu(\vvv')}{\va'}\exp(-H_{\eta,\eta'})\ud{\eta'}\ls 1.
\end{align}
In total, we have
\begin{align}\label{rtt 49}
\abs{\bvv I_3}\ls \d\lnnmv{\a}.
\end{align}
\ \\
Step 4: Estimate of $I_4$: $0\leq\va\leq\d_0$, with $\chi(\ua)$ term, $\sqrt{\e\eta'}\abs{\tv'}\leq\va'$ and $\va^2\leq\e(\eta-\eta')\abs{\tv}^2$.\\
$I_4$ is defined similar as \eqref{rtt 35}. Based on \eqref{ptt 01}, $\sqrt{\e\eta'}\abs{\tv'}\leq\va'$ implies
\begin{align}\label{rtt 41}
\zeta(\eta',\vvv')\ls \abs{\va'}+\sqrt{\e\eta'}\abs{\tv'}\ls \va'.
\end{align}
Hence, similar to the derivation for $I_3$ in \eqref{rtt 39} and \eqref{rtt 40}, using \eqref{rtt 37} and \eqref{rtt 41}, we have
\begin{align}
M\ls \frac{\va'}{\sqrt{\e\eta'}}\abs{\int_{\r^3}\frac{1}{\abs{\vuu}}\chi(\ua)k(\vuu,\vvv')\a(\eta',\vuu)\ud{\vuu}},
\end{align}
and
\begin{align}
\abs{\bvvp M}\ls \d\frac{\va'}{\sqrt{\e\eta'}}\lnnm{\a}{\vth,\vrh}.
\end{align}
Hence, we must handle $\dfrac{\va'}{\sqrt{\e\eta'}}$ with the outer integral in \eqref{rtt 35}. Based on \eqref{ptt 03}, $\va^2\leq\e(\eta-\eta')\abs{\tv}^2$ leads to
\begin{align}
-H_{\eta,\eta'}=-\int_{\eta'}^{\eta}\frac{\nu(\vvv)}{\va'(y)}\ud{y}\ls&-\frac{\nu(\vvv)(\eta-\eta')}{\abs{\tv}\sqrt{\e(\eta-\eta')}}
\ls-\frac{\nu(\vvv)}{\abs{\tv}}\sqrt{\frac{\eta-\eta'}{\e}}.
\end{align}
Therefore, we know
\begin{align}\label{rtt 44}
&\int_0^{\eta}\dfrac{\va'}{\sqrt{\e\eta'}}\frac{1}{\va'}\exp(-H_{\eta,\eta'})\ud{\eta'}=\int_0^{\eta}\dfrac{1}{\sqrt{\e\eta'}}\exp(-H_{\eta,\eta'})\ud{\eta'}\\
\ls& \int_0^{\eta}\frac{1}{\sqrt{\e\eta'}}\exp\bigg(-\frac{\nu(\vvv)}{\abs{\tv}}\sqrt{\frac{\eta-\eta'}{\e}}\bigg)\ud{\eta'}
=\int_0^{\frac{\eta}{\e}}\frac{1}{\sqrt{z}}\exp\bigg(-\frac{\nu(\vvv)}{\abs{\tv}}\sqrt{\frac{\eta}{\e}-z}\bigg)\ud{z}\no\\
=&\int_0^{1}\frac{1}{\sqrt{z}}\exp\bigg(-\frac{\nu(\vvv)}{\abs{\tv}}\sqrt{\frac{\eta}{\e}-z}\bigg)\ud{z}
+\int_1^{\frac{\eta}{\e}}\frac{1}{\sqrt{z}}\exp\bigg(-\frac{\nu(\vvv)}{\abs{\tv}}\sqrt{\frac{\eta}{\e}-z}\bigg)\ud{z},
\end{align}
where we define substitution $\eta'\rt z=\dfrac{\eta'}{\e}$, which implies $\ud{\eta'}=\e\ud{z}$. We can estimate these two terms separately.
\begin{align}\label{rtt 42}
\int_0^{1}\frac{1}{\sqrt{z}}\exp\bigg(-\frac{\nu(\vvv)}{\abs{\tv}}\sqrt{\frac{\eta}{\e}-z}\bigg)\ud{z}\leq&\int_0^{1}\frac{1}{\sqrt{z}}\ud{z}=2.
\end{align}
\begin{align}\label{rtt 43}
\int_1^{\frac{\eta}{\e}}\frac{1}{\sqrt{z}}\exp\bigg(-\frac{\nu(\vvv)}{\abs{\tv}}\sqrt{\frac{\eta}{\e}-z}\bigg)\ud{z}
&\leq\int_1^{\frac{\eta}{\e}}\exp\bigg(-\frac{\nu(\vvv)}{\abs{\tv}}\sqrt{\frac{\eta}{\e}-z}\bigg)\ud{z}\\
&\overset{t^2=\frac{\eta}{\e}-z}{\ls}\int_0^{\infty}t\ue^{-\frac{\nu(\vvv)}{\abs{\tv}}t}\ud{t}\ls \left(\frac{\abs{\tv}}{\nu(\vvv)}\right)^2\ls 1.\no
\end{align}
Inserting \eqref{rtt 42} and \eqref{rtt 43} into \eqref{rtt 44}, we know the outer integral in \eqref{rtt 35} is bounded. Therefore, we have
\begin{align}\label{rtt 50}
\abs{\bvv I_4}\ls \d\lnnmv{\a}.
\end{align}
\ \\
Step 5: Estimate of $I_5$: $0\leq\va\leq\d_0$, with $\chi(\ua)$ term, $\sqrt{\e\eta'}\abs{\tv'}\leq\va'$, $\va^2\geq\e(\eta-\eta')\abs{\tv}^2$.\\
$I_5$ is defined similar as \eqref{rtt 35}. Using \eqref{rtt 41}, we have
\begin{align}
M\ls \abs{\int_{\r^3}\frac{\va'}{\zeta(\eta',\vuu)}\chi(\ua)k(\vuu,\vvv')\a(\eta',\vuu)\ud{\vuu}}.
\end{align}
Using Lemma \ref{Regularity lemma 3}, we may bound
\begin{align}
\abs{\bvvp M}&\ls \va'\Big(1+\abs{\ln(\e\eta')}\Big)\lnnmv{\a}.
\end{align}
Hence, we must handle $\va'\Big(1+\abs{\ln(\e\eta')}\Big)$ with the outer integral in \eqref{rtt 35}. Based on \eqref{ptt 03}, $\va^2\geq\e(\eta-\eta')\abs{\tv}^2$ implies
\begin{align}
-H_{\eta,\eta'}=-\int_{\eta'}^{\eta}\frac{\nu(\vvv')}{\va'(y)}\ud{y}\ls&-\frac{\nu(\vvv')(\eta-\eta')}{\va}.
\end{align}
Therefore, we know
\begin{align}\label{rtt 45}
\int_0^{\eta}\va'\Big(1+\abs{\ln(\e\eta')}\Big)\frac{1}{\va'}\exp(-H_{\eta,\eta'})\ud{\eta'}&=\int_0^{\eta}\Big(1+\abs{\ln(\e\eta')}\Big)\exp(-H_{\eta,\eta'})\ud{\eta'}\\
&\ls \int_0^{\eta}\Big(1+\abs{\ln(\e\eta')}\Big)\exp\bigg(-\frac{\nu(\vvv')(\eta-\eta')}{\va}\bigg)\ud{\eta'}.\no
\end{align}
Naturally,
\begin{align}
1+\abs{\ln(\e\eta')}\ls \Big(1+\abs{\ln(\e)}\Big)+\abs{\ln(\eta')}.
\end{align}
Since $0\leq\va\leq\d_0$, direct computation reveals that
\begin{align}\label{rtt 46}
\int_0^{\eta}\Big(1+\abs{\ln(\e)}\Big)\exp\bigg(-\frac{\nu(\vvv')(\eta-\eta')}{\va}\bigg)\ud{\eta'}\ls \Big(1+\abs{\ln(\e)}\Big)\frac{\va}{\nu(\vvv)}
\ls \d_0\Big(1+\abs{\ln(\e)}\Big).
\end{align}
Hence, it suffices to consider
\begin{align}
Q=\int_0^{\eta}\abs{\ln(\eta')}\exp\bigg(-\frac{\nu(\vvv')(\eta-\eta')}{\va}\bigg)\ud{\eta'}.
\end{align}
If $0\leq\eta\leq 2$, applying H\"older's inequality, we have
\begin{align}
Q&\ls \bigg(\int_0^2\abs{\ln(\eta')}^2\ud\eta'\bigg)^{\frac{1}{2}}\bigg(\int_0^2\exp\bigg(-\frac{2\nu(\vvv')(\eta-\eta')}{\va}\bigg)\ud{\eta'}\bigg)^{\frac{1}{2}}
\ls \sqrt{\frac{\va}{\nu(\vvv)}}\ls \sqrt{\d_0}.
\end{align}
If $2\leq\eta\leq L=\e^{-\frac{1}{2}}$, we decompose and apply H\"older's inequality to obtain
\begin{align}
Q&\ls \int_0^{2}\abs{\ln(\eta')}\exp\bigg(-\frac{\nu(\vvv')(\eta-\eta')}{\va}\bigg)\ud{\eta'}+\int_2^{\eta}\abs{\ln(\eta')}\exp\bigg(-\frac{\nu(\vvv')(\eta-\eta')}{\va}\bigg)\ud{\eta'}\\
&\ls \bigg(\int_0^2\abs{\ln(\eta')}^2\ud\eta'\bigg)^{\frac{1}{2}}\bigg(\int_0^2\exp\bigg(-\frac{2\nu(\vvv')(\eta-\eta')}{\va}\bigg)\ud{\eta'}\bigg)^{\frac{1}{2}}
+\ln(L)\int_2^{\eta}\exp\bigg(-\frac{\nu(\vvv')(\eta-\eta')}{\va}\bigg)\ud{\eta'}\no\\
&\ls \sqrt{\d_0}\Big(1+\abs{\ln(\e)}\Big).\no
\end{align}
In summary, we have
\begin{align}
\int_0^{\eta}\abs{\ln(\eta')}\exp\bigg(-\frac{\nu(\vvv')(\eta-\eta')}{\va}\bigg)\ud{\eta'}\ls \sqrt{\d_0}\Big(1+\abs{\ln(\e)}\Big).
\end{align}
This completes the bound of outer integral of \eqref{rtt 35}. Hence, we know
\begin{align}\label{rtt 51}
\abs{\bvv I_5}\ls \sqrt{\d_0}\Big(1+\abs{\ln(\e)}\Big)\lnnmv{\a}.
\end{align}
\ \\
Step 6: Synthesis.\\
Collecting all estimates related to $I_i$ in \eqref{rtt 47}, \eqref{rtt 48}, \eqref{rtt 49}, \eqref{rtt 50} and \eqref{rtt 51}, we have proved
\begin{align}\label{rtt 74}
\abs{\bvv I}\ls& \bigg(\d+\sqrt{\d_0}\Big(1+\abs{\ln(\e)}\Big)\bigg)\lnnmv{\a}+\bigg(\frac{\e}{\d_0^3}+\frac{1}{\d_0^4}+\frac{\e}{\d^2}+\frac{1}{\d}\bigg).
\end{align}

%%%%%%%%%%%%%%%%%%%%%%%%%%%%%%%%%%%%%%%%%%%%%%%%%%%%%%%%%%%%%%%%%%%%%%%%
\subsubsection{Region II: $\va<0$ and $\va^2+\vb^2+\vc^2\geq \vb'^2(L)+\vc'^2(L)$}
%%%%%%%%%%%%%%%%%%%%%%%%%%%%%%%%%%%%%%%%%%%%%%%%%%%%%%%%%%%%%%%%%%%%%%%%

Based on \eqref{mtt 27}, we only need to estimate
\begin{align}
\t[\tilde\a]=&\int_0^{L}\frac{\tilde\a\Big(\eta',\vvv'(\eta,\vvv;\eta')\Big)}{\va'(\eta,\vvv;\eta')}
\exp(-H_{L,\eta'}-\rr[H_{L,\eta}])\ud{\eta'}\\
&+\int_{\eta}^{L}\frac{\tilde\a\Big(\eta',\rr[\vvv'(\eta,\vvv;\eta')]\Big)}{\va'(\eta,\vvv;\eta')}\exp(\rr[H_{\eta,\eta'}])\ud{\eta'}.\no
\end{align}
Here $\rr[H]=H$ just for clarification. Notice that
\begin{align}\label{rtt 52}
\exp(-H_{L,\eta'}-\rr[H_{L,\eta}])\ls \exp(-\rr[H_{\eta',\eta}]).
\end{align}
Also, we can decompose
\begin{align}\label{rtt 53}
\t[\tilde\a]=&\int_0^{\eta}\frac{\tilde\a\Big(\eta',\vvv'(\eta')\Big)}{\va'(\eta')}\exp(-H_{L,\eta'}-\rr[H_{L,\eta}])\ud{\eta'}\\
&+\int_{\eta}^{L}\frac{\tilde\a\Big(\eta',\vvv'(\eta')\Big)}{\va'(\eta')}\exp(-H_{L,\eta'}-\rr[H_{L,\eta}])\ud{\eta'}
+\int_{\eta}^{L}\frac{\tilde\a\Big(\eta',\rr[\vvv'(\eta')]\Big)}{\va'(\eta')}\exp(\rr[H_{\eta,\eta'}])\ud{\eta'}.\no
\end{align}
The integral $\displaystyle\int_0^{\eta}$ part can be estimated as in Region I due to \eqref{rtt 52}, so we focus on the integral $\displaystyle\int_{\eta}^L$ part.
Also, due to \eqref{rtt 52}, it suffices to estimate
\begin{align}
II=\int_{\eta}^{L}\frac{\tilde\a\Big(\eta',\vvv'(\eta')\Big)}{\va'(\eta')}\exp(-H_{\eta',\eta})\ud{\eta'}.
\end{align}
Here the proof is almost identical to that in Region I, so we only point out the key differences.\\
\ \\
Step 0: Preliminaries.\\
\eqref{ptt 01} and \eqref{ptt 02} still holds, but the key result \eqref{ptt 03} needs to be updated. For $0\leq\eta\leq\eta'$,
\begin{align}
\va'=&\sqrt{E_1-\vb'^2-\vc'^2}=\sqrt{\va^2+\vb^2+\vc^2-\bigg(\frac{R_1-\e\eta}{R_1-\e\eta'}\bigg)^2\vb^2-\bigg(\frac{R_2-\e\eta}{R_2-\e\eta'}\bigg)^2\vc^2}\leq\abs{\va}.
\end{align}
Then we have
\begin{align}\label{ptt 08}
-\int_{\eta}^{\eta'}\frac{1}{\va'(y)}\ud{y}\leq &-\int_{\eta}^{\eta'}\frac{1}{\abs{\va}}\ud{y}=-\frac{\eta'-\eta}{\abs{\va}}.
\end{align}
Here, note that $\va<0$ but $\va'\geq0$ defined in \eqref{mtt 83}.\\
\ \\
Step 1: Estimate of $II_1$: $\va\leq-\d_0$ and $\va'\geq\dfrac{\d_0}{2}$ for all $\eta'\in[0,L]$.\\
Since $\eta'\geq\eta$, we must have $\va'\leq \abs{\va}$, so it is unclear whether $\abs{\va'}\geq\dfrac{\d_0}{2}$ directly from $\va\leq\d_0$. Hence, we must put this as an additional requirement. If there exists some $\va'\leq\dfrac{\d_0}{2}$, it will be handled in $II_5$ estimate later. As for the estimate, this is very similar to the estimate of $I_1$. We will use the mild formulation of $\gg$ in \eqref{mtt 27} instead of $\a$ in \eqref{normal derivative equation}.
\begin{align}
\abs{\bvv II_1} \ls&\abs{\zeta}\abs{\bvv\frac{\p\gg}{\p\eta}}\ls \abs{\bw{\vrh}{\vth+1}\frac{\p\gg}{\p\eta}}.
\end{align}
Hence, the key is to estimate $\dfrac{\p\gg}{\p\eta}$. As in \eqref{mtt 27}, we rewrite the equation \eqref{Milne transform} along the characteristics as
It suffices to consider
\begin{align}\label{rtt 54}
\gg(\eta,\vvv)\sim &\exp\left(H_{\eta,0}\right)\int_{\eta}^L\frac{(K[\gg]+S)\Big(\eta',\vvv'(\eta')\Big)}{\va'(\eta')}
\exp\left(-H_{\eta',0}\right)\ud{\eta'}.
\end{align}
where $\sim$ denotes that we only focus on $\ds\int_{\eta}^L$ part due to the decomposition as in \eqref{rtt 53}. The justification of $\ds\int_0^{\eta}$ and boundary data $p$ part is covered by the estimate of $I_1$ and \eqref{rtt 52}.

Taking $\eta$ derivative on both sides of \eqref{rtt 54}, we have
\begin{align}
\frac{\p\gg}{\p\eta}:=Y_1+Y_2+Y_3+Y_4+Y_5,
\end{align}
where
\begin{align}
Y_1=&\exp\left(H_{\eta,0}\right)\frac{\p{H_{\eta,0}}}{\p{\eta}}\int_{\eta}^L\frac{K[\gg]\Big(\eta',\vvv'(\eta')\Big)}{\va'(\eta')}
\exp\left(-H_{\eta',0}\right)\ud{\eta'},\\
Y_2=&\frac{(K[\gg]+S)(\eta,\vvv)}{\va},\\
Y_3=&-\exp\left(H_{\eta,0}\right)\int_{\eta}^L\Bigg((K[\gg]+S)\Big(\eta',\vvv'(\eta')\Big)
\exp\left(-H_{\eta',0}\right)\frac{1}{\va'^2(\eta')}\frac{\p\va'(\eta')}{\p\eta}\ud{\eta'}\bigg)\\
Y_4=&-\exp\left(H_{\eta,0}\right)\int_{\eta}^L\frac{(K[\gg]+S)\Big(\eta',\vvv'(\eta')\Big)}{\va'(\eta')}
\exp\left(-H_{\eta',0}\right)\frac{\p{H_{\eta',0}}}{\p{\eta}}\ud{\eta'},\\
Y_5=&\exp\left(H_{\eta,0}\right)
\int_{\eta}^L\frac{1}{\va'(\eta')}\bigg(\nabla_{\vvv'} (K[\gg]+S)\Big(\eta',\vvv'(\eta')\Big)\frac{\p\vvv'(\eta')}{\p\eta}\bigg)
\exp\left(-H_{\eta',0}\right)\ud{\eta'}.
\end{align}
We need to estimate each term. Below are some preliminary results, which are direct extension of \eqref{rtt 24}, \eqref{rtt 27}, \eqref{rtt 25} and \eqref{rtt 26}:
\begin{itemize}
\item
For $\eta'\geq\eta$, we must have $\dfrac{1}{\va'}\ls\dfrac{1}{\d_0}$.
\item
Using substitution $y=H_{\eta,\eta'}$, we know
\begin{align}\label{rtt 24'}
\abs{\int_{\eta}^L\frac{\nu\Big(\vvv'(\eta')\Big)}{\va'(\eta')}\exp(H_{\eta,\eta'})\ud{\eta'}}\leq\abs{\int_{-\infty}^0\ue^{y}\ud{y}}=1.
\end{align}
\item
For $t,s\in[\eta,L]$, based on (PSF), we have
\begin{align}\label{rtt 27'}
\abs{{H_{t,s}}}\ls\frac{\abs{\vvv}}{\d_0}\abs{t-s}.
\end{align}
\item
We have
\begin{align}\label{rtt 25'}
\abs{\frac{\p\vb'(\eta')}{\p\eta}}\ls \e\abs{\vvv},\quad\abs{\frac{\p\vc'(\eta')}{\p\eta}}\ls\e\abs{\vvv},\quad\abs{\frac{\p\va'(\eta')}{\p\eta}}\ls \frac{\e\abs{\vvv}^2}{\va'(\eta')}\ls \frac{\e\abs{\vvv}^2}{\d_0}.
\end{align}
\item
For $t,s\in[\eta,L]$, we obtain
\begin{align}\label{rtt 26'}
\abs{\frac{\p{H_{t,s}}}{\p{\eta}}}&\ls\frac{\e\br{\vvv}^3}{\d_0^3}\abs{t-s}\ls\frac{\e L\br{\vvv}^3}{\d_0^3}\ls\frac{\br{\vvv}^3}{\d_0^3}.
\end{align}
\end{itemize}
The estimate of $Y_i$ is standard based on (PSF) and the above preliminaries. Using Lemma \ref{Regularity lemma 1'} and \eqref{rtt 26'}, we have
\begin{align}
\abs{\bvv Y_1}&\ls\abs{\frac{\p{H_{\eta,0}}}{\p{\eta}}}\abs{\int_{\eta}^L\frac{\nu\Big(\vvv'(\eta')\Big)}{\va'(\eta')}\exp(H_{\eta,\eta'})\ud{\eta'}}
\abs{\bvv\nu^{-1}\gg}\\
&\ls\frac{\br{\vvv}^3}{\d_0^3}\abs{\bvv\nu^{-1}\gg}\ls \frac{1}{\d_0^3}\lnnm{\gg}{\vth+2,\vrh}\ls \frac{1}{\d_0^3}.\no
\end{align}
Also, using \eqref{Milne bound} and Lemma \ref{Regularity lemma 1'}, we have
\begin{align}
\abs{\bvv Y_2}&\ls\abs{\frac{1}{\va}}\bigg(\abs{\bvv K[\gg]}+\abs{\bvv S}\bigg)\ls \frac{1}{\d_0}\bigg(1+\abs{\bvv\nu^{-1}\gg}\bigg)\ls \frac{1}{\d_0}.
\end{align}
On the other hand, using \eqref{rtt 25'}, \eqref{rtt 24'} and \eqref{Milne bound}, we obtain
\begin{align}
\abs{\bvv Y_3}&\ls\int_{\eta}^L\bigg(\abs{\bvvp K[\gg]}+\abs{\bvvp S}\bigg)\exp\left(H_{\eta,\eta'}\right)\frac{1}{\d_0^2}\frac{\e\abs{\vvv}^2}{\d_0}\ud\eta'\\
&\ls\frac{\e}{\d_0^3}\bigg(\lnnm{\nu^{-1}\gg}{\vth+2,\vrh}+\lnnm{S}{\vth+2,\vrh}\bigg)\bigg(\int_{\eta}^L\exp\left(H_{\eta,\eta'}\right)\ud\eta'\bigg)\ls\frac{\e}{\d_0^3}.\no
\end{align}%this can be extended to treat exponential decay case, i.e. multiple $\ue^{K_0\eta}$ on both sides and use $H$ term and decay of S to create decay
Using \eqref{rtt 26'}, \eqref{rtt 24'} and \eqref{Milne bound}, we know
\begin{align}
\abs{\bvv Y_4}&\ls\int_{\eta}^L\frac{1}{\d_0}\bigg(\abs{\bvvp K[\gg]}+\abs{\bvvp S}\bigg)\exp\left(H_{\eta,\eta'}\right)\frac{\br{\vvv}^3}{\d_0^3}\ud\eta'\\
&\ls\frac{1}{\d_0^4}\bigg(\lnnm{\nu^{-1}\gg}{\vth+3,\vrh}+\lnnm{S}{\vth+3,\vrh}\bigg)\bigg(\int_{\eta}^L\exp\left(H_{\eta,\eta'}\right)\ud\eta'\bigg)\ls\frac{1}{\d_0^4}.\no
\end{align}
Finally, using \eqref{rtt 25'}, \eqref{rtt 24'} and \eqref{Regularity bound}, we have
\begin{align}
\\
\abs{\bvv Y_5}&\ls\int_{\eta}^L\frac{1}{\d_0}\bigg(\abs{\bvvp \nabla_{\vvv'}K[\gg]}+\abs{\bvvp \nabla_{\vvv'}S}\bigg)\exp\left(H_{\eta,\eta'}\right)\bigg(\e\abs{\vvv}+\frac{\e\abs{\vvv}^2}{\d_0}\bigg)\ud\eta'\no\\
&\ls\frac{\e}{\d_0^3}\bigg(\lnnm{\gg}{\vth+2,\vrh}+\lnnm{S}{\vth+2,\vrh}\bigg)\bigg(\int_{\eta}^L\exp\left(H_{\eta,\eta'}\right)\ud\eta'\bigg)\ls\frac{\e}{\d_0^3}.\no
\end{align}
Collecting all $Y_i$ estimates, we have
\begin{align}\label{rtt 66}
\abs{\bvv II_1} \ls& \frac{\e}{\d_0^3}+\frac{1}{\d_0^4}.
\end{align}
\ \\
Step 2: Estimate of $II_2$: $-\d_0\leq\va\leq0$ with $1-\chi(\ua)$ term.\\
We decompose $1=\Big(1-\chi(\ua)\Big)+\chi(\ua)$.
\begin{align}
II_2:=&\int_{\eta}^L\bigg(\int_{\r^3}\frac{\zeta(\eta',\vvv')}{\zeta(\eta',\vuu)}\Big(1-\chi(\ua)\Big)
k(\vuu,\vvv')\a(\eta',\vuu)\ud{\vuu}\bigg)
\frac{1}{\va'}\exp(H_{\eta,\eta'})\ud{\eta'}.
\end{align}
Then by a similar argument as estimating $I_2$, we have
\begin{align}\label{rtt 67}
\abs{\bvv II_2}\ls&\frac{1}{\d}+\frac{\e}{\d^2}.
\end{align}
\ \\
Step 3: Estimate of $II_3$: $-\d_0\leq\va\leq0$, with $\chi(\ua)$ term and $\sqrt{\e\eta'}\vb'\geq\va'$.\\
This is similar to the estimate of $I_3$, we have
\begin{align}\label{rtt 68}
\abs{\bvv II_3}\ls \d\lnnmv{\a}.
\end{align}
\ \\
Step 4: Estimate of $II_4$: $-\d_0\leq\va\leq0$, with $\chi(\ua)$ term, and $\sqrt{\e\eta'}\vb'\leq\va'$.\\
This step is different. We do not need to further decompose the cases like $I_4$ and $I_5$.
Based on \eqref{ptt 08}, we have,
\begin{align}
-H_{\eta,\eta'}\ls &-\frac{\nu(\vvv)(\eta'-\eta)}{\va}.
\end{align}
Then following the same argument in estimating $I_5$, we know
\begin{align}\label{rtt 69}
\abs{\bvv II_4}\ls \sqrt{\d_0}\Big(1+\abs{\ln(\e)}\Big)\lnnmv{\a}.
\end{align}
\ \\
Step 5: Estimate of $II_5$: $\va\leq-\d_0$ and $\va'\leq\dfrac{\d_0}{2}$ for some $\eta'\in[0,L]$.\\
Now we come back to study the leftover in Step 1, i.e. though the characteristic starts from a point with $\abs{\va}\geq\d_0$, as it goes, we finally arrive at the region that $\va'\leq\dfrac{\d_0}{2}$.

Let $\left(\eta^{\ast},-\dfrac{\d_0}{2},\vb^{\ast},\vc^{\ast}\right)$ be on the same characteristic as $(\eta,\vvv)$, i.e. this is the first time that the characteristic enters the region $\va'\leq\dfrac{\d_0}{2}$. In detail, we have
\begin{align}
&\vb^{\ast}=\frac{R_1-\e\eta}{R_1-\e\eta^{\ast}}\vb,\quad \vc^{\ast}=\frac{R_2-\e\eta}{R_2-\e\eta^{\ast}}\vc,\label{rtt 55}\\
&\va^2+\vb^2+\vc^2=\dfrac{\d_0^2}{4}+\left(\frac{R_1-\e\eta}{R_1-\e\eta^{\ast}}\right)^2\vb^2+\left(\frac{R_2-\e\eta}{R_2-\e\eta^{\ast}}\right)^2\vc^2.\label{rtt 56}
\end{align}
Taking $\eta$ derivative in \eqref{rtt 56}, we obtain
\begin{align}\label{rtt 57}
\frac{\p\eta^{\ast}}{\p\eta}&=\frac{\dfrac{R_1-\e\eta}{(R_1-\e\eta^{\ast})^2}\vb^2+\dfrac{R_2-\e\eta}{(R_2-\e\eta^{\ast})^2}\vc^2}
{\dfrac{(R_1-\e\eta)^2}{(R_1-\e\eta^{\ast})^3}\vb^2+\dfrac{(R_2-\e\eta)^2}{(R_2-\e\eta^{\ast})^3}\vc^2},
\end{align}
Here we do not need to compute $\eta^{\ast}$ explicitly. Since $\eta<\eta^{\ast}\leq L$, we know $0\leq\e\eta<\e\eta^{\ast}\leq \e L=\e^{\frac{1}{2}}$, which implies
\begin{align}\label{rtt 58}
\frac{R_1}{2}\leq R_1-\e\eta^{\ast}<R_1-\e\eta\leq R_1,\quad \frac{R_2}{2}\leq R_2-\e\eta^{\ast}<R_2-\e\eta\leq R_2.
\end{align}
Inserting \eqref{rtt 58} into \eqref{rtt 57}, we have
\begin{align}\label{rtt 59}
\abs{\frac{\p\eta^{\ast}}{\p\eta}}\ls 1.
\end{align}
Taking $\eta$ derivative in \eqref{rtt 55} and using \eqref{rtt 59} and \eqref{rtt 58}, we obtain
\begin{align}\label{rtt 60}
\abs{\frac{\p\vb^{\ast}}{\p\eta}}=\e\abs{\vb}\abs{\frac{R_1-\e\eta}{(R_1-\e\eta^{\ast})^2}\frac{\p\eta^{\ast}}{\p\eta}-\frac{1}{R_1-\e\eta^{\ast}}}\ls \e\nu(\vvv),\\
\abs{\frac{\p\vc^{\ast}}{\p\eta}}=\e\abs{\vc}\abs{\frac{R_2-\e\eta}{(R_2-\e\eta^{\ast})^2}\frac{\p\eta^{\ast}}{\p\eta}-\frac{1}{R_2-\e\eta^{\ast}}}\ls \e\nu(\vvv).\label{rtt 61}
\end{align}
Then we have the mild formulation between $\eta$ and $\eta^{\ast}$ as
\begin{align}\label{rtt 62}
\gg(\eta,\vvv)=\gg\left(\eta^{\ast},-\frac{\d_0}{2},\vb^{\ast},\vc^{\ast}\right)\exp(-H_{\eta^{\ast},\eta})+\int_{\eta}^{\eta^{\ast}}
\frac{(K[\gg]+S)\Big(\eta',\vvv'(\eta,\vvv;\eta')\Big)}{\va'(\eta,\vvv;\eta')}\exp(H_{\eta',\eta})\ud{\eta'}.
\end{align}
Similar to the estimate of $II_1$, taking $\eta$ derivative in \eqref{rtt 62} and multiplying $\zeta$ on both sides, we obtain
\begin{align}\label{rtt 65}
\abs{\bvv II_5}=\abs{\bvv\zeta(\eta,\vvv)\frac{\p\gg}{\p\eta}}\ls \abs{\bvv\zeta(P_1+P_2)},
\end{align}
where
\begin{align}
P_1&=\frac{\p\gg\left(\eta^{\ast},-\frac{\d_0}{2},\vb^{\ast},\vc^{\ast}\right)}{\p\eta}\exp(-H_{\eta^{\ast},\eta}),\\
P_2&=-\gg\left(\eta^{\ast},-\frac{\d_0}{2},\vb^{\ast},\vc^{\ast}\right)\exp(-H_{\eta^{\ast},\eta})\frac{\p H_{\eta^{\ast},\eta}}{\p\eta}\\
&+\frac{\p}{\p\eta}\Bigg(\int_{\eta}^{\eta^{\ast}}
\frac{(K[\gg]+S)\Big(\eta',\vvv'(\eta,\vvv;\eta')\Big)}{\va'(\eta,\vvv;\eta')}\exp(H_{\eta',\eta})\ud{\eta'}\Bigg).\no
\end{align}
Since for $\eta'\in[\eta,\eta^{\ast}]$, we always have $\va'\geq\dfrac{\d_0}{2}$, mimicking Step 1 to estimate $II_1$ and using \eqref{rtt 59}, we may bound
\begin{align}\label{rtt 63}
\abs{\bvv\zeta P_2}\ls \frac{\e}{\d_0^3}+\frac{1}{\d_0^4}.
\end{align}
The key is the estimate of $P_1$: considering $\abs{\exp(-H_{\eta^{\ast},\eta})}\ls 1$ and using \eqref{rtt 59}, \eqref{rtt 60} and \eqref{rtt 61}, we have
\begin{align}
&\abs{\bvv\zeta P_1}\ls\abs{\bv\zeta\frac{\p\gg\left(\eta^{\ast},-\frac{\d_0}{2},\vb^{\ast},\vc^{\ast}\right)}{\p\eta}}\\
\leq&\abs{\bv\zeta\frac{\p\gg\left(\eta^{\ast},-\frac{\d_0}{2},\vb^{\ast},\vc^{\ast}\right)}{\p\eta^{\ast}}\frac{\p\eta^{\ast}}{\p\eta}}+
\abs{\bv\zeta\frac{\p\gg\left(\eta^{\ast},-\frac{\d_0}{2},\vb^{\ast},\vc^{\ast}\right)}{\p \vb^{\ast}}\frac{\p\vb^{\ast}}{\p\eta}}\no\\
&+
\abs{\bv\zeta\frac{\p\gg\left(\eta^{\ast},-\frac{\d_0}{2},\vb^{\ast},\vc^{\ast}\right)}{\p \vc^{\ast}}\frac{\p\vc^{\ast}}{\p\eta}}\no\\
\ls&\abs{\bv\a\left(\eta^{\ast},-\frac{\d_0}{2},\vb^{\ast},\vc^{\ast}\right)}
+\e\lnnmv{\nu\left(\zeta\frac{\p\gg}{\p\vb}\right)}+\e\lnnmv{\nu\left(\zeta\frac{\p\gg}{\p\vc}\right)}.\no
\end{align}
The estimate of $\abs{\bv\a\left(\eta^{\ast},-\frac{\d_0}{2},\vb^{\ast},\vc^{\ast}\right)}$ is achieved as in $II_2,II_3,II_4$ since now $\abs{\va^{\ast}}\leq\dfrac{\d_0}{2}$. However, we have to preserve the latter two terms related to $\dfrac{\p\gg}{\p\vb}$ and $\dfrac{\p\gg}{\p\vc}$. Hence, we have
\begin{align}\label{rtt 64}
\abs{\bvv\zeta P_2}\ls & \bigg(\d+\sqrt{\d_0}\Big(1+\abs{\ln(\e)}\Big)\bigg)\lnnmv{\a}+\bigg(\frac{\e}{\d^2}+\frac{1}{\d}\bigg).\\
&+\e\lnnmv{\nu\left(\zeta\frac{\p\gg}{\p\vb}\right)}+\e\lnnmv{\nu\left(\zeta\frac{\p\gg}{\p\vc}\right)}.\no
\end{align}
Inserting \eqref{rtt 63} and \eqref{rtt 64} into \eqref{rtt 65}, we obtain
\begin{align}\label{rtt 70}
\abs{\bvv II_5}\ls& \bigg(\d+\sqrt{\d_0}\Big(1+\abs{\ln(\e)}\Big)\bigg)\lnnmv{\a}+\bigg(\frac{\e}{\d_0^3}+\frac{1}{\d_0^4}+\frac{\e}{\d^2}+\frac{1}{\d}\bigg)\\
&+\e\lnnmv{\nu\left(\zeta\frac{\p\gg}{\p\vb}\right)}+\e\lnnmv{\nu\left(\zeta\frac{\p\gg}{\p\vc}\right)}.\no
\end{align}
\ \\
Step 6: Synthesis.\\
Collecting all estimates related to $II_i$ in \eqref{rtt 66}, \eqref{rtt 67}, \eqref{rtt 68}, \eqref{rtt 69} and \eqref{rtt 70}, we have proved
\begin{align}\label{rtt 75}
\abs{\bvv II}\ls& \bigg(\d+\sqrt{\d_0}\Big(1+\abs{\ln(\e)}\Big)\bigg)\lnnmv{\a}+\bigg(\frac{\e}{\d_0^3}+\frac{1}{\d_0^4}+\frac{\e}{\d^2}+\frac{1}{\d}\bigg)\\
&+\e\lnnmv{\nu\left(\zeta\frac{\p\gg}{\p\vb}\right)}+\e\lnnmv{\nu\left(\zeta\frac{\p\gg}{\p\vc}\right)}.\no
\end{align}

%%%%%%%%%%%%%%%%%%%%%%%%%%%%%%%%%%%%%%%%%%%%%%%%%%%%%%%%%%%%%%%%%%%%%%%%
\subsubsection{Region III: $\va<0$ and $\va^2+\vb^2+\vc^2\leq \vb'^2(L)+\vc'^2(L)$}
%%%%%%%%%%%%%%%%%%%%%%%%%%%%%%%%%%%%%%%%%%%%%%%%%%%%%%%%%%%%%%%%%%%%%%%%

Based on \eqref{mtt 28}, we only need to estimate
\begin{align}
III=\t[\tilde\a]=&\int_0^{\eta^+}\frac{\tilde\a\Big(\eta',\vvv'(\eta,\vvv;\eta')\Big)}{\va'(\eta,\vvv;\eta')}
\exp(-H_{\eta^+,\eta'}-\rr[H_{\eta^+,\eta}])\ud{\eta'}\\
&+\int_{\eta}^{\eta^+}\frac{\tilde\a\Big(\eta',\rr[\vvv'(\eta,\vvv;\eta')]\Big)}{\va'(\eta,\vvv;\eta')}\exp(\rr[H_{\eta,\eta'}])\ud{\eta'}.\no
\end{align}
Here $\eta^+$ is defined in \eqref{mtt 84} and $\rr[H]=H$. Notice that
\begin{align}\label{rtt 52'}
\exp(-H_{\eta^+,\eta'}-\rr[H_{\eta^+,\eta}])\ls \exp(-\rr[H_{\eta',\eta}]).
\end{align}
Also, we can decompose
\begin{align}
\t[\tilde\a]=&\int_0^{\eta}\frac{\tilde\a\Big(\eta',\vvv'(\eta')\Big)}{\va'(\eta')}\exp(-H_{\eta^+,\eta'}-\rr[H_{\eta^+,\eta}])\ud{\eta'}\\
&+\int_{\eta}^{\eta^+}\frac{\tilde\a\Big(\eta',\vvv'(\eta')\Big)}{\va'(\eta')}\exp(-H_{\eta^+,\eta'}-\rr[H_{\eta^+,\eta}])\ud{\eta'}\no\\
&+\int_{\eta}^{\eta^+}\frac{\tilde\a\Big(\eta',\rr[\vvv'(\eta')]\Big)}{\va'(\eta')}\exp(\rr[H_{\eta,\eta'}])\ud{\eta'}.\no
\end{align}
Due to \eqref{rtt 52'}, the integral $\displaystyle\int_0^{\eta}$ part can be estimated as in Region I and the integral $\ds\int_{\eta}^{\eta^+}$ part can be estimated as in Region II, so we omit the details here. At the end of the day, we have
\begin{align}\label{rtt 76}
\abs{\bvv III}\ls& \bigg(\d+\sqrt{\d_0}\Big(1+\abs{\ln(\e)}\Big)\bigg)\lnnmv{\a}+\bigg(\frac{\e}{\d_0^3}+\frac{1}{\d_0^4}+\frac{\e}{\d^2}+\frac{1}{\d}\bigg)\\
&+\e\lnnmv{\nu\left(\zeta\frac{\p\gg}{\p\vb}\right)}+\e\lnnmv{\nu\left(\zeta\frac{\p\gg}{\p\vc}\right)}.\no
\end{align}

%%%%%%%%%%%%%%%%%%%%%%%%%%%%%%%%%%%%%%%%%%%%%%%%%%%%%%%%%%%%%%%%%%%%%%%%
\subsection{Regularity Estimates}
%%%%%%%%%%%%%%%%%%%%%%%%%%%%%%%%%%%%%%%%%%%%%%%%%%%%%%%%%%%%%%%%%%%%%%%%

%%%%%%%%%%%%%%%%%%%%%%%%%%%%%%%%%%%%%%%%%%%%%%%%%%%%%%%%%%%%%%%%%%%%%%%%
\subsubsection{Estimates of Normal Derivative}
%%%%%%%%%%%%%%%%%%%%%%%%%%%%%%%%%%%%%%%%%%%%%%%%%%%%%%%%%%%%%%%%%%%%%%%%

Collecting estimates \eqref{rtt 74}, \eqref{rtt 75}, \eqref{rtt 76} in these three regions, and inserting \eqref{rtt 71} and \eqref{rtt 72} into \eqref{rtt 73}, we have
\begin{align}\label{ptt 10}
\lnnmv{\a}\ls& \bigg(\d+\sqrt{\d_0}\Big(1+\abs{\ln(\e)}\Big)\bigg)\lnnmv{\a}+\bigg(\frac{\e}{\d_0^3}+\frac{1}{\d_0^4}+\frac{\e}{\d^2}+\frac{1}{\d}\bigg)\\
&+\e\lnnmv{\nu\left(\zeta\frac{\p\gg}{\p\vb}\right)}+\e\lnnmv{\nu\left(\zeta\frac{\p\gg}{\p\vc}\right)}+\lnmh{p_{\a}}+\lnnmv{\nu^{-1}S_{\a}}.\no
\end{align}
Then we choose these constants to perform absorbing argument. First we choose $0<\d<<1$ sufficiently small such that
\begin{align}
C\d\leq\frac{1}{4}.
\end{align}
Then we take $\d_0=\d^2(1+\abs{\ln(\e)})^{-2}$ such that
\begin{align}
C(1+\abs{\ln(\e)})\sqrt{\d_0}\leq C\d\leq\frac{1}{4}.
\end{align}
for $\e$ sufficiently small. Hence, we can absorb all the term related to $\lnnmv{\a}$ on the right-hand side of \eqref{ptt 10} to the left-hand side to obtain the desired result.
\begin{lemma}\label{Regularity lemma 4}
Assume \eqref{Milne bound} and \eqref{Regularity bound} holds. We have
\begin{align}
\lnnmv{\a}\ls&\abs{\ln(\e)}^{8}+\lnmh{p_{\a}}+\lnnmv{\nu^{-1}S_{\a}}
+\e\lnnmv{\nu\left(\zeta\frac{\p\gg}{\p\vb}\right)}+\e\lnnmv{\nu\left(\zeta\frac{\p\gg}{\p\vc}\right)}.
\end{align}
\end{lemma}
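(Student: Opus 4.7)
The plan is to combine the extensive preceding case analysis into a unified a priori bound for $\lnnmv{\a}$ and then close it by an absorbing argument. I start from the mild formulation \eqref{rtt 73}, $\a = \k[p_{\a}] + \t[\tilde\a] + \t[S_{\a}]$, and apply the abstract boundary and source bounds \eqref{rtt 71}--\eqref{rtt 72} (consequences of Lemma \ref{Milne lemma 8} and Lemma \ref{Milne lemma 9}) to control $\k[p_{\a}]$ by $\lnmh{p_{\a}}$ and $\t[S_{\a}]$ by $\lnnmv{\nu^{-1}S_{\a}}$. The only nontrivial task is to bound $\t[\tilde\a]$, where the non-local integrand carries the singular factor $\zeta(\eta',\vvv')/\zeta(\eta',\vuu)$ that degenerates whenever $\vuu$ is near the grazing set.

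Next, I split the characteristic integration into the three regimes already introduced: Region I with $\va>0$, Region II with $\va<0$ reaching the reflection boundary, and Region III with $\va<0$ turning at the tangential plane $\va'=0$ before reaching $\eta=L$. In Region I, a secondary decomposition according to the size of $\va$, to the cutoff $\chi(\ua)$, and to the comparison of $\sqrt{\e\eta'}\abs{\tv'}$ with $\va'$ and of $\va^2$ with $\e(\eta-\eta')\abs{\tv}^2$ yields \eqref{rtt 74}. Region II adds one genuinely new case: $\va\le -\d_0$ but $\va'$ drops below $\d_0/2$ somewhere in $[\eta,L]$, which forces me to transfer the $\eta$-derivative to the auxiliary point $(\eta^{\ast},-\d_0/2,\vb^{\ast},\vc^{\ast})$ and produces the trailing terms $\e\lnnmv{\nu(\zeta\p_{\vb}\gg)}+\e\lnnmv{\nu(\zeta\p_{\vc}\gg)}$ in \eqref{rtt 75}. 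Region III is a superposition of the two previous techniques and gives \eqref{rtt 76}.

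Inserting \eqref{rtt 71}, \eqref{rtt 72}, and \eqref{rtt 74}--\eqref{rtt 76} into the mild representation delivers the consolidated inequality \eqref{ptt 10}. I then pick $\d$ small enough that $C\d\le 1/4$ absorbs the first bad term, and I set $\d_0 = \d^2(1+\abs{\ln(\e)})^{-2}$ so that $C(1+\abs{\ln(\e)})\sqrt{\d_0}\le 1/4$ absorbs the second. Both $\lnnmv{\a}$ contributions on the right then move to the left-hand side, and the residual prefactors are dominated by $\d_0^{-4}\sim (1+\abs{\ln(\e)})^{8}$, producing the announced $\abs{\ln(\e)}^{8}$ rate.

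The principal obstacle, and the reason for the rather large $\abs{\ln(\e)}^8$ loss, is the singular integral in Step 5 of Region I (and its analogues in Regions II and III): when $\chi(\ua)$ is active and $\sqrt{\e\eta'}\abs{\tv'}\le\va'$, Lemma \ref{Regularity lemma 3} yields a logarithmic divergence that couples to the characteristic length scale via $\e\eta'\sim\e^{1/2}$. Balancing this logarithm against the inverse-$\d_0$ losses from the other subcases is exactly what fixes the exponent eight; any sharper rate seems to require a genuinely new mechanism for handling the grazing singularity of the weighted kernel $\tilde\a$.
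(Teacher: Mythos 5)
Your proposal follows the paper's own argument essentially verbatim: the mild formulation \eqref{rtt 73} with \eqref{rtt 71}--\eqref{rtt 72}, the three-region bounds \eqref{rtt 74}--\eqref{rtt 76} assembled into \eqref{ptt 10}, and the absorbing choice $C\d\le 1/4$, $\d_0=\d^2(1+\abs{\ln(\e)})^{-2}$ yielding the $\d_0^{-4}\sim\abs{\ln(\e)}^{8}$ rate. This is correct and matches the paper's proof, including the identification of Region II's $\eta^{\ast}$ case as the source of the trailing $\e\lnnmv{\nu(\zeta\p_{\vb}\gg)}+\e\lnnmv{\nu(\zeta\p_{\vc}\gg)}$ terms.
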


%%%%%%%%%%%%%%%%%%%%%%%%%%%%%%%%%%%%%%%%%%%%%%%%%%%%%%%%%%%%%%%%%%%%%%%%
\subsubsection{Estimates of Velocity Derivatives}
%%%%%%%%%%%%%%%%%%%%%%%%%%%%%%%%%%%%%%%%%%%%%%%%%%%%%%%%%%%%%%%%%%%%%%%%

Taking $\va$ derivative in \eqref{Milne transform} and multiplying $\zeta$ defined in \eqref{weight} on both sides, we obtain the $\e$-transport problem for $\b:=\zeta\dfrac{\p\gg}{\p\va}$ as
\begin{align}
\left\{
\begin{array}{l}\displaystyle
\va\frac{\p\b}{\p\eta}+G_1(\eta)\bigg(\vb^2\dfrac{\p\b }{\p\va}-\va\vb\dfrac{\p\b }{\p\vb}\bigg)+G_2(\eta)\bigg(\vc^2\dfrac{\p\b }{\p\va}-\va\vc\dfrac{\p\b }{\p\vc}\bigg)+\nu\b=\tilde\b+S_{\b},\\\rule{0ex}{1.5em}
\b(0,\vvv)=p_{\b}(\vvv)\ \ \text{for}\ \ \va>0,\\\rule{0ex}{1.5em}
\b(L,\vvv)=-\b(L,\rr[\vvv]),
\end{array}
\right.
\end{align}%boundary data changes since $\va$ derivative applies
where the crucial non-local term
\begin{align}
\tilde\b(\eta,\vvv)=\int_{\r^3}\zeta(\vvv)\p_{\va}k(\vuu,\vvv)\gg(\eta,\vuu)\ud{\vuu}.
\end{align}
Here we utilize Lemma \ref{weight lemma} to move $\zeta$ inside the derivative. $p_{\b}$ and $S_{\b}$ will be specified later. We need to derive the a priori estimate of $\b$.
Compared with $\tilde\a$ defined in \eqref{rtt 77}, the key difference is that $\tilde\b$ does not contain $\b$ directly but rather $\gg$. Hence, we no longer need the analysis in previous sections to tackle the strong singularities. Then directly tracking along the characteristics, by a similar but much simpler argument using Theorem \ref{Milne theorem 4}, Lemma \ref{Regularity lemma 2'} and \eqref{Milne bound}, \eqref{Regularity bound}, we obtain the desired result.
\begin{lemma}\label{Regularity lemma 5}
Assume \eqref{Milne bound} and \eqref{Regularity bound} holds. We have
\begin{align}
\lnnmv{\b}\ls 1+\lnmh{p_{\b}}+\lnnmv{\nu^{-1}S_{\b}}.
\end{align}
\end{lemma}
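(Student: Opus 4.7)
The plan is to imitate the $L^{\infty}$ machinery developed in Section \ref{mtt section 01} for $\gg$ itself, but applied to the equation for $\b=\zeta\,\p_{\va}\gg$. The crucial simplification, flagged in the remark preceding the lemma statement, is that the non-local source $\tilde\b$ depends on $\gg$ rather than on $\b$, so the weighted kernel does not close back on the unknown and no delicate singularity analysis (of the kind that occupied the whole treatment of $\a$) is needed. Concretely, I will track $\b$ along the characteristics using the operators $\k$ and $\t$ defined in \eqref{mtt 26}--\eqref{mtt 28}, writing
\begin{align*}
\b(\eta,\vvv)=\k[p_{\b}](\eta,\vvv)+\t\big[\tilde\b+S_{\b}\big](\eta,\vvv).
\end{align*}
The sign flip in the reflection condition $\b(L,\vvv)=-\b(L,\rr[\vvv])$ is harmless because all subsequent estimates are in absolute value; $\k$ and $\t$ still satisfy Lemma \ref{Milne lemma 8} and Lemma \ref{Milne lemma 9} with the same constants.

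First I would dispose of the easy pieces: Lemma \ref{Milne lemma 8} gives $\lnnmv{\k[p_{\b}]}\ls \lnmh{p_{\b}}$, while Lemma \ref{Milne lemma 9} yields $\lnnmv{\t[S_{\b}]}\ls \lnnmv{\nu^{-1}S_{\b}}$. The only substantive step is to control $\lnnmv{\t[\tilde\b]}$, and by Lemma \ref{Milne lemma 9} again it suffices to bound $\lnnmv{\nu^{-1}\tilde\b}$. Using \eqref{ptt 01} to get $\zeta(\eta,\vvv)\ls |\vvv|\ls \nu(\vvv)$, and inserting the standard weighted ratio, I estimate pointwise
\begin{align*}
\bvv\,\abs{\tilde\b(\eta,\vvv)}
\leq \zeta(\vvv)\int_{\r^{3}} \abs{\p_{\va} k(\vuu,\vvv)}\,\frac{\bvv}{\br{\vuu}^{\vth}\ue^{\vrh\abs{\vuu}^{2}}}\cdot\br{\vuu}^{\vth}\ue^{\vrh\abs{\vuu}^{2}}\abs{\gg(\eta,\vuu)}\ud\vuu.
\end{align*}
Since $\p_{\va}k$ is a component of $\nabla_{\vvv}k$, Lemma \ref{Regularity lemma 2'} controls the remaining velocity integral by a universal constant, and Theorem \ref{Milne theorem 2} (or Theorem \ref{Milne theorem 4} if decay is desired) gives $\lnnmv{\gg}\ls 1$. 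Hence $\bvv|\tilde\b|\ls \nu(\vvv)$, i.e. $\lnnmv{\nu^{-1}\tilde\b}\ls 1$, and consequently $\lnnmv{\t[\tilde\b]}\ls 1$.

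Combining the three contributions produces
\begin{align*}
\lnnmv{\b}\ls 1+\lnmh{p_{\b}}+\lnnmv{\nu^{-1}S_{\b}},
\end{align*}
which is exactly the claim. The only place where one has to be slightly careful is the weight bookkeeping: the chosen $\vrh<1/4$ and $\vth>3$ must simultaneously fit into the hypotheses of Lemma \ref{Regularity lemma 2'} and of Theorem \ref{Milne theorem 2}, but the excerpt already fixes this admissible window, so this is routine. I expect no genuine obstacle; the whole point of the lemma, as emphasized in the informal remark, is that the absence of $\b$ inside $\tilde\b$ removes every difficulty that made the estimate for $\a$ intricate (the grazing region, the logarithmic loss of Lemma \ref{Regularity lemma 3}, and the secondary bound on $\p_{\vb}\gg,\p_{\vc}\gg$ appearing on the right of Lemma \ref{Regularity lemma 4}), so the proof reduces essentially to a direct application of the $L^{\infty}$ framework of the earlier section.
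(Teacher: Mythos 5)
Your proposal is correct and follows essentially the same route as the paper: the paper's proof is exactly the one-line argument you flesh out, namely tracking $\b$ along the characteristics via $\k$ and $\t$ and bounding the non-local term $\tilde\b$ through Lemma \ref{Regularity lemma 2'} together with the $L^{\infty}$ bound on $\gg$ from Theorem \ref{Milne theorem 2}/\ref{Milne theorem 4}, using that $\tilde\b$ involves $\gg$ rather than $\b$ so no singularity analysis is needed. Your extra observations (the harmlessness of the sign in $\b(L,\vvv)=-\b(L,\rr[\vvv])$ and the bound $\zeta\ls\nu$) are correct details the paper leaves implicit.
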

In a similar fashion, $\c:=\zeta\dfrac{\p\gg}{\p\vb}$ and $\dd:=\zeta\dfrac{\p\gg}{\p\vc}$ can be estimated.
\begin{lemma}\label{Regularity lemma 6}
Assume \eqref{Milne bound} and \eqref{Regularity bound} holds. We have
\begin{align}
\lnnmv{\c}\ls 1+\lnmh{p_{\c}}+\lnnmv{\nu^{-1}S_{\c}}\\
\lnnmv{\dd}\ls 1+\lnmh{p_{\dd}}+\lnnmv{\nu^{-1}S_{\dd}}.
\end{align}
\end{lemma}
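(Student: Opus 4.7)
The plan is to mimic the argument sketched for Lemma \ref{Regularity lemma 5} (the $\b$ case), modifying it to handle the two extra complications that arise when the derivative lands on a velocity component appearing explicitly in the geometric drift. First I would differentiate the equation \eqref{Milne transform} with respect to $\vb$ (and then $\vc$), multiply through by the weight $\zeta$, and invoke Lemma \ref{weight lemma} to move $\zeta$ inside the transport operator. This produces a transport equation for $\c := \zeta\,\p_{\vb}\gg$ of the same form as \eqref{normal derivative equation}, with boundary data $\c(L,\vvv)=\c(L,\rr[\vvv])$ (no sign flip, in contrast to $\b$), and with a non-local term
\[
\tilde\c(\eta,\vvv)=\int_{\r^3}\zeta(\vvv)\,\p_{\vb}k(\vuu,\vvv)\,\gg(\eta,\vuu)\,\ud\vuu,
\]
where crucially the velocity derivative falls on the kernel $k$ and $\gg$ appears \emph{undifferentiated}. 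This eliminates the delicate singularity analysis of Section 4.2 and allows me to bound $\tilde\c$ pointwise by Lemma \ref{Regularity lemma 2'} together with Theorem \ref{Milne theorem 4}.

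Next I would collect the extra source terms generated by Leibniz's rule. Differentiating $-G_1(\eta)\bigl(\vb^2\p_{\va}\gg-\va\vb\p_{\vb}\gg\bigr)$ in $\vb$ produces the correction $-G_1(\eta)\bigl(2\vb\,\p_{\va}\gg-\va\,\p_{\vb}\gg\bigr)$, and differentiating $\nu(\vvv)\gg$ produces $\nu'(|\vvv|)\tfrac{\vb}{|\vvv|}\gg$ (the $G_2$ block and $\ll$ produce no $\vb$-sensitive extras beyond those already dealt with). After multiplying by $\zeta$, these become $-G_1(\eta)\bigl(2\vb\,\b-\va\,\c\bigr)$ plus a harmless $\gg$-type term controlled by Lemma \ref{Regularity lemma 0}. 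The $\b$-piece is absorbed into $S_{\c}$ using Lemma \ref{Regularity lemma 5}, while the $\e\,\va\,\c$-piece is a genuine self-term that I plan to absorb on the left by the smallness of $G_1\sim\e$.

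With the equation and source cleanly organized, I would close the proof by running the characteristic formulation \eqref{rtt 73} exactly as in Regions I--III of the $\a$-estimate, but substantially simplified: the non-local contribution $\t[\tilde\c]$ no longer requires the five-subcase $I_1,\ldots,I_5$ dissection, because $\tilde\c$ is already an $L^{\infty}_{\vth,\vrh}$ quantity by Lemma \ref{Regularity lemma 2'}, so Lemma \ref{Milne lemma 9} suffices to give $\lnnmv{\t[\tilde\c]}\ls \lnnmv{\nu^{-1}\tilde\c}\ls 1$. Combined with \eqref{rtt 71}--\eqref{rtt 72} applied to $\k[p_{\c}]$ and $\t[S_{\c}]$, and after absorbing the small $\e\,\c$ self-coupling, this yields the desired bound on $\c$; the estimate on $\dd$ is symmetric, differentiating instead in $\vc$ and exchanging the roles of $G_1,G_2$.

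The main obstacle I anticipate is really bookkeeping rather than analysis: making sure the Leibniz-correction terms that couple $\c$ back to $\b$ (and $\dd$ back to $\b$) carry a genuine factor of $\e$ so that they can be absorbed, and verifying that the modified boundary data $p_{\c}$ and corrected source $S_{\c}$ (which incorporate the $\nu'(|\vvv|)\vb|\vvv|^{-1}\gg$ and $\p_{\vb}K[\gg]$ pieces) still satisfy the hypotheses needed in \eqref{rtt 71}--\eqref{rtt 72}. Everything else is a direct transcription of the argument used for $\b$.
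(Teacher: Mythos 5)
Your proposal is correct and follows essentially the same route as the paper: differentiate \eqref{Milne transform} in $\vb$ (resp.\ $\vc$), use Lemma \ref{weight lemma} to commute $\zeta$ with the transport operator, observe that the non-local term now carries the derivative on the kernel with $\gg$ undifferentiated so that Lemma \ref{Regularity lemma 2'} and Theorem \ref{Milne theorem 4} bound it pointwise without any of the singularity analysis needed for $\a$, and close along characteristics as for $\b$. The only deviation is organizational: the paper keeps the Leibniz corrections $G_1(2\vb\b-\va\c)$ and $G_2(2\vc\b-\va\dd)$ inside $S_{\c}$, $S_{\dd}$ and performs the $\e$-smallness absorption only later in Theorem \ref{Regularity theorem 1}, whereas you absorb the self-coupling already inside the lemma, which is merely bookkeeping.
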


%%%%%%%%%%%%%%%%%%%%%%%%%%%%%%%%%%%%%%%%%%%%%%%%%%%%%%%%%%%%%%%%%%%%%%%%
\subsubsection{A Priori Estimates}
%%%%%%%%%%%%%%%%%%%%%%%%%%%%%%%%%%%%%%%%%%%%%%%%%%%%%%%%%%%%%%%%%%%%%%%%

In this subsection, we combine above a priori estimates of normal and velocity derivatives.
\begin{theorem}\label{Regularity theorem 1}
Assume \eqref{Milne bound} and \eqref{Regularity bound} holds. We have
\begin{align}
\lnnmv{\zeta\frac{\p\gg}{\p\eta}}+\lnnmv{\nu\zeta\frac{\p\gg}{\p\va}}&\ls \abs{\ln(\e)}^8,\\
\lnnmv{\nu\zeta\frac{\p\gg}{\p\vb}}+\lnnmv{\nu\zeta\frac{\p\gg}{\p\vc}}&\ls 1.
\end{align}
\end{theorem}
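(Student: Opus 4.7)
The plan is to synthesize Lemmas \ref{Regularity lemma 4}, \ref{Regularity lemma 5}, and \ref{Regularity lemma 6}, which give a priori bounds for $\a=\zeta\p_\eta\gg$, $\b=\zeta\p_\va\gg$, $\c=\zeta\p_\vb\gg$, $\dd=\zeta\p_\vc\gg$ provided the boundary data $p_\bullet$ and source terms $S_\bullet$ of the differentiated equations are controlled. The main work therefore consists of (i) identifying the correct $p_\bullet$ and $S_\bullet$ by differentiating \eqref{Milne transform} and its boundary condition in $\eta$, $\va$, $\vb$, $\vc$, and (ii) closing the coupling in Lemma \ref{Regularity lemma 4} between $\a$ and $(\c,\dd)$ by resolving the two systems in the right order.

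First, I will differentiate \eqref{Milne transform} in each variable and multiply by $\zeta$. Using Remark \ref{weight remark}, $\zeta$ commutes with the transport operator, so the resulting systems for $\a,\b,\c,\dd$ all take the $\e$-Milne form required by the lemmas, with source terms $S_\bullet$ that are linear combinations of (a) $\zeta$ times lower-order derivatives of the original source $S$, which are $O(1)$ by \eqref{Regularity bound}; (b) commutator terms of the form $\zeta\cdot G_i\cdot(\text{polynomial in }\vvv)\cdot\nabla_\vvv\gg$, each carrying a factor $\e$ from $G_i$ and therefore $O(\e)$ under (PSF); and (c) cross-terms coming from the commutator of $\va\p_\eta$ with $\p_\vv$ — specifically, the $\b$ equation inherits a source $\sim-\a/\nu$-type term because differentiating $\va\p_\eta\gg$ in $\va$ produces $\p_\eta\gg$. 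The boundary data $p_\bullet$ is obtained by solving \eqref{Milne transform} algebraically at $\eta=0$: on $\va>0$ we have $\gg(0,\cdot)=p$, and $\zeta(0,\vvv)=|\va|$ cancels the $1/\va$ appearing when we isolate $\p_\eta\gg$, yielding $|p_\a|\ls1$ in the weighted norm thanks to \eqref{Regularity bound} and Lemma \ref{Regularity lemma 1'}; the velocity-derivative data $p_\b,p_\c,p_\dd$ are directly $\zeta\nabla_\vvv p$, again $O(1)$.

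Next comes the closure, done in the following order. Step A: apply Lemma \ref{Regularity lemma 6} at weight $(\vth+1,\vrh)$ (legal since $\vth_0$ is free), which upgrades $\lnnmv{\c}$ and $\lnnmv{\dd}$ to $\lnnm{\nu\c}{\vth,\vrh}$ and $\lnnm{\nu\dd}{\vth,\vrh}$ of size $\ls 1$, since the sources $S_\c,S_\d$ involve only $O(\e)$ commutators and $\nabla_\vvv S$. Step B: feed these into Lemma \ref{Regularity lemma 4}; the coupling terms $\e\lnnmv{\nu\c}+\e\lnnmv{\nu\dd}$ become $O(\e)$, and the remaining right-hand side is dominated by $\abs{\ln\e}^8$, giving $\lnnmv{\a}\ls\abs{\ln\e}^8$. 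Step C: finally apply Lemma \ref{Regularity lemma 5} at weight $(\vth+1,\vrh)$ to produce $\lnnm{\nu\b}{\vth,\vrh}$; the source $S_\b$ contains the cross-term $\sim\a/\nu$, whose weighted $L^\infty$ norm with one less power of $\br{\vvv}$ is controlled by $\lnnmv{\a}\ls\abs{\ln\e}^8$, so $\lnnmv{\nu\b}\ls\abs{\ln\e}^8$.

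The delicate step will be Step C, because the derivation of Lemma \ref{Regularity lemma 5} implicitly required the source to be integrable along characteristics against the weight $\br{\vvv}^{\vth}\ue^{\vrh|\vvv|^2}$, and the term $\a/\nu$ only enjoys a logarithmic, not $O(1)$, bound. I will need to verify that using weight $\vth+1$ (as opposed to $\vth$) in Lemma \ref{Regularity lemma 5} still yields the same-shape estimate — this works because only $\vth>3$ and the kernel bound in Lemma \ref{Regularity lemma 1'} is used, both of which are preserved on raising $\vth$ by one. The other potential pitfall is checking that the reflexive boundary condition at $\eta=L$ transfers properly to $\b,\c,\dd$ (with the correct sign flip $\b(L,\vvv)=-\b(L,\rr[\vvv])$ as stated, since $\p_\va$ is odd under $\rr$); this is a direct differentiation of the original boundary identity and introduces no new difficulty. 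Collecting Steps A–C yields exactly the stated bounds.
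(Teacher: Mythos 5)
Your overall route is the paper's own: differentiate \eqref{Milne transform} in $\eta$ and $\vvv$, use Remark \ref{weight remark} to commute $\zeta$ through the transport operator, identify the boundary data $p_{\a},p_{\b},p_{\c},p_{\dd}$ (including solving the equation at $\eta=0$ where $\zeta=\abs{\va}$, and the sign-flipped reflexive condition for $\b$) and the sources $S_{\a},\dots,S_{\dd}$, then synthesize Lemmas \ref{Regularity lemma 4}--\ref{Regularity lemma 6}, upgrading to the $\nu$-weighted norms by shifting $\vth$ to $\vth+1$. All of that matches.

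The genuine problem is in your closure. You assert that the commutator sources are ``$O(\e)$ under (PSF)'' and on that basis Step A concludes $\lnnmv{\nu\c}+\lnnmv{\nu\dd}\ls 1$ unconditionally. This is circular: (PSF) controls $\gg$ and $K[\gg]$, not $\nabla_{\vvv}\gg$, and the commutator sources are precisely $\e$ times the unknown weighted derivatives you are trying to bound. Concretely, $S_{\c}=G_1\big(2\vb\b-\va\c\big)$ and $S_{\dd}=G_2\big(2\vc\b-\va\dd\big)$ contain $\b=\zeta\p_{\va}\gg$, whose eventual size is $\abs{\ln\e}^{8}$, not $O(1)$; at the stage of your Step A you can only obtain $\lnnmv{\c}+\lnnmv{\dd}\ls 1+\e\lnnmv{\b}$ after absorbing the self-terms. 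Likewise $S_{\a}=\tfrac{\ud G_1}{\ud\eta}(\vb^2\b-\va\vb\c)+\tfrac{\ud G_2}{\ud\eta}(\vc^2\b-\va\vc\dd)$ contributes $\e^{2}\big(\lnnmv{\nu\b}+\lnnmv{\nu\c}+\lnnmv{\nu\dd}\big)$ to Step B, and $S_{\b}=\a-G_1\vb\c-G_2\vc\dd$ feeds $\a$ into Step C. As written, A needs C (through $\b$), C needs B (through $\a$), and B needs A, so your A$\to$B$\to$C order does not close. The repair is the paper's bookkeeping: carry all $\e$-weighted unknowns symbolically, eliminate $\c,\dd$ in terms of $\e\lnnmv{\b}$, then $\b$ in terms of $\lnnmv{\nu^{-1}\a}$ (absorbing the resulting $\e^{2}\lnnmv{\b}$ self-coupling), then close for $\a$ via Lemma \ref{Regularity lemma 4} by absorbing the $\e^{2}\lnnmv{\a}$ term, and finally back-substitute to get $\lnnmv{\nu\b}\ls\abs{\ln\e}^{8}$ and $\lnnmv{\nu\c}+\lnnmv{\nu\dd}\ls 1$. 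Every coupling carries at least one power of $\e$, so the absorption succeeds; but that smallness is an output of the closure, not an input you may invoke in Step A.
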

\begin{proof}
Collecting the estimates for $\a$, $\b$, $\c$ and $\dd$ in Lemma \ref{Regularity lemma 4}, Lemma \ref{Regularity lemma 5}, and Lemma \ref{Regularity lemma 6}, we have
\begin{align}
\lnnmv{\a}\ls&\abs{\ln(\e)}^8+\lnmh{p_{\a}}+\lnnmv{\nu^{-1}S_{\a}}+\e\bigg(\lnnmv{\nu \c}+\lnnmv{\nu\dd}\bigg),\label{rtt 84}\\
\lnnmv{\b}\ls&1+\lnmv{p_{\b}}+\lnnmv{\nu^{-1}S_{\b}},\label{rtt 85}\\
\lnnmv{\c}\ls&1+\lnmv{p_{\c}}+\lnnmv{\nu^{-1}S_{\c}},\label{rtt 86}\\
\lnnmv{\dd}\ls&1+\lnmv{p_{\dd}}+\lnnmv{\nu^{-1}S_{\dd}}.\label{rtt 87}
\end{align}
Now we clear up these boundary terms and source terms. At $\eta=0$, we know $\zeta=\va$. Hence, we may solve from \eqref{Milne transform} to get
\begin{align}
p_{\a}&=\va\frac{\p\gg}{\p\eta}(0,\vvv)=-\frac{\e}{R_1}\bigg(\vb^2\frac{\p p}{\p\va}-\va\vb\frac{\p p}{\p\vb}\bigg)-\frac{\e}{R_2}\bigg(\vc^2\frac{\p p}{\p\va}-\va\vc\frac{\p p}{\p\vc}\bigg)+\nu p-K[\gg](0,\vvv)
\end{align}
Therefore, using Theorem \ref{Milne theorem 4}, Lemma \ref{Regularity lemma 1'}, \eqref{Milne bound} and \eqref{Regularity bound}, we have
\begin{align}\label{rtt 78}
\lnmh{p_{\a}}&\ls \e\lnm{\nabla_{\vvv}p}{\vth+2,\vrh}+\lnm{p}{\vth+1,\vrh}+\lnnm{\nu^{-1}\gg}{\vth,\vrh}\ls 1.
\end{align}
On the other hand, we can directly take derivative in the boundary data $p$ to get
\begin{align}
p_{\b}=\va\frac{\p p}{\p\va},\quad p_{\c}=\va\frac{\p p}{\p\vb},\quad p_{\dd}=\va\frac{\p p}{\p\vc},
\end{align}
which, using \eqref{Regularity bound}, yield
\begin{align}\label{rtt 79}
\lnmh{p_{\b}}+\lnmh{p_{\c}}+\lnmh{p_{\dd}}\ls \lnm{\nabla_{\vvv}p}{\vth+1,\vrh}\ls 1.
\end{align}
Directly Taking $\eta$ and $\vvv$ derivatives on both sides of \eqref{Milne transform} and multiplying $\zeta$, we obtain
\begin{align}
S_{\a}=&\frac{\ud{G_1}}{\ud{\eta}}\bigg(\vb^2\b-\va\vb\c\bigg)+\frac{\ud{G_2}}{\ud{\eta}}\bigg(\vc^2\b-\va\vc\dd\bigg),\\
S_{\b}=&\a-G_1\vb\c-G_2\vc\dd,\quad
S_{\c}=G_1\bigg(2\vb\b-\va\c\bigg),\quad
S_{\dd}=G_2\bigg(2\vc\b-\va\dd\bigg).
\end{align}
Note that fact that $\abs{G_1}+\abs{G_2}\ls\e$ and $\abs{\dfrac{\ud G_1}{\ud\eta}}+\abs{\dfrac{\ud G_2}{\ud\eta}}\ls \e^2$. We have
\begin{align}
\lnnmv{\nu^{-1}S_{\a}}&\ls \e^2\bigg(\lnnmv{\nu\b}+\lnnmv{\nu\c}+\lnnmv{\nu\dd}\bigg),\label{rtt 80}\\
\lnnmv{\nu^{-1}S_{\b}}&\ls\lnnmv{\nu^{-1}\a}+\e\bigg(\lnnmv{\c}+\lnnmv{\dd}\bigg),\label{rtt 81}\\
\lnnmv{\nu^{-1}S_{\c}}&\ls \e\bigg(\lnnmv{\b}+\lnnmv{\c}\bigg),\label{rtt 82}\\
\lnnmv{\nu^{-1}S_{\dd}}&\ls \e\bigg(\lnnmv{\b}+\lnnmv{\dd}\bigg).\label{rtt 83}
\end{align}
Inserting \eqref{rtt 79} and \eqref{rtt 82} into \eqref{rtt 86}, and absorbing $\e\lnnmv{\c}$ into the left-hand side, we get
\begin{align}\label{rtt 88}
\lnnmv{\c}\ls&1+\e\lnnmv{\b}.
\end{align}
Similarly, inserting \eqref{rtt 79} and \eqref{rtt 83} into \eqref{rtt 87}, and absorbing $\e\lnnmv{\dd}$ into the left-hand side, we get
\begin{align}\label{rtt 89}
\lnnmv{\dd}\ls&1+\e\lnnmv{\b}.
\end{align}
Inserting \eqref{rtt 88} and \eqref{rtt 89} into \eqref{rtt 81}, and further with \eqref{rtt 79} into \eqref{rtt 85}, after absorbing $\e^2\lnnmv{\b}$ into the left-hand side, we have
\begin{align}\label{rtt 90}
\lnnmv{\b}\ls&1+\lnnmv{\nu^{-1}\a}.
\end{align}
Then inserting \eqref{rtt 90} into \eqref{rtt 88} and \eqref{rtt 89}, we obtain
\begin{align}\label{rtt 91}
\lnnmv{\c}\ls&1+\e\lnnmv{\nu^{-1}\a},\quad \lnnmv{\dd}\ls1+\e\lnnmv{\nu^{-1}\a}.
\end{align}
Finally, inserting \eqref{rtt 90} and \eqref{rtt 91} into \eqref{rtt 80}, and further with \eqref{rtt 78} into \eqref{rtt 84}, after absorbing $\e^2\lnnmv{\a}$ into the left-hand side, we obtain
\begin{align}\label{rtt 92}
\lnnmv{\a}\ls&\abs{\ln(\e)}^8.
\end{align}
Hence, inserting \eqref{rtt 92} into \eqref{rtt 90} and \eqref{rtt 91}, we get the desired result.
\end{proof}

\begin{remark}
The estimates of weighted velocity derivatives $\zeta\frac{\p\gg}{\p\va}$, $\zeta\frac{\p\gg}{\p\vb}$ and $\zeta\frac{\p\gg}{\p\vc}$ have an extra $\nu$ in the estimates. This is crucial for the tangential derivative estimates.
\end{remark}

\begin{theorem}\label{Regularity theorem 2}
Assume \eqref{Milne bound} and \eqref{Regularity bound} holds. For $K_0>0$ sufficiently small, we have
\begin{align}
\lnnmv{\ue^{K_0\eta}\zeta\frac{\p\gg}{\p\eta}}+\lnnmv{\ue^{K_0\eta}\nu\zeta\frac{\p\gg}{\p\va}}&\ls\abs{\ln(\e)}^8,\\
\lnnmv{\ue^{K_0\eta}\nu\zeta\frac{\p\gg}{\p\vb}}+\lnnmv{\ue^{K_0\eta}\nu\zeta\frac{\p\gg}{\p\vc}}&\ls 1.
\end{align}
\end{theorem}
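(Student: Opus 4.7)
\textbf{Proof Proposal for Theorem \ref{Regularity theorem 2}.}
The plan is to run the same scheme as in Theorem \ref{Regularity theorem 1}, but with an exponentially weighted unknown, exactly paralleling the passage from Theorem \ref{Milne theorem 2} to Theorem \ref{Milne theorem 4}. Define
\[
\hat\gg(\eta,\vvv):=\ue^{K_0\eta}\gg(\eta,\vvv),\qquad
\hat\a:=\ue^{K_0\eta}\a,\quad \hat\b:=\ue^{K_0\eta}\b,\quad \hat\c:=\ue^{K_0\eta}\c,\quad \hat\dd:=\ue^{K_0\eta}\dd,
\]
where $\a,\b,\c,\dd$ are the weighted derivatives introduced in the proof of Theorem \ref{Regularity theorem 1}. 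As in \eqref{exponential}, $\hat\gg$ satisfies the $\e$-Milne problem with geometric correction whose right-hand side has an extra term $K_0\va\hat\gg$, and by Theorem \ref{Milne theorem 4} we already have $\lnnmv{\hat\gg}\ls 1$. Then $\hat\a$, $\hat\b$, $\hat\c$, $\hat\dd$ satisfy the transport systems obtained by differentiating \eqref{exponential} in $\eta,\va,\vb,\vc$ and multiplying by $\zeta$, which — thanks to Lemma \ref{weight lemma} — are identical in structure to the systems governing $\a,\b,\c,\dd$ in Theorem \ref{Regularity theorem 1}, except that the source terms and boundary data carry harmless extra $K_0$-contributions: for instance the $\hat\a$ equation picks up $K_0\va\hat\a+K_0\ue^{K_0\eta}\bigl(\nu\gg-K[\gg]-S\bigr)$ on the right, while the velocity-derivative equations pick up only lower-order $K_0$-multiples of $\hat\b,\hat\c,\hat\dd$.

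First I would rerun the mild formulation $\hat\a=\k[\hat p_{\a}]+\t[\tilde{\hat\a}]+\t[\hat S_{\a}]$ along the characteristics, decomposing the characteristic analysis exactly into the three regions (I, II, III) and the five sub-cases $I_1$--$I_5$ of Section 4.2. The entire singular-kernel analysis (Lemmas \ref{Regularity lemma 0}--\ref{Regularity lemma 3}) is unchanged, since it depends only on $k$ and $\zeta$, not on the decaying weight. The key observation is that in the upper bounds \eqref{rtt 74}, \eqref{rtt 75}, \eqref{rtt 76}, every norm of $\gg$, $K[\gg]$, $\nabla_{\vvv}K[\gg]$ or $S$ that appears can be upgraded to its exponentially weighted version thanks to \eqref{Milne bound}, \eqref{Regularity bound} and Theorem \ref{Milne theorem 4}; also $\ue^{K_0\eta}\exp(-H_{\eta,\eta'})\leq\exp(-H_{\eta,\eta'}+K_0(\eta-\eta'))$ still produces a finite time integral if $K_0$ is small enough relative to the collision frequency, since we can bound $\nu(\vvv')-K_0\va'\gs \nu_0/2$ uniformly. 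This reproduces, for $\hat\a$, the analogue of Lemma \ref{Regularity lemma 4}:
\[
\lnnmv{\hat\a}\ls \abs{\ln(\e)}^{8}+\lnmh{\hat p_{\a}}+\lnnmv{\nu^{-1}\hat S_{\a}}+\e\bigl(\lnnmv{\nu\hat\c}+\lnnmv{\nu\hat\dd}\bigr)+K_0\lnnmv{\hat\a},
\]
and the last term is absorbed on the left for $K_0$ small.

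Next I would do the same for $\hat\b,\hat\c,\hat\dd$, obtaining the weighted analogues of Lemmas \ref{Regularity lemma 5}--\ref{Regularity lemma 6}. The boundary and source estimates \eqref{rtt 78}--\eqref{rtt 83} go through verbatim after replacing $(p_{\a},p_{\b},p_{\c},p_{\dd})$ and $(S_{\a},S_{\b},S_{\c},S_{\dd})$ by their hatted versions, using that at $\eta=0$ the weight $\ue^{K_0\eta}=1$ and that $\ue^{K_0\eta}$ times the cross-derivative terms on the right of the derivative equations stays bounded modulo an extra $K_0\lnnmv{\hat\b}+K_0\lnnmv{\hat\c}+K_0\lnnmv{\hat\dd}$ that will again be absorbed. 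Then exactly the synthesis of \eqref{rtt 84}--\eqref{rtt 92} — cascading bounds on $\hat\c,\hat\dd\mapsto\hat\b\mapsto\hat\a$, absorbing all $\e$- and $K_0$-small terms — gives the two claimed bounds.

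The main obstacle I anticipate is the $K_0\va'\hat\a$-type source in the $\hat\a$ equation interacting with the delicate splitting of Steps 3--5 of Region I (the regime $0\le\va\le\d_0$, $\chi(\ua)$, $\sqrt{\e\eta'}|\tilde\vvv'|\leq\va'$, $\va^2\geq\e(\eta-\eta')|\tilde\vvv|^2$) and with the $II_5$ bootstrap in Region II that couples $\hat\a$ to $\nu\zeta\p_{\vb}\gg$ and $\nu\zeta\p_{\vc}\gg$. The verification to be checked carefully is that the $\ue^{K_0\eta}$ factor applied to those pieces still fits inside $\int\ue^{-H_{\eta,\eta'}}$ or $\int\ue^{-H_{\eta^\ast,\eta}}$; this works because $H_{\eta,\eta'}\gs\nu_0|\eta-\eta'|$ with $\nu_0$ independent of $\e$, so choosing $K_0<\nu_0/2$ preserves all integrability estimates \eqref{rtt 44}--\eqref{rtt 46} with the same logarithmic loss $\abs{\ln\e}^8$. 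Once this is in place, everything else is a routine repetition of Theorem \ref{Regularity theorem 1}.
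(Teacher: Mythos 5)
Your proposal is correct and is exactly the paper's route: the paper proves this theorem by repeating the proof of Theorem \ref{Regularity theorem 1} with the factor $\ue^{K_0\eta}$ multiplied through each step (sometimes inside the characteristic integrals), absorbing the resulting $K_0$-terms for $K_0$ small, just as you describe via $H_{\eta,\eta'}\gs\nu_0(\eta-\eta')$ and the analogy with the passage from Theorem \ref{Milne theorem 2} to Theorem \ref{Milne theorem 4}. Your extra bookkeeping of where the $K_0$-contributions enter the $\hat{\a},\hat{\b},\hat{\c},\hat{\dd}$ systems is consistent with the paper's (much terser) argument.
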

\begin{proof}
This proof is almost identical to that of Theorem \ref{Regularity theorem 1}. In each step, we need to multiple $\ue^{K_0\eta}$ on both sides (sometimes inside the integral). When $K_0$ is sufficiently small, we can close the proof.
\end{proof}

\begin{corollary}\label{Regularity corollary}
Assume \eqref{Milne bound} and \eqref{Regularity bound} holds. We have
\begin{align}
\e\lnnmv{\ue^{K_0\eta}\vb^2\frac{\p\gg}{\p\va}}+\e\lnnmv{\ue^{K_0\eta}\vc^2\frac{\p\gg}{\p\va}}\ls \abs{\ln(\e)}^8.
\end{align}
\end{corollary}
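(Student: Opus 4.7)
\medskip

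\noindent\textbf{Proof proposal.} The plan is to read the desired bound directly off of the Milne equation \eqref{Milne transform} by isolating the term $\dfrac{\e\vb^2}{R_1-\e\eta}\dfrac{\p\gg}{\p\va}+\dfrac{\e\vc^2}{R_2-\e\eta}\dfrac{\p\gg}{\p\va}$ and estimating the remainder using the quantities already controlled by Theorem \ref{Milne theorem 4} and Theorem \ref{Regularity theorem 2}. Concretely, the equation \eqref{Milne transform} gives the pointwise identity
\begin{align*}
\dfrac{\e\vb^2}{R_1-\e\eta}\dfrac{\p\gg}{\p\va}+\dfrac{\e\vc^2}{R_2-\e\eta}\dfrac{\p\gg}{\p\va}
=\va\dfrac{\p\gg}{\p\eta}+\dfrac{\e\va\vb}{R_1-\e\eta}\dfrac{\p\gg}{\p\vb}+\dfrac{\e\va\vc}{R_2-\e\eta}\dfrac{\p\gg}{\p\vc}+\ll[\gg]-\ss.
\end{align*}

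I would then multiply by $\ue^{K_0\eta}\bv$ and take the $L^\infty_{\eta,\vvv}$ norm term by term. For $\va\,\p_\eta\gg$ I use $|\va|\leq\zeta$ (immediate from the definition \eqref{weight}) together with Theorem \ref{Regularity theorem 2} to get a bound of size $\abs{\ln(\e)}^8$. For $\e\va\vb\,\p_{\vb}\gg$ and $\e\va\vc\,\p_{\vc}\gg$, I again use $|\va|\leq\zeta$, $|\vb|,|\vc|\ls\nu$, and Theorem \ref{Regularity theorem 2} to obtain a bound of size $\e$, with $R_i-\e\eta\sim R_i$ absorbed into $\ls$. For $\ll[\gg]=\nu\gg-K[\gg]$, I apply Theorem \ref{Milne theorem 4} (with $\vth$ replaced by $\vth+1$ to absorb $\nu$) and Lemma \ref{Regularity lemma 1'}; and $\ss$ is controlled directly by \eqref{Milne bound}. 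All of these contributions are $O(\abs{\ln(\e)}^8)$ in the weighted $L^\infty$ sense.

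Combining these bounds yields
\begin{align*}
\lnnmv{\ue^{K_0\eta}\left(\dfrac{\e\vb^2}{R_1-\e\eta}+\dfrac{\e\vc^2}{R_2-\e\eta}\right)\dfrac{\p\gg}{\p\va}}\ls\abs{\ln(\e)}^8.
\end{align*}
The final step is to separate the $\vb^2$ and $\vc^2$ contributions. This is where positivity is crucial: since $\vb^2,\vc^2\geq0$ and $R_1-\e\eta,R_2-\e\eta\in[R_i/2,R_i]$ by Remark \ref{Milne power}, the two summands carry the same sign as $\dfrac{\p\gg}{\p\va}$, so no cancellation is possible, and we obtain
\begin{align*}
\e\vb^2\abs{\dfrac{\p\gg}{\p\va}}\ls\left(\dfrac{\e\vb^2}{R_1-\e\eta}+\dfrac{\e\vc^2}{R_2-\e\eta}\right)\abs{\dfrac{\p\gg}{\p\va}},
\end{align*}
and similarly for $\e\vc^2\,\p_{\va}\gg$. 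This gives the claimed bound.

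The proof is essentially bookkeeping; the only subtle point is that we must not try to extract a bound on $\p_{\va}\gg$ itself, which is singular at the grazing set. The main ``obstacle'' is really just to recognize that the combination $\vb^2\p_{\va}\gg+\vc^2\p_{\va}\gg$ (weighted by the $R_i$ factors) is algebraically simpler than either summand separately: it sits on the left-hand side of the Milne operator and can be expressed in terms of derivatives that are already controlled by $\zeta\,\p_{\va}\gg$, $\zeta\,\p_{\vb}\gg$, $\zeta\,\p_{\vc}\gg$, $\zeta\,\p_\eta\gg$, none of which see the degeneracy of $\zeta$ at the grazing set.
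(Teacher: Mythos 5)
Your proposal is correct and follows essentially the same route as the paper: rearrange \eqref{Milne transform} to isolate $\Big(G_1\vb^2+G_2\vc^2\Big)\dfrac{\p\gg}{\p\va}$, bound the remaining terms via $\abs{\va}\leq\zeta$, Theorem \ref{Milne theorem 4}, Lemma \ref{Regularity lemma 1'} and Theorem \ref{Regularity theorem 2}, and then split the two contributions using that $G_1$ and $G_2$ share the same sign with $\abs{G_i}\sim\e$ (your positivity/no-cancellation observation). Nothing is missing.
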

\begin{proof}
We rearrange the terms in \eqref{Milne transform} to obtain
\begin{align}
\Big(G_1\vb^2+G_2\vc^2\Big)\frac{\p\gg}{\p\va}=\Big(S-\nu\gg+K[\gg]\Big)-\va\frac{\p\gg}{\p\eta}+G_1\va\vb\frac{\p\gg}{\p\vb}+G_2\va\vc\frac{\p\gg}{\p\vc}.
\end{align}
Recall $\zeta$ definition in \eqref{weight}, we know $\abs{\va}\leq\zeta$. Therefore, using \eqref{Milne bound} and Theorem \ref{Regularity theorem 1}, we know
\begin{align}\label{rtt 93}
&\lnnmv{\Big(G_1\vb^2+G_2\vc^2\Big)\frac{\p\gg}{\p\va}}\\
&\ls \lnnmv{S-\nu\gg+K[\gg]}+\lnnmv{\zeta\frac{\p\gg}{\p\eta}}+\lnnmv{G_1\nu\zeta\frac{\p\gg}{\p\vb}}+\lnnmv{G_2\nu\zeta\frac{\p\gg}{\p\vc}}\no\\
&\ls \lnnmv{S}+\lnnm{\gg}{\vth+2,\vrh}+\lnnmv{\zeta\frac{\p\gg}{\p\eta}}+\e\lnnmv{\nu\zeta\frac{\p\gg}{\p\vb}}+\e\lnnmv{\nu\zeta\frac{\p\gg}{\p\vc}}\ls \abs{\ln(\e)}^8.\no
\end{align}
Since $G_1$ and $G_2$ have the same sign and $\e\ls \abs{G_1}\ls\e$, $\e\ls \abs{G_2}\ls\e$, we can separate the two terms in the left-hand side of \eqref{rtt 93} to obtain
\begin{align}
\e\lnnmv{\vb^2\frac{\p\gg}{\p\va}}+\e\lnnmv{\vc^2\frac{\p\gg}{\p\va}}\ls \abs{\ln(\e)}^8.
\end{align}
We can perform the same analysis with an extra $\ue^{K_0\eta}$ term. Hence, our result naturally follows.
\end{proof}

%%%%%%%%%%%%%%%%%%%%%%%%%%%%%%%%%%%%%%%%%%%%%%%%%%%%%%%%%%%%%%%%%%%%%%%%
\subsubsection{Estimates of Tangential Derivative}
%%%%%%%%%%%%%%%%%%%%%%%%%%%%%%%%%%%%%%%%%%%%%%%%%%%%%%%%%%%%%%%%%%%%%%%%

Now we pull the tangential variables $\iota_1$ and $\iota_2$ dependence back and study the tangential derivatives.
\begin{theorem}\label{Milne tangential}
Assume \eqref{Milne bound} and \eqref{Regularity bound} holds. We have
\begin{align}
\lnnmv{\ue^{K_0\eta}\frac{\p\gg}{\p\iota_1}}\ls \abs{\ln(\e)}^8,\quad \lnnmv{\ue^{K_0\eta}\frac{\p\gg}{\p\iota_2}}\ls \abs{\ln(\e)}^8.
\end{align}
\end{theorem}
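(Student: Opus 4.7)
The strategy is to differentiate the equation \eqref{Milne transform} directly in the tangential parameter $\iota_1$ and observe that $\gg_{\iota_1}:=\p\gg/\p\iota_1$ satisfies exactly the same type of $\e$-Milne problem with geometric correction, but with a modified source and modified in-flow data. Unlike the normal and velocity derivatives handled in the preceding sections, the tangential direction is non-characteristic for the one-dimensional normal transport operator $\va\p_\eta$, so no singular weight $\zeta$ is needed here and the $L^\infty$ well-posedness and exponential-decay machinery of Theorem \ref{Milne theorem 2} and Theorem \ref{Milne theorem 4} can be applied directly.

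The first step is to commute $\p_{\iota_1}$ past the equation \eqref{Milne transform}. Both $\va\p_\eta$ and $\ll$, as well as the specular reflection condition at $\eta=L$, act trivially on $\iota_1$, so the only commutator contributions come from the geometric correction coefficients $\e/(R_i-\e\eta)$ (and from $P_1,P_2$ when present), producing source terms of the schematic form
\begin{align*}
\frac{\e(\p_{\iota_1}R_1)}{(R_1-\e\eta)^2}\Big(\vb^2\p_\va\gg - \va\vb\p_\vb\gg\Big) + \frac{\e(\p_{\iota_1}R_2)}{(R_2-\e\eta)^2}\Big(\vc^2\p_\va\gg - \va\vc\p_\vc\gg\Big).
\end{align*}
Together with $\p_{\iota_1}\ss$, these comprise the modified source $\ss_{\iota_1}$ for $\gg_{\iota_1}$.

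The second step is to estimate $\ss_{\iota_1}$ in the weighted $L^\infty$ norm carrying the exponential prefactor $\ue^{K_0\eta}$. The dangerous terms $\e\vb^2\p_\va\gg$ and $\e\vc^2\p_\va\gg$ are controlled precisely by Corollary \ref{Regularity corollary}, contributing $O(\abs{\ln\e}^8)$. The remaining commutator terms $\e\va\vb\p_\vb\gg$ and $\e\va\vc\p_\vc\gg$ are handled using the pointwise inequality $|\va|\le\zeta$ together with Theorem \ref{Regularity theorem 2}, yielding a bound of order $1$. The piece $\p_{\iota_1}\ss$ is $O(1)$ by the assumption \eqref{Regularity bound}. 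Combining these gives $\lnnmv{\nu^{-1}\ue^{K_0\eta}\ss_{\iota_1}}\ls\abs{\ln\e}^8$.

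The third step is to verify that the in-flow datum $p_{\iota_1}=\p_{\iota_1}(h-\tilde h)$ is uniformly bounded: the $\p_{\iota_1}h$ piece is controlled by \eqref{Regularity bound}, while the tangential regularity of $\tilde h$ follows from the fact that $\tilde h$ is determined through the near-identity endomorphism $\mathcal{M}$ of Theorem \ref{Milne theorem 1}, whose matrix entries depend smoothly on $\iota_1$ through the data. With $\ss_{\iota_1}$ and $p_{\iota_1}$ under control, Theorem \ref{Milne theorem 2} and Theorem \ref{Milne theorem 4} applied to the equation for $\gg_{\iota_1}$ yield $\lnnmv{\ue^{K_0\eta}\gg_{\iota_1}}\ls\abs{\ln\e}^8$, and the $\iota_2$ estimate is identical by symmetry. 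The main obstacle is controlling the commutator term $\e\vb^2\p_\va\gg$, in which the large $\vb^2$ factor is compensated by only a single power of $\e$; this is exactly why the preceding development of the $\nu\zeta$-weighted velocity-gradient bounds (Theorem \ref{Regularity theorem 2}) and their amplification through Corollary \ref{Regularity corollary} was set up in advance of attempting the tangential regularity.
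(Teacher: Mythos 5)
Your proposal is correct and follows essentially the same route as the paper: differentiate \eqref{Milne transform} in $\iota_i$, bound the resulting commutator source using Corollary \ref{Regularity corollary} for the $\e\vb^2\p_{\va}\gg$, $\e\vc^2\p_{\va}\gg$ terms and Theorem \ref{Regularity theorem 2} together with $\abs{\va}\leq\zeta$ for the $\e\va\vb\p_{\vb}\gg$-type terms, and then apply the $L^{\infty}$ well-posedness and decay theory (Theorem \ref{Milne theorem 4}) to the Milne problem satisfied by $\p_{\iota_i}\gg$ with boundary datum $\p_{\iota_i}p$. Your extra remark on the tangential smoothness of $\tilde h$ via the near-identity map $\mathcal{M}$ is slightly more explicit than the paper, which simply treats the $(\iota_1,\iota_2)$-dependence of the data as uniformly controlled, but the substance of the argument is the same.
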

\begin{proof}
Let $\w:=\dfrac{\p\gg}{\p\iota_i}$ for $i=1,2$. Taking $\iota_i$ derivative on both sides of \eqref{Milne transform}, we know that $\w$ satisfies the equation
\begin{align}\label{Milne tangential problem}
\left\{
\begin{array}{l}\displaystyle
\va\frac{\p \w}{\p\eta}+G_1(\eta)\bigg(\vb^2\dfrac{\p
\w}{\p\va}-\va\vb\dfrac{\p
\w}{\p\vb}\bigg)+G_2(\eta)\bigg(\vc^2\dfrac{\p
\w}{\p\va}-\va\vc\dfrac{\p
\w}{\p\vc}\bigg)+\nu\w-K[\w]=S_{\w},\\\rule{0ex}{2.0em}
\w(0,\iota_1,\iota_2,\vvv)=\dfrac{\p p}{\p\iota_i}(\iota_1,\iota_2,\vvv)\ \ \text{for}\ \ \sin\phi>0,\\\rule{0ex}{2.0em}
\w(L,\iota_1,\iota_2,\vvv)=\w(L,\iota_1,\iota_2,\rr[\vvv]),
\end{array}
\right.
\end{align}
where
\begin{align}
S_{\w}=\frac{\p\ss}{\p\iota_i}+\dfrac{\p_{\iota_i}R_1}{R_1-\e\eta}G_1(\eta)\bigg(\vb^2\dfrac{\p\gg}{\p\va}-\va\vb\dfrac{\p\gg}{\p\vb}\bigg)
+\dfrac{\p_{\iota_i}R_2}{R_2-\e\eta}G_2(\eta)\bigg(\vc^2\dfrac{\p\gg}{\p\va}-\va\vc\dfrac{\p\gg}{\p\vc}\bigg).
\end{align}
For $\eta\in[0,L]$, we have
\begin{align}
\dfrac{\p_{\iota_i}R_j}{R_j-\e\eta}\ls \max_{i,j=1,2}\p_{\iota_i}R_j\ls 1.
\end{align}
Therefore, noting that $\abs{\va}\leq\zeta$, using \eqref{Regularity bound}, Theorem \ref{Regularity theorem 2} and Corollary \ref{Regularity corollary}, we have
\begin{align}
\lnnmv{S_{\w}}&\ls\lnnmv{\frac{\p\ss}{\p\iota_i}}+ \lnnmv{G_1(\eta)\bigg(\vb^2\dfrac{\p\gg}{\p\va}-\va\vb\dfrac{\p\gg}{\p\vb}\bigg)}
+\lnnmv{G_2(\eta)\bigg(\vc^2\dfrac{\p\gg}{\p\va}-\va\vc\dfrac{\p\gg}{\p\vc}\bigg)}\\
&\ls 1+\e\lnnmv{\vb^2\dfrac{\p\gg}{\p\va}}+\e\lnnmv{\vc^2\dfrac{\p\gg}{\p\va}}+\e\lnnmv{\nu\zeta\dfrac{\p\gg}{\p\vb}}+\e\lnnmv{\nu\zeta\dfrac{\p\gg}{\p\vc}}
\ls \abs{\ln(\e)}^8.\no
\end{align}
By a similar argument, we can add $\ue^{K_0\eta}$ contribution to obtain
\begin{align}
\lnnmv{\ue^{K_0\eta}S_{\w}}\ls \abs{\ln(\e)}^8.
\end{align}
Therefore, applying Theorem \ref{Milne theorem 4} to \eqref{Milne tangential problem}, we have that
\begin{align}
\lnnmv{\ue^{K_0\eta}\w(\eta,\iota_1,\iota_2,\vvv)}\ls \abs{\ln(\e)}^8.
\end{align}
\end{proof}

\begin{theorem}\label{Milne velocity}
Assume \eqref{Milne bound} and \eqref{Regularity bound} holds. We have
\begin{align}
\lnnmv{\ue^{K_0\eta}\nu\frac{\p\gg}{\p\vb}}\ls \abs{\ln(\e)}^8,\quad \lnnmv{\ue^{K_0\eta}\nu\frac{\p\gg}{\p\vc}}\ls \abs{\ln(\e)}^8.
\end{align}
\end{theorem}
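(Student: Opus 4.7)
The plan is to mimic the proof of Theorem \ref{Milne tangential} but with the tangential derivative $\p_{\iota_i}$ replaced by the velocity derivative $\p_{\vb}$ (respectively $\p_{\vc}$). Setting $\w := \p_{\vb}\gg$ and differentiating \eqref{Milne transform} with respect to $\vb$, I expect to obtain a transport--collision equation of exactly the same form as \eqref{Milne transform},
\begin{align*}
\va\frac{\p\w}{\p\eta}+G_1\bigg(\vb^2\frac{\p\w}{\p\va}-\va\vb\frac{\p\w}{\p\vb}\bigg)+G_2\bigg(\vc^2\frac{\p\w}{\p\va}-\va\vc\frac{\p\w}{\p\vc}\bigg)+\nu\w-K[\w]=S_{\w},
\end{align*}
with boundary data $\w(0,\vvv)=\p_\vb p(\vvv)$ on $\va>0$, specular reflection at $\eta=L$, and source
\begin{align*}
S_{\w}=\frac{\p\ss}{\p\vb}-2G_1\vb\frac{\p\gg}{\p\va}+G_1\va\frac{\p\gg}{\p\vb}-\frac{\p\nu}{\p\vb}\gg+\int_{\r^3}\frac{\p k(\vuu,\vvv)}{\p\vb}\gg(\eta,\vuu)\ud{\vuu}.
\end{align*}
The first term comes from differentiating $S$; the next two are commutators between $\p_\vb$ and the geometric transport coefficients; the last two capture the commutator $\p_\vb(\nu\gg-K[\gg])-(\nu\w-K[\w])$. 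A direct application of Theorem \ref{Milne theorem 4} then reduces the desired estimate to bounding $\lnmh{\p_\vb p}$ and $\lnnmv{\nu^{-1}\ue^{K_0\eta}S_\w}$.

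The easy pieces fall into place quickly. The boundary contribution $\lnmh{\p_\vb p}\ls 1$ is given by \eqref{Regularity bound}. The kernel-derivative integral is controlled uniformly in $\vvv$ by Lemma \ref{Regularity lemma 2'}, together with $\lnnmv{\ue^{K_0\eta}\gg}\ls 1$ from Theorem \ref{Milne theorem 4}. The term $(\p_\vb\nu)\gg$ is tame because $|\p_\vb\nu|\ls 1$ by Lemma \ref{Regularity lemma 0}. The commutator $G_1\va\p_\vb\gg$ is absorbed using $|\va|\leq\zeta$ (a direct consequence of the definition \eqref{weight}) and then Theorem \ref{Regularity theorem 2}, giving $|G_1\va\p_\vb\gg|\ls\e\,\nu\zeta|\p_\vb\gg|\ls\e$.

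The main obstacle is the remaining commutator $2G_1\vb\p_\va\gg$, which is precisely the point where a naive application of Theorem \ref{Regularity theorem 2} would fall short: the bound $\lnnmv{\ue^{K_0\eta}\nu\zeta\p_\va\gg}\ls |\ln\e|^8$ carries the singular weight $\zeta^{-1}$, which degenerates near the grazing set. The rescue is Corollary \ref{Regularity corollary}, which controls $\e\vb^2\p_\va\gg$ (and $\e\vc^2\p_\va\gg$) \emph{without} the $\zeta$ weight. Using $\nu^{-1}|\vb|\ls 1$ to turn one power of $\vb$ into the $\nu^{-1}$ required by the mild formulation, and combining this with the $\vb^2\p_\va\gg$ bound from Corollary \ref{Regularity corollary}, one obtains the bound
\begin{align*}
\lnnmv{\nu^{-1}\ue^{K_0\eta}G_1\vb\p_\va\gg}\ls|\ln\e|^8.
\end{align*}
This is precisely the logarithmic loss that appears on the right-hand side of the statement.

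Collecting all source estimates yields $\lnnmv{\nu^{-1}\ue^{K_0\eta}S_\w}\ls|\ln\e|^8$, and Theorem \ref{Milne theorem 4} then delivers $\lnnm{\ue^{K_0\eta}\w}{\vth,\vrh}\ls|\ln\e|^8$. To upgrade from the $\vth$-weight to the $\nu$-weight required by the statement, I will run the same argument one level higher, replacing $\vth$ by $\vth+1$ throughout (all previous bounds have room in $\vth$), which yields $\lnnm{\ue^{K_0\eta}\w}{\vth+1,\vrh}\ls|\ln\e|^8$ and hence $\lnnmv{\ue^{K_0\eta}\nu\p_\vb\gg}\ls|\ln\e|^8$ since $\nu\ls\br{\vvv}$. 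The $\p_\vc$ estimate follows by identical reasoning with the roles of $(\vb,R_1,G_1)$ and $(\vc,R_2,G_2)$ interchanged.
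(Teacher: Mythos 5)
Your reduction to estimating the boundary data and the source $S_{\w}$ is sound in outline, and most of the source terms are treated exactly as the paper treats them; but the handling of the single dangerous commutator term $-2G_1\vb\,\p_{\va}\gg$ — which you correctly identify as the main obstacle — does not work as written. The inequality $\nu^{-1}\abs{\vb}\ls 1$ only converts $\nu^{-1}\abs{G_1\vb\,\p_{\va}\gg}$ into a quantity of size $\e\abs{\p_{\va}\gg}$; it does not manufacture the \emph{two} powers of $\vb$ (or $\vc$) that Corollary \ref{Regularity corollary} requires. The only available controls on $\p_{\va}\gg$ are $\lnnmv{\ue^{K_0\eta}\nu\zeta\,\p_{\va}\gg}\ls\abs{\ln(\e)}^8$ from Theorem \ref{Regularity theorem 2}, whose weight $\zeta$ degenerates at the grazing set, and $\e\lnnmv{\ue^{K_0\eta}\vb^2\p_{\va}\gg}+\e\lnnmv{\ue^{K_0\eta}\vc^2\p_{\va}\gg}\ls\abs{\ln(\e)}^8$ from the Corollary. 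In the region where $\eta,\va,\vb,\vc$ are all small neither bound controls $\e\abs{\vb}\abs{\p_{\va}\gg}$: at $\eta=0$ one has $\zeta=\abs{\va}$, so the first bound gives only $\abs{\p_{\va}\gg}\ls\abs{\ln(\e)}^8/\abs{\va}$ and the second $\abs{\p_{\va}\gg}\ls\abs{\ln(\e)}^8/(\e\vb^2)$, and $\e\abs{\vb}\abs{\p_{\va}\gg}$ can be of order $\e\abs{\vb}/\abs{\va}$, which is unbounded. Hence the claimed bound $\lnnmv{\nu^{-1}\ue^{K_0\eta}G_1\vb\,\p_{\va}\gg}\ls\abs{\ln(\e)}^8$ does not follow, and the application of Theorem \ref{Milne theorem 4} cannot be closed.

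The paper's proof is organized precisely to avoid this: it never estimates $\p_{\vb}\gg$ alone, but multiplies the differentiated equation by $\vb$ and estimates $\v=\vb\,\p_{\vb}\gg$, so that the commutator becomes $2G_1\vb^2\p_{\va}\gg$, carrying the two velocity powers that Corollary \ref{Regularity corollary} controls, while the remaining new terms ($\vb\,\p_{\vb}\ss$, $\int_{\r^3}\vb\,\p_{\vb}k(\vuu,\vvv)\gg(\eta,\vuu)\ud\vuu$, $G_1\va\vb\,\p_{\vb}\gg$) are bounded essentially as you propose. Repeating with the factor $\vc$ controls $\vc\,\p_{\vb}\gg$ (the commutator is then $2G_1\vb\vc\,\p_{\va}\gg\ls\e(\vb^2+\vc^2)\abs{\p_{\va}\gg}$), and $\va\,\p_{\vb}\gg$ comes for free from $\abs{\va}\leq\zeta$ and Theorem \ref{Regularity theorem 2}; the three weighted bounds are then assembled into the stated $\nu\,\p_{\vb}\gg$ estimate. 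If you wish to keep your structure you must introduce such a velocity weight \emph{before} estimating. A secondary bookkeeping issue: if you keep $-K[\w]$ on the left of your equation for $\w=\p_{\vb}\gg$, then $S_{\w}$ must also contain $-K[\w]$, since $\p_{\vb}$ does not commute with $K$; the paper instead keeps only the damping $\nu$ on the left and treats the problem as pure transport through the mild formulation of Section \ref{mtt section 01}.
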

\begin{proof}
Let $\v:=\vb\dfrac{\p\gg}{\p\vb}$. Taking $\vb$ derivative on both sides of \eqref{Milne transform} and multiplying $\vb$, we know that $\v$ satisfies the equation
\begin{align}\label{Milne tangential problem}
\left\{
\begin{array}{l}\displaystyle
\va\frac{\p \v}{\p\eta}+G_1(\eta)\bigg(\vb^2\dfrac{\p
\v}{\p\va}-\va\vb\dfrac{\p
\v}{\p\vb}\bigg)+G_2(\eta)\bigg(\vc^2\dfrac{\p
\v}{\p\va}-\va\vc\dfrac{\p
\v}{\p\vc}\bigg)+\nu\v=S_{\v},\\\rule{0ex}{2.0em}
\v(0,\iota_1,\iota_2,\vvv)=\vb\dfrac{\p p}{\p\vb}(\iota_1,\iota_2,\vvv)\ \ \text{for}\ \ \sin\phi>0,\\\rule{0ex}{2.0em}
\v(L,\iota_1,\iota_2,\vvv)=\v(L,\iota_1,\iota_2,\rr[\vvv]),
\end{array}
\right.
\end{align}
where
\begin{align}
S_{\v}=\int_{\r^3}\vb\p_{\vb}k(\vuu,\vvv)\ud\vuu+\vb\frac{\p\ss}{\p\vb}+2G_1\vb^2\frac{\p\gg}{\p\va}-2G_1\va\vb\frac{\p\gg}{\p\vb}.
\end{align}
Based on \eqref{Regularity bound}, Lemma \ref{Regularity lemma 2'} and Theorem \ref{Milne theorem 4}, we have
\begin{align}
\lnnmv{\int_{\r^3}\vb\p_{\vb}k(\vuu,\vvv)\ud\vuu}+\abs{\vb\frac{\p\ss}{\p\vb}}\ls 1.
\end{align}
Using Corollary \ref{Regularity corollary}, we get
\begin{align}
\lnnmv{2G_1\vb^2\frac{\p\gg}{\p\va}}\ls \e\lnnmv{\vb^2\frac{\p\gg}{\p\va}}\ls \abs{\ln(\e)}^8.
\end{align}
Using Theorem \ref{Regularity theorem 2}, we obtain
\begin{align}
\lnnmv{2G_1\va\vb\frac{\p\gg}{\p\vb}}\ls \e\lnnmv{\nu\zeta\frac{\p\gg}{\p\vb}}\ls 1.
\end{align}
Hence, collecting all above, we have proved that
\begin{align}
\lnnmv{S_{\v}}\ls \abs{\ln(\e)}^8.
\end{align}
Based on the analysis in Section \ref{mtt section 01}, we have
\begin{align}
\lnnmv{\v}\ls \lnmv{\vb\dfrac{\p p}{\p\vb}}+\lnnmv{\nu^{-1}S_{\v}}\ls \abs{\ln(\e)}^8.
\end{align}
By a similar argument, we can add $\ue^{K_0\eta}$ contribution to obtain
\begin{align}
\lnnmv{\ue^{K_0\eta}\vb\frac{\p\gg}{\p\vb}}\ls \abs{\ln(\e)}^8.
\end{align}
Similarly, we can show
\begin{align}
\lnnmv{\ue^{K_0\eta}\vc\frac{\p\gg}{\p\vb}}\ls \abs{\ln(\e)}^8.
\end{align}
Since $\abs{\va}\ls\zeta$, Theorem \ref{Regularity theorem 2} implies
\begin{align}
\lnnmv{\ue^{K_0\eta}\va\frac{\p\gg}{\p\vb}}\ls \abs{\ln(\e)}^8.
\end{align}
Then our result naturally follows. The $\dfrac{\p\gg}{\p\vc}$ bounds can be shown in a similar fashion.
\end{proof}

\begin{remark}
Theorem \ref{Regularity theorem 2}, Corollary \ref{Regularity corollary}, Theorem \ref{Milne tangential} and Theorem \ref{Milne velocity} provide bounds of all kinds of normal and velocity derivatives. However, note that $\dfrac{\p\gg}{\p\eta}$ estimate must be accompanied by the weight $\zeta$ since it may have singularity near the grazing set. Similarly, $\dfrac{\p\gg}{\p\va}$ estimate should be with either $\zeta$ or $\e$. On the other hand, $\dfrac{\p\gg}{\p\iota_i}$, $\dfrac{\p\gg}{\p\vb}$ and $\dfrac{\p\gg}{\p\vc}$ can avoid the introduction of $\zeta$ or $\e$, since they do not directly interact with grazing set.
\end{remark}

%%%%%%%%%%%%%%%%%%%%%%%%%%%%%%%%%%%%%%%%%%%%%%%%%%%%%%%%%%%%%%%%%%%%%%%%
\section*{Acknowledgement}
%%%%%%%%%%%%%%%%%%%%%%%%%%%%%%%%%%%%%%%%%%%%%%%%%%%%%%%%%%%%%%%%%%%%%%%%

L. Wu is supported by NSF grant DMS-1853002.

%%%%%%%%%%%%%%%%%%%%%%%%%%%%%%%%%%%%%%%%%%%%%%%%%%%%%%%%%%%%%%%%%%%%%%%%
%%%%%%%%%%%%%%%%%%%%%%%%%%%%%%%%%%%%%%%%%%%%%%%%%%%%%%%%%%%%%%%%%%%%%%%%

\bibliographystyle{siam}
\bibliography{Reference}

\end{document}